\newtheorem{df}{Definition}[section]
\newtheorem{thm}[df]{Theorem}
\newtheorem{prop}[df]{Proposition}
\newtheorem{lemm}[df]{Lemma}
\newtheorem{cor}[df]{Corollary}
\newtheorem{rem}[df]{Remark}
\newtheorem{fact}[df]{Fact}
\newcommand{\id}{\mathrm{id}}
\newcommand{\Q}{\mathbb{Q}}
\newcommand{\R}{\mathbb{R}}
\newcommand{\Z}{\mathbb{Z}}
\newcommand{\shuugou}[1]{\{ #1 \}}
\newcommand{\zettaiti}[1]{\lvert #1 \rvert}
\newcommand{\gyaku}[1]{ #1^{-1}}
\newcommand{\skein}[1]{\mathcal{S}( #1 )}
\newcommand{\tskein}[1]{\mathcal{A}( #1 )}
\newcommand{\tzeroskein}[1]{\mathcal{A}_0 ( #1 )}
\newcommand{\kukakko}[1]{\langle #1 \rangle}
\newcommand{\defeq}{\stackrel{\mathrm{def.}}{=}}
\newcommand{\Aut}{\mathrm{Aut}}
\newcommand{\bch}{\mathrm{bch}}
\newcommand{\filtn}[1]{\{ #1 \}_{n \geq 0}}
\newcommand{\comp}[1]{\underleftarrow{\lim}_{#1 \rightarrow \infty}}
\newcommand{\gauss}[1]{\lfloor #1 \rfloor}
\newcommand{\homfly}{\mathcal{H}_\mathrm{HOMFLY-PT}}
\newcommand{\arcsinh}{\mathrm{arcsinh}}
\newcommand{\tknot}[1]{\mathcal{T} (#1)}
\newcommand{\ad}{\mathrm{ad}}
\begin{document}

\title[Completed HOMFLY-PT skein algebras]
{A formula for the action of Dehn twists on
HOMFLY-PT skein modules and its applications}
\author{Shunsuke Tsuji}
\date{}
\maketitle

\begin{abstract}
We introduce a formula for the action of Dehn twists
on the HOMFLY-PT type skein module of a surface.
As an application of the formula to mapping class group, 
we give an embedding from the Torelli group of a surface
$\Sigma_{g,1}$ of genus $g$ with non-empty connected boundary
into the completed HOMFLY-PT type skein algebra.
As an application of the formula to  integral
homology $3$-spheres, we construct an invariant 
$z(M) \in \mathbb{Q} [\rho ] [[h]]$ for an integral homology $3$-sphere $M$.
The invariant $z(M) \mod (h^{n+1})$
is a finite type invariant of order $n$.

\end{abstract}

\section{Introduction}

A skein algebra plays an important role 
in both theories of mapping class groups of surfaces
and  finite type invariants for integral homology $3$-spheres.
Actually, in our preceding papers
\cite{TsujiCSAI} \cite{Tsujipurebraid} \cite{TsujiTorelli} \cite{Tsujihom3},
Kauffman bracket skein algebras
lead to some new results concerning 
mapping class groups of surfaces and 
finite type invariants for integral homology $3$-spheres.
The Kauffman bracket skein algebra of a surface $\Sigma$
is the quotient of the $\Q [A^{\pm 1}]$-free module 
with basis the set of framed unoriented  links in $\Sigma \times I$
modulo the relations defining the Kauffman bracket.
Throughout this paper, we denote the unit interval $[0,1]$ by $I$.
In \cite{TsujiTorelli},
we introduce
an embedding of the Torelli group of a compact connected oriented surface
with non-empty connected boundary into
the completed Kauffman bracket skein algebra of the surface,
which gives a new construction of the core of the Casson invariant
defined by Morita \cite{Morita_Casson_core}.
Furthermore, in \cite{Tsujihom3},
using the embedding, we construct an invariant $z_\mathcal{S} (M) \in \Q [[A+1]]$
for an integral homology $3$-sphere $M$.
We expect the invariant $(z_\mathcal{S} (M))_{|A^4 =q}$ to equal  the Ohtsuki series 
\cite{Ohtsuki1995}.

The aim of this paper is to establish some analogues
of the results stated above
for HOMFLY-PT type skein algebras.
In this paper, we present 3 main theorems
(Theorem \ref{thm_main_intro_1}, Theorem \ref{thm_main_intro_2} and Theorem \ref{thm_main_intro_3}).
In theory of mapping class groups of surfaces,
the HOMFLY-PT type skein algebra has  information
including the Kauffman bracket skein algebra and
the Goldman Lie algebra.
We remark that
the Kauffman bracket skein algebra does not include
 information coming from the Goldman Lie algebra.
In theory of integral homology $3$-spheres,
we construct an invariant for integral homology
$3$-spheres via the HOMFLY-PT type skein algebra.
We hope the invariant
to recover all the quantum invariant via the quantum group
of $sl_n$ for any $n$. 
In this paper, a HOMFLY-PT type skein algebra
is the quotient of the $\Q [\rho] [[h]]$-free module 
with basis the set of framed oriented links in $\Sigma \times I$
modulo the relations in Definition \ref{df_skein_algebra}.
Using the relations, we  can construct some polynomial invariant
for oriented framed links
having the same information as the HOMFLY-PT polynomial.
For an oriented unframed link $L$ in $S^3$,
the HOMFLY-PT polynomial $\homfly (L)(X,h) \in 
\Q [X^{\pm 1},  h^{\pm 1}]$ is defined using the relations in the Figure
\ref{figure_HOMFLYpolynomial}.

\begin{figure}
\begin{picture}(300,140)
\put(20,70){\includegraphics[width=40pt]{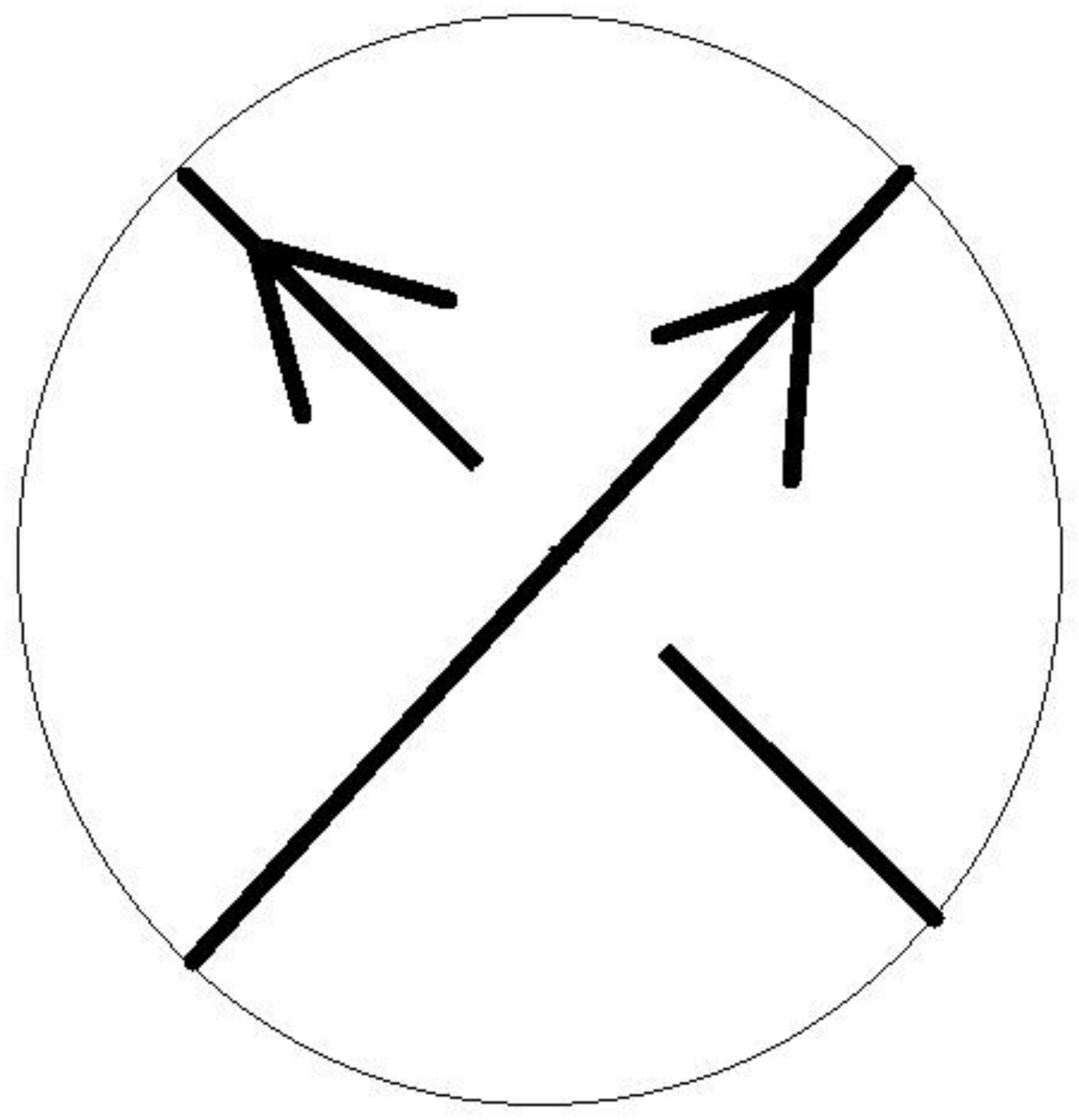}}
\put(100,70){\includegraphics[width=40pt]{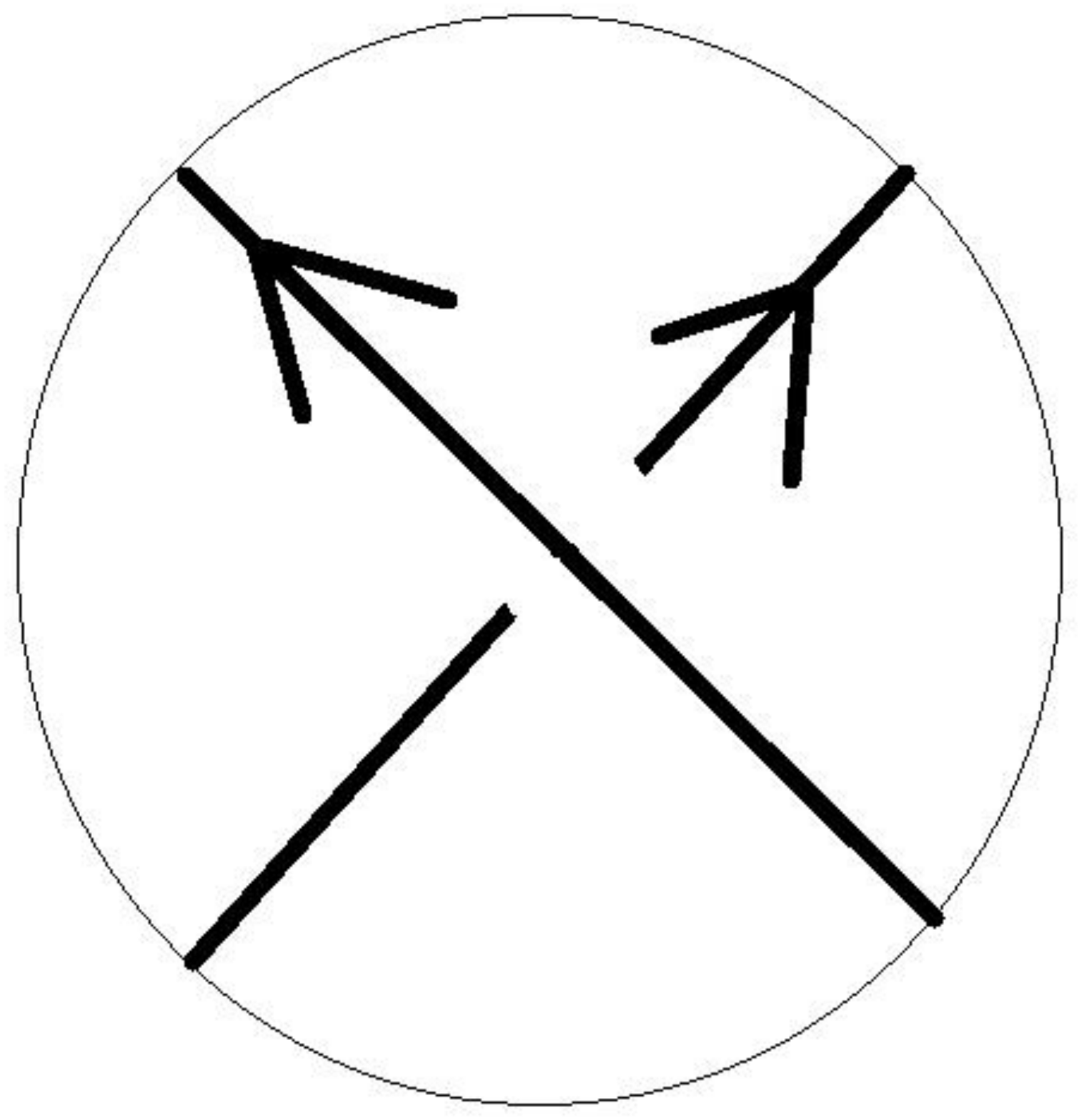}}
\put(180,70){\includegraphics[width=40pt]{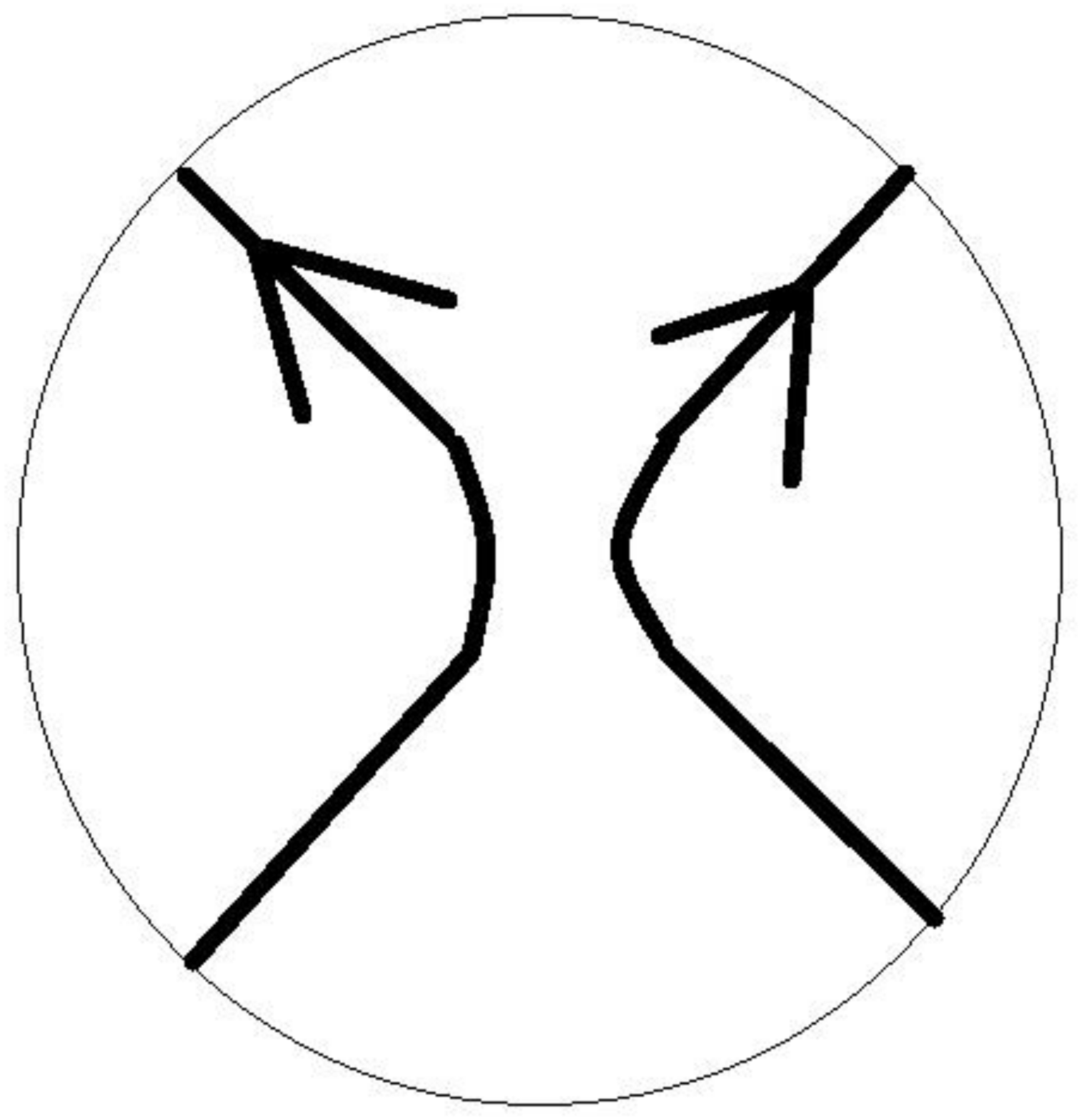}}
\put(0,90){$X$ \ \ \ \ \ \ \ \ \ \ \ \ \ \ \ \ \ \ $-\gyaku{X}$ \ \ \ \ \ \ \ \ \ \ \ \ 
 \ \ \ $=h$}
\put(0,10){\includegraphics[width=40pt]{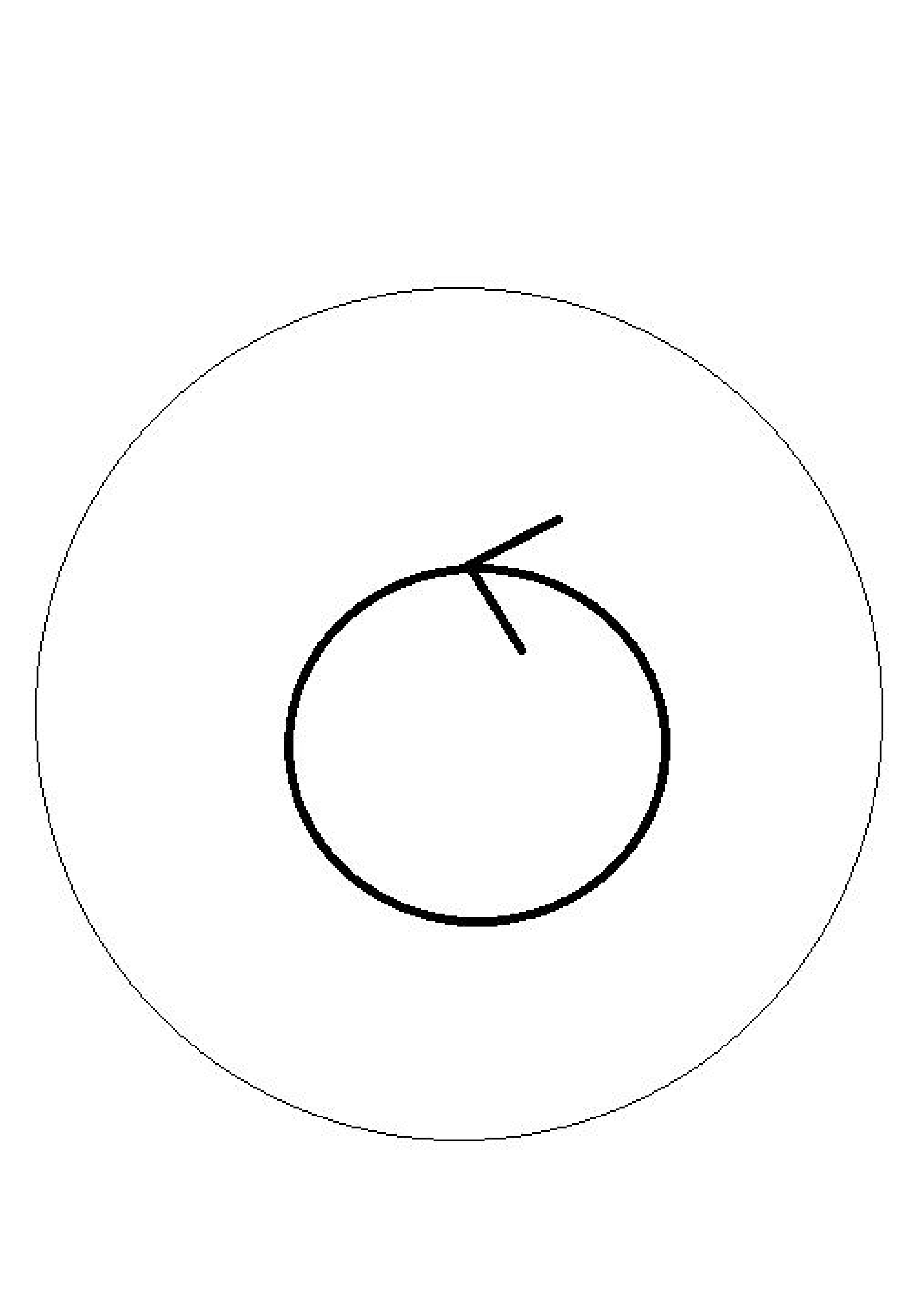}}
\put(100,10){\includegraphics[width=40pt]{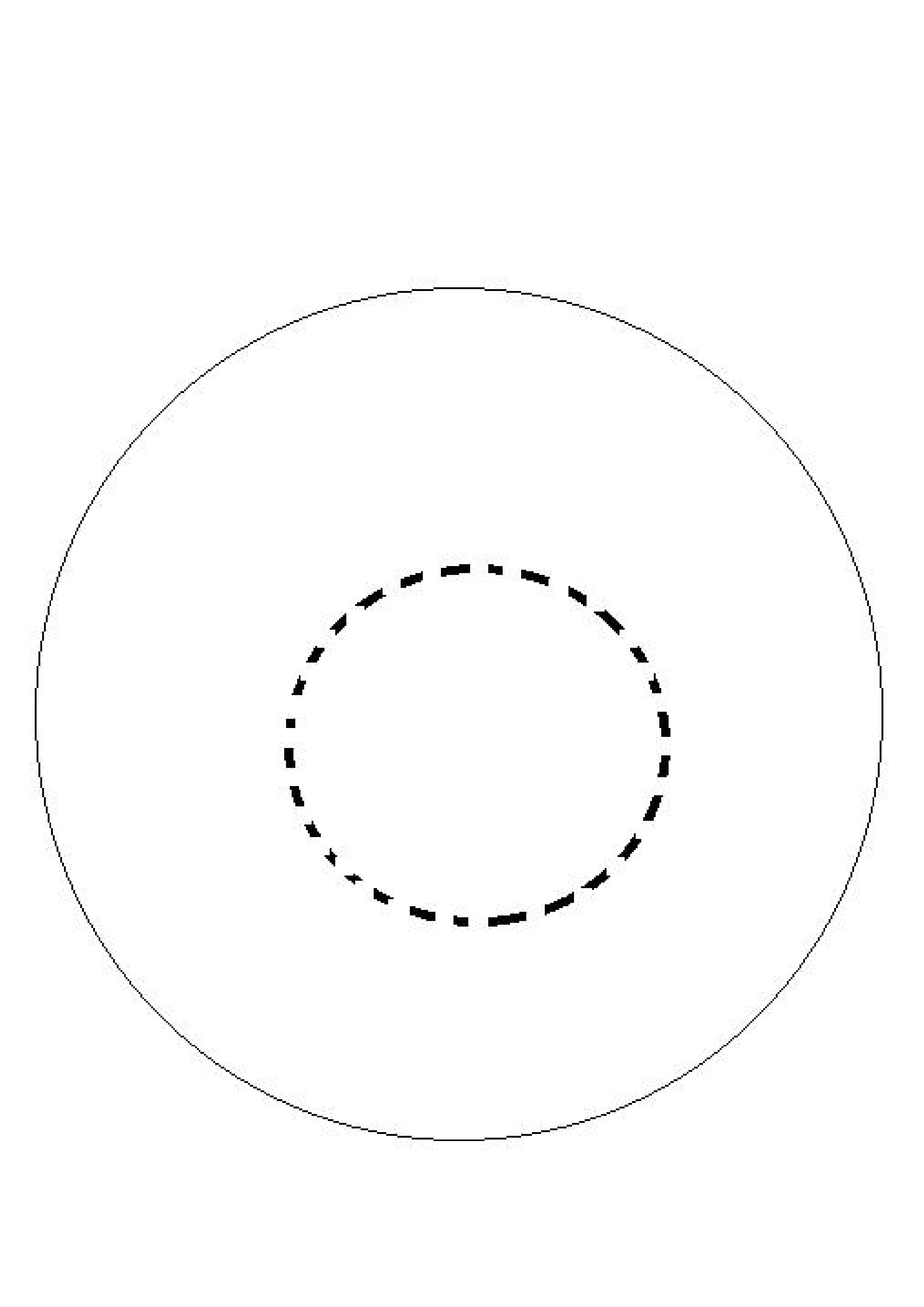}}
\put(0,30){\ \ \ \ \ \ \ \ \ \ \ \ \ \ \ $=\frac{X-\gyaku{X}}{h}$}
\end{picture}
\caption{HOMFLY polynomial}
\label{figure_HOMFLYpolynomial}
\end{figure}

Let $\Sigma$ be a compact connected oriented surface
with non-empty boundary and
$\mathcal{M} (\Sigma)$ the mapping class group
of $\Sigma$
which is
$\mathrm{Diff} (\Sigma, \partial \Sigma)/
\mathrm{Diff}_0 (\Sigma, \partial \Sigma)$
by definition.
An action $\sigma$ of the Goldman Lie algebra on
the group ring of the fundamental group of $\Sigma$ was introduced by
Kawazumi-Kuno \cite{KK}
\cite{Kawazumi}.
Through this action, Kawazumi-Kuno \cite{KK} \cite{Kawazumi} and
Massuyeau-Turaev \cite{MT} obtained a formula for the action of the 
right handed
Dehn twist $t_c \in \mathcal{M} (\Sigma)$ along a simple closed curve $c$
\begin{equation}
\label{equation_KawazumiKuno}
t_c ( \cdot )=\exp (\sigma(L_{\mathrm{Gol}}(c)))( \cdot ) :\widehat{\Q \pi_1(\Sigma, *)} \to
\widehat{\Q \pi_1 (\Sigma, *)},
\end{equation}
where $\widehat{\Q \pi_1 (\Sigma, *)}$ is the completed 
group ring of the fundamental 
group of $\Sigma$ with base point $* \in \partial \Sigma$.
Here, we denote $L_{\mathrm{Gol}} (c) \defeq
\frac{1}{2} \zettaiti{(\log(c))^2} \in \widehat{\Q \widehat{\pi_1} (\Sigma)}$ where
$\widehat{\Q \widehat{\pi_1} (\Sigma)}$ is the completed 
Goldman Lie algebra and
$\zettaiti{ \cdot } : \widehat{\Q \pi_1 (\Sigma, *)}
\to \widehat{\Q \widehat{\pi_1} (\Sigma)}$
is the quotient map.
The formula (\ref{equation_KawazumiKuno})
relates the Goldman Lie algebra
to the mapping class group.
We also consider the completion 
$\widehat{\skein{\Sigma}}$ of  the Kauffman bracket skein algebra
$\mathcal{S} (\Sigma)$,
 the completion $\widehat{\skein{\Sigma,J}}$ of the Kauffman bracket skein module
$\skein{\Sigma,J}$
with base point set $J \subset \partial \Sigma$
and an Lie action $\sigma$ of $\skein{\Sigma}$
on $\skein{\Sigma,J}$.
In \cite{TsujiCSAI}, we also obtained a formula for the action of the 
Dehn twist $t_c \in \mathcal{M} (\Sigma)$ along a simple closed curve $c$
\begin{equation}
\label{equation_Dehn_Kauff}
t_c (\cdot) = \exp ( \sigma (L_\mathcal{S} (c))) (\cdot ):
\widehat{\skein{\Sigma,J}} \to \widehat{\skein{\Sigma,J}}
\end{equation}
where $L_\mathcal{S} (c) \defeq \dfrac{-A+\gyaku{A}}{4 \log (-A)}
( \mathrm{arccosh} (-\frac{c}{2}))^2$.
The formula (\ref{equation_Dehn_Kauff})  relates
the Kauffman bracket skein algebra to the mapping class group.
As the first one of our $3$ main theorems, we establish
a HOMFLY-PT type skein algebra
version of these formulas.
\begin{thm}[Theorem \ref{thm_main_Dehn}]
\label{thm_main_intro_1}
There exists an element $\Lambda (c)$
of the completed HOMFLY-PT type skein algebra 
$\widehat{\tskein{\Sigma}}$ of $\Sigma$
for any simple closed curve $c$ satisfying
\begin{equation*}
t_c (\cdot) = \exp ( \sigma (\Lambda (c))) (\cdot ):
\widehat{\tskein{\Sigma,J^-,J^+}} \to \widehat{\tskein{\Sigma,J^-,J^+}}
\end{equation*}
where $\widehat{\tskein{\Sigma,J^-,J^+}}$
is the completion of the HOMFLY-PT type skein algebra
of $\Sigma$ with start point set and end point set $J^-$
and $J^+$.
\end{thm}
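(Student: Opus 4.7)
The plan is to follow the strategy of the Kauffman bracket formula (\ref{equation_Dehn_Kauff}) and the Goldman formula (\ref{equation_KawazumiKuno}), carried over to the oriented HOMFLY-PT setting. The first step is a reduction to a local problem. The Dehn twist $t_c$ is supported in any tubular neighborhood $A_c$ of $c$, and the Lie action $\sigma(x)$ for $x$ supported in $A_c$ affects only arcs and curves that enter $A_c$. Hence it suffices to identify $\Lambda(c) \in \widehat{\tskein{A_c}} \subset \widehat{\tskein{\Sigma}}$ and to verify the formula when restricted to arcs meeting $c$ transversely inside $A_c$. I also need to check that $\sigma(\Lambda(c))$ lies in a sufficiently deep filtration piece for $\exp(\sigma(\Lambda(c)))$ to converge on $\widehat{\tskein{\Sigma,J^-,J^+}}$ with respect to the $h$-adic (or augmentation) filtration.

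The second step is to construct $\Lambda(c)$ as a power series in the class $c \in \tskein{A_c}$. Motivated by the Goldman expression $L_{\mathrm{Gol}}(c) = \frac{1}{2} \zettaiti{(\log c)^2}$ and the Kauffman expression $L_\mathcal{S}(c) = \frac{-A+\gyaku{A}}{4 \log(-A)} (\arccosh (-c/2))^2$, the element $\Lambda(c)$ should be a squared-logarithm expression in $c$ with coefficients in $\Q[\rho][[h]]$ chosen so that $\exp(\sigma(\Lambda(c)))$ implements a positive full twist on a single transverse strand. One first computes the operator $\sigma(c^n)$ acting on a single strand transverse to $c$ inside $A_c$; using the HOMFLY-PT skein relations of Definition \ref{df_skein_algebra}, this operator is diagonalizable with eigenvalues that are explicit functions of $\rho$ and $h$. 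The formal power series $\Lambda(c)$ is then the unique series whose exponentiated $\sigma$-action has eigenvalues matching the full twist in each eigenspace.

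Once the local model in $A_c$ is established, the global formula follows because every framed oriented link (or tangle) in $\Sigma \times I$ can be isotoped to meet $A_c$ in a disjoint union of transverse strands, and both $t_c$ and $\exp(\sigma(\Lambda(c)))$ are compatible with this decomposition. The main technical obstacle is the explicit identification of $\Lambda(c)$. In the Kauffman case the transcendental coefficient $\frac{-A+\gyaku{A}}{4 \log(-A)}$ already required care; in the HOMFLY-PT setting the extra parameter $\rho$ together with the orientation data leads to a richer eigenvalue structure, so that $\Lambda(c)$ should combine a symmetric $\arccosh$-type contribution reminiscent of the Kauffman case with an antisymmetric, Goldman-type contribution. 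Most of the proof will consist in identifying the correct closed form of this series and verifying, by a direct computation in the annulus, that its exponential matches $t_c$ to all orders in the completion.
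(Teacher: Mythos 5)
Your proposal follows essentially the same route as the paper: the paper also defines $\Lambda(c)$ as an $\arcsinh$-normalized squared-logarithm series supported in an annular neighborhood of $c$ (built from the cablings $l_n$ of $c$, with the key computation $\sigma(l(n))(r^0)=\frac{1}{n}\bigl(\frac{q^n-q^{-n}}{q-q^{-1}}\bigr)^2 r^n$ playing exactly the role of your eigenvalue matching on a single transverse strand), verifies $\sigma(\Lambda(c_l))(r^0)=\log(t)(r^0)$ in the annulus, and then globalizes by cutting $\Sigma$ along the annulus and invoking the Leibniz rule for $\sigma$ on multiple strands. The only substantive items you defer but the paper must (and does) supply are the well-definedness of $\Lambda(c)$ under change of orientation and choice of lift $\gamma$, and the convergence argument, which runs through the filtration property $(1-t_c)^{N+1}(F^N)\subset F^{N+1}$ rather than a plain $h$-adic estimate.
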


One motivation to study a HOMFLY-PT type skein algebra in this paper 
comes from mapping class group theory.
Let $\Sigma_{g,1}$ be a compact connected oriented surface of genus $g$
with nonempty connected boundary,
$(\widehat{\Q \hat{\pi} (\Sigma_{g,1})}, \filtn{F^n \widehat{\Q \hat{\pi} (\Sigma_{g,1})}})$
the completed filtered Goldman Lie algebra of $\Sigma_{g,1}$
and $(\widehat{\skein{\Sigma_{g,1}}}, \filtn{F^n \widehat{\skein{\Sigma_{g,1}}}})$
the completed filtered Kauffman bracket skein algebra of $\Sigma_{g,1}$.
In this paper, we introduce a completed filtered
HOMFLY-PT type skein algebra 
$(\widehat{\tskein{\Sigma_{g,1}}}, \filtn{F^n \widehat{\tskein{\Sigma_{g,1}}}})$
of the surface.
We denote by $\mathcal{I} (\Sigma_{g,1})$ the Torelli group,
that is
the kernel of the action of $\mathcal{M} (\Sigma_{g,1})$
on $H_1 (\Sigma_{g,1},\Z)$.
Since $[F^n \mathfrak{g},F^m \mathfrak{g}] \subset
F^{n+m-2} \mathfrak{g}$ for $\mathfrak{g} = \widehat{\Q \hat{\pi} (\Sigma)},
\widehat{\skein{\Sigma}}, \widehat{\tskein{\Sigma}}$,
we can consider $F^3 \mathfrak{g}$ as a group whose
group law is the Baker-Campbell-Hausdorff series.
There exist embeddings
\begin{align*}
&\zeta_\mathrm{Gol} : \mathcal{I} (\Sigma_{g,1}) 
\to F^3 \widehat{\Q \hat{\pi} (\Sigma_{g,1})}, \\
&\zeta_\mathcal{S} :\mathcal{I} (\Sigma_{g,1})
\to F^3 \widehat{ \skein{\Sigma_{g,1}}}.
\end{align*}
We define some filtrations $\filtn{\mathcal{I}_g(n)}$,
$\filtn{\mathcal{M}_g (n)}$ and $\filtn{\mathcal{M}^\mathcal{S}_g (n)}$
by $\mathcal{I}_g (0) \defeq \mathcal{I} (\Sigma_{g,1})$,
$\mathcal{I}_g (n)=[\mathcal{I}_g(n-1), \mathcal{I} (\Sigma_{g,1})]$,
$\mathcal{M}_g(n)=\zeta_\mathrm{Gol}^{-1} (F^{n+3} \widehat{\Q \hat{\pi} (\Sigma_{g,1})})$
and $\mathcal{M}^\mathcal{S}_g (n)
=\zeta_\mathcal{S}^{-1} (F^{n+3} \widehat{\skein{\Sigma_{g,1}}}$.
We remark that $\filtn{\mathcal{M}_g (n)}$ is the Johnson filtration.
Then we have $\mathcal{I}_g (n) \subset \mathcal{M}_g(n)$ and
$\mathcal{I}_g(n) \subset \mathcal{M}^\mathcal{S}_g (n)$.
We obtain $\mathcal{M}_g (1) = \mathcal{M}^\mathcal{S}_g(1)$
but $\mathcal{M}_g^\mathcal{S} (2) 
\not\supset \mathcal{M}_g(2)$ and
$\mathcal{M}_g^\mathcal{S} (2) 
\not\subset \mathcal{M}_g(2)$.
So we expect that there exists  a filtration
$\filtn{\mathcal{M}'_g (n)}$
of $\mathcal{I} (\Sigma_{g,1})$ satisfying
$\mathcal{I}_g (n) \subset \mathcal{M}'_g (n)
\subset \mathcal{M}_g^\mathcal{S} (n) 
\cap \mathcal{M}_g(n)$ for any $n$.
In Section \ref{section_Torelli},
we construct the following embedding
as the second main theorem.
\begin{thm}[Theorem \ref{thm_main_embedding}, Corollary \ref{cor_main_embedding}]
\label{thm_main_intro_2}
There is an injective group homomorphism 
$\zeta_\mathcal{A} : \mathcal{I} (\Sigma)
\to I \tskein{\Sigma}$ defined by 
$\zeta (t_{c_1 c_2}) = \Lambda(c_1)-\Lambda(c_2)$
for a pair $(c_1, c_2)$ of simple closed curves 
bounding a surface.
\end{thm}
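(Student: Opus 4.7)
The plan is to adapt the construction of $\zeta_\mathcal{S}$ from \cite{TsujiTorelli}, using the element $\Lambda(c) \in \widehat{\tskein{\Sigma_{g,1}}}$ provided by Theorem \ref{thm_main_intro_1} in place of $L_\mathcal{S}(c)$. By Johnson's theorem the Torelli group $\mathcal{I}(\Sigma_{g,1})$ is generated by bounding pair maps $t_{c_1}t_{c_2}^{-1}$, where $c_1 \cup c_2$ cobounds an embedded subsurface. I would set $\zeta_\mathcal{A}(t_{c_1}t_{c_2}^{-1}) \defeq \Lambda(c_1)-\Lambda(c_2)$ on such generators and then verify, in order, (i) that the right-hand side lies in $F^3 \widehat{\tskein{\Sigma_{g,1}}}$, (ii) that the assignment extends to a group homomorphism $\mathcal{I}(\Sigma_{g,1}) \to F^3 \widehat{\tskein{\Sigma_{g,1}}}$, where the target is viewed as a group under the Baker--Campbell--Hausdorff series, and (iii) that the resulting map is injective.

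For (i), I would inspect the low-degree part of $\Lambda(c)$ in the filtration and check that its leading terms depend only on the homology class $[c] \in H_1(\Sigma_{g,1};\Z)$; the cobounding equality $[c_1]=[c_2]$ then annihilates all filtration degrees below $3$. Step (ii) rests on two observations. First, because $c_1$ and $c_2$ are disjoint they commute in $\widehat{\tskein{\Sigma_{g,1}}}$ (by stacking at different $I$-levels), and since $\Lambda(c)$ is built only from $c$ itself, the elements $\Lambda(c_1)$ and $\Lambda(c_2)$ commute, so that
\begin{equation*}
\bch(\Lambda(c_1),-\Lambda(c_2)) = \Lambda(c_1)-\Lambda(c_2).
\end{equation*}
Second, Theorem \ref{thm_main_intro_1} gives $t_{c_1}t_{c_2}^{-1} = \exp(\sigma(\Lambda(c_1)-\Lambda(c_2)))$. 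Extending multiplicatively via $\bch$ and using the faithfulness of $\sigma$ on $F^3 \widehat{\tskein{\Sigma_{g,1}}}$ shows that every relation holding among the bounding pair generators in $\mathcal{I}(\Sigma_{g,1})$ maps to $0$; this makes $\zeta_\mathcal{A}$ a well-defined group homomorphism.

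For (iii), I would compose $\zeta_\mathcal{A}$ with a natural quotient from $\widehat{\tskein{\Sigma_{g,1}}}$ to the completed Goldman Lie algebra $\widehat{\Q \hat{\pi}(\Sigma_{g,1})}$ (or, alternatively, to the completed Kauffman bracket skein algebra $\widehat{\skein{\Sigma_{g,1}}}$) that sends $\Lambda(c)$ to a nonzero scalar multiple of the Kawazumi--Kuno element $L_\mathrm{Gol}(c)=\tfrac12 \zettaiti{(\log c)^2}$ (resp.\ $L_\mathcal{S}(c)$). The composition then coincides with $\zeta_\mathrm{Gol}$ (resp.\ $\zeta_\mathcal{S}$), whose injectivity is already known by \cite{KK} (resp.\ \cite{TsujiTorelli}); injectivity of $\zeta_\mathcal{A}$ follows.

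The main obstacle I anticipate is the explicit identification of the leading filtration term of $\Lambda(c)$: one needs to pin it down well enough both to see that it is homological in step (i) and to construct the comparison map in step (iii), i.e.\ to exhibit a specialization under which $\Lambda(c)$ reduces to $\tfrac12 \zettaiti{(\log c)^2}$. Once this identification is in place, the remainder of the argument is a formal consequence of Theorem \ref{thm_main_intro_1}, the commutativity of $\Lambda(c_1)$ and $\Lambda(c_2)$ for disjoint $c_1,c_2$, and the $\bch$-calculus on $F^3 \widehat{\tskein{\Sigma_{g,1}}}$.
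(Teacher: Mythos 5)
Your overall architecture (membership in $F^3$, well-definedness over a presentation by bounding-pair maps, injectivity by comparison with a known embedding) is sound in outline, and your route to injectivity --- pushing $\Lambda(c)$ down to the completed Goldman Lie algebra and quoting the injectivity of $\zeta_{\mathrm{Gol}}$ --- is a legitimate alternative to the paper, which instead builds an explicit inverse $\theta : I\tskein{\Sigma} \to \mathcal{I}(\Sigma)$ using the faithfulness of the mapping class group action on the completed skein modules $\widehat{\tskein{\Sigma,*_i,*_j}}$ (Proposition \ref{prop_MCG_tskein_faithful}). The comparison maps you would need ($\varrho$ sending $\Lambda(c)\mapsto \zettaiti{\frac12(\log c)^2}$, and $\psi_{\mathcal{AG}}$, $\psi_{\mathcal{AS}}$) are in fact constructed in the paper, so that part of your plan can be carried out.

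The genuine gap is in your step (ii). You claim that every relation among the generators maps to $0$ ``using the faithfulness of $\sigma$ on $F^3\widehat{\tskein{\Sigma_{g,1}}}$.'' But $\sigma$ is \emph{not} faithful there: every element of $h^2\Q[\rho][[h]]\cdot[\emptyset]$ lies in $F^3\widehat{\tskein{\Sigma}}$ and acts as zero on every $\widehat{\tskein{\Sigma,J^-,J^+}}$, since the empty link has no crossings with any diagram and $h$ is central. Hence from Theorem \ref{thm_main_Dehn} you only learn that the $\bch$-image of a relation lies in $\ker\sigma$, i.e.\ that it is a central scalar multiple of the empty link --- you cannot conclude that it vanishes. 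The paper closes exactly this gap with Lemma \ref{lemm_relation_key}: an element killed by $\sigma$ equals its image under an embedding factoring through a ball $I^3$, hence is such a scalar, and that scalar must then be shown to vanish separately for each relation --- e.g.\ for the lantern relation via $\Lambda(c)=0$ for contractible $c$ (Corollary \ref{cor_def_Lambda_trivial}), and for the crossed lantern and push-map relations via the vanishing of $e(x(\Lambda(c_i)-\Lambda(c'_i)))$. Relatedly, ``every relation holding among the bounding pair generators'' has to be made concrete: the paper works with Putman's infinite presentation of $\mathcal{I}(\Sigma_{g,1})$ (generators of type BP, separating, and commutator; relations (F.1)--(F.8), lantern, crossed lantern, Witt--Hall, commutator shuffle), each of which requires its own evaluation of the central scalar. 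Without these two ingredients your step (ii) does not establish well-definedness, and the rest of the argument cannot start.
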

If $\mathcal{M}'_g (n) =
\mathcal{M}^\mathcal{A}_g (n) \defeq \zeta_\mathcal{A}^{-1}
(F^{n+3} \widehat{\tskein{\Sigma_{g,1}}})$, this filtration satisfies the above condition.
For details, see Subsection \ref{subsection_some_filtrations}.

Another motivation to establish a HOMFLY-PT type skein algebra version
comes from theory of quantum invariants for links and integral homology
$3$-spheres.
A number-theoretical expansion of the quantum invariants 
defined via the quantum group of $sl_2$
into power series was given by Ohtsuki \cite{Ohtsuki1995}.
The power series is called the Ohtsuki series.
For any simple Lie algebra,
the power series invariant was given by Le \cite{Le2000}.
Since the HOMFLY-PT polynomial is a universal invariant of 
quantum invariants defined via the quantum group via $sl_n$ for links in $S^3$,
we expect that there exists a power series invariant
for integral homology $3$-spheres
which is a universal invariant of power series invariant
defined via the quantum group of $sl_n$.
Using the HOMLY-PT type skein algebra, we also
construct a power series invariant
$z_\mathcal{A} (M) \in \Q [\rho ] [[h]]$
for an integral homology $3$-spheres $M$.
\begin{thm}[Theorem \ref{thm_main}]
\label{thm_main_intro_3}
Fix a Heegaard splitting $H_g^+ \cup_\iota H_g^-$ of $S^3$
where $H_g^+$ and $H_g^-$ are handle bodies and
$\iota:\partial H_g^+ \to \partial H_g^-$ is a diffeomorphism.
Let $e$ be a standard embedding $\Sigma \times I \to S^3$,
which induces $e_*:\widehat{\tskein{\Sigma_{g,1}}} \to \Q[\rho][[h]]$.
The map $Z: \mathcal{I}(\Sigma_{g,1}) \to \Q[\rho][[h]]$
defined by 
\begin{equation*}
Z(\xi) \defeq \sum_{i=0}^\infty \frac{1}{(h)^i i!}e_*
((\zeta (\xi))^i)
\end{equation*}
induces an invariant for integral homology $3$-spheres
\begin{equation*}
z:\mathcal{H} (3) \to \Q[\rho][[h]], H_g^+ \cup \iota \circ \xi H_g^- \to Z(\xi).
\end{equation*}
Here we denote by $\mathcal{H} (3)$ the set of integral homology $3$-spheres.
\end{thm}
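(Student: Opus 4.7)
The plan is to mimic the construction from the Kauffman bracket setting of \cite{Tsujihom3}, substituting the HOMFLY-PT skein algebra and the Torelli embedding of Theorems \ref{thm_main_intro_1} and \ref{thm_main_intro_2} for the Kauffman analogues. Two items require verification: that $Z(\xi) \in \Q[\rho][[h]]$ is well-defined for every $\xi \in \mathcal{I}(\Sigma_{g,1})$, and that the resulting value descends to the set $\mathcal{H}(3)$ of integral homology $3$-spheres.

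For well-definedness I would first note that $\zeta_\mathcal{A}(\xi) \in F^3 \widehat{\tskein{\Sigma_{g,1}}}$ by Theorem \ref{thm_main_intro_2}, so $(\zeta(\xi))^i \in F^{3i}$. Next I would check that the standard embedding $e_*$ sends $F^n \widehat{\tskein{\Sigma_{g,1}}}$ into a sufficiently high power of $h$ in $\Q[\rho][[h]]$, essentially because the HOMFLY-PT skein relation trades each crossing change for a factor of $h$ and $F^\bullet$ is built from crossing changes. Then $\frac{1}{h^i i!} e_*((\zeta(\xi))^i) \in \Q[\rho][[h]]$ and the sum converges in the $h$-adic topology. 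Commutativity of $\widehat{\tskein{S^3}} \cong \Q[\rho][[h]]$ removes any ordering ambiguity in $(\zeta(\xi))^i$.

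For the descent to $\mathcal{H}(3)$, by a Reidemeister-Singer type statement for Heegaard splittings restricted to the Torelli group, two pairs $(g,\xi)$ and $(g',\xi')$ yield the same integral homology $3$-sphere iff they are related by (i) stabilization (attaching a trivial $1$-handle and extending $\xi$ by the identity) and (ii) replacing $\xi$ by $\phi^- \xi \phi^+$ for some $\phi^\pm \in \mathcal{I}(\Sigma_{g,1}) \cap \mathcal{H}_g^\pm$, where $\mathcal{H}_g^\pm$ is the handlebody subgroup of $\mathcal{M}(\Sigma_{g,1})$. Stabilization invariance should be essentially formal: $\zeta_\mathcal{A}$, the filtration, and the embedding $e$ can all be chosen compatibly with the inclusion $\widehat{\tskein{\Sigma_{g,1}}} \hookrightarrow \widehat{\tskein{\Sigma_{g+1,1}}}$, so every term of the series defining $Z(\xi)$ is literally preserved.

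The main obstacle is handlebody-group invariance. I would reformulate $Z(\xi) = e_*(\exp(\zeta_\mathcal{A}(\xi)/h))$ with the $h$-adic exponential, then use Theorem \ref{thm_main_intro_1} to interpret the exponential as the action of $\xi$ on the skein module, so that $\xi \mapsto \phi^- \xi \phi^+$ can be analyzed through the Baker-Campbell-Hausdorff series applied to the triple $\zeta_\mathcal{A}(\phi^-)$, $\zeta_\mathcal{A}(\xi)$, $\zeta_\mathcal{A}(\phi^+)$. Invariance then reduces to: (a) identifying a two-sided ideal $\mathfrak{a}^\pm \subset \widehat{\tskein{\Sigma_{g,1}}}$ generated by skein classes of framed oriented curves bounding disks in $H_g^\pm$, and showing $e_*(\mathfrak{a}^\pm) = 0$ by capping the curves off inside the respective handlebody of $S^3$; and (b) showing that $\zeta_\mathcal{A}(\phi^- \xi \phi^+) - \zeta_\mathcal{A}(\xi)$, after BCH exponentiation, lies in $\mathfrak{a}^- + \mathfrak{a}^+$ modulo terms already killed by $e_*$. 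Step (b) is the most delicate point: the oriented HOMFLY-PT structure and the extra parameter $\rho$ force more bookkeeping than in the Kauffman bracket case, and vanishing on disk-bounding curves must be checked at every order of the filtration.
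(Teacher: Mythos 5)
The well-definedness half of your argument matches the paper's: $\zeta(\xi)\in F^3\widehat{\tskein{\Sigma_{g,1}}}$ gives $(\zeta(\xi))^i\in F^{3i}$, and the paper's Proposition \ref{prop_filt_disk}, which computes $F^n\tskein{D}=h^{\gauss{(n+1)/2}}\Q[\rho][[h]][\emptyset]$, shows that $e_*$ gains enough powers of $h$ to absorb the $h^{-i}$ and make the series converge. Your reformulation $Z(\xi)=e_*(\exp(\zeta(\xi)/h))$ and your plan to kill the handlebody contributions via ideals annihilated by $e_*$ are likewise the paper's Propositions \ref{prop_bch_Z} and \ref{prop_ideal}. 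One imprecision: the kernels of $\iota^{\pm}$ are only one-sided ideals (a right ideal for $\iota^+$, a left ideal for $\iota^-$), not two-sided as you assert; the argument works because $\eta^+$ is multiplied on one fixed side of $\xi$ and $\eta^-$ on the other.

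The genuine gap is your description of the equivalence relation on $\mathcal{I}(\Sigma_{g,1})$. Two Torelli elements give the same integral homology $3$-sphere iff $\xi_1=\eta^-\xi_2\eta^+$ with $\eta^{\pm}\in\mathcal{M}(H_{g,1}^{\pm})$ \emph{arbitrary} handlebody mapping classes, constrained only by the requirement that the product be Torelli again; the individual $\eta^{\pm}$ need not act trivially on homology. Restricting to $\phi^{\pm}\in\mathcal{I}(\Sigma_{g,1})\cap\mathcal{M}(H_{g,1}^{\pm})$, as you do, misses for instance conjugation of $\xi$ by elements of $\mathcal{M}(H_{g,1}^+)\cap\mathcal{M}(H_{g,1}^-)$ acting nontrivially on $H_1$. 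The paper resolves this with Lemma \ref{lemm_torelli_check} (imported from \cite{Tsujihom3}), which generates the full relation by conjugation by the specific non-Torelli elements $h_i$ and $s_{ij}$ together with one-sided multiplication by Torelli BP-twists supported in the handlebodies; invariance under $h_i$ is then proved by an isotopy of embeddings in $S^3$, and invariance under $s_{ij}$ by the vanishing $\iota^{\epsilon}(\Lambda(c_{i,j})-\Lambda(c_{a,i})-\Lambda(c_{b,j}))=0$ of Lemma \ref{lemm_proof_key}. Your proposal contains no mechanism for these extra moves, so as written it does not establish that $Z$ descends to $\mathcal{H}(3)$; moreover your step (b), which you flag but do not carry out, is precisely the content of Lemma \ref{lemm_proof_key} and cannot be omitted.
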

We hope that $z_\mathcal{A}$ is a universal invariant
of power series invariant 
for integral homology $3$-spheres
defined via the quantum group of $sl_n$.

\tableofcontents

\section*{Acknowledgements}

The author is very grateful to his adviser, Nariya Kawazumi, for helpful discussion
and encouragement.
He thanks to Kazuo Habiro, Yusuke Kuno, Gw\'{e}na\"{e}l Massuyeau,
Jun Murakami, Tomotada Ohtsuki and Masatoshi Sato for helpful comments.
This work was supported by JSPS KAKENHI Grant Number 15J05288 
and the Leading Graduate Course for Frontiers of Mathematical Sciences and Phsyics.

\section{Definition of oriented framed tangles in $\Sigma \times I$}

In this section, let $\Sigma $ be a compact connected oriented surface.

We define $\tilde{\mathcal{E}} (\Sigma)$ to be the set consisting
of all the embedding map $E =E_{S^1} \sqcup E_{I}:
(\coprod_\mathrm{finite} S^1 ) \times I \sqcup  (\coprod_\mathrm{finite} I) \times I
 \to  \Sigma \times I$
satisfying the following.

\begin{enumerate}
\item The embedding satisfies
$
E_I ( (\coprod_\mathrm{finite} \shuugou{0,1}) \times I)
\subset p_1 \circ E_I ( (\coprod_\mathrm{finite} \shuugou{0,1}) \times 
\shuugou{0}) \times I \subset \partial \Sigma \times I
$
where $p_1 :\Sigma \times I \to \Sigma$ is the first projection.
\item The embedding $E_{I|( \coprod_\mathrm{finite} \shuugou{0,1})
\times I} :( \coprod_\mathrm{finite} \shuugou{0,1})
\times I \to p_1 \circ E_I ( (\coprod_\mathrm{finite} \shuugou{0,1}) \times 
\shuugou{0}) \times I$ preserves the orientation.
\item The map $p_1 \circ E_{I|( \coprod_\mathrm{finite} \shuugou{0,1})
\times \shuugou{0}} $ is injective.
\end{enumerate}

Two elements $E_0$ and $E_1$ of $\tilde{\mathcal{E}} (\Sigma)$
are isotopic if there exists a continuous map
$H : ((\coprod_\mathrm{finite} S^1 ) \times I \sqcup  (\coprod_\mathrm{finite} I) \times I)
 \times I \to \Sigma \times I $ satisfying the following.
\begin{enumerate}
\item For any $t \in I$, $H( \cdot , t) =H_{t ,S^1} \sqcup H_{t, I} \in 
\mathcal{E} (\Sigma)$.
\item For any $t \in I$, $p_1 \circ H_{0, I} (( \coprod_\mathrm{finite} \shuugou{0,1}
\times \shuugou{0}) = p_1 \circ H_{t ,I} (( \coprod_\mathrm{finite} \shuugou{0,1})
\times \shuugou{0})$.
\item We have $H(\cdot , 0)=E_0$ and $H (\cdot ,1) =E_1$.
\end{enumerate}
We call an isotopic class of $\tilde{\mathcal{E}} (\Sigma)$
a framed oriented tangle in $\Sigma \times I$.
Let $T$ be a framed oriented tangle in $\Sigma \times I$
represented by $E = E_{S^1 } \sqcup E_I$.
We call the set $E_I ((\coprod_\mathrm{finite} \shuugou{0}) 
\times \shuugou{0})$ and
$E_I ((\coprod_\mathrm{finite} \shuugou{1})
\times \shuugou{0})$ the start point set of $T$
and the end point set of $T$, respectively.
Let $J^-$ and $J^+$ be disjoint subsets of $\partial \Sigma$
with $\sharp (J^-) = \sharp (J^+)$.
We denote by $\mathcal{E} (\Sigma ,J^-,J^+)$
the subset of $\tilde{\mathcal{E}}$
consisting of all elements 
 representing oriented framed tangles
whose start point sets and end point sets are
$J^-$ and $J^+$, respectively.
We denote by $\mathcal{T}(\Sigma, J^-,J^+)$ 
the set of 
isotopic
classes of elements of $\mathcal{E} (\Sigma, J^-,J^+)$,
in other words,
the set of framed oriented tangles with start point set 
and end point set $J^-$ and $J^+$, respectively.
We denote by $\kukakko{\cdot}$ the quotient map $\mathcal{E} (\Sigma, J^-,J^+) \to 
\mathcal{T}(\Sigma,J^-,J^+)$.
If $J^- =J^+ =\emptyset$, we simply denote $\mathcal{T}(\Sigma,J^-,J^+)$ and 
$\mathcal{E}(\Sigma, J^-,J^+)$ by 
$\mathcal{T}(\Sigma)$ and $\mathcal{E}(\Sigma)$.

The definition of `tangles' is similar to the definition of `link'
of marked surfaces in \cite{Mu2012}.
But a tangle in this definition has one arc on each point of $J^- \cap J^+$.

\begin{df}

Let $J^+$ and $J^-$ be  disjoint finite subsets of $\partial \Sigma $
with $\sharp (J^+) =\sharp (J^-)$.
An element $E$ of $\mathcal{E}(\Sigma, J^-,J^+)$ is
generic if $E :(\coprod_\mathrm{finite} S^1 ) \times I \sqcup  
(\coprod_\mathrm{finite} I) \times I \to \Sigma \times I$
 satisfies the following.
 
 \begin{enumerate}
 \item For $x \in \coprod_\mathrm{finite} S^1$
or $x \in \coprod_\mathrm{finite} I$,
 $I \to I, t \mapsto p_2 \circ E(x,t)$ is an orientation preserving
 embedding map where $p_2:\Sigma \times I \to I$ is the second
projection.
 \item $(\coprod_\mathrm{finite} S^1)
 \sqcup (\coprod_\mathrm{finite} I) \to \Sigma, x \mapsto p_1 \circ
E (x,0)$ is an immersion such that the intersections of the image consist of 
transverse double points
where $p_1: \Sigma \times I \to \Sigma$ is the first projection.
\end{enumerate}

 \end{df}

It is convenient to present tangles in $\Sigma \times I$
by tangle diagrams on $\Sigma$ in the same fashion
in which links in $\R^3$ may be presented by planar link diagrams.

\begin{df}
Let $J^+$ and $J^-$ be  disjoint finite subsets of $\partial \Sigma $
with $\sharp (J^+) =\sharp (J^-)$,
$T$ an element of $\mathcal{T}(\Sigma,J^-,J^+)$ and 
$E$ an element of $\mathcal{E}(\Sigma, J^-,J^+)$
which is generic and representing $T$.
The tangle diagram of $T$ is $p_1 \circ  E ((\coprod_\mathrm{finite} S^1 ) \times I
 \sqcup  
(\coprod_\mathrm{finite} I) \times I \to \Sigma \times I)$
together with orientation and height-information, i.e., the choice of the upper branch
of the curve at each crossing.
The chosen branch is called an over-crossing; the other branch is 
called an under crossing.
We denote by $T(d)$ the element of 
$\mathcal{T}(\Sigma,J^-,J^+)$ presented by a tangle diagram $d$

\end{df}

\begin{prop}[see, for example, \cite{BuZi85}]

Let $J^+$ and $J^-$ be  disjoint finite subsets of $\partial \Sigma $
with $\sharp (J^+) =\sharp (J^-)$.
Then, $T(d)$ equals 
$T(d')$ if and only if $d$ can be transformed into $d'$ by a sequence of isotopies of 
$\Sigma$ and the Reidemeister moves RI, RII, RIII shown in Figure \ref{fig:RI},
\ref{fig:RIIi}, \ref{fig:RIIii} and \ref{fig:RIII}.
We remark that the Reidemeister move I (RI) implies that
there are two ways to describe a positive (right handed)
twist as Figure \ref{fig:RI}.

\end{prop}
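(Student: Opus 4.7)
The plan is to follow the classical Reidemeister proof, adapted from $\R^3$ to $\Sigma\times I$ with prescribed boundary behaviour on $J^-\sqcup J^+$. The easy direction is a direct check: each Reidemeister move I, II, III shown in the figures is realised by a local ambient isotopy inside a small cylindrical neighbourhood $D^2\times I\subset\Sigma\times I$ disjoint from $\partial\Sigma\times I$, and the two versions of RI in Figure \ref{fig:RI} are realised by the obvious ``rolling'' isotopy that converts one blackboard-framed positive twist into the other while preserving the framing. Together with a diffeomorphism of $\Sigma$ relative to $\partial\Sigma$ this shows $T(d)=T(d')$ whenever $d$ and $d'$ are related by a sequence of the listed moves.

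For the non-trivial direction, I would start with a smooth isotopy $H_t$ between two generic representatives $E_0,E_1\in\mathcal{E}(\Sigma,J^-,J^+)$ of the same tangle, normalised so that the boundary points on $J^\pm\times I$ are fixed (which is possible by property (2) in the definition of isotopy combined with an isotopy of $\partial\Sigma\times I$ extending to a collar). Next I would put the family of projected diagrams $d_t=p_1\circ H_t$ together with their height data into general position with respect to the stratification of immersions from arcs and circles into $\Sigma$ with transverse double points, using the standard transversality / Cerf-theoretic argument. The result is that the one-parameter family $d_t$ is generic for all $t$ except at finitely many parameters where exactly one of the following codimension-one degenerations occurs: a tangential double point (giving RII), a triple point (giving RIII), a cusp of the projection combined with a framing change (giving RI), or a crossing passing through the height coordinate of another crossing (requiring no move, as it does not change the diagram up to isotopy). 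Because each of these events is local in $\Sigma$, the analysis reduces verbatim to the planar case in Burde--Zieschang \cite{BuZi85}.

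The main subtlety, and I expect the real work, is the framing. Writhe of the diagram is not invariant under planar RI, so in the framed setting one is forced to replace RI by the modified move pictured in Figure \ref{fig:RI}, where a positive twist is presented in two equivalent ways corresponding to the two local orientations of the cusp. One has to verify that when a cusp event occurs in the generic one-parameter family, the change in framing is compensated exactly by one of these two models, and that a single cusp cannot be avoided by perturbing the isotopy (otherwise the framing would be a separate invariant). The remaining boundary issue -- that the arcs of the tangle meet $\partial\Sigma$ in the prescribed points $J^\pm$ with the prescribed orientation -- is handled by noting that near $\partial\Sigma$ the projection is already generic and fixed, so all degenerations occur in the interior and the classical local analysis applies without modification.
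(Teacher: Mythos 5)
The paper gives no proof of this proposition at all --- it is stated with only a citation to Burde--Zieschang --- and your sketch is precisely the classical general-position argument that the citation points to, including the correct treatment of the framed version of RI and of the boundary points in $J^\pm$. Your proposal is correct in outline and consistent with the paper's (implicit) approach.
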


\begin{figure}
\begin{picture}(300,80)
\put(0,0){\includegraphics[width=60pt]{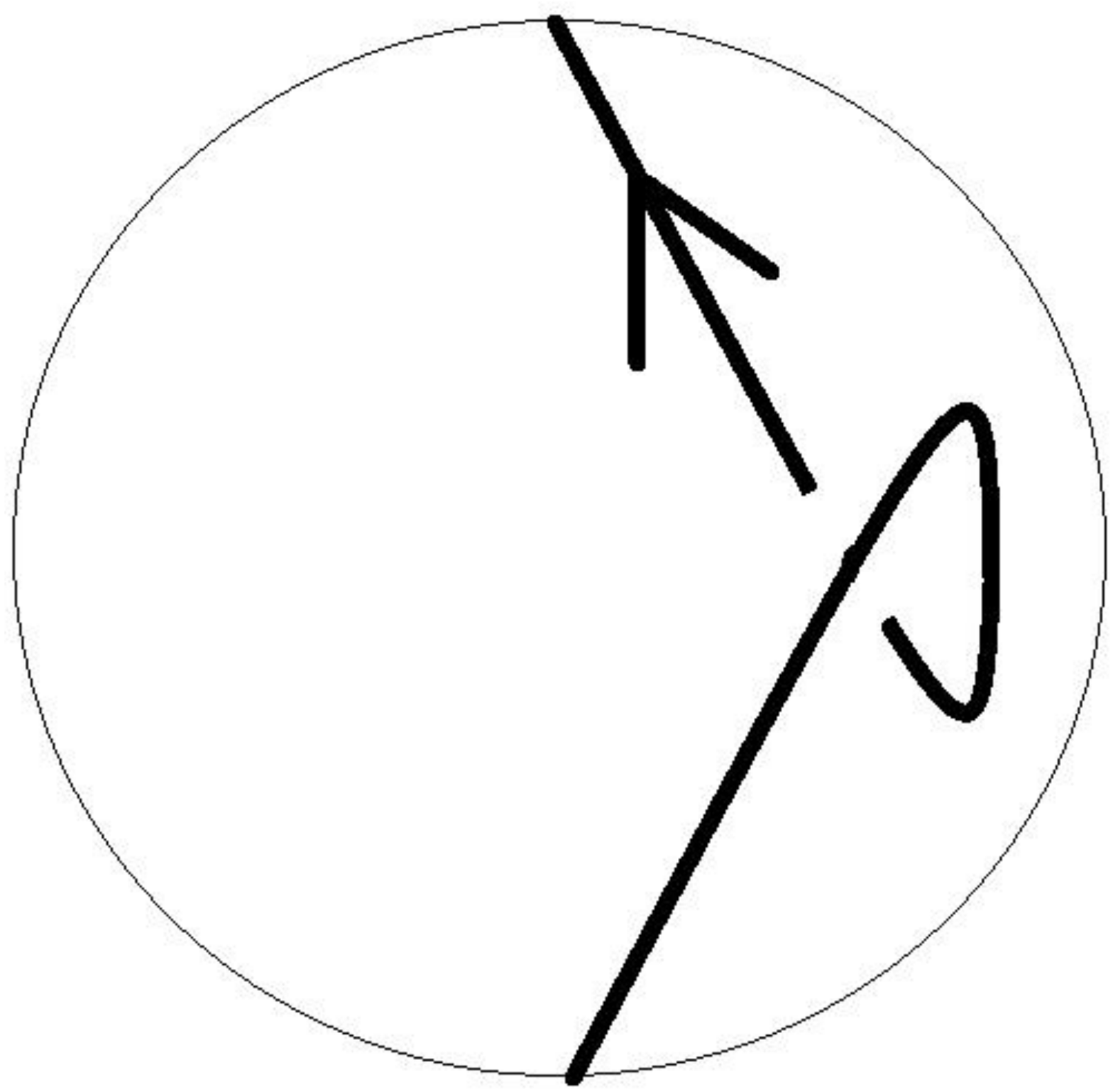}}
\put(70,0){\includegraphics[width=60pt]{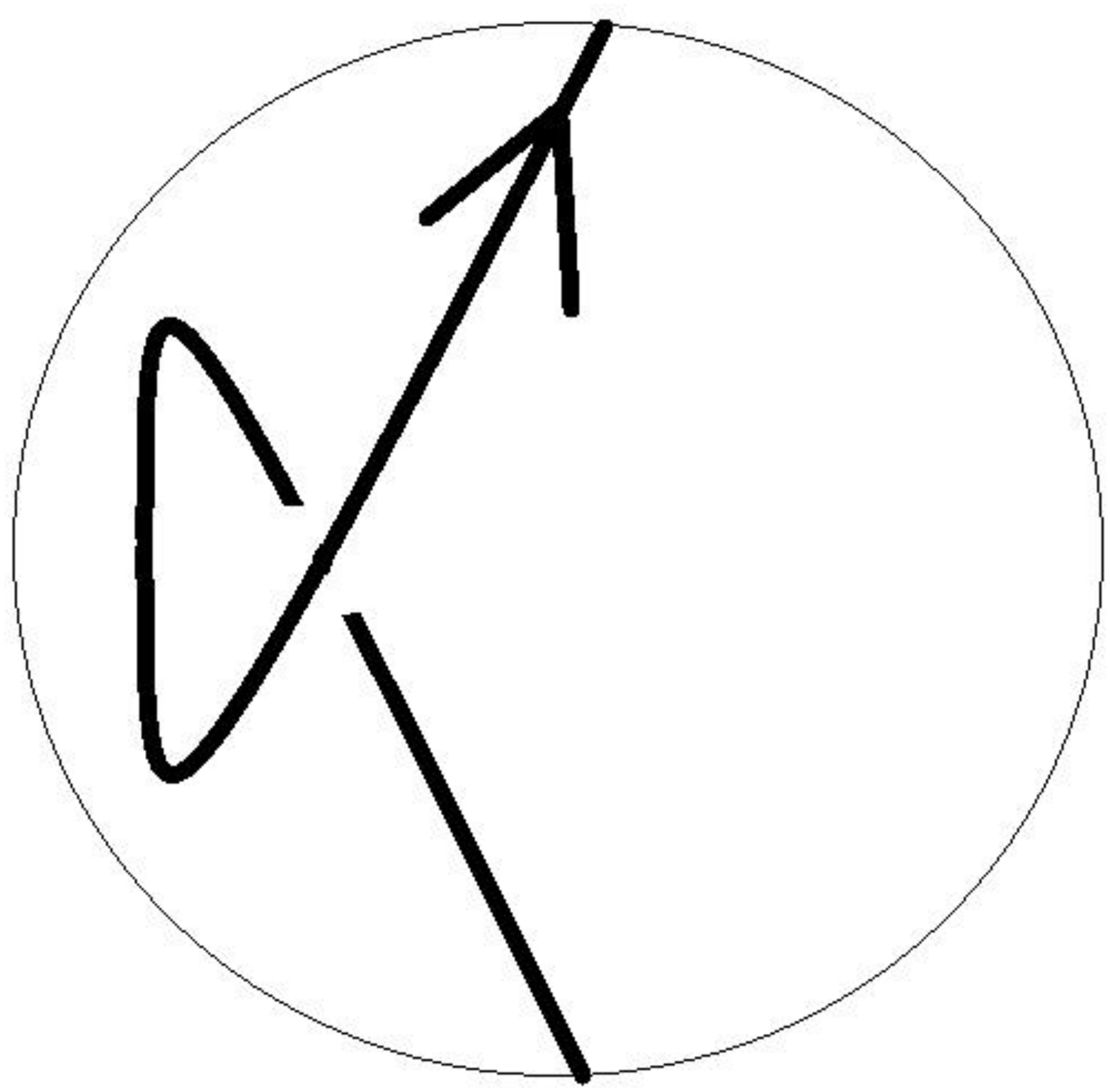}}
\put(60,30){\LARGE{$\leftrightarrow$}}
\end{picture}
\caption{RI: Reidemester move I}
\label{fig:RI}
\end{figure}

\begin{figure}
\begin{picture}(300,80)
\put(0,0){\includegraphics[width=60pt]{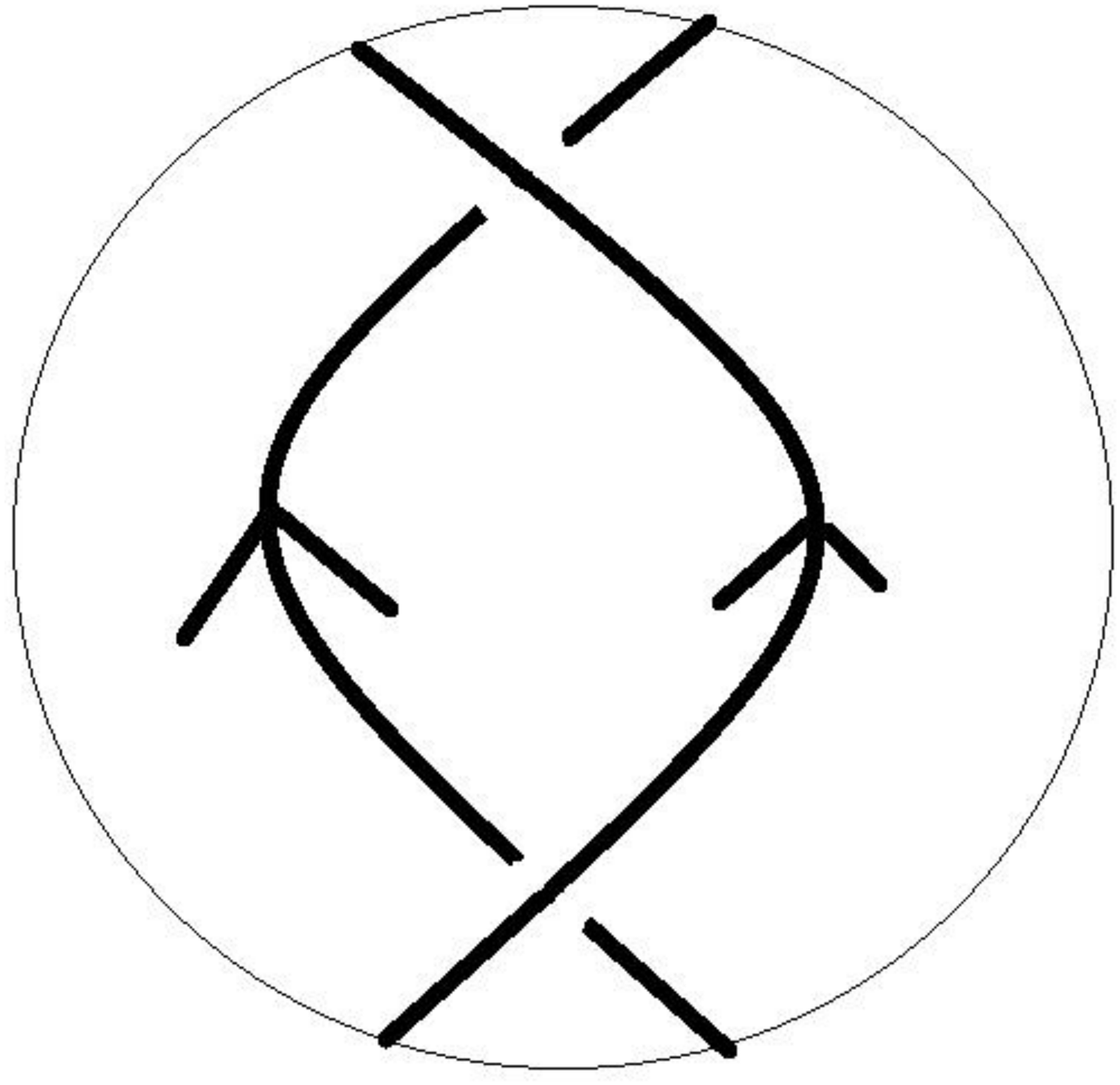}}
\put(70,0){\includegraphics[width=60pt]{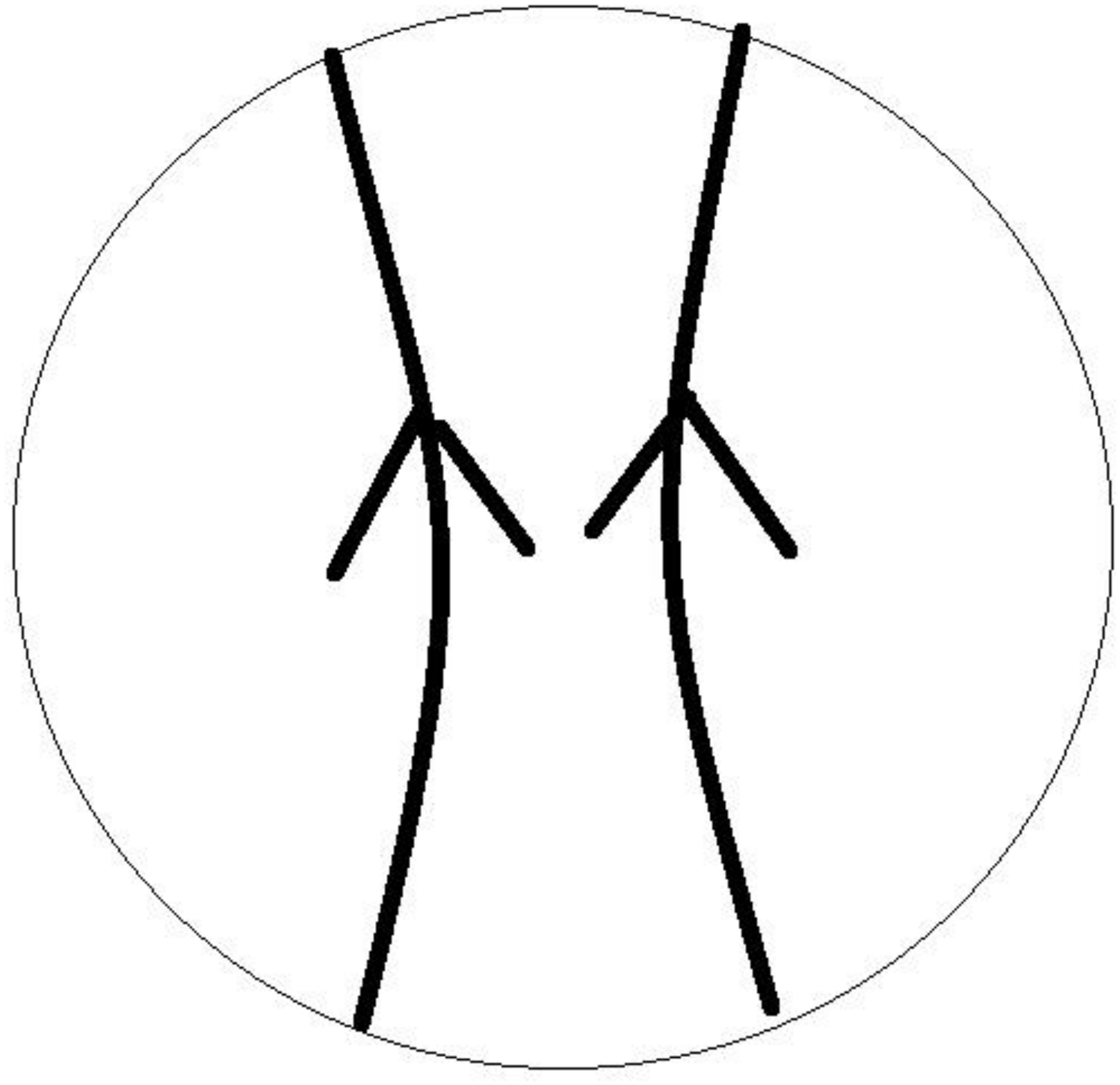}}
\put(140,0){\includegraphics[width=60pt]{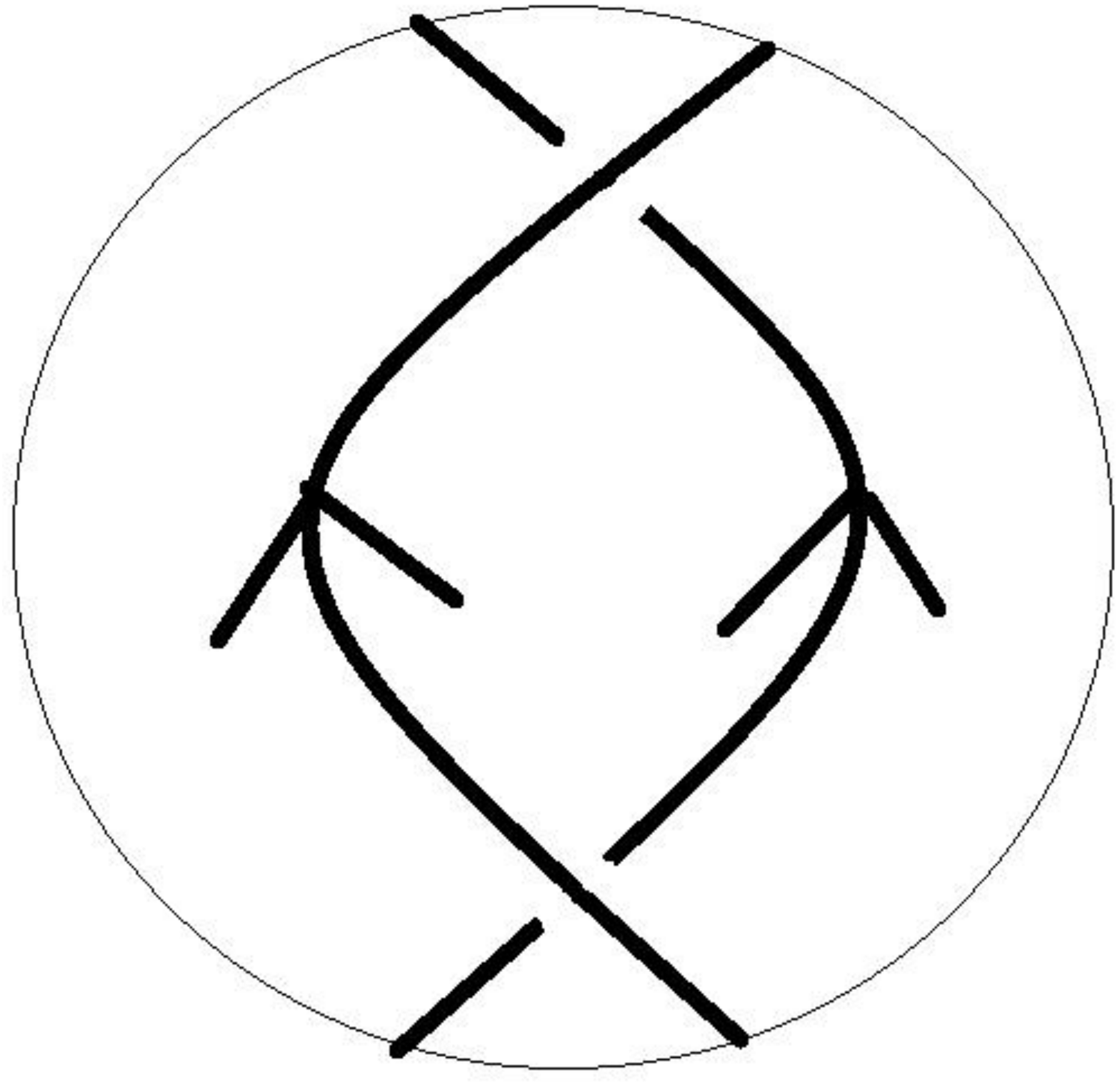}}
\put(60,30){\LARGE{$\leftrightarrow$}}
\put(130,30){\LARGE{$\leftrightarrow$}}
\end{picture}
\caption{RII: Reidemeister move II-i}
\label{fig:RIIi}
\end{figure}

\begin{figure}
\begin{picture}(300,80)
\put(0,0){\includegraphics[width=60pt]{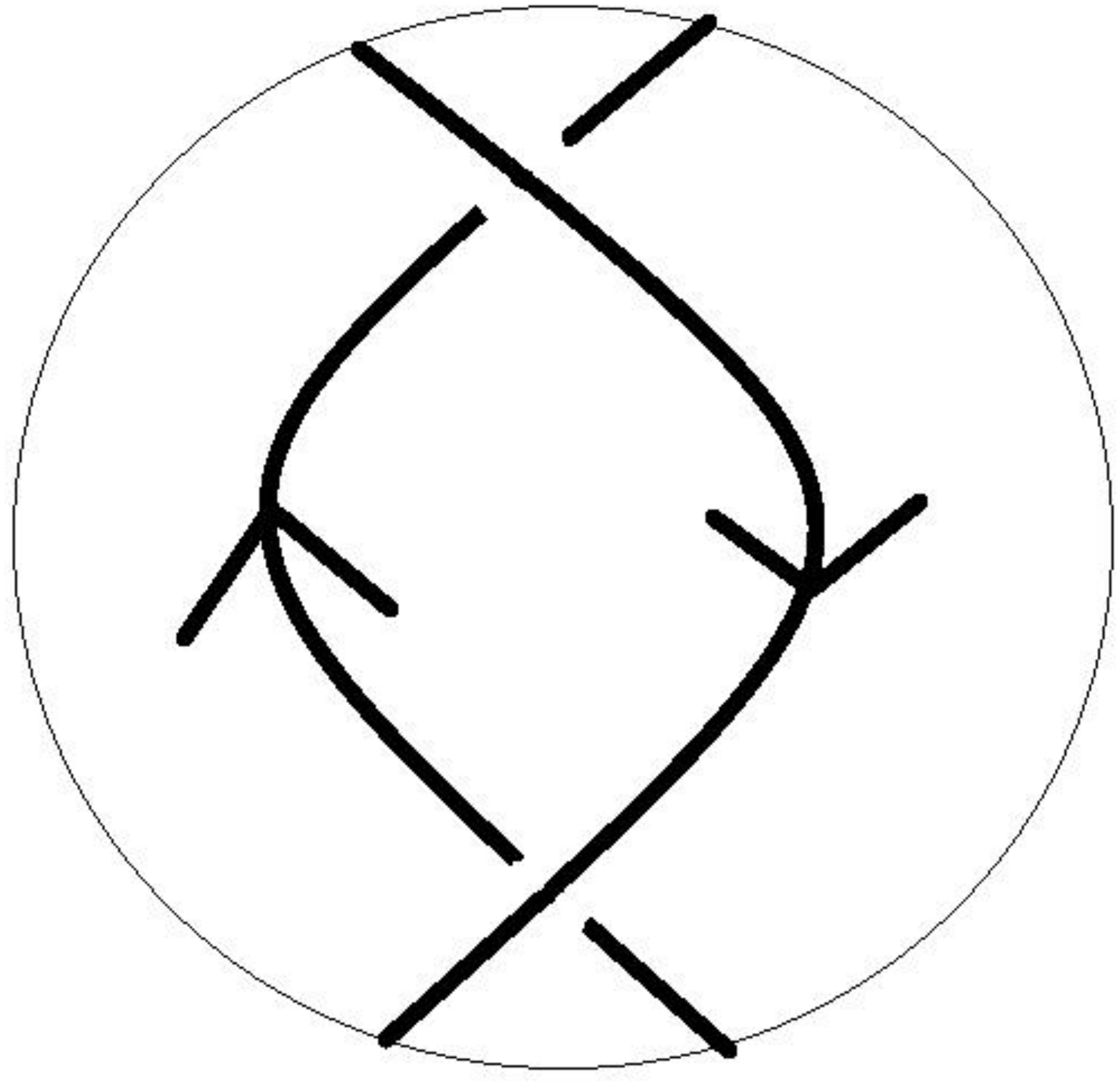}}
\put(70,0){\includegraphics[width=60pt]{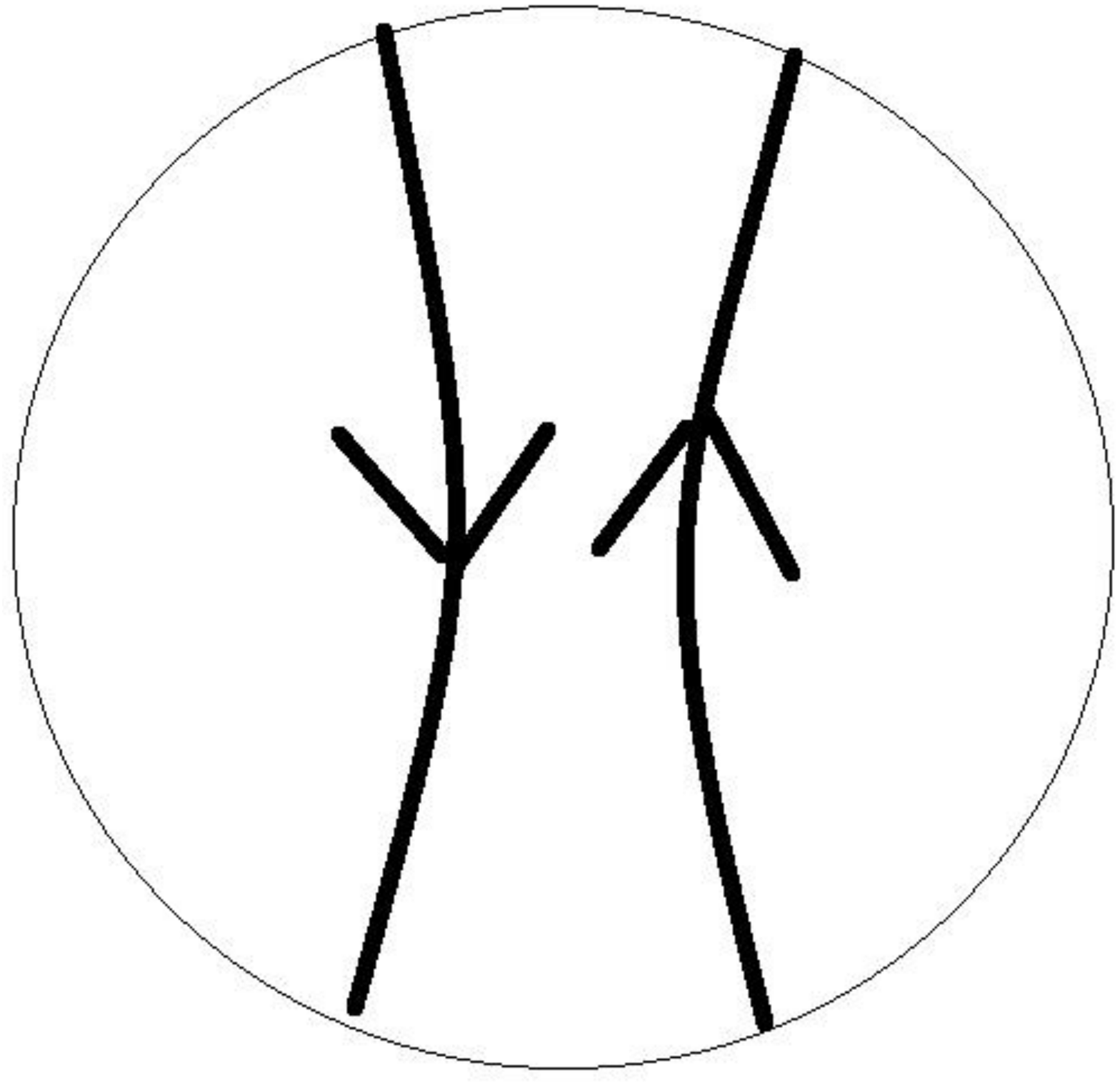}}
\put(140,0){\includegraphics[width=60pt]{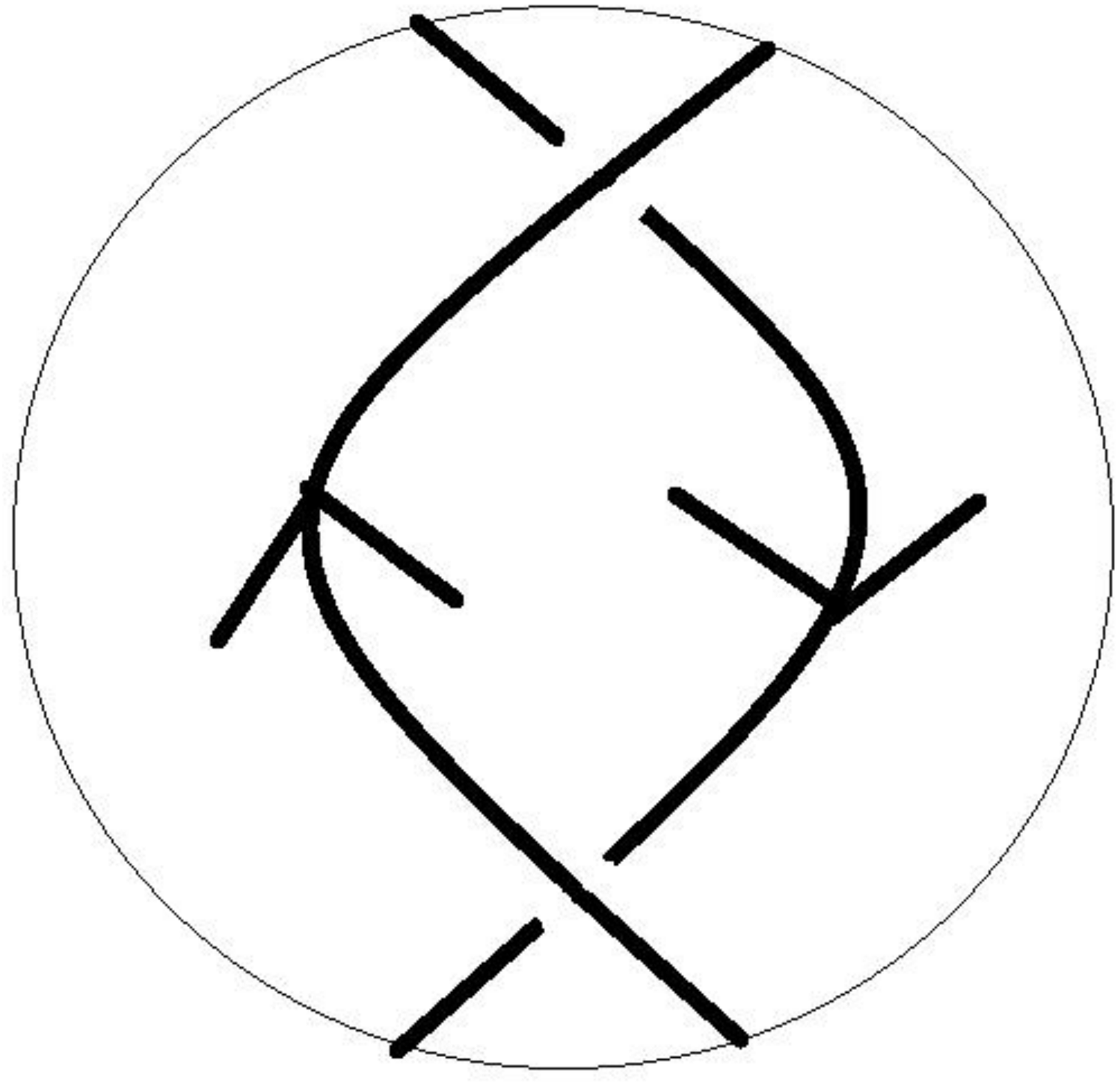}}
\put(60,30){\LARGE{$\leftrightarrow$}}
\put(130,30){\LARGE{$\leftrightarrow$}}
\end{picture}
\caption{RII: Reidemeister move II-ii}
\label{fig:RIIii}
\end{figure}

\begin{figure}
\begin{picture}(300,80)
\put(0,0){\includegraphics[width=60pt]{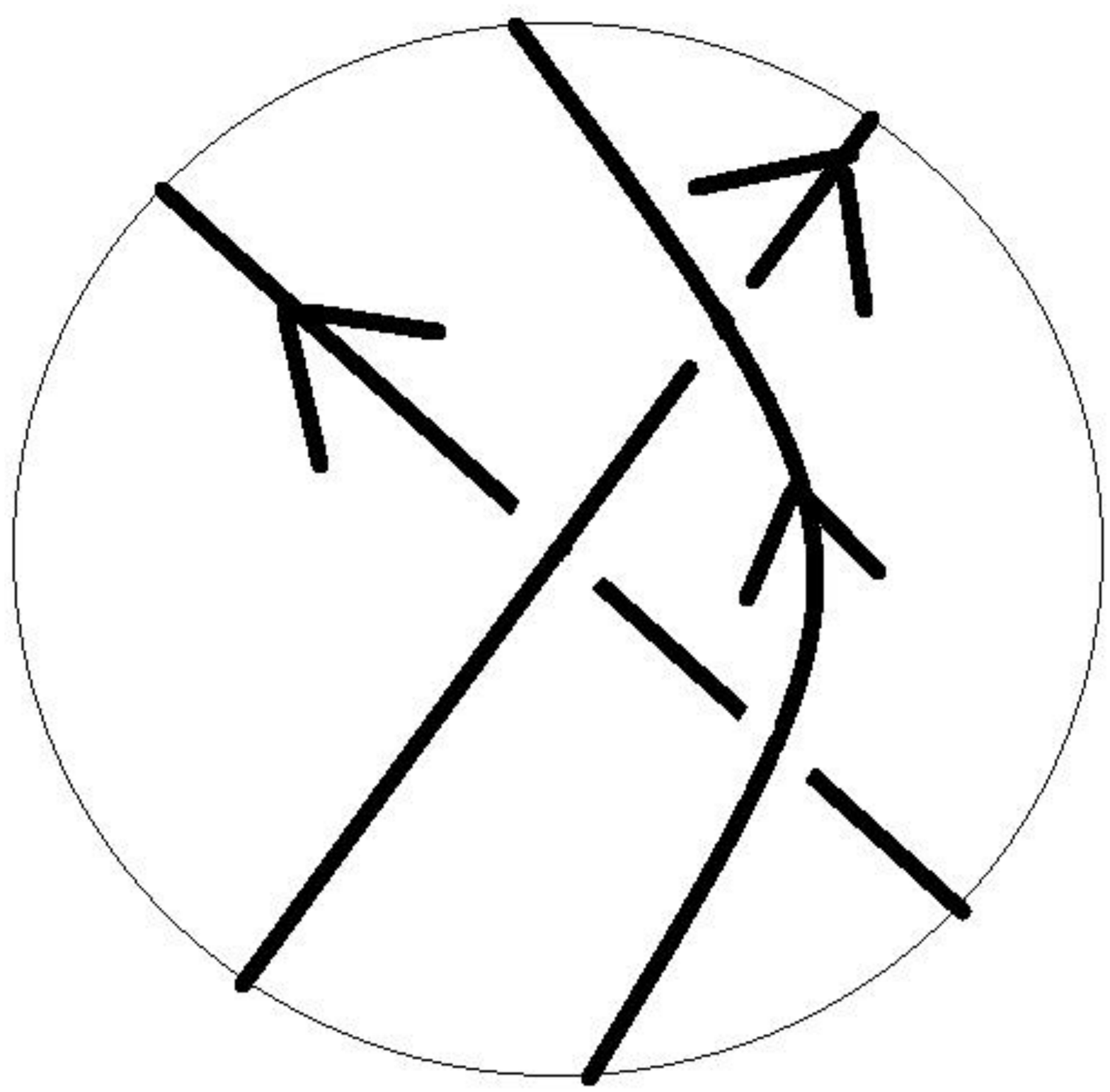}}
\put(70,0){\includegraphics[width=60pt]{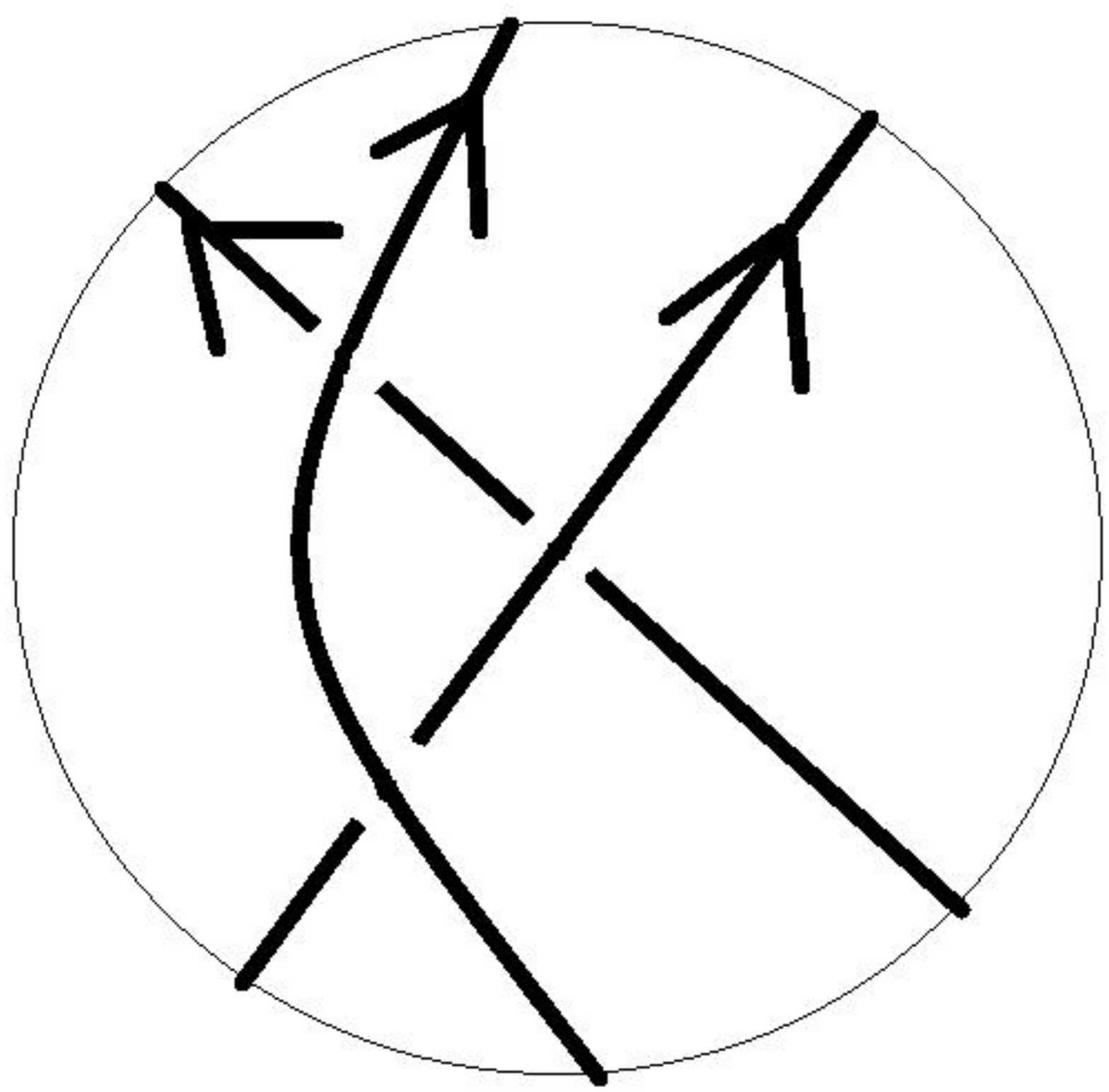}}
\put(60,30){\LARGE{$\leftrightarrow$}}
\end{picture}
\caption{RIII: Reidemeister move III}
\label{fig:RIII}
\end{figure}

We remark that these moves induce the  other Reidemeister moves
such as Figure \ref{fig:RIIIs}.

\begin{figure}
\begin{picture}(400,350)
\put(-70,0){\includegraphics[width=500pt]{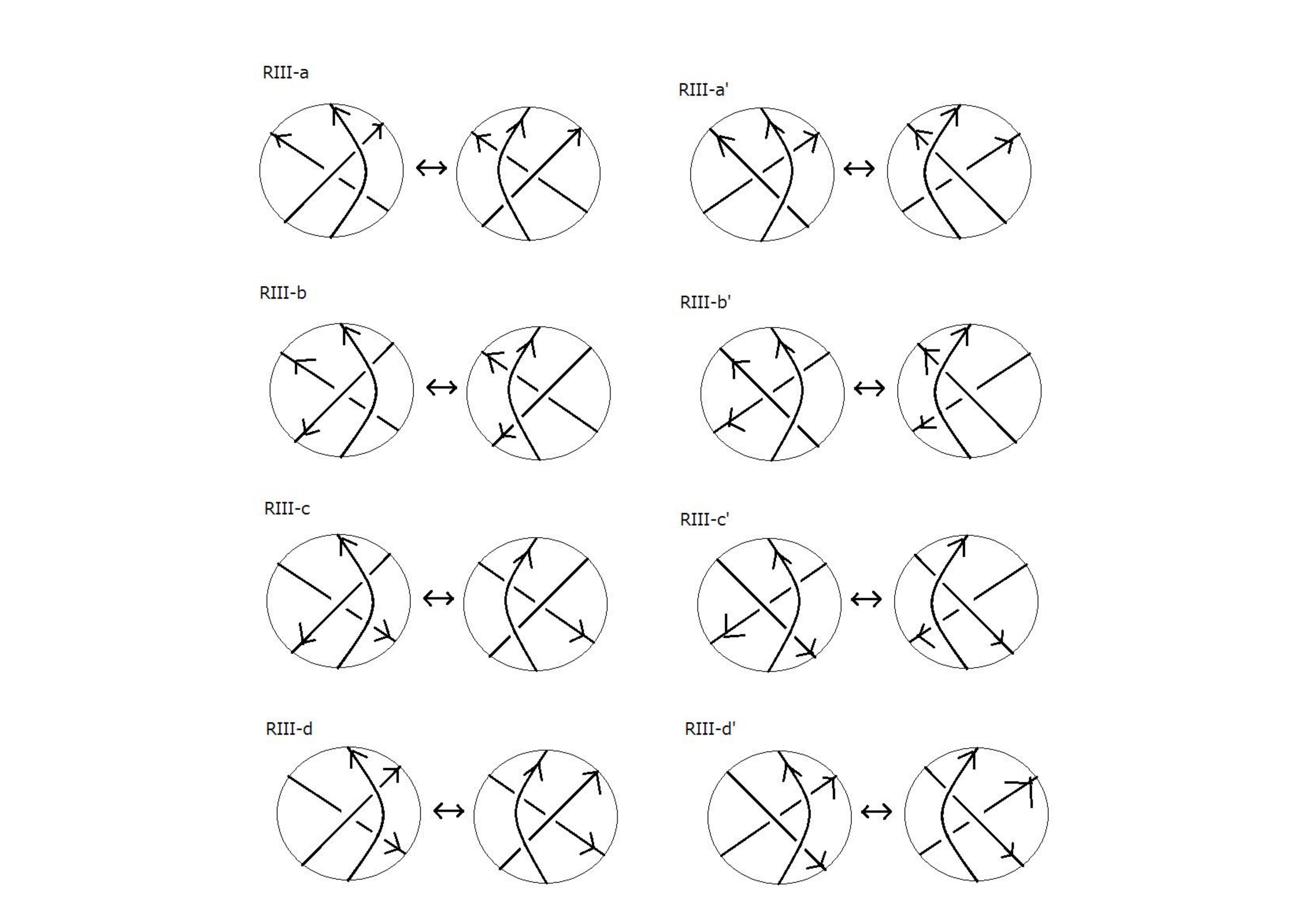}}
\end{picture}
\caption{RIII: Reidemeister moves}
\label{fig:RIIIs}
\end{figure}

Let $J^+,J^-,J'^+$ and $J'^-$ be mutually disjoint finite subsets of $\partial \Sigma$
with $\sharp (J^+) =\sharp (J^-)$ and 
$\sharp(J'^+) =\sharp(J'^-)$.
Here $e_1$ and $e_2$ denote the embedding maps from $\Sigma \times I$ to
$\Sigma \times I$ defined by $e_1(x,t) =(x,\frac{t+1}{2})$ and
$e_2(x,t)=(x,\frac{t}{2})$, respectively.
 We define $\boxtimes : \mathcal{T}(\Sigma,J^-,J^+)
\times \mathcal{T}(\Sigma, J'^-,J'^+) \to \mathcal{T}(\Sigma, J^- \cup J'^-,
J^+ \cup J'^+)$ by
\begin{equation*}
\kukakko{E} \boxtimes \kukakko{E'} \defeq \kukakko{e_1 \circ E \sqcup e_2 \circ E'}
\end{equation*}
for $E \in \mathcal{E}(\Sigma,J^-,J^+)$ and $E' \in \mathcal{E}(\Sigma, J'^-,J'^+)$.

Let $J_1,J_2$ and $J_3$ be mutually disjoint finite subsets of $\partial \Sigma$
with $\sharp (J_1) =\sharp (J_2) = \sharp (J_3)$.
Let $d_1$ and $d_2$ be  tangle diagrams presenting of 
elements of $\mathcal{T}(\Sigma,J_2,J_1)$ and $\mathcal{T}(\Sigma,J_3,J_2)$,
respectively, such that the intersections of $d_1$ and $d_2$ consist of
transverse double points except for $J_2$.
We denote by $d_1 \circ d_2$ the tangle diagram
satisfying the following.
\begin{itemize}
\item The tangle diagram $d_1 \circ d_2$ equals
$d_1 \cup d_2$ with the same height-information as $d_1$ and $d_2$
except for a neighborhoods of any intersection of $d_1$ and $d_2$.
\item 
In a neighborhood of each intesection
of $(d_1\cap d_2)\backslash J_2$,
the branches of $d_1 \circ d_2$ 
belonging to $d_1$ are over-crossings.
\item In a neighborhood of each point 
of $J_2$, the tangle diagram $d_1 \circ d_2$ is as shown in 
Figure \ref{fig_boundary_gousei}.
\end{itemize}
We define $\circ :\mathcal{T}(\Sigma,J_2,J_1) \times
\mathcal{T}(\Sigma,J_3,J_2) \to \mathcal{T}(\Sigma,J_3,J_1)$
by $T(d_1) \circ T(d_2) \defeq T(d_1 \circ d_2)$.

\begin{figure}
\begin{picture}(160,80)
\put(0,-40){\includegraphics[width=160pt]{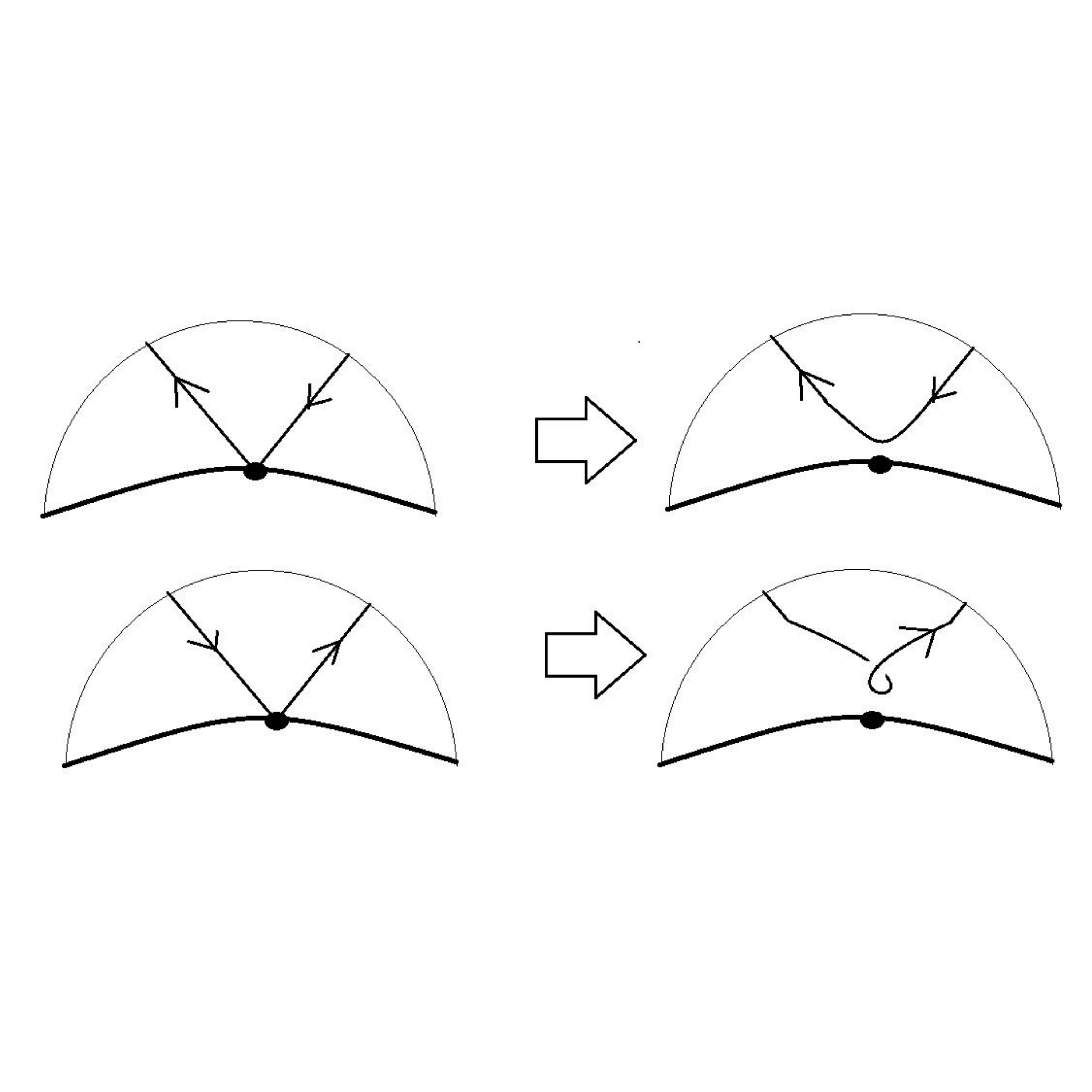}}
\put(10,49){\includegraphics[width=10pt]{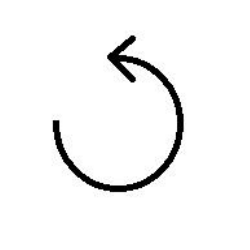}}
\end{picture}
\caption{}
\label{fig_boundary_gousei}
\end{figure}

Let $J^+$ and $J^-$ be  disjoint finite subsets of $\partial \Sigma $
with $\sharp (J^+) =\sharp (J^-)$,
$T$ an element of $\mathcal{T}(\Sigma,J^-,J^+)$ 
represented by $E \in \mathcal{E}(\Sigma,J^-,J^+)$ and 
$\xi$ an element of $\mathcal{M}(\Sigma)$ represented by 
a diffeomorphism $\mathcal{X}_\xi$.
We denote by $\xi T$ an element of 
$\mathcal{T}(\Sigma,J^-.J^+)$ represented by
$(\mathcal{X}_\xi \times \id_I)\circ E \in \mathcal{E}(\Sigma,J^-,J^+)$.
This defines  a left action of the mapping class group
$\mathcal{M} (\Sigma)$ on the set 
$\mathcal{T} (\Sigma, J^-, J^+)$.

\section{HOMFLY-PT type skein modules}

Throughout this section, 
let $\Sigma$ be a compact connected oriented surface.

\subsection{Definition}
First of all, we define a Conway triple.

\begin{df}
Let $J^+$ and $J^-$ be  finite disjoint subsets of $\partial \Sigma $
with $\sharp (J^+) =\sharp (J^-)$.
A triple of three tangles $T_+$, $T_-$ and $T_0$ is a Conway triple
if there exist tangle diagrams $d_+$, $d_-$ and $d_0$
presenting $T_+$, $T_-$ and $T_0$, respectively,
such that $d_+$, $d_-$ and $d_0$ are identical
except for a closed disk, where they differ as shown in 
Type C(+), Type C(-) and Type C(0)
in Figure \ref{fig_Conway_triples}, respectively.
\end{df}

\begin{figure}
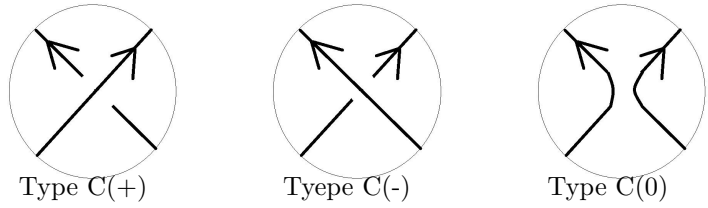

\begin{picture}(300,83)
\put(0,-20){\includegraphics[width=80pt]{conway_plus_PNG.pdf}}
\put(100,-20){\includegraphics[width=80pt]{conway_minus_PNG.pdf}}
\put(200,-20){\includegraphics[width=80pt]{conway_zero_PNG.pdf}}
\put(10,-10){Type C(+)}

\put(110,-10){Tyepe C(-)}

\put(210,-10){Type C(0)}

\end{picture}
\caption{Conway triples}
\label{fig_Conway_triples}
\end{figure}

\begin{df}
\label{df_skein_algebra}
Let $J^+$ and $J^-$ be  finite disjoint subsets of $\partial \Sigma $
with $\sharp (J^+) =\sharp (J^-)$.
We define $\tskein{\Sigma,J^+,J^-}$ to be
the quotient of $\Q [\rho][[h]]\mathcal{T}(\Sigma,J^+,J^-)$ modulo the skein relation,
 the framing relation and the trivial knot relation.
Here the skein relation is given by
\begin{equation*}
T_+-T_- =hT_0
\end{equation*}
for $(T_+,T_-,T_0)$ a Conway triple of $\mathcal{T}(\Sigma,J^+,J^-)$.
The framing relation is given by
\begin{equation*}
T(d_1) =\exp (\rho h) T(d_0)
\end{equation*}
for $d_1$ and $d_0$ which are identical except for a disk,
where they are a positive (right handed) twist and a straight line.
We mean by a positive (right handed) twist
the diagram as shown in Figure \ref{fig:RI}.
Let $d_\mathrm{triv.knot}$ and $d_\mathrm{empty}$
be two diagrams
which are identical except for a disk,
where they are the boundary of a closed disk and
empty, respectively.
The trivial knot relation is
\begin{equation*}
T(d_\mathrm{triv. knot}) = \frac{2 \sinh (\rho h)}{h}T(d_\mathrm{empty}).
\end{equation*}
We denote by $[T]$ the element of $\mathcal{A}
(\Sigma, J^-,J^+)$ represented by a tangle $T$.
We simply denote $\tskein{\Sigma} \defeq \tskein{\Sigma, \emptyset, \emptyset}$.

\end{df}

Let $J^+,J^-,J'^+$ and $J'^-$ be mutually disjoint finite
subsets of $\partial \Sigma$
with $\sharp (J^+) =\sharp (J^-)$ and 
$\sharp(J'^+) =\sharp(J'^-)$.
We define the $\Q [\rho][[h]]$-bilinear homomorphism
$\boxtimes :\tskein{\Sigma,J^-,J^+} \times \tskein{\Sigma,J'^-,J'^+}
\to \tskein{\Sigma,J^- \cup J'^-, J^+ \cup J'^+}$
by $[T_1] \boxtimes [T_2]  =[T_1 \boxtimes T_2]$.
The skein module $\mathcal{A} (\Sigma)$
is the associative algebra over  $\Q[k][[h]]$ 
with product defined $ab \defeq a \boxtimes b$ for $a, b \in 
\tskein{\Sigma}$.
The skein module $\tskein{\Sigma,J^-,J^+}$ is the
$\tskein{\Sigma}$-bimodule given by
$av \defeq a\boxtimes v$ and $va \defeq v \boxtimes a$
for $a \in \tskein{\Sigma}$ and $v \in \tskein{\Sigma,J^-.J^+}$.

We fix points $*_1, *_2, \cdots ,*_b$, $*'_1, *'_2, \cdots ,*'_b$,
$*''_1, *''_2, \cdots ,*''_b$ and paths $r_1, r_2, \cdots, r_b$,
$r'_1, r'_2, \cdots, r'_b$ as Figure \ref{fig_boundary_points}.
We denote $\mathcal{T}(\Sigma, *_\alpha,*,j) \defeq
\mathcal{T}(\Sigma,\shuugou{*_\alpha},\shuugou{*'_\beta})$.
We define $\circ :\mathcal{T} (\Sigma,*_\beta,*_\gamma) \times \mathcal{T}
(\Sigma, *_\alpha,*_\beta) \to \mathcal{T}(\Sigma,*_\alpha,*_\gamma)$
by $T_1 \circ T_2 \defeq   (r'_\gamma)^{-1} \circ(((r'_\gamma \circ T_1)
 \circ (r_\beta)^{-1}) \circ T_2)$
where we denote by $r_\alpha$, $\gyaku{(r_\alpha)}$, $r'_\alpha$ and $\gyaku{(r'_\alpha)}$ the tangles
presented by $r_\alpha$, $\gyaku{(r_\alpha)}$, $r'_\alpha$ and $\gyaku{(r'_\alpha)}$,
 respectively.
We denote $\tskein{\Sigma,*_\alpha,*_\beta} \defeq \tskein{\Sigma,\shuugou{*_\alpha},\shuugou{*'_\beta}}$.
The map $\circ :\mathcal{T} (\Sigma,*_\beta,*_\gamma) \times \mathcal{T}
(\Sigma, *_\alpha,*_\beta) \to \mathcal{T}(\Sigma,*_\alpha,*_\gamma)$
induces
$\circ :\tskein{\Sigma,*_\beta,*_\gamma} \times \tskein{\Sigma,*_\alpha,*_\beta} \to
\tskein{\Sigma,*_\alpha,*_\gamma}$.
In particular, $\tskein{\Sigma,*_\alpha,*_\alpha} \defeq \tskein{\Sigma,*_\alpha}$
is the associative algebra over $\Q [\rho][[h]]$ with unit $1  \defeq [r_\alpha]$.
There exists a natural map $\zettaiti{\cdot}:\tskein{\Sigma,*_\alpha}
\to \tskein{\Sigma}$ defined by Fig \ref{fig_closure}.

\begin{figure}
\begin{picture}(300,200)
\put(0,-80){\includegraphics[width=330pt]{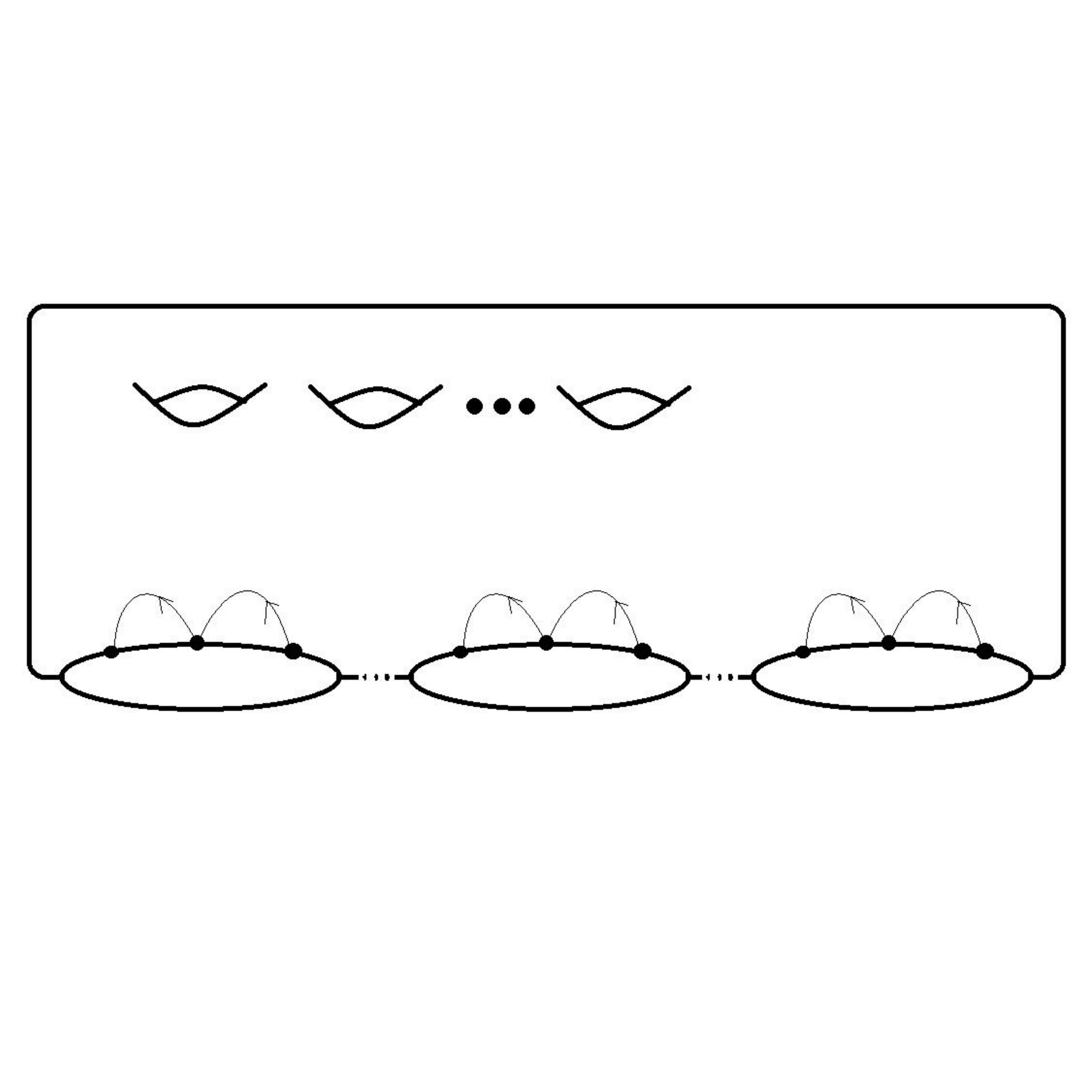}}
\put(33,44){$*''_1 \ \ \ \ \ *'_1 \ \ \ \ *_1$}
\put(136,44){$*''_\alpha \ \ \ \ \ *'_\alpha \ \ \ \ *_\alpha$}
\put(238,44){$*''_b \ \ \ \ \ *'_b \ \ \ \ *_b$}
\put(46,80){$r'_1\ \ \ \ \ \ r_1$}
\put(149,80){$r'_\alpha \ \ \ \ \ \ r_\alpha$}
\put(251,80){$r'_b\ \ \ \ \ \ r_b$}

\put(233,100){\includegraphics[width=20pt]{anorientation_PNG.pdf}}
\end{picture}
\caption{$*_\alpha,*'_\alpha,*''_\alpha,r_\alpha,r'_\alpha$}
\label{fig_boundary_points}
\end{figure}

\begin{figure}
\begin{picture}(330,50)
\put(0,-150){\includegraphics[width=330pt]{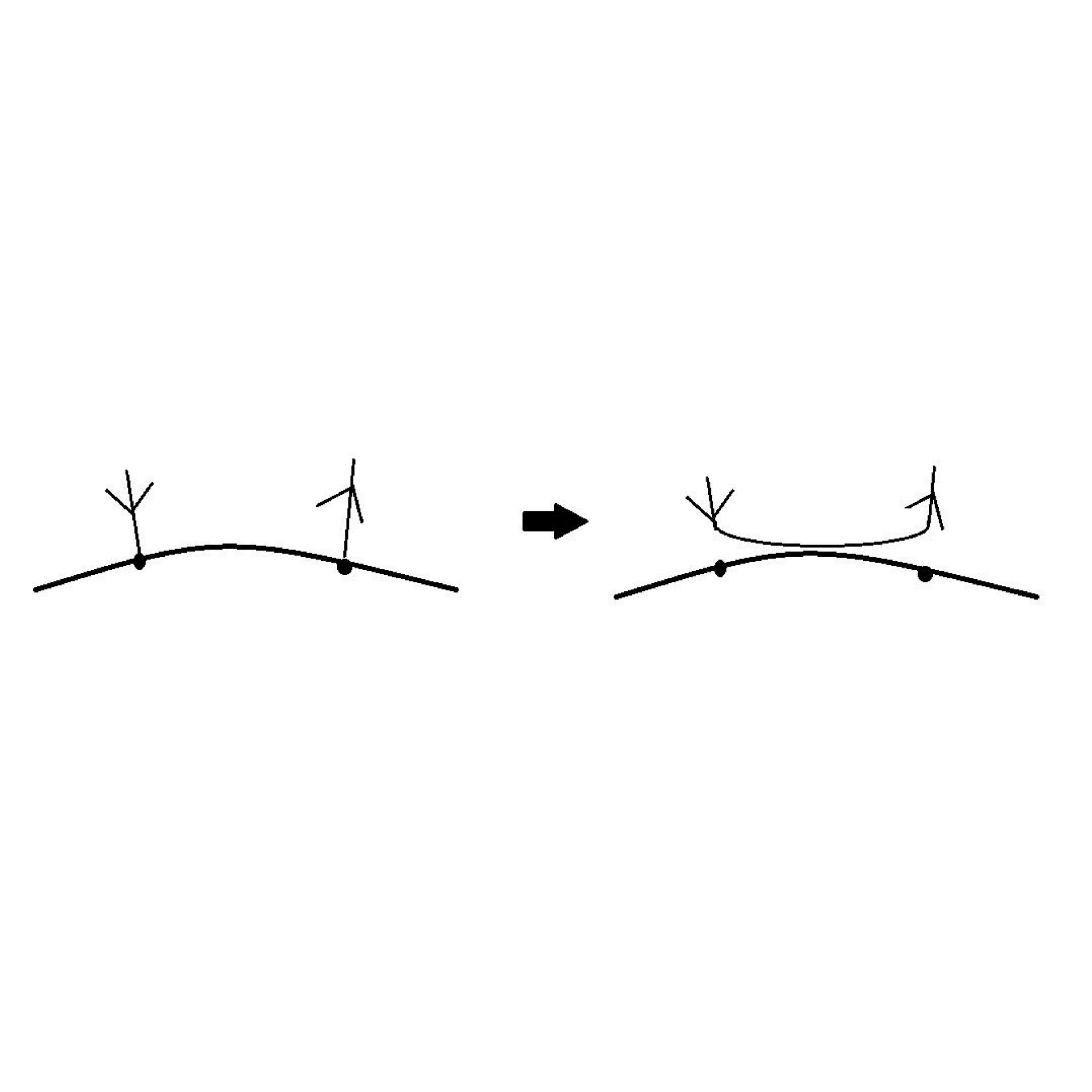}}

\put(0,20){\includegraphics[width=20pt]{anorientation_PNG.pdf}}
\end{picture}
\caption{$\zettaiti{\cdot}: \tskein{\Sigma,*_\alpha} \to \tskein{\Sigma}$}
\label{fig_closure}
\end{figure}

\begin{prop}
\label{prop_tskein_disk}
We define  a $\Q [\rho][[h]]$-algebra homomorphism
$\mathcal{N}:\tskein{D}  \to \Q [\rho][[h]]$
by 
\begin{equation*}
\mathcal{N} ([L]) \defeq \exp (w(L) \rho h) \homfly (L)(\exp(\rho h),h)
\end{equation*}
where $D$ is the closed disk and $w(L)$ is the self linking number of
$L$, i.e., $w(L)= \sharp \shuugou{\mathrm{the \ positive \ crossing \ of \ } L}-
\sharp \shuugou{\mathrm{the \ negative \ crossing \ of \ } L}$.
Then $\mathcal{N}$ is well-defined and bijective.

\end{prop}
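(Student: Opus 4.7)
The plan is to establish well-definedness of $\mathcal{N}$ by verifying that the three defining relations of $\tskein{D}$ are respected under the formula, and then prove bijectivity by showing that $\tskein{D}$ is a free rank-one $\Q[\rho][[h]]$-module generated by the class of the empty link $[\emptyset]$. For the framing relation, a right-handed kink increases $w(L)$ by $1$ while leaving the underlying unframed link (hence $\homfly(L)(X,h)$) unchanged, so $\mathcal{N}$ acquires exactly the factor $\exp(\rho h)$. For the trivial knot relation, the unknotted boundary of a disk has writhe $0$ and $\homfly(\mathrm{unknot})(X,h) = (X-X^{-1})/h$ evaluates at $X = \exp(\rho h)$ to $2\sinh(\rho h)/h$, matching the required right-hand side. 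For the skein relation, $w(T_+) = w(T_0)+1$ and $w(T_-) = w(T_0)-1$; after pulling out the common factor $\exp(w(T_0)\rho h)$, the identity $\mathcal{N}(T_+) - \mathcal{N}(T_-) = h\, \mathcal{N}(T_0)$ reduces to the HOMFLY-PT skein identity $X\, \homfly(L_+) - X^{-1}\, \homfly(L_-) = h\, \homfly(L_0)$ at $X = \exp(\rho h)$. Multiplicativity of $\mathcal{N}$ under $\boxtimes$ follows from additivity of the writhe together with the split-product property $\homfly(L_1 \sqcup L_2) = \homfly(L_1)\, \homfly(L_2)$ in the normalization $\homfly(\mathrm{unknot}) = (X-X^{-1})/h$.

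For bijectivity, define the $\Q[\rho][[h]]$-linear map $\psi : \Q[\rho][[h]] \to \tskein{D}$ by $a \mapsto a\, [\emptyset]$. Since $\mathcal{N}([\emptyset]) = 1$ and $\mathcal{N}$ is $\Q[\rho][[h]]$-linear, we have $\mathcal{N} \circ \psi = \mathrm{id}$, so it suffices to prove $\psi$ is surjective, i.e., that every class $[L] \in \tskein{D}$ is a scalar multiple of $[\emptyset]$. I would argue this by strong induction on the number of crossings of a diagram $d$ representing $[L]$. In the inductive step, pick a crossing of $d$ and apply the skein relation to write $[T_\epsilon] = [T_{-\epsilon}] + \epsilon h [T_0]$; here $[T_0]$ has strictly fewer crossings and is handled by induction, while iteratively replacing $T_\epsilon$ by $T_{-\epsilon}$ at suitably chosen crossings (relative to a fixed ordering of components and a descent scheme) eventually converts $d$ into a diagram of a split unlink with only kinks. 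In the base case the diagram presents a disjoint union of framed unknots, which collapses directly to $f(\rho,h)[\emptyset]$ by removing each kink using the framing relation and then deleting each trivial loop using the trivial knot relation.

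The main obstacle is the descent scheme in the inductive step: one must justify that any link diagram on the disk can be transformed by crossing changes (which introduce terms of strictly lower crossing number through the skein relation) into a split unlink. This is the classical argument underlying the existence and uniqueness of the HOMFLY-PT polynomial, which works by fixing a base point on each component and an ordering of components, and proceeding outward so that under- and over-crossings can be switched to produce a descending diagram. Once this generation statement is secured, the equality $\mathcal{N} \circ \psi = \mathrm{id}$ combined with the surjectivity of $\psi$ forces $\psi$ to be bijective; consequently $\mathcal{N} = \psi^{-1}$ is bijective as well, completing the proof.
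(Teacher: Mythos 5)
Your proposal is correct and follows essentially the same route as the paper: verify the three defining relations against the formula for $\mathcal{N}$, then exhibit $f \mapsto f[\emptyset]$ as a two-sided inverse. The only difference is that you spell out the surjectivity of that inverse (the descending-diagram induction underlying existence/uniqueness of the HOMFLY-PT polynomial), which the paper simply asserts by declaring the two composites to be identities.
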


\begin{proof}
Let $L_+$, $L_-$ and $L_0$ be elements of 
$\mathcal{T}(D)$ such that $(L_+,L_-, L_0)$
is a Conway triple.
Since $w(L_+)-1=w(L_-)+1=w(L_0)$ and 
\begin{equation*}
\exp(\rho h) \homfly (L_+)-\exp (-\rho h) \homfly (L_-) =h \homfly (L_0),
\end{equation*}
we have 
\begin{equation*}
\mathcal{N}  ([L_+])-\mathcal{N} ([L_-]) =h \mathcal{N} ([L_0]).
\end{equation*}
If $d_1$ and $d_0$ 
are tangle diagrams which are identical except for a dsik,
where they are a positive twist and a straight line, respectively,
then,
we have $\homfly (T(d_1)) =\homfly (T(d_0))$ and 
$w(T(d_1))-1=w(T(d_0))$. Then we obtain
\begin{equation*}
\mathcal{N}([T(d_1)]) =\exp (\rho h) \mathcal{N} ([T (d_0)]).
\end{equation*}
If two $d_{\mathrm{triv.knot}}$ and $d_\mathrm{empty}$
 diagrams are identical except for a disk,
where they are the boundary of a closed disk and 
empty, respectively,
we have
\begin{equation*}
\mathcal{N}([T(d_\mathrm{triv.knot})]) 
= \frac{\sinh (\rho h)}{h} \mathcal{N}[T(d_\mathrm{empty})].
\end{equation*}
These formulas prove that $\mathcal{N}$ is well-defined.
We define $\mathcal{L'}: \Q [\rho][[h]]  \to \tskein{D}$
by $f(\rho,h) \in \Q [\rho][[h]] \mapsto f(\rho,h) [\emptyset]$.
We have $\mathcal{L'} \circ \mathcal{N} =\id_{\Q [\rho][[h]]}$ and
$\mathcal{N} \circ \mathcal{L'} =\id_{\tskein{D}}$.
This proves the proposition.

\end{proof}

\subsection{Regular homotopy path and Turaev skein algebra}
Let $\pi^+_1 (\Sigma, {*}, {*'})$ be
the regular homotopy set of free immersed paths from $*$
to $*'$ on $\Sigma$ for two points $*,*' \in \partial \Sigma$
satisfying $* \neq *'$.
We define a composite 
$\cdot : \pi^+_1 (\Sigma, {*'},{*''})
\times \pi^+_1 (\Sigma, {*},{*'})
\to \pi^+_1 (\Sigma,{*}, {*''})$
as Figure \ref{fig_boundary_gousei}
for $*,*',*'' \in \partial \Sigma$ satisfying
$* \neq *'$, $*' \neq *''$ and $*'' \neq *$. 
We denote $\pi^+_1 (\Sigma,*_\alpha,*_\beta)
\defeq \pi^+_1 (\Sigma,{*_\alpha},{*'_\beta})$.
We define a composite $\cdot :\pi^+_1 (\Sigma,*_\beta,*_\gamma) \times
\pi^+_1 (\Sigma,*_\alpha,*_\beta) \to \pi^+_1 (\Sigma,*_\alpha,*_\gamma)$
by
\begin{equation*}
R_1 \cdot R_2 \defeq   (r'_\gamma
)^{-1} \cdot(((r'_\gamma \circ R_1) \circ (r_\beta)^{-1}) \circ R_2).
\end{equation*}
In particular, $\pi^+_1 (\Sigma,*_\alpha,*_\alpha) \defeq
\pi^+_1 (\Sigma,*_\alpha)$ is a group with unit $\ r_\alpha$.
Let $\mathcal{P} (\Sigma,*_\alpha,*_\beta)$ be the quotient of  
$\Q [\rho][[h]] \pi^+_1 (\Sigma,*_\alpha,*_\beta)$ modulo the relation
\begin{equation*}
R_1 = \exp (\rho h) R_0.
\end{equation*}
Here $R_1$ and $R_0$ are identical except for a closed disk,
where they differ  as shown in Figure \ref{fig_regular_relation}.

\begin{figure}
\begin{picture}(160,80)
\put(0,0){\includegraphics[width=70pt]{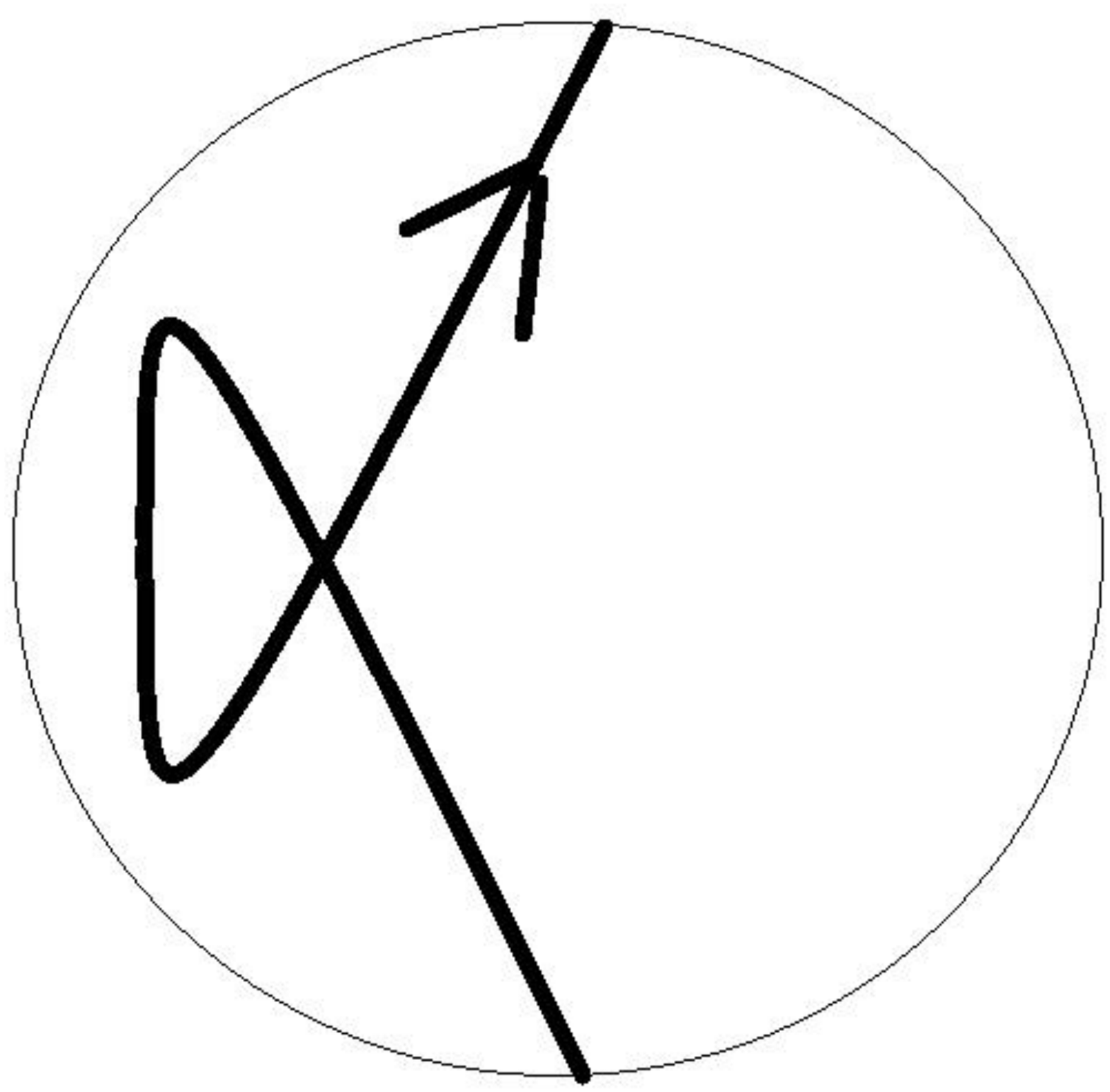}}
\put(35,0){$R_1$}
\put(80,0){\includegraphics[width=70pt]{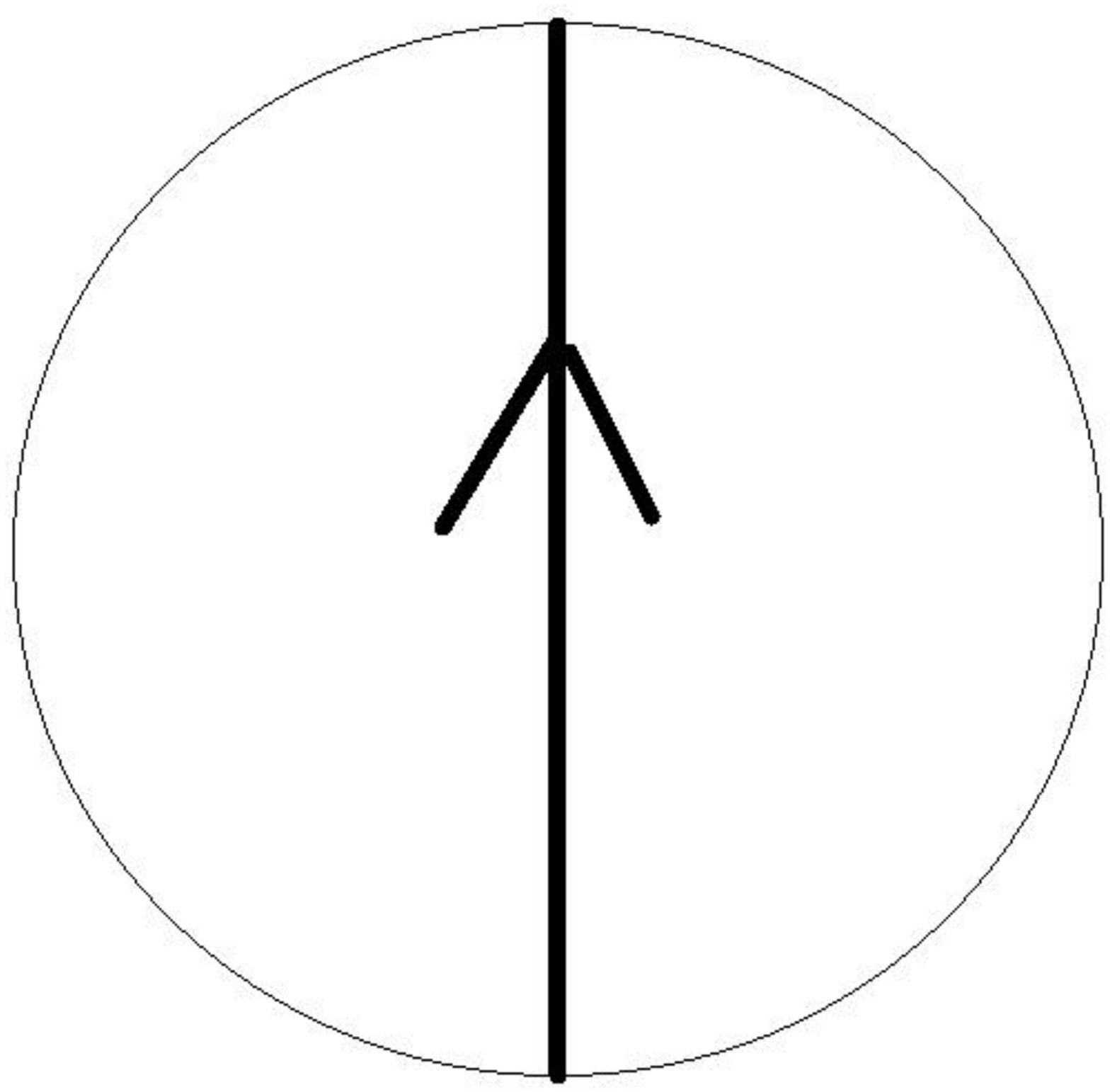}}
\put(115,0){$R_0$}
\end{picture}
\caption{}
\label{fig_regular_relation}
\end{figure}

There exists a $\Q [\rho][[h]]$-module homomorphism
$\psi : \mathcal{P}  (\Sigma,\shuugou{*}, \shuugou{*'})  \to
  \tskein{\Sigma,\shuugou{*}, \shuugou{*'}}$
defined by $R \mapsto \tilde{R}$ where
$\tilde{R} : I \times I
\to \Sigma \times I$ is defined by $ (t_1, t_2) \mapsto (R(t_1),t_1+\delta t_2)$
for a map $R :I \to \Sigma$. Here $\delta > 0$ is a sufficiently small number.
Let $J^- $ and $J^+$ be finite disjoint subsets of $\partial \Sigma$
with $\sharp J^-= \sharp J^+ $,
and $*$ an element of $J^-$.
We define a $\Q [\rho][[h]]$-module homomorphism
\begin{equation*}
\psi_* : \bigoplus_{ *' \in J^+} \tskein{\Sigma, J^- \backslash 
\shuugou{*}, J^+ \backslash \shuugou{*'}} \otimes_{\Q [\rho][[h]]}
\mathcal{P} (\Sigma,\shuugou{*}, \shuugou{*'}) \to \tskein{\Sigma,J^-, J^+}
\end{equation*}
by $\psi_* (a \otimes R) \defeq a \boxtimes \psi(R)$.
The aim of this subsection is  to prove the following.

\begin{prop}
\label{prop_psi_isom}
The  $\Q [\rho] [[h]]$-module homomorphism $\psi_*$ is an
isomorphism.
\end{prop}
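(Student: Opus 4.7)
The plan is to construct an explicit inverse $\Phi$ to $\psi_*$, by proving that every tangle, modulo the defining skein and framing relations, admits a canonical decomposition into a complementary tangle and a regular homotopy path. Given $T \in \mathcal{T}(\Sigma, J^-, J^+)$ with $* \in J^-$, let $C_T$ be the component of $T$ starting at $*$ and let $*' \in J^+$ be its endpoint. Call a diagram $d$ of $T$ \emph{$*$-ascending} if, at every crossing in which $C_T$ participates, the strand belonging to $C_T$ --- or, at a self-crossing of $C_T$, the first passage along $C_T$ from $*$ --- is the over-strand. The height coordinate on $C_T$ is then essentially determined, so up to RII, RIII, and ambient isotopy an ascending tangle is recorded by the pair $(T', R)$ with $T' \in \mathcal{T}(\Sigma, J^- \setminus \shuugou{*}, J^+ \setminus \shuugou{*'})$ the complementary tangle and $R \in \pi^+_1(\Sigma, \shuugou{*}, \shuugou{*'})$ the regular homotopy class of $p_1 \circ C_T$; the remaining RI-ambiguity on $C_T$ is absorbed by the defining relation $R_1 = \exp(\rho h) R_0$ of $\mathcal{P}(\Sigma, \shuugou{*}, \shuugou{*'})$.

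\textbf{Reduction.} At any crossing where $C_T$ passes under, the skein relation $T_+ - T_- = h T_0$ replaces $d$ by the ascending (switched) diagram plus $h$ times the resolved diagram $T_0$; in $T_0$ the total crossing count has strictly dropped, although the $*$-component has been re-spliced with the strand it met. A double induction on the pair (total crossing count, number of non-ascending crossings involving the $*$-component), ordered lexicographically, therefore expresses $[T] \in \tskein{\Sigma, J^-, J^+}$ as a finite $\Q[\rho][[h]]$-linear combination of ascending tangles, which via the bijection of the previous paragraph defines
\[
\Phi([T]) \in \bigoplus_{*' \in J^+} \tskein{\Sigma, J^- \setminus \shuugou{*}, J^+ \setminus \shuugou{*'}} \otimes_{\Q[\rho][[h]]} \mathcal{P}(\Sigma, \shuugou{*}, \shuugou{*'}).
\]
The composite $\psi_* \circ \Phi$ is the identity because the reduction was performed entirely in $\tskein{\Sigma, J^-, J^+}$, while $\Phi \circ \psi_* = \id$ because $\psi_*(a \otimes R) = a \boxtimes \psi(R)$ is by construction already $*$-ascending with pair $(a, R)$.

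\textbf{The main obstacle} is well-definedness of $\Phi$: the resulting pair must be independent of the chosen diagram of $T$ and of the order in which non-ascending crossings are switched, and it must descend through the skein, framing, and trivial knot relations defining $\tskein{\Sigma, J^-, J^+}$. Independence of the diagram reduces to invariance of the ascending decomposition under the three Reidemeister moves --- RI on $C_T$ is cancelled by the $\exp(\rho h)$-relation in $\mathcal{P}$, RI elsewhere by the framing relation in the $\tskein$-factor, and RII, RIII by a finite case-check of the local configurations involving $C_T$, each verified by one or two uses of the skein relation in the spirit of the classical well-definedness proof for the HOMFLY-PT polynomial. Independence of the switching order follows because two orderings differ, after one step, by skein moves on diagrams with fewer crossings, where the induction applies. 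Compatibility with the defining relations of $\tskein{\Sigma, J^-, J^+}$ is automatic whenever the relation is disjoint from $C_T$, and follows by a parallel reduction on both sides combined with the induction whenever it meets $C_T$.
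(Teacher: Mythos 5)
Your proposal is correct and follows essentially the same route as the paper: the paper also constructs the inverse $\bar{\psi}$ by a lexicographic double induction on (total number of crossings, number of distinguished crossings involving the $*$-component), switching those crossings with the skein relation until the $*$-component becomes an ascending path that can be split off as an element of $\mathcal{P}(\Sigma,\shuugou{*},\shuugou{*'})$, and then verifies well-definedness by checking independence of the chosen crossing and invariance under RI (via the framing and $\exp(\rho h)$ relations), RII, and RIII. The only difference is organizational: the paper packages all the well-definedness claims into a single seven-part induction hypothesis (Proposition \ref{prop_induction}) proved by a chain of lemmas, whereas you state them as a list of checks, but the content is the same.
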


It is obvious that $\psi_*$ is surjective.
We will construct $\bar{\psi} :\tskein{\Sigma,J^-,J^+} 
\to  \bigoplus_{ *' \in J^+} \tskein{\Sigma, J^- \backslash 
\shuugou{*}, J^+ \backslash \shuugou{*'}} \otimes_{\Q [\rho][[h]]}
\mathcal{P} (\Sigma,\shuugou{*}, \shuugou{*'})$ satisfying
$\psi_* \circ \bar{\psi} =\id_{\tskein{\Sigma,J^-,J^+} }$ and
$\bar{\psi} \circ \psi_* = \id_{\bigoplus_{ *' \in J^+} \tskein{\Sigma, J^- \backslash 
\shuugou{*}, J^+ \backslash \shuugou{*'}} \otimes_{\Q [\rho][[h]]}
\mathcal{P} (\Sigma,\shuugou{*}, \shuugou{*'})}$. 

For a tangle diagram $d$ of an element of
$\mathcal{T} (\Sigma,J^-, J^+)$,
we denote by $\mathrm{Cross} (d)$
the set of crossings of $d$
and by $\mathrm{Cross}_1 (d)$
the set of crossings
of types shown in 
\ref{fig_shoumei_group_ring}.
Let 
$\bar{d}$  be the components of $d$
not including $*$ and
$d_{\mathrm{path}}$ the component 
of $d$ including $*$.
If $\mathrm{Cross}_1 (d) = \emptyset$,
$\psi ([T (\bar{d})] \otimes d_{\mathrm{path}})
=[T(d)]$.
By induction on 
$( \sharp \mathrm{Cross} (d), \sharp \mathrm{Cross}_1 (d))$,
we define a map $\bar{\psi}$ from
the set of tangle diagrams to
$\bigotimes_{ *' \in J^+} \tskein{\Sigma, J^- \backslash 
\shuugou{*}, J^+ \backslash \shuugou{*'}} \otimes_{\Q [\rho][[h]]}
\mathcal{P} (\Sigma,\shuugou{*}, \shuugou{*'})$.
If three tangle diagrams $d_1$, $d_2$ and $d_3$ are identical except
for a sufficiently small neighborhood such that $(T(d_1), T(d_2), T(d_3))$
is a Conway triple, then we have $\sharp \mathrm{Cross}_1 (d_1) \pm
1 = \sharp \mathrm{Cross}_1 (d_2)$ and
$\sharp \mathrm{Cross} (d_1) = \sharp \mathrm{Cross} (d_2)
=\sharp \mathrm{Cross} (d_3)+1$.
Using the skein relation, we define $\bar{\psi} (d_1) = \bar{\psi} (d_2) + h 
\bar{\psi} (d_3)$ or $\bar{\psi} (d_2)= \bar{\psi}(d_1)-h\bar{\psi} (d_3)$.

For a tangle diagram $d$ with $\sharp (\mathrm{Cross}_1 (d))=0$,
we define $\bar{\psi} (d)
\defeq [T(\bar{d})] \otimes d_{\mathrm{path}}$.

Let $d$ be  a tangle diagram satisfying 
$\sharp \mathrm{Cross} =N $ and $\sharp \mathrm{Cross_1} =N' \geq 1$.
We choose  one of the crossings $P \in
\mathrm{Cross}_1(d)$.
We assume this crossing $P$ is shown as
Type 1, Type 3 or Type 5 in 
Figure \ref{fig_shoumei_group_ring}.
Let $d'$ and $d''$ be the tangle diagrams
which are identical except for a sufficiently small neighborhood of $P$
such that $(T(d), T(d'), T(d''))$ is a Conway triple.
We define 
\begin{equation*}
\bar{\psi} (d) \defeq \bar{\psi} (d')-h\bar{\psi}(d'').
\end{equation*}
We assume this crossing $P$ is shown as
 Type 2, Type 4 or Type 6
in Figure \ref{fig_shoumei_group_ring}.
Let $d'$ and $d''$ be the tangle diagrams
which are identical except for a sufficiently small neighborhood of $P$
such that $(T(d'), T(d), T(d''))$
is a Conway triple.
We define 
\begin{equation*}
\bar{\psi} (d) \defeq \bar{\psi} (d')+h\bar{\psi}(d'').
\end{equation*}

\begin{figure}
\begin{picture}(300,160)
\put(0,90){\includegraphics[width=2cm]{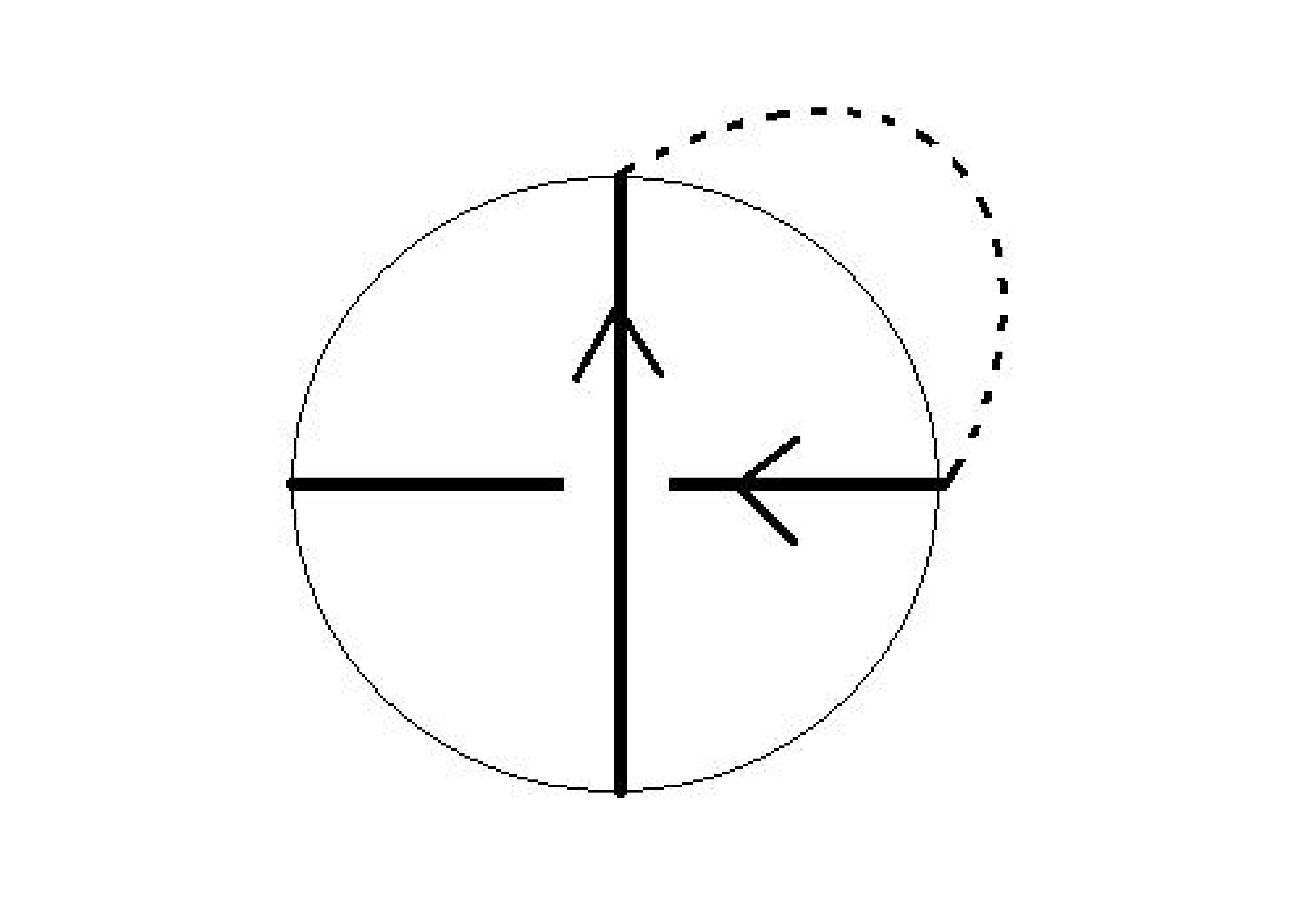}}
\put(75,90){\includegraphics[width=2cm]{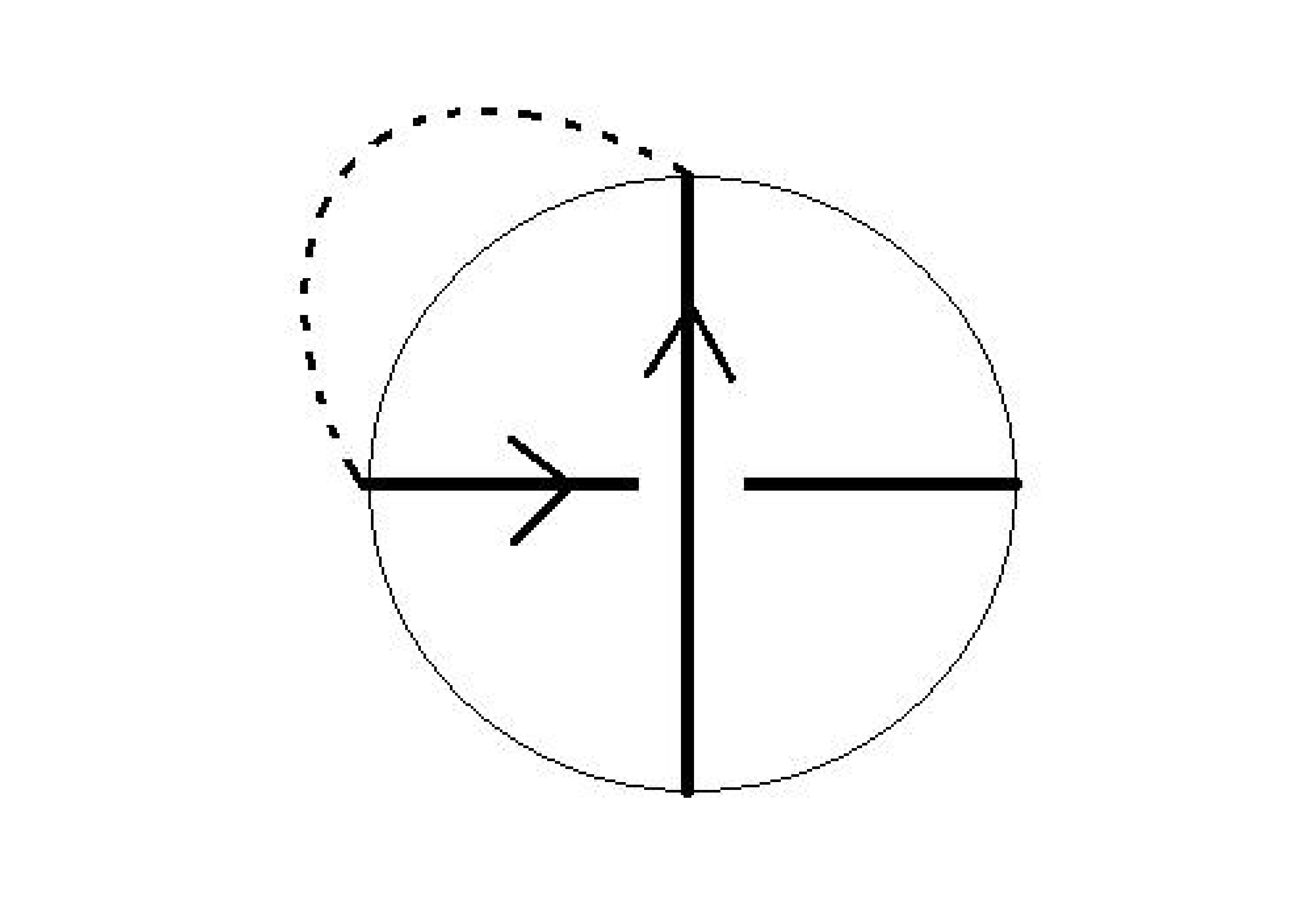}}
\put(150,90){\includegraphics[width=2cm]{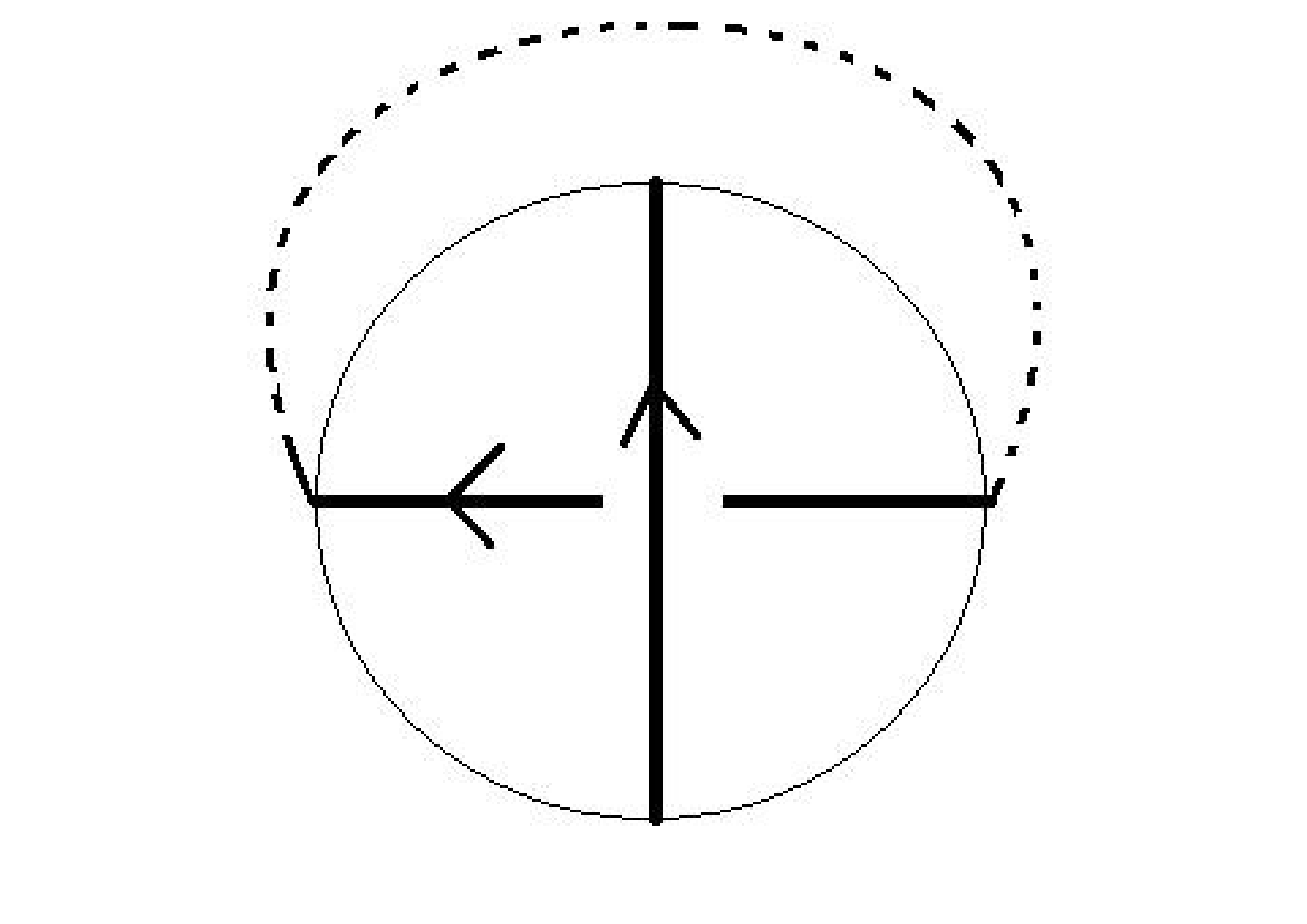}}
\put(225,90){\includegraphics[width=2cm]{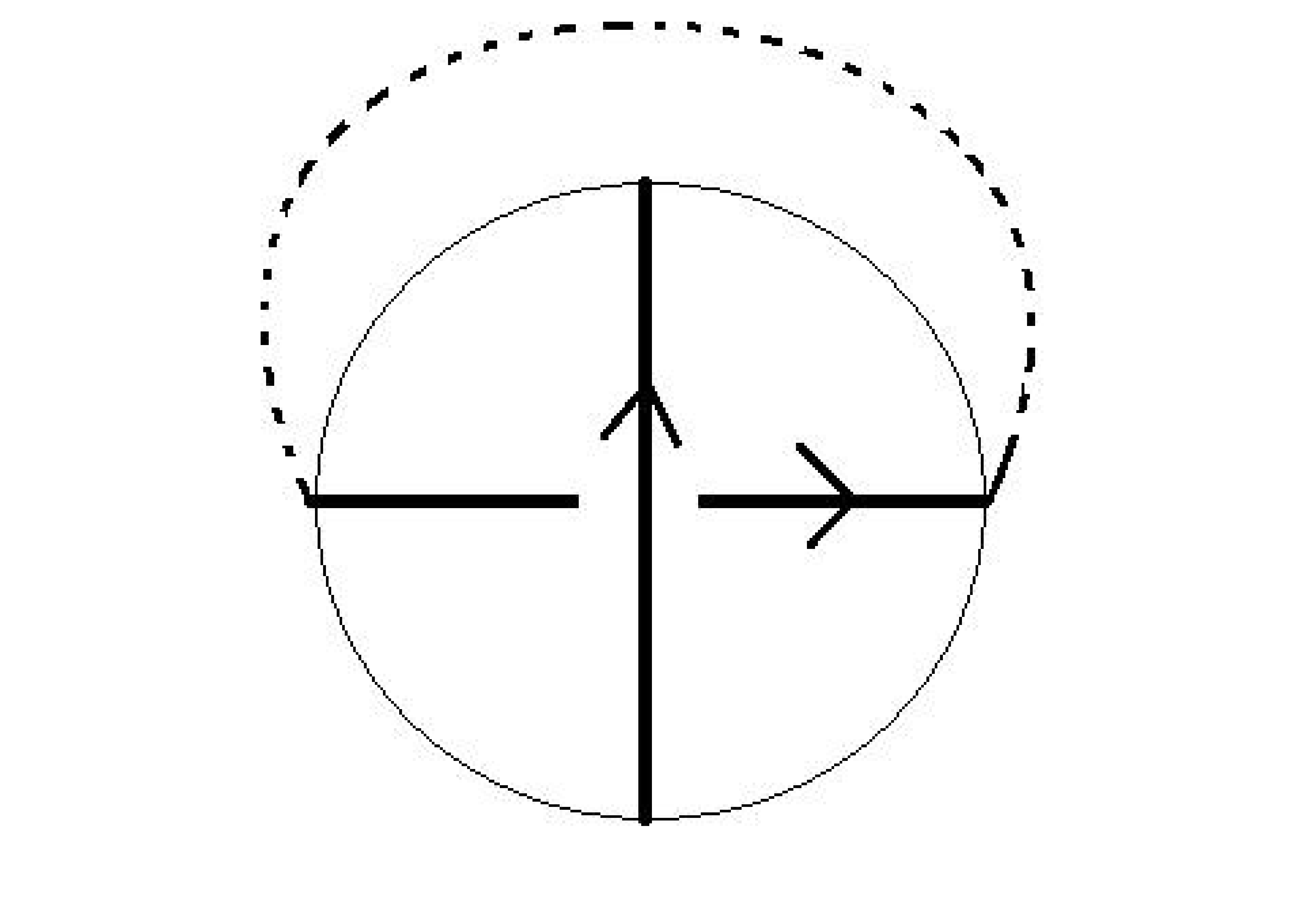}}
\put(75,10){\includegraphics[width=2cm]{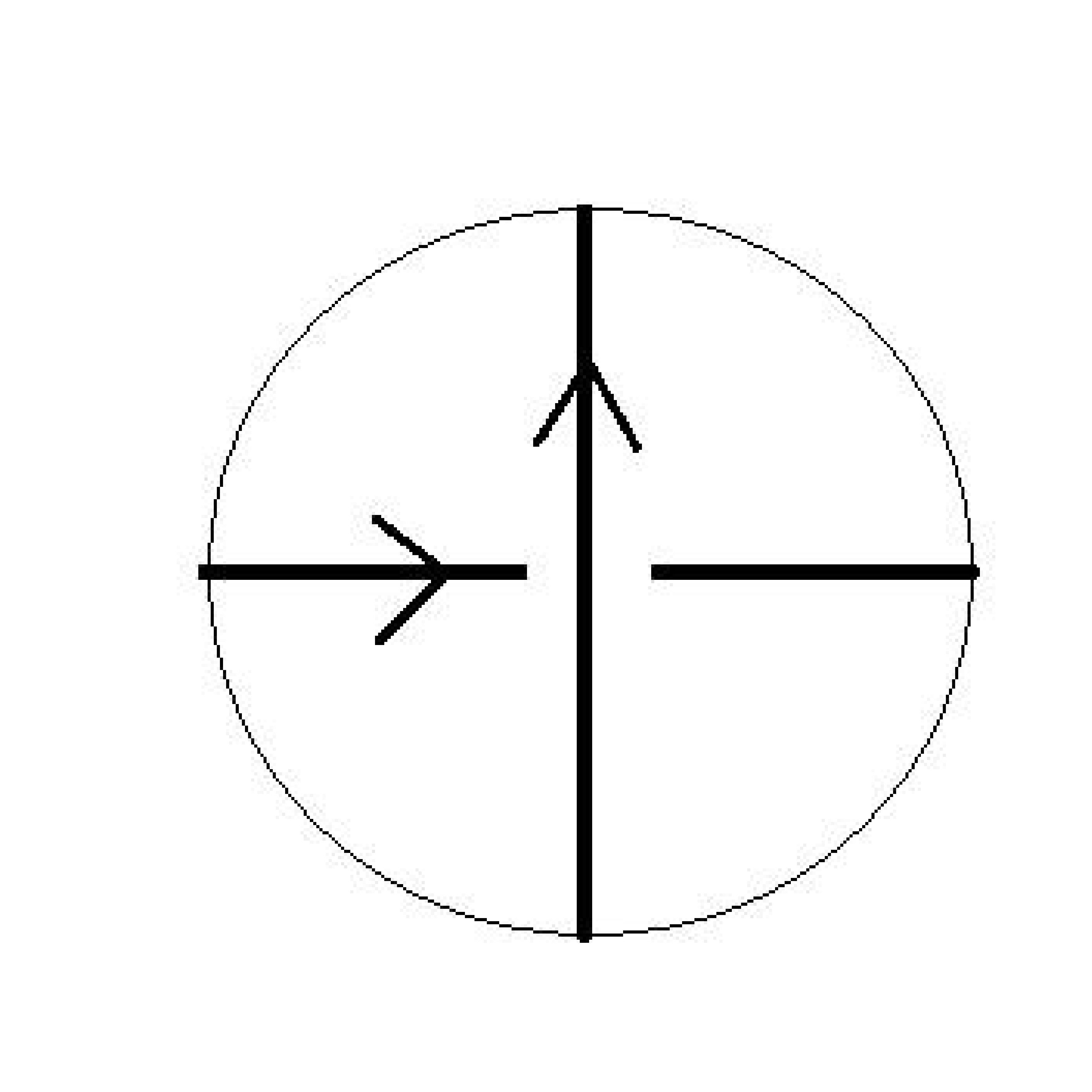}}
\put(150,10){\includegraphics[width=2cm]{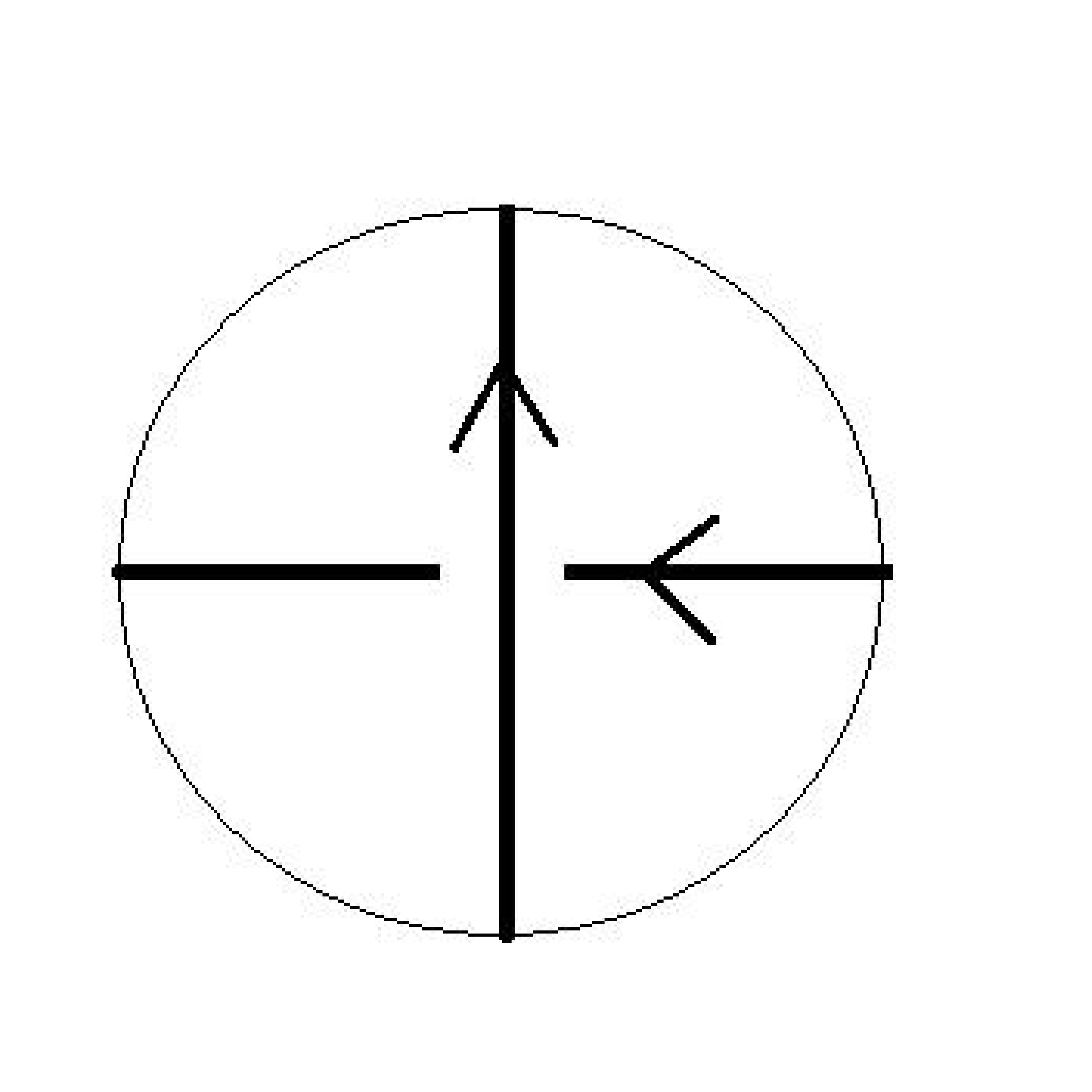}}

\put(25,90){$*$}
\put(100,90){$*$}
\put(175,90){$*$}
\put(250,90){$*$}

\put(25,80){Type 1}
\put(100,80){Type 2}
\put(175,80){Type 3}
\put(250,80){Type 4}

\put(100,10){$*$}
\put(175,10){$*$}

\put(100,0){Type 5}
\put(175,0){Type 6}

\put(25,33){$*'' \in J^- \backslash \shuugou{*}$}
\put(205,33){$*'' \in J^- \backslash \shuugou{*}$}

\end{picture}
\caption{}
\label{fig_shoumei_group_ring}
\end{figure}

\begin{prop}[($N$, $N'$)]
\label{prop_induction}
Let $\mathcal{D} (N,N')$ be the set 
consisting of all diagram $d$ satisfying 
one of the followings.
\begin{itemize}
\item $\sharp (\mathrm{Cross}(d)) < N$.
\item $\sharp (\mathrm{Cross}(d)) =N $ and $\sharp (\mathrm{Cross}_1(d))<N'$.
\end{itemize}
Then we have the followings.
\begin{enumerate}
\item For $d \in \mathcal{D} (N,N')$, $\bar{\psi} (d)$ is well-defined,
in other words, $\bar{\psi} (d)$ is independent from the choice of
$\mathrm{Cross}_1 (d)$.
\item Let $d_+,d_-$ and $d_0$ be three elements of $\mathcal{D}(N,N')$
such that $d_+$, $d_-$ and $d_0$ are identical
except for a closed disk, where they differ as shown in 
Type C(+), Type C(-) and Type C(0)
in Figure \ref{fig_Conway_triples}
respectively. Then we have $\bar{\psi} (d_+)-\bar{\psi} (d_-)
=h \bar{\psi} (d_0)$.
\item If $d_1$ and $d_0 \in \mathcal{D} (N,N')$ are identical except for a disk,
where they are one of Figure \ref{fig:RI} and  a straight line, respectively, then,
we have $\bar{\psi} (d_1) =\exp (\rho h) \bar{\psi} (d_0)$.
\item If $d_1$ and $d_0 \in \mathcal{D} (N,N')$ are identical except for a disk,
where they are the boundary of a closed disk and
empty, respectively, then, we have  $\bar{\psi} (d_1) =
\frac{\sinh {\rho h}}{h} \bar{\psi} (d_0)$.
\item If $d$ and $d' \in \mathcal{D} (N,N')$ are diagrams
obtained by
RI move Figure \ref{fig:RI}, we have $\bar{\psi} (d) =\bar{\psi} (d')$.
\item If $d$ and $d' \in \mathcal{D} (N,N')$  are
diagrams  obtained by
RII move Figure \ref{fig:RIIi} or Figure \ref{fig:RIIii}, we have $\bar{\psi} (d) =\bar{\psi} (d')$.
\item If $d$ and $d' \in \mathcal{D} (N,N')$ are
diagrams obtained by one of
RIII moves Figure \ref{fig:RIIIs}, then,
we have $\bar{\psi} (d) =\bar{\psi} (d')$.
\end{enumerate}
\end{prop}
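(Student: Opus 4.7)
The plan is to argue by strong induction on the pair $(N, N')$ ordered lexicographically, with the base case handled by the explicit formula $\bar{\psi}(d) = [T(\bar{d})] \otimes d_{\mathrm{path}}$ when $\mathrm{Cross}_1(d) = \emptyset$. For the inductive step I fix $(N, N')$ and assume (1)--(7) hold on every $\mathcal{D}(N_0, N'_0)$ with $(N_0, N'_0)$ strictly smaller. Note that whenever $d$ has $\sharp \mathrm{Cross}(d) = N$ and $\sharp \mathrm{Cross}_1(d) = N' - 1 \geq 1$, any resolution at $P \in \mathrm{Cross}_1(d)$ produces diagrams $d'$ (with $\sharp \mathrm{Cross}_1 = N' - 2$) and $d''$ (with $\sharp \mathrm{Cross} = N - 1$), both lying in strictly smaller $\mathcal{D}$, so the recursion terminates.

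First I would establish (1), the independence of $\bar{\psi}(d)$ from the choice of crossing in $\mathrm{Cross}_1(d)$. When $\sharp \mathrm{Cross}_1(d) \leq 1$ there is nothing to check. When $\sharp \mathrm{Cross}_1(d) \geq 2$, I pick two distinct crossings $P_1, P_2 \in \mathrm{Cross}_1(d)$ and compare the values obtained by resolving $P_1$ first versus $P_2$ first. A single resolution lands in diagrams of strictly smaller complexity, so the inductive hypothesis (1) lets me iterate the expansion using the other crossing. The two orders of resolution then produce formal sums over the four joint resolutions; because the skein resolutions happen in disjoint disks they commute as operations on diagrams, and the type (1--6) of $P_j$ in Figure \ref{fig_shoumei_group_ring} is determined by local data near $*$ (unaffected by a distant resolution at $P_i$), so the $\pm h$ signs attached to each term match.

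Properties (2)--(7) then follow from (1) and the inductive hypothesis. For (2), if the distinguished crossing in the Conway triple $(d_+, d_-, d_0)$ lies in $\mathrm{Cross}_1$ of $d_\pm$ the identity is built into the recursive definition of $\bar{\psi}$; otherwise I use (1) to resolve a different Type-1 crossing first and apply (2) of the inductive hypothesis, with the base case $\mathrm{Cross}_1 = \emptyset$ reducing to the skein relation in $\tskein{\Sigma, J^- \backslash \shuugou{*}, J^+ \backslash \shuugou{*'}}$. For the Reidemeister-type relations (3)--(7), each is a local modification in a small disk $D_0$; using (1) I may first resolve every Type-1 crossing outside $D_0$, reducing either to diagrams already covered by the inductive hypothesis or to a small configuration where the defining relations of $\tskein{\Sigma, J^-, J^+}$, together with Proposition \ref{prop_tskein_disk} applied to the disk-local contribution, close the identity.

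The main obstacle is part (1): the bookkeeping of signs and $h$-weights when $P_1, P_2$ are of different types among Type 1--6. I expect the commutativity check to split into a finite case analysis over pairs of types, with the most delicate case being when both $P_1$ and $P_2$ lie on the arc through $*$, since then a resolution at one can reroute that arc near the other; however, the type of each crossing is determined by the orientation of the strands at $*$, unchanged by smoothing elsewhere, so a careful enumeration closes the argument.
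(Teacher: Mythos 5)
Your proposal is correct and follows essentially the same route as the paper: a lexicographic induction on $(N,N')$ in which part (1) is settled by comparing the two orders of resolving a pair of crossings in $\mathrm{Cross}_1(d)$ and invoking the skein relation (2) at the lower level, and parts (3)--(7) are handled by a case split on whether the relevant disk meets $\mathrm{Cross}_1(d)$, resolving a crossing elsewhere first when it does. The paper merely packages the same argument as separate lemmas advancing $(N,N')$ to $(N,N'+1)$ for the groups (1)(2)(4), (3)(5), (6), and (7).
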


\begin{lemm}[Proposition  \ref{prop_induction}
($N$, $N'+1$ ) (1)(2)(4)]
The element of 
\begin{equation*}
\bar{\psi} (d) \in \bigotimes_{ *' \in J^+} \tskein{\Sigma, J^- \backslash 
\shuugou{*}, J^+ \backslash \shuugou{*'}} \otimes_{\Q [\rho][[h]]}
\mathcal{P} (\Sigma,\shuugou{*}, \shuugou{*'}) 
\end{equation*}
 is independent of the choice of the crossings.
In particular, we have Proposition ($N$,$N'$) (2)(4).
\end{lemm}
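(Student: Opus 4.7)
The plan is to argue by induction on $(N,N')$, assuming Proposition \ref{prop_induction}($N,N'$) in full and establishing parts (1), (2), (4) at level $(N,N'+1)$. The only genuinely new case is a diagram $d$ with $\sharp\mathrm{Cross}(d)=N$ and $\sharp\mathrm{Cross}_1(d)=N'$. For any $P\in\mathrm{Cross}_1(d)$, the two Conway-style partners $d',d''$ at $P$ satisfy either $\sharp\mathrm{Cross}<N$, or $\sharp\mathrm{Cross}=N$ together with $\sharp\mathrm{Cross}_1<N'$; hence both lie in $\mathcal{D}(N,N')$. The inductive hypothesis therefore supplies well-defined values $\bar\psi(d'),\bar\psi(d'')$ obeying properties (1)--(7) at level $(N,N')$, and this is the only input the argument needs.

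For part (1), I would take two distinct crossings $P_1,P_2\in\mathrm{Cross}_1(d)$ and write
\[
\bar\psi^{P_1}(d)=\bar\psi(d'_1)\pm_1 h\,\bar\psi(d''_1),
\]
where the sign $\pm_1$ is dictated by which of the six types in Figure \ref{fig_shoumei_group_ring} the crossing $P_1$ has. The crossing $P_2$ survives in both $d'_1$ and $d''_1$ and still belongs to $\mathrm{Cross}_1$, so by induction I can further expand $\bar\psi(d'_1)$ and $\bar\psi(d''_1)$ at $P_2$, producing a $\mathbb{Z}[h]$-linear combination of the four diagrams obtained by independently choosing one of the two resolutions at each of $P_1,P_2$. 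Performing the expansion in the reverse order, $P_2$ before $P_1$, produces the same four doubly-resolved diagrams. A direct case analysis over the $6\times 6$ type-pairs of $(P_1,P_2)$ then verifies that the coefficients agree, giving $\bar\psi^{P_1}(d)=\bar\psi^{P_2}(d)$.

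For part (2), I would use (1) just established. Given a Conway triple $(d_+,d_-,d_0)$ with $d_\pm\in\mathcal{D}(N,N'+1)\setminus\mathcal{D}(N,N')$: if the Conway crossing $P$ itself lies in $\mathrm{Cross}_1(d_+)$, then (1) allows me to resolve $d_+$ exactly at $P$, and the defining recursion of $\bar\psi$ reads off the identity $\bar\psi(d_+)-\bar\psi(d_-)=h\,\bar\psi(d_0)$ at once. Otherwise the Conway move does not affect which crossings touch the base-pointed component, so $\mathrm{Cross}_1(d_+)=\mathrm{Cross}_1(d_-)=\mathrm{Cross}_1(d_0)$; I pick any $Q$ in this common set, resolve all three diagrams at $Q$, observe that the six resulting diagrams reassemble into Conway triples at $P$, now lying in $\mathcal{D}(N,N')$, and apply Proposition \ref{prop_induction}($N,N'$)(2) to each of them, then combine using the $Q$-recursion.

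For part (4), the inserted trivial knot contains no crossings, so $\mathrm{Cross}_1(d_1)=\mathrm{Cross}_1(d_0)$; I resolve both at a common $P$. The resolutions $(d'_1,d'_0)$ and $(d''_1,d''_0)$ are again related by exactly the same trivial-knot insertion and now lie in $\mathcal{D}(N,N')$, so the inductive (4) gives $\bar\psi(d'_1)=\tfrac{2\sinh(\rho h)}{h}\bar\psi(d'_0)$ and similarly for $d''$; combining with the defining recursion for $\bar\psi(d_1)$ and $\bar\psi(d_0)$ yields the same proportionality upstairs. The principal obstacle in the whole argument is the sign bookkeeping in the case analysis of part (1); once that is settled, parts (2) and (4) follow almost formally.
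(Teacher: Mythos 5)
Your proposal is correct and follows essentially the same route as the paper: induction on $(N,N')$, verifying independence of the resolving crossing by comparing the two orders of resolution at $P_1,P_2$ (the paper telescopes through the common term $\bar{\psi}(d(-\epsilon_1,-\epsilon_2))$ and invokes the inductive skein relation, while you fully expand both orders into the four doubly-resolved diagrams and match coefficients — these are the same computation), and then deducing (2) and (4) from the recursion. Your treatment of (2) and (4) is in fact more explicit than the paper's one-sentence remarks; the only thing left implicit on both sides is the base case $\mathrm{Cross}_1=\emptyset$, where the claims follow from the skein and trivial-knot relations applied to $[T(\bar{d})]$.
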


\begin{proof}
We choose  two crossings $P_1$ and $P_2
\in \mathrm{Cross}_1 (d)$.
Let $d(\epsilon'_1, \epsilon'_2)$ be a tangle diagram
which is identical to $d$ except for sufficiently small neighborhoods
of $P_i$.
In the neighborhood of $P_i$,
it coincides with
Type C(+), Type C(-) and Type C(0)
in Figure \ref{fig_Conway_triples}
if $\epsilon'_i =1,-1,0$, respectively,
for $i=1,2$.
We assume $d(\epsilon_1, \epsilon_2) =d$.
It suffices to show 
\begin{equation*}
(\bar{\psi}(d(-\epsilon_1, \epsilon_2))+\epsilon_1 \bar{\psi}(d(0, \epsilon_2)))
-
(\bar{\psi}(d(\epsilon_1, -\epsilon_2))+\epsilon_2 \bar{\psi}(d(\epsilon_1, 0)))
=0.
\end{equation*}
By Proposition \ref{prop_induction}
($N$,$N'$)(2), we have
\begin{align*}
&(\bar{\psi}(d(-\epsilon_1, \epsilon_2))
+\epsilon_1 \bar{\psi}(d(0, \epsilon_2)))
-
(\bar{\psi}(d(\epsilon_1, -\epsilon_2))
+\epsilon_2 \bar{\psi}(d(\epsilon_1, 0))) \\
&=(\bar{\psi}(d(-\epsilon_1, \epsilon_2))
-\bar{\psi}(d(-\epsilon_1, -\epsilon_2))
+\epsilon_1 \bar{\psi}(d(0, \epsilon_2))) \\
&-
(\bar{\psi}(d(\epsilon_1, -\epsilon_2))
-\bar{\psi}(d(-\epsilon_1, -\epsilon_2))
+\epsilon_2 \bar{\psi}(d(\epsilon_1, 0)))  \\
&=(\epsilon_2\bar{\psi}(d(-\epsilon_1, 0))
+\epsilon_1 \bar{\psi}(d(0, \epsilon_2)))
-
(\epsilon_1\bar{\psi}(d(0, -\epsilon_2))
+\epsilon_2 \bar{\psi}(d(\epsilon_1, 0))) \\
&=(\epsilon_2\bar{\psi}(d(-\epsilon_1, 0))
-\epsilon_2 \bar{\psi}(d(\epsilon_1,0)))
+(\epsilon_1 \bar{\psi}(d(0,\epsilon_2))
-\epsilon_1 \bar{\psi}(d(0, -\epsilon_2))) \\
&=-\epsilon_1 \epsilon_2 \bar{\psi} (d(0,0))
+\epsilon_1 \epsilon_2 \bar{\psi} (d(0,0))=0.
\end{align*}
This proves
Proposition \ref{prop_induction}
($N$,  $N'+1$ )(1).

Since the map $\bar{\psi}$ is defined using the skein relation,
we have Proposition( $N$,  $N'+1$ )(2).

Since a trivial knot is included by
$\bar{d}$,
we have Proposition \ref{prop_induction} ($N$,  $N'+1$ )(4).

This completes the proof of  the lemma.
\end{proof}

\begin{lemm}[Proposition \ref{prop_induction} ($N$, $N'+1$ ) (3)(5)]
\label{lemm_RI}
Let $d_1$, $d'_1$ and $d_0 \in \mathcal{D} (N,N'+1)$ be three
tangle diagrams identical except for a disk $D$.
If, in the disk, the diagrams $d_1$, $d'_1$ and $d_0$ are as shown in
the right of Figure \ref{fig:RI}, the left of Figure \ref{fig:RI}
and a straight line, respectively, then,
we have $\bar{\psi} (d_1) =\bar{\psi} (d'_1) =\exp (\rho h) \bar{\psi} (d_0)$.
In other words, Proposition \ref{prop_induction} ($N$, $N'+1$ )(3)
and Proposition \ref{prop_induction} ($N$, $N'+1$ )(5) hold.

\end{lemm}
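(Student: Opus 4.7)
The plan is to split the argument into two cases, according to whether the disk $D$ containing the Reidemeister-I twist meets the path component $d_{\mathrm{path}}$ of $d$ or sits entirely in the link part $\bar{d}$. The desired framing factor $\exp(\rho h)$ will be recovered from the framing relation of the ambient skein module $\tskein{\Sigma, J^-\setminus\{*\}, J^+\setminus\{*'\}}$ in the first case, and from the regular-homotopy relation of $\mathcal{P}(\Sigma, \{*\}, \{*'\})$ in the second. In both cases the three diagrams $d_1, d'_1, d_0$ are treated in parallel.

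In the first case, where the twist lies on $\bar{d}$, its self-crossing is of none of the six types in Figure \ref{fig_shoumei_group_ring}, so $d_1, d'_1$ and $d_0$ share the same $\mathrm{Cross}_1$. If $\sharp \mathrm{Cross}_1(d_1) = 0$, I apply the base clause $\bar{\psi}(d) = [T(\bar{d})] \otimes d_{\mathrm{path}}$ directly: the path components agree, while the link parts differ by one RI move inside $D$, so the framing relation of $\tskein{\Sigma, J^-\setminus\{*\}, J^+\setminus\{*'\}}$ supplies $[T(\bar{d_1})] = \exp(\rho h)[T(\bar{d_0})]$ and the claim follows. If $\sharp \mathrm{Cross}_1(d_1) > 0$, I select any $Q \in \mathrm{Cross}_1(d_1)$ lying outside $D$ (which exists, because the only crossing inside $D$ is the twist, which is not in $\mathrm{Cross}_1$). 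By Proposition \ref{prop_induction} ($N$, $N'+1$) (1) I may compute $\bar{\psi}$ using $Q$, producing diagrams of strictly smaller $(N, N')$-complexity to which the inductive hypothesis for parts (3) and (5) at lower complexity applies term-by-term. The same reasoning handles $d'_1$.

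In the second case, where the twist lies on $d_{\mathrm{path}}$, the twist's self-crossing is in $\mathrm{Cross}_1$, and by part (1) I resolve exactly this crossing. The skein step produces a switched diagram $d_1^{\mathrm{sw}}$ (a negative twist on the path, whose self-crossing under the opposite sign no longer demands further resolution) and a smoothed diagram $d_1^{\mathrm{sm}}$, equal to $d_0$ together with a disjoint trivial small loop lying inside $\bar{d}$. After reducing $d_1^{\mathrm{sm}}$ to the base clause and invoking the trivial-knot relation of $\tskein{\Sigma, J^-\setminus\{*\}, J^+\setminus\{*'\}}$, I obtain $\bar{\psi}(d_1^{\mathrm{sm}}) = \tfrac{2\sinh(\rho h)}{h}\bar{\psi}(d_0)$. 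For $d_1^{\mathrm{sw}}$ the reduction reaches the base clause with the path component carrying a single negative twist, and the regular-homotopy relation $R_1 = \exp(\rho h) R_0$ of $\mathcal{P}(\Sigma, \{*\}, \{*'\})$ (applied in its inverse form for the negative twist) gives $\bar{\psi}(d_1^{\mathrm{sw}}) = \exp(-\rho h) \bar{\psi}(d_0)$. Combining via part (2),
\begin{equation*}
\bar{\psi}(d_1) = \bar{\psi}(d_1^{\mathrm{sw}}) + h\, \bar{\psi}(d_1^{\mathrm{sm}}) = \exp(-\rho h)\bar{\psi}(d_0) + 2\sinh(\rho h)\bar{\psi}(d_0) = \exp(\rho h)\bar{\psi}(d_0),
\end{equation*}
using $\exp(-\rho h) + 2\sinh(\rho h) = \exp(\rho h)$; the identical argument handles $d'_1$.

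The main obstacle I anticipate is in the second case, in verifying two orientation-sensitive geometric facts: first, that the oriented Conway smoothing of the self-crossing of an RI positive twist on the path genuinely separates off a disjoint trivial loop rather than reconnecting the path into a longer arc, and second, that after switching the twist's crossing the resulting negative twist sits in the base clause of the $\bar{\psi}$-recursion so that its image in $\mathcal{P}(\Sigma,\{*\},\{*'\})$ carries the factor $\exp(-\rho h)$. Once these geometric points are recorded, everything reduces to the elementary identity $\exp(-\rho h) + 2\sinh(\rho h) = \exp(\rho h)$.
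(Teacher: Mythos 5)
Your proposal is correct and follows essentially the same route as the paper: split according to whether the disk meets $\mathrm{Cross}_1$, reduce the far-from-$D$ crossings by the skein recursion and the inductive hypothesis, and in the remaining case resolve the kink into the negative twist plus a disjoint trivial loop, concluding from $\exp(-\rho h)+2\sinh(\rho h)=\exp(\rho h)$ exactly as the paper does with Proposition \ref{prop_induction} ($N$,$N'+1$)(2)(4). The only imprecision is your claim that the kink's self-crossing lies in $\mathrm{Cross}_1$ for both $d_1$ and $d'_1$ and that an ``identical argument'' handles both: of the two RI presentations in Figure \ref{fig:RI} only one has its kink of a type in Figure \ref{fig_shoumei_group_ring}, and the paper exploits this asymmetry by noting that the other diagram already lies in $\mathcal{D}(N,N')$, so that Proposition \ref{prop_induction} ($N$,$N'$)(3) applies to it directly — which is also the cleanest way to repair your parallel treatment, since for that diagram the crossing-switched kink would gain a $\mathrm{Cross}_1$ crossing and need not stay in $\mathcal{D}(N,N'+1)$.
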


\begin{proof}
We assume $D \cap \mathrm{Cross}_1 (d'_1) = \emptyset$.
We choose $P \in \mathrm{Cross_1} (d'_1)$.
We assume $P$ is a positive crossing.
We denote by $d_1(-1)$, $d'_1(-1)$ and $d_0(-1)$
 tangle diagrams they are identical except for a sufficiently
small neighborhood of $P$, where they are as shown in
Type C(-) in Figure \ref{fig_Conway_triples}
and by $d_1 (0)$, $d'_1(0)$ and $d_0(0)$
tangle diagrams they are identical except for the
 neighborhood of $P$, where they are as shown in
Type C(0) in Figure \ref{fig_Conway_triples}.
We remark that the diagrams
$d_1(-1)$, $d'_1(-1)$, $d_0(-1)$
$d_1 (0)$, $d'_1(0)$ and $d_0(0)$
are element of $\mathcal{D} (N, N')$.
By Proposition \ref{prop_induction} (3) (5) (N,N'),
we have
\begin{align*}
\bar{\psi} (d_1(i))= \bar{\psi} (d'_1 (i))= \exp (\rho h) \bar{\psi} (d_1 (i))
\end{align*}
for $i=0,-1$.
By Proposition \ref{prop_induction} (2)(N,N'+1),
we have
\begin{equation*}
\bar{\psi} (d)= \bar{\psi} (d (-1))+h \bar{\psi} (d(0))
\end{equation*}
for $d=d_1,d'_1, d_0$.
Using the above equations, we have
\begin{equation*}
\bar{\psi} (d_1) =\bar{\psi} (d'_1) =\exp (\rho h) \bar{\psi} (d_0).
\end{equation*}

We assume $P$ is a negative crossing.
We denote by $d_1(1)$, $d'_1(1)$ and $d_0(1)$
 tangle diagrams they are identical except for a sufficiently
small neighborhood of $P$, where they are as shown in
Type C(+) in Figure \ref{fig_Conway_triples}
and by $d_1 (0)$, $d'_1(0)$ and $d_0(0)$
tangle diagrams they are identical except for the
 neighborhood of $P$, where they are as shown in
Type C(0) in Figure \ref{fig_Conway_triples}.
Proceeding in the same way upto sign,
we have
\begin{equation*}
\bar{\psi} (d_1(i))= \bar{\psi} (d'_1 (i))= \exp (\rho h) \bar{\psi} (d_1 (i)), \ \
\bar{\psi} (d)= \bar{\psi} (d (1))-h \bar{\psi} (d(0)),
\end{equation*}
for $i=0,1$ and $d=d_1,d'_1, d_0$.
Using the above equations, we have
\begin{equation*}
\bar{\psi} (d_1) =\bar{\psi} (d'_1) =\exp (\rho h) \bar{\psi} (d_0).
\end{equation*}

We assume $D \cap \mathrm{Cross}_1 (d'_1) \neq \emptyset$.
By Proposition \ref{prop_induction} ($N$,$N'$)(3), we have
$\bar{\psi}(d_1) =\exp(\rho h) \bar{\psi}(d_0)$.
Let $d''_1$ be the diagram which is identical except for the disk,
where it is the mirror of the right of Figure \ref{fig:RI}.
By Proposition \ref{prop_induction} ($N$,$N'$)(3), we have
$\bar{\psi}(d''_1) =\exp(-\rho h) \bar{\psi}(d_0)$.
By Proposition \ref{prop_induction} ($N$, $N'+1$ )(2)(4), we have
\begin{align*}
&\bar{\psi}(d'_1)=\bar{\psi}(d''_1)+h \frac{\sinh ( \rho h)}{h} \bar{\psi}(d_0) \\
&=\exp (-\rho h) \bar{\psi} (d_0)+(\exp(\rho h)-\exp(-\rho 
h)) \bar{\psi} (d_0) =\exp(\rho h)(d_0).
\end{align*}
This proves the lemma.
\end{proof}

\begin{lemm}[Proposition \ref{prop_induction} ($N$, $N'+1$ )(6)]
\begin{enumerate}
\item Let $d_1, d_2$ and $d_3$ be three tangle diagrams
which are identical except for a disk $D$. We suppose that they differ as shown in
the right, the center  and the left of Figure \ref{fig:RIIi}, respectively,
in the disk.
Then we have $\bar{\psi} (d_1) =\bar{\psi}(d_2) = \bar{\psi}(d_3)$.
\item  Let $d_1, d_2$ and $d_3$ be three tangle diagrams
which are identical except for a disk $D$. We suppose that they differ as shown in
the right, the center  and the left of Figure \ref{fig:RIIii}, respectively,
in the disk.
Then we have $\bar{\psi} (d_1) =\bar{\psi}(d_2) = \bar{\psi}(d_3)$.
\end{enumerate}
These imply Proposition \ref{prop_induction} ($N$, $N'+1$ )(6).
\end{lemm}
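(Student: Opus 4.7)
The plan is to follow the two-case template of Lemma \ref{lemm_RI}, using Proposition \ref{prop_induction} at level $(N, N')$ as the inductive engine. Throughout, $d_1, d_2, d_3 \in \mathcal{D}(N, N'+1)$ are the three RII-related diagrams, and I treat the RII-i and RII-ii versions in parallel since they differ only in the orientations of the two strands inside $D$, which merely selects which sign convention in the recursive definition of $\bar\psi$ gets triggered.

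Case A: $D \cap \mathrm{Cross}_1(d_1) = \emptyset$, i.e., the two new RII crossings do not involve the distinguished path through $*$. If moreover $\mathrm{Cross}_1(d_1) = \emptyset$, then by definition $\bar\psi(d_i) = [T(\bar d_i)] \otimes d_{i,\mathrm{path}}$. The path factor is identical across $i$ because $D$ is disjoint from the path, and the tangles $\bar d_i$ are RII-related in $\tskein{\Sigma, J^- \setminus \{*\}, J^+ \setminus \{*'\}}$; that RII holds in the HOMFLY-PT skein module is standard, obtained by applying the skein relation at both crossings and cancelling with the framing relation. If instead $\mathrm{Cross}_1(d_1) \neq \emptyset$, pick a crossing $P \in \mathrm{Cross}_1(d_1)$ lying outside $D$ and use the recursive definition $\bar\psi(d_i) = \bar\psi(d_i') \pm h\, \bar\psi(d_i'')$ from Conway resolution at $P$. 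The diagrams $d_i', d_i''$ all belong to $\mathcal{D}(N, N')$ and remain related by the same RII move in the same disk, so Proposition \ref{prop_induction} $(N, N')(6)$ gives $\bar\psi(d_1') = \bar\psi(d_2') = \bar\psi(d_3')$ and likewise for the $d_i''$ terms, producing the desired equalities.

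Case B: $D \cap \mathrm{Cross}_1(d_1) \neq \emptyset$, so at least one strand through $D$ is part of the path, forcing both RII crossings of $d_1$ (and of $d_3$) to lie in $\mathrm{Cross}_1$. I resolve one of the two RII crossings of $d_1$ via the recursive definition, and likewise for $d_3$. The resulting sign-flipped and smoothed diagrams have strictly smaller $\sharp\mathrm{Cross}_1$ (or strictly smaller $\sharp\mathrm{Cross}$) and so lie in $\mathcal{D}(N, N')$ with already-known values. One of the smoothings opens up the RII configuration, producing a diagram that matches a corresponding smoothing of $d_2$ via Case A or via Proposition \ref{prop_induction} $(N, N')(2)(6)$; the other terms pair up, using that the two RII crossings have opposite signs, to cancel in the way the HOMFLY-PT skein relation always cancels RII.

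The main obstacle is the sign bookkeeping in Case B. The two RII crossings are necessarily of opposite sign, and each falls into one of Types 1--6 of Figure \ref{fig_shoumei_group_ring}, each with its own $\pm h$ rule in the recursive definition of $\bar\psi$. The cleanest way to organize the argument is to resolve both RII crossings simultaneously into four terms, identify the pair whose double-smoothing reproduces $\bar\psi(d_2)$ and the pair whose double-sign-flip is amenable to Case A, then verify the four-term cancellation by the same arithmetic that makes RII invariance automatic in the classical HOMFLY-PT skein module, combined with Proposition \ref{prop_induction} $(N, N')(2)$.
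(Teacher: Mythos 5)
Your Case A is essentially the paper's: when $D$ meets neither $\mathrm{Cross}_1(d_1)$ nor $\mathrm{Cross}_1(d_3)$, either the base-case formula applies and the RII move lives entirely in $\bar d$ or in the path factor, or one resolves a crossing outside $D$ and invokes Proposition \ref{prop_induction} $(N,N')(6)$. Note, though, that your case division is stated only in terms of $d_1$; since the two RII crossings of $d_1$ and of $d_3$ lie on opposite sides of the over/under dichotomy, $D\cap\mathrm{Cross}_1(d_1)=\emptyset$ does not force $D\cap\mathrm{Cross}_1(d_3)=\emptyset$, so your Case A as written does not cover all the configurations it claims to.

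The real problem is Case B. Your premise that the disk meeting the path forces \emph{both} RII crossings of $d_1$ \emph{and} of $d_3$ to lie in $\mathrm{Cross}_1$ is false, and it obscures the fact the paper's argument turns on: switching a crossing moves it into or out of $\mathrm{Cross}_1$ (this is the statement $\sharp\mathrm{Cross}_1(d_+)\pm 1=\sharp\mathrm{Cross}_1(d_-)$ recorded just before the recursion), so for an RII pair exactly one of $d_1,d_3$ has its two disk crossings in $\mathrm{Cross}_1$ and the other has none. Consequently one of $d_1,d_3$ — together with $d_2$, which has two fewer crossings — already lies in $\mathcal{D}(N,N')$, and the inductive hypothesis $(N,N')(6)$ gives one of the two equalities $\bar\psi(d_1)=\bar\psi(d_2)$ or $\bar\psi(d_2)=\bar\psi(d_3)$ for free. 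What remains is only $\bar\psi(d_1)=\bar\psi(d_3)$, which follows from the skein and trivial-knot relations already available at level $(N,N'+1)$ by parts (2) and (4). Your alternative — a four-term simultaneous resolution of both crossings with a cancellation "as in the classical HOMFLY-PT skein module" — is not carried out, and as described it is unclear how $\bar\psi(d_2)$ would ever enter, since $d_2$ has no crossings in $D$ to resolve; you would still need the inductive hypothesis to identify one of your resolved terms with $\bar\psi(d_2)$, which is precisely the step your incorrect premise prevents you from seeing. So the proposal is not a complete proof as it stands: fix the case split, replace the false claim about $\mathrm{Cross}_1$ by the correct parity statement, and the argument collapses to the paper's.
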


\begin{proof}
If $D \cap \mathrm{Cross}_1 (d_1) =D \cap \mathrm{Cross}_1 (d_3) =\emptyset$,
we can prove this lemma in a similar way to the proof of  
Lemma \ref{lemm_RI}
in the case $D \cap \mathrm{Cross}_1 (d_1) =\emptyset$.

We assume $D \cap \mathrm{Cross}_1 (d_1) \neq \emptyset$
or $D \cap \mathrm{Cross}_1 (d_3) \neq \emptyset$.
The claims (1) and (2) are proved simultaneously.
Since $d_1,d_2 \in \mathcal{D}(N,N')$ or 
$d_2, d_3 \in \mathcal{D}(N,N')$,
we have $\bar{\psi}(d_1)=\bar{\psi}(d_2)$ or
$\bar{\psi}(d_2)=\bar{\psi}(d_3)$ from
Proposition \ref{prop_induction} ($N$,$N'$)(6).
By Proposition \ref{prop_induction} ($N$, $N'+1$ )(2)
and Proposition \ref{prop_induction} ($N$, $N+1$)(3),
we can use  the skein relation and the trivial knot relation.
By computation using the skein relation and the trivial knot relation,
we obtain $\bar{\psi} (d_1) =\bar{\psi} (d_3)$.
This proves the lemma.

\end{proof}

\begin{lemm}[Proposition  \ref{prop_induction} ($N$, $N'+1$ )(7)]
Let $d_1$ and $d_2$  be two tangle diagrams
which are identical except for a disk $D$.
If  they differ as shown in
the right and the left one of 
RIII-a, RIII-a', RIII-b, RIII-b', 
RIII-c, RIII-c', RIII-d and RIII-d' in 
Figure \ref{fig:RIIIs}, respectively, in the disk,
we have $\bar{\psi}(d_1) = \bar{\psi}(d_2)$.
\end{lemm}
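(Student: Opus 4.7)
The plan is to run the same two-case inductive analysis used in the preceding lemmas for RI and RII, now applied to the eight variants RIII-a, \ldots, RIII-d' of RIII. Since an RIII move neither alters the total crossing count nor the count of Type 1--6 crossings, we have $\sharp \mathrm{Cross}(d_1) = \sharp \mathrm{Cross}(d_2)$ and $\sharp \mathrm{Cross}_1(d_1) = \sharp \mathrm{Cross}_1(d_2)$, so it is enough to treat the case in which both diagrams are genuinely in $\mathcal{D}(N, N'+1) \setminus \mathcal{D}(N, N')$. The dichotomy is whether any crossing inside the RIII disk $D$ belongs to $\mathrm{Cross}_1$.

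\textbf{Case A: $D \cap \mathrm{Cross}_1(d_1) = \emptyset$.} If in addition $\mathrm{Cross}_1(d_1) = \emptyset$, then $\bar{\psi}(d_i) = [T(\overline{d_i})] \otimes (d_i)_{\mathrm{path}}$; the path parts coincide because the $*$-strand does not enter $D$, and $[T(\overline{d_1})] = [T(\overline{d_2})]$ in $\tskein{\Sigma, J^{-} \backslash \shuugou{*}, J^{+} \backslash \shuugou{*'}}$ follows from the standard invariance of a HOMFLY-PT type skein module under RIII (derived from the skein relation). Otherwise, pick $P \in \mathrm{Cross}_1(d_1) \setminus D$ and apply the defining skein recursion of $\bar{\psi}$ at $P$, as in the proof of Lemma \ref{lemm_RI}. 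The resulting swapped and smoothed diagrams $d_i'$, $d_i''$ still differ by the same RIII move inside $D$ and both lie in $\mathcal{D}(N, N')$, so Proposition \ref{prop_induction}$(N, N')(7)$ gives $\bar{\psi}(d_1') = \bar{\psi}(d_2')$ and $\bar{\psi}(d_1'') = \bar{\psi}(d_2'')$; adding these back through the recursion yields $\bar{\psi}(d_1) = \bar{\psi}(d_2)$.

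\textbf{Case B: $D \cap \mathrm{Cross}_1(d_1) \neq \emptyset$.} Choose $P \in \mathrm{Cross}_1(d_1) \cap D$ and resolve at $P$. The smoothed diagrams $d_i''$ have $\sharp \mathrm{Cross} = N - 1$ and thus lie in $\mathcal{D}(N, N')$; moreover, because smoothing one crossing of an RIII triangle leaves the remaining two strands either in an RII configuration or, after isotopy, as two parallel arcs, the pair $(d_1'', d_2'')$ is related by an RII move and ambient isotopy on $\Sigma$, so the already-established Proposition \ref{prop_induction}$(N, N'+1)(6)$ gives $\bar{\psi}(d_1'') = \bar{\psi}(d_2'')$. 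The swapped diagrams $d_i'$ have one fewer Type 1--6 crossing and so lie in $\mathcal{D}(N, N')$; one verifies that they are related by an RIII move of a (possibly different) variant among the eight listed in Figure \ref{fig:RIIIs}, and the induction hypothesis $(N, N')(7)$ supplies $\bar{\psi}(d_1') = \bar{\psi}(d_2')$. Reassembling via the defining recursion concludes the proof.

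\textbf{Main obstacle.} The hard part is the verification in Case B that the swapped pair $(d_1', d_2')$ is again related by one of the eight variants of RIII, or by a short sequence of moves whose invariance has already been proved in the current inductive step. This demands, for each of the eight variants RIII-a, \ldots, RIII-d' and each choice of which of the three crossings in the RIII triangle is the one lying in $\mathrm{Cross}_1$, tracing the effect of the swap and identifying the resulting diagrams as another RIII configuration, possibly after applying the parts of Proposition \ref{prop_induction}$(N, N'+1)(2, 3, 5, 6)$ already established. The completeness of the list in Figure \ref{fig:RIIIs} --- closed under swaps of any single crossing, modulo the simpler moves --- is exactly what makes this finite case check succeed.
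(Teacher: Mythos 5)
Your proposal follows essentially the same route as the paper's proof: the same dichotomy on whether the RIII disk meets $\mathrm{Cross}_1(d_1)$, with the empty case reduced to the argument of Lemma \ref{lemm_RI}, and the nonempty case handled by resolving at a $\mathrm{Cross}_1$ crossing of the triangle so that the swapped pair drops into $\mathcal{D}(N,N')$ (where $(N,N')(7)$ applies) while the smoothed pair is either isotopic or related by an RII move covered by $(N,N'+1)(6)$. The "main obstacle" you identify — checking that the list of RIII variants is closed under single-crossing swaps modulo the already-established moves — is exactly the finite case analysis the paper carries out explicitly for RIII-a (distinguishing which of the three triangle crossings lies in $\mathrm{Cross}_1$) and dismisses for the other variants as similar.
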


\begin{proof}
If $D \cap \mathrm{Cross}_1 (d_1) =D \cap \mathrm{Cross}_1 (d_3) =\emptyset$,
we can prove this lemma in a similar way to the proof of  
Lemma \ref{lemm_RI}
in the case $D \cap \mathrm{Cross}_1 (d_1) =\emptyset$.

We prove in the case
$D \cap \mathrm{Cross}_1 (d_1) \neq \emptyset$.
We assume that
$d_1$ and $d_2$  are two tangle diagrams
which are identical for the disk $D$, where they differ as shown in
the right and the left one of 
RIII-a. The other cases are proved similarly.
Let $P_1,P_2$ and $P_3$ be  the crossings of $d_1$ in this disk as Figure
\ref{fig_RIIIs}.
Since $P_1, P_2 \in \mathrm{Cross}(d_1) \backslash \mathrm{Cross}_1 (d_1)$
implies $ P_3 \in \mathrm{Cross}(d_1) \backslash \mathrm{Cross}_1 (d_1)$,
it suffices to show $P_1 \in \mathrm{Cross}_1(d_1)$ or $P_2 \in \mathrm{Cross}_1(d_2)$.

We assume $P_2 \in \mathrm{Cross}_1(d_1)$.
Let $d'_1, d'_2$ and $ d'_3$ be tangle diagrams
which are identical $d_1$ except for the disk,
 where they are  as shown in
Figure \ref{fig_RIIIs}.
By Proposition \ref{prop_induction} ($N$,$N'$)(7), we have
$
\bar{\psi}(d'_1)=\bar{\psi}(d'_2).
$
By Proposition \ref{prop_induction} ($N$, $N'+1$ )(2), we have
\begin{equation*}
\bar{\psi}(d_1) =\bar{\psi}(d'_1)-h\bar{\psi}(d'_3)=\bar{\psi}(d'_2)-h\bar{\psi}(d'_3)=
\bar{\psi}(d_2).
\end{equation*}

We assume $P_1 \in \mathrm{Cross}_1(d_1)$.
Let $d''_1, d''_2, d''_3$ be tangle diagrams
which are identical to $d_1$ except for
 the disk, where they are  
as shown in
Figure \ref{fig_RIIIs}.
By Proposition  \ref{prop_induction} ($N$,$N'$)(7),
we have $\bar{\psi}(d''_2) = \bar{\psi}(d''_4)$.
By Proposition \ref{prop_induction}
 ($N$, $N'+1$ )(2) (6), 
we have
\begin{equation*}
\bar{\psi}(d_1) =\bar{\psi}(d''_2)+h\bar{\psi}(d''_1)=\bar{\psi}(d''_4)+h\bar{\psi}(d''_3)=
\bar{\psi}(d_2).
\end{equation*}
This proves the lemma.
\end{proof}

\begin{figure}
\begin{picture}(300,160)
\put(0,90){\includegraphics[width=1.6cm]{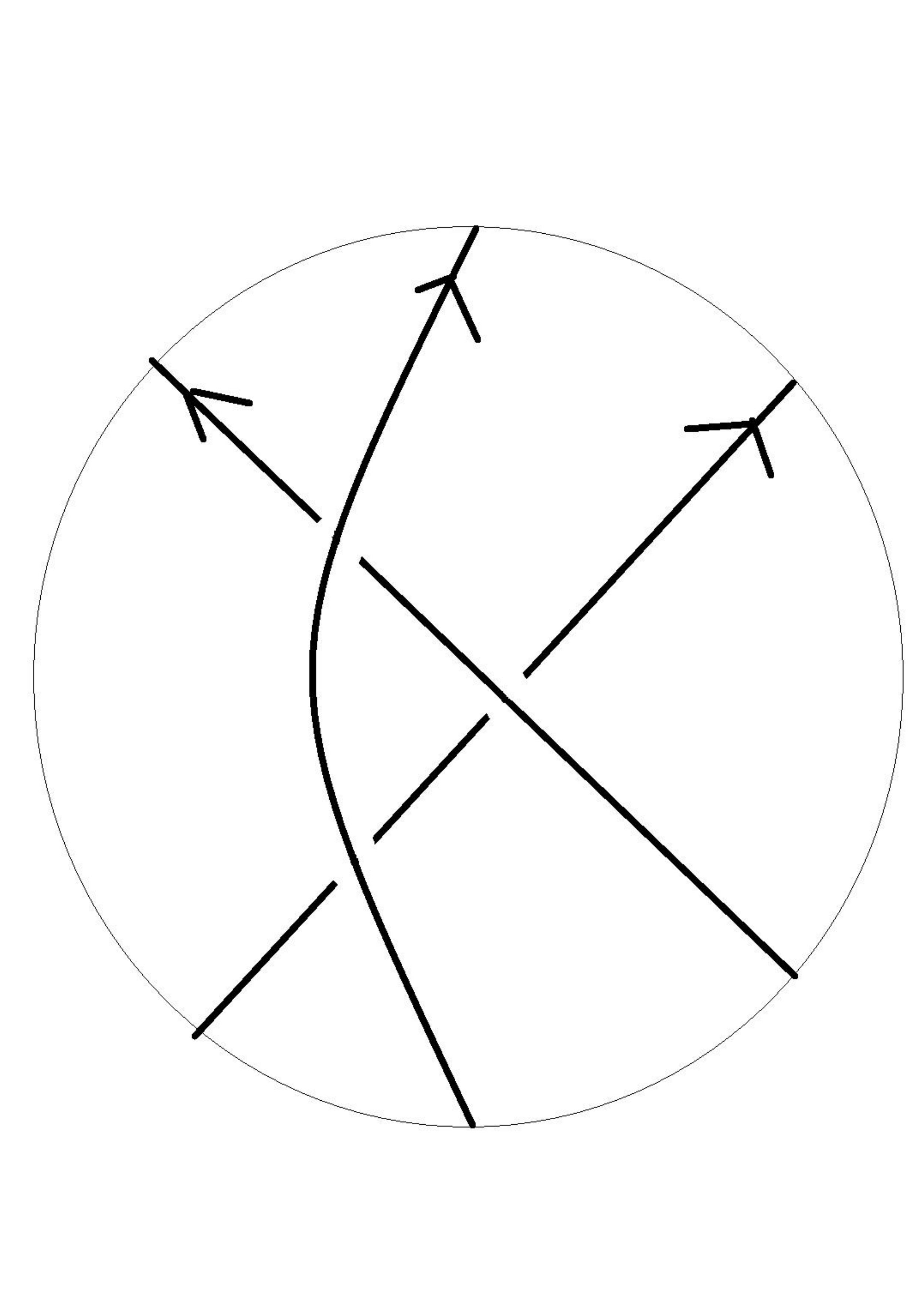}}
\put(75,90){\includegraphics[width=1.6cm]{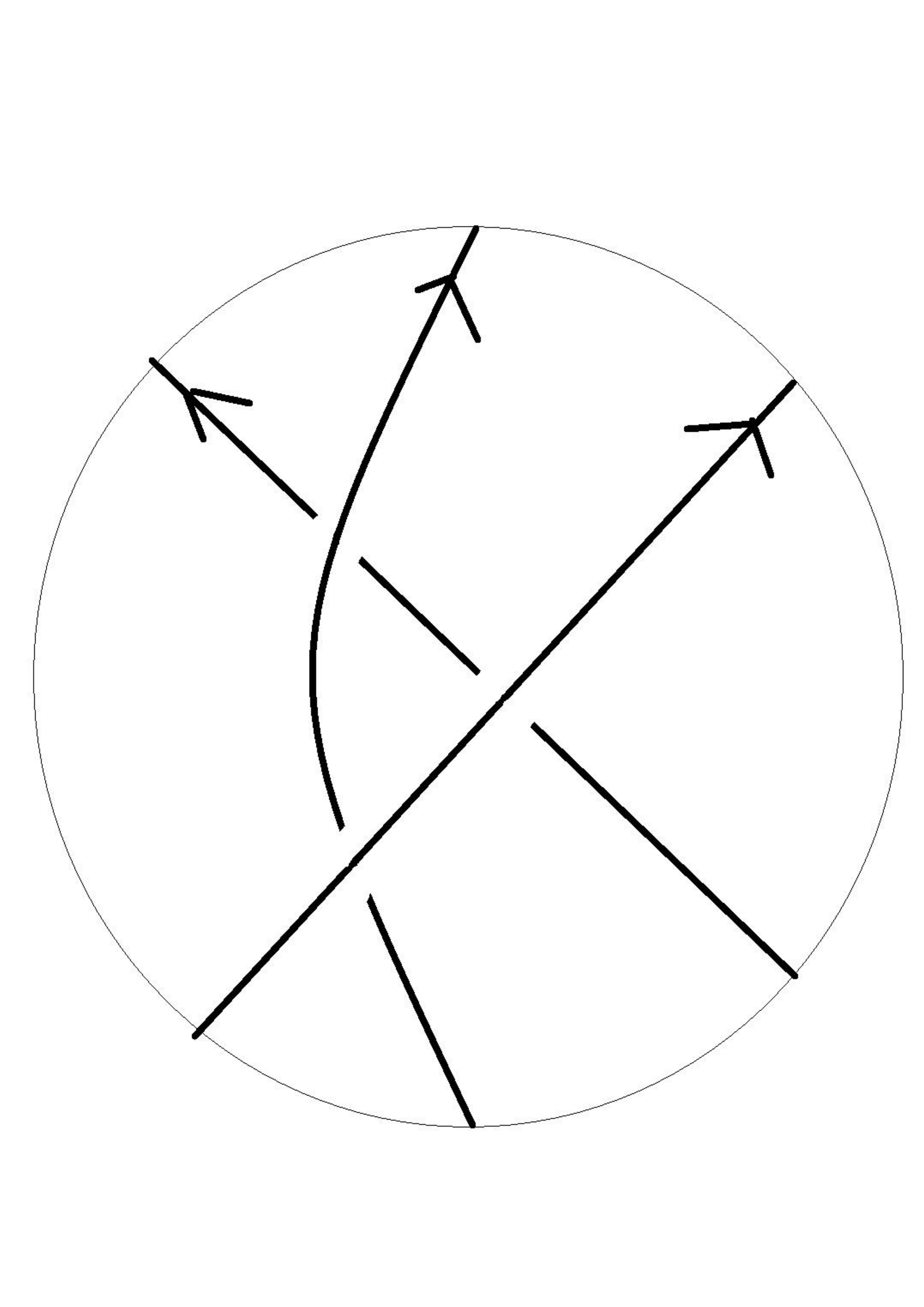}}
\put(150,90){\includegraphics[width=1.6cm]{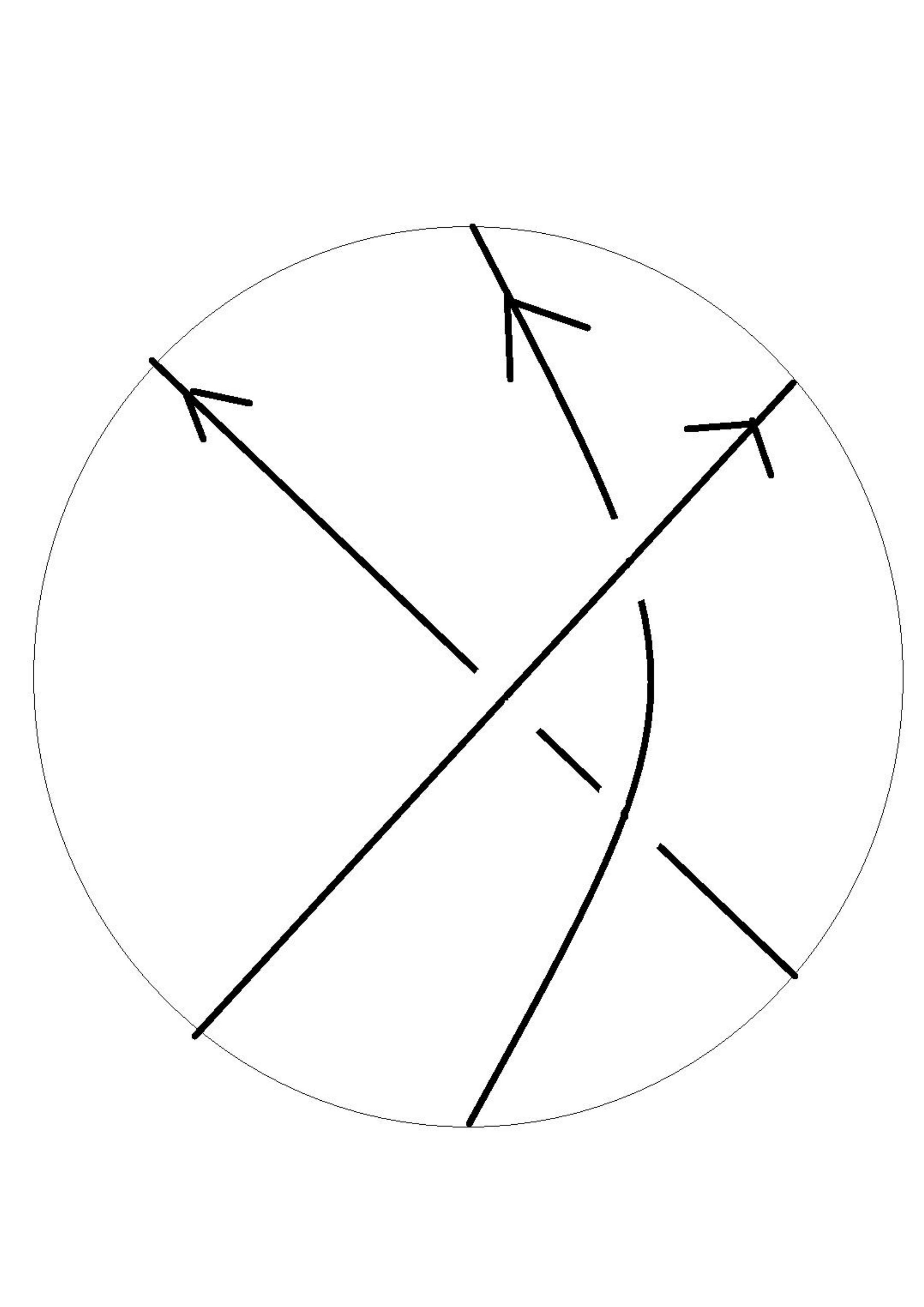}}
\put(225,90){\includegraphics[width=1.6cm]{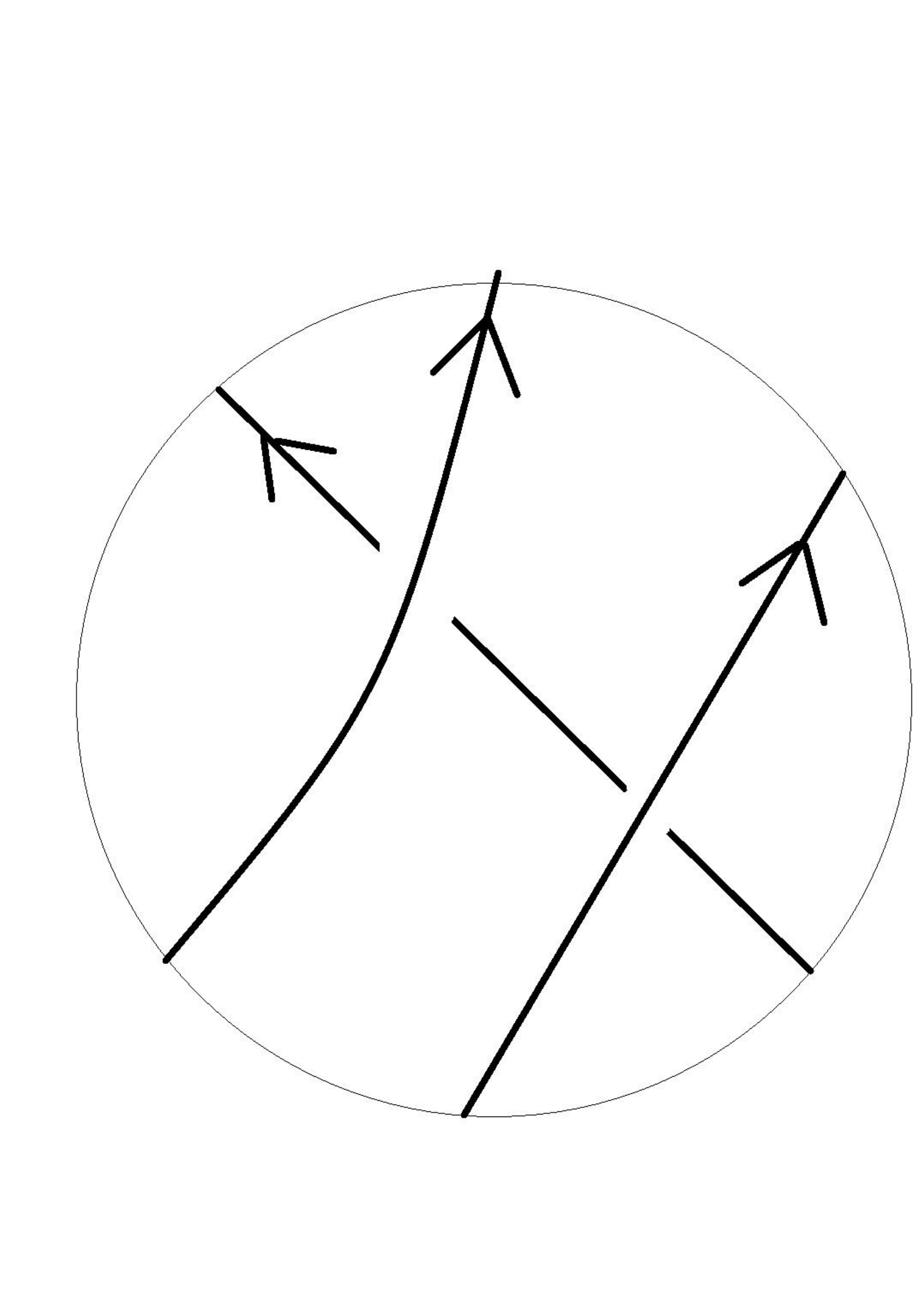}}

\put(0,10){\includegraphics[width=1.6cm]{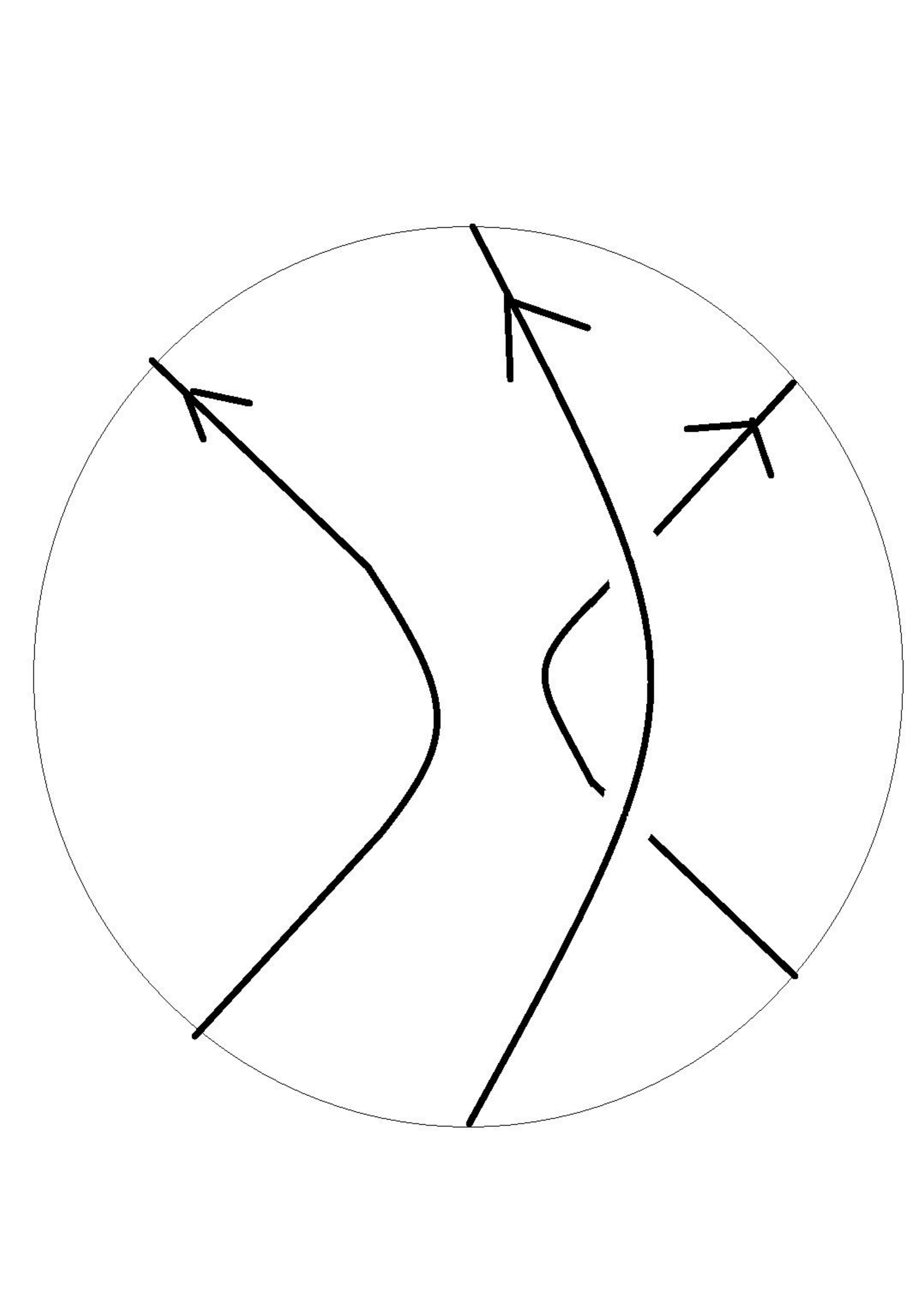}}
\put(75,10){\includegraphics[width=1.6cm]{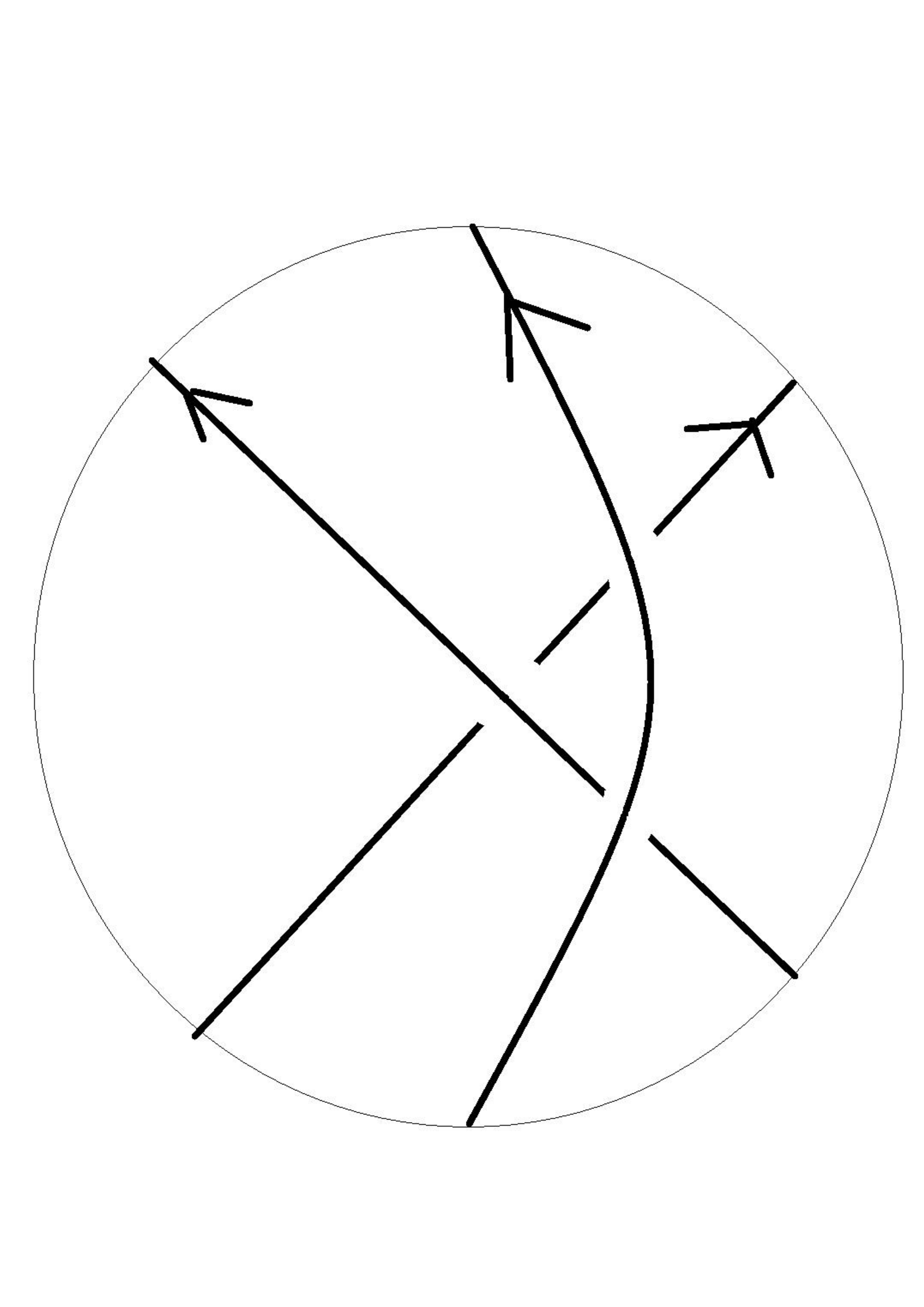}}
\put(150,10){\includegraphics[width=1.6cm]{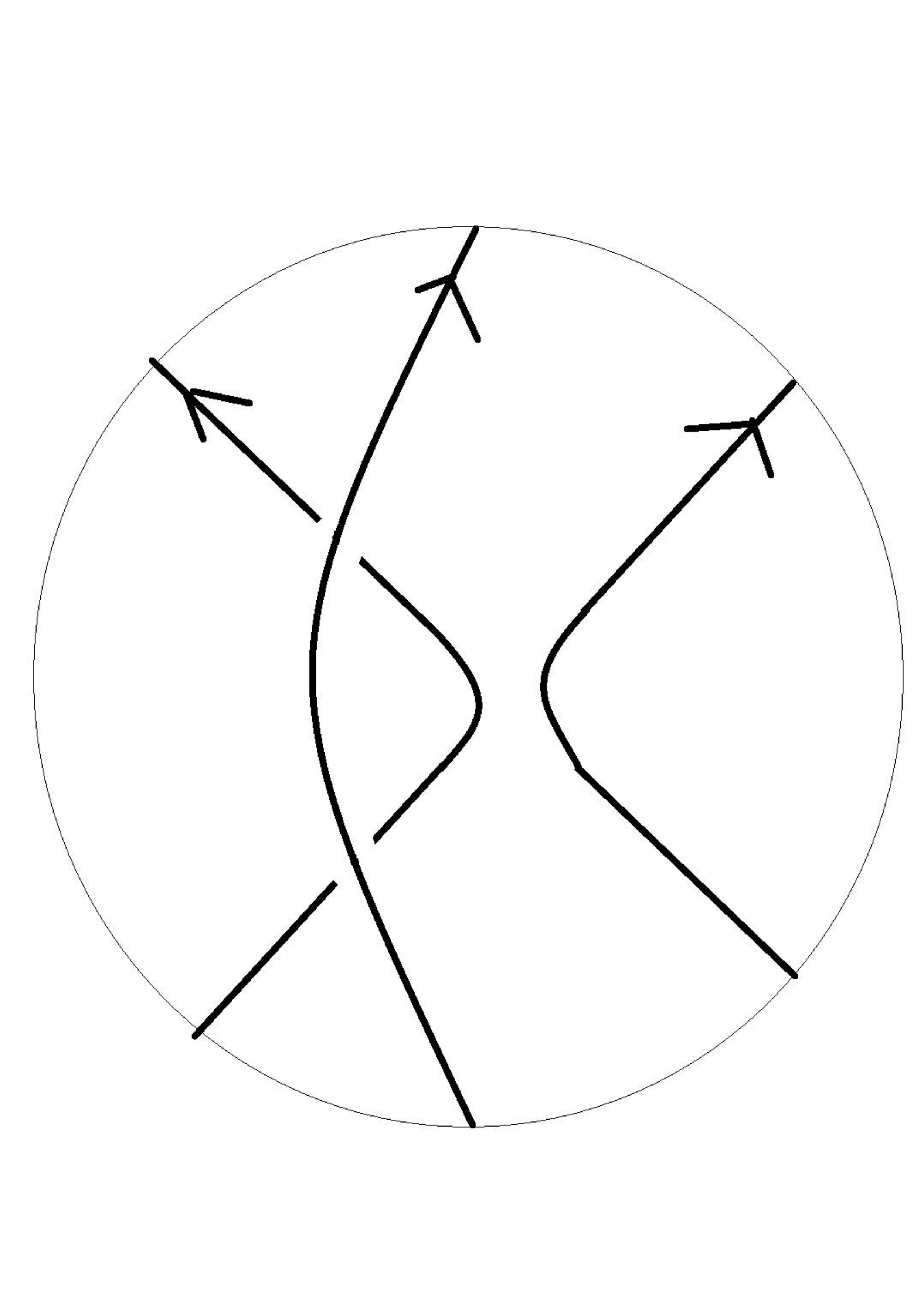}}
\put(225,10){\includegraphics[width=1.6cm]{RIII_o22_PNG.pdf}}

\put(25,80){$d_1$}
\put(100,80){$d'_1$}
\put(175,80){$d'_2$}
\put(250,80){$d'_3$}

\put(25,0){$d''_1$}
\put(100,0){$d''_2$}
\put(175,0){$d''_3$}
\put(250,0){$d''_4$}

\put(27,118){\tiny{$P_1$}}
\put(7,112){\tiny{$P_3$}}
\put(7,124){\tiny{$P_2$}}
\end{picture}
\caption{}
\label{fig_RIIIs}
\end{figure}

\begin{proof}[Proof of Proposition \ref{prop_psi_isom}]
Using the above lemmas, $\bar{\psi}(d)$ induces
\begin{equation*}
\bar{\psi} : \tskein{\Sigma,J^-, J^+} \to
\bigotimes_{ *' \in J^+} \tskein{\Sigma, J^- \backslash 
\shuugou{*}, J^+ \backslash \shuugou{*'}} \otimes_{\Q [\rho][[h]]}
\mathcal{P} (\Sigma,\shuugou{*}, \shuugou{*'}) 
\end{equation*}
by $\bar{\psi} ([T(d)]) =\bar{\psi}(d)$.
By definition, we have
\begin{equation*}
\psi \circ \bar{\psi} =\id_{\tskein{\Sigma,J^-, J^+}},
\bar{\psi} \circ \psi = \id_{\bigotimes_{ *' \in J^+} \tskein{\Sigma, J^- \backslash 
\shuugou{*}, J^+ \backslash \shuugou{*'}} \otimes_{\Q [\rho][[h]]}
\mathcal{P} (\Sigma,\shuugou{*}, \shuugou{*'}) }.
\end{equation*}
This proves the proposition.
\end{proof}

Using Proposition \ref{prop_tskein_disk} and Proposition 
\ref{prop_psi_isom}, we have the following.

\begin{cor}
The map $\frac{h}{2 \sinh{\rho h}} \zettaiti{\cdot}:
\tskein{D, *_1} \to \tskein{D} \simeq \Q [\rho][[h]]$
is an algebra isomorphism.
\end{cor}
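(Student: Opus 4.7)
The plan is to combine Propositions \ref{prop_tskein_disk} and \ref{prop_psi_isom} to show that $\tskein{D, *_1}$ is a free $\Q[\rho][[h]]$-algebra of rank one with unit $[r_1]$, and then to evaluate the closure of this unit using the trivial knot relation. First I would apply Proposition \ref{prop_psi_isom} with $\Sigma = D$, $J^- = \shuugou{*_1}$, $J^+ = \shuugou{*'_1}$, and base point $*_1 \in J^-$ to obtain a $\Q[\rho][[h]]$-module isomorphism
\begin{equation*}
\tskein{D, *_1} \cong \tskein{D} \otimes_{\Q[\rho][[h]]} \mathcal{P}(D, \shuugou{*_1}, \shuugou{*'_1}).
\end{equation*}
Since $D$ is simply connected, the regular homotopy classes of immersed paths from $*_1$ to $*'_1$ form an infinite cyclic set $\pi_1^+(D, \shuugou{*_1}, \shuugou{*'_1}) \cong \Z$ indexed by the rotation number of the tangent vector, and the defining relation $R_1 = \exp(\rho h) R_0$ identifies paths differing by a single positive kink with a factor $\exp(\rho h)$. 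Hence $\mathcal{P}(D, \shuugou{*_1}, \shuugou{*'_1}) \cong \Q[\rho][[h]]$ as a free module of rank one, generated by the class of the straight path $r'_1$.

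Combined with $\tskein{D} \cong \Q[\rho][[h]]$ from Proposition \ref{prop_tskein_disk}, this identifies $\tskein{D, *_1}$ with $\Q[\rho][[h]]$ as a $\Q[\rho][[h]]$-module, the generator corresponding to the algebra unit $[r_1]$. Since every element of $\tskein{D, *_1}$ is then uniquely of the form $f \cdot [r_1]$ with $f \in \Q[\rho][[h]]$, the composition product $\circ$ reduces to multiplication of coefficients, so $\tskein{D, *_1} \cong \Q[\rho][[h]]$ as $\Q[\rho][[h]]$-algebras as well.

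It remains to compute $\zettaiti{[r_1]}$. As depicted in Figure \ref{fig_closure}, the closure of the straight arc $r_1$ is a trivially embedded unknot in $D \times I$ bounding a small disk with trivial framing, so by the trivial knot relation it equals $\frac{2 \sinh(\rho h)}{h}[\emptyset]$ in $\tskein{D}$. By $\Q[\rho][[h]]$-linearity of $\zettaiti{\cdot}$, the normalized map $\frac{h}{2\sinh(\rho h)} \zettaiti{\cdot}$ therefore sends $f \cdot [r_1]$ to $f \cdot [\emptyset]$ for every $f$, which is manifestly a $\Q[\rho][[h]]$-algebra isomorphism $\tskein{D, *_1} \to \tskein{D} \cong \Q[\rho][[h]]$. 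The only mildly subtle point is the identification of $\mathcal{P}(D, \shuugou{*_1}, \shuugou{*'_1})$ as rank one: one must verify that in the disk the kink relation together with regular isotopy collapses the rotation number $\Z$-grading down to the single generator $r'_1$; everything else then follows formally from the preceding propositions.
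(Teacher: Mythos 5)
Your proposal is correct and follows essentially the same route as the paper, which derives this corollary directly from Proposition \ref{prop_psi_isom} (applied to $\Sigma=D$) together with Proposition \ref{prop_tskein_disk}; you simply make explicit the rank-one identification of $\mathcal{P}(D,\shuugou{*_1},\shuugou{*'_1})$ via the kink relation and the evaluation $\zettaiti{[r_1]}=\frac{2\sinh(\rho h)}{h}[\emptyset]$ from the trivial knot relation, which is exactly what the paper leaves implicit.
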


\begin{prop}[\cite{Turaev} Theorem 5.2.]
The skein algebra $\tskein{S^1 \times I}$
is the free $\Q [\rho] [[h]]$-algebra
generated by $\shuugou{\zettaiti{\psi (r^n)}|n \in \Z \backslash \shuugou{0}}$,
where $r$ is a generator of the fundamental group of $S^1 \times I$.
\end{prop}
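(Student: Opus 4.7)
My plan is to establish two complementary statements: (a) that the elements $\zettaiti{\psi(r^n)}$ with $n \neq 0$ generate $\tskein{S^1 \times I}$ as a $\Q[\rho][[h]]$-algebra, and (b) that they satisfy no nontrivial polynomial relation, so the algebra is free on these generators.

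For the generation statement (a), I would argue by a double induction on the number of crossings and on the number of null-homotopic components of a diagram. Given a framed oriented link $L \subset S^1 \times I$ presented by a diagram $d$, each crossing can be resolved by the skein relation $T_+ - T_- = h T_0$, writing $[L]$ as a $\Q[\rho][[h]]$-linear combination of classes whose diagrams have strictly fewer crossings (the $T_0$ term) or the same number of crossings but with the sign switched; iterating, one reduces to the crossingless case. A crossingless diagram on the annulus is a disjoint union of framed simple closed curves, each either null-homotopic or freely homotopic to a power of the core $r$. The framing relation normalizes each framing to the blackboard framing up to a scalar $\exp(k \rho h)$, and the trivial knot relation replaces null-homotopic components by the scalar $\frac{2 \sinh(\rho h)}{h}$. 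What remains is a disjoint union of parallel annular cores, whose skein class equals a monomial in the elements $\zettaiti{\psi(r^n)}$ (using commutativity of the product $\boxtimes$ in the annulus, which follows from vertical stacking being isotopic to swapping heights in $S^1 \times I$).

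For the freeness statement (b), I would invoke Turaev's classical theorem on the annular HOMFLY skein algebra and argue by a comparison of algebras. Turaev's skein algebra over a Laurent polynomial ring in his framing and Conway variables is a free commutative polynomial algebra on the classes of the cores $r^n$, $n \neq 0$. One constructs a $\Q[\rho][[h]]$-algebra homomorphism from $\tskein{S^1 \times I}$ into an appropriate completion of Turaev's algebra, sending $\exp(\rho h)$ to Turaev's framing parameter and $h$ to his Conway variable. This map sends $\zettaiti{\psi(r^n)}$ to Turaev's $n$-th core generator, so algebraic independence in the target implies algebraic independence in the source.

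The main obstacle will be the bookkeeping in the change-of-variables in step (b): one must check that the passage to $\Q[\rho][[h]]$ (with $\exp(\rho h)$ invertible as a formal power series) is faithful enough to preserve polynomial independence, and that the framing and trivial-knot relations of this paper are precisely the specializations of Turaev's. A self-contained alternative, avoiding Turaev, is to filter $\tskein{S^1 \times I}$ by total absolute winding number and reduce modulo $h$; in the associated graded, $\zettaiti{\psi(r^n)}$ becomes the free homotopy class of $r^n$ in the (completed) Goldman-type commutative algebra on conjugacy classes, where algebraic independence is immediate, and one then lifts this to $\tskein{S^1 \times I}$ by a standard filtration argument on $h$.
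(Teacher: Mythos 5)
The paper offers no argument for this proposition at all: it is imported verbatim as Theorem 5.2 of Turaev's paper, so your primary route for freeness (invoking Turaev after matching variables) is exactly what the paper does, and the change-of-variables bookkeeping you flag is indeed the only thing to check there. However, your sketch of the generation step (a) contains a genuine error. An embedded circle in $S^1 \times I$ is either null-homotopic or isotopic to the core, so its free homotopy class is $1$, $r$, or $r^{-1}$ --- never $r^n$ for $\lvert n\rvert \geq 2$, since those classes are not primitive. Consequently a crossingless diagram can only yield monomials in $\zettaiti{\psi(r^{\pm 1})}$ and scalars, and a reduction that terminated at crossingless diagrams would show the algebra is generated by two elements, contradicting freeness on the infinite set $\shuugou{\zettaiti{\psi(r^n)}}_{n \neq 0}$. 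The correct normal form is not ``crossingless'' but ``totally descending'': one orders the components, chooses basepoints, and inducts on the number of crossings together with the number of non-descending crossings; the skein relation lets you switch crossings modulo terms with strictly fewer crossings, and a totally descending diagram is isotopic in $S^1 \times I \times I$ to a stacked product of the standard spirals $\psi(r^n)$ (which for $\lvert n\rvert \geq 2$ still have crossings) and trivial components. Your stated induction measure (``crossings and null-homotopic components'') also does not obviously decrease under crossing switches, so the termination argument needs this repair as well.

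Your proposed self-contained alternative for (b) has a second gap. Knowing that the monomials are linearly independent after reduction modulo $h$ (where the algebra degenerates to the symmetric algebra on conjugacy classes of $\pi_1(S^1\times I)=\Z$) does not by itself lift to freeness over $\Q[\rho][[h]]$: if a nonzero polynomial $P$ in the generators vanished in $\tskein{S^1\times I}$, you could only conclude $P \in h\cdot\tskein{S^1\times I}$ and would need to divide by $h$ to continue, which presupposes that $h$ is a non-zero-divisor on the skein module. That $h$-torsion-freeness is essentially the content of Turaev's theorem, not something available in advance, and the skein module is not $h$-adically complete in a way that would let a Nakayama-type argument substitute for it. So the ``standard filtration argument on $h$'' is circular as stated, and the citation of Turaev remains the only complete route offered.
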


Using this proposition and Proposition \ref{prop_psi_isom}, 
we have the following.

\begin{cor}
The skein module $ \tskein{S^1 \times I, J^-,J^+}$
is a free $\Q [\rho][[h]]$-module.
\end{cor}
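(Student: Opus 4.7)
The plan is to induct on $n = \sharp (J^-) = \sharp (J^+)$, using Proposition \ref{prop_psi_isom} as the engine that peels off one boundary point at each stage, and invoking the Turaev-style description of $\tskein{S^1 \times I}$ for the base case.

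For the base case $n=0$, the proposition just preceding this corollary identifies $\tskein{S^1 \times I}$ with the free $\Q[\rho][[h]]$-algebra on the countable set $\shuugou{\zettaiti{\psi (r^n)}\mid n \in \Z \backslash \shuugou{0}}$. A free (commutative, since concentric annuli commute) algebra on a set of generators is, as a module, the polynomial module with basis the set of (unordered) monomials in the generators, so $\tskein{S^1 \times I}$ is a free $\Q[\rho][[h]]$-module.

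For the inductive step, pick $* \in J^-$ and apply Proposition \ref{prop_psi_isom} to obtain
\begin{equation*}
\tskein{S^1 \times I,J^-,J^+} \;\cong\; \bigoplus_{*' \in J^+} \tskein{S^1\times I, J^- \backslash \shuugou{*}, J^+ \backslash \shuugou{*'}} \otimes_{\Q [\rho][[h]]} \mathcal{P}(S^1\times I,\shuugou{*},\shuugou{*'}).
\end{equation*}
By the induction hypothesis each first tensor factor is a free $\Q[\rho][[h]]$-module, so after passing to the tensor product it is enough to show that each $\mathcal{P}(S^1\times I,\shuugou{*},\shuugou{*'})$ is itself a free $\Q[\rho][[h]]$-module; since a tensor product of two free modules over a commutative ring is free, and a direct sum of free modules is free, this will conclude the induction.

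It remains to analyze $\mathcal{P}(S^1\times I,\shuugou{*},\shuugou{*'})$. On an oriented surface the Whitney classification shows that the forgetful map $\pi^+_1(\Sigma,*,*') \to \pi_1(\Sigma,*,*')$ (from regular homotopy classes of immersed paths to ordinary homotopy classes) is a surjection whose fibers are $\Z$-torsors, with $\Z$ acting by insertion of positive twists of the form on the left of Figure \ref{fig_regular_relation}. The framing relation $R_1 = \exp(\rho h) R_0$ in the definition of $\mathcal{P}$ collapses precisely this $\Z$-action: choosing one representative in each ordinary homotopy class yields a $\Q[\rho][[h]]$-basis of $\mathcal{P}(\Sigma,\shuugou{*},\shuugou{*'})$. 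For $\Sigma = S^1 \times I$ the set $\pi_1(S^1\times I,*,*')$ is a $\Z$-torsor (indexed by winding number), hence countable, so $\mathcal{P}(S^1\times I,\shuugou{*},\shuugou{*'})$ is a free $\Q[\rho][[h]]$-module of countably infinite rank. The main obstacle is verifying this last claim rigorously, i.e.\ exhibiting a well-defined rotation-number invariant $\pi_1^+ \to \Z$ compatible with the framing relation so that no unintended identifications occur; once that is in hand the induction closes immediately.
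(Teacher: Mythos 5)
Your proposal is correct and follows essentially the same route as the paper, which derives the corollary directly from Turaev's theorem for $\tskein{S^1\times I}$ together with Proposition \ref{prop_psi_isom} without writing out further details. Your induction on $\sharp(J^-)$ and the observation that $\mathcal{P}(S^1\times I,\shuugou{*},\shuugou{*'})$ is free with basis indexed by $\pi_1(S^1\times I,*,*')$ (via the Whitney rotation-number classification collapsing the kink relation) are precisely the steps the paper leaves implicit.
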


\subsection{Poisson-like structure} 
In this section, we define a Lie bracket $[ \ \ , \ \ ]:\tskein{\Sigma} \times
\tskein{\Sigma} \to \tskein{\Sigma}$
and an action $\sigma :\tskein{\Sigma} \times \tskein{\Sigma,J^-,J^+}
\to \tskein{\Sigma,J^-,J^+}$
which makes $\tskein{\Sigma,J^-,J^+}$ a $\tskein{\Sigma}$-module.
Here we consider $\tskein{\Sigma}$ as a Lie algebra.

\begin{df}
Let $J^+,J^-,J'^+$ and $J'^-$ be four disjoint finite subsets of $\partial \Sigma$
with $\sharp (J^+) =\sharp (J^-)$ and 
$\sharp(J'^+) =\sharp(J'^-)$.
We choose tangle diagrams $d_1$ and $d_2$ which present elements of 
$\mathcal{T}(\Sigma,J^-.J^+)$ and $ \mathcal{T}
(\Sigma,J'^-,J'^+)$, respectively, and
suppose $d_1\cup d_2$ has at worst transverse
double points. 
Choose an order of $d_1\cap d_2= \shuugou{P_1, P_2, \cdots , P_m}$.
We denote by $d (\epsilon'_1, \epsilon'_2, \cdots , \epsilon'_m)$
the tangle diagram which are identical except for
sufficiently small neighborhoods such that $d (\epsilon'_1, \cdots ,\epsilon'_m)$
as shown in Type C(+), Type C(-) or Type C(0)
in Figure \ref{fig_Conway_triples}
in the neighborhood of $P_i$
if $\epsilon'_i =1,-1,0$, respectively, for $i =1,2, \cdots,m$.
We denote by $\epsilon_j$ 
the local intersection number of $d_1$ and $d_2$ at $P_j$.
We define
\begin{equation*}
\sigma (d_1, d_2)  \defeq \sum_{j=1}^m \epsilon_j [T(d(-\epsilon_1, \cdots ,-\epsilon_{j-1},0,
\epsilon_{j+1}, \cdots, \epsilon_m))]
\in \tskein{\Sigma,J^-\cup J'^-, J^+ \cup J'^+}.
\end{equation*}

\end{df}

We remark that $h \sigma (d_1,d_2) =[T(d_1)] \boxtimes [T(d_2)]-
[T(d_2)] \boxtimes [T(d_1)]$.

\begin{lemm}
\begin{enumerate}
\item The element $\sigma (d_1,d_2)$ is independent of the choice of the order 
of $d_1 \cap d_2$.
\item We have $\sigma (d_1,d_2) =-\sigma (d_2,d_1)$.
\item Let  $d'_1$ and $d'_2$ be tangle diagrams which are identical to
$d_1$ and $d_2$, respcectively,
 except for a disk, where they are as shown in Fig \ref{fig_regular_RIIa}
or Fig \ref{fig_regular_RIIb}.
This move corresponds to the Reidemeister move II in knots or tangles.
Then we have $\sigma (d_1,d_2) =\sigma (d'_1,d'_2)$.
\item Let  $d'_1$ and $d'_2$ be tangle diagrams which are identical to
$d_1$ and $d_2$, respcectively,
 except for a disk, where they are as shown in Fig \ref{fig_regular_RIII}.
This move corresponds to the Reidemeister move III in knots or tangles.
Then we have $\sigma (d_1,d_2) =\sigma (d'_1,d'_2)$.
\item Let $J^+,J^-,J'^+,J'^-,J''^+$ and $J''^-$  be
 mutually disjoint finite subsets of $\partial \Sigma$
with $\sharp (J^+) =\sharp (J^-)$, 
$\sharp(J'^+) =\sharp(J'^-)$ and 
$\sharp(J''^+) =\sharp(J''^-)$.
We choose $d_1$, $d_2$ and $d_3$ which present elements
$\mathcal{T}(\Sigma,J^-.J^+)$, $ \mathcal{T}
(\Sigma,J'^-,J'^+)$ and $\mathcal{T}(\Sigma,J''^-,J''^)$, respectively, and
suppose $d_1\cup d_2 \cup d_3$
 has at worst transverse
double points. 
Then we have $\sigma (\sigma(d_1,d_2),d_3) =\sigma (d_1, \sigma(d_2,d_3))-
\sigma (d_2, \sigma (d_1,d_3))$.
\end{enumerate}
\end{lemm}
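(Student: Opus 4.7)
The argument hinges on the identity
\begin{equation*}
h\,\sigma(d_1,d_2)=[T(d_1)]\boxtimes[T(d_2)]-[T(d_2)]\boxtimes[T(d_1)]
\end{equation*}
stated in the remark preceding the lemma, obtained by applying the skein relation at the crossings in $d_1\cap d_2$ one at a time. The right hand side depends only on the isotopy classes $T(d_1)$ and $T(d_2)$, so once multiplication by $h$ is known to be injective on $\tskein{\Sigma,J^-\cup J'^-,J^+\cup J'^+}$, most of the assertions will be automatic. I will first treat (1) and (2) directly, then use the associative structure for (3)--(5).

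For (1) I compare the sums associated to two orderings of $d_1\cap d_2$ differing by a transposition of adjacent crossings $P_i\leftrightarrow P_{i+1}$; only the pivot terms at these two indices change. A direct computation using the skein relation at $P_i$ (resp.\ at $P_{i+1}$) rewrites the change at each pivot as $\epsilon_i\epsilon_{i+1}h\,[T(d(\ldots,0,0,\ldots))]$, and the two contributions appear with opposite signs, giving zero. For (2), now that the sum is order-independent, I reorder the sum defining $\sigma(d_2,d_1)$ as $P_m,P_{m-1},\ldots,P_1$; since the local intersection number of $d_2$ with $d_1$ at $P_j$ equals $-\epsilon_j$, each reindexed term matches a term of $\sigma(d_1,d_2)$ with the opposite sign.

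Each of the moves in Figures~\ref{fig_regular_RIIa}, \ref{fig_regular_RIIb} and \ref{fig_regular_RIII} alters $d_1$ and $d_2$ only by local isotopies of their individual strands inside the disk, so $T(d_i)=T(d'_i)$ as framed oriented tangles for $i=1,2$. The displayed identity then yields $h\,\sigma(d_1,d_2)=h\,\sigma(d'_1,d'_2)$, and (3) and (4) follow by cancelling $h$. For (5), applying the identity twice on each side, the equation $\sigma(\sigma(d_1,d_2),d_3)=\sigma(d_1,\sigma(d_2,d_3))-\sigma(d_2,\sigma(d_1,d_3))$ becomes, after multiplication by $h^2$, the associative Jacobi identity $[[a,b],c]=[a,[b,c]]-[b,[a,c]]$ with $a=[T(d_1)]$, $b=[T(d_2)]$, $c=[T(d_3)]$, which holds in any associative algebra; cancelling $h^2$ yields (5).

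The principal obstacle is thus the injectivity of multiplication by $h$ on $\tskein{\Sigma,J^-\cup J'^-,J^+\cup J'^+}$. I would establish this by iterating Proposition~\ref{prop_psi_isom} to reduce to $h$-torsion freeness of $\tskein{\Sigma}$ and of $\mathcal{P}$, then verify the latter two by a normal-form argument: choose representatives for oriented framed links in $\Sigma\times I$ modulo the skein and framing relations and show the resulting $\Q[\rho][[h]]$-module is free. Alternatively, assertions (3)--(5) can be proved directly by a combinatorial case analysis at the common crossings of the relevant diagrams, bypassing any cancellation of $h$; this is considerably more tedious but remains self-contained inside the skein module.
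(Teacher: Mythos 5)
Your argument for parts (1) and (2) coincides with the paper's: the transposition computation via the skein relation at the two pivot crossings, and the reversal of the order together with the sign flip of the local intersection numbers. For (3)--(5), however, you take a genuinely different route, and it has a gap you yourself flag but do not close. Everything there rests on cancelling $h$ (respectively $h^2$) in the identity $h\,\sigma(d_1,d_2)=[T(d_1)]\boxtimes[T(d_2)]-[T(d_2)]\boxtimes[T(d_1)]$, i.e.\ on multiplication by $h$ being injective on $\tskein{\Sigma,J^-\cup J'^-,J^+\cup J'^+}$. This is nowhere established in the paper, and your proposed reduction does not reach it: Proposition \ref{prop_psi_isom} peels off one strand at a time only when $J^-\neq\emptyset$, so iterating it leaves you with $h$-torsion-freeness of $\tskein{\Sigma}$ itself (the closed-link part), which is the substance of Turaev's freeness theorem for HOMFLY-type skein modules of surfaces --- the paper cites it only for $S^1\times I$, and in any case it would have to be re-proved for the present coefficients $\Q[\rho][[h]]$ and the modified framing and trivial-knot relations. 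Invoking an unproved structural theorem to derive a lemma that the paper needs at the very foundation of its Lie structure is circular in spirit and, as written, incomplete.

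The paper avoids this entirely: for (3) and (4) it uses part (1) to put the two relevant crossings first in the order and then checks by hand that the smoothed diagrams coincide (e.g.\ $T(d(0,1,\ldots))=T(d(1,0,\ldots))$, with the framing relation needed in the case of Figure \ref{fig_regular_RIIb}), and for (5) it writes out both sides as explicit double sums over the three families of crossings $d_1\cap d_2$, $d_1\cap d_3$, $d_2\cap d_3$ with carefully chosen orders and matches terms. Your ``fallback'' of a direct combinatorial case analysis is exactly this, but you give no details for it. If you want to keep your cleaner algebraic derivation of (3)--(5), you must first prove that the skein modules in question are $h$-torsion-free (a normal-form/freeness argument for $\tskein{\Sigma}$ over $\Q[\rho][[h]]$); otherwise you should carry out the direct verification. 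Note also that the Jacobi-type identity $[[a,b],c]=[a,[b,c]]-[b,[a,c]]$ you use is correct, so once torsion-freeness is in place the rest of your argument for (5) does go through.
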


\begin{figure}[htbp]
	\begin{tabular}{rrr}
	\begin{minipage}{0.5\hsize}
\begin{picture}(160,80)
\put(0,0){\includegraphics[width=70pt]{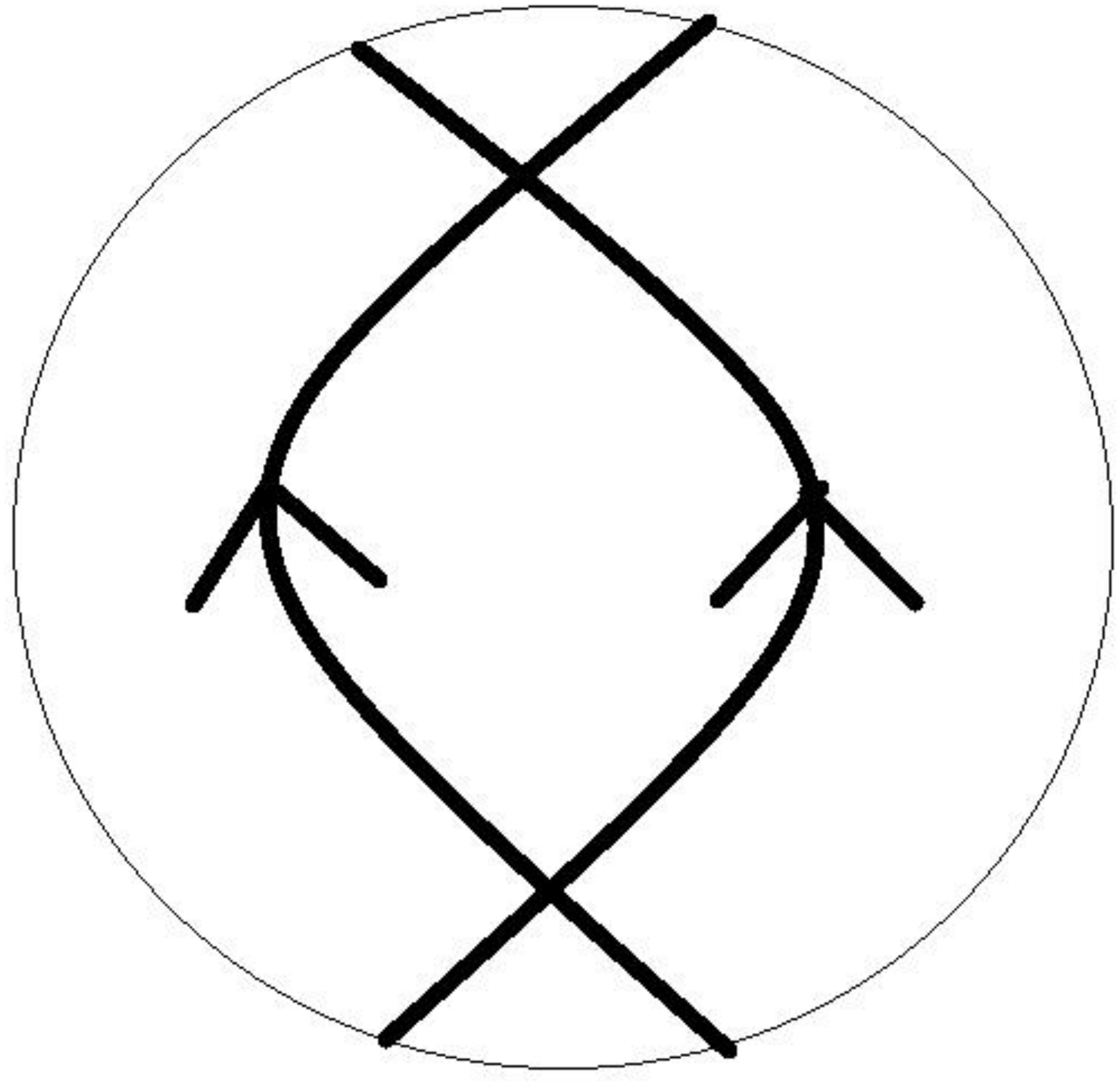}}
\put(80,0){\includegraphics[width=70pt]{oar20_PNG.pdf}}
\put(43,25){$P_1$}
\put(43,65){$P_2$}
\put(24,40){$d_1 d_2$}
\put(94,40){$d'_1 \ \ \ \ \ \ d'_2$}
\end{picture}
\caption{}
\label{fig_regular_RIIa}
	\end{minipage}
	\begin{minipage}{0.5\hsize}
\begin{picture}(160,80)
\put(0,0){\includegraphics[width=70pt]{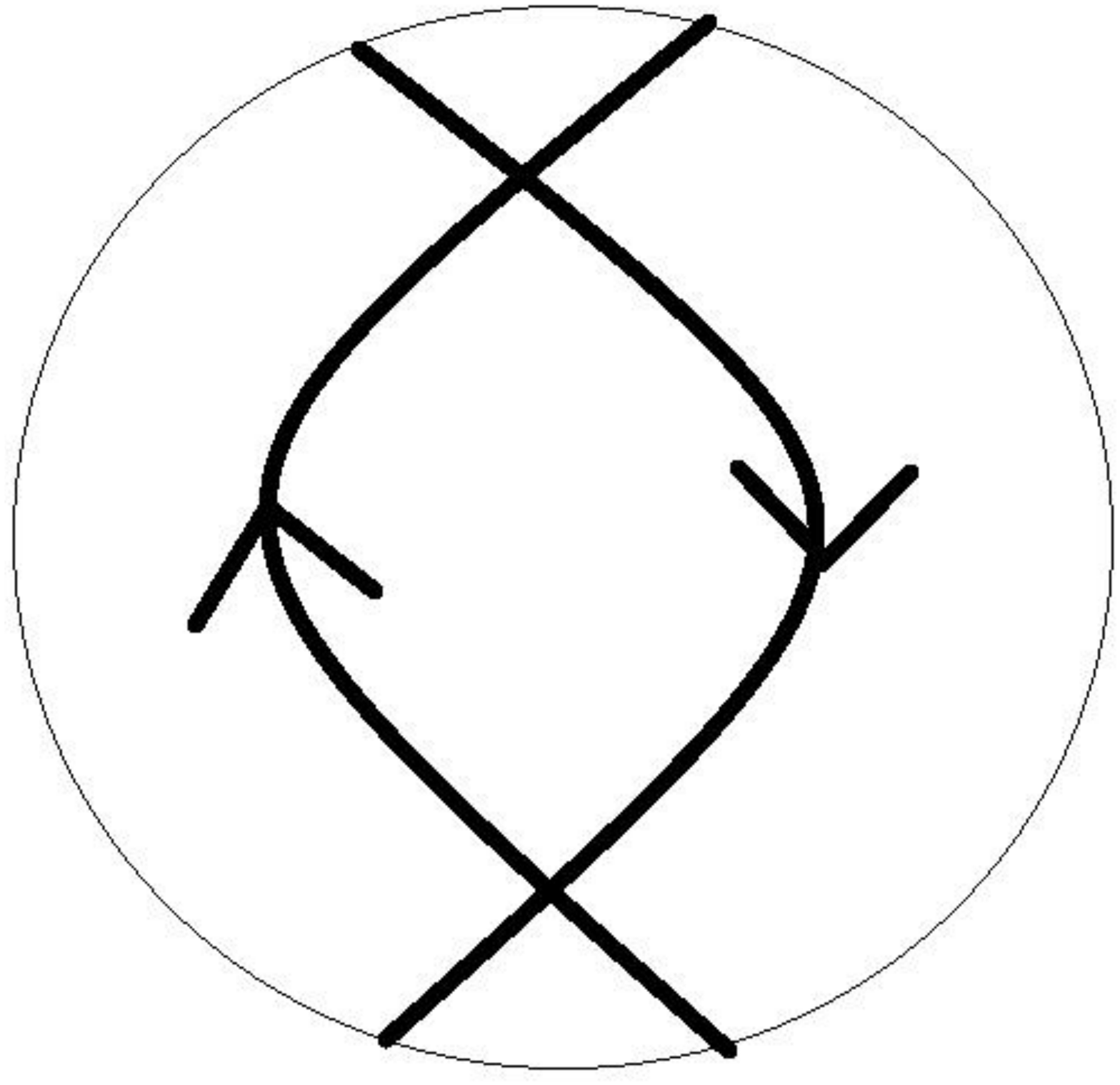}}
\put(80,0){\includegraphics[width=70pt]{obr20_PNG.pdf}}
\put(43,25){$P_2$}
\put(43,65){$P_1$}
\put(24,40){$d_1 d_2$}
\put(94,40){$d'_1 \ \ \ \ \ \ d'_2$}
\end{picture}
\caption{}
\label{fig_regular_RIIb}
	\end{minipage}
	\end{tabular}
\end{figure}

\begin{figure}
\begin{picture}(160,80)
\put(0,0){\includegraphics[width=70pt]{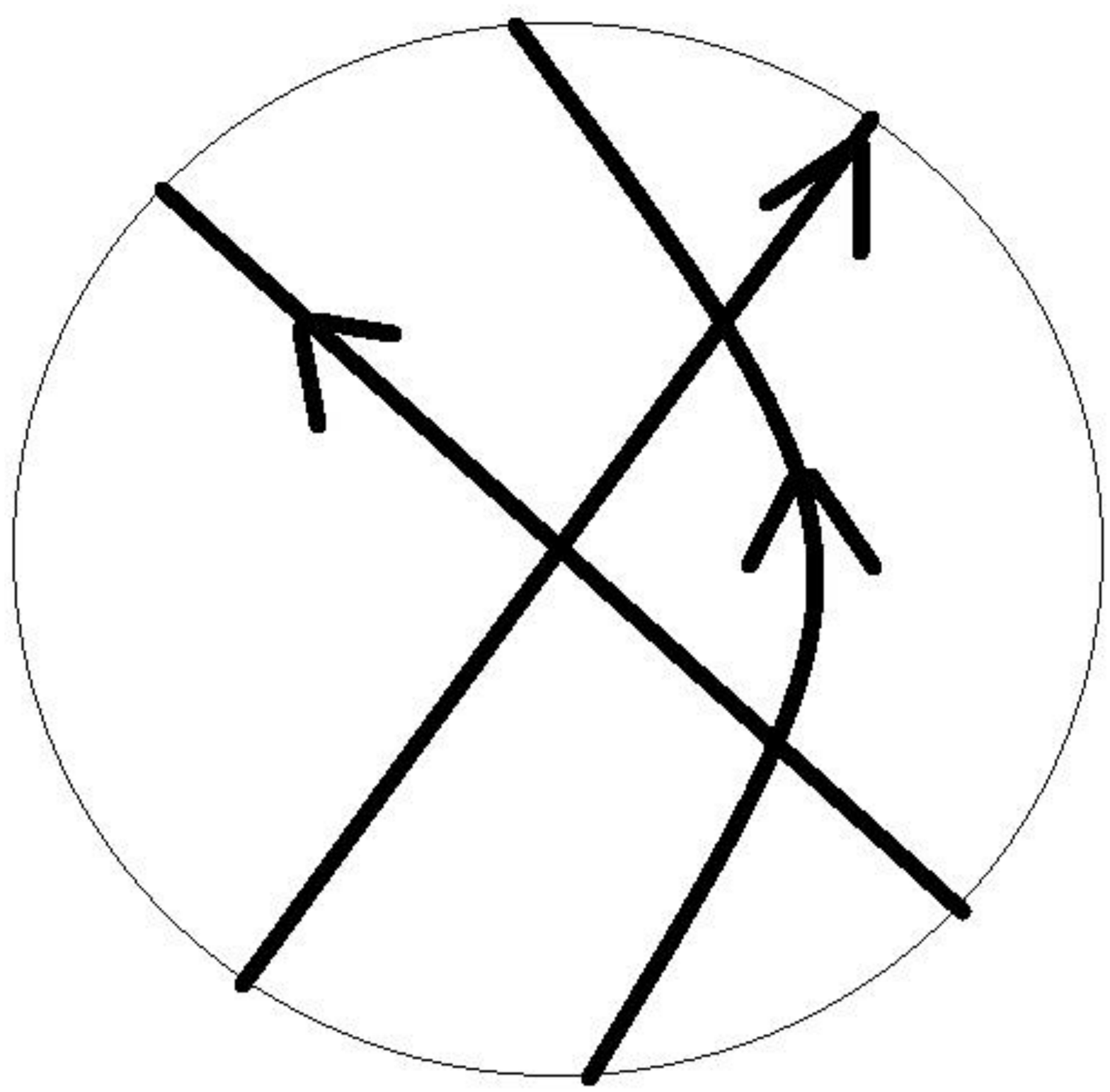}}
\put(80,0){\includegraphics[width=70pt]{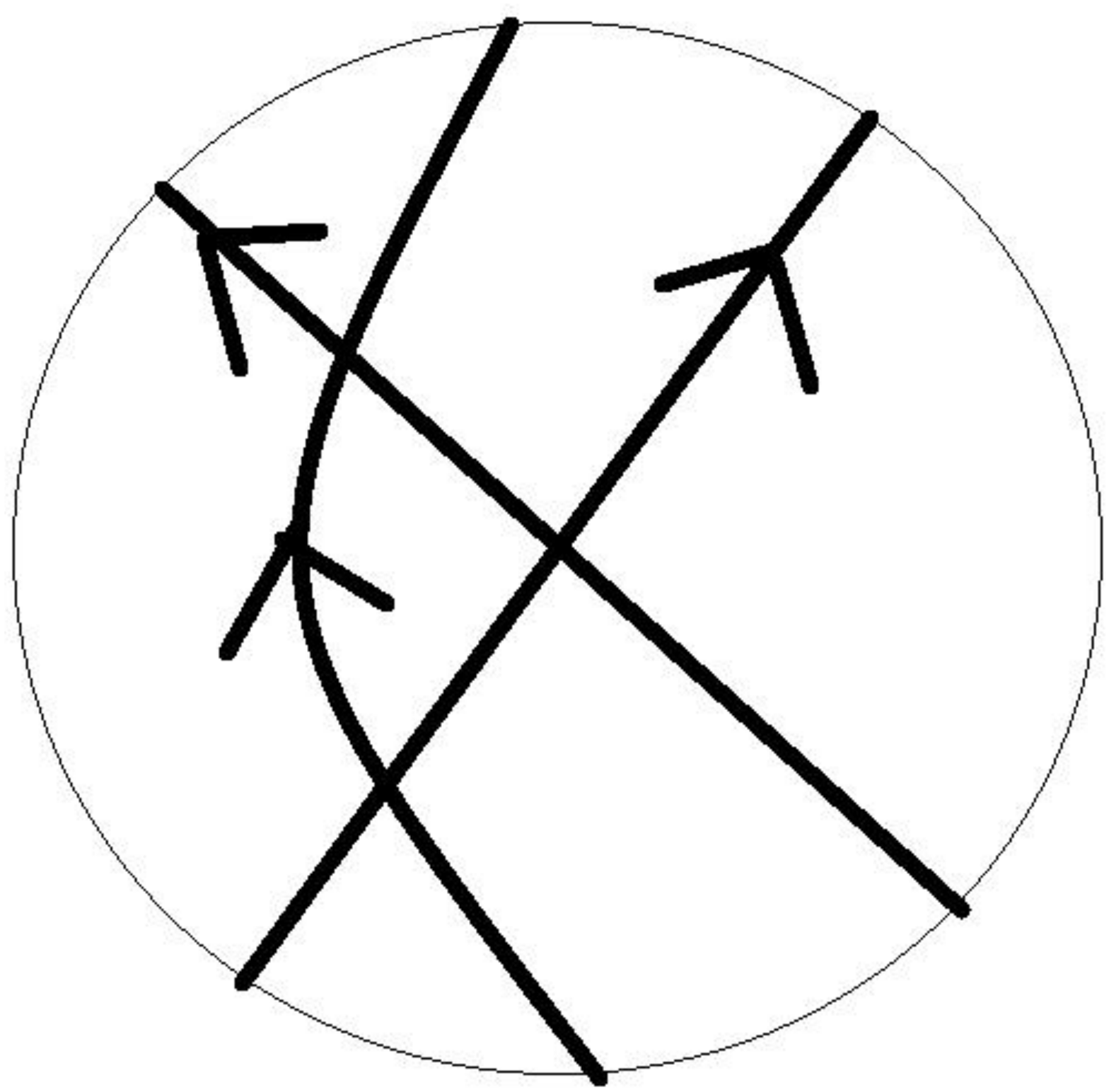}}
\put(53,35){$P_1$}
\put(53,55){$P_2$}
\put(24,45){$d_2$}
\put(35,10){$d_1$}
\put(90,35){$P'_2$}
\put(90,52){$P'_1$}
\put(123,45){$d'_2$}
\put(115,10){$d'_1$}
\end{picture}
\caption{}
\label{fig_regular_RIII}
\end{figure}

\begin{proof}[Proof of (1)]
It suffices to show that 
\begin{align*}
&\epsilon_j [T(d (\epsilon'_1, \cdots ,\epsilon'_m)_{|\epsilon'_i =\epsilon_i, \epsilon'_j=0})]
+
\epsilon_i [T(d (\epsilon'_1, \cdots ,\epsilon'_m)_{|\epsilon'_i =0, \epsilon'_j=-\epsilon_j})] \\
&=\epsilon_j [T(d (\epsilon'_1, \cdots ,\epsilon'_m)_{|\epsilon'_i =-\epsilon_i, \epsilon'_j=0})]
+
\epsilon_i [T(d (\epsilon'_1, \cdots ,\epsilon'_m)_{|\epsilon'_i =0, \epsilon'_j=\epsilon_j})] .\\
\end{align*}
By skein relation, we have
\begin{align*}
&\epsilon_j [T(d (\epsilon'_1, \cdots ,\epsilon'_m)_{|\epsilon'_i =\epsilon_i, \epsilon'_j=0})]
-
\epsilon_j [T(d (\epsilon'_1, \cdots ,\epsilon'_m)_{|\epsilon'_i =-\epsilon_i, \epsilon'_j=0})] \\
&=\epsilon_i [T(d (\epsilon'_1, \cdots ,\epsilon'_m)_{|\epsilon'_i =0, \epsilon'_j=\epsilon_j})]
-
\epsilon_i [T(d (\epsilon'_1, \cdots ,\epsilon'_m)_{|\epsilon'_i =0, \epsilon'_j=-\epsilon_j})] \\
&=\epsilon_i \epsilon_j
[T(d (\epsilon'_1, \cdots ,\epsilon'_m)_{|\epsilon'_i =0, \epsilon'_j=0})].
\end{align*}
This prove the lemma (1).
\end{proof}

\begin{proof}[Proof of (2)]
Choosing the inverse order $d_2 \cap d_1 
=\shuugou{P_m, \cdots , P_1}$,
$\sigma (d_2, d_1)$ can be written as
\begin{equation*}
\sigma (d_2, d_1)  \defeq \sum_{j=1}^m -\epsilon_j [T(d(-\epsilon_1, \cdots ,-\epsilon_{j-1},0,
\epsilon_{j+1}, \cdots, \epsilon_m))]
\in \tskein{\Sigma,J^-\cup J'^-, J^+ \cup J'^+}.
\end{equation*}
Since
\begin{equation*}
\sigma (d_1, d_2)  \defeq \sum_{j=1}^m \epsilon_j [T(d(-\epsilon_1, \cdots ,-\epsilon_{j-1},0,
\epsilon_{j+1}, \cdots, \epsilon_m))]
\in \tskein{\Sigma,J^-\cup J'^-, J^+ \cup J'^+},
\end{equation*}
we obtain $\sigma (d_1, d_2) =-\sigma (d_2, d_1)$.
This proves the lemma (2).
\end{proof}

\begin{proof}[Proof of (3)]
By this lemma (1), we can suppose that
$P_1$ and $P_2$ are in the disk such as
in Figure \ref{fig_regular_RIIa}
or Figure \ref{fig_regular_RIIb}.
It suffices to show 
\begin{align*}
 [T(d (0,1, \epsilon'_3, \cdots ,\epsilon'_m))]
= [T(d (1,0, \epsilon'_3, \cdots ,\epsilon'_m))]. \\
\end{align*}
The equation is obvious.
We remark  that we need the framing relation
if $d_1$ and $d_2$ are as shown in Fig \ref{fig_regular_RIIb}.
This proves the lemma (3).
\end{proof}

\begin{proof}[Proof of (4)]
By this lemma (1), we can suppose that
$P_1$ and  $P_2$ are in the disk
such as in Figure \ref{fig_regular_RIII}.
Let $P'_1$ and $P'_2$ be two points
such as in Figure \ref{fig_regular_RIII}.
Fix the order  $d'_1\cap d'_2 = \shuugou{P'_1, P'_2,P'_3 \cdots , P'_m}$
where $P_i= P'_i$ for $i=3, \cdots, m$.
We denote by $d' (\epsilon'_1, \epsilon'_2, \cdots , \epsilon'_m)$
the tangle diagram which are identical except for
the neighborhoods such that $d' (\epsilon'_1, \cdots ,\epsilon'_m)$
as shown in 
Type C(+), Type C(-) or Type C(0)
in Figure \ref{fig_Conway_triples}
in the neighborhood of $P'_i$
if $\epsilon'_i =1,-1,0$, respectively, for $i =1,2, \cdots,m$.
We have
\begin{align*}
&T(d (0,-1,\epsilon'_3, \cdots , \epsilon'_m))=
T(d' (0,-1, \epsilon'_3\cdots , \epsilon'_m)), \\
&T(d (-1,0,\epsilon'_3, \cdots , \epsilon'_m))=
T(d' (-1,0, \epsilon'_3, \cdots , \epsilon'_m)), \\
&T(d (-1,1,\epsilon'_3, \cdots , \epsilon'_m))=
T(d' (-1,1,\epsilon'_3, \cdots , \epsilon'_m)), \\
\end{align*}
for any $\epsilon'_i \in  \shuugou{-1,0,1}$.
This proves the lemma (4).

\end{proof}

\begin{proof}[Proof of (5)]
Let  $P_1, P_2, \cdots , P_m$  be the crossings of $d_1 \cap d_2$,
$P_{m+1}, \cdots ,P_{m+m'}$  the crossings of $d_1 \cap d_3$
and $P_{m+m'+1}, \cdots ,P_{m+m'+m''}$
the crossings of $d_2 \cap d_3$.
We denote by $d' (\epsilon'_1, \epsilon'_2, \cdots , \epsilon'_m)$
the tangle diagram which are identical except for
the neighborhoods such that $d' (\epsilon'_1, \cdots ,\epsilon'_m)$
as shown in
Type C(+), Type C(-) or Type C(0)
in Figure \ref{fig_Conway_triples}
in the neighborhood of $P'_i$
if $\epsilon'_i =1,-1,0$, respectively, for $i =1,2, \cdots,m$.
We denote by $\epsilon_j$ 
the local intersection number of $d_1$ and $d_2$
, $d_1$ and $d_3$ or $d_2$ and $d_3$ at $P_j$
for $j =1,2, \cdots ,m+m'+m''$.

In this proof, we simply denote
\begin{align*}
T(i, \emptyset , j) \defeq
&T(d(-\epsilon_1, \cdots, -\epsilon_{i-1},0, 
\epsilon_{i+1}, \cdots, \epsilon_m, 
\epsilon_{m+1}, \cdots, \epsilon_{m+m'}, \\
&-\epsilon_{m+m'+1}, \cdots,
-\epsilon_{j-1}, 0, \epsilon_{j+1}, \cdots, 
 \epsilon_{m+m'+m''})), \\
T(i,j ,\emptyset) \defeq
&T(d(-\epsilon_1, \cdots, -\epsilon_{i-1},0, 
\epsilon_{i+1}, \cdots, \epsilon_m, \\
&-\epsilon_{m+1}, \cdots, -\epsilon_{j-1},0, 
\epsilon_{j+1}, \cdots, \epsilon_{m+m'}, -\epsilon_{m+m'+1}, \cdots, 
-\epsilon_{m+m'+m''})), \\
T( \emptyset, i,j) \defeq
&T(d(-\epsilon_1, \cdots, - \epsilon_m, 
-\epsilon_{m+1}, \cdots,-\epsilon_{i-1}, 0,\epsilon_{i+1}, \cdots,
 \epsilon_{m+m'}, \\
&-\epsilon_{m+m'+1}, \cdots,
-\epsilon_{j-1}, 0, \epsilon_{j+1}, \cdots, 
 \epsilon_{m+m'+m''})).
\end{align*}
Choose the order $d_1 \cap d_2 = \shuugou{P_1, P_2, \cdots ,P_m}$
and the order $(d_1 \cup d_2) \cap d_3 = \shuugou{P_{m+m'+1}, P_{m+2}, \cdots ,P_{m+m'+m'},
P_{m+1}, \cdots, P_{m+m'}}$
By definition, we have
\begin{align*}
\sigma(\sigma(d_1,d_2),d_3) =&\sum_{i  \in \shuugou{1, \cdots, m}, j \in \shuugou{m+m'+1, \cdots, m+m'+m''}} \epsilon_i \epsilon_j [T(i, \emptyset, j)] \\
&+\sum_{i  \in \shuugou{1, \cdots, m}, j \in \shuugou{m+1, \cdots, m+m'}} \epsilon_i \epsilon_j [T(i,j \emptyset)]. \\
\end{align*}

Choose the order $d_2 \cap d_3 = \shuugou{P_{m+m'+1}, P_{m+m'+2}, \cdots ,P_{m+m'+m''}}$
and the order $(d_2 \cup d_3) \cap d_1 = \shuugou{P_{1}, P_{2}, \cdots ,P_{m+m'}}$.
By definition, we have
\begin{align*}
\sigma(d_1,\sigma(d_2,d_3)) 
&=\sum_{i  \in \shuugou{1, \cdots, m}, j \in \shuugou{m+m'+1, \cdots, m+m'+n''}} \epsilon_i \epsilon_j [T(i, \emptyset ,j)] \\
&+\sum_{i  \in \shuugou{m+1, \cdots, m+m'}, j \in \shuugou{m+m'+1, \cdots, m+m'+m''}} \epsilon_i \epsilon_j [T(\emptyset,i,j)]. \\
\end{align*}

Choose the order $d_1 \cap d_3 = \shuugou{P_{m+1}, P_{m+1}, \cdots, P_{m+m'}}$ and
the order 
$(d_1 \cup d_3) \cap d_2 = \shuugou{P_{m+m'+1},  P_{m+m'+2}, \cdots, P_{m+m'+m''},
P_m, P_{m-1}, \cdots, P_2, P_1}$.
By definition, we have
\begin{align*}
\sigma(d_2,\sigma(d_1,d_3)) 
&=\sum_{i  \in \shuugou{m+1, \cdots, m+m'}, j \in \shuugou{m+m'+1, \cdots, m+m'+m''}} \epsilon_i \epsilon_j [T(\emptyset,i,j)] \\
&-\sum_{i  \in \shuugou{1, \cdots, m}, j \in \shuugou{m+1, \cdots, m+m'}} \epsilon_i \epsilon_j [T(i,j, \emptyset)]. \\
\end{align*}

From the above equations, we obtain
\begin{equation*}
\sigma(\sigma(d_1,d_2),d_3)=\sigma(d_1,\sigma(d_2,d_3))
-\sigma (d_2, \sigma(d_1,d_3)).
\end{equation*}
This proves the lemma (5).
\end{proof}

By the above lemma(1)(3)(4),  we define
\begin{align*}
[[T(d_1)],[T(d_2)]]=\sigma(d_1,d_2), \\
\sigma ([T(d_1)])([T(d_3)]) = \sigma (d_1,d_3).
\end{align*}
for tangle diagrams $d_1 $ and $d_2$
presenting elements of $\mathcal{T}(\Sigma)$
and a tangle diagram $d_3$
presenting an element of $\mathcal{T}(\Sigma,J^-,J^+)$.
By the above lemma (2) (5), we have
\begin{align*}
&[x_1,x_2]=-[x_2,x_1], \\
&[x_1,[x_2,x_3]]+[x_2,[x_3,x_1]]+[x_3,[x_1,x_2]]=0, \\
&\sigma ([x_1,x_2])(z)= \sigma (x_1)(\sigma(x_2)(z))-\sigma (x_2)(\sigma(x_1)(z)). \\
\end{align*}
So the bracket $[ \ \ , \ \ ]:\tskein{\Sigma} \times
\tskein{\Sigma} \to \tskein{\Sigma}$ is a Lie bracket and
the action $\sigma :\tskein{\Sigma} \times \tskein{\Sigma,J^-,J^+}
\to \tskein{\Sigma,J^-,J^+}$ makes
 $\tskein{\Sigma,J^-,J^+}$ a $\tskein{\Sigma}$-module.
Here we consider $\tskein{\Sigma}$ as a Lie algebra.

Let $d_1$, $d_2$ and $d_3$ be tangle diagrams presenting knots in
$\Sigma \times I$.
We choose an order $d_1 \cap (d_2 \cup d_3)= \shuugou{P_1, \cdots , P_m,P'_1, 
\cdots, P'_{m'}}$ such that $P_i \in d_1 \cap d_2$ and $P'_i \in d_1 \cap d_3$.
Then we have the Leibniz rule:
\begin{equation*}
[x,yz]=[x,y]z+y[x,z].
\end{equation*}
In a similar way, we obtain the Leibniz rules:
\begin{align*}
&[xy,z]=x[y,z]+[x,z]y, \\
&\sigma (x)(y v_1)=[x,y]v_1+y \sigma(x)(v_1), \\
&\sigma(x)(v_1 y)=\sigma (x)(v_1)y+v_1 [x,y], \\
&\sigma (x)(v_1 v_2) =v_1 \sigma(x)(v_2)+\sigma(x)(v_1)v_2,
\end{align*}
for $x,y,z \in \tskein{\Sigma}$, $v_1 \in \tskein{\Sigma,*_\beta,*_\gamma}$ and
$v_2 \in \tskein{\Sigma,*_\alpha,*_\beta}$.
Let $J^+,J^-,J'^+$ and $J'^-$ be mutually disjoint finite subsets of $\partial \Sigma$
with $\sharp (J^+) =\sharp (J^-)$ and 
$\sharp(J'^+) =\sharp(J'^-)$.
Furthermore, we obtain the Leibniz rule:
\begin{equation*}
\sigma (x)(v_1 \boxtimes v_2)=\sigma (x)(v_1) \boxtimes v_2 +
v_1 \boxtimes \sigma (x)(v_2)
\end{equation*}
for $x \in \tskein{\Sigma}$, $v_1 \in \tskein{\Sigma,J^-,J^+}$
and $v_2 \in \tskein{\Sigma, J'^-,J'^+}$.

\section{Filtration and completion}
Let $T \mathcal{P}(\Sigma,*_\alpha)$ be the tensor algebra  $\oplus_{k=0}^\infty
\mathcal{P}(\Sigma,*_\alpha)^{\otimes k}$ of 
$\mathcal{P}(\Sigma,*_\alpha)$
over $\Q [\rho] [[h]]$.
We remark that $\mathcal{P}(\Sigma,*_\alpha)^{\otimes 0} =\Q [\rho][[h]]$.
We consider the augmentation map $\epsilon:
\mathcal{P}(\Sigma,*_\alpha) \to \Q$ defined by
$\epsilon (R) =1, \epsilon (h)=0$
and  the surjective maps 
\begin{align*}
&\psi_{T}: T \mathcal{P}(\Sigma,*_\alpha) \to \tskein{\Sigma}, \\
&\psi_{T,*_\alpha}: T\mathcal{P}(\Sigma,*_\alpha) \otimes 
\mathcal{P}(\Sigma,*_\alpha) \to \tskein{\Sigma, *_\alpha}, \\
&\psi_{T,*_\alpha, *_\beta}:T\mathcal{P}(\Sigma,*_\alpha) \otimes 
\mathcal{P}(\Sigma,*_\alpha, *_\beta) \to \tskein{\Sigma, *_\alpha, *_\beta}, \\
\end{align*}
defined by
\begin{align*}
&\psi_{T} (a_1 \otimes a_2 \otimes \cdots \otimes a_j) =\zettaiti{\psi(a_1)}
\zettaiti{\psi (a_2)} \cdots \zettaiti{\psi (a_j)}, \\
&\psi_{T,*_\alpha}(A \otimes a) =\psi_{T} (A) \psi(a), \\
&\psi_{T,*_\alpha, *_\beta}(A \otimes a) =\psi_{T} (A) \psi(a).
\end{align*}
We define  filtrations  $\filtn{F^n \tskein{\Sigma}}$,
$\filtn{F^n \tskein{\Sigma,*_\alpha}}$ and 
$\filtn{F^n \tskein{\Sigma, *_\alpha, *_\beta}}$
by 
\begin{align*}
&F^n \tskein{\Sigma} = \sum_{2i_0+i_1+i_2+ \cdots +i_j \geq n}
h^{i_0} \psi_{T}( (\ker \epsilon)^{i_1} \otimes (\ker \epsilon)^{i_2}
\otimes \cdots \otimes (\ker \epsilon)^{i_j}), \\
&F^n \tskein{\Sigma,*_\alpha} = \sum_{i_0+i_1 \geq n}
F^{i_0} \tskein{\Sigma} \psi ((\ker \epsilon)^{i_1}), \\
&F^n \tskein{\Sigma,*_\alpha, *_\beta} = \sum_{i_0+i_1 \geq n}
F^{i_0} \tskein{\Sigma} \psi (\mathcal{P}
(\Sigma,*_\alpha, *_\beta)(\ker \epsilon)^{i_1}). \\
\end{align*}
Let $*, *'$ be points of $\partial \Sigma$
and $\chi$ a diffeomorphism
$\Sigma \to \Sigma$ satisfying 
that $\chi (*) =*_\alpha \in \shuugou{*_1, \cdots , *_b}$, $\chi (*')
=*'_\beta\in \shuugou{*'_1, \cdots ,*'_b}$
and $\chi (\partial) = \partial$ for any component $\partial$ of $\partial \Sigma$.
In the case that $* \in \partial \Sigma \backslash
\shuugou{*_1, \cdots , *_b}$ and $*' \in \partial \Sigma \backslash
\shuugou{*'_1, \cdots, *'_b}$, 
we define a filtration $\filtn{F^n\tskein{\Sigma,*,*'}}$ by
\begin{equation*}
F^n \tskein{\Sigma,\shuugou{*},\shuugou{*'}} =\chi (F^n \tskein{\Sigma,*_\alpha, *_\beta}).
\end{equation*}
Let $J^+$ and $J^-$ be finite disjoint subsets of
$\partial \Sigma$ with $\sharp J^+ =\sharp J^-$ and
$*$ an element of $J^-$.
In the case that $\sharp J^-=\sharp J^+ \geq 2$, we define the filtration
$\filtn{\tskein{\Sigma,J^-,J^+}}$ by
\begin{equation*}
F^n \tskein{\Sigma,J^-,J^+} =\sum_{i_0+i_1 \geq n}
\bigoplus_{*' \in J^+} (F^{i_0}\tskein{\Sigma, J^- \backslash \shuugou{*},
J^+ \backslash \shuugou{*'}}) (F^{i_1} \tskein{\Sigma, \shuugou{*}, \shuugou{*'}}).
\end{equation*}

\subsection{The filtrations depend only on the under lying $3$-manifolds}

Let $\Sigma_{0,b+1}$  be a compact connected surface
of genus $0$ with $b+1$ boundary components
and $R_1, R_2, \cdots , R_b$ the elements of $\pi^+_1 (\Sigma,*_1)$
as in Figure \ref{fig_sigma_0_b_r}.
We denote $\eta_{b} \defeq \psi ((R_1-1)(R_2- 1) \cdots
(R_{b-1}- 1)(R_b-1))$. In particular $\eta_0= 1 \in \tskein{\Sigma_{0,1}, *_1}$. Any embedding 
\begin{equation*}
\iota :
\coprod_{1 \leq j \leq M+N} \Sigma_{b_j+1,1} \times I  
\sqcup \coprod_{i} \Sigma_{0,2} \times I \to 
\Sigma \times I
\end{equation*}
satisfying $\iota (\shuugou{*_1 \in \Sigma_{b_j+1,0}|1 \leq j \leq M} \times I)
= J^+ \times [\frac{1}{4}, \frac{3}{4}]$ and 
$\iota (\shuugou{*'_1 \in \Sigma_{b_0+1,0}|1 \leq j \leq M} \times I)
=  J^- \times [\frac{1}{4}, \frac{3}{4}]$
induces $\iota_* : (\oplus_{j=1}^M \tskein{\Sigma_{b_0+1,0}, *_1}) \oplus 
(\oplus_{j=M+1}^N \tskein{\Sigma_{b_j+1,0}}) 
\oplus (\oplus_i \tskein{\Sigma_{2,0}}) \to
\tskein{\Sigma, *_\alpha}$.

\begin{figure}
\begin{picture}(160,80)
\put(-70,-10){\includegraphics[width=270pt]{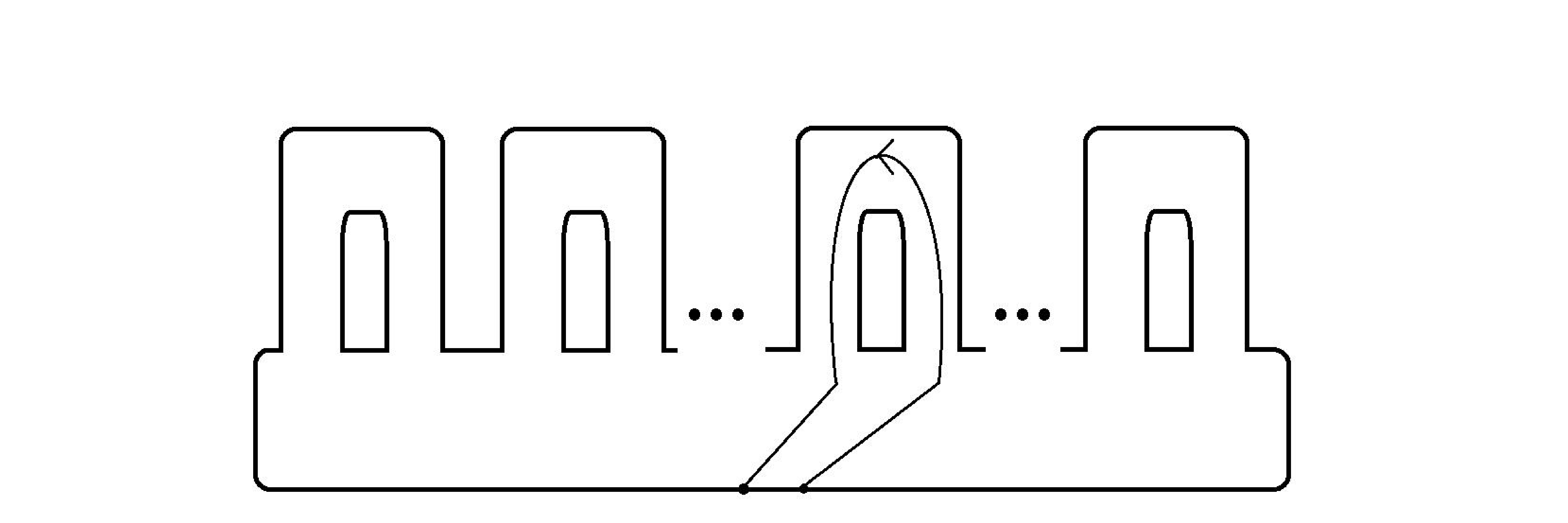}}

\put(52,-12){$*'_1$}
\put(70,-12){$*_1$}

\put(52,10){$R_i$}

\put(-10,60){$1$}
\put(30,60){$2$}
\put(80,60){$i$}
\put(130,60){$b$}
\end{picture}
\caption{}
\label{fig_sigma_0_b_r}
\end{figure}

\begin{df}
Let $J^-$ and $J^+$ be disjoint finite subsets
of $\partial \Sigma$ with  $\sharp (J^+) =\sharp (J^-) =M$.
We define $F^{\star (-1)} \tskein{\Sigma,J^-, J^+}
=F^{\star 0} \tskein{\Sigma,J^-,J^+} \defeq \tskein{\Sigma, J^-,J^+}$.
For $n \geq 1$, a submodule $F^{\star n}  \tskein{\Sigma, J^-,J^+}$
is the submodule generated by $h F^{\star (n-2)} \tskein{\Sigma,
J^-,J^+}$ and
\begin{align*}
&\shuugou{\kukakko{\iota} \defeq \iota_* ((\oplus_{j}^N \eta_{b_1}) \oplus
(\oplus_{j=M+1}^N \zettaiti{\eta_{b_j}}) \oplus (\oplus_i \zettaiti{R_1})) \\
&
|\iota :
\coprod_{1 \leq j \leq M+N} \Sigma_{b_j+1,1} \times I  
\sqcup \coprod_{i} \Sigma_{0,2} \times I \to 
\Sigma \times I \\
&\mathrm{embedding},b_1+ \cdots +b_{M+N} \geq n, \\
&\iota (\shuugou{*_1 \in \Sigma_{b_j+1,0}|1 \leq j \leq M} \times I)
= J^+ \times [\frac{1}{4}, \frac{3}{4}] \\
&\iota (\shuugou{*'_1 \in \Sigma_{b_0+1,0}|1 \leq j \leq M} \times I)
=  J^- \times [\frac{1}{4}, \frac{3}{4}]}.
\end{align*}
\end{df}

The aim of this section is to prove the lemma.

\begin{lemm}
\label{lemm_filt_douti}
We have $F^{\star n} \tskein{\Sigma,J^-,J^+} =F^n \tskein{\Sigma,J^-,J^+}$
for any $n$.
\end{lemm}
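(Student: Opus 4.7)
The plan is to prove the two inclusions $F^{\star n} \tskein{\Sigma,J^-,J^+} \subseteq F^n \tskein{\Sigma,J^-,J^+}$ and the reverse separately, both by induction on $n$. The $h \cdot F^{n-2}$ parts of both filtrations match up trivially under the induction hypothesis, so I can reduce to the non-$h$ pieces. For the multi-basepoint case $\sharp J^- = \sharp J^+ \geq 2$, I would run a secondary induction on the number of basepoint pairs via the recursive definition $F^n \tskein{\Sigma,J^-,J^+} = \sum_{i_0+i_1\geq n} \bigoplus_{*' \in J^+} F^{i_0}\tskein{\Sigma, J^-\setminus\{*\}, J^+\setminus\{*'\}} \cdot F^{i_1}\tskein{\Sigma, \{*\}, \{*'\}}$.

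For the easy direction $F^{\star n} \subseteq F^n$, the key observation is that $\eta_b = \psi((R_1-1)(R_2-1)\cdots(R_b-1))$ is by definition the image under $\psi$ of a product of $b$ elements of $\ker\epsilon \subset \mathcal{P}(\Sigma_{0,b+1},*_1)$, so $\eta_b \in F^b \tskein{\Sigma_{0,b+1},*_1}$ and $\zettaiti{\eta_b} \in F^b \tskein{\Sigma_{0,b+1}}$ directly from the definition of $F^n$ via $\psi_T$. Since $F^n$ is preserved under the product $\boxtimes$ and under the functorial map $\iota_*$ induced by an embedding, $\iota_*(\bigoplus \eta_{b_j} \oplus \bigoplus \zettaiti{\eta_{b_j}} \oplus \bigoplus \zettaiti{R_1}) \in F^{\sum b_j} \subseteq F^n$; in the multi-basepoint case the $M$ open-surface factors $\Sigma_{b_j+1,1}$ absorb the basepoint pairs one at a time, matching the recursive definition of $F^n$.

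For the hard direction $F^n \subseteq F^{\star n}$, I would take a generator of the form $h^{i_0}\, \zettaiti{\psi(a_1)} \cdots \zettaiti{\psi(a_k)}$ with $a_j \in (\ker \epsilon)^{i_j}$ and $2i_0 + \sum i_j \geq n$, and realize it by an embedding. By $\Q[\rho][[h]]$-linearity we may assume each $a_j$ is a pure product $(R_1^{(j)}-1)(R_2^{(j)}-1)\cdots(R_{i_j}^{(j)}-1)$ of elements of $\ker\epsilon$. The wedge of loops $R_\ell^{(j)}$ based at $*_\alpha$, after a small homotopy into generic position, admits a regular neighborhood in $\Sigma$ homeomorphic to $\Sigma_{0,i_j+1}$, giving an embedding $\iota_j$ whose induced map sends $\eta_{i_j}$ to $\psi(a_j)$ modulo $F^{i_j+1}$. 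Stacking the thickenings $\iota_j \times \id_I$ vertically along the $I$-direction assembles them into a single embedding $\iota$ realizing the generator, and any error terms land in strictly higher filtration and are absorbed by the induction.

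The main obstacle is the regular-neighborhood step: one must check that the neighborhood of the chosen loops really has the claimed topological type and that the induced embedding carries $\eta_{i_j}$ to precisely $\psi(a_j)$, not just to something congruent modulo lower-weight error. Non-generic configurations (tangential intersections, higher-order self-intersections) must be replaced by generic representatives using the regular-homotopy relation in $\mathcal{P}(\Sigma,*_\alpha)$. The secondary obstacle is the multi-basepoint bookkeeping: for $\sharp J^\pm \geq 2$ one must inductively assign each basepoint pair in $J^-\times J^+$ to exactly one open-surface factor $\Sigma_{b_j+1,1}$ of $\iota$, using the recursive splitting of $F^n \tskein{\Sigma,J^-,J^+}$ to isolate one basepoint pair at a time and then invoke the single-basepoint case.
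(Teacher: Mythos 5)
Your treatment of the inclusion $F^n \tskein{\Sigma,J^-,J^+} \subseteq F^{\star n}\tskein{\Sigma,J^-,J^+}$ is sound and is essentially the paper's argument: a pure product $\psi(X_0(X_1-1)\cdots(X_j-1))$ of loops based at $*_\alpha$ is realized \emph{exactly} (not merely modulo higher filtration) as $e_*(\eta_j)$ for an explicit proper embedding $e:\Sigma_{0,j+1}\times I\to\Sigma\times I$ in which the $j$ handles follow the loops $X_i$ at distinct heights, so the regular-neighborhood worry you raise does not produce error terms at all.

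The genuine gap is in the opposite inclusion $F^{\star n}\subseteq F^{n}$, which you treat as the easy direction by invoking that ``$F^n$ is preserved under the functorial map $\iota_*$ induced by an embedding.'' That statement is precisely what this lemma (together with Theorem \ref{thm_filt_underlying3mfd}, which is deduced from it) is designed to establish, so the argument is circular. The filtration $F^n$ is defined through $\psi_T$ using loops based at the fixed points $*_\alpha$ and the standard stacking product, whereas the embeddings $\iota$ in the definition of $F^{\star n}$ place the pieces $\Sigma_{b_j+1,1}\times I$ at arbitrary positions and heights in $\Sigma\times I$: the loops $\iota(R_i)$ are not based at $*_\alpha$, a single component $\iota_*(\zettaiti{\eta_{b_j}})$ may be knotted, and distinct components may be linked, so $\kukakko{\iota}$ is not a priori a product of $\psi$-images of elements of $(\ker\epsilon)^{b_j}$. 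The paper closes this gap by induction on $n$: it isolates one component $\iota_1$, converts it into a standard embedding $\iota'_1$ based at $*_\alpha$ by a sequence of crossing changes, and controls each change by Lemma \ref{lemm_filt_douti_move} (resting on the two technical lemmas about $\Sigma_{\star,i,j}$ and $\Sigma_{\star\star,i,j}$), which shows that a self-crossing or inter-component crossing change alters $\kukakko{\iota}$ only by an element of $hF^{\star(n-2)}$; the inductive hypothesis then identifies $F^{\star (n-2)}$ with $F^{n-2}$ and the residual factor $F^{\star(k+1-b_1)}$ with $F^{k+1-b_1}$. Without this crossing-change analysis the inclusion $F^{\star n}\subseteq F^n$ is unsupported, and your proposal contains no substitute for it.
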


Since the filtration $\filtn{F^{\star n} \tskein{\Sigma,J^-,J^+}}$
depends only on the under lying $3$-manifolds,
the filtration $\filtn{F^n \tskein{\Sigma,J^-,J^+}}$
also depends only on the under lying $3$-manifolds
(Theorem \ref{thm_filt_underlying3mfd}).

\begin{prop}
We have $F^{\star n} \tskein{\Sigma,J^-,J^+} \supset F^n \tskein{\Sigma,J^-,J^+}$
for any $n$.
\end{prop}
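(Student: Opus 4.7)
The plan is to show that any generator of $F^n\tskein{\Sigma,J^-,J^+}$ is congruent modulo $F^{\star(n+1)}$ to an $\iota_*$-image of a standard element of the form $\bigoplus_{j\le M}\eta_{b_j}\oplus\bigoplus_{j>M}\zettaiti{\eta_{b_j}}\oplus\bigoplus_{i}\zettaiti{R_1}$ for a suitable embedding $\iota$, and then close the argument by induction on $n$. The base case $n\le 0$ is trivial since $F^0$ and $F^{\star 0}$ both equal the whole module.

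First I put a generator in normal form. A typical generator of $F^n\tskein{\Sigma}$ has the shape $h^{i_0}\zettaiti{\psi(a_1)}\cdots\zettaiti{\psi(a_j)}$ with $a_k\in(\ker\epsilon)^{i_k}$ and $2i_0+\sum_k i_k\ge n$, with an analogous description when $J^\pm\neq\emptyset$ involving a prepended path factor $\psi(P\prod_\ell(R_\ell-1))$. Because $\ker\epsilon$ is spanned over $\Q[\rho][[h]]$ by $h$ and by the elements $R-1$ with $R\in\pi_1^+$, each $a_k$ is a linear combination of monomials $h^{s_k}\prod_\ell(R_{k,\ell}-1)$, so using $hF^{\star(n-2)}\subset F^{\star n}$ and an auxiliary induction on the total $h$-weight I reduce to $i_0=0$ and $a_k=\prod_{\ell=1}^{t_k}(R_{k,\ell}-1)$ with $\sum_k t_k\ge n$.

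For each $k$ I then construct an embedding $\iota_k\colon\Sigma_{0,t_k+1}\times I\hookrightarrow\Sigma\times I$ whose image lies in a thin horizontal slab $\Sigma\times J_k$, with slabs for different $k$ taken disjoint. Concretely, I thicken a small disk around the basepoint $*$ into a solid cylinder inside $\Sigma\times J_k$ and, for each $\ell$, attach a ribbon whose core is a lift of $R_{k,\ell}$ to a distinct height level within $J_k$; the height-lifting resolves self-crossings of each $R_{k,\ell}$ and mutual crossings of the $R_{k,\ell'}$ into embedded arcs in $\Sigma\times I$. Under $\iota_k$ the standard generator $R_\ell\in\pi_1^+(\Sigma_{0,t_k+1},*_1)$ maps to this ribbon lift. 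Assembling all $\iota_k$ into a single embedding $\iota$, and in the case $J^\pm\neq\emptyset$ adjoining a further $\Sigma_{b_{j_0}+1,1}$ whose core strand realizes the path $P$ from $*_\alpha$ to $*_\beta$, I obtain
\[
\iota_*\!\left(\bigoplus_{j\le M}\eta_{b_j}\oplus\bigoplus_{j>M}\zettaiti{\eta_{b_j}}\right)=\prod_k\zettaiti{\psi(a_k)}\ \cdot\ (\text{path factor})\ +\ R,
\]
where the remainder $R$ collects all corrections arising from framing and from double-point resolution.

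The main obstacle is verifying that $R\in F^{\star(n+1)}$, so that the induction hypothesis on $n$ forces the original generator into $F^{\star n}$. There are two sources of correction. The framing discrepancy between the blackboard framing of the ribbon and the framing implicit in $R_{k,\ell}\in\pi_1^+$ introduces, by the framing relation, a multiplicative factor $\exp(\rho h)^N=1+O(h)$ whose error lives in $hF^{\star(n-2)}\subset F^{\star n}$. At each height-lifted crossing of the $R_{k,\ell}$, the skein relation expresses the lifted tangle as the original unlifted product plus $h$ times a term with one fewer crossing and strictly smaller total degree, which by the inductive hypothesis lies in $F^{\star(n+1)}$. Carrying out this bookkeeping simultaneously over all crossings, and verifying compatibility with the multiplicative assembly of the several factors $\zettaiti{\psi(a_k)}$ together with the path factor, is the technical heart of the argument; once this is established, the inductive step closes and the containment $F^n\subset F^{\star n}$ follows.
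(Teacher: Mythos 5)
Your overall strategy --- realize each generator $h^{i_0}\,\zettaiti{\psi(a_1)}\cdots\zettaiti{\psi(a_j)}$ (with a possible path factor) as the $\iota_*$-image of the standard elements $\eta_{b}$, $\zettaiti{\eta_b}$ for a suitable embedding of $\coprod\Sigma_{0,b+1}\times I$ --- is exactly the paper's strategy. But your execution has a genuine gap, and the gap is located precisely where you deviate from the paper: the remainder term $R$ and the claim that it lies in $F^{\star(n+1)}$. You assert that each skein-relation correction at a height-lifted crossing is ``$h$ times a term with one fewer crossing and strictly smaller total degree, which by the inductive hypothesis lies in $F^{\star(n+1)}$.'' The induction is on $n$, not on crossing number, so the inductive hypothesis says nothing about these smoothed terms; and a smoothing destroys the $(R_{k,\ell}-1)$-structure of the tangle, so there is no a priori reason the smoothed term lies in $F^{\star(n-1)}$ (or even $F^{\star 1}$) --- a generic tangle only lies in $F^{\star 0}$. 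Establishing that crossing changes and smoothings move an $\iota_*$-image only by elements of $h F^{\star(n-2)}$ is exactly the content of the paper's Lemma \ref{lemm_filt_douti_move} and the two lemmas on $\Sigma_{\star,i,j}$ and $\Sigma_{\star\star,i,j}$, which the paper needs only for the \emph{hard} reverse inclusion $F^{\star n}\subset F^{n}$. As written, your proof of the easy inclusion silently imports that machinery without proving it.

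The fix is to notice that no remainder arises at all. Since $\psi$ is itself defined by a height-lift $(t_1,t_2)\mapsto(R(t_1),t_1+\delta t_2)$, the product in $\mathcal{P}(\Sigma,*_\alpha)$ stacks earlier factors over later ones, and the framing is the blackboard framing, the embedding $e\colon\Sigma_{0,j+1}\times I\to\Sigma\times I$ of Figure \ref{fig_embedding_construct} can be chosen so that $e_*(\psi(R_1^{\epsilon_1}\cdots R_j^{\epsilon_j}))=\psi(X_0X_1^{\epsilon_1}\cdots X_j^{\epsilon_j})$ holds \emph{exactly} for all $\epsilon_i\in\shuugou{0,1}$; expanding $\eta_j=\psi((R_1-1)\cdots(R_j-1))$ then gives $e_*(\eta_j)=\psi(X_0(X_1-1)\cdots(X_j-1))$ on the nose, which is a generator of $F^{\star j}$ by definition. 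With that observation your framing and crossing corrections vanish, the auxiliary induction on $n$ becomes unnecessary, and the argument closes.
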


\begin{proof}
It suffices to show $\psi (X_0(X_1-1)(X_2-1) \cdots
(X_{j-1}-1)(X_j-1)) \in F^{\star j} \tskein{\Sigma,*_\alpha, *_\beta}$
for $X_1, \cdots , X_j \in \pi^{+}_1 (\Sigma, *_\alpha)$ and
$R_0 \in \pi^+_1 (\Sigma, *_\alpha, *_\beta)$.
There is a properly embedding 
$e : (\Sigma_{0,j+1} \times I, *_1 \times I, *'_1 \times I)
\to (\Sigma \times I, *_\alpha \times I , *'_\beta \times I)$
as shown in Figure \ref{fig_embedding_construct},
such that
$e_* (\psi (R_1^{\epsilon_1}R_2^{\epsilon_2} 
\cdots R_{j-1}^{\epsilon_{j-1}}R_j^{\epsilon_j}  ))
=\psi(X_0 X_1^{\epsilon_1} X_2^{\epsilon_2} \cdots X_{j-1}^{\epsilon_{j-1}}X_j^{\epsilon_j})$
for $\epsilon_1, \epsilon_2, \cdots, \epsilon_j \in \shuugou{0,1}$
where we denote by $e_* \tskein{\Sigma_{0,j+1},*_1} \to \tskein{\Sigma,*_\alpha, *_\beta}$
 the $\Q[ \rho ][[h]]$-module homomorphism
induced by $e$.
By definition, we have $\psi (X_0(X_1-1)(X_2-1) \cdots
(X_{j-1}-1)(X_j-1) ) \in F^{\star j} \tskein{\Sigma,*_\alpha, *_\beta}$.
This proves the lemma.
\end{proof}

\begin{figure}
\begin{picture}(400,150)
\put(-50,0){\includegraphics[width=450pt]{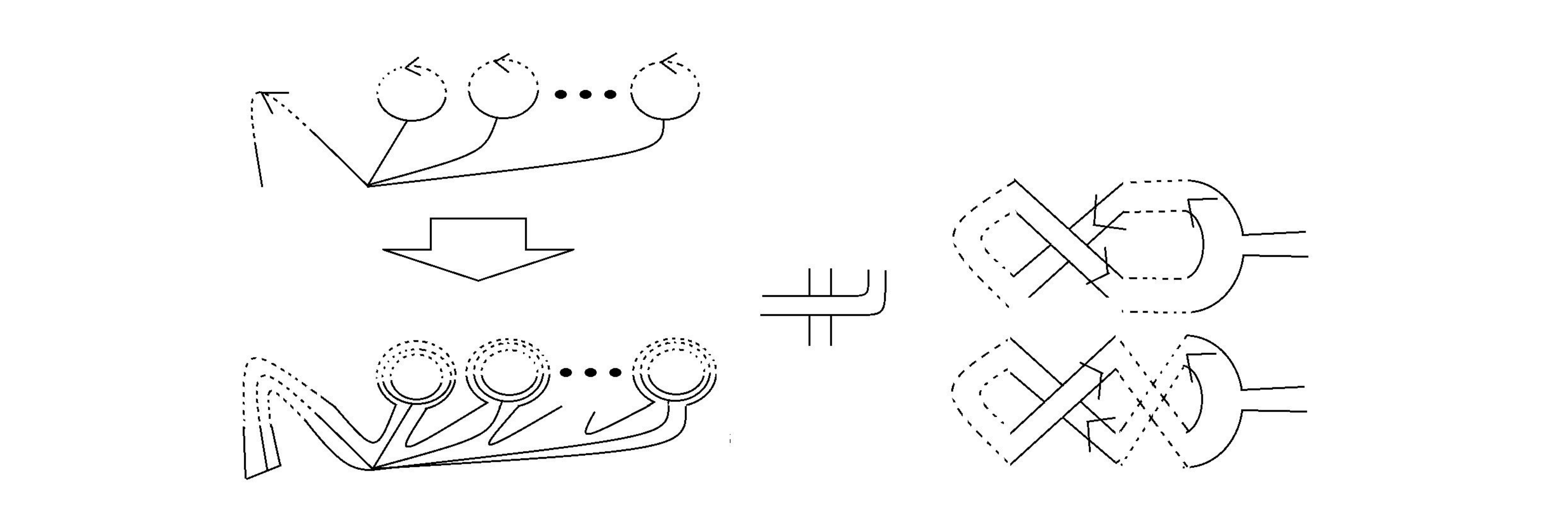}}

\put(20,136){$X_0$}
\put(60,136){$X_1$}
\put(90,136){$X_2$}
\put(135,136){$X_j$}

\put(60,90){$*_\alpha $}
\put(20,90){$*_\beta$}

\put(25,56){$0$}
\put(65,56){$1$}
\put(95,56){$2$}
\put(140,56){$j$}

\put(60,7){$*_1 $}
\put(20,7){$*'_1$}

\put(180,80){$i >i'$}
\end{picture}
\caption{}
\label{fig_embedding_construct}
\end{figure}

To prove Lemma \ref{lemm_filt_douti}, we need the following lemmas.
By straightforward calculations, we have the following two lemmas.

\begin{lemm}
We fix $i , j  \in \shuugou{1, 2, \cdots, b}$.
Let $\Sigma_{\star, i, j}$ be the surface in Figure \ref{fig_sigma_star_i_j},
$\iota_1: \Sigma_{0,b+1} \times I \to \Sigma_{\star,i,j} \times I$ and
$\iota_2 : \Sigma_{0,b+1} \times I \to \Sigma_{\star,i,j} \times I$ the embeddings
as in  Figure \ref{fig_sigma_star_i_j_emb_1},
respectively.
Then there exists an embedding such as Figure \ref{fig_sigma_star_i_j_emb_3}
$\iota_3 :\Sigma_{0,b-j+i+1} \times I  \sqcup \Sigma_{0,j-i+1} \times I \to
\Sigma_{\star,i,j} \times I$ satisfying
\begin{align*}
\kukakko{\iota_1}-\kukakko{\iota_2}=& \iota_{3*} (h((R_1-1)(R_2-1) \cdots(R_{i-1}-1)R_i
(R_{i+1}-1)\cdots(R_{b-j+i}-1)) \oplus  \\ &(\zettaiti{R_1(R_2-1) \cdots (R_{j-i}-1)})
)\in hF^{\star b-2} \tskein{\Sigma_{\star,i,j},*_1}.
\end{align*}
\end{lemm}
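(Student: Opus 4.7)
The plan is to identify the single crossing at which $\iota_1$ and $\iota_2$ diverge, apply the HOMFLY-PT skein relation there, and then trace through the resulting resolution to match the formula on the right-hand side.

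First I would unpack the two embeddings from the figures. Both $\iota_1$ and $\iota_2$ send $\Sigma_{0,b+1}\times I$ into $\Sigma_{\star,i,j}\times I$ in a way that the $b$ distinguished loops $R_1,\ldots,R_b$ are routed around the handles of $\Sigma_{\star,i,j}$; the two embeddings agree on $R_1,\ldots,R_{i-1},R_{j+1},\ldots,R_b$ and differ only by how the band carrying the loops $R_i,\ldots,R_j$ is routed — one sends that band over, the other under, at a single transverse crossing. Since $\kukakko{\iota_\alpha}$ is by definition $\iota_{\alpha *}(\eta_b)=\iota_{\alpha *}(\psi((R_1-1)(R_2-1)\cdots(R_b-1)))$, the two diagrams representing $\kukakko{\iota_1}$ and $\kukakko{\iota_2}$ are identical except in a small disk around that crossing, where they look like $T_+$ and $T_-$ respectively.

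Second, applying the skein relation $T_+-T_-=hT_0$ at that crossing immediately yields the factor $h$ and produces a resolved diagram $T_0$. Smoothing the crossing orientation-preservingly splits the embedded surface into two planar pieces: one with $b-j+i+1$ boundary components, and one with $j-i+1$ boundary components; this geometric splitting is exactly the source of the embedding $\iota_3$ appearing in the statement. I would then track carefully which of the factors $(R_k-1)$ in $\eta_b$ ride along which piece after the smoothing. The factors $(R_1-1),\ldots,(R_{i-1}-1),(R_{i+1}-1),\ldots,(R_{b-j+i}-1)$ stay on the first piece, while $(R_{i+1}-1)\cdots(R_{j-i}-1)$ travel to the second piece (and the loop whose closure gives $\zettaiti{R_1(R_2-1)\cdots(R_{j-i}-1)}$ becomes closed since that piece is detached from the two marked boundary circles). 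The factor $R_i$ — the one that originally ran through the crossing — survives the resolution as a genuine loop rather than as $(R_i-1)$, because the smoothing breaks precisely the piece of the $(R_i-1)$ factor that produced the subtracted $1$.

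Third, verify the filtration claim. By construction the first tensor factor lies in $F^{\star(b-j+i-1)}\tskein{\Sigma_{0,b-j+i+1}\times I,*_1}$ (there are $b-j+i-1$ factors of the form $(R_k-1)$ together with the augmentation-$1$ element $R_i$), and the second factor lies in $F^{\star(j-i-1)}\tskein{\Sigma_{0,j-i+1}\times I}$; their product lands in $F^{\star(b-2)}$, so with the explicit $h$ we obtain membership in $hF^{\star(b-2)}\tskein{\Sigma_{\star,i,j},*_1}$, as claimed.

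The main obstacle is purely diagrammatic bookkeeping: correctly locating the unique crossing that distinguishes $\iota_1$ from $\iota_2$, and then matching the re-indexing of the $R_k$ on each piece with the specific expression written on the right-hand side. As the authors themselves remark, once this geometric picture is pinned down the rest is a direct skein calculation.
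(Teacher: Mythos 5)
The paper never actually proves this lemma --- it is introduced only with the remark that it holds ``by straightforward calculations'' --- so your sketch is being measured against an intended argument rather than a written one. That said, your approach is certainly the intended one, and it is correct in outline: the two embeddings differ by a single crossing change inside a ball, one application of the skein relation $T_+-T_-=hT_0$ produces the overall factor of $h$, the oriented smoothing disconnects the diagram into the based piece and the closed piece parametrized by $\iota_3$, and the monomials of $\eta_b$ in which a strand meeting that crossing is absent cancel between $\kukakko{\iota_1}$ and $\kukakko{\iota_2}$ --- which is exactly why the surviving expression carries a bare $R_i$ on the based component (and a bare $R_1$ on the closed one) rather than $(R_i-1)$. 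Your filtration count is also right: $b-j+i-1$ augmentation-ideal factors on the based piece and $j-i-1$ on the closed piece give $F^{\star(b-2)}$, hence $hF^{\star(b-2)}$ after the skein relation.

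Two cautions, neither fatal. First, the assertion that the embeddings differ at ``a single transverse crossing'' is doing real work: if the rerouted band carried several parallel strands, each monomial of $\eta_b$ would require several applications of the skein relation and the clean single factor of $h$ would not appear. The figures support the single-crossing reading, but this is precisely the point where the diagrammatic bookkeeping must be checked rather than asserted, since the entire shape of the right-hand side (which factor loses its ``$-1$'', where the split occurs) is determined by which two strands meet at that crossing. Second, your phrase ``$(R_{i+1}-1)\cdots(R_{j-i}-1)$ travel to the second piece'' conflates the old and new indexings: the closed component carries the loops originally lying between the two strands of the crossing, relabelled $1,\dots,j-i$, and its factors are $(R_2-1)\cdots(R_{j-i}-1)$ together with the bare $R_1$, as in the statement. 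With those indices straightened out, the computation closes up as you describe.
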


\begin{figure}
\begin{picture}(160,150)
\put(-120,-20){\includegraphics[width=400pt]{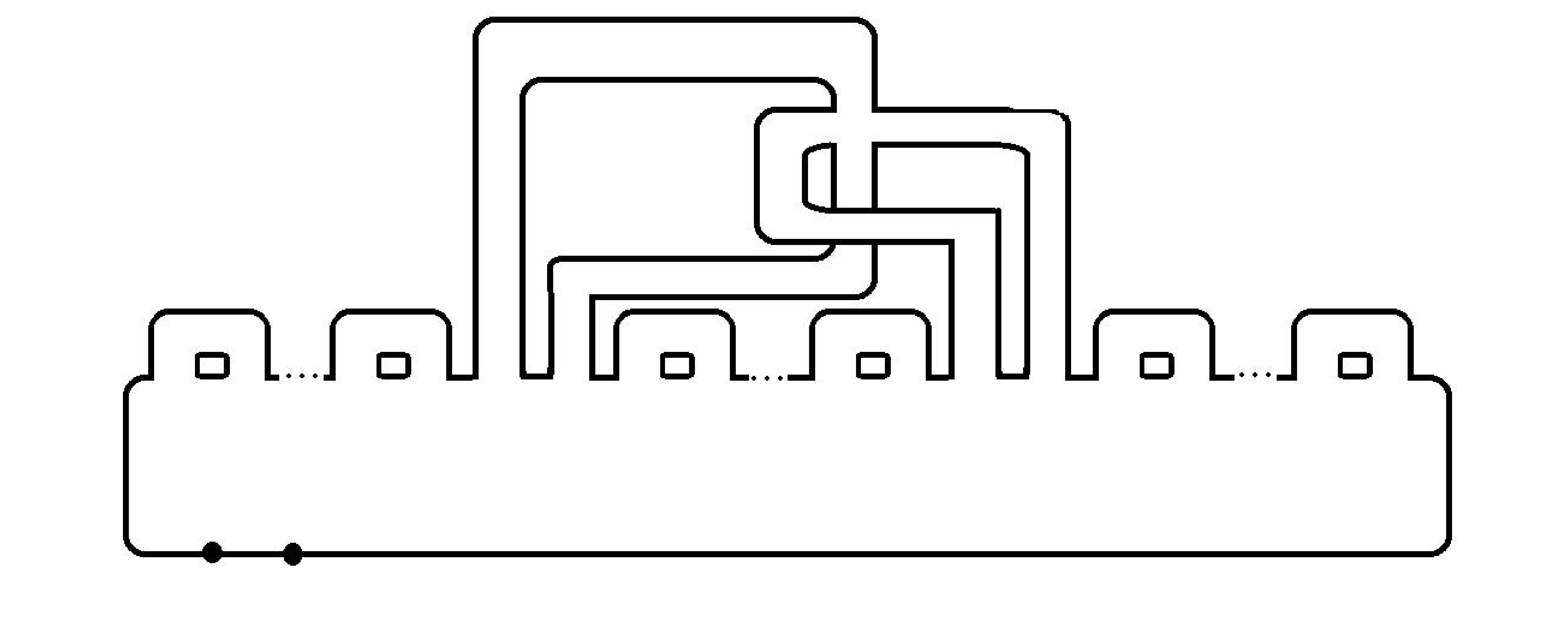}}
\put(-73,-12){$*_1$}
\put(-45,-12){$*'_1$}
\put(-69,36){$1$}
\put(-27,36){$i-1$}
\put(14,36){$i$}
\put(44,36){$i+1$}
\put(93,36){$j-1$}
\put(134,36){$j$}
\put(164,36){$j+1$}
\put(222,36){$b$}
\end{picture}
\caption{$\Sigma_{\star,i,j}$}
\label{fig_sigma_star_i_j}
\end{figure}

\begin{figure}
\begin{picture}(160,150)
\put(-160,-20){\includegraphics[width=480pt]{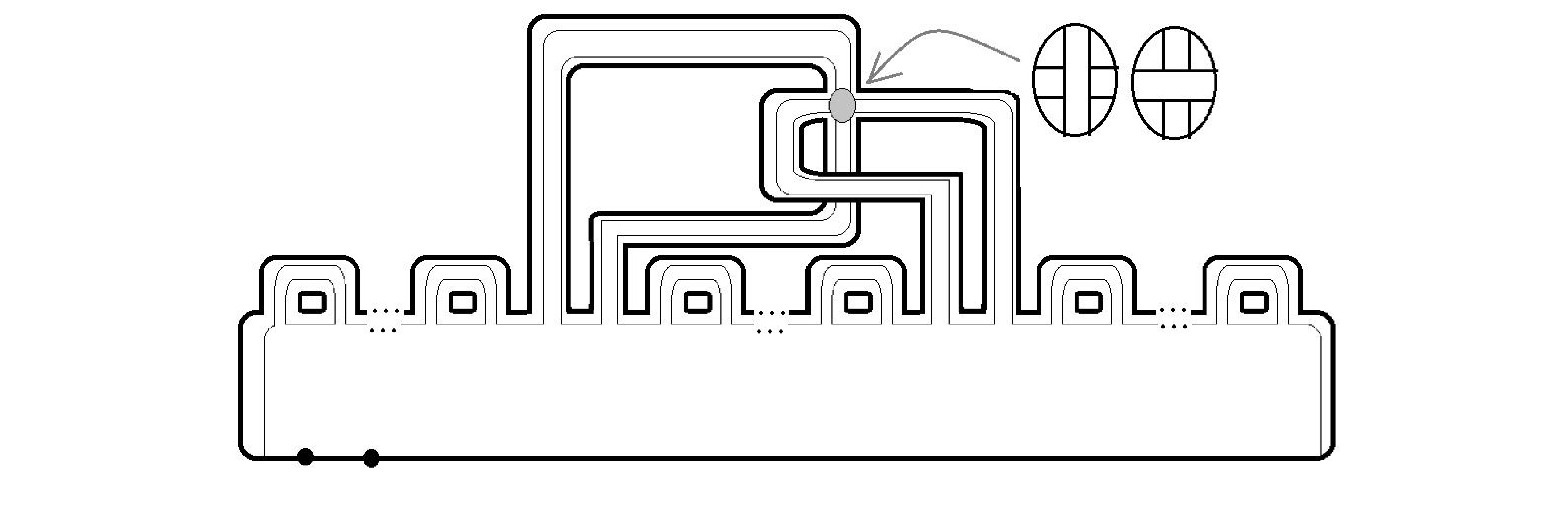}}
\put(-73,-12){$*_1$}
\put(-45,-12){$*'_1$}
\put(-69,36){$1$}
\put(-27,36){$i-1$}
\put(14,36){$i$}
\put(44,36){$i+1$}
\put(93,36){$j-1$}
\put(134,36){$j$}
\put(164,36){$j+1$}
\put(222,36){$b$}

\put(170,80){$\iota_1$}
\put(200,80){$\iota_2$}
\end{picture}
\caption{$\iota_1$ and $\iota_2$}
\label{fig_sigma_star_i_j_emb_1}
\end{figure}

\begin{figure}
\begin{picture}(160,150)
\put(-120,-20){\includegraphics[width=400pt]{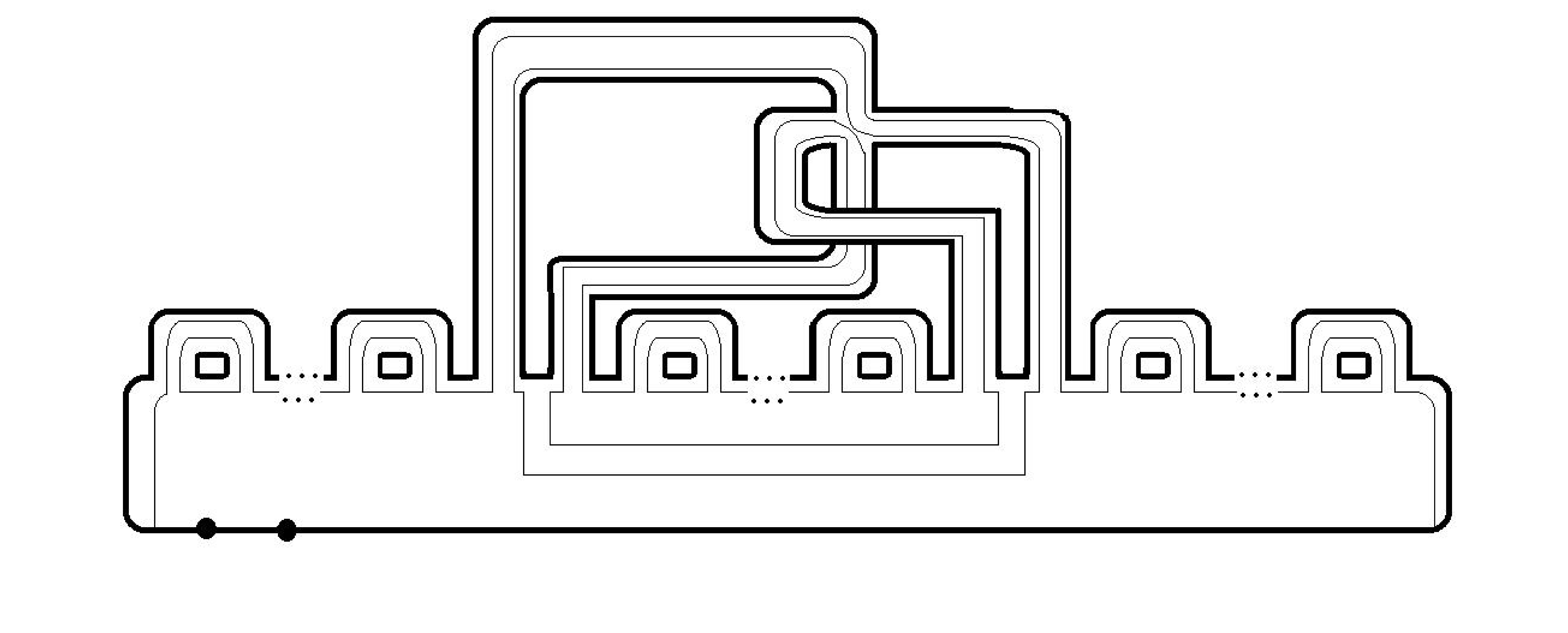}}
\put(-73,-12){$*_1$}
\put(-45,-12){$*'_1$}
\put(-69,36){$1$}
\put(-27,36){$i-1$}
\put(24,110){$i$}

\put(24,80){$1$}
\put(44,36){$2$}
\put(93,36){$j-i$}
\put(134,36){$$}
\put(164,36){$i+1$}
\put(209,36){$b-j+i$}
\end{picture}
\caption{$\iota_3$}
\label{fig_sigma_star_i_j_emb_3}
\end{figure}

\begin{lemm}
We fix $i  \in \shuugou{1, 2, \cdots, b_1}$ and $ j \in \shuugou{1,2, \cdots, b_2}$.
Let $\Sigma_{\star \star, i, j}$ be the surface in Figure \ref{fig_sigma_star_star_i_j},
$\iota_1: \Sigma_{0,b_1+1} \times I \sqcup \Sigma_{0,b_2+1} \times I
\to \Sigma_{\star \star,i,j} \times I$ and
$\iota_1: \Sigma_{0,b_1+1} \times I \sqcup \Sigma_{0,b_2+1} \times I
\to \Sigma_{\star \star,i,j} \times I$ the embeddings
as in Figure \ref{fig_sigma_star_star_i_j_emb_1},
respectively.
Then there exists an embedding such as Figure \ref{fig_sigma_star_star_i_j_emb_3}
$\iota_3 :\Sigma_{0,b_2-j+i} \times I  \sqcup \Sigma_{0,b_1-i+j} \times I \to
\Sigma_{\star \star,i,j} \times I$ satisfying
\begin{align*}
\iota_{1*} (\eta_{b_1} \oplus \eta_{b_2})-\iota_{2*} (\eta_{b_1} \oplus {\eta_{b_2}})
 = \iota_{3* }(h(\eta_{b_2-j+i-1} \oplus \eta_{b_1+j-i-1}))
\in hF^{\star b_1+b_2-2}\tskein{\Sigma_{\star \star,i,j}, *_1}.
\end{align*}

\end{lemm}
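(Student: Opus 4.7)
The plan is to mirror the structure of the preceding lemma (for $\Sigma_{\star, i, j}$), but now tracking how a strand of one copy of $\Sigma_{0, b_1+1}$ meets a strand of the other copy $\Sigma_{0, b_2+1}$ inside $\Sigma_{\star \star, i, j} \times I$. First I would fix tangle diagrams representing $\iota_1$ and $\iota_2$, arranged so they agree outside a small disk $D$ where exactly the $i$-th loop of the first component and the $j$-th loop of the second component cross; at this disk, $\iota_1$ realizes a positive crossing and $\iota_2$ realizes the negative crossing (or vice versa, depending on convention). All other features (the base paths $r_1$, the remaining loops $R_k$, and the self-crossings already present in each $\eta_{b_i}$) can be arranged to coincide identically.

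Next I would apply the skein relation $T_+ - T_- = h T_0$ at this single crossing. Writing $\eta_{b_1} = \psi((R_1-1)\cdots(R_{b_1}-1))$ and analogously for $\eta_{b_2}$, the smoothing $T_0$ opens the crossing and fuses the two components into a single surface picture. The algebraic effect is to cut both products of $(R_k-1)$ factors at positions $i$ and $j$ respectively, and to reconnect them: the trailing factors $(R_{i+1}-1)\cdots(R_{b_1}-1)$ from the first component join with $(R_1-1)\cdots(R_{j-1}-1)$ from the second, and symmetrically the other halves join. This produces exactly two genus-$0$ loops, one enclosing $b_1 - i + j - 1$ handle-holes and one enclosing $b_2 - j + i - 1$ handle-holes, which is precisely what the embedding $\iota_3$ realizes on $\eta_{b_2-j+i-1} \oplus \eta_{b_1+j-i-1}$. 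The explicit embedding $\iota_3$ of Figure~\ref{fig_sigma_star_star_i_j_emb_3} records the resulting routing of these two new smaller surfaces.

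The filtration statement is automatic from the formula: once we have identified the difference with $h \cdot \iota_{3*}(\eta_{b_2-j+i-1} \oplus \eta_{b_1+j-i-1})$, definition of $F^{\star n}$ gives membership in $h F^{\star (b_1 + b_2 - 2)}$ since $(b_2 - j + i - 1) + (b_1 + j - i - 1) = b_1 + b_2 - 2$.

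The main obstacle I expect is the combinatorial bookkeeping of the smoothing. One must verify that after applying the skein relation, only the surviving terms are the ones in which each $(R_k - 1)$ factor on the untouched side remains intact, while the two ``cut'' halves recombine into single new products $(R'_1 - 1)(R'_2 - 1) \cdots$ forming genuine $\eta$-elements. Here the identity $\psi((R - 1)\cdot \text{(trivial path)}) = 0$ kills every smoothing outside the one marked crossing, and one needs to verify that no spurious framing contribution appears from the reorganization; this will use the framing relation $R_1 = \exp(\rho h) R_0$ together with the identification in Proposition~\ref{prop_psi_isom}. Once this matching is pinned down, the statement follows by direct comparison of diagrams.
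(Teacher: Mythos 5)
Your strategy---localize the difference between $\iota_1$ and $\iota_2$ to the single crossing where the $i$-th arm of the first component meets the $j$-th arm of the second, apply the skein relation $T_+-T_-=hT_0$ there, and identify the resulting smoothing with $\iota_{3*}(h(\eta_{b_2-j+i-1}\oplus\eta_{b_1+j-i-1}))$, the filtration claim then following from $(b_2-j+i-1)+(b_1+j-i-1)=b_1+b_2-2$---is precisely the ``straightforward calculation'' the paper invokes, as it states this lemma with no written proof. One small correction: the terms of the expanded products $(R_1-1)\cdots(R_{b_1}-1)$ and $(R_1-1)\cdots(R_{b_2}-1)$ in which $R_i$ or $R_j$ is replaced by the constant $1$ drop out of the difference because the corresponding tangles under $\iota_1$ and $\iota_2$ are then isotopic (there is no longer any crossing at which the two embeddings differ), not because of the identity $\psi((R-1)\cdot(\mathrm{trivial\ path}))=0$, which is false as written.
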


\begin{figure}
\begin{picture}(160,150)
\put(-120,-20){\includegraphics[width=400pt]{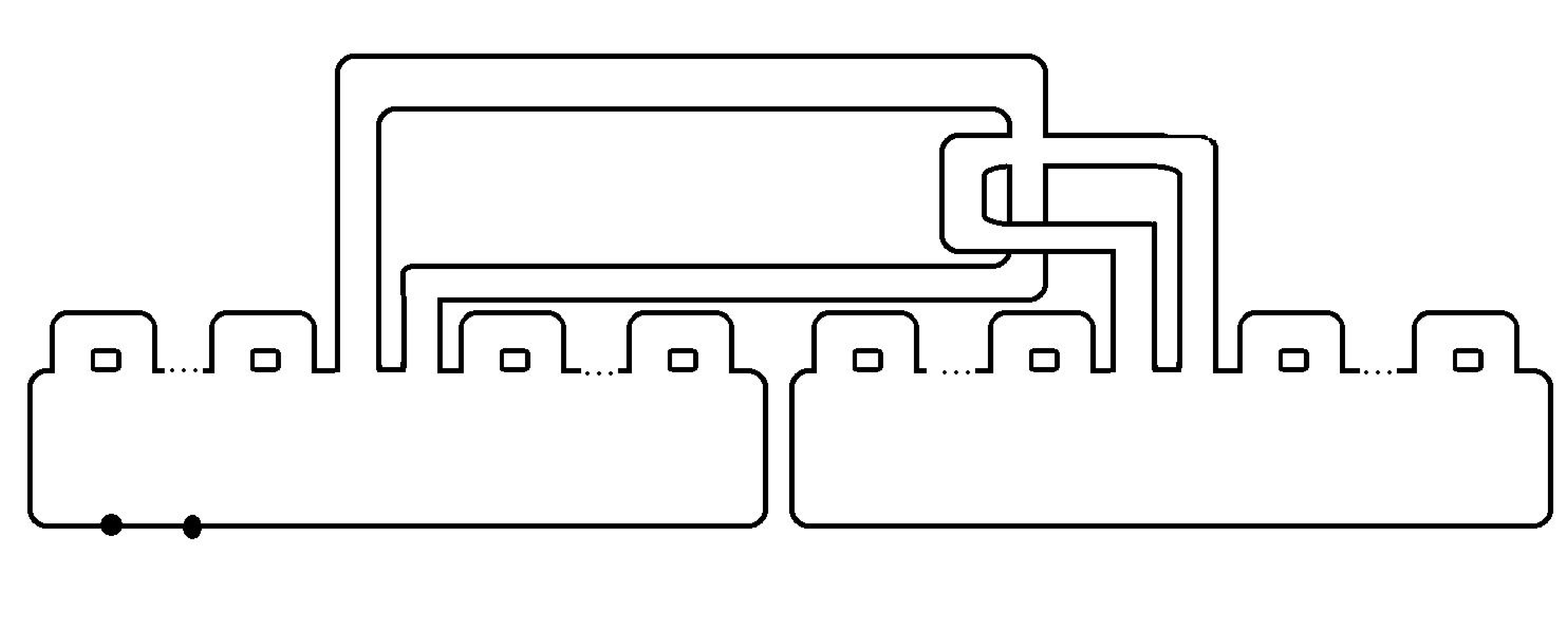}}
\put(-98,-2){$*'_1$}
\put(-70,-2){$*_1$}
\put(-98,36){$1$}
\put(-63,36){$i-1$}
\put(-22,36){$i$}
\put(0,36){$i+1$}
\put(52,36){$b_1$}
\put(98,36){$1$}
\put(133,36){$j-1$}
\put(174,36){$j$}
\put(196,36){$j+1$}
\put(248,36){$b_2$}
\end{picture}
\caption{$\Sigma_{\star \star,i,j}$}
\label{fig_sigma_star_star_i_j}
\end{figure}

\begin{figure}
\begin{picture}(160,150)
\put(-130,-10){\includegraphics[width=420pt]{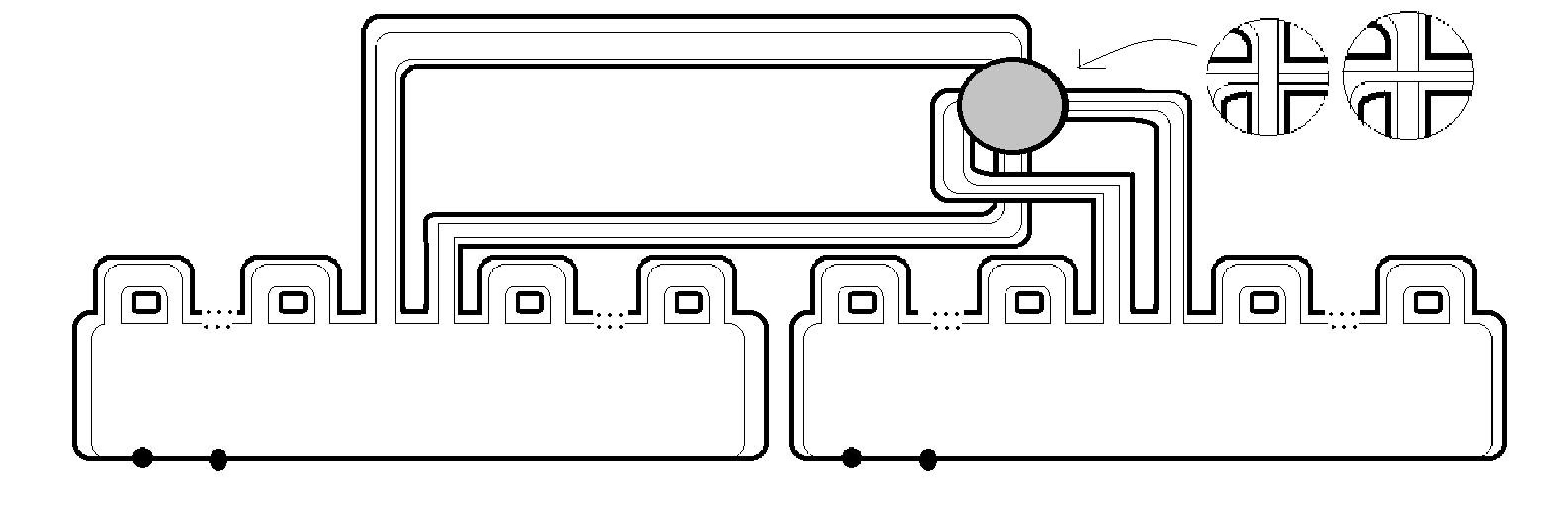}}
\put(-98,-2){$*'_1$}
\put(-70,-2){$*_1$}
\put(100,-2){$*'_1$}
\put(120,-2){$*_1$}
\put(-98,36){$1$}
\put(-63,36){$i-1$}
\put(-22,36){$i$}
\put(0,36){$i+1$}
\put(52,36){$b_1$}
\put(98,36){$1$}
\put(133,36){$j-1$}
\put(174,36){$j$}
\put(196,36){$j+1$}
\put(248,36){$b_2$}

\put(208,78){$\iota_1$}
\put(238,78){$\iota_2$}
\end{picture}
\caption{$\iota_1$ and $\iota_2$}
\label{fig_sigma_star_star_i_j_emb_1}
\end{figure}

\begin{figure}
\begin{picture}(160,150)
\put(-120,-20){\includegraphics[width=400pt]{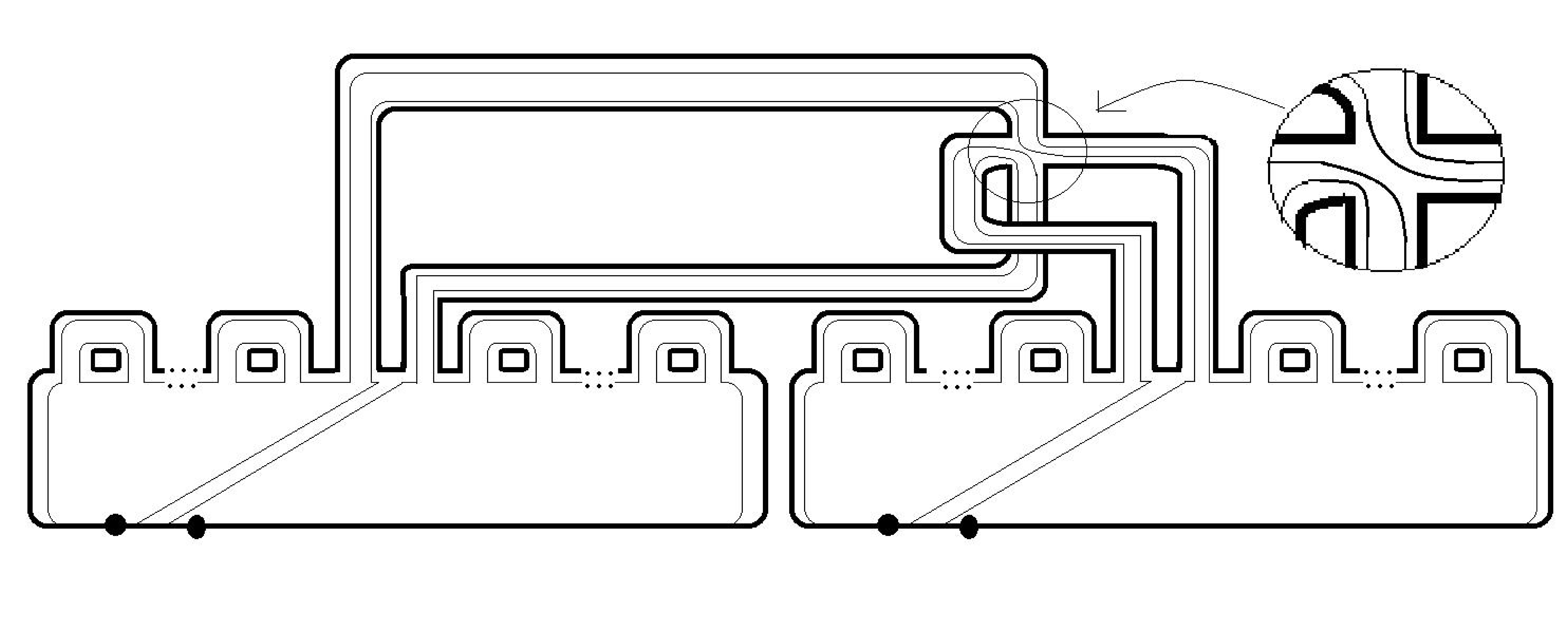}}
\put(-98,-2){$*'_1$}
\put(-70,-2){$*_1$}
\put(100,-2){$*'_1$}
\put(120,-2){$*_1$}
\put(-98,36){$1$}
\put(-63,36){$i-1$}
\put(-22,36){$$}
\put(10,36){$j$}
\put(28,36){{\tiny $b_1-i+j-1$}}
\put(98,36){$1$}
\put(133,36){$j-1$}
\put(174,36){$$}
\put(206,36){$i$}
\put(234,36){{\tiny $b_2-j+i-1$}}
\end{picture}
\caption{$\iota_3$}
\label{fig_sigma_star_star_i_j_emb_3}
\end{figure}

Using the above two lemmas, we have the following.

\begin{lemm}
\label{lemm_filt_douti_move}
  Let $\iota$ and $\iota'$ be two embeddings
\begin{align*}
&(\Sigma_{b_0+1,0}  \times I \sqcup
\coprod_{j \in \shuugou{1, \cdots, N}} \Sigma_{b_j+1,1} \times I  
\sqcup \coprod_{i} \Sigma_{0,2} \times I, \shuugou{*_1 \in \Sigma_{0,b_0+1}} 
\times I, \shuugou{*'_1 \in \Sigma_{0,b_0+1}} \times I) \\
&\to 
(\Sigma \times I, \shuugou{*_\alpha} \times I,
\shuugou{*_\beta}\times I)
\end{align*}
 which are only
differ in an closed ball in $\Sigma \times I$
as shown in Figure \ref{fig_sigma_move},
where $\sum_{i=0}^j b_i \geq N$.
Then we have
\begin{align*}
&\kukakko{\iota}-\kukakko{\iota'} \in h F^{\star (n-2)}
\tskein{\Sigma,*_\alpha, *_\beta}.
\end{align*}
\end{lemm}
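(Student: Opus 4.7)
The plan is to reduce this statement to the two preceding lemmas by localizing. Since the embeddings $\iota$ and $\iota'$ agree outside a closed ball $B \subset \Sigma \times I$, the elements $\kukakko{\iota}$ and $\kukakko{\iota'}$ decompose, using the $\tskein{\Sigma}$-bimodule structure and the Leibniz rules from the previous section, into a common ``external'' factor coming from the complement of $B$ together with ``internal'' factors coming from what the embeddings do inside $B$. Computing $\kukakko{\iota} - \kukakko{\iota'}$ therefore reduces to computing the difference of the internal contributions.

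Next I would classify the local difference shown in Figure \ref{fig_sigma_move}. Such a local move either exchanges two of the boundary annuli belonging to a single embedded $\Sigma_{0, b_j + 1}$---the situation modelled by the embeddings $\iota_1, \iota_2 : \Sigma_{0, b+1} \times I \to \Sigma_{\star, i, j} \times I$ of the first preceding lemma---or exchanges boundary annuli belonging to two distinct pieces $\Sigma_{0, b_{j_1} + 1}$ and $\Sigma_{0, b_{j_2} + 1}$, which is the situation modelled by the embeddings into $\Sigma_{\star\star, i, j} \times I$ of the second preceding lemma. In each case those lemmas already provide a third embedding $\iota_3$ of a disjoint union of surfaces whose total $b$-count is at least $(b_1 + \cdots + b_{M+N}) - 2 \geq n - 2$, together with an explicit expression of the local difference as $h$ times $\iota_{3*}$ applied to the appropriate product of $\eta$'s (and possibly $\zettaiti{\cdot}$'s).

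Pasting the local $\iota_3$ to the unchanged external factor yields a global embedding of the required type in the definition of $F^{\star(n-2)} \tskein{\Sigma, *_\alpha, *_\beta}$, and the corresponding generator equals $\kukakko{\iota} - \kukakko{\iota'}$ divided by $h$. This gives the desired containment.

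The main obstacle will be checking that every local configuration appearing in Figure \ref{fig_sigma_move} really decomposes into one of the two basic moves above, up to isotopy compatible with the constraints that $\iota(\{*_1 \in \Sigma_{b_j+1,1}\} \times I) = J^+ \times [\tfrac{1}{4}, \tfrac{3}{4}]$ and similarly for $J^-$, and that the global embedding obtained by pasting $\iota_3$ to the exterior genuinely lies in the generating set defining $F^{\star(n-2)} \tskein{\Sigma, *_\alpha, *_\beta}$. This is essentially bookkeeping: one must track which components of the embedded surface are affected by the local move and verify that the resulting external/internal combination is again of the standard form $(\oplus_j \eta_{b_j}) \oplus (\oplus_j \zettaiti{\eta_{b_j}}) \oplus (\oplus_i \zettaiti{R_1})$ used to define $F^{\star \bullet}$.
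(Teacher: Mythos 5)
Your proposal follows exactly the route the paper intends: the paper gives no written proof of this lemma beyond the remark ``Using the above two lemmas, we have the following,'' and your argument --- classifying the local move of Figure \ref{fig_sigma_move} as either an intra-component swap (the $\Sigma_{\star,i,j}$ lemma) or an inter-component swap (the $\Sigma_{\star\star,i,j}$ lemma), then pasting the local $\iota_3$ into the unchanged exterior to exhibit $\kukakko{\iota}-\kukakko{\iota'}$ as $h$ times a generator of $F^{\star(n-2)}\tskein{\Sigma,*_\alpha,*_\beta}$ --- is precisely that reduction, spelled out. The bookkeeping you flag at the end is indeed the only content the paper leaves implicit, and your treatment of it is sound.
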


\begin{figure}
\begin{picture}(100,50)
\put(0,0){\includegraphics[width=100pt]{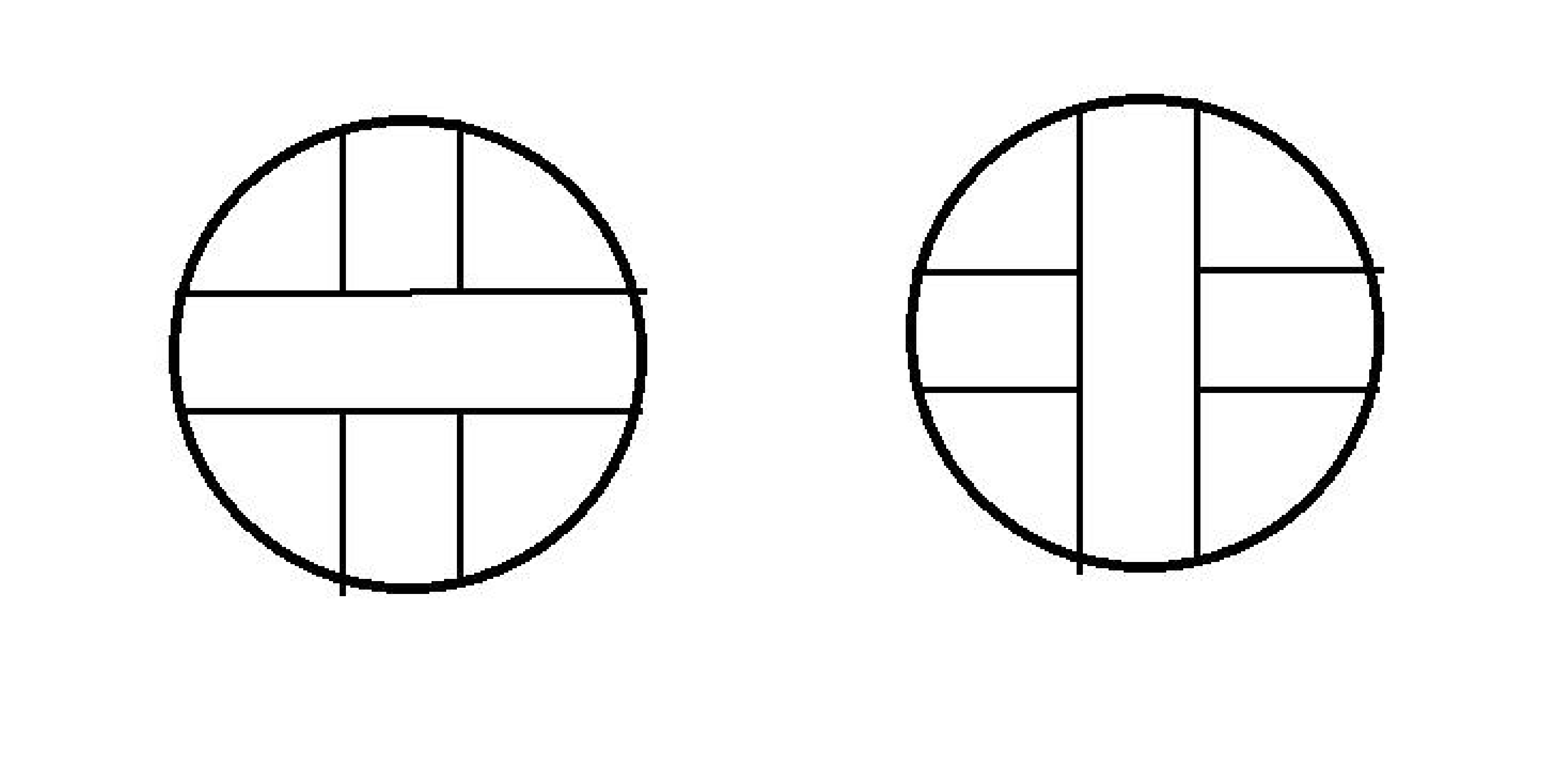}}
\put(0,40){$\iota$}
\put(45,40){$\iota'$}
\end{picture}
\caption{}
\label{fig_sigma_move}
\end{figure}

\begin{proof}[Proof of Lemma \ref{lemm_filt_douti}]
To prove it, we use the induction on $n$.
If $n=0,-1$, the claim follows from definition, 
where we denote $F^{-1} \tskein{\Sigma, J^-,J^+} \defeq 
\tskein{\Sigma, J^-,J^+}$.
We assume $F^k \tskein{\Sigma,J^-,J^+} 
=F^{\star k} \tskein{\Sigma,J^-,J^+}$
for any $k \in \shuugou{1,2, \cdots, n-1}$  and
any finite disjoint subset $J^-, J^+ \subset \partial \Sigma$
with $\sharp J^-
=\sharp J^+$.
Let $\iota$ be an embedding
\begin{align*}
&(\coprod_{j \in \shuugou{1, \cdots, N}}
\Sigma_{b_j+1,0}  \times I \sqcup
\coprod_{j \in \shuugou{1, \cdots, N'}} \Sigma_{b'_j+1,1} \times I  
\sqcup \coprod_{i} \Sigma_{0,2} \times I, \shuugou{*_1|*_1 \in \Sigma_{0,b_j+1}} 
\times I, \\
&\shuugou{*'_1 |*'_1 \in \Sigma_{0,b_j+1}} \times I) \to 
(\Sigma \times I, J^-\times I,
J^+\times I).
\end{align*}
with $\sum b_j +\sum b'_j \geq k+1$ and $(b_1, b'_1) \neq (0,0)$.

If $b_1 \neq 0$, We denote
$\iota_1 \defeq \iota_{|\Sigma_{b_1+1,0} \times I}$
and
\begin{equation*}
\iota' \defeq \iota_{|
(\coprod_{j \in \shuugou{2, \cdots, N}}
\Sigma_{b_j+1,0}  \times I \sqcup
\coprod_{j \in \shuugou{1, \cdots, N'}} \Sigma_{b'_j+1,1} \times I  
\sqcup \coprod_{i} \Sigma_{0,2} \times I)}.
\end{equation*}
Let $\iota'_1$ be
an embedding $\Sigma_{b_1+1,0} \times I \to \Sigma \times I$
such that
$\iota'_{1*} (\psi (R_1^{\epsilon_1}R_2^{\epsilon_2} 
\cdots R_{b_1-1}^{\epsilon_{b_1-1}}R_{b_1}^{\epsilon_{b_1}}  ))
=\psi( X_1^{\epsilon_1} X_2^{\epsilon_2} \cdots X_{b_1}^{\epsilon_{b_1}})$
for $\epsilon_1, \epsilon_2, \cdots, \epsilon_{b_1} \in \shuugou{0,1}$
where $X_i =\iota_1 (R_i)$.
For example, see Figure \ref{fig_embedding_construct}.
We remark $\kukakko{\iota'_1} \in  F^{b_1} \tskein{\Sigma,J^-,J^+}$. 
Using Lemma \ref{lemm_filt_douti_move} repeatedly,
we obtain
\begin{equation*}
\kukakko{\iota} -\kukakko{\iota_1}\kukakko{\iota'}  \in h F^{\star k-1} \tskein{\Sigma,J^-,
J^+}.
\end{equation*}
Using Lemma \ref{lemm_filt_douti_move} repeatedly,
we obtain
\begin{equation*}
\kukakko{\iota_1}\kukakko{\iota'} -
\kukakko{\iota'_1}\kukakko{\iota'}  \in h F^{\star k-1} \tskein{\Sigma,J^-,
J^+}.
\end{equation*}
We remark 
\begin{equation*}
 \kukakko{\iota'_1}\kukakko{\iota'}
\in 
F^{b_1} \tskein{\Sigma,\iota_1 (*_1),
\iota_1 (*'_1)}
\boxtimes
F^{\star k+1-b_1} \tskein{\Sigma, J^-
\backslash \iota_1 (*_1), J^+
\backslash \iota_1 (*'_1)}.
\end{equation*}

If $b'_1 \neq 0$, We denote
$\iota_1 \defeq \iota_{|\Sigma_{b'_1+1,0} \times I}$
and
\begin{equation*}
\iota' \defeq \iota_{|
(\coprod_{j \in \shuugou{1, \cdots, N}}
\Sigma_{b_j+1,0}  \times I \sqcup
\coprod_{j \in \shuugou{2, \cdots, N'}} \Sigma_{b'_j+1,1} \times I  
\sqcup \coprod_{i} \Sigma_{0,2} \times I)}.
\end{equation*}
Let $\iota'_1$ be
an embedding $\Sigma_{b'_1+1,0} \times I \to \Sigma \times I$
such that
$\iota'_{1*} (\psi (R_1^{\epsilon_1}R_2^{\epsilon_2} 
\cdots R_{b'_1}^{\epsilon_j}  ))
=\psi( X_1^{\epsilon_1} X_2^{\epsilon_2} \cdots X_{b'_1}^{\epsilon_j})$
for $\epsilon_1, \epsilon_2, \cdots, \epsilon_{b'_1} \in \shuugou{0,1}$
where $\zettaiti{X_i} =\zettaiti{\iota_1 (R_i)}$.
We remark $\kukakko{\iota'_1} \in  F^{b_1} \tskein{\Sigma,J^-,J^+}$. 
For example, see Figure \ref{fig_embedding_construct}.
Using Lemma \ref{lemm_filt_douti_move}  repeatedly,
we obtain
\begin{equation*}
\kukakko{\iota} -\kukakko{\iota_1}\kukakko{\iota'}  \in h F^{\star k-1} \tskein{\Sigma,J^-,
J^+}.
\end{equation*}
Using Lemma \ref{lemm_filt_douti_move}  repeatedly,
we obtain
\begin{equation*}
\kukakko{\iota_1}\kukakko{\iota'} -
\kukakko{\iota'_1}\kukakko{\iota'}  \in h F^{\star k-1} \tskein{\Sigma,J^-,
J^+}.
\end{equation*}
We remark 
\begin{equation*}
 \kukakko{\iota'_1}\kukakko{\iota'}
\in 
F^{b_1} \tskein{\Sigma}
\boxtimes
F^{\star k+1-b_1} \tskein{\Sigma, J^-,J^+}
\end{equation*}

Hence we obtain
\begin{align*}
&F^{\star k+1} \tskein{\Sigma, J^-, J^+} \\
&\subset \sum_{* \in J^+, *' \in J^-}
\sum_{i+j \geq k+1, i\geq 1}F^i \tskein{\Sigma,*,*'} 
\boxtimes F^{\star j} 
\tskein{\Sigma,J^+ \backslash *, J^- \backslash *'} \\
&+
\sum_{i+j \geq k+1, j \geq 1} F^i \tskein{\Sigma} F^{\star j} 
\tskein{\Sigma,J^+, J^- }
+ h F^{\star (k-1)} \tskein{\Sigma,*_\alpha, *_\beta}\\
&=\sum_{* \in J^+, *' \in J^-}
\sum_{i+j \geq k+1, i\geq 1}F^i \tskein{\Sigma,*,*'} 
\boxtimes F^{j} 
\tskein{\Sigma,J^+ \backslash *, J^- \backslash *'} \\
&+
\sum_{i+j \geq k+1, j \geq 1} F^i \tskein{\Sigma} F^{j} 
\tskein{\Sigma,J^+, J^- }
+ h F^{(k-1)} \tskein{\Sigma,*_\alpha, *_\beta}\\\\
&\subset F^{k+1} \tskein{\Sigma}.
\end{align*}
This proves the lemma.

\end{proof}

By Lemma \ref{lemm_filt_douti}, we have the following.

\begin{thm}
\label{thm_filt_underlying3mfd}
Let $\chi$ be an diffeomorphism
$(\Sigma \times I, J^- \times I , J^+ \times I) \to (\Sigma' \times I, J^- \times I, J^+
\times I)$.
Then we have
\begin{equation*}
\chi_* (F^n \tskein{\Sigma, J^-,J^+})
= F^n \tskein{\Sigma,J^-,J^+}.
\end{equation*}
\end{thm}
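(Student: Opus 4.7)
The plan is to leverage Lemma \ref{lemm_filt_douti}, which identifies $F^n$ with $F^{\star n}$. Since $F^{\star n} \tskein{\Sigma, J^-, J^+}$ is defined purely in terms of embeddings of the standard pieces $\Sigma_{b_j+1,1} \times I$ and $\Sigma_{0,2} \times I$ into the $3$-manifold $\Sigma \times I$ with boundary data specified by $J^\pm$, and the generator $\kukakko{\iota}$ depends only on such an embedding together with universal elements $\eta_b$, $\zettaiti{\eta_b}$, $\zettaiti{R_1}$ of the skein modules of the standard pieces, this filtration is manifestly natural under diffeomorphisms of the underlying $3$-manifold that preserve $J^\pm \times I$. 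So it suffices to prove $\chi_*(F^{\star n}\tskein{\Sigma,J^-,J^+}) = F^{\star n}\tskein{\Sigma',J^-,J^+}$.

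First I would establish the inclusion $\chi_*(F^{\star n}\tskein{\Sigma, J^-, J^+}) \subset F^{\star n}\tskein{\Sigma', J^-, J^+}$ by induction on $n$; the base cases $n \leq 0$ are trivial since $F^{\star n} = \tskein{\Sigma, J^-, J^+}$ in that range. For the induction step, pick a generator of the form $\kukakko{\iota} = \iota_*\bigl((\oplus_{j=1}^M \eta_{b_j}) \oplus (\oplus_{j=M+1}^N \zettaiti{\eta_{b_j}}) \oplus (\oplus_i \zettaiti{R_1})\bigr)$ with $\sum b_j \geq n$. Then $\chi \circ \iota$ is an embedding of the same source into $\Sigma' \times I$, and the functoriality of $\iota \mapsto \iota_*$ gives $\chi_*\kukakko{\iota} = \kukakko{\chi \circ \iota}$.

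The only point requiring care is the normalization $\iota(\shuugou{*_1} \times I) = J^+ \times [\tfrac{1}{4}, \tfrac{3}{4}]$ in the target. Since $\chi$ sends $J^\pm \times I$ to itself, $\chi \circ \iota$ satisfies the analogous condition for some closed subinterval of $J^+ \times I$ in $\Sigma' \times I$; a standard ambient isotopy supported in a collar of $J^\pm \times I$ returns it to the required $[\tfrac{1}{4}, \tfrac{3}{4}]$ interval, and this isotopy does not change the isotopy class of the embedding, hence does not change $\kukakko{\chi \circ \iota}$. The $h \cdot F^{\star(n-2)}$ part of the generating set is handled directly by the inductive hypothesis. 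Applying the same argument to $\chi^{-1}$ gives the reverse inclusion, and converting $F^\star$ back to $F$ via Lemma \ref{lemm_filt_douti} yields the theorem.

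The main obstacle I anticipate is the small isotopy needed to restore the specific boundary parametrization $J^\pm \times [\tfrac{1}{4},\tfrac{3}{4}]$ after pushing forward by $\chi$. This is technical rather than substantive, and relies on the fact that two embeddings of the same standard piece that agree up to isotopy on their boundary setup induce identical elements in the skein module, so I expect it to be a clean verification rather than a genuine difficulty.
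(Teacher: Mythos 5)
Your proposal is correct and follows exactly the paper's route: reduce to the $F^{\star n}$ filtration via Lemma \ref{lemm_filt_douti} and observe that $F^{\star n}$, being defined by embeddings of standard pieces into the underlying $3$-manifold, is preserved under $\chi_*$. The paper compresses this into a one-line chain of equalities ``by definition,'' whereas you spell out the functoriality and the boundary-collar normalization, but the substance is the same.
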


\begin{proof}
By definition and Lemma \ref{lemm_filt_douti}, we have
\begin{equation*}
\chi_* (F^n \tskein{\Sigma, J^-,J^+})
=\chi_* (F^{\star n} \tskein{\Sigma, J^-,J^+})
= F^{\star n} \tskein{\Sigma,J^-,J^+}
= F^n \tskein{\Sigma,J^-,J^+}.
\end{equation*}
This proves the lemma.
\end{proof}

The filtration $\filtn{F^{\star n} \tskein{\Sigma}}=\filtn{F^n \tskein{\Sigma}}$
is independent of the choice of $*_\alpha$.

\begin{prop}
The filtration $\filtn{F^n \tskein{\Sigma}}$ is independent of the choice of $*_\alpha$.
\end{prop}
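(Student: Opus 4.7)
The plan is to reduce the proposition entirely to Lemma~\ref{lemm_filt_douti}. Fix $\Sigma$, and for each admissible base point $*_\alpha \in \partial\Sigma$ let $F^n_\alpha \tskein{\Sigma}$ denote the filtration defined via $\psi_T : T\mathcal{P}(\Sigma,*_\alpha) \to \tskein{\Sigma}$ as in the definition given earlier in this section. What I want to exploit is that the auxiliary filtration $\filtn{F^{\star n}\tskein{\Sigma}}$ is manifestly independent of any such choice, since its generators come from embeddings
\[
\iota : \coprod_{j} \Sigma_{b_j+1,1} \times I \;\sqcup\; \coprod_{i} \Sigma_{0,2} \times I \;\hookrightarrow\; \Sigma \times I
\]
(with $J^- = J^+ = \emptyset$, so there is no component of type $\Sigma_{b_j+1,0}$ carrying marked arcs), together with the closed-curve data $\zettaiti{\eta_{b_j}}$ and $\zettaiti{R_1}$. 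Nowhere in this data is a point of $\partial\Sigma$ singled out.

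First I would record the observation above as the key fact: the submodule $F^{\star n}\tskein{\Sigma}$ is intrinsic to the pair $(\Sigma, \Sigma\times I)$ and involves no choice of base point. Next I would apply Lemma~\ref{lemm_filt_douti} (in the case $J^- = J^+ = \emptyset$) twice, once for each of two choices $*_\alpha$ and $*_\beta$. The lemma yields
\[
F^n_\alpha \tskein{\Sigma} \;=\; F^{\star n}\tskein{\Sigma} \;=\; F^n_\beta \tskein{\Sigma},
\]
which is exactly the desired equality of filtrations. Since $*_\alpha$ and $*_\beta$ were arbitrary, the filtration $\filtn{F^n \tskein{\Sigma}}$ is independent of the choice of base point.

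The main (and only) obstacle in the argument is absorbed in Lemma~\ref{lemm_filt_douti}, whose two directions have already been established via the embedding construction of Figure~\ref{fig_embedding_construct} together with the technical moves of Lemma~\ref{lemm_filt_douti_move}. Given that lemma, the proof of the proposition is purely formal and reduces to the sentence above; no further induction, diagram manipulation, or reference to the individual base points is required.
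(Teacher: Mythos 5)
Your proposal is correct and is essentially the paper's own argument: the paper simply observes that $\filtn{F^{\star n}\tskein{\Sigma}}$ is defined by embeddings of thickened surfaces with no marked boundary points, hence involves no choice of $*_\alpha$, and then invokes Lemma \ref{lemm_filt_douti} to conclude $F^n_\alpha\tskein{\Sigma}=F^{\star n}\tskein{\Sigma}=F^n_\beta\tskein{\Sigma}$. No further content is needed.
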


\subsection{Corollaries}
Lemma \ref{lemm_filt_douti} gives some 
useful propositions.

\begin{prop}
\label{prop_filt_bracket}
Let $J^-$ and $J^+$  be disjoint finite subsets of $\partial \Sigma$ 
with  $\sharp J^- = \sharp J^+$.
We have 
\begin{align*}
&[F^n \tskein{\Sigma}, F^m \tskein{\Sigma}] \subset F^{n+m-2} \tskein{\Sigma}, \\
&\sigma (F^n \tskein{\Sigma})(F^m \tskein{\Sigma,J^-,J^+}) 
\subset F^{n+m-2} \tskein{\Sigma,J^-,J^+}
\end{align*}
for any $n, m \geq 0$.
\end{prop}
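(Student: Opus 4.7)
The plan is to combine the geometric characterization of the filtration from Lemma \ref{lemm_filt_douti} (namely $F^n = F^{\star n}$) with the Leibniz rules for $[\cdot,\cdot]$ and $\sigma$ collected at the end of Section 3. Working with $F^{\star n}$, I will proceed by induction on $n+m$; when $n+m \leq 2$ the right-hand side is all of the relevant skein module and there is nothing to prove, so one assumes $n+m \geq 3$ and that the claim holds for all smaller sums. The relation $h\sigma(\cdot,\cdot) = [\cdot,\cdot]_{\boxtimes}$ together with $h \in F^2$ handles at once any generator of $F^{\star n}$ of the form $h \cdot z$ with $z \in F^{\star(n-2)}$.

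The remaining generators of $F^{\star n}$ are elements $\kukakko{\iota} = \prod_j \zettaiti{\eta_{b_j}}$ arising from embeddings $\iota : \coprod_j \Sigma_{0,b_j+1} \times I \to \Sigma \times I$ with $\sum_j b_j \geq n$; note that this is already a product in the algebra. The Leibniz rules
\[
[x_1 x_2, z] = x_1[x_2,z] + [x_1,z]x_2, \qquad \sigma(x)(v_1 v_2) = \sigma(x)(v_1)v_2 + v_1\sigma(x)(v_2),
\]
extend (via $h\sigma = [\cdot,\cdot]_{\boxtimes}$) to the analogous derivation property $\sigma(x_1 x_2)(v) = x_1 \sigma(x_2)(v) + \sigma(x_1)(v) x_2$ in the first argument. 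Iterating these reduces the general case to the case where $x$ and $y$ are single atomic generators $\zettaiti{\eta_{b_1}}$, $\zettaiti{\eta_{b_2}}$ coming from connected embedded surfaces with $b_1 \geq n$ and $b_2 \geq m$.

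For this atomic case, I would isotope $\iota_1$ and $\iota_2$ so that the supporting tangle diagrams of $\eta_{b_1}$ and $\eta_{b_2}$ meet transversely, and evaluate $\sigma$ using its definition as a signed sum over crossings. The key geometric claim is that each smoothing at a crossing merges a boundary disk of $\iota_1$ with one of $\iota_2$, producing a new embedding of disjoint $\Sigma_{0,b'_j+1} \times I$'s whose total $b$-count is at least $b_1 + b_2 - 2$. Each smoothed term is thus a generator of $F^{\star(n+m-2)}$ modulo a correction which by Lemma \ref{lemm_filt_douti_move} lies in $h F^{\star(n+m-4)} \subset F^{n+m-2}$ (the last inclusion by the inductive hypothesis applied to $F^{n+m-4}$). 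The main obstacle will be precisely this geometric bookkeeping: since $\eta_b$ expands as a signed sum of $2^b$ tangles coming from $\prod_i (R_i-1)$, one must verify across all terms of the expansion and all crossings that the net result assembles into an element of $F^{\star(n+m-2)}$ once the controlled lower-order corrections are absorbed.
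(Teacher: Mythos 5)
Your overall strategy coincides with the paper's: reduce via the Leibniz rules to an atomic case and then invoke the geometric characterization $F^{n}=F^{\star n}$ of Lemma \ref{lemm_filt_douti}. The difference is in where you stop reducing, and that is where a genuine gap remains. The paper pushes the Leibniz reduction all the way down to the \emph{one-sided} atomic case $\sigma (\zettaiti{\psi ((X_j-1)\cdots(X_1-1))})(\psi (Y))$ with $Y$ a single path; there the membership in $F^{\star(j-1)}$ is essentially immediate from the definition of $F^{\star}$, because grouping the signed sum of smoothings by crossing leaves a path passing through $j-1$ surviving factors $(X_i-1)$, realized by an embedded genus-zero surface. (The general module generator $\psi(Y(Z_1-1)\cdots(Z_k-1))$ is then handled by factoring it as a $\circ$-composition and applying the Leibniz rule $\sigma(x)(v_1v_2)=\sigma(x)(v_1)v_2+v_1\sigma(x)(v_2)$ together with Proposition \ref{prop_filt_product}, so that exactly one path factor absorbs the loss of one degree.) You instead set up a \emph{two-sided} atomic case, $\sigma$ of two $\eta$-type generators against each other, and then explicitly defer the verification ("the main obstacle will be precisely this geometric bookkeeping"). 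That deferred step is where the entire content of the proposition lives, and your per-crossing claim is not correct as literally stated: an individual smoothed expansion term is just some tangle and in general lies only in $F^{\star 0}$; the degree $b_1+b_2-2$ emerges only after regrouping the $2^{b_1+b_2}$ signed terms of $\prod_i(R_i-1)\otimes\prod_j(R'_j-1)$ crossing by crossing, so that the two strands meeting at the chosen crossing lose their $(R-1)$-structure while all the others retain it. Without carrying out that cancellation (or, better, avoiding it by the sharper one-sided reduction), the argument is incomplete.

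Two smaller cautions. First, deriving the first-argument derivation property $\sigma(x_1x_2)(v)=x_1\sigma(x_2)(v)+\sigma(x_1)(v)x_2$ by dividing the identity $h\sigma(x)(v)=x\boxtimes v-v\boxtimes x$ by $h$ requires the skein module to have no $h$-torsion, which is not established; it is safer to verify the rule directly from the diagrammatic definition of $\sigma$, as the paper does for the listed Leibniz rules. Second, the generators $h\cdot z$ of $F^{\star n}$ need only bilinearity of $\sigma$ and $hF^{k}\subset F^{k+2}$, not the commutator relation.
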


\begin{proof}
By the Leibniz rule, it suffices to show
\begin{align*}
\sigma (\zettaiti{\psi ((X_j-1) \cdots(X_2-1)(X_1-1))})(\psi (Y))
\in F^{j-1} \tskein{\Sigma}
\end{align*}
for $X_1, X_2 , \cdots, X_j \in \pi_1^+ (\Sigma,*_\alpha)$ and
$Y \in \pi_1^+ (\Sigma, *_\alpha, *_\beta)$.
By definition, we have
\begin{align*}
\sigma (\zettaiti{\psi ((X_j-1) \cdots(X_2-1)(X_1-1))})(\psi (Y))
\in F^{\star(j-1)} \tskein{\Sigma}=F^{j-1} \tskein{\Sigma}.
\end{align*}
This proves the proposition.
\end{proof}

\begin{prop}
\label{prop_filt_product}
Let $J^+,J^-,J'^+$ and $J'^-$ be mutually disjoint finite subsets of $\partial \Sigma$
with $\sharp (J^+) =\sharp (J^-)$ and 
$\sharp(J'^+) =\sharp(J'^-)$.
We have 
\begin{align*}
F^n \tskein{\Sigma,J^-, J^+} \boxtimes 
F^m \tskein{\Sigma, J'^-,J'^+}
\subset F^{n+m} \tskein{\Sigma,J^- \cup 
J'^-,J^+ \cup J'+}
\end{align*}
for any $n,m$. 
In particular, we have
\begin{align*}
&F^n \tskein{\Sigma} F^m \tskein{\Sigma} 
\subset F^{n+m} \tskein{\Sigma}, \\
&F^n \tskein{\Sigma} F^m \tskein{\Sigma, *_\alpha, *_\beta}
\subset F^{n+m} \tskein{\Sigma, *_\alpha, *_\beta}, \\
&F^m \tskein{\Sigma, *_\alpha, *_\beta}
F^n \tskein{\Sigma} 
\subset F^{n+m} \tskein{\Sigma, *_\alpha, *_\beta}.
\end{align*}
for any $n,m \geq 0$.
\end{prop}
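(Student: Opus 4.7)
The plan is to verify the inclusion directly from the original definition of the filtration, by induction on $\sharp (J^-) + \sharp (J'^-)$, reducing everything to the multiplicative structure of $\tskein{\Sigma}$. Throughout I use the $\Q[\rho][[h]]$-bilinearity of $\boxtimes$ and work on generators of the filtration.

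For the base case $\sharp (J^-) + \sharp (J'^-) = 0$, the claim reduces to $F^n \tskein{\Sigma} \cdot F^m \tskein{\Sigma} \subset F^{n+m} \tskein{\Sigma}$, where the product is the associative algebra structure on $\tskein{\Sigma}$. A generator of $F^n \tskein{\Sigma}$ has the form $h^{i_0} \zettaiti{\psi(u_1)} \cdots \zettaiti{\psi(u_j)}$ with $u_k \in (\ker \epsilon)^{i_k}$ and $2 i_0 + \sum_k i_k \geq n$, and similarly for $F^m$. The product of two such generators is of the same form with combined weight $2(i_0 + i'_0) + \sum_k i_k + \sum_\ell i'_\ell \geq n+m$, so it lies in $F^{n+m} \tskein{\Sigma}$.

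For the inductive step, when $\sharp (J'^-) = 1$ and $\sharp (J^-) = 0$, a generator of $F^m \tskein{\Sigma, *_\alpha, *_\beta}$ has the form $c \cdot \psi(p \cdot u)$ with $c \in F^{i_0} \tskein{\Sigma}$, $p \in \mathcal{P}(\Sigma, *_\alpha, *_\beta)$, $u \in (\ker \epsilon)^{i_1}$, and $i_0 + i_1 \geq m$. The compatibility of $\boxtimes$ with the bimodule structure gives $a \boxtimes (c \cdot \psi(p u)) = (a \cdot c) \cdot \psi(pu)$, and by the base case $a \cdot c \in F^{n+i_0} \tskein{\Sigma}$, whence the product lies in $F^{n+i_0+i_1} \tskein{\Sigma, *_\alpha, *_\beta} \subset F^{n+m} \tskein{\Sigma, *_\alpha, *_\beta}$. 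The symmetric case $\sharp (J^-) = 1, \sharp (J'^-) = 0$ is analogous. When $\sharp (J^-) \geq 2$ or $\sharp (J'^-) \geq 2$, the recursive definition
\[
F^n \tskein{\Sigma, J^-, J^+} = \sum_{i_0+i_1 \geq n} \bigoplus_{*' \in J^+} F^{i_0} \tskein{\Sigma, J^- \backslash \shuugou{*}, J^+ \backslash \shuugou{*'}} \cdot F^{i_1} \tskein{\Sigma, \shuugou{*}, \shuugou{*'}}
\]
allows one to peel off a single strand on the larger side, reducing $\sharp (J^-) + \sharp (J'^-)$ by one and invoking the inductive hypothesis.

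The main obstacle is making the compatibility of $\boxtimes$ with the various module structures precise enough for the peel-off step: one must verify that partitioning $J^\pm$ and $J'^\pm$ in the recursive definition is consistent with the $\boxtimes$ decomposition. This is a matter of careful bookkeeping from the recursive definition and the bilinearity of $\boxtimes$. The special cases for $\tskein{\Sigma}$ alone and for $\tskein{\Sigma, *_\alpha, *_\beta}$ follow immediately from the base case and the single-strand inductive step.
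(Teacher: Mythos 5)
Your base case and the left-multiplication step are fine: a product of generators $h^{i_0}\zettaiti{\psi(u_1)}\cdots\zettaiti{\psi(u_j)}$ is again a generator of the right weight, and $a\boxtimes(c\boxtimes\psi(pu))=(a\boxtimes c)\boxtimes\psi(pu)$ by associativity of stacking. But the step you dismiss as ``analogous'' and the peel-off step are where the real difficulty lives, and as set up your induction does not close. The reason is that $\boxtimes$ is not commutative: $[T_1]\boxtimes[T_2]-[T_2]\boxtimes[T_1]=h\,\sigma(d_1,d_2)$. The definition of $F^n\tskein{\Sigma,*_\alpha,*_\beta}$ places the closed factor $F^{i_0}\tskein{\Sigma}$ on a specific side of the arc, and the recursive definition of $F^n\tskein{\Sigma,J^-,J^+}$ for $\sharp J^-\geq 2$ peels off the distinguished strand in a specific stacking position. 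When you form $x\boxtimes y$, the factors generally arrive in the wrong order, and rearranging them to match the definition produces correction terms of the form $h\,\sigma(\cdot,\cdot)$. Bounding those requires $\sigma(F^n)(F^m)\subset F^{n+m-2}$, i.e.\ Proposition \ref{prop_filt_bracket} --- which is not available to you at this stage: in the paper it is itself deduced from the identification $F^n=F^{\star n}$ of Lemma \ref{lemm_filt_douti}. So the ``careful bookkeeping'' you defer is not bookkeeping; it is the content of the statement.

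This is also why the paper's own proof looks so short: it does not argue from the algebraic definition at all. It observes that the geometric filtration $F^{\star n}$ (generated by images of embedded $\coprod\Sigma_{0,b_j+1}\times I$ with $\sum b_j\geq n$) is manifestly multiplicative under $\boxtimes$ --- disjoint union of embeddings is an embedding and the $b_j$'s add --- and then invokes $F^{\star n}=F^n$ from Lemma \ref{lemm_filt_douti}. All the ordering and crossing-change issues you would have to face are absorbed into the proof of that lemma (via Lemma \ref{lemm_filt_douti_move}). To rescue your direct approach you would either have to prove the commutator estimate independently at the level of generators (essentially redoing that work) or restrict the claim to the cases where no reordering is needed, which excludes $F^m\tskein{\Sigma,*_\alpha,*_\beta}\,F^n\tskein{\Sigma}$ and the general multi-point statement.
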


\begin{proof}
We have
\begin{align*}
F^n \tskein{\Sigma,J^-, J^+} \boxtimes 
F^m \tskein{\Sigma, J'^-,J'^+}
&\subset 
F^{\star (n+m)} \tskein{\Sigma,J^- \cup 
J'^-,J^+ \cup J'+} \\
&=F^{n+m} \tskein{\Sigma,J^- \cup 
J'^-,J^+ \cup J'+}.
\end{align*}
This proves the proposition.
\end{proof}

\begin{prop}
\label{prop_filt_disk}
We have $F^{\star n} \tskein{D} =F^n \tskein{D} =h^{\gauss{(n+1)/2}} \Q [\rho][[h]]
[\emptyset]$,
where $D$ is a closed disk and $\gauss{x}$ is the greatest integer 
not greater than $x$ for $x \in \Q$.
\end{prop}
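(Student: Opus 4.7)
The first equality $F^{\star n}\tskein{D}=F^n\tskein{D}$ is immediate from Lemma~\ref{lemm_filt_douti}, so it remains to identify this common submodule with $h^{\gauss{(n+1)/2}}\Q[\rho][[h]][\emptyset]$. Proposition~\ref{prop_tskein_disk} tells us $\tskein{D}=\Q[\rho][[h]][\emptyset]$ as a free rank-one module, so the whole problem reduces to tracking the lowest power of $h$ that can appear in a generator of $F^{\star n}\tskein{D}$. I would proceed by induction on $n$, with the cases $n=-1,0$ giving $F^{\star n}\tskein{D}=\tskein{D}$ and $\gauss{(n+1)/2}=0$ as the base.

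For the inclusion $F^{\star n}\tskein{D}\subset h^{\gauss{(n+1)/2}}\Q[\rho][[h]][\emptyset]$, the summand $hF^{\star(n-2)}\tskein{D}$ lies in $h\cdot h^{\gauss{(n-1)/2}}\Q[\rho][[h]][\emptyset]=h^{\gauss{(n+1)/2}}\Q[\rho][[h]][\emptyset]$ by the inductive hypothesis. The remaining generators come from embeddings $\iota:\coprod_j\Sigma_{0,b_j+1}\times I\sqcup\coprod_i\Sigma_{0,2}\times I\to D\times I$ with $\sum_j b_j\geq n$. Because $D$ is simply connected, the Corollary following Proposition~\ref{prop_psi_isom} gives an algebra isomorphism $\tskein{D,*_1}\simeq\Q[\rho][[h]]$, so each loop image $\iota_*(R_k)$ satisfies $\psi(\iota_*R_k)=\exp(m_k\rho h)\psi(r_1)$ for some integer $m_k$ determined by the rotation number of the framed loop. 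Expanding
\[
\psi\bigl((R_1-1)(R_2-1)\cdots(R_{b_j}-1)\bigr)=\prod_{k=1}^{b_j}(\exp(m_k\rho h)-1)\cdot\psi(r_1)
\]
and closing via $\zettaiti{\psi(r_1)}=\frac{2\sinh(\rho h)}{h}[\emptyset]$ shows $\zettaiti{\eta_{b_j}}\in h^{b_j}\Q[\rho][[h]][\emptyset]$; the $\zettaiti{R_1^{(i)}}$ factors contribute elements of $\Q[\rho][[h]][\emptyset]$, so the full product lies in $h^{\sum b_j}\Q[\rho][[h]][\emptyset]\subset h^n\Q[\rho][[h]][\emptyset]\subset h^{\gauss{(n+1)/2}}\Q[\rho][[h]][\emptyset]$.

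The reverse containment follows inductively from the $hF^{\star(n-2)}$ summand: assuming $h^{\gauss{(n-1)/2}}[\emptyset]\in F^{\star(n-2)}\tskein{D}$, multiplication by $h$ yields $h^{\gauss{(n+1)/2}}[\emptyset]\in hF^{\star(n-2)}\tskein{D}\subset F^{\star n}\tskein{D}$. The principal obstacle is the computation in the previous paragraph, and specifically the step identifying $\psi(R)$ for an arbitrary framed loop $R$ in the disk with $\exp(m\rho h)\psi(r_1)$. This uses the simple connectedness of $D$ (so $R$ is freely homotopic to the constant path) combined with the framing relation in $\mathcal{P}(D,*_1)$, which together translate the rotation number into a power of $\exp(\rho h)$; once this identification is granted, the factor-of-$h$ bookkeeping in the expansion of $(R_1-1)\cdots(R_{b_j}-1)$ is routine.
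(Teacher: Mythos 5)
Your reduction to computing the common value of $F^{\star n}\tskein{D}=F^n\tskein{D}$ is fine, as are the base cases and the reverse inclusion $h^{\gauss{(n+1)/2}}[\emptyset]\in F^{\star n}\tskein{D}$ obtained by iterating the summand $hF^{\star(n-2)}$. The gap is in your treatment of the embedding-generators $\kukakko{\iota}$. The identity $\psi(\iota_*R_k)=\exp(m_k\rho h)\psi(r_1)$ and the ensuing factorization of $\iota_*\zettaiti{\eta_{b_j}}$ as $\prod_k(\exp(m_k\rho h)-1)$ times the closure of $r_1$ are only valid when the image of $\Sigma_{0,b_j+1}\times I$ is a flat, level-preserving copy inside $D\times I$. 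For an arbitrary embedding $\iota$ the image of the arc $\psi(R_k)$ is a properly embedded arc in the ball $D\times I$ that may be knotted, and the several pieces may be linked with one another; consequently $\iota_*$ neither commutes with $\psi$ nor respects the products, and $\kukakko{\iota}$ does not decompose as the product of the separately closed factors. Indeed the claimed containment $\kukakko{\iota}\in h^{\sum_j b_j}\Q[\rho][[h]][\emptyset]$ is false: take $n=2$ and an embedding of $\Sigma_{0,3}\times I$ into $D\times I$ for which $R_1,R_2$ map to a Hopf link of $0$-framed unknots and $R_1R_2$ to an unknot of writhe $2$; then $\zettaiti{\eta_2}=(e^{2\rho h}-1)\frac{2\sinh(\rho h)}{h}[\emptyset]=(4\rho^2h+O(h^2))[\emptyset]$, which lies in $h\Q[\rho][[h]][\emptyset]$ but not in $h^2\Q[\rho][[h]][\emptyset]$. (That these generators are only divisible by $h^{\gauss{(n+1)/2}}$ rather than $h^n$ is precisely why the lemmas on $\Sigma_{\star,i,j}$ and $\Sigma_{\star\star,i,j}$, which trade a crossing change between pieces for $h$ times a lower filtration term, are needed to prove Lemma \ref{lemm_filt_douti} in the first place.)

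The repair is to run the forward inclusion on the $F^n$ side, which is what the paper does: every class in $\pi_1^+(D,*_1)$ equals $\exp(m\rho h)1_{\pi^+(D,*_1)}$ in $\mathcal{P}(D,*_1)$ by the Whitney--Graustein theorem combined with the relation $R_1=\exp(\rho h)R_0$, so $(\ker\epsilon)^i=(\exp(\rho h)1_{\pi^+(D,*_1)}-1_{\pi^+(D,*_1)})^i\mathcal{P}(D,*_1)\subset h^i\Q[\rho][[h]]1_{\pi^+(D,*_1)}$; here your rotation-number argument is legitimate because the paths genuinely lie in the disk. In the definition of $F^n\tskein{D}$ the factors $\zettaiti{\psi((\ker\epsilon)^{i_k})}\subset h^{i_k}\Q[\rho][[h]][\emptyset]$ are combined by $\boxtimes$, i.e.\ by stacking split copies in the $I$-direction, so no linking can occur and $h^{i_0}\psi_T((\ker\epsilon)^{i_1}\otimes\cdots\otimes(\ker\epsilon)^{i_j})\subset h^{i_0+i_1+\cdots+i_j}\Q[\rho][[h]][\emptyset]$; minimizing $i_0+\cdots+i_j$ subject to $2i_0+i_1+\cdots+i_j\geq n$ gives the exponent $\gauss{(n+1)/2}$, and your reverse inclusion then completes the proof.
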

\begin{proof}
Since $(\ker \epsilon)^n =(\exp (\rho h)1_{\pi^+(D,*_1)} -1_{\pi^+(D,*_1)})^n \mathcal{P} (D,*_1)$,
we have 
$(\ker \epsilon)^n  \subset h^n \Q [\rho][[h]] 1_{\pi^+(D,*_1)}$.
This proves the proposition.
\end{proof}

\subsection{Completion}
We consider the topology on $\tskein{\Sigma}$ induced by
the 
 $\filtn{F^n \tskein{\Sigma}}$, and denote its completion
by $\widehat{\tskein{\Sigma}} \defeq
\underleftarrow{\lim}_{i \rightarrow \infty}\tskein{\Sigma}/
F^i \tskein{\Sigma}$.
We call $\widehat{\tskein{\Sigma}}$ the completed skein algebra.
We also consider the topology on $\tskein{\Sigma,J^-,J^+}$ induced by
the filtration $\filtn{F^n \tskein{\Sigma,J^-,J^+}}$, and denote its completion
by $\widehat{\tskein{\Sigma,J^-,J^+}} \defeq
\underleftarrow{\lim}_{i \rightarrow \infty}\tskein{\Sigma,J^-,J^+} /
F^i \tskein{\Sigma,J^-,J^+}$.
We call $\widehat{\tskein{\Sigma,J^-,J^+}}$ the completed skein module.
The completed skein algebra $\widehat{\tskein{\Sigma}}$ 
has a filtration $\widehat{\tskein{\Sigma}} = F^0 \widehat{\tskein{\Sigma}} \supset 
F^1 \widehat{\tskein{\Sigma}} \supset F^2 \widehat{\tskein{\Sigma}} \supset \cdots$
such that $\widehat{\tskein{\Sigma}}/F^n \widehat{\tskein{\Sigma}} \simeq
\tskein{\Sigma}/F^n \tskein{\Sigma}$ for $n \in \Z_{\geq 0}$.
The completed skein module $\widehat{\tskein{\Sigma,J^-,J^+}}$ 
also has a filtration $\widehat{\tskein{\Sigma,J^-,J^+}} = 
F^0 \widehat{\tskein{\Sigma,J^-,J^+}} \supset 
F^1\widehat{\tskein{\Sigma,J^-,J^+}} \supset F^2 \widehat{\tskein{\Sigma,J^-,J^+}} \supset \cdots$
such that $\widehat{\tskein{\Sigma,J^-,J^+}}/F^n \widehat{\tskein{\Sigma,J^-,J^+}} 
\simeq
\tskein{\Sigma,J^-,J^+}/F^n \tskein{\Sigma,J^-,J^+}$ for $n \in \Z_{\geq 0}$.

Let $\pi_1 (\Sigma, *)$ be the fundamental group of $\Sigma$, $\Q [\rho] \pi_1 (\Sigma,*)$
the group ring of $\pi_1 (\Sigma, *)$ over $\Q [\rho]$ and $\epsilon_{\pi} :
\Q[\rho] \pi_1 (\Sigma, *) \to \Q[\rho]$ the augmentation map
defined by $x \in \pi_1 (\Sigma, *) \mapsto 1$.
We consider the natural surjective map $\epsilon_{\mathcal{A}} :
\tskein{\Sigma} \to \Q [\rho]$ defined by 
$\epsilon_{\mathcal{A}} (h) =0$ and $\epsilon_{\mathcal{A}}
([L]) =(2\rho)^{\zettaiti{L}}$ for $L \in \mathcal{T}(\Sigma)$,
 where we denote 
by $\zettaiti{L}$ the number of components of $L$.
We remark that $\ker \epsilon_{\mathcal{A}} =F^1 \tskein{\Sigma}$.
The map $\varpi_{\mathcal{P}} :\mathcal{P}(\Sigma , *_i, *_j)\to
\Q [\rho] \pi_1 (\Sigma, *_i, *_j)$ is defined by
$h \mapsto 0$ and the natural surjection
$\pi^+_1 (\Sigma, *_i,*,j) \to \pi_1 (\Sigma, *_i, *_j)$, 
where $\Q [\rho] \pi_1 (\Sigma, *_i, *_j)$
is the free $\Q [\rho]$-module with basis
$\pi_1 (\Sigma, *_i, *_j)$.
We denote $\varpi_{\mathcal{A}} \defeq
(\epsilon_{\mathcal{A}} \otimes \varpi_{\mathcal{P}}) \circ 
\psi_{*_i} : \tskein{\Sigma, *_i , *_j} \to \Q [\rho] \pi_1 (\Sigma, *_i, *_j)$.
Since $\varpi_{\mathcal{A}} (F^n \tskein{\Sigma, *_i, *_j})=
(\ker \epsilon_\pi)^n \Q [\rho] \pi_1 (\Sigma, *_i, *_j)$
for any $n$, the map $\varpi_{\mathcal{A}}$
induces the surjection
\begin{equation*}
\varpi_{\mathcal{A}} :
\widehat{\tskein{\Sigma, *_i, *_j}} \to 
\widehat{\Q [\rho ] \pi_1 (\Sigma, *_i, *_j)}\defeq
\comp{i} \Q [\rho] \pi_1 (\Sigma, *_i, *_j)/
(\ker \epsilon_\pi)^i \Q [\rho] \pi_1 (\Sigma, *_i, *_j) .
\end{equation*}
It is well-known that $\cap^{\infty}_{j=0}
(\ker \epsilon_\pi)^j \Q [\rho] \pi_1 (\Sigma, *_i, *_j)= \shuugou{0}
$. See, for example, Bourbaki \cite{Bourbaki} Exercise 4.6. 
Since $\varpi_{\mathcal{A}} (\xi (x)) =\xi (\varpi_{\mathcal{A}}(x))$
for any $\xi \in \mathcal{M} (\Sigma)$ and any $x \in
\widehat{\tskein{\Sigma, *_i, *_j}}$, 
we have the following.

\begin{prop}
\label{prop_MCG_tskein_faithful}
For any $\xi \in \mathcal{M} ( \Sigma)$, 
$\xi = \id \in \mathcal{M} (\Sigma)$ if and only if
$\xi (x) =x $ for any $x \in \widehat{\tskein{\Sigma, *_i, *_j}}$ 
and any $*_i, *_j \in \shuugou{*_1, *_2, \cdots, *_b}$.
\end{prop}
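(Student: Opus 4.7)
The forward implication is immediate, since any $\xi$ acts through a diffeomorphism whose restriction to $\Sigma\times I$ is the identity when $\xi=\id$. The substance is the converse: assuming $\xi$ acts trivially on every completed skein module $\widehat{\tskein{\Sigma,*_i,*_j}}$, deduce that $\xi=\id\in\mathcal{M}(\Sigma)$. The plan is to push the triviality down through $\varpi_{\mathcal{A}}$ to the group ring and then invoke a Dehn--Nielsen--Baer type statement for the fundamental groupoid on boundary basepoints.

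First I would check that $\varpi_{\mathcal{A}}:\widehat{\tskein{\Sigma,*_i,*_j}}\to\widehat{\Q[\rho]\pi_1(\Sigma,*_i,*_j)}$ is $\mathcal{M}(\Sigma)$-equivariant. This is essentially tautological from the construction: both $\epsilon_{\mathcal{A}}$ and $\varpi_{\mathcal{P}}$ are defined intrinsically in terms of the underlying $3$-manifold and the homotopy type, and $\psi_{*_i}$ comes from the decomposition in Proposition \ref{prop_psi_isom}, all of which commute with the diffeomorphism action. Since $\xi$ acts trivially on $\widehat{\tskein{\Sigma,*_i,*_j}}$ and $\varpi_{\mathcal{A}}$ is surjective, $\xi$ must then act trivially on $\widehat{\Q[\rho]\pi_1(\Sigma,*_i,*_j)}$.

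Next, using the cited fact that $\bigcap_{j\ge 0}(\ker\epsilon_\pi)^j\Q[\rho]\pi_1(\Sigma,*_i,*_j)=\{0\}$ (Bourbaki, Exercise 4.6), the natural map $\Q[\rho]\pi_1(\Sigma,*_i,*_j)\hookrightarrow\widehat{\Q[\rho]\pi_1(\Sigma,*_i,*_j)}$ is injective. Composing with the inclusion $\pi_1(\Sigma,*_i,*_j)\hookrightarrow\Q[\rho]\pi_1(\Sigma,*_i,*_j)$, we conclude that $\xi$ acts as the identity on each homotopy set $\pi_1(\Sigma,*_i,*_j)$ of paths from $*_i$ to $*_j$ on $\Sigma$, for every pair $(i,j)\in\{1,\dots,b\}^2$.

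Finally I would invoke the Dehn--Nielsen--Baer theorem for compact surfaces with non-empty boundary, in its groupoid form: a mapping class of $\Sigma$ that fixes the boundary pointwise and induces the identity on the fundamental groupoid based at the set $\{*_1,\dots,*_b\}\subset\partial\Sigma$ must be trivial. Concretely, fixing a basepoint $*_1$ and an arc system $\{r_\alpha\}$ from $*_1$ to $*_\alpha$, triviality on all $\pi_1(\Sigma,*_i,*_j)$ implies triviality on $\pi_1(\Sigma,*_1)$ and on every arc $[r_\alpha]$, which, together with the assumption that $\xi$ acts as the identity on $\partial\Sigma$, forces a representative diffeomorphism isotopic to the identity rel $\partial\Sigma$. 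The main obstacle is not analytical but is to confirm that this classical faithfulness statement applies in exactly the setting used here (multiple boundary basepoints, surface with non-empty boundary); once that is verified, the chain $\widehat{\tskein{}}\twoheadrightarrow\widehat{\Q[\rho]\pi}\hookleftarrow\pi_1$ delivers the claim.
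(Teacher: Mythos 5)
Your proposal is correct and follows essentially the same route the paper intends: the paper gives no separate proof but derives the proposition from the immediately preceding paragraph, which establishes exactly your three ingredients (equivariance and surjectivity of $\varpi_{\mathcal{A}}$ onto the completed group ring, separatedness of the augmentation-ideal filtration via the Bourbaki reference, and the resulting reduction to faithfulness of the $\mathcal{M}(\Sigma)$-action on the fundamental groupoid based at one point per boundary component). Your explicit remark that the groupoid version with all pairs $(*_i,*_j)$ is needed — rather than a single basepoint, for which boundary-parallel twists on other components would act trivially — is a point the paper leaves implicit, and is correctly handled.
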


We denote by $\check{\mathcal{M}} (\Sigma)
\subset \mathcal{M} (\Sigma)$  the set of consisting of elements $\xi$
satisfying $(1-\xi)^2 (H_1 (\Sigma)) = \shuugou{0}$, where 
$H_1 (\Sigma) $ is the first homology group of $\Sigma$.
For example, the Dehn twist along a simple closed curve is
an element of $\check{\mathcal{M}}(\Sigma)$.

There exists a natural $\Q$-module surjective homomorphism
\begin{equation*}
\varphi : \ker \epsilon/ (\ker \epsilon)^2 = \Q \oplus \Q [\rho] \otimes H^1 (\Sigma)
\end{equation*}
defined by $h \mapsto 1 \in \Q$ and $R \in \pi^+_1 (\Sigma, *)
\to [R] \in H_1 (\Sigma)$.

\begin{prop}
Let $\xi$ be an element of $\check{\mathcal{M}}(\Sigma)$.
\begin{enumerate}
\item Fix the base point $* \in \shuugou{*_1, \cdots ,*_b}$.
We have $(1-\xi)^2 (\ker \epsilon) \subset (\ker \epsilon)^2
\subset \mathcal{P} (\Sigma, *)$.
\item Fix the base point $* \in \shuugou{*_1, \cdots ,*_b}$. 
We have $(1-\xi)^{N+1} ((\ker \epsilon)^N)
\subset (\ker \epsilon)^{N+1}\subset (\ker \epsilon)^2$
for and any $N$.
\item Let $*_\alpha$ and $*_\beta$ be two elements of $\shuugou{*_1, \cdots ,*_b}$.
We have $(1-\xi)^{N+1}((\ker \epsilon)^N \mathcal{P}(\Sigma, *_\alpha, *_\beta)
) \subset (\ker \epsilon)^{N+1} \mathcal{P}(\Sigma, *_\alpha, *_\beta)$.
\end{enumerate}
\end{prop}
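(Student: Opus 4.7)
The plan is to prove the three parts in turn, with (2) obtained from (1) by a Leibniz-type induction and (3) obtained analogously with one additional ingredient.

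For (1), I exploit the $\Q$-linear surjection $\varphi\colon \ker\epsilon/(\ker\epsilon)^2 \twoheadrightarrow \Q\oplus \Q[\rho]\otimes H_1(\Sigma)$ defined just above. Since $\xi$ is a $\Q[\rho][[h]]$-algebra automorphism of $\mathcal{P}(\Sigma,*)$ that fixes the scalars $\rho$ and $h$, its action descends via $\varphi$ to the identity on the $\Q$-summand (coming from $h$) and to the induced homology automorphism $\xi_*$ on the $\Q[\rho]\otimes H_1(\Sigma)$-summand. The assumption $\xi \in \check{\mathcal{M}}(\Sigma)$ gives $(1-\xi_*)^2 = 0$ on $H_1(\Sigma)$, so $(1-\xi)^2$ vanishes on $\ker\epsilon/(\ker\epsilon)^2$, which is exactly $(1-\xi)^2(\ker\epsilon)\subset(\ker\epsilon)^2$.

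For (2), write $d := 1-\xi$. Since $\xi$ is an algebra homomorphism, $d(ab) = d(a)\,b + a\,d(b) - d(a)\,d(b)$. Iterating on $m$ gives the expansion
\[
d^m(ab) \;=\; \sum_{\substack{0 \leq i,j \leq m \\ i+j \geq m}} c_{ij}\, d^i(a)\, d^j(b),
\]
because each application of $d$ to a term $d^i(a)\,d^j(b)$ produces $d^{i+1}(a)d^j(b)$, $d^i(a)d^{j+1}(b)$ and $-d^{i+1}(a)d^{j+1}(b)$, all with index sum at least $i+j+1$. Also $\xi$ preserves $\epsilon$, so $d$ preserves each $(\ker\epsilon)^k$. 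I induct on $N$, with base $N=1$ given by (1). For the step, write $u \in (\ker\epsilon)^N$ as a finite sum of products $ab$ with $a \in \ker\epsilon$, $b \in (\ker\epsilon)^{N-1}$, and apply the expansion with $m = N+1$. Part (1) yields $d^i(a) \in (\ker\epsilon)^2$ for $i \geq 2$ and $d^i(a) \in \ker\epsilon$ for $i \leq 1$; the inductive hypothesis together with the fact that $d$ preserves each $(\ker\epsilon)^k$ yields $d^j(b) \in (\ker\epsilon)^N$ for $j \geq N$ and $d^j(b) \in (\ker\epsilon)^{N-1}$ for $j \leq N-1$. A four-case analysis on $(i,j)$ with $i+j \geq N+1$ then forces each $d^i(a)\,d^j(b) \in (\ker\epsilon)^{N+1}$: the case $i\leq 1$, $j\leq N-1$ is excluded by $i+j\geq N+1$, while in each of the remaining three cases the filtration degrees of the two factors add up to at least $N+1$.

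For (3), the proof runs in parallel, needing one additional input: a path analog of (1), namely $(1-\xi)^2 \mathcal{P}(\Sigma,*_\alpha,*_\beta) \subset (\ker\epsilon)\cdot \mathcal{P}(\Sigma,*_\alpha,*_\beta)$. This is obtained by fixing a base path $P_0$ and reducing to the loop algebra, where (1) applies, while keeping track of the correction terms coming from $\xi(P_0)\cdot P_0^{-1}$, which visibly lie in $(\ker\epsilon)\cdot\mathcal{P}(\Sigma,*_\alpha,*_\beta)$. Combined with (2) and the twisted Leibniz expansion for the $\mathcal{P}(\Sigma,*_\alpha)$-module structure on $\mathcal{P}(\Sigma,*_\alpha,*_\beta)$, the same four-case analysis as in (2) then yields (3). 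The main obstacle is this path analog of (1): since $\mathcal{P}(\Sigma,*_\alpha,*_\beta)$ carries no augmentation of its own, one must pass to the loop algebra via a choice of base path and carefully verify compatibility with the framing relation and with the bimodule structure used to define the filtration; parts (1) and (2) themselves are routine once the twisted Leibniz expansion and the four-case pattern are set up.
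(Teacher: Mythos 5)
Your parts (1) and (2) are correct and follow essentially the paper's route: the paper uses the twisted binomial expansion $(1-\xi)^{m}(xy)=\sum_{i+j=m}\tbinom{m}{i}(1-\xi)^i\xi^j(x)\,(1-\xi)^j(y)$ where you use the untwisted three-term expansion over $i+j\geq m$, but the threshold bookkeeping is identical and both close the induction.

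Part (3), however, has a genuine gap: the auxiliary ``path analog of (1)'' you state, namely $(1-\xi)^2\,\mathcal{P}(\Sigma,*_\alpha,*_\beta)\subset(\ker\epsilon)\,\mathcal{P}(\Sigma,*_\alpha,*_\beta)$, is too weak for your four-case analysis to go through. Write $u=xv$ with $x\in(\ker\epsilon)^N$ and $v\in\mathcal{P}(\Sigma,*_\alpha,*_\beta)$ and expand $(1-\xi)^{N+1}(xv)$ into terms $(1-\xi)^i(x)\cdot(1-\xi)^j(v)$ (up to twists) with $i+j\geq N+1$. Your lemma lets $(1-\xi)^j(v)$ gain a power of $\ker\epsilon$ only when $j\geq 2$, and (2) lets $(1-\xi)^i(x)$ gain a power only when $i\geq N+1$; the term $(i,j)=(N,1)$ satisfies $i+j\geq N+1$ but crosses neither threshold, so it can only be placed in $(\ker\epsilon)^{N}\mathcal{P}(\Sigma,*_\alpha,*_\beta)$. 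Unlike in (2), the ``bad'' corner of the case analysis is not excluded by the constraint $i+j\geq N+1$, because the path factor starts in filtration degree $0$. What is actually needed --- and what the paper uses implicitly as the ``obvious'' $N=0$ base case of (3) --- is the stronger one-power statement $(1-\xi)\,\mathcal{P}(\Sigma,*_\alpha,*_\beta)\subset(\ker\epsilon)\,\mathcal{P}(\Sigma,*_\alpha,*_\beta)$. This is easier than what you propose and uses neither $\varphi$ nor the hypothesis $\xi\in\check{\mathcal{M}}(\Sigma)$: for a path $R$ one has $R-\xi(R)=(1-\xi(R)R^{-1})\,R$, and the loop $\xi(R)R^{-1}$ has augmentation $1$, so $1-\xi(R)R^{-1}\in\ker\epsilon$. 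This is exactly the mechanism you sketch with your base path $P_0$, but you attach it to the wrong exponent; with the corrected statement the term $(i,j)=(N,1)$ lands in $(\ker\epsilon)^{N}\cdot(\ker\epsilon)\,\mathcal{P}(\Sigma,*_\alpha,*_\beta)$ and the rest of your argument goes through unchanged.
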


\begin{proof}
(1)Since $\varphi (\xi (x)) = \xi (\varphi(x))$ for any  $x \in
\ker \epsilon$, we have $\varphi ((1-\xi)^2(x)) =(1-\xi)^2
\varphi(x) =0$. So we have $(1-\xi)^2 (x) \in (\ker \epsilon)^2$.
This proves (1).

(2)The proof goes by induction in $N$.
If $N=0,1$, the claim is obvious.
We assume $N \in \Z_{\geq 1}$, $x \in (\ker \epsilon)^{N}$
and $y \in (\ker \epsilon)^1$.
By induction assumption, we have
\begin{align*}
&(1-\xi)^{N+1}(xy)=\sum_{i+j=N+1} \frac{(N+1)!}{i! j!} (1-\xi)^i \xi^{j} (x)
(1-\xi)^j(y) \in (\ker \epsilon)^{N+1} .
\end{align*}
This proves (2).

(3)The proof goes by induction in $N$. 
If $N=0$, the claim is obvious.
We assume $N \in \Z_{\geq 1}$, $x \in (\ker \epsilon)^{N}$ and  $v \in
\mathcal{P} (\Sigma, *_\alpha, *_\beta)$.
By induction assumption and (1), we have
\begin{align*}
&(1-\xi)^{N+1}(xv)=\sum_{i+j =N+1} \frac{(N+1)!}{i! j!} (1-\xi)^i \xi^{j} (v')
(1-\xi)^j(v) \in (\ker \epsilon)^{N+1} \mathcal{P}(\Sigma, *_\alpha, *_\beta).
\end{align*}
This proves (3).
\end{proof}

By the above proposition, we have the following.

\begin{prop}
Let $J^-$ and $J^+$  be disjoint finite subsets of $\partial \Sigma$ 
with $\sharp J^- = \sharp J^+$.
For any $\xi \in \check{\mathcal{M}}(\Sigma)$ and ant $N \in \Z_{\geq 0}$,
we have $(1-\xi)^{N+1}(F^N \tskein{\Sigma,J^-,J^+}) \subset F^{N+1}
\tskein{\Sigma, J^-,J^+}$.
\end{prop}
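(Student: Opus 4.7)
The plan is a double induction, outer on $N$ and inner on $m := \sharp J^- = \sharp J^+$, built around the twisted Leibniz formula for $D := 1 - \xi$,
\begin{equation*}
D^n(xy) = \sum_{k=0}^{n} \binom{n}{k}\, D^k(x)\, \xi^k\, D^{n-k}(y),
\end{equation*}
which holds for any bilinear multiplication (including $\boxtimes$) since $\xi$ is an automorphism and $D\xi = \xi D$. By Theorem \ref{thm_filt_underlying3mfd}, $\xi$ preserves every $F^j$, so the $\xi^k$ factors in cross terms never worsen the filtration; the three parts of the preceding proposition will serve as the building blocks for controlling $D^k$ on generators.

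For the base case $N = 0$, I would verify $D(F^0) \subset F^1$ on generators: for $\tskein{\Sigma}$, $D(\zettaiti{\psi(X)}) = \zettaiti{\psi(X - \xi X)}$ with $X - \xi X \in \ker \epsilon$, and for $\tskein{\Sigma, *_\alpha, *_\beta}$ the factorization $R - \xi R = R \cdot (1 - R^{-1}\xi R)$ places $\psi(R - \xi R)$ in $\psi(\mathcal{P}(\Sigma, *_\alpha, *_\beta) \cdot \ker \epsilon) \subset F^1$. Leibniz promotes these to arbitrary products and to general $J^{\pm}$. For the outer step at level $N$, I treat $m = 0$ first. Given a generator $x = h^{i_0} \zettaiti{\psi(y_1)} \cdots \zettaiti{\psi(y_j)}$ with $y_l \in (\ker \epsilon)^{i_l}$ and $2i_0 + \sum i_l \geq N$, iterating Leibniz (using $\xi(h) = h$) writes $D^{N+1}(x)$ as a sum over tuples $(k_1, \ldots, k_j)$ with $\sum k_l = N+1$ of products of $D^{k_l}(\zettaiti{\psi(y_l)})$ interspersed with powers of $\xi$. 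Part (2) of the preceding proposition, iterated through the decomposition $D^{k_l} = D^{k_l - i_l - 1} \circ D^{i_l + 1}$, shows $D^{k_l}(\zettaiti{\psi(y_l)}) \in F^{i_l + 1}$ whenever $k_l \geq i_l + 1$. In the tight case $2i_0 + \sum i_l = N$, the identity $\sum k_l = N+1 \geq \sum i_l + 1$ forces some $k_l \geq i_l + 1$; combined with Proposition \ref{prop_filt_product} this yields $F^{N+1}$.

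The case $m = 1$ splits a generator $a \cdot \psi(vy)$ (with $a \in F^{i_0} \tskein{\Sigma}$, $y \in (\ker \epsilon)^{i_1}$, $i_0 + i_1 \geq N$) under Leibniz into $\sum_k \binom{N+1}{k} D^k(a) \cdot \xi^k D^{N+1-k}(\psi(vy))$. Part (3) of the preceding proposition (in its right-handed variant) shows $D^{k'}(\psi(vy)) \in F^{i_1 + 1}$ for $k' \geq i_1 + 1$, while the outer inductive hypothesis at $N' = i_0 < N$ furnishes $D^k(a) \in F^{i_0 + 1}$ for $k \geq i_0 + 1$; the edge case $i_0 = N$ (so $i_1 = 0$) is covered by the $m = 0$ conclusion already proved at this level. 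A case split on $k \leq i_0$ versus $k \geq i_0 + 1$, together with Proposition \ref{prop_filt_product}, confirms each summand lies in $F^{N+1}$. Finally, the step $m \geq 2$ reduces to $m-1$ and $m=1$ at level $N$ via the recursive definition $F^n \tskein{\Sigma, J^-, J^+} = \sum_{i_0+i_1 \geq n} F^{i_0} \tskein{\Sigma, J^-\setminus\shuugou{*}, J^+\setminus\shuugou{*'}} \cdot F^{i_1} \tskein{\Sigma, \shuugou{*}, \shuugou{*'}}$, with exactly the same Leibniz-plus-case-split argument.

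The main obstacle I expect is bookkeeping rather than a conceptual leap. The Leibniz expansion produces many cross terms, and one must carefully track which inductive hypothesis controls each factor and verify in the tight cases (equality in $2i_0 + \sum i_l \geq N$ or $i_0 + i_1 \geq N$) that the pigeonhole identity $\sum k_l = N+1 > \sum i_l$ always supplies the crucial extra unit of filtration. The interplay among the three filtration conventions -- on $\tskein{\Sigma}$, on $\tskein{\Sigma, *_\alpha, *_\beta}$, and on general $\tskein{\Sigma, J^-, J^+}$ -- is where most of the care is required.
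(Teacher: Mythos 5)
Your proposal is correct and takes essentially the approach the paper intends: the paper gives no written proof (it simply asserts the proposition follows from the preceding one on $(1-\xi)^{N+1}((\ker\epsilon)^N)$ and $(1-\xi)^{N+1}((\ker\epsilon)^N\mathcal{P}(\Sigma,*_\alpha,*_\beta))$), and your Leibniz expansion of $(1-\xi)^{N+1}$ over products of generators, combined with the pigeonhole count $\sum k_l = N+1 > \sum i_l$ in the tight case and Proposition \ref{prop_filt_product}, is exactly the omitted derivation. The bookkeeping you flag (ordering the induction so the $m=0$ case at each level is available before the $m\geq 1$ cases) is handled correctly.
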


For $\xi \in \check{\mathcal{M}}(\Sigma)$ and
disjoint finite subsets $J^-$ and $J^+$ of $\partial \Sigma$ 
with  $\sharp J^- = \sharp J^+$,
we denote
\begin{equation*}
\log (\xi) :\widehat{\tskein{\Sigma,J^-,J^+}} \to 
\widehat{\tskein{\Sigma,J^-,J^+}},
x \mapsto \sum_{i=1}^\infty \frac{-1}{i} (1-\xi)^i (x).
\end{equation*}

\section{A formula for Dehn twists}
For a simple closed curve $c$ in $\Sigma$,
we choose a  simple path $\gamma \in \pi^+_1 (\Sigma, \shuugou{*_\alpha}, 
\shuugou{*'_\alpha})$ 
satisfying $\zettaiti{\psi (\gamma)}  \in 
\tskein{\Sigma}$ is presented by the diagram
$c$. We consider $\mathcal{P} (\Sigma, *_\alpha)^{\otimes n}$
as an associative algebra $\Q [\rho] [[h]]$-algebra
with unit $1 \defeq {1_{\pi^+(\Sigma,*_\alpha)}}^{\otimes n}$
by the product $(x_1 \otimes x_2 \otimes \cdots \otimes x_n)
\cdot (y_1 \otimes y_2 \otimes \cdots \otimes y_n)
=x_1 y_1 \otimes x_2 y_2 \otimes \cdots \otimes x_n y_n$.
We define  $\epsilon_{\mathcal{P}^{\otimes n}}: 
\mathcal{P} (\Sigma, *_\alpha)^{\otimes n} \to \Q [\rho]$ by
$h \mapsto 0$ and $x_1 \otimes \cdots \otimes x_n \mapsto 1$
for $x_1, \cdots , x_n \in \pi_1^+ (\Sigma , *_\alpha)$,
and  
\begin{equation*}
\widehat{\mathcal{P} (\Sigma, *_\alpha)^{\otimes n}}
\defeq \comp{i} \mathcal{P} (\Sigma, *_\alpha)^{\otimes n}/
(\ker \epsilon_{\mathcal{P}^{\otimes n}})^i.
\end{equation*}
The $\Q [\rho][[h]]$-module homomorphism
$\psi_n \defeq \psi_{T|\mathcal{P} (\Sigma, *_\alpha)^{\otimes n}} 
 :\mathcal{P} (\Sigma, *_\alpha)^{\otimes n}
\to \tskein{\Sigma}$
induces
$\psi_n :\widehat{\mathcal{P} (\Sigma, *_\alpha)^{\otimes n}}
\to \widehat{\tskein{\Sigma}}$.
For $f(X) =f^{[1]}(X)\in \Q [[X-1]]$,
we denote 
\begin{align*}
&f^{[k]} (X_1, X_2, \cdots, X_k)
\defeq \frac{f^{[k-1]}(X_1, X_2, \cdots ,X_{k-1})-
f^{[k-1]}(X_2, X_3, \cdots, X_{k})}{X_1-X_k} \\
&\in \Q [[X_1-1, X_2-1, \cdots, X_k-1]].
\end{align*}
We define
\begin{equation*}
\Lambda (c)  \defeq
(\frac{h/2}{\arcsinh (h/2)})^2(
\sum_{n=1}^\infty \frac{(-h)^{n-1}\exp (-n \rho h)}{n} \psi_n
(\gamma_{1,n} \cdots \gamma_{n,n}
\lambda^{[n]} (\gamma_{1,n}, \cdots, \gamma_{n,n}))
-\frac{1}{3}\rho^3 h^2),
\end{equation*}
where $\lambda (X) \defeq \frac{1}{2X} (\log (X))^2
\in \Q [[X-1]]$,
$\gamma_{i,n} \defeq {1_{\pi^+(\Sigma,*_\alpha)}}^{\otimes (i-1)} \otimes \exp (\rho h) \gamma
\otimes  {1_{\pi^+(\Sigma,*_\alpha)}}^{\otimes n-i}$.
Here, we denote
\begin{equation*}
F(x_1,x_2, \cdots , x_n) =\sum a_{i_1, i_2, \cdots, i_n}
(x_1-1)^{i_1} (x_2-1)^{i_2} \cdots (x_n-1)^{i_n}
\end{equation*}
for $F(X_1, X_2, \cdots ,X_n) =\sum
a_{i_1, i_2, \cdots, i_n} (X_1-1)^{i_1} (X_2-1)^{i_2} \cdots (X_n-1)^{i_n}
\in \Q [[X_1-1, X_2-1, \cdots, X_n-1]]$
and $X_1, X_2, \cdots, X_n \in 1 +\ker \epsilon_{\mathcal{P}^{\otimes n}}$.
We remark $\Lambda (c) =\frac{1}{2} \zettaiti{\psi((\gamma-1)^2)}
\mod F^3 \widehat{\tskein{\Sigma}}$.

The aim of this section is to prove the first main theorem as follows.

\begin{thm}
\label{thm_main_Dehn}
Let $\Sigma$ be a compact connected surface, $c$ a simple
closed curve, $t_c$ the Dehn twist alog $c$ and
$J^-$ and $J^+$  disjoint finite subsets of $\partial \Sigma$ 
with $\sharp J^- = \sharp J^+$.
Then we have
\begin{equation*}
\log (t_c) = \sigma (\Lambda (c)):
\widehat{\tskein{\Sigma,J^-,J^+}}
\to \widehat{\tskein{\Sigma,J^-,J^+}}.
\end{equation*}
\end{thm}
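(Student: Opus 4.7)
My proof plan proceeds in three stages: localize to an annular neighborhood of $c$, verify the formula on a single transverse arc, and deduce the general case from the derivation property.

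First, I would observe that both $\log(t_c)$ and $\sigma(\Lambda(c))$ are continuous derivations of $\widehat{\tskein{\Sigma,J^-,J^+}}$ with respect to $\boxtimes$ (for the right hand side this is the last Leibniz rule proved in Subsection 3.3; for the left hand side this follows because $t_c$ is an algebra automorphism, so $1-t_c$ lowers filtration degree by the proposition in Subsection 4.3 and the formal power series $\log(t_c) = -\sum_{i\ge 1}(1-t_c)^i/i$ is a derivation). Next, if a tangle $T$ is supported in $\Sigma\setminus N(c)$ for a small annular neighborhood $N(c)$ of $c$, then $t_c(T)=T$, and the class $\Lambda(c) \in \widehat{\tskein{\Sigma}}$ is represented in $N(c)$, so $\sigma(\Lambda(c))(T)=0$ by definition of $\sigma$ (no crossings). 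Hence both derivations factor through the quotient that kills tangles disjoint from $N(c)$. Using Proposition \ref{prop_psi_isom} iteratively at each point where tangles cross $\partial N(c)$, I can reduce the verification to the case of a single oriented arc $\gamma$ transverse to $c$ exactly once inside $N(c) \cong S^1 \times I$.

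For this local model, I would compute $t_c(\gamma)$ directly as a tangle in $S^1\times I$ (a single arc winding once around the core), and then apply the skein relation at each of the $n$ crossings with parallels of $c$ to expand $t_c(\gamma)$ as a power series in $h$. The framing relation $T(d_1)=\exp(\rho h)T(d_0)$ absorbs the self-twists of $c^n$ into the substitution $\exp(\rho h)\gamma$ appearing in $\gamma_{i,n}$, and the trivial knot relation $T(d_{\mathrm{triv.knot}})=\frac{2\sinh(\rho h)}{h}T(d_{\mathrm{empty}})$ accounts for the prefactor $(h/2)/\arcsinh(h/2)$ after normalizing out the contribution of trivial circles produced by smoothings. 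The polynomial algebra structure of $\tskein{S^1\times I}$ from Turaev's theorem lets me match coefficients unambiguously. Comparing with $\exp(\sigma(\Lambda(c)))(\gamma)$ should yield the formal identity
\begin{equation*}
\exp(\sigma(\Lambda(c)))(\gamma) \;=\; t_c(\gamma)
\end{equation*}
in $\widehat{\tskein{N(c), J^-\cap\partial N(c), J^+\cap\partial N(c)}}$, where the divided differences $\lambda^{[n]}$ of $\lambda(X) = \frac{1}{2X}(\log X)^2$ arise because $\lambda'(X) = (\log X)/X - (\log X)^2/(2X^2)$ relates to conjugation by $c$ on the universal enveloping side, while the correction term $-\frac{1}{3}\rho^3 h^2$ compensates for the discrepancy between the Goldman-style logarithm and the skein logarithm at low filtration degree.

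The main obstacle I expect is the purely formal identity involving the divided differences $\lambda^{[n]}(\gamma_{1,n},\dots,\gamma_{n,n})$: one must show that the explicit generating series defining $\Lambda(c)$, when expanded via the iterated bracket $\sigma$, reproduces the combinatorics of $\log(t_c(\gamma))$ on the annulus, including the correct $\arcsinh$-type normalization and the cubic correction. I would handle this by first establishing the matching modulo $F^3$ (where $\Lambda(c)\equiv \frac{1}{2}|\psi((\gamma-1)^2)|$ and the formula reduces to the Kauffman-bracket version from \cite{TsujiCSAI}), then propagating inductively through the filtration $\filtn{F^n \widehat{\tskein{\Sigma}}}$ using that $\sigma(F^n)\cdot F^m \subset F^{n+m-2}$ from Proposition \ref{prop_filt_bracket}, and verifying the needed algebraic identity for $\lambda$ on the formal power series level inside $\widehat{\mathcal{P}(\Sigma,*_\alpha)^{\otimes n}}$. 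Finally, Theorem \ref{thm_filt_underlying3mfd} ensures that the localized verification transfers back to the statement on all of $\Sigma$.
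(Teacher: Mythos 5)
Your overall architecture --- localize to an annular neighborhood of $c$, verify the identity on arcs crossing the annulus, and use the derivation/Leibniz property of both $\log(t_c)$ and $\sigma(\Lambda(c))$ to assemble the general case --- is the same as the paper's (which cuts $\Sigma$ along $\iota(S^1\times I)$ via explicit decomposition maps $\varpi_{J^1,J^2}$ and reduces to $m$ parallel transverse arcs, Lemma \ref{lemm_Dehn_twist}). However, there is a genuine gap in how you propose to close the central local computation. You plan to ``first establish the matching modulo $F^3$ \dots then propagate inductively through the filtration using $\sigma(F^n)\cdot F^m\subset F^{n+m-2}$.'' This inference is not valid: the filtration estimates of Proposition \ref{prop_filt_bracket} only locate where elements live, they cannot force two derivations that agree modulo $F^3$ to agree exactly. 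Indeed, the leading term $\frac{1}{2}\zettaiti{\psi((\gamma-1)^2)}$ alone does \emph{not} satisfy $\log(t_c)=\sigma(\cdot)$; the entire point of the higher divided-difference terms, the $(\frac{h/2}{\arcsinh(h/2)})^2$ prefactor, and the precise coefficients $\frac{(-h)^{n-1}\exp(-n\rho h)}{n}$ is to correct the higher-order discrepancy, and these are not determined by the $F^3$ truncation. What the paper actually proves at this point is the exact eigenvalue identity
\begin{equation*}
\sigma(l(n))(r^0)=\Bigl(\sum_{i=1}^{n}\frac{1}{i}\,\frac{(n+i-1)!}{(n-i)!\,(2i-1)!}\,h^{2(i-1)}\Bigr)r^n=\frac{1}{n}\Bigl(\frac{y^{n}-y^{-n}}{y-y^{-1}}\Bigr)^{2}r^{n},\qquad y=\exp(\arcsinh(h/2)),
\end{equation*}
(Lemmas \ref{lemm_l_n_prepare_0}--\ref{lemm_l_n}), followed by a closed-form generating-function summation showing $\sigma(\Lambda(c_l))(r^0)=\log r=\log(t)(r^0)$. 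No induction on the filtration can substitute for this computation; your plan leaves it unperformed.

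Two smaller inaccuracies in the same step: the $\arcsinh$ prefactor does not come from ``normalizing out trivial circles produced by smoothings'' via the trivial knot relation --- it is exactly the factor $(\frac{2\log y}{y-y^{-1}})^{-2}$ needed to cancel the quantum-integer eigenvalue above --- and the term $-\frac{1}{3}\rho^3h^2$ is irrelevant to this theorem (it is a scalar multiple of the empty link, so $\sigma$ kills it; it is there so that $\Lambda$ of a contractible curve vanishes, Corollary \ref{cor_def_Lambda_trivial}, which is used later for the lantern relation). Also, Proposition \ref{prop_psi_isom} splits off boundary-based path components one at a time and is not the right tool for cutting a closed tangle along $\partial N(c)$; you need the splitting maps $\varpi_{J^1,J^2}$ adapted to the annulus, together with the check that they intertwine $t_c$ with $t_{c_l}$ and $\sigma(\iota(x))$ with $\sigma(x)$.
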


\subsection{Well-definedness of $\Lambda (c)$}
The aim of this subsection is to explain that
$\Lambda (c)$ is independent of the choice of $\gamma$.

We denote by $\gamma \in \pi_1^+ (S^1 \times I, *_1)$
as Figure \ref{fig_tskein_gamma} and
by $\gamma_\star \in \pi_1^+ (S^1 \times I, *_2)$
as Figure \ref{fig_tskein_gamma2}.
We remark that 
$\gamma^{-1}$ and $\gamma_\star^{-1}$
are shown as Figure \ref{fig_tskein_gamma_inverse}
and Figure \ref{fig_tskein_gamma2_inverse}.

\begin{figure}[htbp]
	\begin{tabular}{rrr}
	\begin{minipage}{0.25\hsize}
		\centering
\begin{picture}(75,75)
		\put(0,0){\includegraphics[width=75 pt]{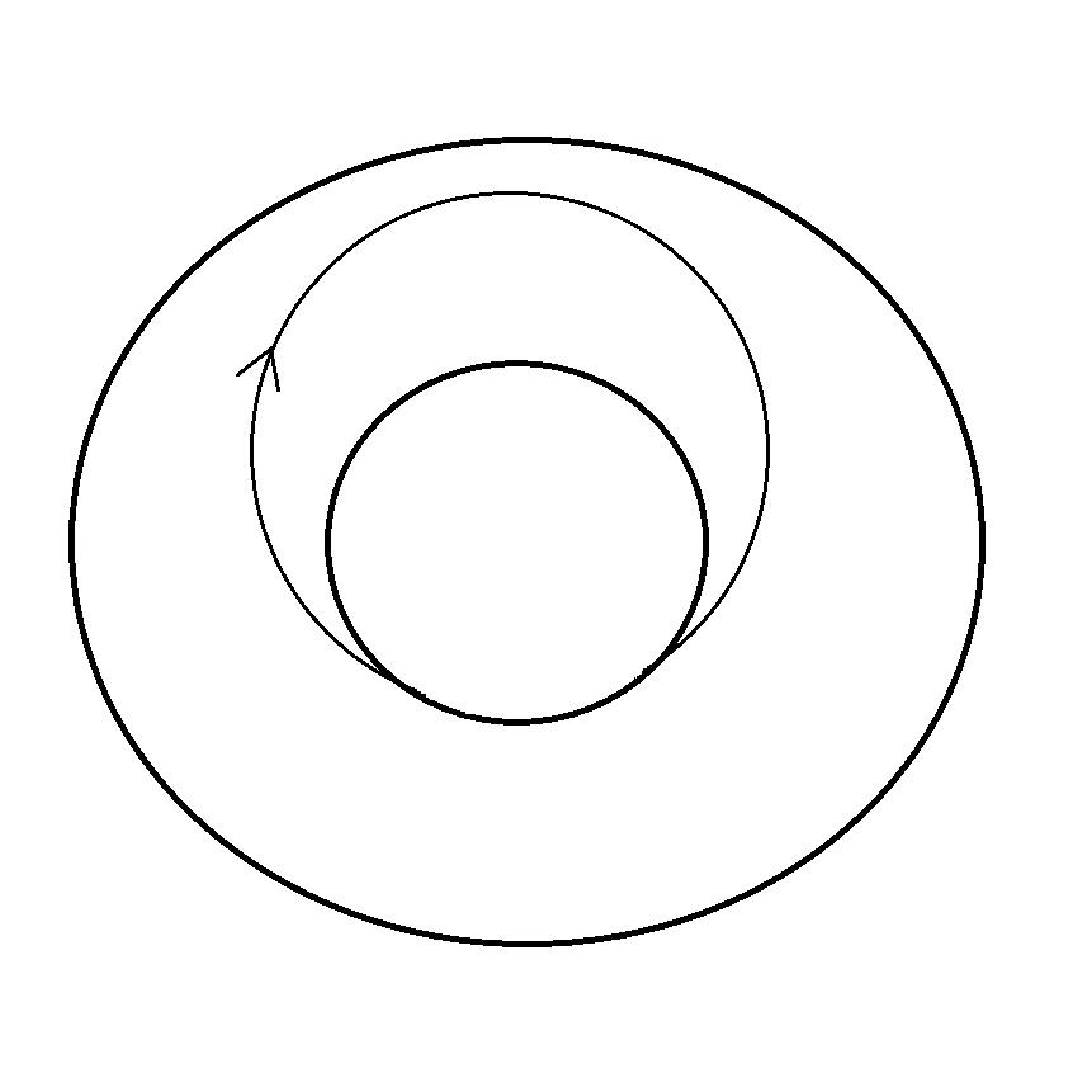}}
\end{picture}
		\caption{$\gamma$}
		\label{fig_tskein_gamma}
	\end{minipage}
	\begin{minipage}{0.25\hsize}
		\centering
\begin{picture}(75,75)
		\put(0,0){\includegraphics[width=75 pt]{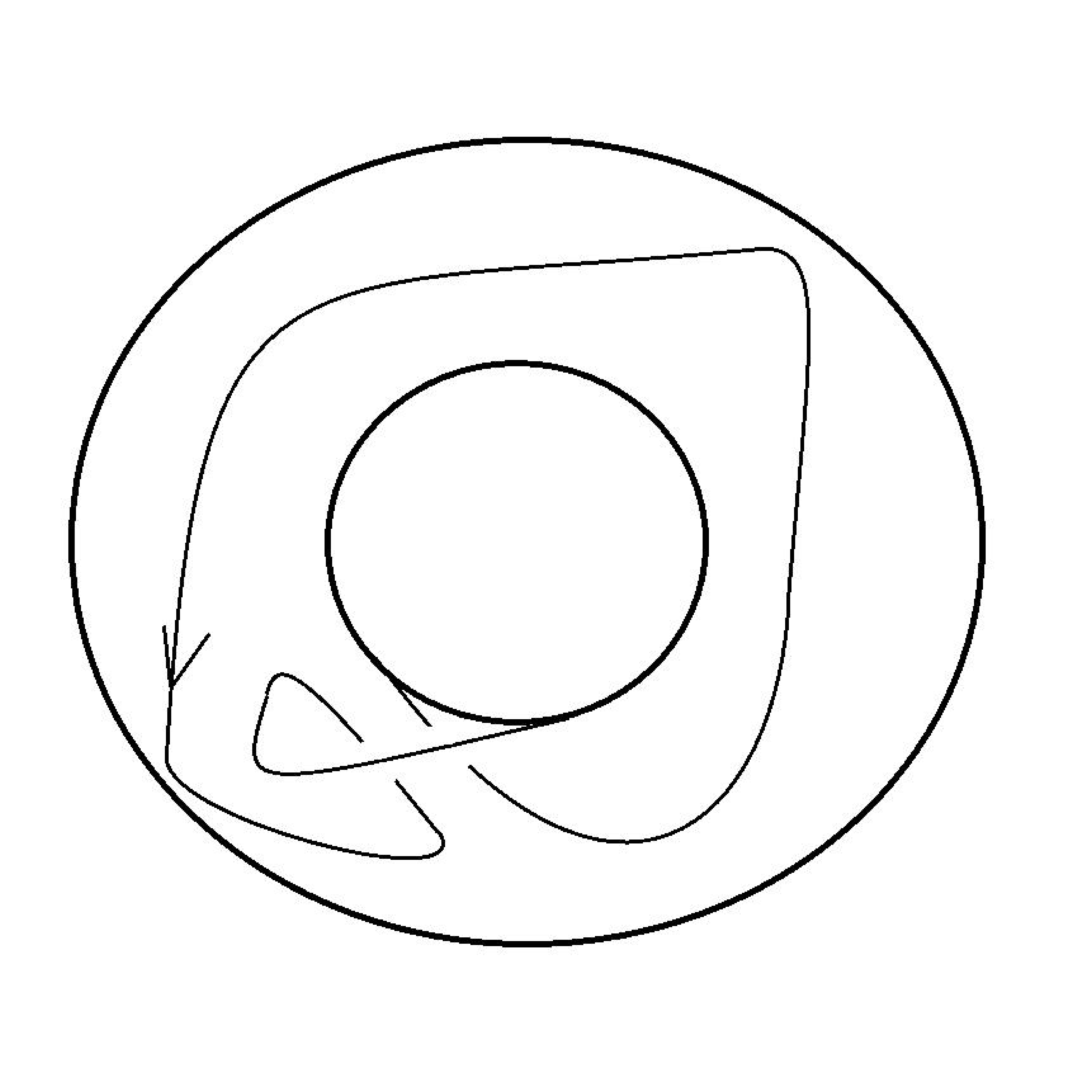}}
\end{picture}
		\caption{$\gamma^{-1}$}
		\label{fig_tskein_gamma_inverse}
	\end{minipage}
	\begin{minipage}{0.25\hsize}
		\centering
\begin{picture}(75,75)
		\put(0,0){\includegraphics[width=75 pt]{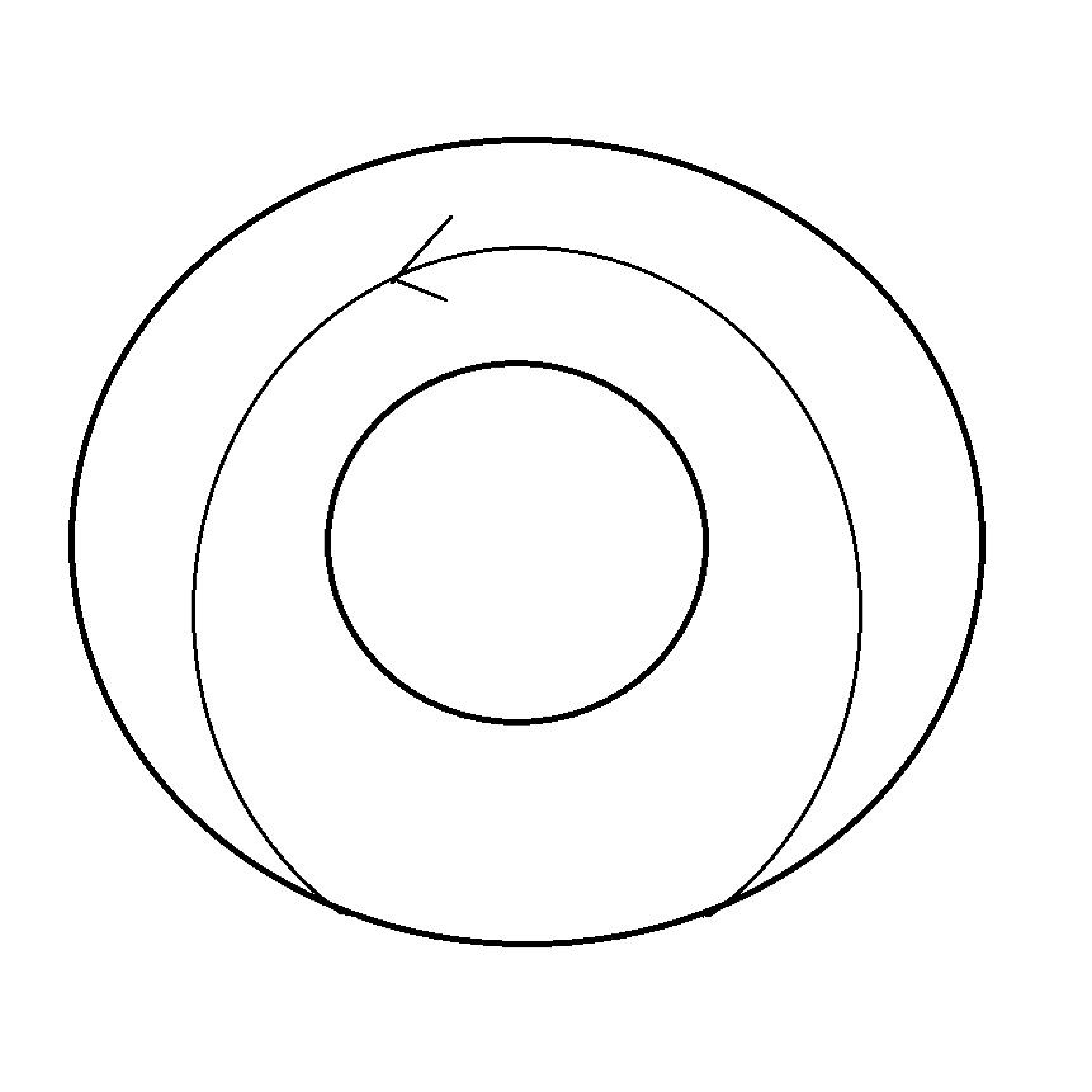}}
\end{picture}
		\caption{$\gamma_\star$}
		\label{fig_tskein_gamma2}
	\end{minipage}
	\begin{minipage}{0.25\hsize}
		\centering
\begin{picture}(75,75)
		\put(0,0){\includegraphics[width=75 pt]{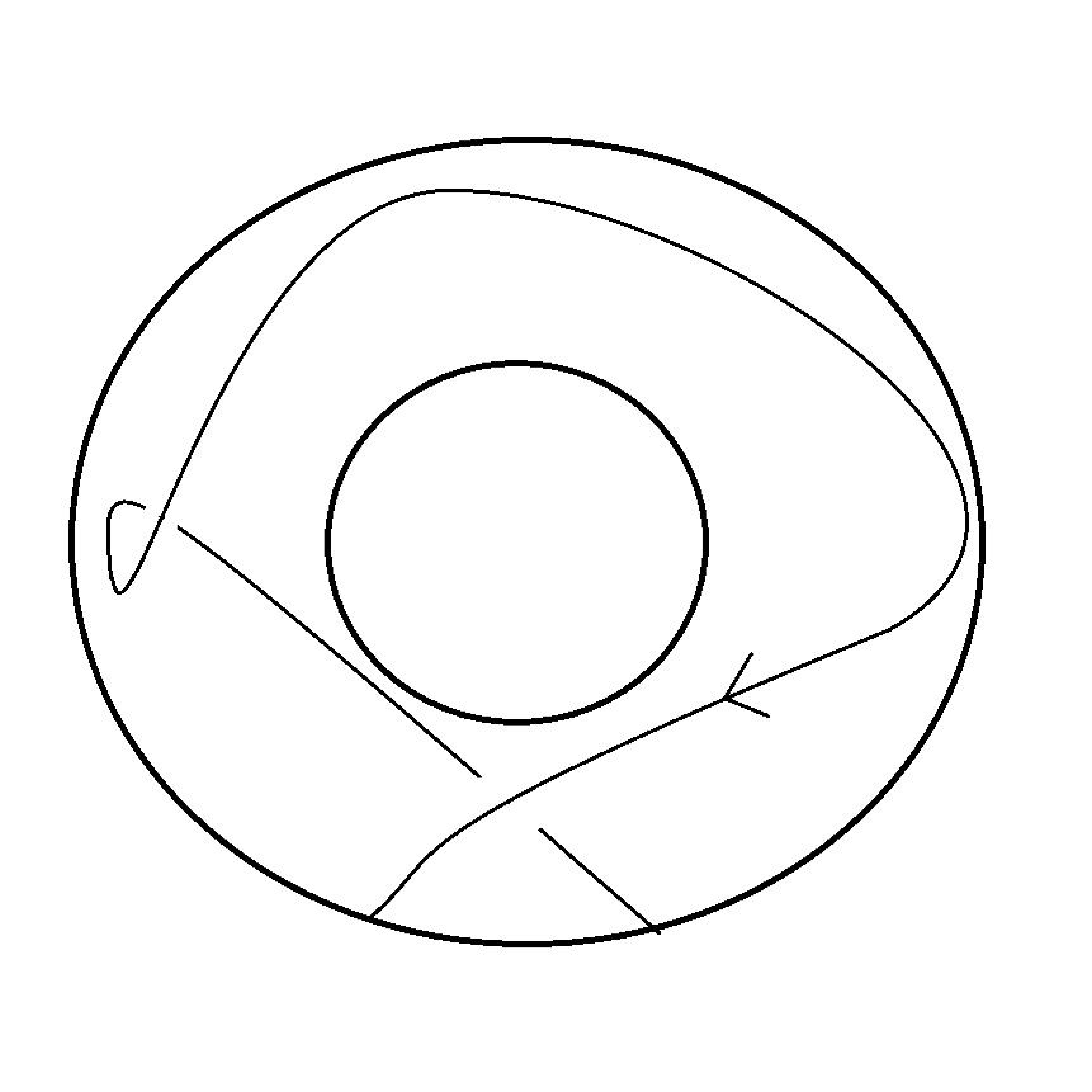}}
\end{picture}
		\caption{$\gamma_\star^{-1}$}
		\label{fig_tskein_gamma2_inverse}
	\end{minipage}
	\end{tabular}
\end{figure}

We denote $l_n \defeq \zettaiti{\psi (\exp((n-1)\rho h) \gamma^n)}$,
 $l_{\star n} \defeq \zettaiti{\psi (\exp(-(n-1) \rho h) \gamma_\star^{-n})}$,
$l_{-n} \defeq \zettaiti{\psi(\exp((n-1) \rho h) \gamma_\star^n)}$
and $l_{\star -n} \defeq \zettaiti{\psi(\exp(-(n-1) \rho h) \gamma^{-n})}$
for any $n \in \Z_{\geq 1}$.
We remark that $l_{\star n}$ are represented by  a knot
presented by the diagram as shown in Figure \ref{figure_l_ij}
when $i=n-1$ and $j=0$
and that $l_n$ are represented by a knot
presented by the diagram as shown in Figure \ref{figure_l_ij}
when $i=0$ and $j=n-1$.

\begin{figure}
\begin{picture}(240,200)
\put(0,0){\includegraphics[width=240pt]{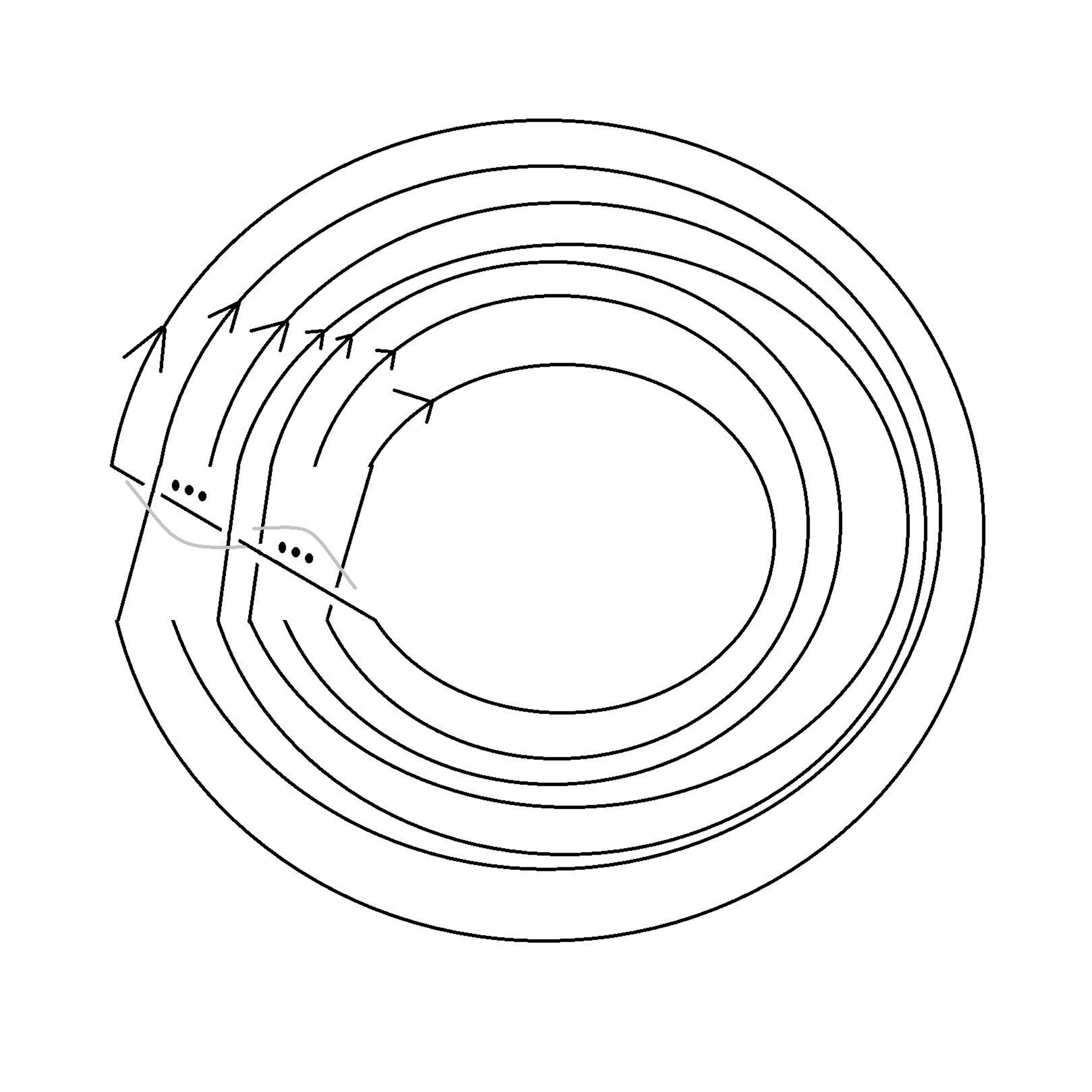}}
\put(40,110){$i$}
\put(70,130){$j$}
\end{picture}
\caption{$l_{i,j}$}
\label{figure_l_ij}
\end{figure}

The aim of this subsection is to prove the theorem.

\begin{thm}
The element
\begin{equation*}
\Lambda (c)(1) =\Lambda (c)  \defeq
(\frac{h/2}{\arcsinh (h/2)})^2(
\sum_{n=1}^\infty \frac{(-h)^{n-1}\exp (-n \rho h)}{n} \psi_n
(\gamma_{1,n} \cdots \gamma_{n,n}
\lambda^{[n]} (\gamma_{1,n}, \cdots, \gamma_{n,n}))
-\frac{1}{3}\rho^3 h^2)
\end{equation*}
equals the following elements
\begin{align*}
&\Lambda (c)(2)   \defeq
(\frac{h/2}{\arcsinh (h/2)})^2(
\sum_{n=1}^\infty \frac{(-h)^{n-1}\exp (-n \rho h)}{n} \psi_n
(\gamma_{1,n} \cdots \gamma_{\star n,n}
\lambda^{[n]} (\gamma_{\star 1,n}, \cdots, \gamma_{ \star n,n}))
-\frac{1}{3}\rho^3 h^2) \\
&\Lambda (c)(3)   \defeq
(\frac{h/2}{\arcsinh (h/2)})^2(
\sum_{n=1}^\infty \frac{h^{n-1}\exp (n \rho h)}{n} \psi_n
(\gamma^{-1}_{1,n} \cdots \gamma^{-1}_{ n,n}
\lambda^{[n]} (\gamma^{-1}_{1,n}, \cdots, \gamma^{-1}_{n,n}))
-\frac{1}{3}\rho^3 h^2) \\
&\Lambda (c)(4)   \defeq
(\frac{h/2}{\arcsinh (h/2)})^2(
\sum_{n=1}^\infty \frac{h^{n-1}\exp (n \rho h)}{n} \psi_n
(\gamma^{-1}_{\star 1,n} \cdots \gamma^{-1}_{\star n,n}
\lambda^{[n]} (\gamma^{-1}_{\star 1,n}, \cdots, \gamma^{-1}_{ \star n,n}))
-\frac{1}{3}\rho^3 h^2) \\
&\Lambda(c) (5) \defeq (\frac{h/2}{\arcsinh (h/2)})^2
(\sum_{n=2}^\infty u_n ((\sum_{i=1}^k  \frac{k!}{i!(k-i)!} (-1)^{k-i}
l'(i)) + 2 (-1)^k \rho) 
-\frac{1}{3}\rho^3 h^2)\\
&\Lambda (c)(6) \defeq (\frac{h/2}{\arcsinh (h/2)})^2
(\sum_{n=2}^\infty u_n ((\sum_{i=1}^k  \frac{k!}{i!(k-i)!} (-1)^{k-i}
l'_\star (-i)) + 2 (-1)^k \rho)
-\frac{1}{3}\rho^3 h^2) \\
&\Lambda(c) (7)  \defeq (\frac{h/2}{\arcsinh (h/2)})^2
(\sum_{n=2}^\infty u_n ((\sum_{i=1}^k  \frac{k!}{i!(k-i)!} (-1)^{k-i}
l'(-i)) + 2 (-1)^k \rho)
-\frac{1}{3}\rho^3 h^2) \\
&\Lambda (c)(8) \defeq (\frac{h/2}{\arcsinh (h/2)})^2
(\sum_{n=2}^\infty u_n ((\sum_{i=1}^k  \frac{k!}{i!(k-i)!} (-1)^{k-i}
l'_\star (i)) + 2 (-1)^k \rho) 
-\frac{1}{3}\rho^3 h^2). \\
\end{align*}
where $\shuugou{u_n}_{\geq 2}$ is defined by $\frac{1}{2} (\log X )^2 =
\sum_{n \geq 2 } u_n (X-1)^n$.
Here we denote
\begin{align*}
&l' (m)= \sum_{k=1}^m \frac{(-h)^{k-1}}{k} \sum_{j_1 + \cdots j_k=m, j_i \geq 1} l_{j_1} l_{j_2} \cdots l_{j_k} \\
&l'_\star (-m)= \sum_{k=1}^m \frac{(-h)^{k-1}}{k} \sum_{j_1 + \cdots j_k=m, j_i \geq 1} l_{\star -j_1} l_{ \star -j_2} \cdots l_{ \star -j_k} \\
&l' (-m)= \sum_{k=1}^m \frac{h^{k-1}}{k} \sum_{j_1 + \cdots j_k=m, j_i \geq 1} l_{-j_1} l_{-j_2} \cdots l_{-j_k} \\
&l'_\star (m)= \sum_{k=1}^m \frac{h^{k-1}}{k} \sum_{j_1 + \cdots j_k=m, j_i \geq 1} l_{\star j_1} l_{ \star j_2} \cdots l_{ \star j_k} \\
\end{align*}
for any $m  \in \Z_{\geq 1}$.
Furthermore, $\Lambda (c) (1)=\Lambda (c) (2)$ implies that
$\Lambda(c)$ does not depend only on the choice of the orientation of 
$c$.

\end{thm}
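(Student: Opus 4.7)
The plan is to organize the eight expressions into four pairs $(\Lambda(c)(i), \Lambda(c)(4+i))$ for $i=1,2,3,4$, prove equality within each pair by a formal power-series identity converting the divided-difference form into the power-sum form, and then identify the four power-sum expressions among themselves via topological moves in $\widehat{\tskein{\Sigma}}$.

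For $\Lambda(c)(1) = \Lambda(c)(5)$, the key observation is that the product $\gamma_{1,n}\gamma_{2,n}\cdots\gamma_{n,n}$ in $\mathcal{P}(\Sigma,*_\alpha)^{\otimes n}$ equals $(\exp(\rho h)\gamma)^{\otimes n}$, since multiplication is coordinate-wise on tensor slots. Using the expansion $X\lambda(X) = \frac{1}{2}(\log X)^2 = \sum_{m\geq 2}u_m(X-1)^m$ together with the standard divided-difference formula $((X-1)^m)^{[n]}(X_1,\ldots,X_n) = h_{m-n+1}(X_1-1,\ldots,X_n-1)$ (complete homogeneous symmetric polynomial), one expands the tensor-algebra element $\gamma_{1,n}\cdots\gamma_{n,n}\lambda^{[n]}(\gamma_{1,n},\ldots,\gamma_{n,n})$ as a sum of monomials in $\exp(\rho h)\gamma$. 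Applying $\psi_n$ (which turns each tensor slot into a closure in $\widehat{\tskein{\Sigma}}$) yields a sum of products $|\psi(\exp(j_1\rho h)\gamma^{j_1})|\cdots|\psi(\exp(j_n\rho h)\gamma^{j_n})|$; after factoring out $\exp(\rho h)$ from each slot and identifying $|\psi(\exp((j-1)\rho h)\gamma^j)| = l_j$, the $\exp(n\rho h)$ produced inside exactly cancels the outer $\exp(-n\rho h)$, and the remaining alternating series $\sum_n \frac{(-h)^{n-1}}{n}(\cdots)$ is precisely the $l'(m)$-combination appearing in $\Lambda(c)(5)$. The binomial sum $\sum_{i=1}^k \frac{k!}{i!(k-i)!}(-1)^{k-i}$ and the $2(-1)^k\rho$ summand arise from isolating the $1/X$ factor in $\lambda(X) = X\lambda(X)/X$, while the $-\tfrac{1}{3}\rho^3 h^2$ counterterm is a universal normalization appearing identically on both sides. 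The same computation with $\gamma_\star$, $\gamma^{-1}$, or $\gamma_\star^{-1}$ in place of $\gamma$ establishes the remaining three pairings $\Lambda(c)(i) = \Lambda(c)(4+i)$.

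For the final identification $\Lambda(c)(5) = \Lambda(c)(6) = \Lambda(c)(7) = \Lambda(c)(8)$, one examines the four families of knots $l_n, l_{\star n}, l_{-n}, l_{\star -n}$ in $\widehat{\tskein{\Sigma}}$. By Figures \ref{fig_tskein_gamma}--\ref{fig_tskein_gamma2_inverse}, the closures of $\gamma^n$ and $\gamma_\star^{-n}$ represent isotopic oriented framed knots up to a framing discrepancy that is exactly absorbed by the compensating factors $\exp(\mp(n-1)\rho h)$ via the framing relation of Definition~\ref{df_skein_algebra}; this yields $l_n = l_{\star n}$ and, by the same argument with the opposite orientation, $l_{-n} = l_{\star -n}$, hence $l'(m) = l'_\star(-m)$ and $l'(-m) = l'_\star(m)$. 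The remaining equality $l'(m) = l'(-m)$ reduces inductively, via the skein relation applied crossing-by-crossing to turn $\gamma^j$-closures into $\gamma_\star^j$-closures, together with the framing correction absorbing the sign flip $(-h)^{k-1} \leftrightarrow h^{k-1}$; this step uses essentially that the underlying unoriented knot of $c^n$ is orientation-preservingly isotopic to itself.

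The main obstacle is the formal power-series identity $\Lambda(c)(1) = \Lambda(c)(5)$. Coordinating the triple sum over $n$, over compositions $j_1+\cdots+j_k = m$, and over the divided-difference expansion while tracking the overall prefactor $((h/2)/\arcsinh(h/2))^2$ and the $-\tfrac{1}{3}\rho^3 h^2$ counterterm demands careful bookkeeping in $\Q[\rho][[h]]$. Once this identity is in hand, the remaining equalities among $\Lambda(c)(5),\ldots,\Lambda(c)(8)$ reduce to applications of the framing and skein relations established earlier, and the corollary that $\Lambda(c)$ is independent of the orientation of $c$ follows from $\Lambda(c)(1) = \Lambda(c)(2)$ after recognizing $\gamma$ and $\gamma_\star$ as encoding opposite orientations of $c$.
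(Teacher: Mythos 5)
Your first half --- the four pairings $\Lambda(c)(i)=\Lambda(c)(i+4)$ obtained by expanding $\lambda(X)=X^{-1}\cdot\tfrac12(\log X)^2=\sum_k u_k\,X^{-1}(X-1)^k$, using the divided differences of $X^i$ and $X^{-1}$, and observing that $\gamma_{1,n}\cdots\gamma_{n,n}=(\exp(\rho h)\gamma)^{\otimes n}$ cancels the prefactor $\exp(-n\rho h)$ --- is essentially the computation the paper carries out (including the identification of $\sum_i\frac{(-h)^{i-1}}{i}l(0,i)=2\rho$ with the $2(-1)^k\rho$ term), and is fine in outline.

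The second half contains a genuine gap. You claim that $l_n$ and $l_{\star n}$ (and likewise $l_{-n}$ and $l_{\star-n}$) are equal because the underlying framed knots are isotopic up to a framing discrepancy absorbed by $\exp(\mp(n-1)\rho h)$. This is false: $l_n$ and $l_{\star n}$ are the closures $l_{0,n-1}$ and $l_{n-1,0}$ of Figure \ref{figure_l_ij}, which differ by $n-1$ crossing changes, and each crossing change contributes an $O(h)$ correction via the skein relation (already for $n=2$ one has $l_2-l_{\star 2}=h\,l_1^2\neq 0$). Only the specific combinations $l'(n)$ and $l'_\star(n)$ coincide, and establishing this is the content of a nontrivial lemma in the paper: an induction producing $l_{\star n}=\sum_{i=1}^n(-h)^{i-1}l(n,i)$, the inversion $l_\star(n,i)=\sum_{j}(-h)^{j-i}\frac{(j-1)!}{(i-1)!(j-i)!}l(n,j)$, and the combinatorial identity $\sum_{i=1}^j\frac{(j-1)!}{(i-1)!(j-i)!}\frac{(-1)^{j-i}}{i}=\frac{(-1)^{j-1}}{j}$. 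Similarly, your reduction of $l'(m)=l'(-m)$ to ``the underlying unoriented knot is isotopic to itself'' does not work: orientation reversal acts nontrivially on the HOMFLY-PT skein module of the annulus, and the paper instead proves $\Lambda(c)(1)=\Lambda(c)(3)$ at the level of the defining series, via the identity expressing $\lambda^{[n]}(X_1^{-1},\dots,X_n^{-1})$ in terms of the $\lambda^{[j]}(X_{i_1},\dots,X_{i_j})$ followed by a resummation using the Taylor expansion of $\frac1n(x^{-n}-1)$. Both of these analytic/skein-theoretic steps are indispensable and cannot be replaced by isotopy arguments; as written your chain of equalities among $\Lambda(c)(5),\dots,\Lambda(c)(8)$ does not close.
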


By definition, we will prove $\Lambda (c) (i) =\Lambda (c) (i+4)$ for $i=1,2,3,4$.
The proof of $\Lambda (c)(1) =\Lambda (c) (3)$ and
$\Lambda (c) (2)= \Lambda (4)$  requires some analytic considerations.
By the skein relation, we will prove $\Lambda (c) (5)=
\Lambda (c)(8)$.

First of  all, we prove $\Lambda (c) (i) =\Lambda (c) (i+4)$ for 
$i =1,2,3,4$.

When $g_{-1} (X) \defeq \dfrac{1}{X} \in \Q [[X-1]]$,
we have
${g_{-1}}^{[n]}(X_1, \cdots , X_n)=\dfrac{(-1)^{n-1}}{X_1 \cdots X_n}$.
Hence we have 
\begin{align*}
&\psi_n (\exp (n \rho h)\gamma_{1,n} \cdots \gamma_{n,n}
g_{-1}^{[n]} (
\gamma_{1,n}, \cdots, \gamma_{n,n}) =l(0,n) =(-1)^n(\exp (\rho h)
\frac{2 \sinh (\rho h)}{h})^n, \\
&\psi_n (\exp (-n \rho h)\gamma_{\star 1,n}^{-1} \cdots \gamma_{\star n,n}^{-1}
g_{-1}^{[n]} (
\gamma_{\star 1,n}^{-1}, \cdots, \gamma_{\star n,n}^{-1}) =l_\star (0,n) =(-1)^{n-1}(\exp (-\rho h)
\frac{2 \sinh (\rho h)}{h})^n.
\end{align*}
For $i \in \Z_{\geq 1}$, when
$g_i (X) \defeq X^i$,
we have
\begin{align*}
&g_i^{[n]} (X_1, \cdots , X_n) =\sum_{i_1+i_2+ \cdots +i_n =n-k, i_j \geq 0}
X_1^{i_1} \cdots X_n^{i_n} \\
\end{align*}
for  $k =1, \cdots , n$.
If $k+1 \leq n$, we have
\begin{align*}
&g_i^{[n]} (X_1, \cdots , X_n) =0.
\end{align*}
Using the equations,
we obtain
\begin{align*}
&\psi_n (\exp (-n \rho h)\gamma_{1,n} \cdots \gamma_{n,n}
g_{-1}^{[n]} (
\gamma_{1,n}, \cdots, \gamma_{n,n}) =l(0,n) =(-1)^n(\exp (-\rho h)
\frac{2 \sinh (\rho h)}{h})^n, \\
&\psi_n (\exp (n \rho h)\gamma_{\star 1,n}^{-1} \cdots \gamma_{\star n,n}^{-1}
g_{-1}^{[n]} (
\gamma_{\star 1,n}^{-1}, \cdots, \gamma_{\star n,n}^{-1}) =l_\star (0,n) =(-1)^{n-1}(\exp (\rho h)
\frac{2 \sinh (\rho h)}{h})^n.
\end{align*}
If $n \leq k$, we obtain
\begin{align*}
&\psi_n (\exp (-n \rho h) \gamma_{1,n} \cdots \gamma_{n,n} g_k^{[n]} (\gamma_{1,n}
, \cdots, \gamma_{n,n}))=
l(k,n) \defeq
\sum_{i_1+i_2+ \cdots +i_n =k, i_j \geq 1}
l_{i_1} l_{i_2} \cdots l_{i_n}, \\
&\psi_n (\exp (n \rho h) \gamma_{\star 1,n}^{-1} \cdots \gamma_{\star n,n}^{-1}
 g_k^{[n]} (\gamma_{\star 1,n}^{-1}
, \cdots, \gamma_{\star n,n}^{-1}))=
l_\star (k,n) \defeq
\sum_{i_1+i_2+ \cdots +i_n =k, i_j \geq 1}
l_{\star i_1} l_{\star i_2} \cdots l_{\star i_n}.
\end{align*}
If $k +1 \leq n$, we obtain
\begin{align*}
&\psi_n (\exp (-n \rho h) \gamma_{1,n} \cdots \gamma_{n,n} g_k^{[n]} (\gamma_{1,n}
, \cdots, \gamma_{n,n}))=
0, \\
&\psi_n (\exp (n \rho h) \gamma_{\star 1,n}^{-1} \cdots \gamma_{\star n,n}^{-1}
 g_k^{[n]} (\gamma_{\star 1,n}^{-1}
, \cdots, \gamma_{\star n,n}^{-1}))=
0.
\end{align*}

Hence, where $g'_k (X) = X^{-1} (X-1)^k$, we have
\begin{align*}
&\sum_{n=1}^\infty \frac{(-h)^{n-1}}{n}
\psi_n (\exp (-n \rho h) \gamma_{1,n} \cdots \gamma_{n,n} {g'_k}^{[n]} (\gamma_{1,n}
, \cdots, \gamma_{n,n})) \\
&=
(\sum_{n=1}^k \sum_{i=n}^k  \frac{h^{n-1}}{n}\frac{k!}{i!(k-i)!} (-1)^{k-i}
l(i,n)) + (-1)^k \sum_{i=1}^\infty \frac{(-h)^{i-1}}{i} l(0,i), \\
&=
(\sum_{i=1}^k  \frac{k!}{i!(k-i)!} (-1)^{k-i}
l'(i)) + (-1)^k \sum_{i=1}^\infty \frac{(-h)^{i-1}}{i} l(0,i)\\
\end{align*}
and  
\begin{align*}
&\sum_{n=1}^\infty \frac{h^{n-1}}{n}
\psi_n (\exp (n \rho h) \gamma_{\star 1,n} \cdots \gamma_{\star n,n} {g'_k}^{[n]} (\gamma_{\star 1,n}
, \cdots, \gamma_{\star n,n})) \\
&=
(\sum_{n=1}^k \sum_{i=n}^k  \frac{h^{n-1}}{n}\frac{k!}{i!(k-i)!} (-1)^{k-i}
l_\star (i,n)) + (-1)^k \sum_{i=1}^\infty \frac{h^{i-1}}{i} l(0,i), \\
&=
(\sum_{i=1}^k  \frac{k!}{i!(k-i)!} (-1)^{k-i}
l'(i)) + (-1)^k \sum_{i=1}^\infty \frac{h^{i-1}}{i} l(0,i).\\
\end{align*}

Here we have
\begin{align*}
&\sum_{i=1}^\infty \frac{(-h)^{i-1}}{i} l(0,i) 
=\sum_{i=1}^\infty \frac{h^{i-1}}{i}(1-\exp (-2 \rho h))^i = -\frac{\log (\exp (-2 \rho
h))}{h} = 2 \rho, \\
&\sum_{i=1}^\infty \frac{h^{i-1}}{i} l_\star (0,i) 
=\sum_{i=1}^\infty \frac{(-h)^{i-1}}{i}(\exp (2 \rho h)-1)^i = \frac{\log (\exp (2 \rho
h))}{h} = 2 \rho. \\
\end{align*} 

Using the formulas, we obtain the following.

\begin{lemm}
\label{prop_def_Lambda_c}
We have
\begin{align*}
&\Lambda (c) (1) =\sum_{n=1}^\infty \frac{(-h)^{n-1}\exp ( -n \rho h)}{n} \psi_n
(\gamma_{1,n} \cdots \gamma_{n,n}
\lambda^{[n]} (\gamma_{1,n}, \cdots, \gamma_{n,n})) \\
&=\sum_{n=2}^\infty u_n ((\sum_{i=1}^k  \frac{k!}{i!(k-i)!} (-1)^{k-i}
l'(i)) + 2 (-1)^k \rho)=\Lambda (c) (5), \\
&\sum_{n=1}^\infty \frac{h^{n-1}\exp ( n \rho h)}{n} \psi_n
(\gamma_{\star 1,n}^{-1} \cdots \gamma_{\star n,n}^{-1}
\lambda^{[n]} (\gamma_{\star 1,n}^{-1}, \cdots, \gamma_{\star n,n}^{-1})) \\
&=\sum_{n=2}^\infty u_n ((\sum_{i=1}^k \frac{k!}{i!(k-i)!} (-1)^{k-i}
l'_\star (i)) + 2(-1)^k \rho)= \Lambda (c) (6).
\end{align*}
These equations imply that
$\Lambda (c) (1) =\Lambda (c) (5)$
and $\Lambda (c) (2) = \Lambda (c) (6)$.
Furthermore, using the orientation preserving diffeomorphism
$S^1 \times I \to S^1 \times I,
(s,t) \mapsto (1-s,1-t)$,
we obtain $\Lambda (c) (3) =\Lambda (c) (7)$
and $\Lambda (c) (4) = \Lambda (c) (8)$.
\end{lemm}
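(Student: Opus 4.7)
The plan is to reduce both displayed equalities in the lemma to the formula just derived for the auxiliary functions $g'_k(X) = X^{-1}(X-1)^k$, by expanding $\lambda$ as a $\Q$-linear combination of the $g'_k$ and exploiting linearity of the divided difference operation $f \mapsto f^{[n]}$.

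First, the power series identity $\lambda(X) = \sum_{k \geq 2} u_k\, g'_k(X)$ in $\Q[[X-1]]$ is immediate from the definition $\lambda(X) = X^{-1}\cdot \tfrac{1}{2}(\log X)^2$ together with the defining expansion $\tfrac{1}{2}(\log X)^2 = \sum_{k \geq 2} u_k (X-1)^k$. A routine induction on $n$ using the recursive definition $f^{[k]}(X_1,\ldots,X_k) = (f^{[k-1]}(X_1,\ldots,X_{k-1})-f^{[k-1]}(X_2,\ldots,X_k))/(X_1-X_k)$ shows that $f \mapsto f^{[n]}$ is $\Q$-linear and continuous for the $(X_i-1)$-adic topology, hence
\[
\lambda^{[n]}(X_1,\ldots,X_n) = \sum_{k\geq 2} u_k\, (g'_k)^{[n]}(X_1,\ldots,X_n)
\]
in $\Q[[X_1-1,\ldots,X_n-1]]$.

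Next I substitute this expansion into the defining series of $\Lambda(c)(1)$, use continuity of $\psi_n$ with respect to the $F^\bullet$-filtration, and interchange the $n$-summation with the $k$-summation. The interchange is legitimate in $\widehat{\tskein{\Sigma}}$ because the factor $(-h)^{n-1}$ together with the filtration estimates of Proposition \ref{prop_filt_bracket} and Proposition \ref{prop_filt_product} makes the double sum locally finite modulo each $F^N$. At fixed $k$, the inner $n$-sum is then precisely the quantity computed in the displayed formula immediately preceding the lemma, namely $\sum_{i=1}^k \frac{k!}{i!(k-i)!}(-1)^{k-i}l'(i)+2(-1)^k\rho$. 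Weighting by $u_k$ and summing over $k \geq 2$ reproduces the expression for $\Lambda(c)(5)$, giving $\Lambda(c)(1)=\Lambda(c)(5)$. The identity $\Lambda(c)(2)=\Lambda(c)(6)$ follows by the same argument with $\gamma_{i,n}$ replaced by $\gamma_{\star i,n}^{-1}$ and $l$ by $l_\star$, using the parallel $g'_k$-formula derived in the same preliminary paragraph.

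For the final sentence of the lemma, the orientation-preserving self-diffeomorphism $(s,t)\mapsto(1-s,1-t)$ of $S^1\times I$ sends $\gamma$ to $\gamma_\star^{-1}$ up to regular homotopy and induces an automorphism of $\widehat{\tskein{S^1\times I}}$ exchanging $l_n \leftrightarrow l_{\star -n}$ and $l_{-n} \leftrightarrow l_{\star n}$; applying this automorphism to the two identities just proved yields $\Lambda(c)(3)=\Lambda(c)(7)$ and $\Lambda(c)(4)=\Lambda(c)(8)$. The main technical point is the justification of the summation interchange inside the completion; once this is handled via the filtration propositions, the remaining content is purely a formal manipulation of the power series $\lambda$ and its divided differences.
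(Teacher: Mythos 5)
Your proposal is correct and takes essentially the same route as the paper: the paper's own proof consists precisely of the computation for $g'_k(X)=X^{-1}(X-1)^k$ displayed just before the lemma, combined with the (tacit) expansion $\lambda=\sum_{k\geq 2}u_k\,g'_k$ and linearity of the divided differences, followed by the same diffeomorphism $(s,t)\mapsto(1-s,1-t)$ for the identities $\Lambda(c)(3)=\Lambda(c)(7)$ and $\Lambda(c)(4)=\Lambda(c)(8)$. Your explicit justification of the $n$--$k$ summation interchange in the completion is a detail the paper leaves implicit, but the argument is otherwise identical.
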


Using the following, we will prove
$\Lambda (c) (1) = \Lambda (c) (3)$
in Lemma \ref{lemm_Lambda_well_defined_2}

\begin{lemm}
We have
\begin{align*}
&\lambda^{[n]} (X_1^{-1}, X_2^{-1}, \cdots, X_n^{-1}) \\
&=(-1)^{n-1} X_1 \cdots X_n \sum_j 
\sum_{1 < i_1 < i_2
<  \cdots <
 i_j \leq n}X_{i_1} X_{i_2}  \cdots X_{i_j} \lambda^{[j]} (X_{i_1},
X_{i_2}, \cdots, X_{i_j}),
\end{align*}
where $\lambda (X) \defeq \frac{1}{2X} (\log (X))^2
\in \Q [[X-1]]$,
\end{lemm}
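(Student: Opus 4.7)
The plan is to reduce the identity to a statement about the divided difference of $X^n \lambda(X)$, and then prove the reduced identity by a Lagrange/partial-fraction manipulation.

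First I would record the scaling $\lambda(X^{-1}) = X^2 \lambda(X)$, which follows from $\log(X^{-1}) = -\log X$ and the definition $\lambda(X) = \frac{1}{2X}(\log X)^2$. Then, applying the standard closed form for divided differences,
\begin{equation*}
f^{[n]}(Y_1,\ldots,Y_n) = \sum_{i=1}^{n}\frac{f(Y_i)}{\prod_{j\neq i}(Y_i - Y_j)},
\end{equation*}
to $f = \lambda$ with $Y_i = X_i^{-1}$, and clearing denominators via $X_i^{-1} - X_j^{-1} = -(X_i - X_j)/(X_i X_j)$, a short bookkeeping calculation would give
\begin{equation*}
\lambda^{[n]}(X_1^{-1}, \ldots, X_n^{-1}) = (-1)^{n-1}\, X_1\cdots X_n \cdot (X^n\lambda)^{[n]}(X_1,\ldots,X_n).
\end{equation*}

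It then suffices to establish the subset-sum identity
\begin{equation*}
(X^n\lambda)^{[n]}(X_1,\ldots,X_n) = \sum_{\emptyset \neq S \subseteq \{1,\ldots,n\}} \Bigl(\prod_{i\in S} X_i\Bigr)\, \lambda^{[|S|]}\bigl(\{X_i\}_{i\in S}\bigr),
\end{equation*}
which, once substituted into the previous formula, yields the lemma. Here the index range ``$1 < i_1 < \cdots$'' as printed should be read as ``$1 \leq i_1 < \cdots$'' so that the right-hand side is manifestly symmetric in the $X_i$; one can verify this at $n=2$, where both sides evaluate to $-X_1 X_2 (X_1 (\log X_1)^2 - X_2 (\log X_2)^2)/(2(X_1 - X_2))$.

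To prove the subset-sum identity I would expand each $\lambda^{[|S|]}$ on the right-hand side by the same closed form, exchange the order of summation so that the outer variable is the distinguished index $k \in S$ appearing inside that formula, and collect the inner sum over $T = S \setminus \{k\} \subseteq \{1,\ldots,n\} \setminus \{k\}$. That inner sum is precisely the expansion of $\prod_{j \neq k}\bigl(1 + X_j/(X_k - X_j)\bigr)$. The key simplification $1 + X_j/(X_k - X_j) = X_k/(X_k - X_j)$ then collapses this product to $X_k^{n-1}/\prod_{j\neq k}(X_k - X_j)$, and summing over $k$ reproduces the Lagrange expression for $(X^n\lambda)^{[n]}(X_1,\ldots,X_n)$. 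I anticipate no serious obstacle; the calculation is driven entirely by the Lagrange expansion and this single telescoping identity, so the main care needed is tracking the sign $(-1)^{n-1}$ and the power $X_i^n$ in the initial reduction.
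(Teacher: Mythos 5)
Your proof is correct, but it follows a genuinely different route from the paper. The paper proves the identity by induction on $n$ directly from the recursive definition $f^{[k]}(X_1,\ldots,X_k) = \bigl(f^{[k-1]}(X_1,\ldots,X_{k-1}) - f^{[k-1]}(X_2,\ldots,X_k)\bigr)/(X_1-X_k)$, which leads to a fairly heavy bookkeeping computation in which the subsets are sorted according to whether they pick up the endpoints $X_1$ and $X_n$. You instead pass to the Lagrange closed form $f^{[n]}(Y_1,\ldots,Y_n)=\sum_i f(Y_i)/\prod_{j\neq i}(Y_i-Y_j)$, use the scaling $\lambda(X^{-1})=X^2\lambda(X)$ to reduce the lemma to the identity $(X^n\lambda)^{[n]}(X_1,\ldots,X_n) = \sum_{\emptyset\neq S}\bigl(\prod_{i\in S}X_i\bigr)\lambda^{[|S|]}(\{X_i\}_{i\in S})$, and prove that by the telescoping $1+X_j/(X_k-X_j)=X_k/(X_k-X_j)$. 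All of these steps check out: the bookkeeping $X_i^2\cdot X_i^{n-1}\prod_{j\neq i}X_j = X_i^{n}\prod_{j}X_j$ and the sign $(-1)^{n-1}$ are right, the inner sum over $T$ does collapse to $X_k^{n-1}/\prod_{j\neq k}(X_k-X_j)$, and your reading of the misprinted index range $1<i_1$ as $1\leq i_1$ agrees with the final line of the paper's own induction. What your approach buys is conceptual clarity: the right-hand side is identified as $(-1)^{n-1}X_1\cdots X_n$ times the $n$-th divided difference of $X^n\lambda(X)$, and the subset-sum identity holds for an arbitrary function, so the whole argument works verbatim for any $f$ satisfying $f(X^{-1})=X^2 f(X)$. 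The one point you should make explicit is why the Lagrange closed form is legitimate for elements of $\Q[[X-1]]$ (e.g.\ verify it degree by degree on polynomials, or check that the Lagrange expression satisfies the defining recursion); this is standard, and the paper itself freely uses such closed forms when it computes $g_{-1}^{[n]}$ and $g_k^{[n]}$ for $g_{-1}=X^{-1}$ and $g_k=X^k$, so it is consistent with the paper's conventions.
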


\begin{proof}
We prove the lemma by induction on $n$.
If $n=1$, the claim is obvious.
By induction hypothesis, we have
\begin{align*}
&\lambda^{[n]} (\gyaku{X_1}, \cdots, \gyaku{X_n}) \\
&=\frac{\lambda^{[n-1]}(\gyaku{X_1}, \cdots, \gyaku{X_{n-1}})
-\lambda^{[n-1]}(\gyaku{X_2}, \cdots, \gyaku{X_n})}{X_1-X_n} \\
&=(-1)^{n-2}\sum_j \sum_{2 \leq i_1 <i_2< \cdots < i_j \leq n-1}( 
\frac{(X_1X_2 \cdots X_{n-1} -X_2 X_3 \cdots X_n)X_{i_1} \cdots X_{i_j}
\lambda^{[j]} (X_{i_1}, \cdots , X_{i_j})}{\gyaku{X_1}-\gyaku{X_{n}}} \\
&+\frac{X_1^2 X_2 \cdots X_{n-1} X_{i_1} \cdots X_{i_j} \lambda^{[j+1]}(X_1,X_{i_1}, \cdots,
X_{i_j})
-X_2 X_3 \cdots X_{n}^2 X_{i_1} \cdots X_{i_j}
\lambda^{[j+1]} (X_{i_1}, \cdots, X_{i_j}, X_n)}{X_1^{-1}-X_n^{-1}} ) \\
&=(-1)^{n-1}X_1 \cdots X_n\sum_j \sum_{2 \leq i_1 <i_2< \cdots < i_j \leq n-1}( 
 X_{i_1}X_{i_2} \cdots X_{i_j}
\lambda^{[j]} (X_{i_1}, \cdots, X_{i_j})\\
&+X_1 X_{i_1} \cdots X_{i_j} \lambda^{[j+1]}(X_1, X_{i_1}, \cdots, X_{i_j})
+X_{i_1} \cdots X_{i_j} X_n \lambda^{[j+1]}(X_{i_1}, \cdots, X_{i_j}, X_n)\\
&+X_1 X_{i_1} \cdots X_{i_j} X_n \frac{\lambda^{[j+1]}(X_1, X_{i_1}, \cdots, X_{i_j})
-\lambda^{[j+1]} (X_{i_1}, \cdots, X_{i_j}, X_n)}{X_1-X_n} )\\
&=(-1)^{n-1}X_1 \cdots X_n\sum_j \sum_{2 \leq i_1 <i_2< \cdots < i_j \leq n-1}( 
 X_{i_1}X_{i_2} \cdots X_{i_j}
\lambda^{[j]} (X_{i_1}, \cdots, X_{i_j})\\
&+X_1 X_{i_1} \cdots X_{i_j} \lambda^{[j+1]}(X_1, X_{i_1}, \cdots, X_{i_j})
+X_{i_1} \cdots X_{i_j} X_n \lambda^{[j+1]}(X_{i_1}, \cdots, X_{i_j}, X_n)\\
&+X_1 X_{i_1} \cdots X_{i_j} X_n \lambda^{[j+2]}
(X_1,X_{i_1}, \cdots, X_{i_j}, X_n) )\\
&=(-1)^{n-1}X_1 \cdots X_n\sum_j \sum_{1 \leq i_1 <i_2< \cdots < i_j \leq n}(
 X_{i_1}X_{i_2} \cdots X_{i_j}
\lambda^{[j]} (X_{i_1}, \cdots, X_{i_j}))
\end{align*} 

This proves the lemma.

\end{proof}

\begin{lemm}
\label{lemm_Lambda_well_defined_2}
We have
\begin{align*}
&\sum_{n=1}^\infty \frac{(-h)^{n-1}}{n}
\exp (-n \rho h)\psi_n (\gamma_{1,n} \cdots \gamma_{n,n}
\lambda^{[n]}(\gamma_{1,n}, \cdots, \gamma_{n,n})) \\
&=\sum_{n=1}^\infty \frac{h^{n-1}}{n}
\exp (n \rho h)\psi_n (\gamma_{1,n}^{-1} \cdots \gamma_{n,n}^{-1}
\lambda^{[n]}(\gamma_{1,n}^{-1}, \cdots, \gamma_{n,n}^{-1})). \\
\end{align*}
This equation implies that $\Lambda (c) (1) =\Lambda (c) (3)$.
Furthermore, using the diffeormorphism
$S^1 \times I \to S^1 \times I, (s,t)\mapsto (1-s,1-t)$,
we obtain $\Lambda (c) (2) =\Lambda (c) (4)$.
\end{lemm}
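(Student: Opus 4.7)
The plan is to substitute $X_l = \gamma_{l,n}$ into the preceding lemma (which expresses $\lambda^{[n]}(X_1^{-1}, \ldots, X_n^{-1})$ as $(-1)^{n-1} X_1 \cdots X_n$ times a sum over non-empty subsets of $\{1, \ldots, n\}$), and then multiply both sides by $\gamma_{1,n}^{-1} \cdots \gamma_{n,n}^{-1}$ so that the prefactor $\gamma_{1,n} \cdots \gamma_{n,n}$ cancels on the right. This yields
\begin{equation*}
\gamma_{1,n}^{-1} \cdots \gamma_{n,n}^{-1} \lambda^{[n]}(\gamma_{1,n}^{-1}, \ldots, \gamma_{n,n}^{-1}) = (-1)^{n-1} \sum_{j=1}^n \sum_{1 \le i_1 < \cdots < i_j \le n} \gamma_{i_1,n} \cdots \gamma_{i_j,n} \lambda^{[j]}(\gamma_{i_1,n}, \ldots, \gamma_{i_j,n})
\end{equation*}
in $\widehat{\mathcal{P}(\Sigma, *_\alpha)^{\otimes n}}$. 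The key reduction is that each summand is supported on only $j$ of the $n$ slots (those indexed by $i_1, \ldots, i_j$), with $1_{\pi^+(\Sigma, *_\alpha)}$ in the remaining $n-j$ slots. Since the product on $\mathcal{P}(\Sigma,*_\alpha)^{\otimes n}$ is componentwise, and since the trivial knot relation gives $\zettaiti{\psi(1_{\pi^+(\Sigma, *_\alpha)})} = \tfrac{2\sinh(\rho h)}{h} \in \Q[\rho][[h]]$, applying $\psi_n$ produces
\begin{equation*}
\psi_n\bigl(\gamma_{i_1,n} \cdots \gamma_{i_j,n} \lambda^{[j]}(\gamma_{i_1,n}, \ldots, \gamma_{i_j,n})\bigr) = \Bigl( \tfrac{2\sinh(\rho h)}{h} \Bigr)^{n-j} A_j,
\end{equation*}
where $A_j \defeq \psi_j(\gamma_{1,j} \cdots \gamma_{j,j} \lambda^{[j]}(\gamma_{1,j}, \ldots, \gamma_{j,j}))$ is independent of the subset $\{i_1, \ldots, i_j\}$, so the $\binom{n}{j}$ subsets of size $j$ contribute uniformly.

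The remaining step is a generating-function calculation. Exchanging the order of summation and substituting $m = n-j$, the right-hand side of the lemma becomes
\begin{equation*}
\sum_{j=1}^\infty (-h)^{j-1} \exp(j\rho h) A_j \sum_{m=0}^\infty \tfrac{1}{m+j} \binom{m+j}{j} \bigl(1 - \exp(2\rho h)\bigr)^m,
\end{equation*}
where the simplification $-h \exp(\rho h) \cdot \tfrac{2\sinh(\rho h)}{h} = 1 - \exp(2\rho h)$ has been used to absorb the factor $(-h)^m \exp(m \rho h) (\tfrac{2 \sinh(\rho h)}{h})^m$. Using $\tfrac{1}{m+j}\binom{m+j}{j} = \tfrac{1}{j}\binom{m+j-1}{m}$ together with the classical identity $\sum_{m \ge 0} \binom{m+j-1}{m} x^m = (1-x)^{-j}$ evaluated at $x = 1 - \exp(2\rho h)$, the inner sum collapses to $\tfrac{1}{j}\exp(-2j\rho h)$. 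Combined with the outer factor $\exp(j\rho h)$, this yields $\exp(-j\rho h)$, matching the left-hand side of the lemma term by term.

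Finally, the claim $\Lambda(c)(2) = \Lambda(c)(4)$ is obtained from $\Lambda(c)(1) = \Lambda(c)(3)$ by pulling back along the orientation-reversing diffeomorphism $(s,t) \mapsto (1-s, 1-t)$ of $S^1 \times I$, which swaps $\gamma$ and $\gamma_\star$ up to inversion. I expect the main technical nuisance to be the careful tracking of the signs $(-1)^{n-1}$, the exponentials $\exp(\pm n \rho h)$, and the combinatorial factors $\binom{n}{j}$ across the summation interchange; once these are correctly reconciled with the identification $-\exp(\rho h)\cdot 2\sinh(\rho h) = 1 - \exp(2\rho h)$, the generating-function collapse above completes the proof.
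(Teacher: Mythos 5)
Your proposal is correct and follows essentially the same route as the paper: substitute the inversion formula for $\lambda^{[n]}$ from the preceding lemma, cancel the prefactor, evaluate the empty slots via the trivial knot value $\frac{2\sinh(\rho h)}{h}$, count the $\binom{n}{j}$ subsets, and collapse the reindexed inner sum with the negative binomial series (the paper phrases this last step as the Taylor expansion of $\frac{1}{n}(x^{-n}-1)$, which is the same identity).
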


\begin{proof}
We simply denote $e \defeq \exp (\rho h)$.
By the above lemma, we have
\begin{align*}
&\sum_{n=1}^\infty (\frac{h^{n-1}}{n}e^{n}
\psi_n(\gamma_{1,n}^{-1} \cdots \gamma_{n,n}^{-1}
\lambda^{[n]}(\gamma_{1,n}^{-1}, \cdots, \gamma_{n,n}^{-1})) \\
&=\sum_{n=1}^\infty
(\frac{(-h)^{n-1}}{n}e^{n} \psi_n (\gamma_{1,n}^{-1} \cdots \gamma_{n,n}^{-1}
\gamma_{1,n} \cdots \gamma_{n,n}
\sum_j \sum_{1 \leq i_1 <i_2< \cdots < i_j \leq n}
\gamma_{i_1,n} \cdots \gamma_{i_j,n}
\lambda^{[n]}(\gamma_{i_1,n}, \cdots, \gamma_{i_j,n})) \\
&=\sum_{n=1}^\infty
(\frac{(-h)^{n-1}}{n}e^{n} \psi_n (
\sum_j \sum_{1 \leq i_1 <i_2< \cdots < i_j \leq n}
\gamma_{i_1,n} \cdots \gamma_{i_j,n}
\lambda^{[n]}(\gamma_{i_1,n}, \cdots, \gamma_{i_j,n})) \\
&=\sum_{n=1}^\infty
(\frac{(-h)^{n-1}}{n}e^{n} 
\sum_j (\frac{e-e^{-1}}{h})^{n-j} \sum_{1 \leq i_1 <i_2< \cdots < i_j \leq n}
\psi_j (\gamma_{1,j} \cdots \gamma_{j,j}
\lambda^{[j]}(\gamma_{1,j}, \cdots, \gamma_{j,j})) \\
&=\sum_{n=1}^\infty
(\frac{(-h)^{n-1}}{n}e^{n} 
\sum_j (\frac{e-e^{-1}}{h})^{n-j} \frac{n!}{(n-j)!j!}
\psi_j (\gamma_{1,j} \cdots \gamma_{j,j}
\lambda^{[j]}(\gamma_{1,j}, \cdots, \gamma_{j,j})) \\
&=\sum_{n=1}^\infty
(\frac{(-h)^{n-1}}{n}e^{n} 
\sum_j (\frac{e-e^{-1}}{h})^{n-j} \frac{n!}{(n-j)!j!}
\psi_j (\gamma_{1,j} \cdots \gamma_{j,j}
\lambda^{[j]}(\gamma_{1,j}, \cdots, \gamma_{j,j})) \\
&=\sum_{n=1}^\infty
(\frac{(-h)^{n-1}}{n}e^n+\sum_{k=1}^\infty 
\frac{(-h)^{n+k-1}}{n+k} e^{n+k} (\frac{-e+e^{-1}}{h})^k
\frac{(n+k)!}{n! k!})
\psi_n (\gamma_{1,n} \cdots \gamma_{n,n}
\lambda^{[n]} (\gamma_{1,n}, \cdots, \gamma_{n,n})) \\
&=\sum_{n=1}^\infty
(\frac{(-h)^{n-1}}{n}e^n-\sum_{k=1}^\infty 
h^{-1} (-he)^{n} (1-e^2)^k
\frac{(n+k-1)!}{n! k!})
\psi_n (\gamma_{1,n} \cdots \gamma_{n,n}
\lambda^{[n]} (\gamma_{1,n}, \cdots, \gamma_{n,n})) \\
\end{align*}

By the Taylor expansion $\frac{1}{n} (x^{-n}-1) =\sum_{k=1}^\infty 
\frac{(n+k-1)!}{n!k!} (-X+1)^k
\in \Q [[X-1]]$ at $X=1$, we have
\begin{align*}
&\sum_{n=1}^\infty
(\frac{(-h)^{n-1}}{n}e^n-\sum_{k=1}^\infty 
h^{-1} (he)^{n} (e^2-1)^k
\frac{(n+k-1)!}{n! k!})
\psi_n (\gamma_{1,n} \cdots \gamma_{n,n}
\lambda^{[n]} (\gamma_{1,n}, \cdots, \gamma_{n,n})) \\
&=\sum_{n=1}^\infty
(\frac{(-h)^{n-1}}{n}e^n-
h^{-1} (-he)^{n} \frac{1}{n} (e^{-2n}-1))
\psi_n (\gamma_{1,n} \cdots \gamma_{n,n}
\lambda^{[n]} (\gamma_{1,n}, \cdots, \gamma_{n,n})\\
&=\sum_{n=1}^\infty
\frac{(-h)^{n-1}}{n}e^{-n}
\psi_n (\gamma_{1,n} \cdots \gamma_{n,n}
\lambda^{[n]} (\gamma_{1,n}, \cdots, \gamma_{n,n})\\
\end{align*}
This proves the lemma.

\end{proof}

\begin{lemm}
We have $l'(n)= l'_\star (n)$ for any $n$.
Hence we obtain $\Lambda (c)(5)=\Lambda (c) (8)$.
\end{lemm}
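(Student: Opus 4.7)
The plan is to recognize both $l'(n)$ and $l'_\star(n)$ as coefficients of logarithmic generating functions, thereby reducing the countable family of scalar identities to a single functional equation in the completed skein of the annulus. Introduce formal series $L(X) \defeq \sum_{n \geq 1} l_n X^n$ and $L_\star(X) \defeq \sum_{n \geq 1} l_{\star n} X^n$ with coefficients in $\widehat{\tskein{S^1 \times I}}$. Since $-\log(1-y)=\sum_{k\geq 1} y^k/k$, a direct formal-series manipulation gives
\[
\sum_{m\geq 1} l'(m)\, X^m \;=\; \tfrac{1}{h}\log\bigl(1+hL(X)\bigr), \qquad \sum_{m\geq 1} l'_\star(m)\, X^m \;=\; -\tfrac{1}{h}\log\bigl(1-hL_\star(X)\bigr).
\]
Consequently the claim $l'(n)=l'_\star(n)$ for every $n \geq 1$ is equivalent to the single functional identity $(1+hL)(1-hL_\star)=1$, i.e.\ $L-L_\star = hLL_\star$, which in coefficients reads
\[
l_n - l_{\star n} \;=\; h \sum_{\substack{i+j=n \\ i,j \geq 1}} l_i\, l_{\star j}, \qquad n \geq 1.
\]

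To establish this bilinear identity I would argue directly in $\widehat{\tskein{S^1 \times I}}$ via the HOMFLY-PT skein relation $T_+ - T_- = hT_0$, by induction on $n$. Comparing the diagrams for $l_n$ and $l_{\star n}$ (both of which present a single strand wrapping the core of the annulus $n$ times, but with the over/under data concentrated at opposite base points as in Figures \ref{fig_tskein_gamma}--\ref{fig_tskein_gamma2_inverse}), one can deform the closure of $\gamma^n$ into the closure of $\gamma_\star^{-n}$ by successively changing the self-crossings of the braided strand. Each crossing change yields $\pm h$ times a resolution that, in the annulus, splits the closed strand into two concentric closures of lower winding numbers $i$ and $n-i$; the framings baked into the definitions of $l_i$ and $l_{\star j}$ are tailored so that these resolutions are precisely $l_i\, l_{\star(n-i)}$. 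Telescoping the $n-1$ intermediate contributions then gives the desired convolution on the right-hand side.

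The main obstacle is the careful bookkeeping of framings and orientations: the prefactors $\exp(\pm(n-1)\rho h)$ attached to $l_n$ and $l_{\star n}$ are exactly what is needed to absorb the Reidemeister-I corrections that appear when a strand is pulled off a bundle, and one must check that these balance under each local skein move before and after the resolution. A clean approach is to perform the entire crossing-change argument for unframed oriented diagrams, keep track of the signed crossing counts, and only at the end apply the framing relation from Definition \ref{df_skein_algebra} to identify the result with $l_i \, l_{\star(n-i)}$. Once $l'(n)=l'_\star(n)$ is verified for all $n$, the equality $\Lambda(c)(5)=\Lambda(c)(8)$ is immediate from their definitions, which involve $l'$ and $l'_\star$ only through the same scalar coefficients.
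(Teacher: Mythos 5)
Your proposal is correct, and it takes a genuinely shorter route than the paper's. The paper's proof has three stages: (i) the local skein identity $l_{a,i}=l_{a-1,i+1}-h\,l_{a-1,0}\,l_{0,i}$ for the interpolating diagrams $l_{i,j}$ of Figure \ref{figure_l_ij}, iterated to give $l_{\star n}=\sum_{i=1}^n(-h)^{i-1}l(n,i)$; (ii) a second composition-sum identity $l_\star(n,i)=\sum_{j= i}^{n}(-h)^{j-i}\binom{j-1}{i-1}l(n,j)$; and (iii) the combinatorial evaluation $\sum_{i=1}^j\binom{j-1}{i-1}\frac{(-1)^{j-i}}{i}=\frac{(-1)^{j-1}}{j}$ via $\int_0^1(X-1)^{j-1}\,dX$. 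Your generating-function reformulation collapses stages (ii) and (iii) entirely: the convolution identity $L-L_\star=hLL_\star$ that you isolate is equivalent to the conclusion of stage (i), since $l_{\star n}=\sum_i(-h)^{i-1}l(n,i)$ reads $L_\star=L/(1+hL)$ in generating functions, which is the same equation; and your telescoping of the single crossing change from $l_{0,n-1}=l_n$ to $l_{n-1,0}=l_{\star n}$ produces exactly $l_n-l_{\star n}=h\sum_{i+j=n}l_{\star i}\,l_j$, the $k$-th step contributing $h\,l_{\star\, k+1}\,l_{n-1-k}$. So both arguments rest on the same local skein move, but yours dispenses with the binomial gymnastics. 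The one point requiring care --- which you flag --- is the framing and orientation bookkeeping behind the local identity $l_{a,i}=l_{a-1,i+1}-h\,l_{a-1,0}\,l_{0,i}$; the paper asserts this with no more detail than you give, so this is not a gap relative to the paper's own standard. The formal logarithm manipulations are legitimate because the annulus skein algebra is a free commutative algebra and $hL(X)$ has vanishing constant term in $X$; and the final deduction $\Lambda(c)(5)=\Lambda(c)(8)$ is immediate, as those two expressions differ only by substituting $l'_\star$ for $l'$.
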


\begin{proof}
Let $l_{i,j}$ be an element of $\tskein{S^1 \times I}$
represented by a knot presented by the diagram
as shown in Figure \ref{figure_l_ij}.
We remark $l_{n,0}= l_{\star n+1}$ and $l_{0,n} = l_{n+1}$.
First, we will prove
\begin{equation*}
l_{n,0} = \sum_{k'=0}^k \sum_{i_1+ \cdots +i_{k'}=k-k'+1, i_l \geq 0}
(-h)^{k'} l_{n-k,i_1} l_{0,i_2} \cdots l_{0,i_{k'}}
\end{equation*}
by induction on $k$.
If $k=0$, the equation is trivial.
We remark that $l_{n-k,i}=l_{n-k-1,i+1}-h l_{n-k-1,0} l_{0,i}$.
By inductive assumption, we obtain
\begin{align*}
&l_{n,0} = \sum_{k'=0}^k \sum_{i_1+ \cdots +i_{k'}=k-k'+1, i_l \geq 0}
(-h)^{k'} l_{n-k,i_1} l_{0,i_2} \cdots l_{0,i_{k'}} \\
&=\sum_{k'=0}^k \sum_{i_1+ \cdots +i_{k'}=k-k'+1, i_l \geq 0}
(-h)^{k'} l_{n-k,i_1} l_{0,i_2} \cdots l_{0,i_{k'}} \\
&=\sum_{k'=0}^k \sum_{i_1+ \cdots +i_{k'}=k-k'+1, i_l \geq 0}
(-h)^{k'} (l_{n-k-1,i_1+1}-h l_{n-k-1,0}l_{0,i_1}) l_{0,i_2} \cdots l_{0,i_{k'}} \\
&=\sum_{k'=0}^{k+1} \sum_{i_1+ \cdots +i_{k'}=k-k'+1, i_l \geq 0}
(-h)^{k'} l_{n-k-1,i_1} l_{0,i_2} \cdots l_{0,i_{k'}} \\.
\end{align*}
This proves the above equation.
The above equation implies
\begin{align*}
l_{\star n} =\sum_{i=1}^n (-h)^{i-1} l (n,i).
\end{align*}

Second, we will prove 
\begin{align*}
l_\star (n,i)=\sum_{j=i}^n (-h)^{j-i} \frac{(j-1)!}{(i-1)! (j-i)!} l (n,j).
\end{align*}
We have 
\begin{equation*}
l_\star (n,i) = \sum_{j_1 + \cdots +j_i=n, i_l \geq 1}  \prod_{l=1}^i l_{\star i_l}
=  = \sum_{j_1 + \cdots +j_i=n, i_l \geq 1}  \prod_{l=1}^i 
\sum_{k_l=1}^{i_l} (-h)^{k_l-1} l (i_l, k_l).
\end{equation*}
Since
\begin{equation*}
\sum_{k_1+ \cdots +k_m=n, k_l \geq j_l}
l(j_1,k_1) l(j_2,k_2) \cdots l(j_m,k_m)=
l(k,j_1+ \cdots +j_m),
\end{equation*}
we obtain
\begin{align*}
l_\star (n,i)=\sum_{j=i}^n (-h)^{j-i} \frac{(j-1)!}{(i-1)! (j-i)!} l (n,j).
\end{align*}

Hence we have
\begin{align*}
&l'_\star (n) =\sum_{i=1}^n \frac{h^{i-1}}{i} l_\star (n,j) 
=\sum_{i=1}^n \frac{h^{i-1}}{i}
\sum_{j=i}^n (-h)^{j-i} \frac{(j-1)!}{(i-1)! (j-i)!} l (n,j) \\
&=\sum_{j=1}^n h^{j-1}(\sum_{i=1}^j \frac{(j-1)!}{(i-1)! (i-j)!} \frac{(-1)^{j-i}}{i})
l (n,j).
\end{align*}
Since 
\begin{align*}
\sum_{i=1}^j \frac{(j-1)!}{(i-1)! (i-j)!} \frac{(-1)^{j-i}}{i}
=\int_0^1(X-1)^{j-1} dX=[\frac{1}{j} (X-1)^j]^1_0 =(-1)^{j-1} \frac{1}{j},
\end{align*}
we have
\begin{align*}
l'_\star (n)=\sum_{j=1}^n \frac{(-h)^{j-1}}{j} l (n,j)=l' (n).
\end{align*}
This proves the lemma.

\end{proof}

\begin{cor}
\label{cor_def_Lambda_trivial}
If $c$ is a contractible simple closed curve,
we have $\Lambda (c) =0$.
\end{cor}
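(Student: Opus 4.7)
The plan is to reduce the claim to an identity inside an embedded disk. Since $c$ is contractible in $\Sigma$, there is an embedded disk $D \subset \Sigma$ in which $c$ and a simple path $\gamma$ representing $c$ both lie. All tangles that appear in the definition of $\Lambda(c)$ are built from $\gamma$, so by naturality of the skein module with respect to surface inclusions, $\Lambda(c)$ equals the image under the continuous $\Q[\rho][[h]]$-linear map $\widehat{\tskein{D}} \to \widehat{\tskein{\Sigma}}$ of the corresponding element $\Lambda(c)_D$ computed inside the disk. Thus it suffices to show $\Lambda(c)_D = 0 \in \widehat{\tskein{D}}$, and by Proposition \ref{prop_tskein_disk} this becomes an identity of formal power series in $\Q[\rho][[h]]$.

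I would work with the expression $\Lambda(c)(5)$ from the preceding theorem, which involves only the scalar elements $l_n = \zettaiti{\psi(\exp((n-1)\rho h)\gamma^n)}$. Inside $\tskein{D}$ the closure of $\gamma^n$ (with the prescribed framing correction) is a framed unknot whose normalized HOMFLY-PT value can be computed explicitly via Proposition \ref{prop_tskein_disk} together with the skein and framing relations. The convolution
\[
l'(m) = \sum_{k=1}^m \frac{(-h)^{k-1}}{k}\sum_{\substack{j_1+\cdots+j_k=m \\ j_i \geq 1}} l_{j_1}\cdots l_{j_k}
\]
is then recognized, via generating functions, as the coefficient of $z^m$ in $\frac{1}{h}\log\bigl(1 + h\sum_{n\geq 1} l_n z^n\bigr)$, which yields a closed form for $l'(m)$ once the $l_n$ are known explicitly.

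Substituting these closed forms into the formula for $\Lambda(c)(5)$, using that $u_n$ is the coefficient of $(X-1)^n$ in $\frac{1}{2}(\log X)^2$ and carrying out the alternating binomial sum $\sum_{i=1}^n \binom{n}{i}(-1)^{n-i}l'(i)$, the expression inside the outer parentheses should reduce to $\frac{1}{3}\rho^3 h^2$, canceling the correction term and giving $\Lambda(c)_D = 0$. The main obstacle is this final formal power series identity: it requires carefully tracking the interaction of the $u_n$ with the $\log$-generating function and the binomial transform. However, the normalization $\bigl(\frac{h/2}{\arcsinh(h/2)}\bigr)^2$ and the $-\frac{1}{3}\rho^3 h^2$ correction in the definition of $\Lambda(c)$ appear to have been arranged precisely so that this cancellation holds for a contractible curve, and one expects the identity to follow from a direct, if tedious, manipulation of generating functions in $\Q[\rho][[h]]$.
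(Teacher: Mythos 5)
Your proposal follows essentially the same route as the paper: the paper fixes an embedding $e: S^1\times I \to D^2$, computes $e_*(l'_\star(n)) = \frac{2\sinh(n\rho h)}{nh}$ via the binomial identity $\sum_{k=1}^n \frac{(n-1)!}{(n-k)!(k-1)!}\frac{1}{k}X^kY^{n-k} = \frac{1}{n}((X+Y)^n - Y^n)$, and then verifies the resulting power-series identity by showing the function $f(X)$ collecting the $u_n$-sum satisfies $f(0)=0$ and $f'(X)=X^2$, hence equals $\frac{1}{3}X^3$, which cancels the $-\frac{1}{3}\rho^3h^2$ correction exactly as you anticipate. The only difference is cosmetic (you organize the convolution $l'(m)$ via a $\log$-generating function where the paper evaluates it directly), and the final identity you defer to ``tedious manipulation'' is precisely the $f'(X)=X^2$ computation the paper carries out.
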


\begin{proof}
We fix the embedding $e:S^1 \times I \to D^2$ and
denote by $e_*$ the $\Q [\rho][[h]]$-algebra homomorphism
$ \tskein{S^1 \times I} \to \tskein{D^2} \simeq
\Q [\rho][[h]]$ induced by $e$.
We have
\begin{align*}
&e_* (l_\star'(n))= \sum_{k=1}^n \frac{h^{k-1}}{k} e_* (\sum_{i_1+ \cdots +i_k =n, i_j \geq 1}
l_{\star i_1} \cdots l_{\star i_k}) \\
&= \sum_{k=1}^n \frac{h^{k-1}}{k} \frac{(n-1)!}{(k-1)!(n-k)!}h^{k-1}
(\frac{2\sinh (\rho h)}{h})^{k}
(\exp (-\rho h))^{n-k}. \\
\end{align*}
Since 
\begin{align*}
&\sum_{k=1}^n \frac{(n-1)!}{(n-k)! (k-1)!} \frac{1}{k}X^k Y^{n-k}=\int_0^X (Z+Y)^{n-1} dZ \\
&=[\frac{1}{n}(Z+Y)^n]^X_0 =\frac{1}{n}((X+Y)^n-Y^n),
\end{align*}
we have
\begin{align*}
&e_* (l_\star'(n))=\frac{1}{n h} (\exp (n \rho h)- \exp ( -n \rho h))= \frac{2 \sinh (n\rho h)}{n h}.
\end{align*}
Using this, we obtain
\begin{align*}
&e_* (\sum_{n=2}^\infty u_n (\sum_{i=1}^n  \frac{n!}{i!(n-i)!} (-1)^{n-i}
l_\star'(i))+2(-1)^n \rho ) \\
&=\sum_{n=2}^\infty u_n (\sum_{i=1}^n  \frac{n!}{i!(n-i)!} (-1)^{n-i}
\frac{2 \sinh (i \rho h)}{ih})+2(-1)^n \rho) \\
&=\frac{1}{h}f(\rho h),
\end{align*}
where we denote
\begin{equation*}
f(X) =\sum_{n=2}^\infty u_n(\sum_{i=1}^n \frac{n!}{i!(n-i)!}(-1)^{n-i} \frac{2 \sinh(iX)}
{i} +2(-1)^n X) \in \Q [[X]].
\end{equation*}
Since $f(0)=0$ and
\begin{align*}
&f' (X) =\sum_{n=2}^\infty u_n (\sum_{i=1}^n \frac{n!}{i!(n-i)!}(-1)^{n-i} 2 \cosh (iX) +2(-1)^n ) 
\\
&=\sum_{n=2}^\infty u_n ((\exp (X)-1)^n+(\exp (-X)-1)^n) \\
&=\frac{1}{2}( (\log (\exp (X)))^2 +(\log (\exp(-X)))^2)= X^2
\end{align*}
we get $f(X)=\frac{1}{3}X^3$.
This proves the corollary.
\end{proof}

\subsection{Dehn twist on an annulus }
We denote by $S^1  \defeq \R/\Z$, by $c_l$ a simple closed
curve $S^1 \times \shuugou{\frac{1}{2}}$, by
$t$ the Dehn twist along $c_l$.
We denote by $r^0$ the element of $\tskein{S^1 \times I, \shuugou{(0,0)},
\shuugou{(0,1)}}$ and by $r^n \defeq t^n (r^0)$ for any $n \in \Z$.
We need  a $\Q [\rho][[h]]$-bilinear map
$(\cdot)  (\cdot) :\tskein{S^1 \times I, \shuugou{(0,0)}, \shuugou{(0,1)}}
\times \tskein{S^1 \times I, \shuugou{(0,0)}, \shuugou{(0,1)}}
\to \tskein{S^1 \times I, \shuugou{(0,0)}, \shuugou{(0,1)}}$ defined by
$[T_1][T_2] =[T_1T_2]$.
Here we denote by $T_1  T_2$ the tangle presented by
$\mu_1 (D_1) \cup \mu_2 (D_2)$ where 
we choose tangle diagrams $D_1$ and $D_2$
presenting $T_1$ and $T_2$, respectively,
and embeding maps $\mu_1$ and $\mu_2 :S^1 \times I 
\to S^1\times I$ defined by $\mu_1(\theta,t) =
(\theta,\frac{t+1}{2})$ and $\mu_2(\theta,\frac{t}{2})$.
We remark that $\mu_1 (D_1) \cup \mu_2 (D_2)$ must be
smoothed out in the neighborhood of $c_l$.
We consider $\tskein{S^1 \times I, \shuugou{(0,0)}, \shuugou{(0,1)}}$
as a commutative associative algebra by the bilinear function.
The aim of this subsection is  to prove the lemma.

\begin{lemm}
\label{lemm_annulus_Dehn}
We have $\sigma (\Lambda (c_l))(r^0)= \log (t)(r^0)$.
\end{lemm}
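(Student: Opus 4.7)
The plan is to verify the identity by working inside the explicit, well-understood module $\tskein{S^1 \times I, \shuugou{(0,0)}, \shuugou{(0,1)}}$ and comparing both sides order by order in the filtration.

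First, I would unpack the module structure. By Proposition \ref{prop_psi_isom}, $\tskein{S^1 \times I, \shuugou{(0,0)}, \shuugou{(0,1)}}$ is naturally a free module over $\tskein{S^1 \times I}$ generated by $r^0$ (and similar base arcs), and $\tskein{S^1 \times I}$ is itself polynomial in the $\zettaiti{\psi (\gamma^n)} = l_n$ by Turaev's theorem quoted above. In particular, $r^n = t^n(r^0)$ can be expanded, via repeated use of the skein relation at the crossings produced by the twist, as an explicit $\Q [\rho][[h]]$-linear combination of elements of the form $l_{i_1}l_{i_2} \cdots l_{i_k} r^0$. I would carry out this recursion by resolving the crossing created by one extra wrap, obtaining a recursion that expresses $r^n - r^{n-1}$ in terms of products of $l_k$'s acting on $r^0$, and iterating.

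Next I would compute both sides as power series. On the right,
\[
\log(t)(r^0) \defeq \sum_{i=1}^\infty \frac{-1}{i}(1-t)^i(r^0) = \sum_{i=1}^\infty \frac{-1}{i}\sum_{k=0}^i (-1)^k \binom{i}{k} r^k,
\]
and plugging in the expansion of $r^k$ from the previous step produces an explicit series in the $l_n$ acting on $r^0$. On the left, by the lemma equating $\Lambda(c)(1)$ with $\Lambda(c)(5)$, $\Lambda(c_l)$ is already a polynomial expression in the closed core loops $l_n$ (with the prefactor $(h/2)/\arcsinh(h/2)$ and the correction $-\tfrac{1}{3}\rho^3 h^2$). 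Applying the Leibniz rule for the derivation $\sigma$ reduces the computation of $\sigma(\Lambda(c_l))(r^0)$ to the basic brackets $\sigma(l_k)(r^0)$. Each $l_k$ meets $r^0$ transversally (after a suitable representative is chosen), and each crossing contributes, via the definition of $\sigma$, a smoothing that turns the pair $(l_k, r^0)$ into another arc of the form $r^j$ or $l_? \cdot r^{?}$ — quantities we have just expressed via $l_*$ and $r^0$.

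The final step is the combinatorial/analytic matching: one must check that the two resulting series in $\{ l_n \}$ agree. This is where the choice of the function $\lambda(X) = \frac{1}{2X}(\log X)^2$ in the definition of $\Lambda$ pays off: differentiating $(\log X)^2$ and using the telescoping identity satisfied by the divided differences $\lambda^{[n]}(X_1,\dots,X_n)$ should convert the $\lambda^{[n]}$-weighted sums into exactly the logarithmic coefficients $\frac{(-1)^{i+1}}{i}$ that arise from $\log(t)$. The framing factor $(h/(2\arcsinh(h/2)))^2$ and the constant $-\tfrac{1}{3}\rho^3 h^2$ are precisely what is needed to absorb the framing corrections coming from resolving crossings and from the trivial-knot value $2\sinh(\rho h)/h$, as already anticipated in Corollary \ref{cor_def_Lambda_trivial}.

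The main obstacle is the combinatorial bookkeeping in this last step: showing that the divided-difference sums $\psi_n(\gamma_{1,n}\cdots\gamma_{n,n}\lambda^{[n]}(\gamma_{1,n},\dots,\gamma_{n,n}))$, after being fed through $\sigma(-)(r^0)$, reproduce term by term the binomial expansion of $-\sum_{i\ge 1}\frac{(1-t)^i}{i}$ applied to $r^0$. It is essentially a generating-function identity for $\tfrac{1}{2}(\log X)^2$, but made subtle by the non-commutative nature of the tensor-product formalism for $\mathcal{P}(\Sigma,*_\alpha)^{\otimes n}$ and by the framing prefactors. I would expect the verification to proceed by inducting on the filtration degree, using Proposition \ref{prop_filt_bracket} to control the error terms at each stage.
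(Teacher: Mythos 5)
Your overall strategy --- reduce everything to the action of the core loops on $r^0$ and then match generating functions built from $\tfrac{1}{2}(\log X)^2$ --- points in the same direction as the paper, but the step you defer as ``combinatorial bookkeeping'' is precisely the substantive content of the proof, and as described your plan does not get through it. The difficulty is this: the elementary resolution of crossings does \emph{not} give that $\sigma(l_{\star n})(r^0)$ is a multiple of $r^n$. It gives (the paper's Lemma \ref{lemm_l_n_prepare_0})
\begin{equation*}
\sigma (l_{\star n})(r^0)=nr^n-h\sum_{j=1}^{n-1}(n-j)\, r^{n-j}  l_{\star j},
\end{equation*}
with genuine cross terms $r^{n-j}l_{\star j}$ that are not absorbed by any framing prefactor. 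The key structural fact --- which your proposal never identifies --- is that the \emph{power-sum combinations} $l(n)=\sum_{i_1+\cdots+i_k=n,\ i_j\geq 1} l_{i_1}\cdots l_{i_k}$ (exactly the combinations appearing in $\Lambda(c)(5)$ via $l'(i)$) act on $r^0$ as scalars:
\begin{equation*}
\sigma (l(n))(r^0)=\Bigl(\sum_{i=1}^{n}\frac{1}{i}\,\frac{(n+i-1)!}{(n-i)!\,(2i-1)!}\,h^{2(i-1)}\Bigr)r^n
=\frac{1}{n}\Bigl(\frac{y^n-y^{-n}}{y-y^{-1}}\Bigr)^{2}r^n,\qquad y=\exp(\arcsinh(h/2)).
\end{equation*}
This is Morton's Murphy-operator eigenvalue statement (Lemma \ref{lemm_l_n}), and proving it occupies most of the paper's argument: one needs the Leibniz expansion of $\sigma(x_1\cdots x_m)(r^0)$ (Lemma \ref{lemm_l_n_prepare_1}) together with a nontrivial polynomial identity (Lemma \ref{lemm_l_n_prepare_2}) to see that the cross terms cancel only for these particular symmetric combinations. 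Without this input, your proposed term-by-term comparison of the $\lambda^{[n]}$-weighted sums against the binomial expansion of $-\sum_i (1-t)^i/i$ applied to $r^0$ has no mechanism for eliminating the $l_{\star j}$-contaminated terms, and the ``generating-function identity for $\tfrac12(\log X)^2$'' you invoke is not yet in a form where both sides live in the span of $\shuugou{r^n}$.

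Two smaller points. First, your account of the prefactors is off: $(\tfrac{h/2}{\arcsinh(h/2)})^{2}$ is not there to absorb framing corrections from crossing resolutions; it is exactly the inverse of the factor $(\tfrac{2\log y}{y-y^{-1}})^{2}$ that the eigenvalues produce after the final generating-function computation, and $-\tfrac13\rho^3h^2$ cancels the constant $(\log y^2)^3$-type terms (cf.\ Corollary \ref{cor_def_Lambda_trivial}). Second, the paper never expands $\log(t)(r^0)$ binomially over the $r^k$ at all --- once the eigenvalue formula is in hand, $\sigma(\Lambda(c_l))(r^0)$ is computed directly as a power series in $r$ and recognized as $\log r$; adopting that route would spare you the double expansion you propose.
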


First of all, we prove the following.

\begin{lemm}
\label{lemm_l_n_prepare_0}
We have 
\begin{align*}
\sigma (l_{\star n})(r_0)=nr^n-h(n-1) r^{n-1}  l_{\star 1}-h
(n-2) r^{n-2}l_{\star 2} \cdots -hr^1l_{\star n-1}.
\end{align*}
\end{lemm}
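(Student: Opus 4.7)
The plan is to compute $\sigma(l_{\star n})(r^0)$ directly from the definition of $\sigma$ by choosing convenient diagrams. Take $d_2$ to be the straight vertical segment representing $r^0$ and $d_1$ to be the standard spiral diagram for $l_{\star n}$ (as in Figure \ref{figure_l_ij} with $i=n-1$, $j=0$) that winds $n$ times around the annulus. These diagrams meet transversally in $n$ points $P_1,\ldots,P_n$, ordered along $r^0$, all sharing a common local intersection sign (say $+1$ after fixing orientation conventions). Applying the definition of $\sigma$ and rewriting it in telescoping form via $[d(+_k)]-[d(-_k)] = h[d(0_k)]$, I get
\begin{equation*}
\sigma(l_{\star n})(r^0) = \sum_{j=1}^n T_j, \qquad T_j \defeq [T(d(+_1,\ldots,+_{j-1},0_j,-_{j+1},\ldots,-_n))].
\end{equation*}

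The crux is to identify each $T_j$ in $\tskein{S^1\times I,\shuugou{(0,0)},\shuugou{(0,1)}}$. Smoothing at $P_j$ merges $r^0$ with $l_{\star n}$ into a single arc $A_j$ of winding number $n$, while the remaining intersection points $P_i$ ($i\neq j$) become self-crossings of $A_j$ with signs inherited from the specification: $+$ for $i<j$ and $-$ for $i>j$. The geometric claim I would prove is
\begin{equation*}
T_j = r^n - h\sum_{k=1}^{n-j} r^{n-k}\, l_{\star k}.
\end{equation*}
The idea is that the $j-1$ positive self-crossings can be paired with the intrinsic positive self-crossings of the $l_{\star n}$-spiral and removed via Reidemeister II moves, with framing discrepancies absorbed by the $\exp(-(n-1)\rho h)$ factor in the definition of $l_{\star n}$; this leaves the clean spiral diagram of $r^n$. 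For the $n-j$ negative self-crossings, I apply the skein relation one at a time, from the $(j{+}1)$-st crossing upward: at each step the $0$-resolution splits off one additional wind as a $l_{\star k}$-factor while the residual arc becomes $r^{n-k}$, producing the telescoping sum.

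Summing $T_j$ over $j$ and reindexing collects the $n$ copies of $r^n$ into $n\,r^n$, and for each fixed $k\in\{1,\ldots,n-1\}$ the term $-h\, r^{n-k}l_{\star k}$ appears once for each $j\leq n-k$, giving $-h(n-k)r^{n-k}l_{\star k}$, which matches the lemma. The main obstacle is the diagrammatic bookkeeping needed to verify the claimed formula for $T_j$: one must carefully track the over/under information at each $P_i$, the framing corrections from Reidemeister I moves, and the compatibility with the orientation conventions fixed in the definition of $l_{\star n}$. A potentially cleaner alternative would be induction on $n$: use a skein-relation recursion at one intrinsic self-crossing of the spiral diagram of $l_{\star n}$ to express it in terms of $l_{\star n-1}$, then combine with the Leibniz-type identities for $\sigma$ from the earlier section, reducing to the base case $n=1$ where $\sigma(l_{\star 1})(r^0)=r^1$ follows from a single smoothing.
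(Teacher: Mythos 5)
Your proposal follows essentially the same route as the paper: both expand $\sigma(l_{\star n})(r^0)$ directly from the definition into $n$ smoothed diagrams, establish for each one the same key identity $T_j = r^n - h\sum_{k=1}^{n-j} r^{n-k}l_{\star k}$ by telescoping the skein relation, and then sum over $j$ to collect $nr^n - h\sum_k (n-k)r^{n-k}l_{\star k}$. The only (immaterial) difference is where the telescoping is performed --- the paper flips the spiral's intrinsic self-crossings to a pattern it identifies outright as $r^n$, whereas you resolve the residual crossings with $r^0$ --- and the diagrammatic identifications you flag as remaining bookkeeping are asserted at the same level of detail in the paper itself.
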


\begin{proof}
For $\epsilon_1, \epsilon_2, \cdots, \epsilon_{2n-1}$,
let $D(\epsilon_1, \cdots, \epsilon_{2n-1})$ be
the diagram as shown in the Figure \ref{fig_annulus_l_n_r}
which is 
Type C(+), Type C(-) or Type C(0)
in Figure \ref{fig_Conway_triples}
in the box $i$ for $\epsilon_i =1,-1,0$, respectively.
We remark 
\begin{align*}
&[T(D(\overbrace{1, \cdots,1}^{j},0, \overbrace{-1, \cdots,-1}^{i},
\overbrace{1,\cdots,1}^{i},\overbrace{-1, \cdots, -1}^{j}))] =r^n, \\
&[T(D(\overbrace{1, \cdots,1}^{j},0, \overbrace{-1, \cdots,-1}^{i},
\overbrace{-1,\cdots,-1}^{i_2},0,
\overbrace{1,\cdots, 1}^{i_1},\overbrace{-1, \cdots, -1}^{j}))] =r^{i_1+j}l_{\star i_2+1}.
\end{align*}
for $i,j,i_1,i_2 \in \Z_{\geq 0}$
satisfying $i+j=n-1$ and $i_1+i_2=i-1$.
Using this, we have
\begin{align*}
&[T(D(\overbrace{1, \cdots,1}^{j},0, \overbrace{-1, \cdots,-1}^{i},
,\overbrace{-1, \cdots, -1}^{n}))] , \\
&=[T(D(\overbrace{1, \cdots,1}^{j},0, \overbrace{-1, \cdots,-1}^{i},
\overbrace{1,\cdots,1}^{i},\overbrace{-1, \cdots, -1}^{j}))] \\
&-h\sum_{i_1=0}^{i-1}[T(D(\overbrace{1, \cdots,1}^{j},0, \overbrace{-1, \cdots,-1}^{i},
\overbrace{-1,\cdots,-1}^{i_2},0,
\overbrace{1,\cdots, 1}^{i_1},\overbrace{-1, \cdots, -1}^{j}))] \\
&=r^n-h\sum_{j=1}^i r^{n-j} l_{\star j}.
\end{align*}

Hence we have
\begin{align*}
&\sigma(l_{\star n})(r^0) \\
&=\sum_{i=1}^n [T(D(\overbrace{1, \cdots,1}^{n-i},0, \overbrace{-1, \cdots,-1}^{i},
,\overbrace{-1, \cdots, -1}^{n}))] \\
&=nr^n-h(n-1) r^{n-1}  l_{\star 1}-h
(n-2) r^{n-2}l_{\star 2} \cdots -hr^1l_{\star n-1}.
\end{align*}
This proves the lemma.

\begin{figure}
\includegraphics[width=350pt]{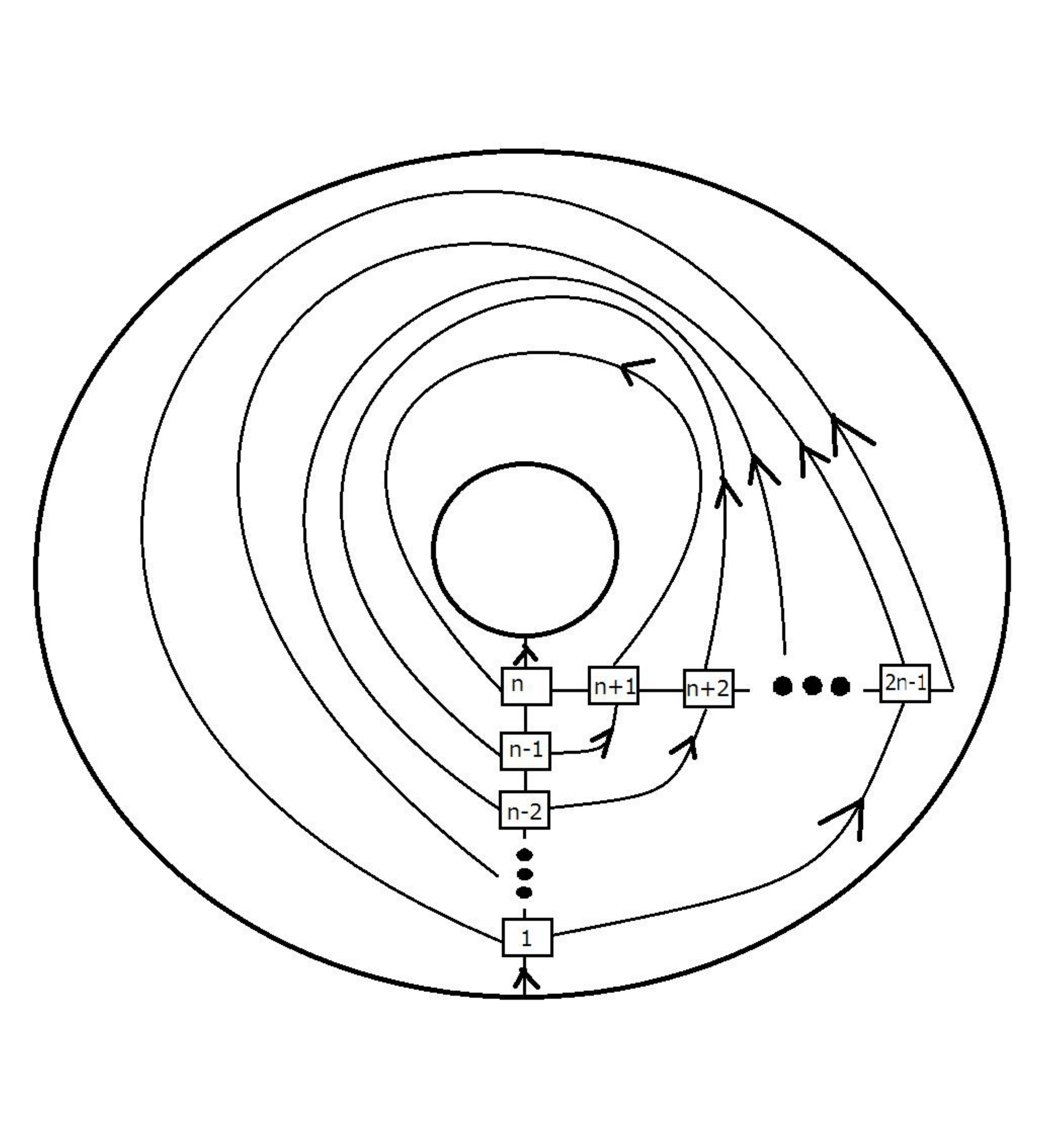}
\caption{$D(\epsilon_1, \cdots, \epsilon_{2n-1})$}
\label{fig_annulus_l_n_r}
\end{figure}
\end{proof}

The following lemma is proved in Morton \cite{Mor02} Theorem 4.2,
using the Murphy operator.
In this paper, we prove the lemma
by elementary methods.

\begin{lemm}
\label{lemm_l_n}
We have $\sigma (l(n))(r^0)
=\sigma (l_\star (n))(r^0)=
(\sum_{i=1}^{n}\frac{1}{i} \frac{(n+i-1)!}{(n-i)!(2i-1)!}h^{2(i-1)})r^n$.
\end{lemm}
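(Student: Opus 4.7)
The plan is to leverage Lemma \ref{lemm_l_n_prepare_0}, which provides the triangular formula
$\sigma(l_{\star n})(r^0) = n\,r^n - h\sum_{k=1}^{n-1}(n-k)\,r^{n-k}l_{\star k}$,
as the base case, and then bootstrap it via the Leibniz-type rules established for the Lie action $\sigma$. Reading the statement in the notation of the preceding subsection, I understand $l(n)$ and $l_\star(n)$ as the symmetrised elements $l'(n)$ and $l'_\star(n)$ defined above; under that reading the first equality $\sigma(l(n))(r^0) = \sigma(l_\star(n))(r^0)$ follows immediately from the identity $l'(n) = l'_\star(n)$ proved in the previous lemma, reducing the problem to computing $\sigma(l'_\star(n))(r^0)$.

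For that computation, I would proceed as follows. By definition $l'_\star(n) = \sum_{k=1}^n \frac{h^{k-1}}{k}\sum_{j_1+\cdots+j_k=n,\,j_i\geq 1} l_{\star j_1}\cdots l_{\star j_k}$. Applying $\sigma(\cdot)(r^0)$ and using the Leibniz rule $\sigma(x)(v_1 \boxtimes v_2) = \sigma(x)(v_1)\boxtimes v_2 + v_1 \boxtimes \sigma(x)(v_2)$ together with the commutativity of $\tskein{S^1\times I}$, one can move each factor $l_{\star j_b}$ past $\sigma(l_{\star j_a})(r^0)$ and then invoke Lemma \ref{lemm_l_n_prepare_0} to replace $\sigma(l_{\star j_a})(r^0)$ by $j_a r^{j_a} - h\sum_{m<j_a}(j_a-m) r^{j_a-m}l_{\star m}$. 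This yields an expansion of $\sigma(l'_\star(n))(r^0)$ as a weighted sum, indexed by compositions of $n$ together with a distinguished part, of monomials of the form $r^p \cdot l_{\star i_1}\cdots l_{\star i_t}$.

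The critical combinatorial step is to show that in this expansion every term with $t \geq 1$ cancels, so that only a scalar multiple of $r^n$ survives, and that this scalar is exactly $\alpha_n(h) := \sum_{i=1}^n \frac{1}{i}\frac{(n+i-1)!}{(n-i)!(2i-1)!}h^{2(i-1)}$. An efficient way to organise the bookkeeping is to pass to the generating function identity
\begin{equation*}
\sum_{n\geq 1} l'_\star(n) X^n \;=\; -h^{-1}\log\!\Bigl(1 - h\sum_{j\geq 1} l_{\star j}X^j\Bigr),
\end{equation*}
which lets one assemble $\sum_n \sigma(l'_\star(n))(r^0)\,X^n$ as a single closed expression via $\sigma(\log(\cdots))$ and Lemma \ref{lemm_l_n_prepare_0}, then match coefficients against $\sum_n \alpha_n(h)\,r^n X^n$. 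Alternatively, one can compare with Morton's Murphy-operator formula \cite{Mor02} Theorem 4.2 and use it as a black-box identification of the eigenvalue.

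The main obstacle is precisely this combinatorial cancellation and the identification of the closed-form coefficient: one must check that the higher-order terms in the iterated application of Lemma \ref{lemm_l_n_prepare_0} conspire to vanish, and that the leftover scalar matches the hypergeometric-style sum $\alpha_n(h)$. The shape of $\alpha_n(h)$, which is essentially a truncation of the $\arcsinh$-type series, is consistent with the $(\tfrac{h/2}{\arcsinh(h/2)})^2$ prefactor appearing in the definition of $\Lambda(c)$, and this consistency is what makes the present lemma the computational heart of the Dehn twist formula on the annulus (Lemma \ref{lemm_annulus_Dehn}).
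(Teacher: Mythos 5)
Your reading of $l(n)$ and $l_\star(n)$ as the logarithmic combinations $l'(n)$ and $l'_\star(n)$ is the intended one, and your route — first equality from $l'(n)=l'_\star(n)$, then Lemma \ref{lemm_l_n_prepare_0} as the base computation, the Leibniz expansion of $\sigma$ on products, and a generating-function decoupling of the resulting logarithm — is essentially the paper's own proof. The one step you flag as the "main obstacle" is exactly what the paper's Lemma \ref{lemm_l_n_prepare_2} supplies: since Lemma \ref{lemm_l_n_prepare_1} says $x\mapsto h\sigma(x)(r^0)+r^0x$ is multiplicative, one has the factorization $1-h\sum_iC_iX^i=(1-h\sum_ia_iX^i)(1-h\sum_ib_iX^i)$ with $a_i=hir^i$ and $b_i=r^0l_{\star i}$, so your identity $\sum_n l'_\star(n)X^n=-h^{-1}\log(1-h\sum_jl_{\star j}X^j)$ splits the answer into the pure $a$-part, which yields the scalar $\sum_{k=1}^n\frac{h^{2(k-1)}}{k}\sum_{i_1+\cdots+i_k=n}i_1\cdots i_k=\sum_{k=1}^n\frac{1}{k}\frac{(n+k-1)!}{(n-k)!(2k-1)!}h^{2(k-1)}$ times $r^n$, plus the pure $b$-part, which cancels against the subtracted $b_{j_1}\cdots b_{j_k}$ terms — completing the cancellation you left open.
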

 
To prove this lemma, we need the followings.

\begin{lemm}
\label{lemm_l_n_prepare_1}
For $x_1, x_2, \cdots, x_m  \in \tskein{S^1 \times I }$,
we have 
\begin{equation*}
\sigma (x_1 x_2 \cdots x_m)(r^0)=\sum_{j=1}^m h^{j-1}
\sum_{1 \leq i_1< \cdots <i_j \leq m}
\prod_{i \in \shuugou{i_1, \cdots, i_j} }(\sigma(x_{i})(r^0)
\prod_{i \in \shuugou{1, \cdots, m} \backslash
\shuugou{i_1, \cdots ,i_j}}(r^0 x_i).
\end{equation*}
\end{lemm}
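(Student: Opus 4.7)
I would proceed by induction on $m$. The base case $m=1$ is immediate, as both sides reduce to $\sigma(x_1)(r^0)$.

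For the inductive step, my first tool is the Leibniz-type identity
\[
\sigma(xy)(r^0)\;=\;\sigma(x)(r^0)\,y+x\,\sigma(y)(r^0),
\]
valid for $x,y\in\tskein{S^1\times I}$ with products understood as bimodule products. This follows at once from the defining relation $h\sigma(z)(r^0)=zr^0-r^0z$ together with the associativity computation $(xy)r^0-r^0(xy)=x(yr^0-r^0y)+(xr^0-r^0x)y$. Iterating yields
\[
\sigma(x_1\cdots x_m)(r^0)\;=\;\sum_{i=1}^m x_1\cdots x_{i-1}\,\sigma(x_i)(r^0)\,x_{i+1}\cdots x_m,
\]
with bimodule products throughout.

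My second tool is a geometric bridging identity comparing the bimodule product (stacking in the $I$-direction of $\Sigma\times I$) with the commutative product $\cdot$ (stacking in the surface direction and smoothing at $c_l$). A direct diagrammatic verification shows $(r^0 x_1)\cdot(r^0 x_2)\cdots(r^0 x_k)=r^0\,x_1x_2\cdots x_k$ as bimodule elements, since $r^0$ is the identity for $\cdot$ and the $x_j$'s become disjoint surface-level links that may be freely reassembled. More generally, for a bimodule element $w$ and $x\in\tskein{S^1\times I}$, I expect $wx=w\cdot(r^0 x)$ plus correction terms of order $h$ arising from crossings of $x$ with $w$ via the Conway skein relation; each correction replaces $x$ at some crossing by $h\,\sigma(x)(r^0)$ acting on a modified bimodule element. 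Substituting these into each Leibniz term and organising the resulting expansion by the subset $S\subseteq\{1,\ldots,m\}$ of indices at which a factor $\sigma(x_i)(r^0)$ appears produces the claimed formula, with the weight $h^{|S|-1}$: the Leibniz step contributes one such factor directly and the remaining $|S|-1$ arise from correction steps, each carrying a factor $h$.

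The main obstacle is the careful bookkeeping for the corrections. I must show that after summing over all sequences of correction steps, the coefficient of a given subset configuration $S$ with the chosen $\sigma(x_i)(r^0)$ factors and $(r^0 x_j)$ factors is exactly $h^{|S|-1}$, with no overcounting. I expect this reduces to a short combinatorial identity, exploiting the fact that $\tskein{S^1\times I}$ is commutative so the order in which corrections are resolved is irrelevant and the summation telescopes cleanly.
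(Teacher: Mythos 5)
Your overall route --- iterate the Leibniz rule $\sigma(xy)(r^0)=x\,\sigma(y)(r^0)+\sigma(x)(r^0)\,y$ coming from $h\sigma(z)(r^0)=zr^0-r^0z$, and then convert bimodule products into the commutative product on $\tskein{S^1\times I,\shuugou{(0,0)},\shuugou{(0,1)}}$ --- is the same mechanism the paper uses; the paper merely packages it as a one-step induction on $m$ (namely $\sigma(x_1y)(r^0)=(x_1r^0)\sigma(y)(r^0)+\sigma(x_1)(r^0)(r^0y)$, then $(x_1r^0)=(r^0x_1)+h\sigma(x_1)(r^0)$, then the induction hypothesis applied to $\sigma(y)(r^0)$), which makes the bookkeeping you are worried about disappear.

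The one genuine flaw is in your ``bridging'' step, and it matters because that step is where all the powers of $h$ are supposed to originate. The identities $v\boxtimes x=v\cdot(r^0 x)$ and $x\boxtimes v=(xr^0)\cdot v$ are \emph{exact}: there are no order-$h$ corrections from crossings or Conway resolutions, because $r^0$ is the unit for the commutative product and the extra link sits in a disjoint level of $\Sigma\times I$. So your assertion that $wx=w\cdot(r^0x)$ holds only ``plus correction terms of order $h$'' attaches corrections to the wrong place. The corrections enter at exactly one point, $xr^0-r^0x=h\,\sigma(x)(r^0)$, and they must be applied to each factor sitting to the \emph{left} of $\sigma(x_i)(r^0)$ in your iterated Leibniz expansion. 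Doing this yields
\begin{equation*}
\sigma(x_1\cdots x_m)(r^0)=\sum_{i=1}^m\Bigl(\prod_{j<i}\bigl((r^0x_j)+h\,\sigma(x_j)(r^0)\bigr)\Bigr)\cdot\sigma(x_i)(r^0)\cdot\prod_{j>i}(r^0x_j),
\end{equation*}
after which the subset bookkeeping is immediate and there is no overcounting: a nonempty $S\subseteq\shuugou{1,\dots,m}$ arises exactly once, from the summand with $i=\max S$ and the choice $A=S\setminus\shuugou{i}$ in the expanded left product, carrying the weight $h^{\lvert A\rvert}=h^{\lvert S\rvert-1}$. With that repair your plan closes and coincides with the paper's argument; as written, the corrections are misplaced and the final count is left unverified.
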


\begin{proof}
We prove the lemma by induction on $m$.
If $m=1$, the lemma is obvious.
By Leibniz rule and induction hypothesis, we have
\begin{align*}
&
\sigma (x_1 x_2 \cdots x_m)(r) \\
&= (x_1r^0 )\sigma ( x_2 \cdots x_m)(r^0)
+\sigma (x_1)(r^0) \prod_{i \in \shuugou{2, \cdots, m }}
(r_0 x_i) \\
&=(r_0 x_1) \sigma ( x_2 \cdots x_m)(r^0)
+h \sigma (x_1)(r_0) \sigma (x_2 \cdots x_m)(r^0)
+\sigma (x_1)(r^0) \prod_{i \in \shuugou{2, \cdots, m }}
(r_0 x_i) \\
&=\sum_{j=1}^m h^{j-1}
\sum_{1 \leq i_1< \cdots <i_j \leq m}
\prod_{i \in \shuugou{i_1, \cdots, i_j} }(\sigma(x_{i})
\prod_{i \in \shuugou{1, \cdots, m} \backslash
\shuugou{i_1, \cdots ,i_j}}(r x_i).
\end{align*}
This proves the lemma.
\end{proof}

\begin{lemm}
\label{lemm_l_n_prepare_2}
We denote $C_k =a_k-h\sum_{i_1+i_2 =k,i_j \geq 1}a_{i_1}  b_{i_2}+b_k  \in \Z[h,a_1, \cdots , a_n,
b_1, \cdots, b_n]$.
Then we have
\begin{align*}
&D_n\defeq C_n+\frac{1}{2}h (\sum_{i_1 +i_2 =n, i_j \geq 1}C_{i_1} C_{i_2})+
\cdots + \frac{1}{n}h^{n-1}(\sum_{i_1+i_2 + \cdots+i_n=n, i_j \geq 1}C_{i_1} \cdots C_{i_n}) \\
&=a_n+\frac{1}{2}h (\sum_{i_1 +i_2 =n, i_j \geq 1}a_{i_1}a_{i_2})+
\cdots + \frac{1}{n}h^{n-1}(\sum_{i_1+i_2 + \cdots+i_n=n, i_j \geq 1}a_{i_1} \cdots a_{i_n}) \\
&+b_n+\frac{1}{2}h (\sum_{i_1 +i_2 =n, i_j \geq 1}b_{i_1}b_{i_2})+
\cdots + \frac{1}{n}h^{n-1}(\sum_{i_1+i_2 + \cdots+i_n=n, i_j \geq 1}b_{i_1} \cdots b_{i_n})
\end{align*}
\end{lemm}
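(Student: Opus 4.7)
The plan is to prove this identity via a generating-function argument. Introduce the formal power series
\begin{equation*}
A(x) \defeq \sum_{k\geq 1} a_k x^k, \quad B(x) \defeq \sum_{k\geq 1} b_k x^k, \quad C(x) \defeq \sum_{k\geq 1} C_k x^k
\end{equation*}
in $\Z[h,a_1,\ldots,b_1,\ldots][[x]]$. The defining formula $C_k = a_k + b_k - h\sum_{i_1+i_2=k}a_{i_1}b_{i_2}$ translates immediately into the single identity
\begin{equation*}
C(x) = A(x) + B(x) - h\,A(x)B(x),
\end{equation*}
which can be rewritten as the crucial factorization
\begin{equation*}
1 - h\,C(x) = \bigl(1 - h\,A(x)\bigr)\bigl(1 - h\,B(x)\bigr).
\end{equation*}

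Next, I would observe that for any power series $F(x)$ with $F(0)=0$ one has the formal identity
\begin{equation*}
-\frac{1}{h}\log\bigl(1 - h\,F(x)\bigr) = \sum_{m\geq 1} \frac{h^{m-1}}{m}\,F(x)^m,
\end{equation*}
and that extracting the coefficient of $x^n$ from $F(x)^m$ yields precisely $\sum_{i_1+\cdots+i_m=n,\,i_j\geq 1} F_{i_1}\cdots F_{i_m}$ where $F_k$ denotes the $k$th coefficient. Since $F(x)^m$ has no term of degree $<m$, the infinite sum over $m$ contributes only finitely many terms in degree $n$ (namely $m=1,\ldots,n$), matching the finite sums appearing in the statement. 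Consequently, $D_n$ is exactly the coefficient of $x^n$ in $-\tfrac{1}{h}\log(1-h\,C(x))$, and the two sums on the right-hand side of the lemma are the coefficients of $x^n$ in $-\tfrac{1}{h}\log(1-h\,A(x))$ and $-\tfrac{1}{h}\log(1-h\,B(x))$ respectively.

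Taking logarithms of the factorization yields
\begin{equation*}
-\log\bigl(1-h\,C(x)\bigr) = -\log\bigl(1-h\,A(x)\bigr) - \log\bigl(1-h\,B(x)\bigr),
\end{equation*}
and dividing by $h$ and reading off the coefficient of $x^n$ gives the claimed identity. The only step requiring any real care is confirming that the bookkeeping between the multi-index sums $\sum_{i_1+\cdots+i_m=n} C_{i_1}\cdots C_{i_m}$ and the coefficients of $C(x)^m$ is correct, including the fact that the upper limit $m\leq n$ matches exactly the vanishing degrees in the series expansion; once that is in place the rest is formal. I do not foresee a substantive obstacle, since the computation reduces to a one-line factorization of power series followed by taking a logarithm.
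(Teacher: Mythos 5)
Your argument is correct, and it is genuinely different from the one in the paper. You encode the data in generating functions, observe that the definition of $C_k$ is equivalent to the factorization $1-hC(x)=(1-hA(x))(1-hB(x))$, and then note that $D_n$ is the coefficient of $x^n$ in $-\tfrac1h\log(1-hC(x))$, so the additivity of the logarithm gives the identity at once; the only point needing care --- that the $m$-th power of a series with no constant term starts in degree $m$, so the sum over $m$ truncates at $m=n$ and everything is a finite polynomial identity --- you address explicitly. The paper instead proves the statement by brute-force differentiation: it shows that every mixed second partial $\tfrac{\partial^2}{\partial b_s\,\partial a_t}D_n$ vanishes, which forces $D_n$ to split as a polynomial in the $a$'s alone plus a polynomial in the $b$'s alone, and these two pieces are then identified by setting $b=0$ (whence $C_k=a_k$) and $a=0$ respectively. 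Your route is shorter and more conceptual --- it exhibits $F\mapsto -\tfrac1h\log(1-hF)$ as the change of variables converting the twisted composition $A+B-hAB$ into ordinary addition, which is exactly the structural reason the lemma is true and fits naturally with the surrounding computations in the paper (e.g.\ the series $\sum_m \tfrac{(-h)^{m-1}}{m}(\cdot)^m$ appearing in the definitions of $l'(m)$). The paper's computation, while heavier, stays entirely inside the polynomial ring and avoids introducing the auxiliary formal variable $x$. Both proofs are valid since the ambient ring $\Z[h,a_1,\dots,a_n,b_1,\dots,b_n]$ is commutative, which is all either argument requires.
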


\begin{proof}
It suffices to show $\frac{\partial^2}{\partial b_s \partial a_t}D_n=0$. 
We denote
\begin{align*}
&E_0 \defeq 1, \\
&E_i  \defeq C_i+ h(\sum_{i_1 +i_2 =i, i_j \geq 1}C_{i_1} C_{i_2})+
\cdots + h^{i-1}(\sum_{i_1+i_2 + \cdots+i_i=i, i_j \geq 1}C_{i_1} \cdots C_{i_i}), \\
&F_0 \defeq 1, \\
&F_i  \defeq C_i+ 2h(\sum_{i_1 +i_2 =i, i_j \geq 1}C_{i_1} C_{i_2})+
\cdots + ih^{i-1}(\sum_{i_1+i_2 + \cdots+i_i=i,i_j \geq 1}C_{i_1} \cdots C_{i_i}) \\.
\end{align*}
If $t=n$ or $s=n$, the claim is obvious. We assume 
$t<n$ and $s<n$. We have
\begin{align*}
&\frac{\partial}{\partial a_t} D_n=hE_{n-t}- h^2\sum_{k=1}^{n-t} b_k E_{n-k-t}, \\
&\frac{\partial^2}{\partial b_s \partial a_t}D_n \\
&=h^2F_{n-t-s}-h^2E_{n-t-s}-h^3\sum_{i=1}^{n-t-s}(a_i+b_i- h
\sum_{i_1+i_2=i}a_{i_1}b_{i_2})F_{n-t-s-i} \\
&=h^2F_{n-t-s}-h^2E_{n-t-s}-h^3\sum_{i=1}^{n-t-s}C_i F_{n-t-s-i} \\
&=h^2F_{n-t-s}-h^2E_{n-t-s} \\
&-h^3(\sum_{i_1 +i_2 =i, i_j \geq 1}C_{i_1} C_{i_2})-h^4 (2\sum_{i_1+i_2+i_3=i, i_j \geq 1}C_{i_1}
C_{i_2}C_{i_3})
\cdots -h^{i+1}((i-1) \sum_{i_1+i_2 + \cdots+i_i=i}C_{i_1} \cdots C_{i_i}) \\
&=0
\end{align*}
This proves the lemma.
\end{proof}

\begin{proof}[Proof of \ref{lemm_l_n}]
By Lemma \ref{lemm_l_n_prepare_0} and Lemma \ref{lemm_l_n_prepare_1}, 
we have 
\begin{align*}
&\sigma(l_n)(r^0) = \\
&\frac{1}{h}(C_n-b_n+\frac{h}{2} \sum_{i_1 +i_2 =n, i_j \geq 1}(C_{i_1} C_{i_2}-b_{i_1}b_{i_2})+
\cdots + \frac{h^{n-1}}{n}\sum_{i_1+i_2 + \cdots+i_n=n,i_j \geq 1}(C_{i_1} \cdots C_{i_n}-b_{i_1} \cdots b_{i_n}) ),
\end{align*}
where $b_i =r^0 l_{\star i}$, $a_i=hi r^i$ and $C_i =a_i+b_i-h\sum_{i_1 +i_2 =i, i_j \geq 1}a_{i_1}b_{i_1}$.
By Lemma \ref{lemm_l_n_prepare_2}, we have
\begin{align*}
&\sigma (l_{\star n})(r_0)==a_n+\frac{1}{2}h (\sum_{i_1 +i_2 =n, i_j \geq 1}a_{i_1}a_{i_2})+
\cdots + \frac{1}{n}h^{n-1}(\sum_{i_1+i_2 + \cdots+i_n=n,i_j \geq 1}a_{i_1} \cdots a_{i_n}) \\
&=r^n (n+\frac{1}{2}h (\sum_{i_1 +i_2 =n, i_j \geq 1}{i_1}{i_2})+
\cdots + \frac{1}{n}h^{n-1}(\sum_{i_1+i_2 + \cdots+i_n=n,i_j \geq 1}{i_1} \cdots {i_n})). \\
\end{align*}
Since 
\begin{equation*}
\sum_{i_1+i_2+\cdots+i_k=n, i_j \geq 0} i_1 i_2 \cdots i_k =\frac{ (n+k-1)!}{(2k-1)!(n-k)!},
\end{equation*}
we have $\sigma (l(n))=(\sum_{i=1}^{n}\frac{1}{i} \frac{(n+i-1)!}{(n-i)!(2i-1)!}h^{2(i-1)})r^n$.
This proves the lemma.
\end{proof}

The polynomial $\sum_{i=0}^{n-1}\frac{1}{i+1} \frac{(n+i)!}{(n-i-1)!(2i-1)!}h^{2i}$
is
written by the quantum integer.

\begin{prop}
We have
\begin{equation*}
\sum_{i=0}^{n-1}\frac{1}{i+1} \frac{(n+i)!}{(n-i-1)!(2i-1)!}(q-q^{-1})^{2i}=
\frac{1}{n}(\frac{q^n-g^{-n}}{q-q^{-1}})^2.
\end{equation*}
\end{prop}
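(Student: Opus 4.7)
The plan is to reformulate the identity as an equality of formal power series in an auxiliary variable $z$. First I would clear denominators: multiplying both sides by $n(q-q^{-1})^{2}$ and reindexing with $j=i+1$ (so that the binomial is well-defined at the endpoint), the claim becomes
\begin{equation*}
\frac{(q^{n}-q^{-n})^{2}}{n} \;=\; \sum_{j=1}^{n}\frac{1}{j}\binom{n+j-1}{2j-1}(q-q^{-1})^{2j}.
\end{equation*}
Since both sides are polynomials in $q^{\pm 1}$, it suffices to verify equality after multiplying by $z^{n}$ and summing over $n\ge 1$ inside $\Q[q^{\pm 1}][[z]]$.

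On the left, expanding $(q^{n}-q^{-n})^{2}=q^{2n}-2+q^{-2n}$ and using the logarithmic series $-\log(1-w)=\sum_{n\ge 1}w^{n}/n$ gives
\begin{equation*}
\sum_{n\ge 1}\frac{(q^{n}-q^{-n})^{2}}{n}\,z^{n}
\;=\;\log\frac{(1-z)^{2}}{(1-q^{2}z)(1-q^{-2}z)}.
\end{equation*}
On the right, I would swap the order of summation and invoke the standard identity $\sum_{n\ge j}\binom{n+j-1}{2j-1}z^{n}=z^{j}(1-z)^{-2j}$ to rewrite the generating function as
\begin{equation*}
\sum_{j\ge 1}\frac{1}{j}\!\left(\frac{(q-q^{-1})^{2}z}{(1-z)^{2}}\right)^{\!j}
\;=\;-\log\!\left(1-\frac{(q-q^{-1})^{2}z}{(1-z)^{2}}\right).
\end{equation*}

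The proof is then closed by the elementary factorisation
\begin{equation*}
(1-z)^{2}-(q-q^{-1})^{2}z \;=\; 1-(q^{2}+q^{-2})z+z^{2} \;=\; (1-q^{2}z)(1-q^{-2}z),
\end{equation*}
which shows the two logarithmic expressions agree. Comparing coefficients of $z^{n}$ yields the proposition.

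The only real obstacle is the bookkeeping in reindexing and exchanging the summation order; once that is done, the remainder rests on routine power-series manipulations, and no convergence issues arise since the whole computation takes place in the formal ring $\Q[q^{\pm 1}][[z]]$.
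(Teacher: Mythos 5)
Your proof is correct, but it takes a genuinely different route from the paper. The paper first establishes the ``underived'' identity $\sum_{i=0}^{n-1}\binom{n+i}{2i+1}t^{i}=\frac{q^{2n}-q^{-2n}}{q^{2}-q^{-2}}$ (with $t=(q-q^{-1})^{2}$) by induction on $n$ via the three-term recurrence $F_{n}(t)(t+2)-F_{n-1}(t)=F_{n+1}(t)$, and then produces the extra factors $\frac{1}{i+1}$ by integrating term by term in $t$, using the substitution $dt=2(q-q^{-3})\,dq$ to evaluate $\int\frac{q^{2n}-q^{-2n}}{q^{2}-q^{-2}}\,dt=\frac{1}{n}(q^{2n}+q^{-2n})+C$. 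You instead package the whole family of identities over all $n$ into a single generating function in $z$, where the harmonic weights $1/j$ turn both sides into logarithms and the identity collapses to the factorisation $1-(q^{2}+q^{-2})z+z^{2}=(1-q^{2}z)(1-q^{-2}z)$. Your interchange of summation is harmless (for fixed $n$ only finitely many $j$ contribute), the binomial generating function $\sum_{n\ge j}\binom{n+j-1}{2j-1}z^{n}=z^{j}(1-z)^{-2j}$ is applied correctly, and the composition $-\log(1-w)$ with $w=(q-q^{-1})^{2}z(1-z)^{-2}$ is legitimate in $\Q[q^{\pm1}][[z]]$ since $w$ has no constant term. What your approach buys is a cleaner, essentially computation-free argument that dispenses with the induction and the change-of-variables integration; what the paper's approach buys is the intermediate Chebyshev-type identity for $\frac{q^{2n}-q^{-2n}}{q^{2}-q^{-2}}$ as a standalone statement, obtained by entirely elementary means. (One cosmetic remark: you say you multiply by $n(q-q^{-1})^{2}$, but your displayed reformulation retains the factor $\frac{1}{n}$ on the left, i.e.\ you only cleared $(q-q^{-1})^{2}$; the displayed equation is the correct one and is what your argument actually proves.)
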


\begin{proof}
First of all, we prove
\begin{equation*}
F_n ((q-q^{-1})^2) \defeq \sum_{i=0}^{n-1} \frac{(n+i)!}{(n-i-1)!(2i-1)!}(q-q^{-1})^{2i}=
\frac{q^{2n}-q^{-2n}}{q^2-q^{-2}}.
\end{equation*}
It suffices to show
\begin{align*}
F_n (t)(t+2)-F_{n-1} (t)=F^{n+1} (t).
\end{align*}
We have
\begin{align*}
&F_n (t)(t+2)-F_{n-1} (t)=t^n+t^0(2n-(n-1)) \\
&+\sum_{i=1}^{n-1}t^i (2 \frac{(n+i)!}{(n-i-1)!(2i+1)!}+
\frac{(n+i-1)!}{(n-i)!(2i-1)!}-\frac{(n+i-1)!}{(n-i-2)!(2i+1)!}) \\
&=t^n+t^0(2n-(n-1)) \\
&+\sum_{i=1}^{n-1}t^i \frac{(n+i-1)!}{(n-i)!(2i+1)!}
(2(n-i)(n+i)+2i(2i+1)-(n-i)(n-i-1))\\
&=t^n+t^0(2n-(n-1)) \\
&+\sum_{i=1}^{n-1}t^i \frac{(n+i-1)!}{(n-i)!(2i+1)!}
(2n^2-2i^2+4i^2+2i-n^2+2ni-i^2+n-i)\\
&=\sum_{i=0}^{n} \frac{(n+1+i)!}{(n-i)!(2i-1)!}t^i.
\end{align*}

We compute
\begin{align*}
&\int \frac{q^{2n}-q^{-2n}}{q^2-q^{-2}}dt \mathrm{\ \ \ where }(q-q^{-1})^2 =t 
=\int \frac{q^{2n}-q^{-2n}}{q^2-q^{-2}}2(q-q^{-3})dq \\
&=2 \int (q^{2n-1}-q^{-2n-1}) dq 
=\frac{1}{n}(q^{2n}+q^{-2n}) +C
\end{align*}
Using this computation, we have
\begin{align*}
&\sum_{i=0}^{n-1}\frac{1}{i+1} \frac{(n+i)!}{(n-i-1)!(2i-1)!}t^i 
=\frac{1}{t}\int_0^t \sum_{i=0}^{n-1} \frac{(n+i)!}{(n-i-1)!(2i-1)!}z^idz \\
&=\frac{1}{(q-\gyaku{q})^2}[\frac{1}{n}(r^{2n}+r^{-2n})]^q_0 
=\frac{1}{n}(\frac{q^n-g^{-n}}{q-q^{-1}})^2 \\.
\end{align*}

This proves the proposition.

\end{proof}

\begin{proof}[proof of Lemma \ref{lemm_annulus_Dehn}]
We denote $y \defeq \exp (\arcsinh \frac{h}{2})$, which satisfies
$y-y^{-1} =h$.
We have
\begin{align*}
&\sigma (\Lambda (c_l))(r^0) \\
&=\sigma ((\frac{h/2}{\arcsinh (h/2)})^2 \sum_{n=2}^\infty
u_n (\sum_{i=1}^n( \frac{n!}{i!(n-i)!}(-1)^{n-i} l(i))+ 2(-1)^n\rho))(r^0) \\
&=(\frac{h/2}{\arcsinh (h/2)})^2 \sum_{n=2}^\infty
u_n (\sum_{i=1}^n( \frac{n!}{i!(n-i)!}(-1)^{n-i} (\frac{y^i-y^{-i}}{y-y^{-1}})^2\frac{1}{i} r^i) \\
&=(\frac{h/2}{\arcsinh (h/2)})^2 \sum_{n=2}^\infty
u_n (\sum_{i=1}^n( \frac{n!}{i!(n-i)!}(-1)^{n-i} (\frac{1}{y-y^{-1}})^2\frac{(y^2r)^i+(y^{-2} r)^i -2 r^i}{i} r^i).
 \\
\end{align*}
Since 
\begin{align*}
&\frac{\partial}{\partial y} \sum_{n=1}^\infty
u_n  (\sum_{i=1}^n(\frac{n!}{i! (n-i)!} (-1)^{n-i} \frac{1}{i}(y^{2i}-1)x^i)+(-1)^n \log (y^2)) \\
&=\frac{2}{y} \sum_{n=1}^\infty u_n(y^2 x-1)^n =\frac{1}{y} (\log (y^2x))^2 \\
&=\frac{1}{y}((\log y^2)^2+ 2 \log y^2 \log x +(\log x)^2) \\
\end{align*}
we have
\begin{align*}
& \sum_{n=1}^\infty
u_n  (\sum_{i=1}^n(\frac{n!}{i! (n-i)!} (-1)^{n-i} \frac{1}{i}(y^{2i}-1)x^i)+(-1)^n \log (y^2)) \\
&=\frac{1}{6} (\log y^2)^3 +\frac{1}{2}(\log y^2)^2 \log x + \frac{1}{2}( \log x)^2 \log y^2 .\\
\end{align*}
Using this, we obtain
\begin{align*}
&\sigma (\Lambda (c_l))(r^0) \\
&=(\frac{h/2}{\arcsinh (h/2)})^2 
\frac{1}{(y-y^{-1})^2}(\frac{1}{6} (\log y^2)^3r^0 +\frac{1}{2}(\log y^2)^2 \log r + \frac{1}{2}
( \log r)^2 \log y^2 \\
&+\frac{1}{6} (\log y^{-2})^3r^0 +\frac{1}{2}(\log y^{-2})^2 \log r + \frac{1}{2}( \log r)^2 \log y^{-2}) \\
&=(\frac{h/2}{\arcsinh (h/2)})^2 (\frac{2 \log y}{y-y^{-1}})^2 \log r= \log r.
\end{align*}
where we denote $\log r = \sum_{i=1}^\infty \frac{(-1)^{i-1}}{i} (r^1-r^0)^i = \log (t)(r)$.
This proves the lemma.

\end{proof}

\subsection{Proof of main theorem}
In the subsection, we prove Theorem \ref{thm_main_Dehn}.

We fix an embedding $\iota :S^1 \times I \to \Sigma$ such that 
$\iota (c_l) =c$.

We assume that $\iota (S^1 \times I)$ separate $\Sigma$ into two surfaces
$\Sigma^1 $ and $\Sigma^2$.
For disjoint finite sets $J^1, J^2 \subset S^1$,
  we consider the trilinear map
\begin{align*}
\varpi_{J^1, J^2} :&\tskein{\Sigma_1, (J^- \cap \partial \Sigma_1) \cup
\iota (J^2 \times \shuugou{1}), (J^+ \cap \partial \Sigma_1) \cup
\iota (J^1 \times \shuugou{1})} \\
&\times \tskein{S^1 \times I, J^1 \times \shuugou{1} \cup
J^2 \times \shuugou{0}, J^1 \times \shuugou{0}  \cup
J^2 \times \shuugou{1}} \\
&\times  \tskein{\Sigma_1, (J^- \cap \partial \Sigma_2) \cup
\iota (J^1 \times \shuugou{0}), (J^+ \cap \partial \Sigma_1) \cup
\iota (J^2 \times \shuugou{0})} \\
& \to \tskein{\Sigma, J^-,J^+}
\end{align*}
defined by $\varpi_{J'} ([T_1],[T_2],[T_3]) =[T_1T_2T_3]$ for 
\begin{align*}
&T_1 \in \tknot{\Sigma_1, (J^- \cap \partial \Sigma_1) \cup
\iota (J^2 \times \shuugou{1}), (J^+ \cap \partial \Sigma_1) \cup
\iota (J^1 \times \shuugou{1})}, \\
& T_2 \in \tknot{S^1 \times I, J^1 \times \shuugou{1} \cup
J^2 \times \shuugou{0}, J^1 \times \shuugou{0}  \cup
J^2 \times \shuugou{1}}, \\
&T_3 \in \tknot{\Sigma_1, (J^- \cap \partial \Sigma_2) \cup
\iota (J^1 \times \shuugou{0}), (J^+ \cap \partial \Sigma_1) \cup
\iota (J^2 \times \shuugou{0})} .
\end{align*}
Here we denote by $T_1T_2T_3$ the tangle presented by
$d_1 \cup \iota (d_2) \cup d_3$, respectively, where 
$d_1$, $d_2$ and $d_3$ present $T_1$, $T_2$ and $T_3$, respectively.
We remark that $d_1 \cup \iota (d_2) \cup d_3$ must be
smoothed out in the neighborhood of $\iota (S^1 \times \shuugou{0,1})$.
Then we have the followings.
\begin{itemize}
\item The $\Q [\rho][[h]]$-module $\tskein{\Sigma,J^-,J^+}$
is generated by all images of $\varpi_{J_1,J_2}$ for any $J_1$ and $J_2$
as $\Q[\rho][[h]]$-module.
\item The map $\varpi_{J_1,J_2}$ preserves the filtrations, in other words,
\begin{align*}
\varpi_{J^1, J^2} (&\tskein{\Sigma_1, (J^- \cap \partial \Sigma_1) \cup
\iota (J^2 \times \shuugou{1}), (J^+ \cap \partial \Sigma_1) \cup
\iota (J^1 \times \shuugou{1})} \\
&\times F^n \tskein{S^1 \times I, J^1 \times \shuugou{1} \cup
J^2 \times \shuugou{0}, J^1 \times \shuugou{0}  \cup
J^2 \times \shuugou{1}} \\
&\times  \tskein{\Sigma_1, (J^- \cap \partial \Sigma_2) \cup
\iota (J^1 \times \shuugou{0}), (J^+ \cap \partial \Sigma_1) \cup
\iota (J^2 \times \shuugou{0})}) \\
& \subset F^n \tskein{\Sigma, J^-,J^+}
\end{align*}
for any $n$.
\item We have  $t_{c} \circ \varpi_{J_1,J_2} = \varpi_{J_1,J_2} \circ (\id ,t_{c_l},\id )$
and $\sigma(\iota(x)) \circ \varpi_{J_1,J_2} = \varpi_{J'} \circ (\id,\sigma (x), \id )$
for $x \in \tskein{S^1 \times I}$.
\end{itemize}

We assume that $\Sigma \backslash \iota (S^1 \times (0,1))$ is a connected surface 
$\Sigma^1$.
For  finite disjoint set $J^1,J^2 \subset S^1$, we consider the bilinear map
\begin{align*}
\varpi_{J^1,J^2} : 
&\tskein{\Sigma_1, 
J^- \cup \iota (J^1 \times \shuugou{0} \cup J^2 \times \shuugou{1}),
J^+ \cup \iota (J^1 \times \shuugou{1} \cup J^2 \times \shuugou{0})} \\
&\times 
\tskein{S^1 \times I,
J^1 \times \shuugou{1} \cup J^2 \times \shuugou{0} ,
J^1 \times \shuugou{0} \cup J^2 \times \shuugou{1} } \\
&\to  \tskein{\Sigma, J^-,J^+}
\end{align*}
defined by $\varpi_{J^1J^2} ([T_1],[T_2]) =[T_1T_2]$ for 
\begin{align*}
&T_1 \in \tknot{\Sigma_1, 
J^- \cup \iota (J^1 \times \shuugou{0} \cup J^2 \times \shuugou{1}),
J^+ \cup \iota (J^1 \times \shuugou{1} \cup J^2 \times \shuugou{0})} \\
&T_2 \in
\tknot{S^1 \times I,
J^1 \times \shuugou{1} \cup J^2 \times \shuugou{0} ,
J^1 \times \shuugou{0} \cup J^2 \times \shuugou{1} }. \\
\end{align*}
Here we denote by $T_1T_2$ the tangle presented by
$d_1 \cup \iota (d_2) $, respectively, where 
$d_1$ and $d_2$ present $T_1$ and $T_2$, respectively.
We remark that $d_1 \cup \iota (d_2)$ must be
smoothed out in the neighborhood of $\iota (S^1 \times \shuugou{0,1})$.
Then we have the followings.
\begin{itemize}
\item The $\Q [\rho][[h]]$-module $\tskein{\Sigma,J^-,J^+}$
is generated by all images of $\varpi_{J_1,J_2}$ for any $J_1$ and $J_2$
as $\Q[\rho][[h]]$-module.
\item The map $\varpi_{J_1,J_2}$ preserves the filtrations, in other words,
\begin{align*}
\varpi_{J^1,J^2} ( 
&\tskein{\Sigma_1, 
J^- \cup \iota (J^1 \times \shuugou{0} \cup J^2 \times \shuugou{1}),
J^+ \cup \iota (J^1 \times \shuugou{1} \cup J^2 \times \shuugou{0})} \\
&\times 
F^n \tskein{S^1 \times I,
J^1 \times \shuugou{1} \cup J^2 \times \shuugou{0} ,
J^1 \times \shuugou{0} \cup J^2 \times \shuugou{1} }) \\
&\subset F^n \tskein{\Sigma, J^-,J^+}
\end{align*}
for any $n$.
\item We have $t_{c} \circ \varpi_{J^1,J^2} = \varpi_{J^1,J^2} \circ (\id ,t_{c_l})$
and $\sigma(\iota(x)) \circ \varpi_{J^1,J^2} = \varpi_{J^1,J^2} \circ (\id,\sigma (x))$
for $x \in \tskein{S^1 \times I}$.
\end{itemize}
Hence, it suffices to show the following lemma. 

\begin{lemm}
\label{lemm_Dehn_twist}
Fix an positive integer $m$.
Choose points $p_1=\frac{1}{2m}$, $\cdots$, $p_i =
\frac{i}{2m}$, $\cdots$, $p_m =\frac{m}{2m}$ in $S^1$
and $\epsilon_1, \epsilon_2, \cdots, \epsilon_m \in \shuugou{0,1}$.
We denote by $r_i^0$ an element of
$\tskein{S^1 \times I, \shuugou{(p_i,1-\epsilon_1)},
\shuugou{(p_i,\epsilon_1)}}$ represented by the tangle
presented by $\shuugou{p_i} \times I$.
Then we have
\begin{equation*}
\sigma(\Lambda(c_l)) (r_1^0 \boxtimes r_2^0 \boxtimes \cdots \boxtimes 
r_m^0)= \log (t)(r_1^0 \boxtimes r_2^0 \boxtimes \cdots \boxtimes 
r_m^0).
\end{equation*}
\end{lemm}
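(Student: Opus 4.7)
The plan is to reduce Lemma \ref{lemm_Dehn_twist} to the single-arc case proved in Lemma \ref{lemm_annulus_Dehn}, by exploiting the Leibniz rule for $\boxtimes$ satisfied by both sides. First I would verify that both $\sigma(\Lambda(c_l))$ and $\log(t_{c_l})$ act as continuous $\Q[\rho][[h]]$-linear derivations on the completed skein module with respect to $\boxtimes$.

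For $\sigma(\Lambda(c_l))$ this is immediate from the last Leibniz rule established at the end of the Poisson-like structure subsection, namely $\sigma(x)(v_1 \boxtimes v_2) = \sigma(x)(v_1) \boxtimes v_2 + v_1 \boxtimes \sigma(x)(v_2)$, applied termwise to the defining series of $\Lambda(c_l)$ and extended by continuity. For $\log(t_{c_l})$ the argument is more delicate. Since $t_{c_l}$ is induced by a diffeomorphism of $S^1 \times I$, it preserves $\boxtimes$ as an algebra automorphism: $t_{c_l}(v_1 \boxtimes v_2) = t_{c_l}(v_1) \boxtimes t_{c_l}(v_2)$. Since $t_{c_l} \in \check{\mathcal{M}}(\Sigma)$, the proposition preceding the definition of $\log(\xi)$ gives $(1 - t_{c_l})^{N+1}(F^N) \subset F^{N+1}$, so the log series converges on the completion. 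In this pro-nilpotent setting, the logarithm of a $\boxtimes$-algebra automorphism is necessarily a derivation: writing $D = \log(t_{c_l})$ and comparing coefficients of $n$ in
\[
\exp(nD)(v_1 \boxtimes v_2) = t_{c_l}^n(v_1) \boxtimes t_{c_l}^n(v_2) = \exp(nD)(v_1) \boxtimes \exp(nD)(v_2)
\]
yields the Leibniz identity for $D$.

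Second, I would invoke the rotational symmetry $(s,t) \mapsto (s + p_i, t)$ of $S^1 \times I$ (together if necessary with the involution $(s,t) \mapsto (s, 1-t)$ to match orientations, absorbed on both sides via the sign relations such as $\Lambda(c)(1) = \Lambda(c)(2)$) to reduce the identity $\sigma(\Lambda(c_l))(r_i^0) = \log(t_{c_l})(r_i^0)$ for each $i$ to Lemma \ref{lemm_annulus_Dehn}; these symmetries fix $c_l$ and carry $r^0$ to $r_i^0$. The derivation property from the first step then gives
\[
\sigma(\Lambda(c_l))(r_1^0 \boxtimes \cdots \boxtimes r_m^0) = \sum_{i=1}^m r_1^0 \boxtimes \cdots \boxtimes \sigma(\Lambda(c_l))(r_i^0) \boxtimes \cdots \boxtimes r_m^0,
\]
with the identical formula for $\log(t_{c_l})$, and the two right-hand sides agree term by term.

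The hardest part I expect is the rigorous verification that $\log(t_{c_l})$ is a derivation for $\boxtimes$ at the level of the completed modules. While routine in principle within a pro-nilpotent framework, it requires careful bookkeeping of the filtrations $F^n$ on the various factor skein modules and on the $\boxtimes$-product module, so as to ensure the relevant series converge in the appropriate sense and that the coefficient-matching argument in $n$ remains valid after passing to the inverse limit.
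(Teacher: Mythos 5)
Your proposal is correct and follows essentially the same route as the paper: the paper's proof is exactly the Leibniz rule for $\sigma(\Lambda(c_l))$ with respect to $\boxtimes$, Lemma \ref{lemm_annulus_Dehn} applied to each strand, and the recombination of the resulting sum into $\log(t)$ of the product. Your extra care in justifying that $\log(t_{c_l})$ is itself a $\boxtimes$-derivation (via the automorphism/pro-nilpotency argument) and in invoking the rotational and orientation symmetries to move $r^0$ to each $r_i^0$ fills in details the paper leaves implicit, but it is not a different approach.
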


\begin{proof}
By the Leibniz rule and Lemma \ref{lemm_annulus_Dehn},
we have
\begin{align*}
&\sigma(\Lambda(c_l)) (r_1^0 \boxtimes r_2^0 \boxtimes \cdots \boxtimes 
r_m^0) \\
&=\sum_{i=1}^m r_1^0 \boxtimes  \cdots \boxtimes r_{i-1}^0 
\boxtimes \sigma(\Lambda (c_l))(r_i^0) \boxtimes
r_{i+1}^0 \boxtimes \cdots \boxtimes r_m^0 \\
&=\sum_{i=1}^m r_1^0 \boxtimes  \cdots \boxtimes r_{i-1}^0 
\boxtimes \log(t)(r_i^0) \boxtimes
r_{i+1}^0 \boxtimes \cdots \boxtimes r_m^0 \\
&= \log (t)(r_1^0 \boxtimes r_2^0 \boxtimes \cdots \boxtimes 
r_m^0).
\end{align*}
This proves the lemma.
\end{proof}

\subsection{Corollary}
Let $\check{\tskein{\Sigma}}$
be the Lie subalgebra topologically generated by
\begin{equation*}
\shuugou{\sigma(\Lambda (c_1)) \circ \cdots \circ
\sigma (\Lambda (c_{j-1}))( \Lambda (c_j))|c_1, \cdots, c_j \mathrm{ \ \ s.c.c.}}.
\end{equation*}
We consider the action of $\zettaiti{\mathcal{P} (\Sigma, *_\alpha)}$
on $\mathcal{P}(\Sigma, *_\alpha, *_\beta)$ defined by the following.
Let $R$ and $R'$ be  an oriented immersed pass of $\pi^+_1 (\Sigma, *_\alpha,
*_\beta)$ and an oriented immersed loop of $\hat{\pi^+_1} 
(\Sigma)$. We define
\begin{equation*}
\sigma_{\mathcal{P}} (R')(R) \defeq \sum_{p \in R' \cap R} \epsilon(p,R',R)
R_{*_\beta p} R'_p R_{*_\alpha}
\end{equation*}
where  we denote by $R_{*_\beta p} $  the pass  along
$R$ from $p$ to $*_\beta$, by $R_{p *_\alpha}$
the pass along $R$ from $*_\alpha$ to $p$,
by $R'_p$ the pass along $R'$ from $p$ to $p$
and by $\epsilon(p,R',R)$  the local intersection number
of $R'$ and $R$ at $p$.
We consider $\zettaiti{\mathcal{P} (\Sigma, *_\alpha)}$ 
as a Lie algebra defined by $[R', \zettaiti{R}]=\zettaiti{\sigma (R')(R)}$.
We remark $\zettaiti{\mathcal{P} (\Sigma, *_\alpha)}$
is a $\zettaiti{\mathcal{P} (\Sigma, *_\alpha)}$-module.

We consider the topology on $\zettaiti{\mathcal{P}(\Sigma,*_\alpha)}$
induced by the filtration $\filtn{\zettaiti{(\ker \epsilon_{\mathcal{P}})^n}}$.
and denote its completion $\widehat{\zettaiti{\mathcal{P}
(\Sigma, *_\alpha)}} \defeq \comp{i} \zettaiti{\mathcal{P}(\Sigma, *_\alpha)}/
\zettaiti{(\ker \epsilon_{\mathcal{P}})^i})$. The completion
also has a filtration $\filtn{F^n \widehat{\zettaiti{\mathcal{P}(\Sigma, *_\alpha)}}}$
such that $\widehat{\zettaiti{\mathcal{P}(\Sigma, *_\alpha)}}/
F^n\widehat{\zettaiti{\mathcal{P}(\Sigma, *_\alpha)}}
=\zettaiti{\mathcal{P}(\Sigma, *_\alpha)}/(\ker \epsilon_{\mathcal{P}})^i$.
We remark $\sigma_{\mathcal{P}} (x) =0$ if and only if $x=0$ for 
$x \in \widehat{\zettaiti{\mathcal{P}(\Sigma, *_\alpha)}}$.
For details see \cite{Kawazumi} \cite{KK}.
\begin{prop}[\cite{Kawazumi} \cite{KK} \cite{MT}]
We have
\begin{equation*}
\log (t_c)=\sigma_{\mathcal{P}} (\zettaiti{\frac{1}{2} (\log (c))^2}):
\widehat{\mathcal{P}(\Sigma, *_\alpha, *_\beta)} \to
\widehat{\mathcal{P}(\Sigma, *_\alpha, *_\beta)}.
\end{equation*}
\end{prop}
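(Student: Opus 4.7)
The plan is to derive this proposition as a corollary of Theorem \ref{thm_main_Dehn} by restriction to the regular-homotopy path subspace. By Proposition \ref{prop_psi_isom}, the map $\psi : \widehat{\mathcal{P}(\Sigma,*_\alpha,*_\beta)} \to \widehat{\tskein{\Sigma,*_\alpha,*_\beta}}$ is a filtration-preserving $\Q[\rho][[h]]$-linear injection onto a direct summand, and it manifestly intertwines the mapping class group action. Combined with Theorem \ref{thm_main_Dehn}, the proposition reduces to the identity
\begin{equation*}
\sigma(\Lambda(c))(\psi(R)) = \psi\bigl(\sigma_{\mathcal{P}}(\zettaiti{\tfrac{1}{2}(\log c)^2})(R)\bigr)
\end{equation*}
for every $R \in \widehat{\mathcal{P}(\Sigma,*_\alpha,*_\beta)}$.

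Both operators are continuous derivations with respect to path concatenation (by the Leibniz rules for $\sigma$ and for $\sigma_{\mathcal{P}}$ stated in the subsection on Poisson-like structure and in the preamble to the proposition, respectively), so I would follow the strategy of the subsection proving Theorem \ref{thm_main_Dehn}: fix a regular-neighborhood embedding $\iota : S^1 \times I \to \Sigma$ with $\iota(c_l)=c$, cut $\Sigma$ along $\iota(S^1 \times \shuugou{1/2})$, and use a bilinear decomposition of type $\varpi_{J^1,J^2}$ to express a general element of $\widehat{\mathcal{P}(\Sigma,*_\alpha,*_\beta)}$ in terms of elementary transverse arcs $r_i^0$ through the annulus. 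The Leibniz rules then propagate any identity on the annulus to the whole surface.

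On the annulus the skein-side identity $\sigma(\Lambda(c_l))(r^0) = \log(t)(r^0)$ is precisely Lemma \ref{lemm_annulus_Dehn}. The corresponding path-side identity $\sigma_{\mathcal{P}}(\zettaiti{\tfrac{1}{2}(\log \gamma_l)^2})(r_0) = \log(t)(r_0)$ follows from the same type of expansion: since $\gamma_l$ meets $r_0$ in a single transverse point, $\sigma_{\mathcal{P}}$ acts combinatorially on powers of $\gamma_l-1$, and the framing relation $R_1 = \exp(\rho h)R_0$ in $\mathcal{P}$ matches $\gamma_l-1$ with the corresponding generators $r^n-r^0$ after passing through $\psi$.

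The main obstacle will be the bookkeeping on the path-algebra side: one has to verify that all higher-order $h$-corrections agree, term by term, with the skein computation of Lemma \ref{lemm_annulus_Dehn}, which means unpacking $\zettaiti{\tfrac{1}{2}(\log \gamma_l)^2}$ modulo every filtration step and comparing with the expansion $\log(t)(r^0) = \sum_{i\geq 1}\tfrac{(-1)^{i-1}}{i}(r^1-r^0)^i$. This computation is lighter than the skein-side calculation, since the framing relation in $\mathcal{P}$ is the semiclassical shadow of the skein relation and the prefactor $(h/2)/\arcsinh(h/2)$ that appears in $\Lambda(c)$ does not enter here.
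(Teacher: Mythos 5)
The paper offers no proof of this proposition: it is imported verbatim from Kawazumi--Kuno and Massuyeau--Turaev as a known input, and is then \emph{combined} with Theorem \ref{thm_main_Dehn} to deduce the well-definedness of $\varrho$ and the identity $\sigma(x)_{|\widehat{\mathcal{P}(\Sigma,*_\alpha,*_\beta)}}=\sigma_{\mathcal{P}}(\varrho(x))$. Your plan runs this logic in reverse, and it can be made to work, but you should be clear that it is not a formal ``restriction to a subspace.'' First, $\sigma(\Lambda(c))$ does not visibly preserve $\psi(\widehat{\mathcal{P}(\Sigma,*_\alpha,*_\beta)})$: resolving a crossing of a cable of $c$ with a path produces elements of $\tskein{\Sigma}\otimes\mathcal{P}(\Sigma,*_\alpha,*_\beta)$ with nontrivial first factor, and the fact that everything collapses back into $\psi(\widehat{\mathcal{P}})$ is part of what must be proved. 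Second, the intertwining identity $\sigma(\Lambda(c))\circ\psi=\psi\circ\sigma_{\mathcal{P}}(\zettaiti{\tfrac12(\log c)^2})$ is genuinely nontrivial: $\Lambda(c)$ is not $\psi$ of $\tfrac12\zettaiti{(\log c)^2}$ but carries the $\lambda^{[n]}$-corrections and the $(\frac{h/2}{\arcsinh(h/2)})^2$ prefactor precisely because $\sigma(l(n))(r^0)=\frac1n(\frac{q^n-q^{-n}}{q-q^{-1}})^2r^n$ (Lemma \ref{lemm_l_n}) rather than $nr^n$; so the two derivations agree only after these quantum-integer factors cancel, which is the content of Lemma \ref{lemm_annulus_Dehn}. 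Your annulus verification on the $\mathcal{P}$-side, namely $\sigma_{\mathcal{P}}(\zettaiti{\tfrac12(\log\gamma_l)^2})(r_0)=(\log\gamma_l)r_0=\log(t)(r_0)$ via $X\frac{d}{dX}\tfrac12(\log X)^2=\log X$, is correct (modulo the $\exp(\rho h)$ rotation-number bookkeeping you defer), and the propagation by the $\varpi_{J^1,J^2}$-decomposition and the Leibniz rules mirrors the proof of Theorem \ref{thm_main_Dehn}. But note that this annulus computation plus the Leibniz propagation already constitutes a complete direct proof of the proposition inside $\widehat{\mathcal{P}}$ -- which is essentially Kawazumi--Kuno's original argument -- so the detour through $\widehat{\tskein{\Sigma,*_\alpha,*_\beta}}$ and Theorem \ref{thm_main_Dehn} adds the burden of the intertwining identity and the injectivity of the completed $\psi$ (which needs a word beyond Proposition \ref{prop_psi_isom}, since one must check the $(\ker\epsilon)$-adic filtration on $\mathcal{P}$ is the one induced from $\filtn{F^n\tskein{\Sigma,*_\alpha,*_\beta}}$) without saving any computation. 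What your route buys is a uniform skein-theoretic framework; what the direct route buys is brevity and independence from the main theorem, which is preferable here since the paper's later propositions use this result together with Theorem \ref{thm_main_Dehn} and a circular dependency must be avoided.
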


Using this proposition and 
Theorem \ref{thm_main_Dehn}, we have the following.
We define the Lie algebra homomorphism
\begin{equation*}
\varrho :\check{\tskein{\Sigma}} \to 
F^2 \widehat{\zettaiti{\mathcal{P}(\Sigma, *_\alpha)}}
\end{equation*}
defined by $\Lambda (c) \mapsto \zettaiti{\frac{1}{2} (\log (c))^2}$.

\begin{prop}
The Lie algebra homomorphism $\varrho$ is well-defined and
the restriction
$\sigma (x)_{|\widehat{\mathcal{P}(\Sigma, *_\alpha, *_\beta)}}$
equals
$\sigma_{\mathcal{P}} (\varrho (x))$.
\end{prop}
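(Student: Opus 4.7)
The plan is to combine Theorem~\ref{thm_main_Dehn} with the Kawazumi--Kuno--Massuyeau--Turaev formula and then invoke the faithfulness of $\sigma_{\mathcal{P}}$ on $\widehat{\zettaiti{\mathcal{P}(\Sigma,*_\alpha)}}$. An element of $\check{\tskein{\Sigma}}$ is a topological linear combination of iterated Lie brackets of elements $\Lambda(c)$. The natural candidate is to send $[\Lambda(c_1),[\Lambda(c_2),\cdots ,\Lambda(c_j)]]$ to the corresponding iterated bracket $[\zettaiti{\tfrac{1}{2}(\log c_1)^2},[\zettaiti{\tfrac{1}{2}(\log c_2)^2},\cdots ,\zettaiti{\tfrac{1}{2}(\log c_j)^2}]]$ in $F^2\widehat{\zettaiti{\mathcal{P}(\Sigma,*_\alpha)}}$, which lies in $F^2$ because $\log c\in \ker\epsilon$. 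The work is to show that this assignment respects any relations holding in $\check{\tskein{\Sigma}}$.

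The key observation is that, via $\psi$, the module $\widehat{\mathcal{P}(\Sigma,*_\alpha,*_\beta)}$ embeds as a submodule of $\widehat{\tskein{\Sigma,*_\alpha,*_\beta}}$ which is preserved by the action $\sigma$ of $\widehat{\tskein{\Sigma}}$: indeed, $\sigma(x)$ is computed by local Conway-type resolutions applied to a framed tangle that is the $\psi$-image of a regular homotopy path, and in each of the resulting terms the relevant component is again (up to multiplication by knot-type factors of $\widehat{\tskein{\Sigma}}$) of the form $\psi(\text{path})$. Under this identification, Theorem~\ref{thm_main_Dehn} gives $\sigma(\Lambda(c))|_{\psi(\widehat{\mathcal{P}})}=\log(t_c)$, while the recalled formula of Kawazumi--Kuno and Massuyeau--Turaev gives $\log(t_c)=\sigma_{\mathcal{P}}(\zettaiti{\tfrac{1}{2}(\log c)^2})$ on $\widehat{\mathcal{P}(\Sigma,*_\alpha,*_\beta)}$. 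Hence the identity
\begin{equation*}
\sigma(\Lambda(c))|_{\widehat{\mathcal{P}(\Sigma,*_\alpha,*_\beta)}}
=\sigma_{\mathcal{P}}(\zettaiti{\tfrac{1}{2}(\log c)^2})
\end{equation*}
holds on generators. Because $\sigma$ and $\sigma_{\mathcal{P}}$ are both Lie algebra actions, the same identity persists when $\Lambda(c)$ is replaced by any iterated bracket of such elements, on the left, and by the corresponding iterated bracket of $\zettaiti{\tfrac{1}{2}(\log c)^2}$, on the right.

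To complete well-definedness, suppose that a finite linear combination $X$ of iterated brackets of the $\Lambda(c_i)$'s vanishes in $\check{\tskein{\Sigma}}$. Then $\sigma(X)=0$ as an operator on $\widehat{\tskein{\Sigma,*_\alpha,*_\beta}}$, so in particular its restriction to $\widehat{\mathcal{P}(\Sigma,*_\alpha,*_\beta)}$ vanishes. By the compatibility established above, the corresponding element $Y\in F^2\widehat{\zettaiti{\mathcal{P}(\Sigma,*_\alpha)}}$ satisfies $\sigma_{\mathcal{P}}(Y)=0$; by the recalled injectivity of $\sigma_{\mathcal{P}}$ on $\widehat{\zettaiti{\mathcal{P}(\Sigma,*_\alpha)}}$, we conclude $Y=0$. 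Thus $\varrho$ is well-defined on the dense subspace of finite bracket expressions, and extends by continuity to all of $\check{\tskein{\Sigma}}$. That $\varrho$ is a Lie algebra homomorphism is then formal, and the second assertion of the proposition is exactly the displayed identity extended from generators to arbitrary $x\in\check{\tskein{\Sigma}}$ by Lie algebra functoriality.

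The main obstacle will be to carry out the bookkeeping behind \emph{$\sigma$ preserves $\psi(\widehat{\mathcal{P}(\Sigma,*_\alpha,*_\beta)})$ and its restriction there agrees with the Dehn twist / Goldman action on $\widehat{\mathcal{P}}$}. Intuitively this is clear, since $\psi$ is a push-in by a small vertical perturbation that is manifestly equivariant under diffeomorphisms of $\Sigma$, and the crossings used to compute $\sigma$ on $\psi$-images are precisely the transverse intersections used to compute $\sigma_{\mathcal{P}}$; but making this precise at the level of completed filtered modules requires a short verification in the same spirit as Proposition~\ref{prop_psi_isom}, combined with the filtration estimates of Proposition~\ref{prop_filt_bracket} so that the comparison passes to the completions.
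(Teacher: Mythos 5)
Your argument is exactly the one the paper intends: the paper gives no written proof of this proposition, stating only that it follows by combining the recalled Kawazumi--Kuno--Massuyeau--Turaev formula $\log(t_c)=\sigma_{\mathcal{P}}(\zettaiti{\tfrac{1}{2}(\log c)^2})$ with Theorem \ref{thm_main_Dehn} and the remark that $\sigma_{\mathcal{P}}(x)=0$ only if $x=0$, which is precisely your chain of reasoning (generators, then iterated brackets by equivariance of the two Lie actions, then well-definedness from faithfulness). The compatibility issue you flag at the end --- that $\sigma$ preserves the $\psi$-image of $\widehat{\mathcal{P}(\Sigma,*_\alpha,*_\beta)}$ only up to closed-component factors --- is left entirely implicit in the paper, so your treatment is, if anything, more explicit than the original.
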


Let $\varpi_{\mathcal{P}2} :
F^2 \widehat{\zettaiti{\mathcal{P}(\Sigma, *_\alpha)}}
\to F^2 \widehat{\zettaiti{\mathcal{P}(\Sigma, *_\alpha)}}/
F^3 \widehat{\zettaiti{\mathcal{P}(\Sigma, *_\alpha)}}
\simeq H\cdot H\oplus Q[\rho]$ defined by
$\varpi_{\mathcal{P}2} ((R_1-1)(R_2-1)) =[R_1]\cdot[R_2]$
and $\varpi_{\mathcal{P}2} (h)=1$, where
$H \defeq \Q [\rho] \otimes_{\Q} H_1 (\Sigma, Z)$ and
$H \cdot H$ is the symmetric tensor of $H$.
Using the above proposition, we have the following.

\begin{prop}
\label{prop_BCH_jouken}
Let $V_1$ and $V_2 \subset V_1$
be $\Q [\rho]$-subspaces of $H$ satisfying
$\mu (V_1, V_2) =\shuugou{0}$ where $\mu $ is the intersection form.
For any $s_1, s_2,s_3 \in \varrho^{-1} (\varpi_{\mathcal{P} 2}^{-1}(V_1 \cdot V_2 \oplus \Q 
[\rho]))$, we have
\begin{equation*}
\sigma (s_1) \circ \sigma (s_2) \circ \sigma (s_3) (\ker \epsilon_{\mathcal{P}}) \subset
(\ker \epsilon_{\mathcal{P}})^2.
\end{equation*}
Furthermore, for any $s_1, s_2, \cdots ,s_{2j-1}
\in \varrho^{-1} (\varpi_{\mathcal{P} 2}^{-1}(V_1 \cdot V_2 \oplus \Q 
[\rho]))$, we have
\begin{equation*}
\sigma (s_1) \circ \sigma (s_2)\circ
\cdots \circ \sigma (s_{2j-1}) (F^{j-1} \widehat{\tskein{\Sigma, J^-,J^+}}) \subset
F^j \widehat{\tskein{\Sigma, J^-,J^+}}.
\end{equation*}
\end{prop}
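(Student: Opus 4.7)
The plan is to pass to the associated graded of the Goldman-type action via $\varrho$, extract the leading-order derivation on a tensor power of $H$, and obtain the required filtration gain from a pigeonhole argument in which the hypothesis $\mu(V_1,V_2)=0$ kills each tensor slot after three operator applications. By the preceding proposition, I can replace $\sigma(s_i)|_{\widehat{\mathcal{P}(\Sigma,*_\alpha,*_\beta)}}$ by $\sigma_{\mathcal{P}}(\varrho(s_i))$, and I will decompose $\varrho(s_i)=u_i+v_i$ where $u_i$ is a fixed lift of $\varpi_{\mathcal{P}2}(\varrho(s_i))\in V_1\cdot V_2\oplus\Q[\rho]$ and $v_i\in F^3\widehat{\zettaiti{\mathcal{P}(\Sigma,*_\alpha)}}$. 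The filtration estimate $\sigma_{\mathcal{P}}(F^n)F^k\subset F^{n+k-2}$ from Proposition \ref{prop_filt_bracket} shows that $\sigma_{\mathcal{P}}(v_i)$ already gains one filtration degree, so every term in the expansion of $\sigma(s_1)\circ\cdots\circ\sigma(s_{2j-1})$ using at least one $v_i$ automatically lands in the required filtration. This reduces the task to controlling the pure composition $\sigma_{\mathcal{P}}(u_1)\circ\cdots\circ\sigma_{\mathcal{P}}(u_{2j-1})$.

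Since $\pi_1(\Sigma)$ is free, the graded piece $\mathrm{gr}^k\widehat{\mathcal{P}(\Sigma,*_\alpha,*_\beta)}$ can be identified with a tensor power of $H$. Under this identification, the $\Q[\rho]$-summand of $\overline{u_i}=\varpi_{\mathcal{P}2}(\varrho(s_i))$ (the class of $h$) acts trivially, and the $V_1\cdot V_2$-summand $a_i\cdot b_i$ (with $a_i\in V_1$, $b_i\in V_2$) acts as a derivation whose local formula at a single tensor slot is $v\mapsto \mu(a_i,v)\,b_i+\mu(b_i,v)\,a_i$, via the standard symplectic identification $S^2H\cong\mathfrak{sp}(H)$. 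The hypothesis $\mu(V_1,V_2)=0$ forces $\mu(V_2,V_2)\subset\mu(V_1,V_2)=0$, so the local action sends $V_1$ into $V_2$ (the $\mu(b_i,\cdot)$ coefficient vanishes on $V_1$) and annihilates $V_2$ (both coefficients vanish). Tracing a single slot through the chain $H\to V_1\cup V_2\to V_2\to 0$ shows that three successive applications at one slot kill it.

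For the second claim, I will use Proposition \ref{prop_psi_isom} together with the Leibniz rule for $\sigma$ with respect to $\boxtimes$ to write any element of $F^{j-1}\widehat{\tskein{\Sigma,J^-,J^+}}$ as a sum of $\boxtimes$-products of $\widehat{\mathcal{P}}$-factors with filtration indices $a_1,\ldots,a_m$ summing to $j-1$. Iterated Leibniz then expands $\sigma_{\mathcal{P}}(u_1)\circ\cdots\circ\sigma_{\mathcal{P}}(u_{2j-1})$ into a sum indexed by distributions $n_1+\cdots+n_m=2j-1$ of the operators over the factors. Because $2j-1>2(j-1)=2\sum a_k$, some factor must satisfy $n_k\ge 2a_k+1$: if $a_k=0$ the operators annihilate the constant part of that factor, and if $a_k\ge 1$ a second pigeonhole over the $a_k$ tensor slots places at least three operators on one slot, which is then killed by the previous paragraph. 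In either sub-case the corresponding factor contributes zero or gains a filtration degree, so the whole term lies in $F^j$. The first claim is the single-factor case $(j=2,\ m=1,\ a_1=1)$, where all three operators act on the single slot $\ker\epsilon_{\mathcal{P}}$. The hard part will be verifying the graded identification of the $u_i$-action as the symplectic derivation, especially the bookkeeping for the deformation parameters $h$ and $\rho$ and for the path (as opposed to loop) structure of $\mathcal{P}(\Sigma,*_\alpha,*_\beta)$; once this is in place the pigeonhole combinatorics is routine.
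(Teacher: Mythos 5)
Your plan is essentially sound, but there is nothing in the paper to compare it against: the author states Proposition \ref{prop_BCH_jouken} with only the remark ``Using the above proposition, we have the following'' and supplies no argument. What you have written is the natural reconstruction, and it is the same symbol-calculus argument used for the analogous statements in \cite{KK} and in the Kauffman-bracket setting of \cite{TsujiTorelli}. The two load-bearing points both check out: the local action of a symbol $a\cdot b$ with $a\in V_1$, $b\in V_2$ on a degree-one slot is indeed $v\mapsto\mu(a,v)b+\mu(b,v)a$ (the degree-$0$ and degree-$1$ cross terms cancel in $\zettaiti{AB}-\zettaiti{A}-\zettaiti{B}$, leaving exactly this), and the chain $H\to V_1\to V_2\to 0$ (your ``$V_1\cup V_2$'' should be $V_1$, using $V_2\subset V_1$) kills a slot in three steps because $\mu(V_1,V_2)=0$ and $V_2\subset V_1$ force all the relevant pairings to vanish. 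The pigeonhole $2j-1>2\sum_k a_k$ is correct, and the $a_k=0$ case works because a purely quadratic symbol annihilates the augmentation-degree-$0$ part of a path ($\mu(a+b,r)-\mu(a,r)-\mu(b,r)=0$), so even a single operator on a degree-$0$ factor gains a degree.

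Two points deserve explicit care beyond what you wrote. First, the proposition preceding \ref{prop_BCH_jouken} only identifies $\sigma(x)$ with $\sigma_{\mathcal{P}}(\varrho(x))$ on $\widehat{\mathcal{P}(\Sigma,*_\alpha,*_\beta)}$, not on $\widehat{\tskein{\Sigma,J^-,J^+}}$; for the second assertion you must therefore run the decomposition $s_i=u_i+v_i$ and the graded computation inside $\widehat{\tskein{\Sigma}}$ itself, using that the hypothesis on $\varrho(s_i)$ pins down the class of $s_i$ in $F^2\widehat{\tskein{\Sigma}}/F^3\widehat{\tskein{\Sigma}}\simeq H\cdot H\oplus\Q[\rho]$ and that the skein action reduces to the Goldman one modulo $F^{n+m-1}$. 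You flag this as ``the hard part,'' which is fair, but it is the step that actually needs writing out. Second, after each application of $\sigma(u_i)$ the output equals its leading graded term plus corrections one filtration degree higher; you should say once that such corrections are harmless because the remaining operators preserve the filtration by Proposition \ref{prop_filt_bracket}, so the bookkeeping can be done entirely in the associated graded. With those two additions the plan closes.
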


\subsection{The Baker-Campbell-Hausdorff series}
\label{subsection_bch}
In this subsection, we will explain the Baker-Campbell-Hausdorff series
(BCH series).
We choose $S \subset \widehat{\tskein{\Sigma}}$ such that,
for any $i \in \Z_{\geq 0}$, there exists $j_i\in \Z_{\geq 0}$
satisfying 
\begin{equation}
\label{equation_jouken_bch}
\sigma(a_{1})\circ
\sigma(a_{2}) \circ \cdots \sigma ( a_{j_i})(F^i
\tskein{\Sigma,J^-,J^+})\subset F^{i+1} \tskein{\Sigma,J^-,J^+}
\end{equation}
for $a_1, a_2, \cdots, a_{j_i} \in
S$ and $J^-,J^+ \subset \partial \Sigma$.
 The BCH series $\bch$ is defined by
\begin{align*}
&\bch (\epsilon_1 a_1, \epsilon_2 a_2, \cdots,\epsilon_m a_m) \\
&\defeq 
 \log (\exp ( \epsilon_1 a_1) \exp (\epsilon_2 a_2) \cdots \exp (\epsilon_m a_m))
\end{align*}
for $a_1, a_2, \cdots, a_{m} \in
S$ and $\epsilon_1, \epsilon_2, \cdots, \epsilon_m \in \shuugou{\pm 1}$.
For example, 
\begin{equation*}
\bch(x,y) = x+y+\frac{1}{2}[x,y]+\frac{1}{12}([x,[x,y]]+[y,[y,x]])+ \cdots.
\end{equation*}
We denote
\begin{equation*}
\zettaiti{S} \defeq
\shuugou{\bch (\epsilon_1 a_1, \cdots, \epsilon_m 
a_m)|m \in \Z_{\geq 1}, a_1,  \cdots, a_{m} 
\in S, \epsilon_1, \cdots, \epsilon_m  \in \shuugou{\pm1}}.
\end{equation*}
For $a_1, a_2, \cdots, a_{j_1}  \in \zettaiti{S}$, 
they satisfies the equation (\ref{equation_jouken_bch}).
The unit of the group $(\zettaiti{S}, \bch) $ is  $0$.
For example, $F^3 \widehat{\tskein{\Sigma}}$ satisfies the condition.
Furthermore, if the genus of $\Sigma$ is $0$,
$\widehat{\tskein{\Sigma}}$ satisfies the condition.

\section{Framed pure braid group}

In the section, let $\Sigma$ be a compact connected oriented surface of genus
$0$.
The set of Dehn twists $\shuugou{t_{ij}|1 \leq i<j \leq b} \cup \shuugou{t_i|1 \leq i \leq b}$,
where $t_{ij} \defeq t_{c_{ij}}$ and $t_i \defeq t_{c_i}$,
generate the mapping class group $\mathcal{M} (\Sigma)$.
Here the simple closed curve$ c_i$ is  presented by $\zettaiti{R_i}$
and the simple closed curve $c_{ij}$ is  presented by $\zettaiti{R_i R_j}$.
Furthermore $\mathcal{M} (\Sigma)$ is presented by the relations
\begin{align*}
&\ad (t_i)(t_j) =t_j, \\
&\ad (t_s)(t_{ij}) =t_{ij}, \\
&\ad(t_{rs})(t_{ij}) =t_{ij} & \mathrm{if\ \ } r<s<i<j, \\
&\ad(t_{rs})(t_{ij}) =t_{ij} & \mathrm{if\ \ } i<r<s<j, \\
&\ad(t_{rs}t_{rj})(t_{ij}) =t_{ij} & \mathrm{if\ \ } r<s=i<j, \\
&\ad(t_{rs}t_{ij}t_{sj})(t_{ij}) =t_{ij} & \mathrm{if\ \ } i=r<s<j, \\
&\ad(t_{rs}t_{rj}t_{sj}\gyaku{t_{rj}} \gyaku{t_{sj}})(t_{ij}) =t_{ij} & \mathrm{if\ \ } r<i<s<j, \\ 
\end{align*}
where $\ad (a)(b) \defeq a b \gyaku{a}$.
See, for example, \cite{Birman} p.20 Lemma 1.8.2.

\begin{df}
The group homomorphism $\zeta : \mathcal{M} (\Sigma) \to
(\widehat{\tskein{\Sigma}},\bch)$ is defined by
$t_i \mapsto \Lambda (c_i)$ and $t_{ij} \mapsto \Lambda (c_{ij})$.
\end{df}

\begin{thm}
\label{thm_zeta_pure_braid}
The group homomorphism $\zeta$ is well-defined.
Furthermore, $\zeta$ is injective.
\end{thm}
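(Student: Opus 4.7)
I handle injectivity first, as it is the easier direction, and then well\mbox{-}definedness, with Theorem~\ref{thm_main_Dehn} serving as the common engine in both cases. For injectivity, suppose $\zeta(\xi) = 0 \in \widehat{\tskein{\Sigma}}$. Given any $*_i, *_j \in \{*_1, \ldots, *_b\}$ and any $x \in \widehat{\tskein{\Sigma, *_i, *_j}}$, Theorem~\ref{thm_main_Dehn} together with the fact that $\zeta$ converts products of Dehn twists into $\bch$-sums yields $\xi(x) = \exp(\sigma(\zeta(\xi)))(x) = \exp(\sigma(0))(x) = x$. Proposition~\ref{prop_MCG_tskein_faithful} then forces $\xi = \id$ in $\mathcal{M}(\Sigma)$.

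For well\mbox{-}definedness I first observe that since $\Sigma$ has genus $0$, Subsection~\ref{subsection_bch} guarantees $\bch$ converges on all of $\widehat{\tskein{\Sigma}}$, so $\zeta(w)$ is unambiguous for any word $w$ in $\{t_i, t_{ij}\}$. I then have to verify each defining relation of $\mathcal{M}(\Sigma)$. The commutation relations ($\ad(t_i)(t_j) = t_j$, $\ad(t_s)(t_{ij}) = t_{ij}$, and $\ad(t_{rs})(t_{ij}) = t_{ij}$ in the disjoint and fully\mbox{-}separated cases) follow because the underlying simple closed curves admit disjoint diagrammatic representatives in $\Sigma$; the Lie bracket $[\Lambda(c_A), \Lambda(c_B)]$ is defined by a sum over intersection points of such diagrams, so disjointness makes it and all iterated brackets vanish, whence $\bch$\mbox{-}commutativity is immediate.

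The remaining cases involve genuinely intersecting configurations (for instance $r < i < s < j$, in which $t_A := t_{rs} t_{rj} t_{sj} t_{rj}^{-1} t_{sj}^{-1}$ satisfies $t_A(c_{ij}) = c_{ij}$ as an isotopy class). Here I exploit the intrinsic naturality $\phi_* \Lambda(c) = \Lambda(\phi(c))$ for any $\phi \in \mathcal{M}(\Sigma)$, which follows directly from the definition of $\Lambda$ in terms of regular homotopy classes in $\pi_1^+$. Combined with the geometric identity $t_A(c_{ij}) = c_{ij}$ and Theorem~\ref{thm_main_Dehn}, this shows that the operators $\exp(\sigma(\bch(\zeta(t_A), \Lambda(c_{ij}), -\zeta(t_A))))$ and $\exp(\sigma(\Lambda(c_{ij})))$ act identically on every completed skein module. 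Taking logarithms within the $F^\bullet$\mbox{-}filtration (where $\exp$ is invertible) and invoking faithfulness of the Lie action of $\widehat{\tskein{\Sigma}}$ on a sufficient family of completed modules promotes this equality to an equality in $\widehat{\tskein{\Sigma}}$ itself.

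The main obstacle is this last faithfulness step: showing that $\sigma(x) = 0$ on all $\widehat{\tskein{\Sigma, *_\alpha, *_\beta}}$ forces $x = 0$. My plan is to factor through the projection $\varpi_{\mathcal{A}} : \widehat{\tskein{\Sigma, *_\alpha, *_\beta}} \to \widehat{\Q[\rho]\pi_1(\Sigma, *_\alpha, *_\beta)}$ and the Lie homomorphism $\varrho : \check{\tskein{\Sigma}} \to F^2 \widehat{\zettaiti{\mathcal{P}(\Sigma, *_\alpha)}}$, using the classical Kawazumi--Kuno faithfulness on the Goldman side to conclude $\varrho(x) = 0$. The residual freedom lies in the $h$- and $\rho$-dependent framing corrections built into $\Lambda(c)$, and it is handled by induction on the powers of $h$ together with the explicit formula $\Lambda(c) \equiv \frac{1}{2}\zettaiti{\psi((\gamma-1)^2)} \pmod{F^3 \widehat{\tskein{\Sigma}}}$. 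Once this injectivity is established, all defining relations of the planar pure braid group $\mathcal{M}(\Sigma)$ lift cleanly, completing the proof.
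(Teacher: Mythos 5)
Your injectivity argument is exactly the paper's: $\zeta(\xi)=0$ gives $\xi(\cdot)=\exp(\sigma(0))(\cdot)=\id$ on every $\widehat{\tskein{\Sigma,J^-,J^+}}$, and Proposition \ref{prop_MCG_tskein_faithful} finishes. That part is fine.

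The well-definedness argument has a genuine gap in the intersecting-curves cases. You establish that $\exp(\sigma(X))$ and $\exp(\sigma(Y))$ agree as operators, where $X=\bch(\zeta(t_A),\Lambda(c_{ij}),-\zeta(t_A))$ and $Y=\Lambda(c_{ij})$, and then want to conclude $X=Y$ by ``faithfulness of the Lie action of $\widehat{\tskein{\Sigma}}$.'' But $\sigma$ is not faithful: every scalar multiple of the empty link acts trivially on all completed modules, and Lemma \ref{lemm_relation_key}(2) shows precisely that the kernel of the action consists of elements determined only by their image in $\tskein{D}\simeq\Q[\rho][[h]]$, which is nonzero. So from $\sigma(X-Y)=0$ you only get that $X-Y$ is such a ``central'' element, and your proposed repair --- projecting to the Goldman side via $\varrho$ and ``induction on powers of $h$'' --- does not close this, since the Goldman projection kills exactly the $h$- and $\rho$-corrections where the ambiguity lives. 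The two honest ways to finish are: (a) the paper's route, which avoids faithfulness entirely by computing \emph{inside} $\widehat{\tskein{\Sigma}}$, namely $\bch(\zeta(w),\Lambda(c),-\zeta(w))=\exp(\ad(\zeta(w)))(\Lambda(c))=\exp(\sigma(\zeta(w)))(\Lambda(c))=w(\Lambda(c))=\Lambda(w(c))=\Lambda(c)$, using Theorem \ref{thm_main_Dehn} applied to the module $\widehat{\tskein{\Sigma}}=\widehat{\tskein{\Sigma,\emptyset,\emptyset}}$ itself together with naturality of $\Lambda$ (this covers your disjoint cases and your intersecting cases uniformly, since every relation in the genus-$0$ presentation has the form $\ad(w)(t_{ij})=t_{ij}$ with $w(c_{ij})=c_{ij}$); or (b) evaluating the central discrepancy under an embedding into a ball, as the paper does for the lantern relation. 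As written, your proof does not reach the conclusion.
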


\begin{proof}
Let $c, c'_1, c'_2, \cdots ,c'_k$ be simple closed curves in $\Sigma$
and $\epsilon_1, \cdots,  \epsilon_k $ be elements of $\shuugou{1,-1}$
satisfying $t_{c'_1}^{\epsilon_1} t_{c'_2}^{\epsilon_2}
\cdots t_{c'_k}^{\epsilon_k} (c) =c$.
It is enough to check 
\begin{equation*}
\bch(\epsilon_1 \lambda(c'_1), \cdots,
\epsilon_2 \Lambda(c'_2), \cdots, \epsilon_k \Lambda (c'_k),\Lambda(c),
,-\epsilon_k \Lambda(c'_k), \cdots, -\epsilon_1 \Lambda(c'_1)) =\Lambda(c)
\end{equation*}
o check the well-definedness of $\zeta$.
By Theorem \ref{thm_main_Dehn}, we have
\begin{align*}
&\bch(\epsilon_1 \Lambda(c'_1), \cdots,
\epsilon_2 \Lambda (c'_2), \cdots, \epsilon_k \Lambda (c'_k), \Lambda (c),
,-\epsilon_k \Lambda (c'_k), \cdots, -\epsilon_1 \Lambda (c'_1)) \\
&=\exp(\sigma(\epsilon_1 \Lambda (c'_1))) \circ \cdots \circ \exp(\sigma(\epsilon_k 
\Lambda(c'_k)))(\Lambda (c)) \\
&=t_{c'_1}^{\epsilon_1} t_{c'_2}^{\epsilon_2}
\cdots t_{c'_k}^{\epsilon_k} (\Lambda (c)) =\Lambda (c).
\end{align*}
This finishes the proof of well-definedness of $\zeta$.

By definition of the BCH series $\bch$, we have $\xi (\cdot) = \exp (\sigma(\zeta(\xi)))
(\cdot): \widehat{\tskein{\Sigma,J^-,J^+}} \to \widehat{\tskein{\Sigma,J^-,J^+}}$
for any $\xi  \in \mathcal{M} (\Sigma)$ and  any finite subsets $J^-,J^+$ of 
$\partial \Sigma$.
By Proposition \ref{prop_MCG_tskein_faithful}, we have
$\xi =\id_\Sigma$ if and only if $\zeta(\xi) =0$.
This finishes the proof of injectivity of $\zeta$.
\end{proof}

\begin{rem}
Using the lantern realtion (Lemma \ref{lemm_lantern_relation}),
we obtain $\zeta (t_c) =\Lambda (c)$ for any simple closed curve $c$.
\end{rem}

\section{Torelli group on $\Sigma_{g,1}$}

\label{section_Torelli}
Through this section, let $\Sigma=\Sigma_{g,1}$ be a compact connected surface
with non-empty connected boundary.

\subsection{The definition of $I \tskein{\Sigma}$}
\label{subsection_definition_I}
The aim of this subsection is to define $I \tskein{\Sigma}$.

\begin{lemm}
Let $\shuugou{c_1, c_2}$ be a pair of curves whose algebraic intersection number
is $0$. 
Then the set $\shuugou{\Lambda (c_1),\Lambda (c_2)}$ satisfies
the conditions equation (\ref{equation_jouken_bch}).
\end{lemm}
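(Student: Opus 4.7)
The plan is to reduce this lemma directly to Proposition \ref{prop_BCH_jouken}, using the algebraic intersection number hypothesis to produce the subspaces $V_1, V_2 \subset H$ needed to invoke it.

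First I would take the subspace $V \defeq \Q[\rho]\cdot [c_1] + \Q[\rho] \cdot [c_2] \subset H$ generated by the homology classes of $c_1$ and $c_2$, and set $V_1 = V_2 = V$. Since the intersection form $\mu$ on $H$ is alternating (so $\mu([c_i],[c_i]) = 0$) and $\mu([c_1],[c_2]) = 0$ by the algebraic intersection number hypothesis, we have $\mu(V_1, V_2) = \shuugou{0}$, which is the condition required by Proposition \ref{prop_BCH_jouken}.

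Next I would verify that $\Lambda(c_i) \in \varrho^{-1}(\varpi_{\mathcal{P}2}^{-1}(V_1 \cdot V_2 \oplus \Q[\rho]))$ for $i=1,2$. Recall from the remark right after the definition of $\Lambda(c)$ that
\begin{equation*}
\Lambda(c_i) \equiv \tfrac{1}{2}\zettaiti{\psi((\gamma_i-1)^2)} \mod F^3 \widehat{\tskein{\Sigma}},
\end{equation*}
where $\gamma_i$ is a path representing $c_i$. Hence $\varrho(\Lambda(c_i)) = \zettaiti{\tfrac{1}{2}(\log c_i)^2} \in F^2 \widehat{\zettaiti{\mathcal{P}(\Sigma,*_\alpha)}}$, and applying $\varpi_{\mathcal{P}2}$ gives $\tfrac{1}{2}[c_i]\cdot[c_i] \in V_1 \cdot V_2$. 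Thus $\varrho(\Lambda(c_i))$ lies in $\varpi_{\mathcal{P}2}^{-1}(V_1 \cdot V_2 \oplus \Q[\rho])$ as required.

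Finally I would apply Proposition \ref{prop_BCH_jouken}: for each $i \geq 0$, choose $j = i+1$, so that the proposition gives
\begin{equation*}
\sigma(a_1) \circ \sigma(a_2) \circ \cdots \circ \sigma(a_{2i+1})(F^i \widehat{\tskein{\Sigma,J^-,J^+}}) \subset F^{i+1} \widehat{\tskein{\Sigma,J^-,J^+}}
\end{equation*}
for any $a_1,\dots,a_{2i+1} \in \shuugou{\Lambda(c_1), \Lambda(c_2)}$. Taking $j_i \defeq 2i+1$ yields exactly condition (\ref{equation_jouken_bch}). The only step with any subtlety is the second one (identifying the degree-$2$ part of $\Lambda(c_i)$ correctly under $\varrho$ and $\varpi_{\mathcal{P}2}$), but since only the leading term matters and $\varpi_{\mathcal{P}2}$ annihilates $F^3$, this is immediate from the definition of $\Lambda(c)$.
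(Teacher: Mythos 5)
Your proposal is correct and follows essentially the same route as the paper: compute $\varpi_{\mathcal{P}2}(\varrho(\Lambda(c_i)))=\frac{1}{2}[c_i]\cdot[c_i]$, use the vanishing of the algebraic intersection number (together with the alternating property of $\mu$) to verify the hypothesis of Proposition \ref{prop_BCH_jouken} with $V_1=V_2$ the span of $[c_1],[c_2]$, and then read off condition (\ref{equation_jouken_bch}) with $j_i=2i+1$. The paper's proof is just a terser version of the same argument, leaving the choice of $V_1,V_2$ implicit.
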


\begin{proof}
Let $\gamma_1$ and $\gamma_2$ be elements of $H_1$ such that 
$\pm [c_1] =\gamma_1$ and $\pm [c_2] =\gamma_2$.
We remark that
\begin{equation*}
\varpi_{\mathcal{P}2}(\zettaiti{\frac{1}{2}(\log (c_i))^2}) =\frac{1}{2} \gamma_i \cdot \gamma_i
\end{equation*}
for $i =1,2$.
Since $\mu(\gamma_1,\gamma_2)=0$ and Proposition \ref{prop_BCH_jouken},
we have the claim of the Lemma.
This finishes the proof.
\end{proof} 

By this lemma, we define $C(c_1,c_2) =\bch (\Lambda(c_1),
\Lambda (c_2),-\Lambda (c_1),-\Lambda (c_2))$
for a pair $\shuugou{c_1, c_2}$ of curves whose algebraic intersection number
is $0$. We denote  by $\mathcal{N}_{\mathrm{comm}}(\Sigma)$
the set of all $C(c_1,c_2) $. Since $[\Lambda (c_1),\Lambda (c_2)] 
\in F^3 \widehat{\tskein{\Sigma}}$ for
a pair $\shuugou{c_1,c_2}$ of curves whose algebraic intersection number is $0$,
we have $\mathcal{N}_{\mathrm{comm}} (\Sigma) \subset F^3
\widehat{\tskein{\Sigma}}$.

\begin{lemm}
\label{lemm_bounding_pair}
Let $\shuugou{c_1,c_2}$ be a pair of non-isotopic disjoint homologous curves
which is presented by $\zettaiti{r[a_1,b_1]\cdots[a_m,b_m]}$ and $\zettaiti{r}$, 
respectively,
for $r, a_1, b_1, \cdots, a_m, b_m \in \pi_1^+(\Sigma)$.
Then we have $\Lambda (c_1)-\Lambda (c_2) \in F^3 \widehat{
\tskein{\Sigma} } $.
\end{lemm}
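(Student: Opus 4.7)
The plan is to work modulo $F^3 \widehat{\tskein{\Sigma}}$. By the remark immediately preceding Theorem \ref{thm_main_Dehn}, for any framed representative $\gamma \in \pi^+_1(\Sigma, *_\alpha)$ of $c$ we have
\[
\Lambda(c) \equiv \tfrac{1}{2} \bigl|\psi\bigl((\gamma - 1)^2\bigr)\bigr| \pmod{F^3 \widehat{\tskein{\Sigma}}}.
\]
Choosing $\gamma_1 = r[a_1, b_1] \cdots [a_m, b_m]$ and $\gamma_2 = r$ in $\mathcal{P}(\Sigma, *_\alpha)$ as dictated by the hypothesis, it therefore suffices to prove
\[
\bigl|\psi\bigl((\gamma_1 - 1)^2 - (\gamma_2 - 1)^2\bigr)\bigr| \in F^3 \widehat{\tskein{\Sigma}}.
\]

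The main step is an elementary calculation in the augmentation ideal $\ker \epsilon \subset \mathcal{P}(\Sigma, *_\alpha)$. I will apply the (non-commutative) identity
\[
(\gamma_1 - 1)^2 - (\gamma_2 - 1)^2 = (\gamma_1 - \gamma_2)(\gamma_1 - 1) + (\gamma_2 - 1)(\gamma_1 - \gamma_2),
\]
and set $w \defeq [a_1, b_1] \cdots [a_m, b_m]$, so that $\gamma_1 - \gamma_2 = r(w - 1)$. Because $(\ker \epsilon)^2$ is a two-sided ideal and $\gamma_1 - 1,\,\gamma_2 - 1 \in \ker \epsilon$, both summands on the right lie in $(\ker \epsilon)^3$ as soon as we know $\gamma_1 - \gamma_2 \in (\ker \epsilon)^2$. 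For the latter I will use the standard group-ring identity
\[
[a, b] - 1 = \bigl((a - 1)(b - 1) - (b - 1)(a - 1)\bigr) a^{-1} b^{-1} \in (\ker \epsilon)^2,
\]
and induct on $m$ via $w_k - 1 = w_{k-1}\bigl([a_k, b_k] - 1\bigr) + (w_{k-1} - 1)$, where $w_k \defeq [a_1, b_1] \cdots [a_k, b_k]$; multiplying by $r$ on the left then keeps us in the ideal $(\ker \epsilon)^2$.

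To finish, observe that by the definition of $F^3 \tskein{\Sigma}$ (take $j = 1$, $i_0 = 0$, $i_1 = 3$ in the generating family) the map $|\psi(\cdot)|$ sends $(\ker \epsilon)^3 \subset \mathcal{P}(\Sigma, *_\alpha)$ into $F^3 \tskein{\Sigma}$. Combined with the congruence above, this yields $\Lambda(c_1) - \Lambda(c_2) \in F^3 \widehat{\tskein{\Sigma}}$. I do not anticipate a real obstacle in this argument; it is a clean augmentation-ideal calculation that relies only on the explicit leading-order formula for $\Lambda(c)$ already established in the preceding subsection, together with the fact that products of commutators land in the second augmentation-ideal power.
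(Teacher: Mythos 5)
Your proposal is correct and follows essentially the same route as the paper's proof: both work modulo $F^3\widehat{\tskein{\Sigma}}$ using the quadratic approximation $\Lambda(c)\equiv\frac{1}{2}\zettaiti{\psi((\gamma-1)^2)}$, factor the noncommutative difference of squares so that one factor is $\gamma_1-\gamma_2=r(w-1)$, and use the identity $[a,b]-1=((a-1)(b-1)-(b-1)(a-1))a^{-1}b^{-1}$ to place that factor in $(\ker\epsilon)^2$, hence the whole expression in $(\ker\epsilon)^3$. The only cosmetic differences are that the paper first reduces to the case $m=1$ whereas you induct on $m$ directly, and you use the exact identity $(\gamma_1-1)^2-(\gamma_2-1)^2=(\gamma_1-\gamma_2)(\gamma_1-1)+(\gamma_2-1)(\gamma_1-\gamma_2)$ in place of the paper's (anti)commutator-style decomposition.
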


\begin{proof}
It suffices to show
\begin{equation*}
\Lambda (c_1)- \Lambda (c_2) \in F^3 \widehat{\tskein{\Sigma}},
\end{equation*}
for simple closed curves $c_1$ and $c_2$
presented by $\zettaiti{r[a,b]}$ and $\zettaiti{r}$,
respectively.
We have
\begin{align*}
&\Lambda (c_1)-\Lambda (c_2) =\zettaiti{\psi( (r-1)^2-(r[a,b]-1)^2)} \\
&=\zettaiti{\psi( \frac{1}{2}((r-r[a,b])(r+r[a,b]-2 )-(r+r[a,b]-2 )(r-r[a,b])))} 
\mod F^3 \widehat{\tskein{\Sigma}}.
\end{align*}
Since $r-r[a,b]=r((a-1)(b-1)-(b-1)(a-1))a^{-1} b^{-1}$, 
we have $\Lambda (c_1)- \Lambda (c_2) \in F^3 \widehat{\tskein{\Sigma}}$.
This proves the lemma.
\end{proof}

\begin{cor}
If $c$ is a null-homologous simple closed curve,
then we have $\Lambda (c) \in F^4 \widehat{\tskein{\Sigma}}$.
\end{cor}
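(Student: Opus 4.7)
The plan is to refine the $F^3$ estimate of Lemma \ref{lemm_bounding_pair} to $F^4$ by a careful termwise bound on the explicit formula for $\Lambda(c)$.

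If $c$ is null-homotopic, then $\Lambda(c)=0\in F^4$ by Corollary \ref{cor_def_Lambda_trivial}. Otherwise, since null-homologous simple closed curves on $\Sigma_{g,1}$ are separating, I would choose a contractible simple closed curve $c_0$ disjoint from $c$ and invoke Lemma \ref{lemm_bounding_pair} with $r=1$ to present $c$ by a word $\zettaiti{[a_1,b_1]\cdots[a_m,b_m]}$. Using that $\Lambda(c)$ is independent of the choice of path representing $c$, I would fix the framing of the representing path $\gamma\in\pi^+_1(\Sigma,*_\alpha,*'_\alpha)$ so that $e^{\rho h}\gamma$ equals that commutator word in $\mathcal{P}$. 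Then $e^{\rho h}\gamma-1\in(\ker\epsilon)^2\mathcal{P}$, and consequently each $\gamma_{i,n}-1=1^{\otimes(i-1)}\otimes(e^{\rho h}\gamma-1)\otimes 1^{\otimes(n-i)}$ lies in $(\ker\epsilon_{\mathcal{P}^{\otimes n}})^2$.

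Next I would bound every summand of $\Lambda(c)$ modulo $F^4$. The outer prefactor $(h/2/\arcsinh(h/2))^2$ equals $1$ modulo $O(h^2)\in F^4$, and the subtracted $\rho^3h^2/3$ already lies in $F^4$ because $h\in F^2\widehat{\tskein{\Sigma}}$. For $n\ge 3$ the prefactor $h^{n-1}$ sits in $F^{2(n-1)}\subseteq F^4$, which absorbs the whole summand. For $n=1$, $\tfrac{1}{2}(\log(e^{\rho h}\gamma))^2$ expands as a power series in $e^{\rho h}\gamma-1\in(\ker\epsilon)^2$ starting at degree $2$ and hence lies in $(\ker\epsilon)^4\mathcal{P}$, whose closure lies in $F^4$. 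For $n=2$, the divided difference $\lambda^{[2]}(X_1,X_2)$ has no constant term, so $\lambda^{[2]}(\gamma_{1,2},\gamma_{2,2})\in(\ker\epsilon_{\mathcal{P}^{\otimes 2}})^2$; multiplying by the unit $\gamma_{1,2}\gamma_{2,2}$ and applying $\psi_2$ yields an element of $F^2\widehat{\tskein{\Sigma}}$, which combines with the $h$-prefactor to give $F^4$.

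The main obstacle is the $n=2$ summand. The naive estimate yields only $F^3$, since the $h$-prefactor contributes $F^2$ and the leading linear behaviour $\lambda^{[2]}=\tfrac{1}{2}((X_1-1)+(X_2-1))+\cdots$ would produce merely a first-order augmentation-ideal contribution under $\psi_2$. The upgrade to $F^2$ at the $\psi_2$-level, and hence to $F^4$ overall, rests on the framing-adjusted condition $e^{\rho h}\gamma-1\in(\ker\epsilon)^2$ rather than on the weaker $\gamma-1\in\ker\epsilon$; this squares up the augmentation degree of each $\gamma_{i,2}-1$ factor and thereby closes the gap. Verifying that the framing of $\gamma$ can indeed be arranged to match the commutator structure of a null-homologous $c$, so that the $e^{\rho h}$-twist built into $\gamma_{i,n}$ cancels the framing shift, is the conceptual key enabling the argument.
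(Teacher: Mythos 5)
Your overall strategy is sound, and in fact it supplies an argument that the paper itself never writes down: the corollary is stated immediately after Lemma \ref{lemm_bounding_pair} with no proof, and applying that lemma naively (taking the partner curve contractible, so $r=1$ and its $\Lambda$ vanishes by Corollary \ref{cor_def_Lambda_trivial}) only yields $\Lambda(c)\in F^3\widehat{\tskein{\Sigma}}$, since the congruence $\Lambda(c)\equiv\frac{1}{2}\zettaiti{\psi((\gamma-1)^2)}$ is only asserted modulo $F^3$. You correctly identify that one must return to the defining series of $\Lambda(c)$ and re-estimate each summand using the stronger input that $\gamma-1$ has augmentation degree $2$; your term-by-term bounds (the prefactor, the $\rho^3h^2/3$ term, the $n\ge 3$ summands via $h^{n-1}$, the $n=1$ summand via $(\log X)^2\in(X-1)^2\Q[[X-1]]$, and the $n=2$ summand via the vanishing constant term of $\lambda^{[2]}$) are all correct.

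The one step that does not hold as you state it is the framing normalization. The path $\gamma$ is required to be a simple path whose closure $\zettaiti{\psi(\gamma)}$ is presented by the blackboard-framed diagram of $c$, so its rotation number is dictated by $c$ and is not a free parameter; for a separating curve of positive genus it is not $\pm 1$, so one cannot in general arrange $e^{\rho h}\gamma$ to equal the commutator word on the nose. Fortunately this exact cancellation is not needed: in $\mathcal{P}(\Sigma,*_\alpha)$ one has $e^{\rho h}\gamma=e^{j\rho h}[a_1,b_1]\cdots[a_m,b_m]$ for some $j\in\Z$, hence $e^{\rho h}\gamma-1\in(\ker\epsilon)^2+h\,\mathcal{P}(\Sigma,*_\alpha)$, and since $h$ carries weight $2$ in the filtration (the $2i_0$ in the definition of $F^n\tskein{\Sigma}$), every one of your estimates goes through with this weaker membership: for $n=1$ one gets $u^2\in(\ker\epsilon)^4+h(\ker\epsilon)^2+h^2\mathcal{P}$, which still maps into $F^4$, and similarly the $n=2$ summand lands in $h\cdot F^2\subset F^4$. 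With that repair the proof is complete.
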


We denote by $\mathcal{N}_{\mathrm{bp}}(\Sigma)$ and
$\mathcal{N}_{\mathrm{sep}}(\Sigma)$
the set of all $\Lambda(c_1)-\Lambda(c_2)$ for 
 a pair $\shuugou{c_1,c_2}$
 of non-isotopic disjoint homologous curves,
i.e. which is a bounding pair (BP)
and the set of all $\Lambda (c)$
for a null homologous simple closed curve $c$,
i.e. for separating s.c.c. $c$, respectively.
By the above lemma and corollary,
we have $\mathcal{N}_{\mathrm{bp}}(\Sigma) \cup
\mathcal{N}_{\mathrm{sep}}(\Sigma) \subset F^3 \widehat{\tskein{\Sigma}}$.
We define
\begin{equation*}
I \tskein{\Sigma} \defeq \zettaiti{\mathcal{N}_{\mathrm{gen}}(\Sigma)}
\end{equation*}
where we denote $\mathcal{N}_{\mathrm{gen}}(\Sigma) \defeq
\mathcal{N}_{\mathrm{comm}} (\Sigma)
\cup \mathcal{N}_{\mathrm{bp}}(\Sigma) \cup
\mathcal{N}_{\mathrm{sep}}(\Sigma)$.

We denote by
\begin{align*}
&\mathcal{I}_{\mathrm{comm}}(\Sigma) \defeq
\shuugou{C_{c_1c_2} \defeq [t_{c_1}, t_{c_2}]|\mu(c_1,c_2)=0}, \\
&\mathcal{I}_{\mathrm{sep}}(\Sigma) \defeq
\shuugou{t_{c_1c_2} \defeq t_{c_1}{t_{c_2}}^{-1}|\shuugou{c_1,c_2}: \mathrm{BP}}, \\
&\mathcal{I}_{\mathrm{sep}}(\Sigma) \defeq
\shuugou{t_{c}|c: \mathrm{separating \ \ s.c.c.}}.
\end{align*}
The set $\mathcal{I}_{\mathrm{gen}}(\Sigma) \defeq
\mathcal{I}_{\mathrm{comm}} (\Sigma)
\cup \mathcal{I}_{\mathrm{bp}}(\Sigma) \cup
\mathcal{I}_{\mathrm{sep}}(\Sigma)$ is the generator set
of the infinite presentation of $\mathcal{I} (\Sigma)$ in \cite{Pu2008}.

\begin{df}
The map $\theta :\mathcal{N}_{\mathrm{gen}}(\Sigma)  \to \mathcal{I}_{\mathrm{gen}}(\Sigma)$ is
defined by the following.
\begin{itemize}
\item  For
a pair $\shuugou{c_1,c_2}$ of curves whose algebraic intersection number is $0$,
$C(c_1,c_2) \mapsto C_{c_1c_2}$.
\item For a pair $\shuugou{c_1,c_2}$
 of non-isotopic disjoint homologous curves, $\Lambda(c_1)-\Lambda(c_2)
 \mapsto t_{c_1c_2}$.
\item For
a null homologous simple closed curve $c$,
$\theta (\Lambda (c)) =t_c$.
\end{itemize}

\end{df}

In subsection \ref{subsection_injectivity_of_zeta}
Proposition \ref{prop_theta}, we prove $\theta$ is well-defined.

\subsection{Well-definedness of $\theta$}
\label{subsection_injectivity_of_zeta}

The aim of this subsection is to prove the following.

\begin{lemm}
\label{lemm_zeta_inv_inj}
The map $\theta:\mathcal{N}_{\mathrm{gen}}(\Sigma) \to
\mathcal{I}_{\mathrm{gen}}(\Sigma)$ induces
$\theta:I \tskein{\Sigma} \to \mathcal{I}(\Sigma)$.
\end{lemm}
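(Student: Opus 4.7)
The plan is to use the faithfulness of the mapping class group action (Proposition~\ref{prop_MCG_tskein_faithful}) to show that $\theta$ extends to $I\tskein{\Sigma}$ because $\exp(\sigma(x))$ is an element of $\mathcal{M}(\Sigma)$ determined uniquely by $x$, and this element must coincide with the Torelli-group product assigned by $\theta$ on any BCH-decomposition of $x$.

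First, I would verify that for each $a \in \mathcal{N}_{\mathrm{gen}}(\Sigma)$, the operator $\exp(\sigma(a))$ on $\widehat{\tskein{\Sigma, J^-, J^+}}$ coincides with the action of $\theta(a) \in \mathcal{I}_{\mathrm{gen}}(\Sigma)$. For a separating simple closed curve $c$, this is exactly Theorem~\ref{thm_main_Dehn}: $\exp(\sigma(\Lambda(c))) = t_c$. For a bounding pair $\{c_1,c_2\}$, since $c_1$ and $c_2$ admit disjoint representatives one sees that $[\Lambda(c_1), \Lambda(c_2)] = 0$ in $\widehat{\tskein{\Sigma}}$ (all crossings in the bracket formula can be eliminated by choosing diagrams in disjoint annular neighbourhoods), so $\bch(\Lambda(c_1), -\Lambda(c_2)) = \Lambda(c_1) - \Lambda(c_2)$ and therefore $\exp(\sigma(\Lambda(c_1) - \Lambda(c_2))) = t_{c_1} t_{c_2}^{-1} = t_{c_1 c_2}$. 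For a commutator element $C(c_1, c_2) = \bch(\Lambda(c_1), \Lambda(c_2), -\Lambda(c_1), -\Lambda(c_2))$ with $\mu(c_1, c_2) = 0$, the formal identity $\exp(\bch(X_1, \ldots, X_k)) = \exp(X_1)\cdots \exp(X_k)$ together with Theorem~\ref{thm_main_Dehn} gives $\exp(\sigma(C(c_1, c_2))) = [t_{c_1}, t_{c_2}] = C_{c_1 c_2}$.

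Next, given $x \in I\tskein{\Sigma}$ with any BCH-decomposition $x = \bch(\epsilon_1 a_1, \ldots, \epsilon_m a_m)$, $a_i \in \mathcal{N}_{\mathrm{gen}}(\Sigma)$, I set $\theta(x) \defeq \theta(a_1)^{\epsilon_1} \cdots \theta(a_m)^{\epsilon_m} \in \mathcal{I}(\Sigma)$. Because $\sigma$ is a Lie algebra homomorphism into the derivations of $\widehat{\tskein{\Sigma, *_i, *_j}}$, one has
\begin{equation*}
\exp(\sigma(x)) \;=\; \prod_{i=1}^{m} \exp(\epsilon_i \sigma(a_i)) \;=\; \prod_{i=1}^{m} \theta(a_i)^{\epsilon_i}
\end{equation*}
as operators on $\widehat{\tskein{\Sigma, *_i, *_j}}$ for every pair $*_i, *_j \in \{*_1,\ldots,*_b\}$. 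Since the left-hand side depends only on $x$, any two BCH-decompositions of $x$ produce Dehn-twist products which act identically; by the faithfulness statement of Proposition~\ref{prop_MCG_tskein_faithful}, those products are equal in $\mathcal{M}(\Sigma)$. Hence $\theta(x)$ is well-defined, and the assignment $x \mapsto \theta(x)$ is visibly a homomorphism from the group $(I\tskein{\Sigma}, \bch)$ to $\mathcal{I}(\Sigma)$.

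The main obstacle is the vanishing $[\Lambda(c_1), \Lambda(c_2)] = 0$ for disjoint curves, which is needed to identify $\bch(\Lambda(c_1), -\Lambda(c_2))$ with the formal difference $\Lambda(c_1) - \Lambda(c_2)$ appearing in the definition of $\mathcal{N}_{\mathrm{bp}}(\Sigma)$. Once this and the analogous BCH unwinding for the commutator generators are in place, well-definedness reduces entirely to Proposition~\ref{prop_MCG_tskein_faithful} together with Theorem~\ref{thm_main_Dehn}.
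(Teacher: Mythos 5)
Your proposal is correct and follows essentially the same route as the paper: the paper's Proposition \ref{prop_theta} verifies via Theorem \ref{thm_main_Dehn} that each generator of $\mathcal{N}_{\mathrm{gen}}(\Sigma)$ acts as the corresponding mapping class, Lemma \ref{lemm_theta} converts BCH words into products of those mapping classes acting on $\widehat{\tskein{\Sigma,J^-,J^+}}$, and well-definedness then follows from the faithfulness statement Proposition \ref{prop_MCG_tskein_faithful}, exactly as in your argument. Your explicit observation that $[\Lambda(c_1),\Lambda(c_2)]=0$ for disjoint curves (so that $\bch(\Lambda(c_1),-\Lambda(c_2))=\Lambda(c_1)-\Lambda(c_2)$) is a detail the paper leaves implicit, and it is a correct and welcome addition.
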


\begin{prop}
\label{prop_theta}
The map $\theta$ is well-defined.
\end{prop}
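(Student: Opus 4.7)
The strategy is to reduce well-definedness to the faithfulness result in Proposition \ref{prop_MCG_tskein_faithful}. The key intermediate claim is: for every $x \in \mathcal{N}_{\mathrm{gen}}(\Sigma)$, the element $\theta(x) \in \mathcal{M}(\Sigma)$ acts on each completed skein module $\widehat{\tskein{\Sigma, J^-, J^+}}$ as the operator $\exp(\sigma(x))$. Once this identity is established case by case, well-definedness is immediate: if some $x \in \mathcal{N}_{\mathrm{gen}}(\Sigma)$ admits two descriptions coming from different types of generators, the corresponding mapping classes induce the same automorphism on every $\widehat{\tskein{\Sigma, *_i, *_j}}$, and Proposition \ref{prop_MCG_tskein_faithful} forces them to coincide in $\mathcal{M}(\Sigma)$.

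I will treat the three types of generators separately. For a separating simple closed curve $c$, the identity $t_c = \exp(\sigma(\Lambda(c)))$ is exactly Theorem \ref{thm_main_Dehn}. For a bounding pair $\{c_1, c_2\}$, I use that $c_1$ and $c_2$ are already disjoint: since the skein Lie bracket is defined as a sum over transverse double points of representative diagrams, this forces $[\Lambda(c_1), \Lambda(c_2)] = 0$. Consequently $\sigma(\Lambda(c_1))$ and $\sigma(\Lambda(c_2))$ commute, and
\begin{equation*}
\exp(\sigma(\Lambda(c_1)-\Lambda(c_2))) = \exp(\sigma(\Lambda(c_1))) \circ \exp(-\sigma(\Lambda(c_2))) = t_{c_1} \circ t_{c_2}^{-1} = t_{c_1 c_2},
\end{equation*}
again by Theorem \ref{thm_main_Dehn}.

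For the commutator case $C(c_1, c_2)$ with $\mu(c_1, c_2)=0$, I take $V_1 = V_2$ to be the $\Q[\rho]$-span of $[c_1]$ and $[c_2]$ in $H = \Q[\rho]\otimes H_1(\Sigma, \Z)$; since $\mu$ is alternating, the hypothesis $\mu(c_1,c_2)=0$ gives $\mu(V_1, V_2)=0$. Both $\Lambda(c_i)$ then lie in $\varrho^{-1}(\varpi_{\mathcal{P}2}^{-1}(V_1\cdot V_2\oplus \Q[\rho]))$, so Proposition \ref{prop_BCH_jouken} applies. This simultaneously guarantees that $\bch(\Lambda(c_1), \Lambda(c_2), -\Lambda(c_1), -\Lambda(c_2))$ converges in $\widehat{\tskein{\Sigma}}$ and, propagated through the Lie algebra homomorphism $\sigma$, yields
\begin{equation*}
\exp(\sigma(C(c_1, c_2))) = \exp(\sigma(\Lambda(c_1)))\exp(\sigma(\Lambda(c_2)))\exp(-\sigma(\Lambda(c_1)))\exp(-\sigma(\Lambda(c_2))) = C_{c_1 c_2}.
\end{equation*}

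The main obstacle is this last case: I must carefully track convergence of $\sigma(\bch(\cdots))$ as an operator, verifying that every iterated bracket appearing in the BCH expansion continues to fit the hypothesis of Proposition \ref{prop_BCH_jouken} so that the filtration shifts propagate through the expansion and allow the exponential identity to be applied term by term. Once the three cases are settled, the well-definedness of $\theta$ follows from Proposition \ref{prop_MCG_tskein_faithful} as described above.
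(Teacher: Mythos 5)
Your proposal is correct and takes essentially the same route as the paper: establish, case by case via Theorem \ref{thm_main_Dehn}, that $\exp(\sigma(x))$ coincides with the action of $\theta(x)$ on every $\widehat{\tskein{\Sigma,J^-,J^+}}$, and then conclude by the faithfulness statement of Proposition \ref{prop_MCG_tskein_faithful}. The extra detail you supply for the bounding-pair case (vanishing of the bracket for disjoint curves) and the commutator case (convergence via Proposition \ref{prop_BCH_jouken}) is consistent with what the paper uses implicitly.
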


\begin{proof}
By Theorem \ref{thm_main_Dehn}, we have the followings.

\begin{itemize}
\item  For
a pair $\shuugou{c_1,c_2}$ of curves whose algebraic intersection number is $0$,
$\exp(\sigma(C(c_1,c_2)))(\cdot)= C_{c_1c_2}(\cdot)\in \Aut (\widehat{\tskein{\Sigma,
J^-,J^+}})$ for any finite sets $J^-,J^+ \subset \partial{\Sigma}$.
\item For a pair $\shuugou{c_1,c_2}$
 of non-isotopic disjoint homologous curves, 
$\exp(\sigma(\Lambda(c_1)-\Lambda(c_2))) (\cdot)= t_{c_1c_2}(\cdot)\in \Aut (\widehat{\tskein{\Sigma,J^-,J^+}})$ for any finite sets $J^-,J^+ 
\subset \partial{\Sigma}$.
\item For
a null homologous simple closed curve $c$,
$\exp(\sigma(\Lambda(c)))(\cdot) =t_c(\cdot) \in \Aut (\widehat{\tskein{\Sigma,J^-,J^+}})$
 for any finite set $J^-.J^+ \in \partial{\Sigma}$.
\end{itemize}

By Proposition \ref{prop_MCG_tskein_faithful}, $\theta$ is well-defined.
This finishes the proof.
\end{proof}

By Theorem \ref{thm_main_Dehn}, we have the following.

\begin{lemm}
\label{lemm_theta}
For $x_1,x_2, \cdots, x_j  \in \shuugou{k x|k \in \shuugou{\pm1},
x \in \mathcal{N}_{\mathrm{gen}}(\Sigma)}$,
 we have 
\begin{equation*}
\prod_{i=1}^j(\theta ( x_i)) (\cdot) =
\exp(\bch( x_1,  x_2, \cdots, x_j))(\cdot)  \in \Aut (\widehat{\tskein{\Sigma,J^-,J^+}})
\end{equation*} 
for any finite sets $J^-,J^+ \subset \partial \Sigma$.
\end{lemm}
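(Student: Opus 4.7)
The plan is to reduce the statement to the single-generator case already established in Proposition \ref{prop_theta}, and then invoke the formal identity $\exp(\sigma(\bch(x_1,\ldots,x_j))) = \exp(\sigma(x_1)) \circ \cdots \circ \exp(\sigma(x_j))$, which is essentially the defining property of the BCH series once one knows that $\sigma$ respects the Lie bracket.

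First I would verify that the right-hand side $\bch(x_1, \ldots, x_j)$ makes sense. By Theorem \ref{thm_main_Dehn} and the analysis of Subsection \ref{subsection_definition_I} (and the lemma preceding Lemma \ref{lemm_bounding_pair} for the commutator case), for each generator $x \in \mathcal{N}_{\mathrm{gen}}(\Sigma)$ a bounded number of iterated $\sigma$-actions sends $F^i \widehat{\tskein{\Sigma,J^-,J^+}}$ into $F^{i+1} \widehat{\tskein{\Sigma,J^-,J^+}}$, so the set $\{kx : k \in \{\pm 1\}, x \in \mathcal{N}_{\mathrm{gen}}(\Sigma)\}$ satisfies condition (\ref{equation_jouken_bch}) of Subsection \ref{subsection_bch}. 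Hence $\bch(x_1, \ldots, x_j)$ converges in $\widehat{\tskein{\Sigma}}$, and moreover the exponential series $\exp(\sigma(\bch(x_1, \ldots, x_j)))$ converges as an automorphism of $\widehat{\tskein{\Sigma, J^-, J^+}}$.

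Next I would handle the single-generator case: by Proposition \ref{prop_theta} (applied with $J^\pm$ arbitrary), for each $x \in \mathcal{N}_{\mathrm{gen}}(\Sigma)$ we have $\theta(x)(\cdot) = \exp(\sigma(x))(\cdot)$ on $\widehat{\tskein{\Sigma,J^-,J^+}}$, and so also $\theta(-x)(\cdot) := \theta(x)^{-1}(\cdot) = \exp(-\sigma(x))(\cdot) = \exp(\sigma(-x))(\cdot)$. Consequently
\begin{equation*}
\prod_{i=1}^j \theta(x_i)(\cdot) \;=\; \exp(\sigma(x_1)) \circ \exp(\sigma(x_2)) \circ \cdots \circ \exp(\sigma(x_j))(\cdot).
\end{equation*}
It therefore remains to show that the right-hand side equals $\exp(\sigma(\bch(x_1, \ldots, x_j)))(\cdot)$.

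For this last identity I would argue by induction on $j$, using the two-variable formula $\exp(\sigma(a)) \circ \exp(\sigma(b)) = \exp(\sigma(\bch(a,b)))$ together with the associativity-like identity $\bch(x_1,\ldots,x_j) = \bch(\bch(x_1,\ldots,x_{j-1}), x_j)$. The two-variable formula is a formal consequence of the fact that $\sigma$ is a Lie algebra homomorphism from $\widehat{\tskein{\Sigma}}$ to the derivation algebra of $\widehat{\tskein{\Sigma,J^-,J^+}}$ (established earlier via the Leibniz rules), so it reduces to the purely formal identity in the completed enveloping algebra where $\bch$ was defined. The main subtlety is to check that at each stage of the induction, the intermediate element $\bch(x_1, \ldots, x_{j-1})$ still lies in a subspace on which the BCH series and the exponential converge when paired with $x_j$; this is guaranteed by the filtration-preserving property of $\sigma$ recorded in Proposition \ref{prop_filt_bracket} and the aforementioned condition (\ref{equation_jouken_bch}). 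I expect the bookkeeping of convergence across the induction step to be the only technical hurdle; the algebraic content is entirely formal once the single-generator case is in hand.
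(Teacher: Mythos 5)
Your proposal is correct and follows exactly the route the paper intends: the paper offers no written proof beyond the phrase ``By Theorem \ref{thm_main_Dehn}'', and the intended argument is precisely your reduction to the single-generator identity $\theta(x)(\cdot)=\exp(\sigma(x))(\cdot)$ from Proposition \ref{prop_theta} combined with the defining property of $\bch$ from Subsection \ref{subsection_bch}, with convergence supplied by condition (\ref{equation_jouken_bch}) since $\mathcal{N}_{\mathrm{gen}}(\Sigma)\subset F^3\widehat{\tskein{\Sigma}}$. Your write-up simply makes explicit the bookkeeping the paper leaves implicit.
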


\begin{proof}[Proof of Lemma \ref{lemm_zeta_inv_inj}]
For $x_1,x_2, \cdots, x_j ,
y_1,y_2, \cdots,y_k \in \shuugou{k' x|k' \in \shuugou{\pm1},
x \in \mathcal{N}_{\mathrm{gem}} (\Sigma)}$,
we assume \begin{equation*}
\bch(x_1, \cdots, x_j) = \bch (y_1, \cdots, y_k). 
\end{equation*}
By Lemma \ref{lemm_theta}, we have
\begin{equation*}
\prod_{i=1}^j\theta (x_i) (\cdot) =\prod_{i=1}^k\theta (y_i) (\cdot) 
\in \Aut (\widehat{\tskein{\Sigma,J^-,J^+}})
\end{equation*}
for any finite sets $J^-,J^+ \subset \partial \Sigma$.
By Proposition \ref{prop_MCG_tskein_faithful}, we have
\begin{equation*}
\prod_{i=1}^j\theta (x_i) =\prod_{i=1}^k \theta (y_i).
\end{equation*}
This finishes the proof.
\end{proof}

\subsection{Well-definedness of $\zeta$}
\label{subsection_well_defined_zeta}
In this section, we prove $\theta$ is injective.
In order to check Putman's relation \cite{Pu2008},
we need the following.

\begin{lemm}
\label{lemm_relation_key}
Let $\Sigma$ be a compact surface with non-empty boundary.
We choose an element $x \in \widehat{\tskein{\Sigma}}$ satisfying
$\sigma (x)(\widehat{\tskein{\Sigma,J^-,J^+}}) =
\shuugou{0}$ for any finite set $J^-,J^+ \subset \partial \Sigma$.
Then we have followings.
\begin{enumerate}
\item For any embedding $e' :\Sigma \to \tilde{\Sigma}$ and any element $\xi \in 
\mathcal{M}(\tilde{\Sigma})$, we have $\xi (e'(x)) =e'(x) \in 
\widehat{\tskein{\tilde{\Sigma}}}$ where 
we also denote by $e'$ the homomorphism $\widehat{\tskein{\Sigma}}
\to \widehat{\tskein{\tilde{\Sigma}}}$ induced by $e'$.
\item For any embedding $e'':\Sigma \times I  \to I^3 \to \Sigma \times I$,
we have $x =e''(x) \in \Q [[A+1]] [\emptyset] \subset \widehat{\tskein{\Sigma}}$
where we also denote by $e''$ the homomorphism $\widehat{\tskein{\Sigma}}
\to \widehat{\tskein{\Sigma}}$ induced by $e''$.
\end{enumerate}
\end{lemm}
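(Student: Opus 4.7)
The plan is to establish part (1) first, using the Dehn twist formula (Theorem \ref{thm_main_Dehn}) together with a locality/Leibniz-rule argument, and then to derive part (2) from (1) combined with the structure of embeddings factoring through a three-ball.

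For part (1), since $\mathcal{M}(\tilde{\Sigma})$ is generated by Dehn twists along simple closed curves, it suffices to verify $t_c(e'(x)) = e'(x)$ for each simple closed curve $c \subset \tilde{\Sigma}$. By Theorem \ref{thm_main_Dehn}, $t_c(\cdot) = \exp(\sigma(\Lambda(c)))(\cdot)$, so the identity reduces to $\sigma(\Lambda(c))(e'(x)) = 0$. Using antisymmetry of the Lie bracket on $\widehat{\tskein{\tilde{\Sigma}}}$ from Section 3.3, this is the same as $\sigma(e'(x))(\Lambda(c)) = 0$. Choose a representative of $\Lambda(c)$ whose underlying tangle meets $e'(\partial \Sigma) \times I$ transversally; then this representative decomposes into a piece $T^{\star}$ lying inside $e'(\Sigma \times I)$ (with boundary points $J^{\pm}$ on $e'(\partial \Sigma)$) and pieces lying outside. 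Applying the Leibniz rule for $\sigma$ proved in Section 3.3 and the naturality of $\sigma$ with respect to the embedding $e'$, the bracket $\sigma(e'(x))(\Lambda(c))$ reduces term by term to $e'(\sigma(x)(T^{\star}))$, where $T^{\star} \in \widehat{\tskein{\Sigma, J^-, J^+}}$. By hypothesis $\sigma(x)$ annihilates $\widehat{\tskein{\Sigma, J^-, J^+}}$, so each term vanishes and the proof of (1) is complete.

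For part (2), factor $e'' = \beta \circ \alpha$ with $\alpha : \Sigma \times I \hookrightarrow I^3$ and $\beta : I^3 \hookrightarrow \Sigma \times I$. The 3-ball version of Proposition \ref{prop_tskein_disk} gives $\widehat{\tskein{I^3}} \cong \Q[\rho][[h]]$, and $\beta_*$ sends scalars to scalar multiples of $[\emptyset]$, so $e''(x) \in \Q[\rho][[h]][\emptyset]$ is immediate. It remains to show $x = e''(x)$. Set $y \defeq x - e''(x)$. Because a scalar multiple of $[\emptyset]$ brackets trivially with everything in every tangle module, $\sigma(e''(x)) = 0$; hence $y$ also satisfies the hypothesis $\sigma(y) = 0$, and in addition $\alpha_*(y) = 0$. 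I then argue $y = 0$ by induction on the filtration $F^n \widehat{\tskein{\Sigma}}$: combining part (1) applied to $e' = \id$ (so $\mathcal{M}(\Sigma)$ fixes $y$) with Proposition \ref{prop_filt_bracket}, the symbol of $y$ in $F^n / F^{n+1}$ lies in the kernel of the induced action of the associated graded of $\sigma$; that kernel is classically identified with the Poisson center of the graded algebra, which by the Goldman-type analysis (via $\varpi_{\mathcal{A}}$ into $\widehat{\Q[\rho]\pi_1(\Sigma)}$) consists of scalar multiples of the empty link. Such scalar contributions are already captured by $\alpha_*$, so the vanishing $\alpha_*(y) = 0$ forces the leading symbol to be zero, and the induction proceeds.

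The main obstacle lies in part (2): precisely identifying $\ker \sigma$ with $\Q[\rho][[h]][\emptyset]$. This is a centrality statement for the graded Poisson HOMFLY-PT algebra, analogous to the classical fact that an element of $\widehat{\Q[\rho] \pi_1(\Sigma)}$ whose $\sigma_{\mathcal{P}}$-action on all path modules vanishes must be a constant. I expect to reduce this to the known Goldman-Turaev centrality statement via the surjection $\varpi_{\mathcal{A}}$ combined with an $h$-adic induction, but the interaction between the filtration by powers of $\ker \epsilon$ and the filtration $F^{\bullet}$ in which $h$ has degree $2$ will require some care. A secondary technical point in part (1) is making the cutting-along-$e'(\partial\Sigma)\times I$ argument rigorous: the cleanest route is to expand $\Lambda(c)$ via Proposition \ref{prop_psi_isom} as a series in path compositions and to apply the various Leibniz identities term by term.
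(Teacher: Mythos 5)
Your part (1) is correct and is essentially the paper's argument (reduce to Dehn twists, apply Theorem \ref{thm_main_Dehn}, and observe via antisymmetry, locality of $\sigma$ and the Leibniz rules that $\sigma(\Lambda(c))(e'(x))=-\sigma(e'(x))(\Lambda(c))=0$ by the hypothesis on $x$); the paper states this in one line and your expansion of the cutting argument is the right way to justify it.

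Part (2) is where the problem lies. Your entire argument rests on the claim that an element $y\in\widehat{\tskein{\Sigma}}$ with $\sigma(y)=0$ on all modules $\widehat{\tskein{\Sigma,J^-,J^+}}$ and with $\alpha_*(y)=0$ must vanish, which you propose to extract from a ``Poisson center'' computation for the associated graded of $\widehat{\tskein{\Sigma}}$. This is not a known input you can cite: the paper only records the injectivity of $\sigma_{\mathcal{P}}$ on $\widehat{\zettaiti{\mathcal{P}(\Sigma,*_\alpha)}}$, and passing from that statement to the skein algebra through $\varpi_{\mathcal{A}}$ is obstructed exactly where you say it is --- the associated graded of $F^\bullet\widehat{\tskein{\Sigma}}$ is built from products of loops and powers of $h$ (with $h$ of degree $2$), not from single loops, so the kernel of the graded action is a priori much larger than the image of the Goldman center, and nothing in the paper controls it. As written, the crux of part (2) is an unproved conjecture, so the proof is incomplete.

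The paper avoids this entirely by a geometric trick that uses only part (1): choose a larger surface $\tilde{\Sigma}$ with an embedding $e''':\tilde{\Sigma}\times I\to\Sigma\times I$ containing two disjoint copies $\Sigma'=\chi'(\Sigma)$ and $\Sigma''=\chi''(\Sigma)$ of $\Sigma$, arranged so that $e'''\circ(\chi'\times\id_I)$ induces the identity on $\widehat{\tskein{\Sigma}}$, so that $e'''\circ(\chi''\times\id_I)(\Sigma\times I)$ lies inside an embedded $I^3$, and so that some diffeomorphism $s$ of $\tilde{\Sigma}$ carries $\chi'$ to $\chi''$. Part (1) applied to $s\in\mathcal{M}(\tilde{\Sigma})$ gives $\chi'(x)=\chi''(x)$ in $\widehat{\tskein{\tilde{\Sigma}}}$, hence $x=e'''\circ(\chi'\times\id_I)(x)=e'''\circ(\chi''\times\id_I)(x)$, and the latter embedding factors through $I^3$. (One then gets the statement for an arbitrary $e''$ factoring through $I^3$ because both sides are the scalar $\alpha_*(x)\in\tskein{I^3}\simeq\Q[\rho][[h]]$ times $[\emptyset]$.) I would recommend replacing your centrality argument with this construction; alternatively, if you want to keep your route, you must actually prove the kernel identification, which is a substantial theorem in its own right.
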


\begin{proof}[Proof of (1)]
Since $\mathcal{M} (\tilde{\Sigma})$ is generated by Dehn twists,
it suffices to check $t_c (x) =x $ for  any simple closed curve $c$.
By assumption of $x$ and Theorem \ref{thm_main_Dehn}, we have
\begin{align*}
t_c(x) =\exp(\sigma(\Lambda(c))(x) =x.
\end{align*}
This finishes the proof.
\end{proof}

\begin{proof}[Proof of (2)]
It suffices to show there exists an embedding
$e'':\Sigma \times I  \to I^3 \to \Sigma \times I$
such that $x = e''(x)$ for any $x \in \tskein{\Sigma}$.
We choose some  compact connected surfaces $\tilde{\Sigma}$
 with non-empty connected boundary and
an embedding $e''': \tilde{\Sigma} \times I \to \Sigma \times I$
such that there
exists submanifolds $\Sigma'$, $\Sigma'' \subset \tilde{\Sigma}$
satisfying the following.

\begin{itemize}
\item The submanifolds $\Sigma' $ and $ \Sigma''$  are disjoint.
\item There are diffeomorphisms $\chi' : \Sigma \to \Sigma'$ and
$\chi'' : \Sigma \to \Sigma''$.
\item There is an embedding $e_{I^3}: I^3 \to \Sigma \times I$ satisfying
\begin{equation*}
e'''\circ (\chi'' \times \id_I) (\Sigma \times I) \subset e''(I^3).
\end{equation*}
\item The embedding $e''' \circ (\chi' \times \id_I)$ induces the identity map of
$\widehat{\tskein{\Sigma}}$.
\item There exists a diffeomorphism $s :\tilde{\Sigma} \to \tilde{\Sigma}$
such that $s \circ \chi' = \chi''$.
\end{itemize}

\begin{figure}
\begin{picture}(300,360)
\put(0,190){\includegraphics[width=310pt]{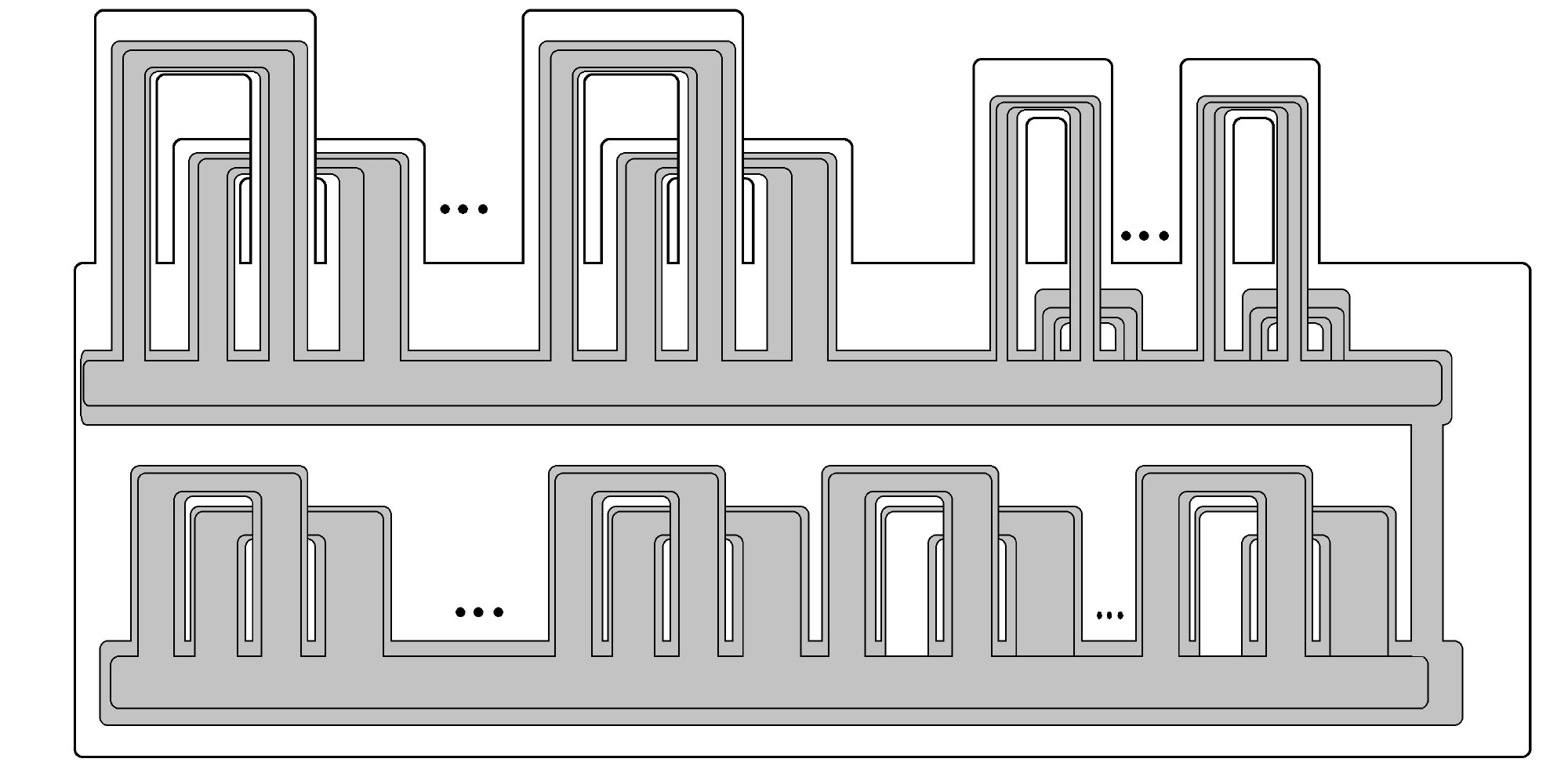}}
\put(0,10){\includegraphics[width=310pt]{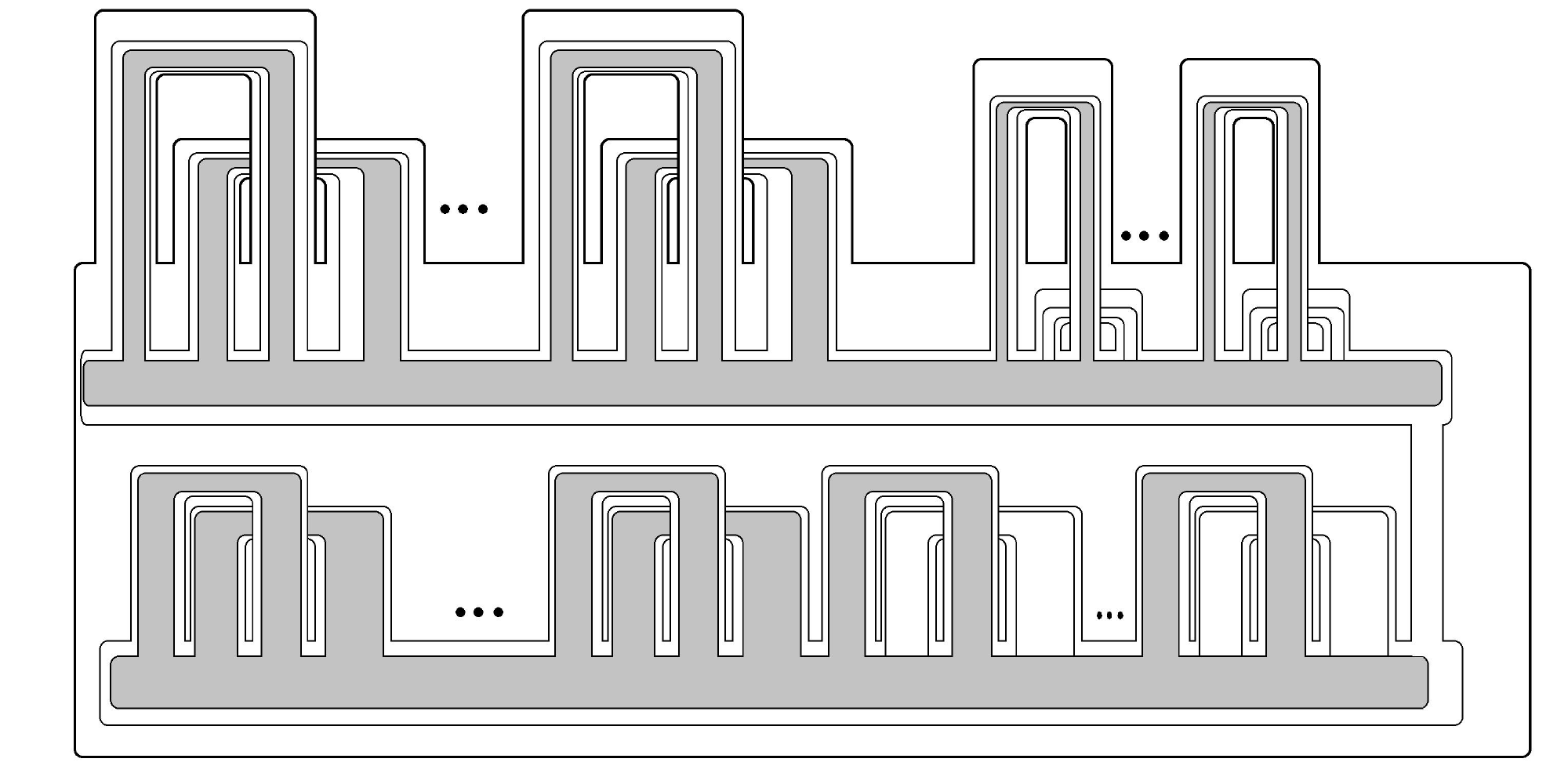}}
\put(120,205){$\tilde{\Sigma}$}
\put(270,100){{$\Sigma'$}}
\put(90,50){$\Sigma''$}
\end{picture}
\caption{$e''': \tilde{\Sigma}\times I \to \Sigma \times I$}
\label{figure_embedding_Sigma_tilde}
\end{figure}

\begin{figure}
\begin{picture}(300,330)
\put(0,180){\includegraphics[width=310pt]{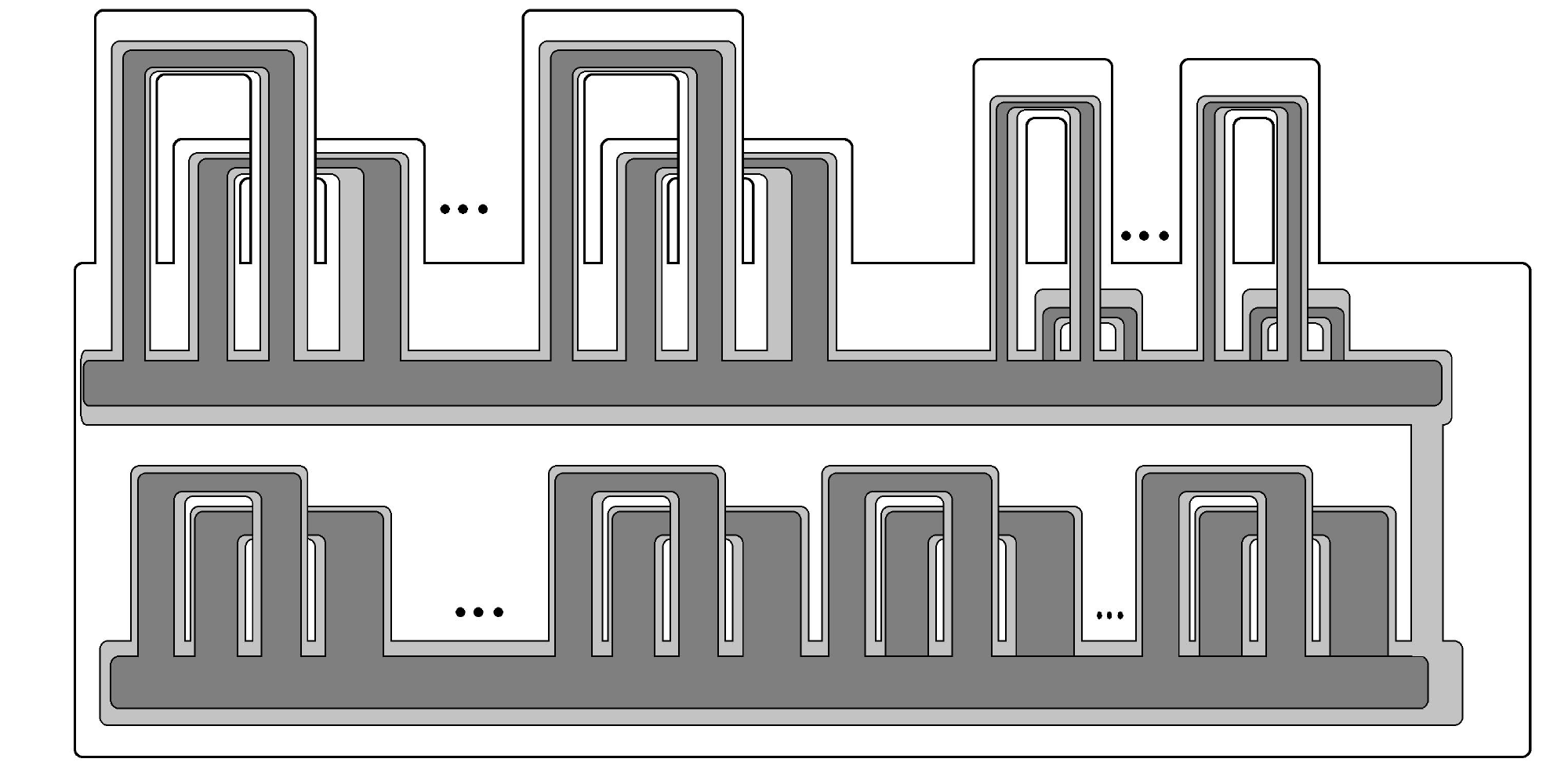}}
\put(0,0){\includegraphics[width=310pt]{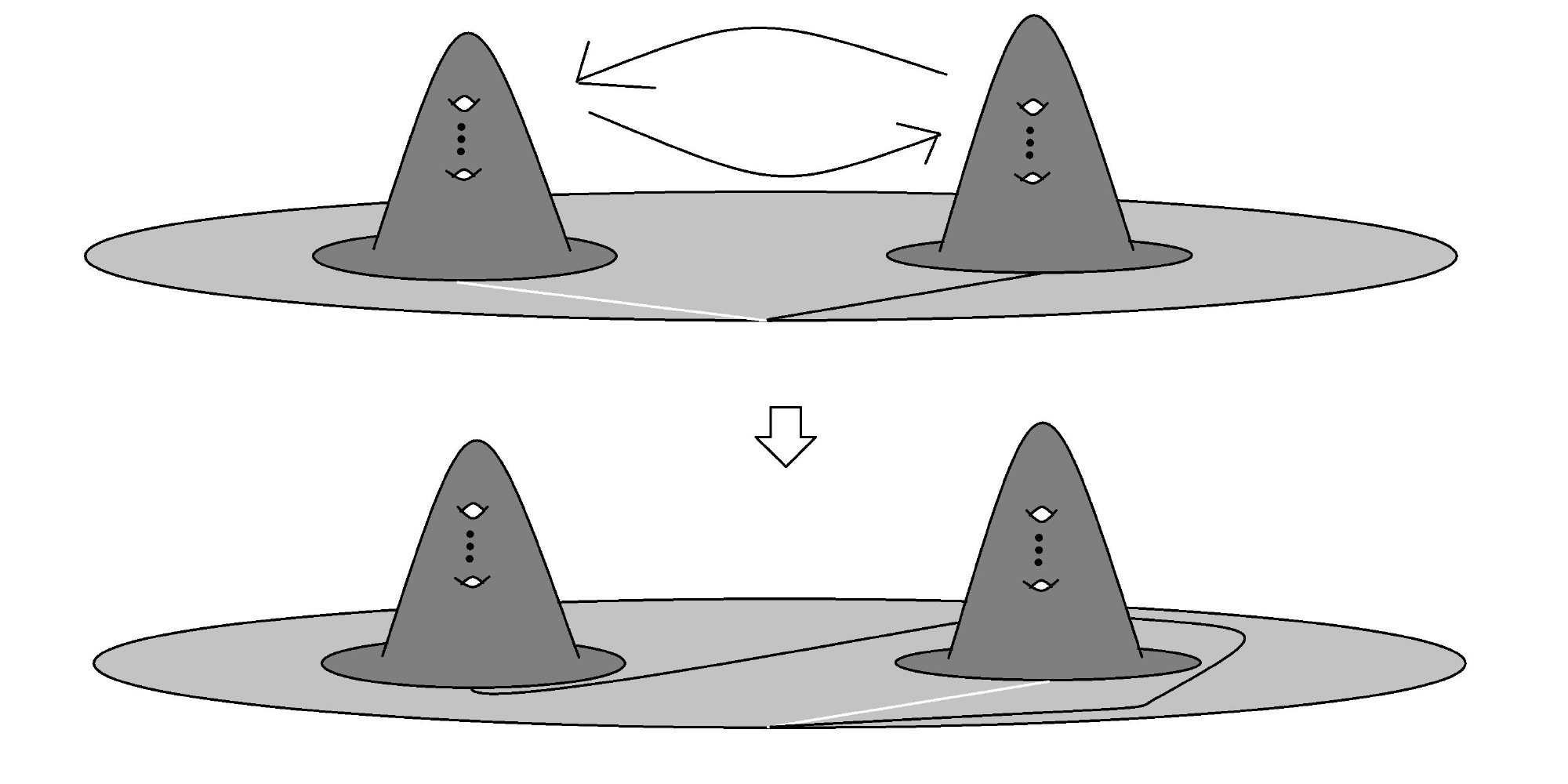}}
\put(163,65){{$s$}}
\end{picture}
\caption{$s : \tilde{\Sigma} \to \tilde{\Sigma}$}
\label{figure_embedding_Sigma_tilde_h}
\end{figure}

For example, see Figure \ref{figure_embedding_Sigma_tilde}
and Figure \ref{figure_embedding_Sigma_tilde_h}.
By (1), we have $\chi'(x) =\chi''(x)$
for any $x \in \tskein{\Sigma}$.
Hence, we obtain $x =e''' \circ (\chi' \times \id_I) (x) =e''' \circ 
(\chi'' \times \id_I)(x)$. The embedding $e''' \circ 
(\chi'' \times \id_I)$ satisfies the claim $e''$.
This finishes the proof.

\end{proof}

To check the relations (F.1), (F.2), ... (F.7) and (F.8)
in \cite{Pu2008}, we need the following.
We obtain the lemma by a straightforward calculation
and definition of the BCH series.

\begin{lemm}
\begin{enumerate}
\item When  $\shuugou{c_1,c_2}$ is a pair
 of non-isotopic disjoint homologous curves, 
we have
\begin{equation*}
\Lambda(c_1)-\Lambda(c_2) =-(\Lambda(c_2)-\Lambda(c_1)).
\end{equation*}
\item When  $\shuugou{c_1,c_2}$ is a pair
 of curves whose algebraic intersection number is $0$, we have
\begin{equation*}
\bch (\bch(\Lambda(c_1),\Lambda(c_2),
-\Lambda(c_1),-\Lambda(c_2)),\bch(\Lambda(c_2),\Lambda(c_1),
-\Lambda(c_2),-\Lambda(c_1)))=0.
\end{equation*}
\item When  $\shuugou{c_1,c_2,c_3}$ is a triple
of non-isotopic disjoint homologous curves, we have
\begin{equation*}
\bch(\Lambda(c_1)-\Lambda(c_2), \Lambda(c_2)-\Lambda(c_3)) 
= \Lambda(c_1)-\Lambda(c_3).
\end{equation*}
\item If $\shuugou{c_1,c_2}$ is a pair of
non-isotopic disjoint homologous curves such that $c_1$ and $c_2$
are separating curves, we have
\begin{equation*}
\Lambda(c_1)-\Lambda(c_2) = \bch (\Lambda(c_1),-\Lambda(c_2)).
\end{equation*}
\item If $\shuugou{c_1,c_2}$ is a pair 
of non-isotopic disjoint homologous curves and
$\shuugou{c_3,c_2}$ is a pair
of curves whose algebraic intersection number is $0$ such that $c_1$ and $c_3$ are disjoint,
we have
\begin{equation*}
\Lambda(t_{c_3}(c_2)) -\Lambda(c_{1}) 
=\bch (\bch(\Lambda(c_3),\Lambda(c_2),-\Lambda(c_3),
-\Lambda(c_2)),\Lambda(c_2)-\Lambda(c_1)).
\end{equation*}
\item  For
a pair $\shuugou{c_1,c_2}$ of curves whose algebraic intersection number is $0$
and $x \in \mathcal{N}_{\mathrm{gen}}(\Sigma)$, we have
\begin{equation*}
\bch(x,C(c_1,c_2),-x) =C(\theta(x)(c_1),\theta(x)(c_2)).
\end{equation*}
\item For a pair $\shuugou{c_1,c_2}$
 of non-isotopic disjoint homologous curves
and $x \in \mathcal{N}_{\mathrm{gen}}(\Sigma)$, we have
\begin{equation*}
\bch(x,\Lambda(c_1)-\Lambda (c_2),-x) =\Lambda (\theta(x)(c_1))
-\Lambda(\theta(x)(c_2)).
\end{equation*}
\item For
a null homologous simple closed curve $c$
and $x \in \mathcal{N}_{\mathrm{gen}}(\Sigma)$, we have
\begin{equation*}
\bch(x,\Lambda(c),-x) =\Lambda (\theta(x)(c))
\end{equation*}
\end{enumerate}
\end{lemm}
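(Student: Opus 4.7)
My plan is to extract two general properties of $\Lambda$ and then reduce each of the eight identities to formal manipulations with BCH. First, $\Lambda$ is equivariant under the mapping class group: for any $\xi\in\mathcal{M}(\Sigma)$ and any simple closed curve $c$ one has $\xi_{*}(\Lambda(c)) = \Lambda(\xi(c))$. This is immediate from the explicit defining formula for $\Lambda$, which depends only on the regular homotopy class of a path $\gamma$ representing $c$: $\xi$ fixes $\partial\Sigma$ pointwise, so it fixes the base points $*_\alpha,*'_\alpha$, and it commutes with $\psi$, $|\,\cdot\,|$, and the algebraic operations of $\widehat{\tskein{\Sigma}}$. Second, for disjoint simple closed curves $c_1,c_2$ the elements $\Lambda(c_1)$ and $\Lambda(c_2)$ commute in $\widehat{\tskein{\Sigma}}$: each summand $\psi_{n}(\gamma_{1,n}\cdots\gamma_{n,n}\lambda^{[n]}(\cdots))$ defining $\Lambda(c_i)$ can be represented by a link supported in a tubular neighborhood of $c_i$, so choosing disjoint tubular neighborhoods makes the corresponding representatives commute for the $\boxtimes$ product, whence the Lie bracket vanishes after completion.

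With these two inputs in hand, (1) is $\mathbb{Q}$-linearity; (2) is obtained by iterated application of $\bch(y,-y)=0$ inside the nested BCH expression, using associativity to collapse adjacent pairs $-\Lambda(c_2),\Lambda(c_2)$, then $-\Lambda(c_1),\Lambda(c_1)$, and so on, so that the whole expression telescopes to $0$; (3) and (4) follow by the commutation property, which reduces every BCH expression in the $\Lambda(c_i)$'s to the ordinary sum. For (5), observe that $c_1$ is disjoint from both $c_2$ and $c_3$, hence also from $t_{c_3}(c_2)$; applying the Lie-theoretic identity $\bch(x,y,-x)=\exp(\mathrm{ad}(x))(y)$ together with $\mathrm{ad}(\Lambda(c_3))=\sigma(\Lambda(c_3))$ on $\widehat{\tskein{\Sigma}}$, Theorem~\ref{thm_main_Dehn}, and naturality yields
\begin{equation*}
\bch(\Lambda(c_3),\Lambda(c_2),-\Lambda(c_3))=\exp(\sigma(\Lambda(c_3)))(\Lambda(c_2))=t_{c_3}(\Lambda(c_2))=\Lambda(t_{c_3}(c_2));
\end{equation*}
a short BCH computation, exploiting that $\Lambda(c_1)$ commutes with everything involving $c_2$ and $c_3$, then converts the right-hand side into $\Lambda(t_{c_3}(c_2))-\Lambda(c_1)$.

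Identities (6)--(8) all reduce to the single conjugation formula $\bch(x,\Lambda(c),-x)=\Lambda(\theta(x)(c))$ for $x\in\mathcal{N}_{\mathrm{gen}}(\Sigma)$. The chain is
\begin{equation*}
\bch(x,\Lambda(c),-x)=\exp(\mathrm{ad}(x))(\Lambda(c))=\exp(\sigma(x))(\Lambda(c))=\theta(x)(\Lambda(c))=\Lambda(\theta(x)(c)),
\end{equation*}
where the penultimate equality uses that, for $x$ in each of the three generator types, $\exp(\sigma(x))$ coincides with $\theta(x)$ on the skein algebra (Theorem~\ref{thm_main_Dehn} applied to each $\Lambda$-summand of $x$, together with commutation in the bounding-pair case), and the final equality is naturality. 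Identity (8) is then the formula itself; for (7), expanding $\Lambda(c_1)-\Lambda(c_2)=\bch(\Lambda(c_1),-\Lambda(c_2))$ via commutation (as $c_1,c_2$ are disjoint) and using BCH-associativity reduces the outer conjugation to two applications of the conjugation formula; (6) is the same idea applied to $C(c_1,c_2)=\bch(\Lambda(c_1),\Lambda(c_2),-\Lambda(c_1),-\Lambda(c_2))$, termwise.

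\textbf{Main obstacle.} The real substance is the commutation step for disjoint curves, which rests on the observation that each building block $\psi_n(\gamma_{1,n}\cdots)$ defining $\Lambda(c)$ localizes to a tubular neighborhood of $c$. Once that is in place, all identities reduce to formal BCH bookkeeping combined with Theorem~\ref{thm_main_Dehn} and naturality. The most intricate identity is (5), since both commutation and conjugation must be invoked, but the computation there is bounded in length once the reduction $\bch(\Lambda(c_3),\Lambda(c_2),-\Lambda(c_3))=\Lambda(t_{c_3}(c_2))$ is in hand.
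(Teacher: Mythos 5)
Your proposal is correct and matches the paper's intended argument: the paper offers no written proof beyond the remark that the lemma follows by ``a straightforward calculation and definition of the BCH series,'' and the calculation you supply --- naturality $\xi_*(\Lambda(c))=\Lambda(\xi(c))$, vanishing of $[\Lambda(c_1),\Lambda(c_2)]$ for disjoint curves via disjoint annular neighborhoods, and the conjugation identity $\bch(x,y,-x)=\exp(\ad (x))(y)$ combined with Theorem \ref{thm_main_Dehn} --- is exactly the bookkeeping the paper relies on here and in the surrounding lemmas. The only points you treat lightly are the convergence hypotheses needed to regroup the nested BCH expressions, but these are supplied by Proposition \ref{prop_BCH_jouken} and the lemma opening Subsection \ref{subsection_definition_I}, so there is no gap.
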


In order to check lantern relation
in \cite{Pu2008}, we need the following.

\begin{figure}
\begin{picture}(300,450)
\put(0,300){\includegraphics[width=310pt]{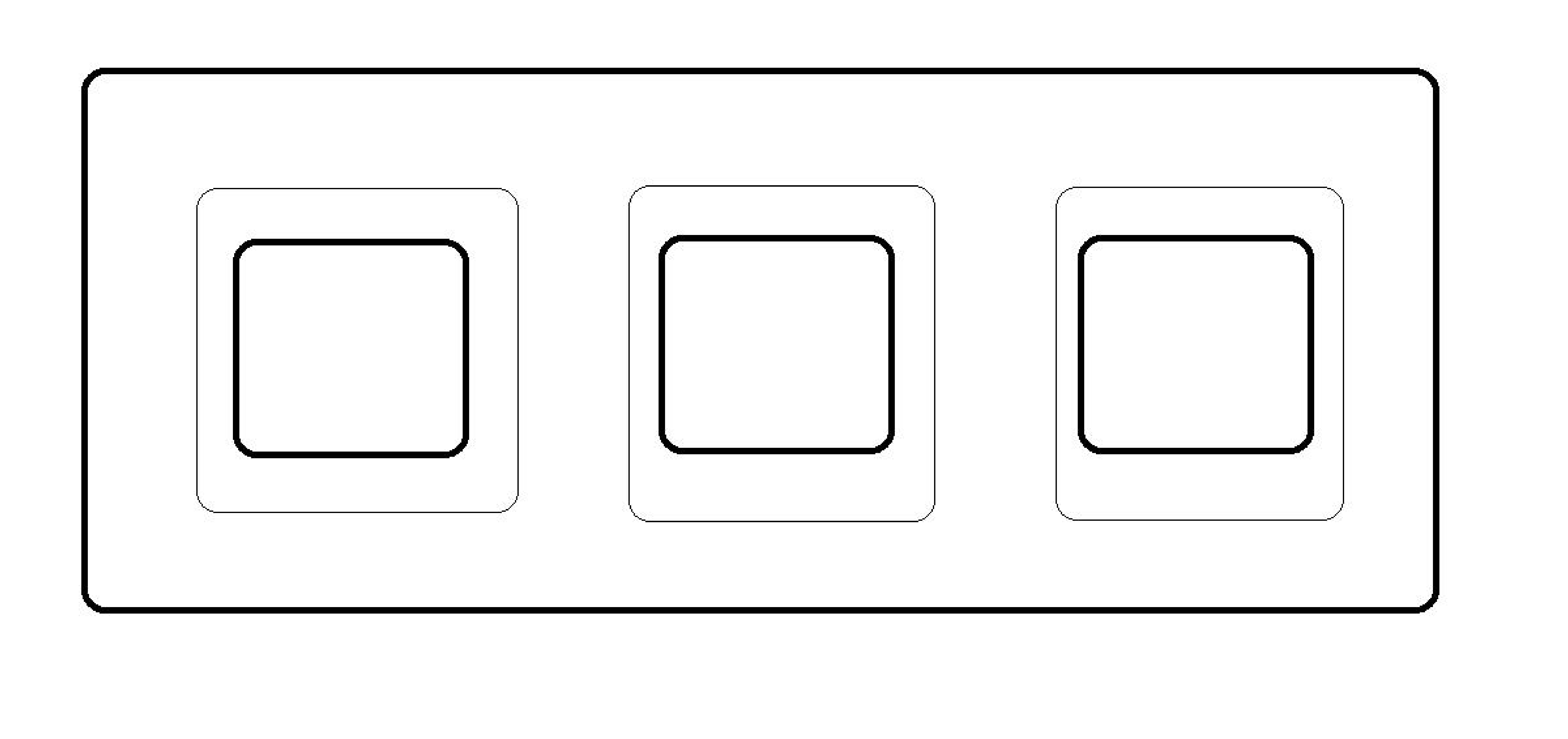}}
\put(0,150){\includegraphics[width=310pt]{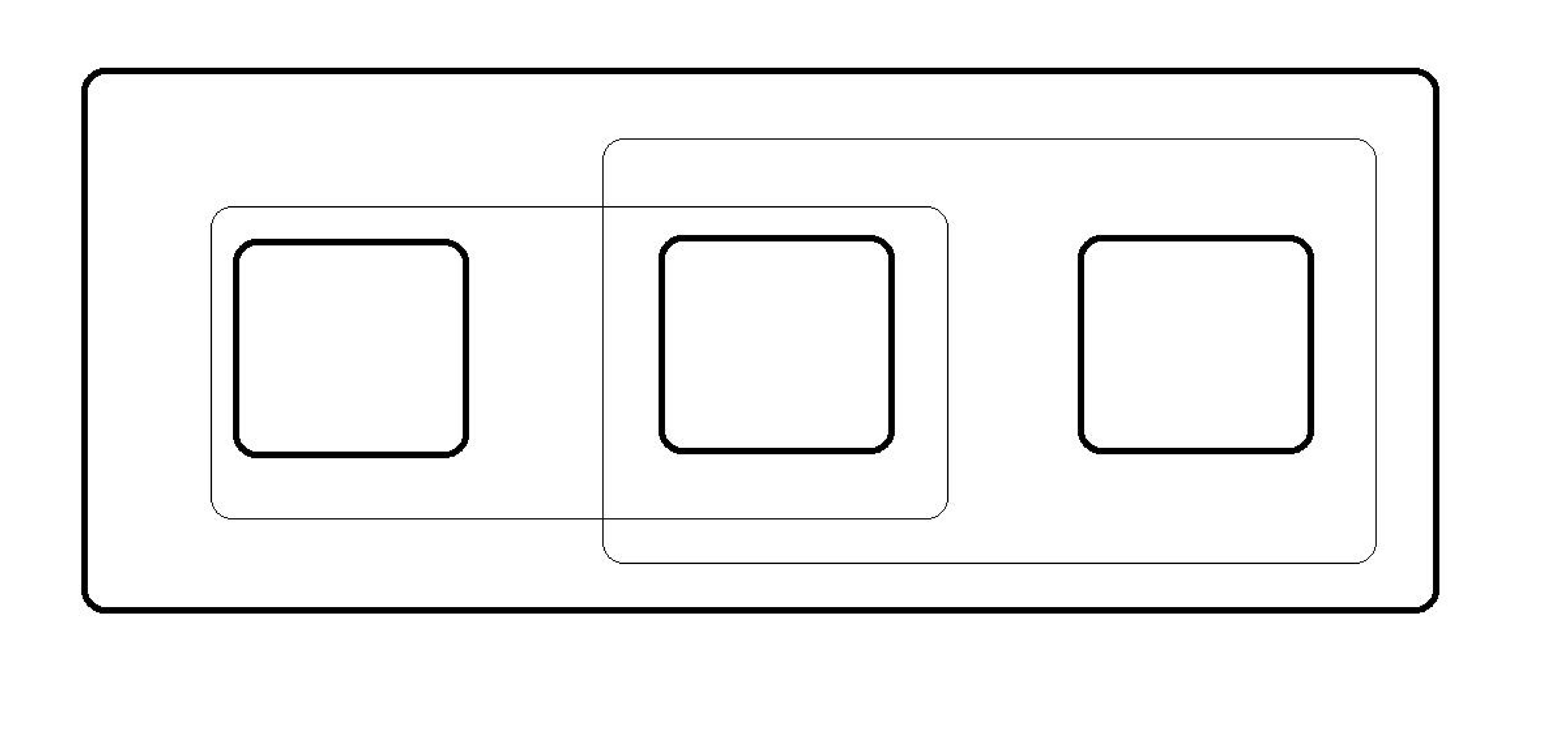}}
\put(0,0){\includegraphics[width=310pt]{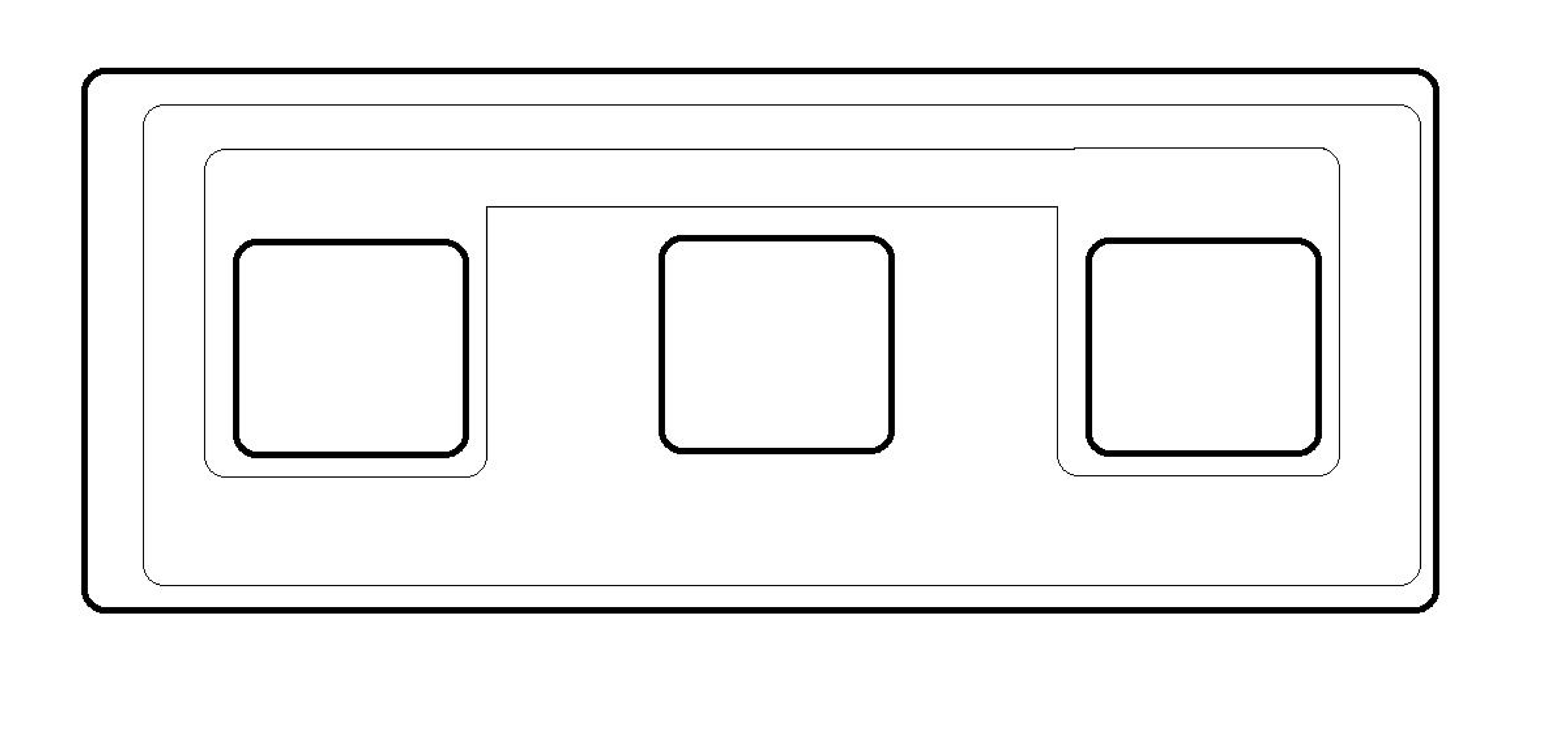}}
\put(70,337){$c_1$}
\put(150,337){$c_2$}
\put(229,337){$c_3$}
\put(70,187){$c_{12}$}
\put(229,187){$c_{23}$}
\put(150,35){$c_{123}$}
\put(150,110){$c_{13}$}
\end{picture}
\caption{The simple closed curves in $\Sigma_{04}$}
\label{figure_Sigma_0_4}
\end{figure}

\begin{lemm}
\label{lemm_lantern_relation}
Let $\Sigma_{0,4}$ be a compact connected oriented surface
of genus $0$ and $4$ boundary components and
$c_1,c_2,c_3,c_{12},c_{23},c_{13}$ and $c_{123}$
the simple closed curves as Figure \ref{figure_Sigma_0_4}.
We have $\bch(\Lambda(c_{123},
-\Lambda(c_{12}),-\Lambda(c_{23}),-\Lambda(c_{13}),
\Lambda(c_1),\Lambda(c_2),\Lambda(c_3)))=0\in
\widehat{\tskein{\Sigma_{0,4}}}$.
\end{lemm}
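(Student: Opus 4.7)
Set $X \defeq \bch(\Lambda(c_{123}), -\Lambda(c_{12}), -\Lambda(c_{23}), -\Lambda(c_{13}), \Lambda(c_1), \Lambda(c_2), \Lambda(c_3))$; the goal is to show $X = 0$ in $\widehat{\tskein{\Sigma_{0,4}}}$. The plan is to encode the classical lantern relation as $\exp(\sigma(X))=\id$ via Theorem \ref{thm_main_Dehn}, then upgrade this to the pointwise identity $\sigma(X)=0$ by a filtration bootstrap that requires $X\in F^3$, and finally collapse $X$ through a planar embedding into a disk.

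Applying Theorem \ref{thm_main_Dehn} to each factor of the BCH gives
$$\exp(\sigma(X)) = t_{c_{123}}\circ t_{c_{12}}^{-1}\circ t_{c_{23}}^{-1}\circ t_{c_{13}}^{-1}\circ t_{c_1}\circ t_{c_2}\circ t_{c_3}$$
as automorphisms of every $\widehat{\tskein{\Sigma_{0,4},J^-,J^+}}$. The classical lantern relation in $\mathcal{M}(\Sigma_{0,4})$ asserts that this composition equals $\id$ (the factors $t_{c_i}$ and $t_{c_{123}}$ are central, being Dehn twists on boundary-parallel curves), so $\exp(\sigma(X))=\id$ on every skein module.

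Next I check $X\in F^3\widehat{\tskein{\Sigma_{0,4}}}$. Because $\Sigma_{0,4}$ is planar, the intersection form on $H_1$ vanishes identically, so Proposition \ref{prop_BCH_jouken} forces every iterated bracket of the $\Lambda(c_\bullet)$'s appearing in the BCH expansion into $F^3$. Thus modulo $F^3$ only the linear terms survive; using $\Lambda(c)\equiv\tfrac{1}{2}|\psi((\gamma-1)^2)|\pmod{F^3}$ and the map $\varpi_{\mathcal{P}2}$, the class of $X$ in $F^2/F^3\cong H\cdot H\oplus\Q[\rho]$ equals
$$\tfrac{1}{2}\Bigl([c_{123}]^{\cdot 2}-[c_{12}]^{\cdot 2}-[c_{23}]^{\cdot 2}-[c_{13}]^{\cdot 2}+[c_1]^{\cdot 2}+[c_2]^{\cdot 2}+[c_3]^{\cdot 2}\Bigr),$$
which vanishes identically after expanding $[c_{ij}]=[c_i]+[c_j]$ and $[c_{123}]=[c_1]+[c_2]+[c_3]$. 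Hence $X\in F^3$, so by Proposition \ref{prop_filt_bracket} the operator $\sigma(X)$ strictly raises filtration: $\sigma(X)(F^m)\subset F^{m+1}$. Comparing coefficients of $\exp(\sigma(X))=\id$ on the quotient $F^m/F^{m+2}$ then forces $\sigma(X)(F^m)\subset F^{m+2}$, and iterating the same computation doubles the filtration shift at each stage, yielding $\sigma(X)(F^m)\subset\bigcap_n F^{m+2^n}=\{0\}$. Therefore $\sigma(X)=0$ on every $\widehat{\tskein{\Sigma_{0,4},J^-,J^+}}$.

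Finally, Lemma \ref{lemm_relation_key}(2) gives $X=e_*(X)$ for every embedding $e:\Sigma_{0,4}\times I\to I^3\to\Sigma_{0,4}\times I$. Since $\Sigma_{0,4}$ is planar, one can choose $e$ to factor through a planar embedding $\Sigma_{0,4}\hookrightarrow D$; then every $c_\bullet$ pushes forward to a null-homotopic curve in $I^3$, so by Corollary \ref{cor_def_Lambda_trivial} each $e_*(\Lambda(c_\bullet))=0$ and hence $e_*(X)=\bch(0,\dots,0)=0=X$. The hard step is the filtration estimate $X\in F^3$: without it $\sigma(X)$ merely preserves filtration and the bootstrap breaks, while with it the entire rest of the argument is formal. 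Establishing $X\in F^3$ relies both on the suppression of BCH commutators modulo $F^3$ coming from the triviality of the intersection form on the planar surface and on the precise homological cancellation of the linear part, which in turn fixes the orientations and ordering in the statement.
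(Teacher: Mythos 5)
Your proof has the same skeleton as the paper's: the classical lantern relation plus Theorem \ref{thm_main_Dehn} gives $\exp(\sigma(X))=\id$ on every $\widehat{\tskein{\Sigma_{0,4},J^-,J^+}}$, hence $\sigma(X)=0$; Lemma \ref{lemm_relation_key}(2) then identifies $X$ with its image under an embedding through $I^3$; and there every curve is contractible, so Corollary \ref{cor_def_Lambda_trivial} kills each $\Lambda(c_\bullet)$ while commutativity of the target kills all BCH commutators. The only place you genuinely diverge is in extracting $\sigma(X)=0$ from $\exp(\sigma(X))=\id$. The paper treats this as immediate, and it is: for a genus-$0$ surface the entire algebra $\widehat{\tskein{\Sigma}}$ satisfies condition (\ref{equation_jouken_bch}) (end of Subsection \ref{subsection_bch}), so a sufficiently high power of $\sigma(X)$ raises the filtration and the same telescoping you perform inverts $\exp$ without any hypothesis on $X$. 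Your detour through ``$X\in F^3$'' is therefore unnecessary, and your closing claim that the argument breaks without it is overstated. Moreover that detour is the one mildly shaky point of your write-up: to conclude $X\in F^3\widehat{\tskein{\Sigma_{0,4}}}$ from the vanishing of $\varpi_{\mathcal{P}2}(\varrho(X))$ you need the composite to be injective on $F^2\widehat{\tskein{\Sigma_{0,4}}}/F^3\widehat{\tskein{\Sigma_{0,4}}}$, which the paper only establishes for $\widehat{\zettaiti{\mathcal{P}(\Sigma,*_\alpha)}}$, not for the skein algebra itself; the computation is in the spirit of Lemma \ref{lemm_bounding_pair} and is surely repairable, but as written it is an unproved identification. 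Everything else, including the homological cancellation $[c_{123}]^{\cdot 2}-\sum[c_{ij}]^{\cdot 2}+\sum[c_i]^{\cdot 2}=0$ and the final evaluation $e_*(X)=\sum\pm e_*(\Lambda(c_\bullet))=0$, is correct and matches the paper.
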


\begin{proof}
Let $e$ be an embedding $\Sigma_{0,4} \times I \to I^3$ as 
Figure \ref{figure_Sigma_0_4}.
Since 
\begin{align*}
&\exp (\bch(\Lambda (c_{123},-\Lambda(c_{12}),
-\Lambda(c_{23}),-\Lambda(c_{13}),\Lambda(c_1),\Lambda(c_2),
\Lambda(c_3))))(\cdot)
= \\
&t_{c_{123}}{t_{c_{12}}}^{-1}{t_{c_{23}}}^{-1}{t_{c_{31}}}^{-1}t_{c_{1}}t_{c_{2}}t_{c_3}(\cdot) = \id(\cdot)
\in \Aut (\widehat{\tskein{\Sigma_{0.4},J^-,J^+}})
\end{align*}
for all finite sets $J^-,J^+ \subset \partial \Sigma$, we have
\begin{equation*}
\sigma(\bch(\Lambda (c_{123},-\Lambda(c_{12}),
-\Lambda(c_{23}),-\Lambda(c_{13}),\Lambda(c_1),\Lambda(c_2),
\Lambda(c_3))))
(\widehat{\tskein{\Sigma_{0,4},J^-,J^+}}) = \shuugou{0}.
\end{equation*}
By Lemma \ref{lemm_relation_key},
it suffices to show $e(\bch(\Lambda (c_{123},-\Lambda(c_{12}),
-\Lambda(c_{23}),-\Lambda(c_{13}),\Lambda(c_1),\Lambda(c_2),
\Lambda(c_3))))
=0$. We remark $e([\widehat{\tskein{\Sigma_{0,4}}}, \widehat{\tskein{\Sigma_{0,4}}}])
=\shuugou{0}$. Using Corollary  \ref{cor_def_Lambda_trivial}, we have
\begin{align*}
&e(\bch(\Lambda (c_{123},-\Lambda(c_{12}),
-\Lambda(c_{23}),-\Lambda(c_{13}),\Lambda(c_1),\Lambda(c_2),
\Lambda(c_3))))=0. \\
\end{align*}
This finishes the proof.

\end{proof}

We will check the crossed lantern relation in \cite{Pu2008}.
Let $\Sigma_{1,2}$ be a connected compact surface
of genus $g=1$ with two boundary components
and $a$, $b$ and $v$ elements
$H_1(\Sigma_{1,2},\Q)$ represented by
$a \defeq [c'_a], b \defeq [c_b], v \defeq [c_v] \in H_1 (\Sigma_{1,2},\Q)$.
Here we denote simple closed curves in $\Sigma_{1,2}$
as in Figure \ref{figure_CL}.

In order to check the crossed lantern relation, 
we need the following.

\begin{figure}
\begin{picture}(300,140)
\put(0,-160){\includegraphics[width=340pt]{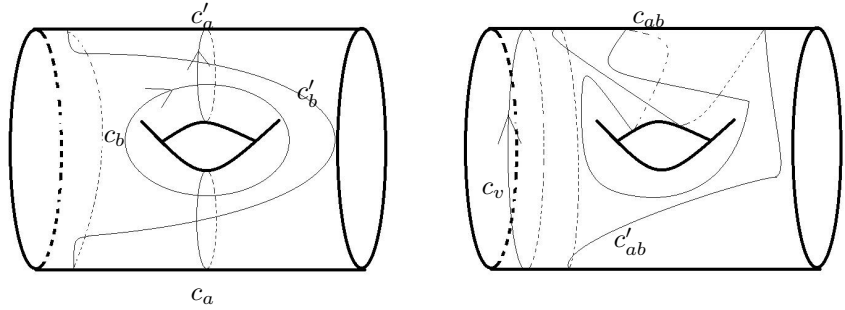}}
\put(47,70){$c_b$}
\put(120,86){$c'_b$}
\put(80,115){$c'_a$}
\put(80,10){$c_a$}
\put(190,50){$c_v$}
\put(240,30){$c'_{ab}$}
\put(247,115){$c_{ab}$}
\end{picture}
\caption{$\Sigma_{1,2}$}

\label{figure_CL}
\end{figure}

\begin{lemm}
\begin{enumerate}
\item The set $ \shuugou{\pm(\Lambda(c_a)-\Lambda(c'_a))
,\pm (\Lambda(c_b)-\Lambda(c'_b)), \pm (\Lambda(c_{ab})-\Lambda(c'_{ab}))}$
satisfies the conditions equation (\ref{equation_jouken_bch}).
\item We have $\bch (\Lambda(c_b)-\Lambda(c'_b),-\Lambda(c_a)
+\Lambda(c'_a),-\Lambda(c_{ab})+\Lambda(c'_{ab}))=0$.
\end{enumerate}
\end{lemm}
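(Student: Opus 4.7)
The plan is to establish both parts by combining the filtration estimate of Proposition \ref{prop_filt_bracket} with the main Dehn twist formula (Theorem \ref{thm_main_Dehn}) and the faithfulness assertion of Proposition \ref{prop_MCG_tskein_faithful}, very much in the spirit of the proof of Lemma \ref{lemm_lantern_relation}.

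For part (1), I will first verify that each of the three pairs $\shuugou{c_a,c'_a}$, $\shuugou{c_b,c'_b}$, $\shuugou{c_{ab},c'_{ab}}$ pictured in Figure \ref{figure_CL} is a bounding pair in $\Sigma_{1,2}$: disjoint, non-isotopic, and homologous. Lemma \ref{lemm_bounding_pair} then puts each difference $\Lambda(c_i)-\Lambda(c'_i)$ in $F^3\widehat{\tskein{\Sigma_{1,2}}}$. Since Proposition \ref{prop_filt_bracket} gives $\sigma(F^3)(F^i\widehat{\tskein{\Sigma_{1,2},J^-,J^+}})\subset F^{i+1}\widehat{\tskein{\Sigma_{1,2},J^-,J^+}}$, the condition of equation (\ref{equation_jouken_bch}) is met with $j_i=1$ for every $i$, so in particular the triple BCH appearing in part (2) converges in the completion.

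For part (2), my strategy is to invoke the crossed lantern relation of Putman \cite{Pu2008}, which states
\begin{equation*}
(t_{c_b}t_{c'_b}^{-1})(t_{c'_a}t_{c_a}^{-1})(t_{c'_{ab}}t_{c_{ab}}^{-1})=\id
\end{equation*}
in $\mathcal{M}(\Sigma_{1,2})$. Setting $x\defeq\bch(\Lambda(c_b)-\Lambda(c'_b),-\Lambda(c_a)+\Lambda(c'_a),-\Lambda(c_{ab})+\Lambda(c'_{ab}))$, Theorem \ref{thm_main_Dehn} together with this identity yields $\exp(\sigma(x))=\id$ on each $\widehat{\tskein{\Sigma_{1,2},J^-,J^+}}$; taking logarithms, which is legal by part (1), gives $\sigma(x)=0$ on every such module. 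I then apply Lemma \ref{lemm_relation_key}(2) to force $x=e''(x)\in\Q[\rho][[h]][\emptyset]$ for any embedding $e'':\Sigma_{1,2}\times I\to I^3\to\Sigma_{1,2}\times I$. Choosing the embedding so that each pair $\shuugou{c_i,c'_i}$ cobounds an embedded annulus in $I^3$ whose annulus framing matches the surface framing, the two curves become isotopic as framed knots in the target, whence each individual difference $\Lambda(c_i)-\Lambda(c'_i)$ maps to $0$. Since $e''$ is a continuous filtered algebra homomorphism it commutes with $\bch$, and therefore $e''(x)=\bch(0,0,0)=0$, yielding $x=0$ as required.

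The main obstacle, as I see it, is the geometric step at the end of part (2): exhibiting a concrete embedding of $\Sigma_{1,2}$ into $I^3$ in which all three bounding pairs simultaneously cobound embedded annuli with framings compatible with the surface framing. This should be available by pushing $\Sigma_{1,2}$ flat into a neighborhood of a planar model where each pair $\shuugou{c_i,c'_i}$ visibly bounds a trivial annulus, but it needs checking for the $\shuugou{c_{ab},c'_{ab}}$ pair whose representatives wind nontrivially around both handles. The remaining ingredients — convergence of the BCH series, continuity of $e''$ with respect to the filtration, and compatibility of $e''$ with the Lie-algebra operations entering $\bch$ — are routine consequences of Proposition \ref{prop_filt_bracket} and Proposition \ref{prop_filt_product}.
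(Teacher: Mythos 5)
Your part (2) follows the paper's route (crossed lantern relation, Theorem \ref{thm_main_Dehn}, Lemma \ref{lemm_relation_key}(2), then evaluation in $I^3$ where each difference $\Lambda(c_i)-\Lambda(c'_i)$ dies), but your part (1) rests on a false premise, and since part (2) needs part (1) both to make sense of the BCH series and to pass from $\exp(\sigma(x))=\id$ to $\sigma(x)=0$, the gap is fatal. The three pairs $\shuugou{c_a,c'_a}$, $\shuugou{c_b,c'_b}$, $\shuugou{c_{ab},c'_{ab}}$ are \emph{not} bounding pairs in $\Sigma_{1,2}$: their homology classes differ by the class $v=[c_v]$ of a boundary component, e.g.\ $[c_a]=\pm(a-v)$ while $[c'_a]=\pm a$. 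This is visible in the paper's own computation
\begin{equation*}
\varpi_{\mathcal{P}2}\bigl(\zettaiti{\tfrac{1}{2}\bigl((\log c_a)^2-(\log c'_a)^2\bigr)}\bigr)=-\tfrac{1}{2}\,v\cdot(2a-v)\neq 0,
\end{equation*}
which says precisely that $\Lambda(c_a)-\Lambda(c'_a)$ has nonvanishing image in $F^2/F^3$ and hence does \emph{not} lie in $F^3\widehat{\tskein{\Sigma_{1,2}}}$. So Lemma \ref{lemm_bounding_pair} does not apply, and your one-step filtration argument via Proposition \ref{prop_filt_bracket} (which would give $j_i=1$ in condition (\ref{equation_jouken_bch})) collapses.

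What the paper actually does for (1) is compute the degree-two parts of all three elements and observe that each lies in $v\cdot V_1\oplus\Q[\rho]$ where $V_1=\Q[\rho]\langle a,b,v\rangle$ and $v$, being a boundary class, satisfies $\mu(V_1,\Q[\rho]v)=\shuugou{0}$. Proposition \ref{prop_BCH_jouken} then shows that $2j-1$ successive applications of $\sigma$ of such elements carry $F^{j-1}\widehat{\tskein{\Sigma_{1,2},J^-,J^+}}$ into $F^j\widehat{\tskein{\Sigma_{1,2},J^-,J^+}}$; this is much weaker than what you claimed ($j_i=2i+1$ rather than $j_i=1$) but is exactly enough for (\ref{equation_jouken_bch}) and hence for convergence of the BCH series. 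If you repair part (1) this way, the rest of your part (2) goes through essentially as in the paper; the geometric step you flag as the "main obstacle" is handled more cheaply there, since one only needs that $e(x(\Lambda(c_i)-\Lambda(c'_i)))=e((\Lambda(c_i)-\Lambda(c'_i))x)=0$ in $\tskein{D}\simeq\Q[\rho][[h]]$, which follows because the two curves of each pair become isotopic framed knots once pushed into $I^3$.
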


\begin{proof}[Proof of (1)]
We have 
\begin{align*}
&\varpi_{\mathcal{P}2} (\zettaiti{\frac{1}{2} (\log (c_a)^2-(\log (c'_a))^2)})
= \frac{1}{2}((a-v)\cdot(a-v)-a\cdot a)=-\frac{1}{2}v \cdot (2a-v) \\
& \varpi_{\mathcal{P}2} (\zettaiti{\frac{1}{2} (\log (c_b)^2-(\log (c'_b))^2)})
= \frac{1}{2}(b\cdot b-(b-v)\cdot(b-v))
=\frac{1}{2}v \cdot (2b-v) \\
& \varpi_{\mathcal{P}2} (\zettaiti{\frac{1}{2} (\log (c_{ab})^2-(\log (c'_{ab}))^2)})
= \frac{1}{2}((a+b)\cdot (a+b)-(a+b-v)\cdot(a+b-v)) \\
&=\frac{1}{2}v \cdot (2a+2b-v) \\
\end{align*}
By Proposition  \ref{prop_BCH_jouken}, this finishes the proof.
\end{proof}

\begin{proof}[Proof of (2)]
Let $e$ be an embedding $\Sigma_{0,4} \times I \to I^3$ as 
Figure \ref{figure_CL}.
Since 
\begin{align*}
&\exp (\sigma(\bch (\Lambda(c_b)-\Lambda(c'_b),-\Lambda(c_a)
+\Lambda(c'_a),-\Lambda(c_{ab})+\Lambda(c'_{ab}))))(\cdot) \\
&=t_{c_b c'_b} \circ t_{c'_a c_a}\circ {t_{c_{ab} c'_{ab}}}^{-1}(\cdot) = \id(\cdot)
\in \Aut (\widehat{\tskein{\Sigma_{1,2},J^-,J^+}})
\end{align*}
for all finite sets $J^-,J^+ \subset \partial \Sigma$, we have
\begin{equation*}
\sigma(\bch (\Lambda(c_b)-\Lambda(c'_b),-\Lambda(c_a)
+\Lambda(c'_a),-\Lambda(c_{ab})+\Lambda(c'_{ab})))
(\widehat{\tskein{\Sigma_{1,2},J}}) = \shuugou{0}.
\end{equation*}
By Lemma \ref{lemm_relation_key},
it suffices to show $e(\bch (\Lambda(c_b)-\Lambda(c'_b),-\Lambda(c_a)
+\Lambda(c'_a),-\Lambda(c_{ab})+\Lambda(c'_{ab})))=0$.
Since $e(x (\Lambda(c_i)-\Lambda(c'_i))) =e((\Lambda(c_i)-\Lambda(c_i)) x) =0 $ for $i \in \shuugou{a, b,ab}$ and
any $x \in \widehat{\tskein{\Sigma_{1,2}}}$, we have 
$e(\bch (\Lambda(c_b)-\Lambda(c'_b),-\Lambda(c_a)
+\Lambda(c'_a),-\Lambda(c_{ab})+\Lambda(c'_{ab})))=0$.
This finishes the proof.
\end{proof}

Since the Witt-Hall relation and the commutator shuffle relation
are relations between pushing maps.
it is enough  to show the following lemma
to check Witt-Hall relation and commutator shuffle relation in 
\cite{Pu2008}.

\begin{lemm}
Let $\Sigma'$ be a compact surface, $D$ a closed disk in $\Sigma'$
and $\Sigma''$ the surface $\Sigma' \backslash D$.
We denote by $\mathcal{S}_{\mathrm{push}}(\Sigma'', \partial D)$
the set of all pair $\shuugou{c_1,c_2}$ of simple closed curves
such that
$t_{c_1} \gyaku{t_{c_2}}$ is a pushing map of $D$
and that $[c_1] =[c_2] \in H_1 (\Sigma'').$

For $\shuugou{c_{11},c_{21}} \cdots \shuugou{c_{1k},c_{2k}} \in 
\mathcal{S}_{\mathrm{push}}(\Sigma'', \partial D)$
and $\epsilon_1 \cdots \epsilon_k \in \shuugou{\pm1}$,
if $ \prod_{i=1}^k (t_{c_{1i}c_{2i}})^{\epsilon_i} = \id \in \mathcal{I}(\Sigma'')$,
then we have
\begin{equation*}
\bch(\epsilon_1 (\Lambda(c_{11})-\Lambda(c_{21})), \cdots, 
\epsilon_k(\Lambda(c_{1k})-\Lambda(c_{2k}))) =0.
\end{equation*}
\end{lemm}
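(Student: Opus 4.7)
The plan is to mirror the proofs of the lantern relation (Lemma \ref{lemm_lantern_relation}) and the crossed lantern relation given earlier in this section. Write $y_i \defeq \Lambda(c_{1i}) - \Lambda(c_{2i}) \in \widehat{\tskein{\Sigma''}}$ and $x \defeq \bch(\epsilon_1 y_1, \ldots, \epsilon_k y_k)$. By Lemma \ref{lemm_bounding_pair}, each $y_i$ lies in $F^3 \widehat{\tskein{\Sigma''}}$, and since $[F^n, F^m] \subset F^{n+m-2}$ by Proposition \ref{prop_filt_bracket}, it follows inductively that $x \in F^3 \widehat{\tskein{\Sigma''}}$.

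The first step is to show that $\sigma(x)$ acts as the zero operator on every completed skein module. By Lemma \ref{lemm_theta} and the main Dehn twist formula (Theorem \ref{thm_main_Dehn}), for any finite disjoint $J^-, J^+ \subset \partial \Sigma''$,
\[
\exp(\sigma(x))(\cdot) \;=\; \prod_{i=1}^{k} (t_{c_{1i}c_{2i}})^{\epsilon_i}(\cdot) \;\in\; \Aut\bigl(\widehat{\tskein{\Sigma'', J^-, J^+}}\bigr),
\]
and by hypothesis the right-hand side is the identity automorphism. Since $x \in F^3$, Proposition \ref{prop_filt_bracket} shows that $\sigma(x)$ strictly raises the filtration, so the formal logarithm of $\exp(\sigma(x))$ is well-defined and equals $\sigma(x)$; hence $\sigma(x)$ vanishes on each $\widehat{\tskein{\Sigma'', J^-, J^+}}$.

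Next, apply Lemma \ref{lemm_relation_key}(2) to produce an embedding $e : \Sigma'' \times I \to I^3 \to \Sigma'' \times I$ with $x = e(x)$ in $\widehat{\tskein{\Sigma''}}$, so it suffices to prove $e(x) = 0$. The map $e$ is a $\Q[\rho][[h]]$-algebra homomorphism, and it is also a Lie algebra homomorphism for the skein bracket: indeed $h[a,b] = ab - ba$ in $\widehat{\tskein{\Sigma''}}$ and $e$ preserves both products and the parameter $h$. Consequently
\[
e(x) \;=\; \bch\bigl(\epsilon_1 e(y_1), \ldots, \epsilon_k e(y_k)\bigr).
\]
For each simple closed curve $c$ on $\Sigma''$, the image $e(c)$ lies inside a small ball in $\Sigma'' \times I$; its projection to $\Sigma''$ is contained in a disk and is therefore contractible. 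By the naturality of the construction of $\Lambda$ — the formula depends only on a simple path $\gamma \in \pi^+_1(\Sigma'', *_\alpha, *'_\alpha)$ representing $c$ and on the operators $\psi_n$, both of which are functorial under embeddings of $\Sigma \times I$ — we have $e(\Lambda(c)) = \Lambda(e(c))$, and Corollary \ref{cor_def_Lambda_trivial} yields $\Lambda(e(c)) = 0$. Hence $e(y_i) = 0$ for every $i$, so $e(x) = \bch(0, \ldots, 0) = 0$, completing the proof.

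The main obstacle is the naturality identity $e(\Lambda(c)) = \Lambda(e(c))$: one must check that the explicit expression for $\Lambda(c)$ in terms of $\gamma_{i,n}$, $\lambda^{[n]}$ and $\psi_n$ commutes with the induced maps on $\pi^+_1$ under the composite embedding through $I^3$, keeping track of orientations and framings. Once this functoriality is in place, the argument becomes a formal parallel of the lantern relation proof, with the identity $\prod (t_{c_{1i}c_{2i}})^{\epsilon_i} = \id$ in $\mathcal{I}(\Sigma'')$ playing the role that the lantern relation itself played in Lemma \ref{lemm_lantern_relation}.
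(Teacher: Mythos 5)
Your first half (showing that $\sigma(\bch(\epsilon_1 y_1,\dots,\epsilon_k y_k))$ annihilates every $\widehat{\tskein{\Sigma'',J^-,J^+}}$ and then invoking Lemma \ref{lemm_relation_key}) coincides with the paper's argument. The gap is in the second half. You claim that for the embedding $e:\Sigma''\times I\to I^3\to \Sigma''\times I$ one has $e(\Lambda(c))=0$ for \emph{every} simple closed curve $c$, because $e(c)$ is null-homotopic inside a ball. This conflates ``null-homotopic in the $3$-manifold'' with ``bounds a disk in the surface''. Corollary \ref{cor_def_Lambda_trivial} applies only to a simple closed curve that is contractible \emph{in the surface}, so that its annulus neighbourhood embeds in a disk of the surface and all the cablings $l_n, l_{\star n}$ become standard $0$-framed unknots. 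Under an embedding of $\Sigma''\times I$ into $I^3$, a simple closed curve of a surface with genus is in general a knotted (or nontrivially framed) curve in the ball; its image in $\tskein{D}\simeq\Q[\rho][[h]]$ is then computed from the HOMFLY-PT invariants of its cables, and $e(\Lambda(c))$ is a nonzero power series (it is essentially a ``$\tfrac12(\log)^2$'' satellite invariant of the knot $e(c)$). So $e(y_i)=e(\Lambda(c_{1i}))-e(\Lambda(c_{2i}))$ need not vanish term by term, and your conclusion $e(x)=\bch(0,\dots,0)=0$ does not follow. A symptom of the problem is that your second half never uses the hypothesis that $t_{c_{1i}}t_{c_{2i}}^{-1}$ is a pushing map of $D$; if the argument were valid it would prove the vanishing for arbitrary bounding pairs, which is exactly what the paper cannot do in one stroke and is why it checks Putman's relations case by case.

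The paper's route through this step is different and does use the hypothesis: let $e':\Sigma''\hookrightarrow\Sigma'$ be the inclusion that fills the disk $D$ back in. Since $t_{c_{1i}}t_{c_{2i}}^{-1}$ is a push of $D$ and $[c_{1i}]=[c_{2i}]$, the curves $c_{1i}$ and $c_{2i}$ become isotopic in $\Sigma'$, hence $e'(\Lambda(c_{1i}))=e'(\Lambda(c_{2i}))$, i.e.\ $e'(y_i)=0$ already in $\widehat{\tskein{\Sigma'}}$, and therefore $e'(\bch(\epsilon_1y_1,\dots,\epsilon_ky_k))=0$. One then composes $e'$ with an embedding $\Sigma'\times I\to I^3$ and returns into $\Sigma''\times I$ to invoke Lemma \ref{lemm_relation_key}. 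If you replace your ``every curve becomes contractible'' step with this filling-the-disk argument, the rest of your write-up goes through.
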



\begin{proof}
Since 
\begin{equation*}
\exp (\sigma(\bch(\epsilon_1 (\Lambda(c_{11})-\Lambda(c_{21})), \cdots, 
\epsilon_k(\Lambda(c_{1k})-\Lambda(c_{2k})))
))(\cdot)
=\prod_{i=1}^k (t_{c_{1i}c_{2i}})^{\epsilon_i}(\cdot) = \id(\cdot)
\in \Aut (\widehat{\tskein{\Sigma,J^-,J^+}})
\end{equation*}
for all finite set $J^-,J^+ \subset \partial \Sigma$, we have
\begin{equation*}
\sigma(\bch(\epsilon_1 (\Lambda(c_{11})-\Lambda(c_{21})), \cdots, 
\epsilon_k(\Lambda(c_{1k})-\Lambda(c_{2k}))))
(\widehat{\tskein{\Sigma,J^-,J^+}}) = \shuugou{0}.
\end{equation*}
We denote by $e'$ the embedding $\Sigma'' \to \Sigma'$.
We choose an embedding $e:\Sigma' \times I \to I^3$.
By Lemma \ref{lemm_relation_key},
it suffices to show  $e\circ e'(\bch(\epsilon_1 (\Lambda(c_{11})-\Lambda(c_{21})), \cdots, 
\epsilon_k(\Lambda(c_{1k})-\Lambda(c_{2k}))))=0$.
Since $e'(\Lambda(c_{1i})-\Lambda(c_{2i}))$ for $i \in \shuugou{1,2, \cdots,k}$,
we have $e'(\bch(\epsilon_1 (\Lambda(c_{11})-\Lambda(c_{21})), \cdots, 
\epsilon_k(\Lambda(c_{1k})-\Lambda(c_{2k}))))=0$.
This finishes the proof.
\end{proof}

By above lemmas, we have the main theorem.

\begin{thm}
\label{thm_main_embedding}
The group homomorphism $\zeta : \mathcal{I} (\Sigma)
\to I \tskein{\Sigma}$ defined by $\zeta (t_c) =\Lambda(c)$ for
$t_c \in \mathcal{I}_{\mathrm{sep}} (\Sigma)$,
$\zeta (t_{c_1 c_2}) = \Lambda(c_1)-\Lambda(c_2)$
for $t_{c_1,c_2} \in \mathcal{I}_{\mathrm{bp}} (\Sigma)$
and $\zeta (C_{c_1c_2}) = C(c_1, c_2)$
for $C_{c_1 c_2} \in \mathcal{I}_{\mathrm{com}} (\Sigma)$
is well-defined.
\end{thm}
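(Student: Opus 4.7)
The plan is to verify that the assignment $\zeta$ on the generating set $\mathcal{I}_{\mathrm{gen}}(\Sigma)$ respects every relation in Putman's infinite presentation of $\mathcal{I}(\Sigma)$ from \cite{Pu2008}. In view of Lemma \ref{lemm_zeta_inv_inj} we already have a well-defined map $\theta \colon I\tskein{\Sigma} \to \mathcal{I}(\Sigma)$ sending each of the three generator families to the corresponding Torelli generator, so it suffices to produce, for every defining relation of $\mathcal{I}(\Sigma)$, an identity in $\widehat{\tskein{\Sigma}}$ of the form $\bch(\epsilon_1 x_1, \dots, \epsilon_k x_k) = 0$ with $x_i \in \mathcal{N}_{\mathrm{gen}}(\Sigma)$ and $\epsilon_i \in \{\pm 1\}$.

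The preceding lemmas in this subsection are tailored exactly for this purpose. First, the elementary BCH identities (1)--(8) of the displayed lemma dispatch the inverse, concatenation, and conjugation relations (F.1)--(F.8): each of these is a purely formal consequence of either antisymmetry of $\Lambda(c_1)-\Lambda(c_2)$, the Jacobi identity, or the fact that $\bch(x,y,-x) = \exp(\sigma(x))(y)$ combined with Theorem \ref{thm_main_Dehn}. The lantern relation and its crossed variant are handled by Lemma \ref{lemm_lantern_relation} and the $\Sigma_{1,2}$-lemma respectively: in both cases one shows first (via Proposition \ref{prop_MCG_tskein_faithful} and $\exp(\sigma(\cdot))$ equaling the corresponding product of Dehn twists which is the identity in $\mathcal{M}(\Sigma)$) that the relevant BCH element lies in the kernel of $\sigma$ on every $\widehat{\tskein{\Sigma,J^-,J^+}}$, and then invokes Lemma \ref{lemm_relation_key} to reduce the check to an explicit computation inside $\widehat{\tskein{I^3}} = \Q[\rho][[h]]$, where $\Lambda(c)$ of a nullhomotopic $c$ vanishes by Corollary \ref{cor_def_Lambda_trivial} and commutators disappear. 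Finally, the Witt--Hall relation and the commutator shuffle relation follow from the lemma verifying that any product of pushing-type BCH combinations of bounding-pair generators that represents the identity in $\mathcal{I}(\Sigma)$ must already vanish in $\widehat{\tskein{\Sigma}}$; this is proved by the same two-step scheme (kernel-of-$\sigma$ plus $I^3$-embedding), using that each bounding-pair generator pushes forward to zero in the disk.

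With all relations verified, $\zeta$ extends uniquely from $\mathcal{I}_{\mathrm{gen}}(\Sigma)$ to a group homomorphism $\mathcal{I}(\Sigma) \to (I\tskein{\Sigma},\bch)$, and one reads off $\zeta(t_c) = \Lambda(c)$ for separating $c$, $\zeta(t_{c_1c_2}) = \Lambda(c_1)-\Lambda(c_2)$ for bounding pairs, and $\zeta(C_{c_1c_2}) = C(c_1,c_2)$ for commutators of disjoint Dehn twists, directly from the definition. Injectivity, asserted in Corollary \ref{cor_main_embedding}, will then be immediate from Proposition \ref{prop_MCG_tskein_faithful}: if $\zeta(\xi) = 0$ then $\xi = \exp(\sigma(\zeta(\xi)))$ acts trivially on every $\widehat{\tskein{\Sigma,*_i,*_j}}$, forcing $\xi = \mathrm{id}$.

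The main obstacle is bookkeeping rather than mathematical depth: one must organize Putman's large but finite list of relation schemas into the three verification patterns available, namely direct BCH manipulation, reduction to $I^3$ via Lemma \ref{lemm_relation_key}, and the faithfulness argument of Proposition \ref{prop_MCG_tskein_faithful}. The most delicate step is checking that every BCH expression appearing in a relation actually lies in a subset of $\widehat{\tskein{\Sigma}}$ for which the BCH series converges; for this one must verify in each case that the relevant generators $x_i \in \mathcal{N}_{\mathrm{gen}}(\Sigma)$ satisfy the filtration-raising condition (\ref{equation_jouken_bch}), which is ensured by Proposition \ref{prop_BCH_jouken} together with the vanishing of the appropriate intersection pairings associated to the curve systems in each relation.
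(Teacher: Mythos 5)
Your proposal is correct and follows essentially the same route as the paper: both reduce the well-definedness of $\zeta$ to checking the relations of Putman's presentation $\Gamma_{g,1}$ (the relations (F.1)--(F.8) via the elementary BCH identities, the lantern and crossed lantern relations via Lemma \ref{lemm_relation_key} combined with the embedding into $I^3$ and Corollary \ref{cor_def_Lambda_trivial}, and the Witt--Hall and commutator shuffle relations via the pushing-map lemma), then invoke Putman's Theorem 4.2 to identify $\Gamma_{g,1}$ with $\mathcal{I}(\Sigma)$. Your added remarks on BCH convergence via Proposition \ref{prop_BCH_jouken} and on injectivity via Proposition \ref{prop_MCG_tskein_faithful} match the paper's treatment in the surrounding lemmas and in Corollary \ref{cor_main_embedding}.
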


\begin{proof}
Let $\Gamma_{g,1}$ be the group defined in \cite{Pu2008} Definition 4.1
which is generated by $\Gamma_{g,1 \mathrm{BP}} \cup
\Gamma_{g,1 \mathrm{sep}} \cup \Gamma_{g,1 \mathrm{comm}}$
where
\begin{align*}
&\Gamma_{g,1 \mathrm{BP}} \defeq \shuugou{t'_{c_1,c_2}|(c_1,c_2):\mathrm{BP}} \\ 
&\Gamma_{g,1 \mathrm{sep}} \defeq \shuugou{t'_{c}|c:\mathrm{sep. c. c. c.}} \\
&\Gamma_{g,1 \mathrm{comm}} \defeq \shuugou{c'_{c_1,c_2}|\mu (c_1,c_2)=0}. \\
\end{align*}
The relations of $\Gamma_{g,1}$ are
(F.1), (F.2), $\cdots$,  (F.8),
the lantern relation, the crossed lantern relation,
the Witt-Hall relation and the commutator shuffle relation.
We set the group homomorphism $\zeta'':\kukakko{
\Gamma_{g,1 \mathrm{BP}} \cup
\Gamma_{g,1 \mathrm{sep}} \cup \Gamma_{g,1 \mathrm{comm}}}\to
I \tskein{\Sigma}$ by
$t'_{c_1,c_2} \in \Gamma_{g,1 \mathrm{BP}} \mapsto \Lambda(c_1)-\Lambda(c_2)$,
$t'_{c} \in \Gamma_{g,1 \mathrm{sep}} \mapsto \Lambda(c)$ and
$c'_{c_1,c_2} \in \Gamma_{g,1 \mathrm{comm}} \mapsto C(c_1,c_2)$.
Using the above lemmas, the group homomorphism $\zeta''$ induces 
$\zeta':\Gamma_{g,1} \to I \tskein{\Sigma}$.
By \cite{Pu2008} Theorem 4.2, we obtain
$\mathcal{I} (\Sigma) \simeq \Gamma_{g,1}$ by
$t'_{c_1,c_2} \mapsto t_{c_1,c_2}$,
$t'_{c} \mapsto t_c$ and
$c'_{c_1,c_2} \mapsto C_{c_1,c_2}$.
This proves the theorem.

\end{proof}

Since $\mathcal{I} \tskein{\Sigma}$ is generated by the set
$\shuugou{t_{c_1c_2}|c_1,c_2: \mathrm{BP}}$, 
we have the following.

\begin{cor}
\label{cor_main_embedding}
We have $I \tskein{\Sigma} =\zettaiti{\mathcal{N}_{bp}(\Sigma)}$.
Furthermore $\zeta$ can be defined by
$\zeta(t_{c_1c_2})=\Lambda(c_1)-\Lambda(c_2)$.
\end{cor}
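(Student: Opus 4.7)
The plan is to deduce the corollary as a short consequence of Theorem \ref{thm_main_embedding} together with Johnson's classical theorem that the Torelli group $\mathcal{I}(\Sigma_{g,1})$ (for $g \geq 3$) is generated as a group by bounding pair maps $t_{c_1c_2}$. No new skein calculation is needed; the relation-checking heavy lifting has already happened in the proof of Theorem \ref{thm_main_embedding}, and the displayed hint in the paper (``$\mathcal{I}(\Sigma)$ is generated by the set $\{t_{c_1c_2}\}$'') is essentially all that remains to invoke.

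The first step is to observe that the homomorphism $\zeta : \mathcal{I}(\Sigma) \to I\tskein{\Sigma}$ built in Theorem \ref{thm_main_embedding} is surjective. Indeed, by its very definition the image of $\zeta$ contains every element of $\mathcal{N}_{\mathrm{comm}}(\Sigma) \cup \mathcal{N}_{\mathrm{bp}}(\Sigma) \cup \mathcal{N}_{\mathrm{sep}}(\Sigma) = \mathcal{N}_{\mathrm{gen}}(\Sigma)$, and $I\tskein{\Sigma}$ is by construction the subgroup generated under the BCH operation by $\mathcal{N}_{\mathrm{gen}}(\Sigma)$. Applying Johnson's generation theorem, this image is already generated under BCH by the smaller set $\{\zeta(t_{c_1c_2})\} = \{\Lambda(c_1) - \Lambda(c_2)\} = \mathcal{N}_{\mathrm{bp}}(\Sigma)$, whence the desired equality $I\tskein{\Sigma} = \zettaiti{\mathcal{N}_{\mathrm{bp}}(\Sigma)}$.

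For the second assertion, since BP maps alone generate $\mathcal{I}(\Sigma)$, any group homomorphism out of $\mathcal{I}(\Sigma)$ is determined by its values on them, so the formula $\zeta(t_{c_1c_2}) = \Lambda(c_1) - \Lambda(c_2)$ suffices to specify $\zeta$. Well-definedness has already been confirmed through the full Putman presentation in Theorem \ref{thm_main_embedding}, so no further relation-checking is required; the point is merely that once one knows $\zeta$ exists on all of $\mathcal{I}(\Sigma)$, its description simplifies on the restricted generating set. The only subtlety worth flagging is the hypothesis $g \geq 3$ implicit in Johnson's theorem, but this is exactly the genus range of interest for the paper's applications, so I do not expect it to be an obstacle.
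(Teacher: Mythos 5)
Your argument is correct and is essentially the paper's own proof: the paper likewise deduces the corollary from Theorem \ref{thm_main_embedding} together with the fact that $\mathcal{I}(\Sigma_{g,1})$ is generated by bounding pair maps, so that the image of $\zeta$, which is all of $I\tskein{\Sigma}$, is already the BCH-subgroup generated by $\mathcal{N}_{\mathrm{bp}}(\Sigma)$. Your remark about the implicit hypothesis $g\geq 3$ in Johnson's generation theorem is a legitimate caveat that the paper itself does not address.
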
 

\subsection{Some filtrations of the Torelli group of $\Sigma_{g,1}$}
\label{subsection_some_filtrations}
Let $\Sigma$ be a compact connected oriented surface with nonempty boundary.
We denote by $\skein{\Sigma}$ the Kauffman bracket skein algebra of $\Sigma$,
which is the quotient of $\Q [A^{\pm 1}]$-free module with basis
the set of unoriented framed links modulo the relations defining
the Kauffman bracket.
For details, see \cite{TsujiCSAI} \cite{Tsujipurebraid} \cite{TsujiTorelli} \cite{Tsujihom3}.
We will define the $\Q$-module homomorphism 
$\psi'_{\mathcal{A} \mathcal{S}}:\tskein{\Sigma} \to 
\skein{\Sigma}$, by $L \mapsto  (-A)^{w(L)} \bar{L}$, $h \mapsto
-A^2 +A^{-2}$ and $ \exp (\rho h) \mapsto A^4$.
Here $\bar{L}$ is the unoriented link of $L$ and 
$w (L)$ is the self-linking number of $L$,
which is the sum of the signs of crossings of $L$.

\begin{prop}
The $\Q$-module homomorphism
$\psi'_{\mathcal{A} \mathcal{S}}$
is well-defined.
\end{prop}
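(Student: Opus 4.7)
The plan is to verify that the $\Q$-linear assignment $L\mapsto (-A)^{w(L)}\bar L$ on the free module $\Q\mathcal{T}(\Sigma)$, together with the substitutions $h\mapsto -A^2+A^{-2}$ and $\exp(\rho h)\mapsto A^4$ on scalars (which determine the images of $\rho$ and $\sinh(\rho h)/h$ inside an appropriate completion of $\skein{\Sigma}$), descends to the quotient $\tskein{\Sigma}$. The assignment is automatically well-defined on isotopy classes of oriented framed tangles, since both the underlying unoriented framed link $\bar L$ and the self-linking number $w(L)$ are invariants of framed isotopy; thus it suffices to check the three relations in Definition \ref{df_skein_algebra}.

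For the skein relation, fix a Conway triple $(T_+,T_-,T_0)$, set $w\defeq w(T_-)$, and note that $w(T_+)=w+2$ and $w(T_0)=w+1$. The diagrams $\bar T_+,\bar T_-$ agree away from the distinguished crossing, and the Kauffman bracket there gives
\[
\bar T_+ = A\,\bar T_0 + A^{-1}\bar T_\infty, \qquad \bar T_- = A^{-1}\bar T_0 + A\,\bar T_\infty,
\]
where a short orientation check shows that the oriented smoothing $\bar T_0$ is the $A$-smoothing at the positive crossing. Then
\[
(-A)^{w+2}\bar T_+ - (-A)^w\bar T_- = (-A)^w\bigl[(A^3-A^{-1})\bar T_0 + (A-A)\bar T_\infty\bigr] = (A^3-A^{-1})(-A)^w\bar T_0,
\]
which equals $(-A^2+A^{-2})\,(-A)^{w+1}\bar T_0 = \psi'_{\mathcal{A}\mathcal{S}}(hT_0)$.

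For the framing relation, if $d_1,d_0$ differ by a positive kink then $w(T(d_1))=w(T(d_0))+1$, and the standard Kauffman kink evaluation yields $\bar T(d_1)=-A^3\,\bar T(d_0)$. Combining,
\[
\psi'_{\mathcal{A}\mathcal{S}}(T(d_1))= (-A)(-A^3)\,(-A)^{w(T(d_0))}\bar T(d_0)= A^4\,\psi'_{\mathcal{A}\mathcal{S}}(T(d_0))= \psi'_{\mathcal{A}\mathcal{S}}(\exp(\rho h)\,T(d_0)).
\]
For the trivial knot relation, the Kauffman bracket gives $\bar T(d_{\mathrm{triv.knot}})=(-A^2-A^{-2})\bar T(d_{\mathrm{empty}})$ in $\skein{\Sigma}$, while the scalar substitutions yield
\[
\psi'_{\mathcal{A}\mathcal{S}}\!\left(\frac{2\sinh(\rho h)}{h}\right) = \frac{A^4-A^{-4}}{-A^2+A^{-2}} = -(A^2+A^{-2}),
\]
so both sides agree.

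The only real obstacle is bookkeeping the sign conventions consistently: one must pin down that a positive kink contributes the factor $-A^3$ (not $-A^{-3}$) and that at a positive oriented crossing the oriented smoothing coincides with the $A$-smoothing appearing in the Kauffman resolution. Once those conventions are fixed, the three verifications above reduce to elementary algebra in the variable $A$, establishing well-definedness.
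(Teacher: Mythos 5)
Your proof is correct and follows essentially the same route as the paper: one checks the skein, framing, and trivial-knot relations directly via the Kauffman bracket resolutions and the writhe shifts $w(T_+)=w(T_0)+1=w(T_-)+2$. Your version is slightly more explicit about the smoothing and kink sign conventions, but the computation is the same.
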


\begin{proof}
Let $(L^+,L^-, L^0)$ be a Conway triple.
We obtain
\begin{align*}
& \psi'_\mathcal{AS} (L^+ -L^- -h L^0)
= (-A)^{w(L^+)} \bar{L^+}-(-A)^{w(L^-)} \bar{L^-}
-(-A^2+A^{-2}) (-A)^{w(L^0)} \bar{L^0} \\
& =(-A)^{w(L^0)} (-A \bar{L^+} -A^{-1} \bar{L^0} -
(-A^2+A^{-2})  \bar{L^0})=0. 
\end{align*}
Let $d_1$ and $d_0$ be tangle diagrams
which are identical except for a disk, where they are a positive
twist and a straight line.
Then we have

\begin{align*}
& \psi'_\mathcal{AS} (T(d_1)-\exp(\rho h)T(d_0))
= (-A)^{w(T(d_1))} \bar{T(d_1)}-(-A)^{w(T(d_0))}A^4 \bar{T(d_0)}\\
& =(-A)^{w(T(d_0))}((-A) \bar{T(d_1)}-A^4 \bar{T(d_0)}) =0. 
\end{align*}
Furthermore, we have
$\psi'_\mathcal{AS} ( (\mathrm{triv.knot})-\dfrac{2 \sinh \rho h}{h})
=(\mathrm{triv.knot})-\dfrac{A^4-A^{-4}}{ -A^2+A^{-2}}=0$.
This proves the proposition.
\end{proof}

We will prove $\psi'_\mathcal{AS} (F^n \tskein{\Sigma})
\subset F^n \skein{\Sigma}$
where the filtration $\filtn{F^n \skein{\Sigma}}$ 
of $\skein{\Sigma}$ is defined in \cite{Tsujipurebraid}.

\begin{thm}
\label{thm_filtration_AS}
We have 
$\psi'_\mathcal{AS} (F^n \tskein{\Sigma})
\subset F^n \skein{\Sigma}$.
\end{thm}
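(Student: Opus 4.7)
The plan is to invoke Lemma \ref{lemm_filt_douti} to replace $F^n \tskein{\Sigma}$ by the geometric filtration $F^{\star n} \tskein{\Sigma}$, which is generated by $h F^{\star(n-2)} \tskein{\Sigma}$ together with images $\iota_*(\bigoplus_j \zettaiti{\eta_{b_j}})$ of embeddings $\iota: \coprod_j \Sigma_{b_j+1,1} \times I \to \Sigma \times I$ with $\sum b_j \geq n$. The Kauffman bracket skein filtration $F^n \skein{\Sigma}$ defined in \cite{Tsujipurebraid} admits an entirely parallel starred-filtration characterization, where $(A+1)$ plays the role of $h$ and unoriented analogues $\zettaiti{(R_1-1)\cdots(R_b-1)}$ built from unoriented framed loops play the role of $\eta_b$. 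Hence by induction on $n$ the problem reduces to verifying filtration-preservation on these two families of generators.

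For the factor $h$: a direct computation gives
\begin{equation*}
\psi'_\mathcal{AS}(h) = -A^2 + A^{-2} = -A^{-2}(A-1)(A+1)(A^2+1),
\end{equation*}
which, under the convention that $(A+1)$ carries filtration degree $2$ in $F^\bullet \skein{\Sigma}$ (so as to match the degree $2$ of $h$ in $F^\bullet \tskein{\Sigma}$), lies in $F^2 \skein{\Sigma}$.

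For the generators $\iota_*(\zettaiti{\eta_b})$ with $b \geq n$: under $\psi'_\mathcal{AS}$ the class $\zettaiti{\psi(R_i)}$ of an oriented framed simple loop $R_i$ maps to $(-A)^{w(R_i)} \zettaiti{\overline{R_i}}$, whose specialization at $A = -1$ agrees with the unoriented closure. Therefore $\psi'_\mathcal{AS}(\psi(R_i) - 1)$ lies in the augmentation ideal of $\skein{\Sigma}$ (with respect to $A\mapsto -1$). Expanding the product $\psi((R_1-1)\cdots(R_b-1))$ and taking closure, the image decomposes into a product of $b$ elements in the augmentation ideal, which yields membership in $F^b \skein{\Sigma}$ by the unoriented analogue of Lemma \ref{lemm_filt_douti}. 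Any residual correction terms coming from the framing relation $\exp(\rho h) \mapsto A^4$ factor through $A^4 - 1 = -(A+1)(A-1)(A^2+1)$, which itself lies in the augmentation ideal, so they only raise the filtration degree and are absorbed by induction.

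The main obstacle will be the careful bookkeeping of the framing corrections $(-A)^{w(L)}$ and the auxiliary factor $\exp(\rho h) \mapsto A^4$: a priori they intertwine the augmentation parameter $A+1$ with the self-linking data of the underlying tangles, and one must check that the combined effect never reduces the filtration degree. This is handled by the observation that both $-A-1$ and $A^4 - 1$ belong to the Kauffman-side augmentation ideal, so each framing correction can only \emph{increase} the filtration level; the induction hypothesis then closes the argument, provided one has formulated the Kauffman-side starred filtration in a manner strictly parallel to Definition of $F^{\star n}$ here, which is precisely what is done in \cite{Tsujipurebraid}.
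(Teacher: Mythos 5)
Your overall skeleton matches the paper's: reduce to the $F^{\star n}$ generators via Lemma \ref{lemm_filt_douti}, observe that $\psi'_\mathcal{AS}(h)=-A^2+A^{-2}$ lands in $F^2\skein{\Sigma}$, and push the remaining generators through $\psi'_\mathcal{AS}$. The gap is at the central step, where you assert that the image of a generator ``decomposes into a product of $b$ elements in the augmentation ideal'' and that the framing corrections ``can only increase the filtration level.'' The map $\psi'_\mathcal{AS}$ is \emph{not} an algebra homomorphism except on $\tzeroskein{\Sigma}$, precisely because the writhe is not additive over components: for a configuration of several loops, $w$ of the union contains pairwise crossing contributions between distinct components. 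Writing the generator as an alternating sum $\sum_{\epsilon\in\{0,1\}^N}(-1)^{N-\sum_i\epsilon_i}T(\epsilon_1,\dots,\epsilon_N)$, its image under $\psi'_\mathcal{AS}$ is that sum twisted by coefficients of the form $\prod_j\bigl((-A)^{w(T_j)}\bigr)^{\epsilon_j}\prod_{k<l}a_{k,l}^{\epsilon_k\epsilon_l}$ with $a_{k,l}=(-A)^{\mathrm{lk}(T_k,T_l)}$. The per-index factors are exactly what the lemma imported from \cite{Tsujipurebraid} absorbs; the pairwise factors are \emph{quadratic} in $\epsilon$ and do not factor through the individual indices, so that lemma does not apply to them.

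Your general principle --- each correction is a unit congruent to $1$ modulo the augmentation ideal, hence only raises the degree --- is false here: each individual term $T(\epsilon)$ lies only in $F^0$, and it is only the full alternating combination that lies in $F^N$. Multiplying the terms by units congruent to $1$ that do not factor compatibly with the index structure a priori leaves you with something in $F^2$ only, destroying the depth. This is exactly the content of the paper's Lemma \ref{lemm_filtration_AS}: the pairwise twist is harmless because each $a_{k,l}-1$ has filtration degree at least $2$ (since $(A+1)$ has degree $2$), which exactly pays for sacrificing the alternating structure in the two indices $k$ and $l$; the bookkeeping is carried out in the auxiliary ring $R$ with the weighting $2N'+M'\geq n$ and an induction peeling off one index at a time. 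Without an analogue of this combinatorial lemma your argument does not close.
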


In order to prove the Theorem, we fix an embedding
$e :\coprod_{1 \leq j \leq n}
\Sigma_{0, b_j+1} \times I  \to \Sigma \times I
$ where $N= \sum_j b_j$.
For $\epsilon_1 , \cdots , \epsilon_N \in \shuugou{0,1}$,
let $T(\epsilon_1, \cdots, \epsilon_N)$
 be the oriented framed link 
\begin{equation*}
e( \zettaiti{\gamma_1^{\epsilon_1} \cdots \gamma_{b_1}^{\epsilon_{b_1}}}
\sqcup 
\zettaiti{\gamma_1^{\epsilon_{b_1+1}} \cdots \gamma_{b_2}^{\epsilon_{b_1+b_2}}}
\sqcup \cdots \sqcup
\zettaiti{\gamma_1^{\epsilon_{b_1+\cdots+ b_{j-1}+1}}
 \cdots \gamma_{b_j}^{\epsilon_{b_1+ \cdots +b_j}}})
\end{equation*}
whose framing is defined by surfaces $\Sigma_{0,b_i}$.
Using the first equation in the proof of Lemma 5.9
in \cite{Tsujipurebraid} repeatedly,
 we have the following.

\begin{lemm}
We have $\sum_{\epsilon_1, \cdots, \epsilon_N \in \shuugou{0,1}}
(-1)^{N-\sum_i \epsilon_i} a_1^{\epsilon_1} \cdots
a_N^{\epsilon_N}\bar{T}(\epsilon_1, \cdots, \epsilon_N)
\in F^N \skein{\Sigma}$
for $a_1, \cdots a_N \in 1+ ((A+1))$.
\end{lemm}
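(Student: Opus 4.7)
The plan is to prove this by induction on $N$, leveraging the bilinear/multiplicative structure of the alternating sum together with the key identity from \cite{Tsujipurebraid} Lemma 5.9. The base case $N=0$ is trivial, since the sum reduces to a single term in $F^0 \skein{\Sigma} = \skein{\Sigma}$.

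For the inductive step, I would split the sum according to the value of one index, say $\epsilon_N$, writing
\begin{equation*}
S_N = \sum_{\epsilon_1,\dots,\epsilon_{N-1}} (-1)^{N-1-\sum_{i<N}\epsilon_i} a_1^{\epsilon_1}\cdots a_{N-1}^{\epsilon_{N-1}} \bigl(a_N \bar T(\epsilon_1,\dots,\epsilon_{N-1},1) - \bar T(\epsilon_1,\dots,\epsilon_{N-1},0)\bigr),
\end{equation*}
and then decompose the inner parenthesis as
\begin{equation*}
a_N \bar T(\epsilon',1) - \bar T(\epsilon',0) = (a_N-1)\,\bar T(\epsilon',1) + \bigl(\bar T(\epsilon',1) - \bar T(\epsilon',0)\bigr).
\end{equation*}
Since $a_N - 1 \in ((A+1)) \subset F^1 \Q[A^{\pm 1}]$, the first piece contributes to $F^1 \skein{\Sigma}$ multiplied by the $(N-1)$-indexed alternating sum obtained by freezing $\epsilon_N = 1$, which lies in $F^{N-1} \skein{\Sigma}$ by the induction hypothesis. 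Hence this piece is in $F^1 \skein{\Sigma}\cdot F^{N-1} \skein{\Sigma} \subset F^N \skein{\Sigma}$.

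For the second piece, I would apply the first equation from the proof of Lemma 5.9 of \cite{Tsujipurebraid} to each difference $\bar T(\epsilon',1) - \bar T(\epsilon',0)$. That equation rewrites the difference of two such $\zettaiti{\gamma_1^{\epsilon_1}\cdots \gamma_b^{\epsilon_b}}$'s that differ by a single $\gamma$-letter as an element living in $F^1\skein{\Sigma}$ multiplied by an analogous link with one fewer generator on the corresponding $\Sigma_{0,b_j+1}$. Summing over the remaining indices with the alternating weights $(-1)^{\cdot}a_i^{\epsilon_i}$ and applying induction to the resulting $(N-1)$-indexed sum gives again an element of $F^1 \skein{\Sigma}\cdot F^{N-1}\skein{\Sigma}\subset F^N \skein{\Sigma}$.

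The main obstacle I foresee is setting up the induction cleanly enough that both pieces after the split land in sums of the same form as $S_N$ but with smaller $N$. In particular, when the $N$-th letter belongs to a component $\Sigma_{0,b_j+1}$ with $b_j \ge 2$, removing one generator leaves a $\Sigma_{0,b_j}$-configuration, so the alternating sum on the remaining indices is again of the type covered by the induction hypothesis; when $b_j = 1$, the component disappears entirely and one must check that the resulting reduced sum is still captured by the same induction scheme. Once these bookkeeping points are settled, repeated application of the Lemma 5.9 identity reduces everything to a product of $N$ factors in $\ker\epsilon$, which by definition lies in $F^N \skein{\Sigma}$.
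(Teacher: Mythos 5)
Your overall strategy matches the paper's, which disposes of this lemma in one line by invoking ``the first equation in the proof of Lemma 5.9 in \cite{Tsujipurebraid} repeatedly''; your induction on $N$ with the splitting $a_N\bar{T}(\epsilon',1)-\bar{T}(\epsilon',0)=(a_N-1)\bar{T}(\epsilon',1)+(\bar{T}(\epsilon',1)-\bar{T}(\epsilon',0))$ is a sensible way to organize that repetition and is the natural way to absorb the coefficients $a_i\in 1+((A+1))$. There is, however, a concrete gap in the first piece. You assert that the $(N-1)$-indexed alternating sum with $\epsilon_N$ frozen at $1$ lies in $F^{N-1}\skein{\Sigma}$ ``by the induction hypothesis,'' but that sum is not an instance of the statement for $N-1$: on the component carrying the index $N$ the loop is $\zettaiti{\gamma_1^{\epsilon_1}\cdots\gamma_{b_j-1}^{\epsilon_{b_j-1}}\gamma_{b_j}}$, with $\gamma_{b_j}$ permanently present rather than absent. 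This family cannot be re-realized as $\zettaiti{\delta_1^{\epsilon_1}\cdots\delta_{b_j-1}^{\epsilon_{b_j-1}}}$ for generators of a smaller configuration: setting all $\epsilon_i=0$, one side is a contractible loop while the other is $\zettaiti{\gamma_{b_j}}$. Your closing remark about ``removing one generator leaves a $\Sigma_{0,b_j}$-configuration'' misdescribes the situation — freezing $\epsilon_N=1$ does not remove $\gamma_{b_j}$, it fixes it.

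The repair is to run the induction on a stronger statement in which each component's loop is allowed an arbitrary fixed prefix, i.e.\ $\zettaiti{r_j\,\gamma_1^{\epsilon}\cdots\gamma_{b_j}^{\epsilon}}$ with $r_j$ a fixed element of $\pi_1^+$ of the subsurface — exactly the role played by $X_0$ in the generators $\psi(X_0(X_1-1)\cdots(X_j-1))$ of $F^{\star n}$ earlier in the paper. That enlarged class of families is closed under freezing an index at $1$ (including the degenerate case $b_j=1$, where the component becomes a fixed knot with no variable letters), so the first piece lands in $F^1\skein{\Sigma}\cdot F^{N-1}\skein{\Sigma}\subset F^N\skein{\Sigma}$ as you intend. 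You must then also verify that the output of the Lemma 5.9 identity applied to $\bar{T}(\epsilon',1)-\bar{T}(\epsilon',0)$ stays within this enlarged class; it does, since that identity trades the affected component for configurations with one fewer variable letter, possibly carrying a fixed factor. With the strengthened hypothesis both pieces close up and your argument is complete.
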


Let $R$ be the commutative algebra over $\Q$
 generated by
$\shuugou{a_{i,j}|i,j \in \Z_{\geq 1}, i<j}$
and $\shuugou{r_i| i \in \Z_{\geq 1}}$
and $F^nR$ the ideal of $R$ generated by
$(a_{i_1,j_1}-1) \cdots (a_{i_{N'},j_{N'}}-1)
(r_{k_1}-1) \cdots (r_{k_{M'}}-1)$
for $2N+M \geq n$.

\begin{lemm}
\label{lemm_filtration_AS}
We have
\begin{equation*}
\sum_{1 \leq i_1 <i_2 < \cdots <i_k \leq n}
(-1)^{n-j}
(\prod_{1 \leq k <l \leq k} a_{i_k,i_l})
(\prod_{1 \leq m \leq k} r_{i_m})
\in F^n R.
\end{equation*}
\end{lemm}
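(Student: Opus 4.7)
The plan is to use inclusion--exclusion on the Boolean lattice of subsets of $\{1,\dots,n\}$. I would pass from $a_{i,j}$ and $r_i$ to the shifted variables $\tilde a_{i,j}\defeq a_{i,j}-1$ and $\tilde r_i\defeq r_i-1$, which are the natural generators of the ideal $F^\bullet R$, and then re-expand the whole sum and read off the coefficient of each monomial in the $\tilde a$'s and $\tilde r$'s.

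Concretely, for each $S=\{i_1<\dots<i_k\}\subset\{1,\dots,n\}$ I would substitute $a_{i_k,i_l}=1+\tilde a_{i_k,i_l}$ and $r_{i_m}=1+\tilde r_{i_m}$ into $\prod_{k<l}a_{i_k,i_l}\prod_m r_{i_m}$ and expand. The resulting monomials are indexed by a pair $(A,B)$ with $A\subset\binom{S}{2}$ recording the chosen $\tilde a$-factors and $B\subset S$ recording the chosen $\tilde r$-factors, and each contributes $\prod_{(i,j)\in A}\tilde a_{i,j}\prod_{i\in B}\tilde r_i$. Interchanging the order of summation, for fixed $(A,B)\subset\binom{\{1,\dots,n\}}{2}\times\{1,\dots,n\}$ this monomial is collected with coefficient $\sum_{S\supset V(A)\cup B}(-1)^{n-|S|}$, where $V(A)$ denotes the set of endpoints of edges in $A$.

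Setting $T\defeq V(A)\cup B$ and writing $S=T\sqcup U$ with $U\subset\{1,\dots,n\}\setminus T$, a one-line binomial calculation gives
$$\sum_{S:\,T\subset S\subset\{1,\dots,n\}}(-1)^{n-|S|}=(-1)^{n-|T|}\sum_{U\subset\{1,\dots,n\}\setminus T}(-1)^{|U|}=(-1)^{n-|T|}(1-1)^{n-|T|},$$
which equals $1$ when $T=\{1,\dots,n\}$ and vanishes otherwise. Hence the entire left-hand side of the lemma collapses to
$$\Phi_n=\sum_{\substack{A\subset\binom{\{1,\dots,n\}}{2},\ B\subset\{1,\dots,n\}\\ V(A)\cup B=\{1,\dots,n\}}}\prod_{(i,j)\in A}\tilde a_{i,j}\prod_{i\in B}\tilde r_i.$$

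Finally I would check weights. By definition, the monomial $\prod_{(i,j)\in A}\tilde a_{i,j}\prod_{i\in B}\tilde r_i$ lies in $F^{2|A|+|B|}R$. Since $|V(A)|\le 2|A|$ and $V(A)\cup B=\{1,\dots,n\}$, we have $2|A|+|B|\ge |V(A)|+|B|\ge|V(A)\cup B|=n$, so every surviving term lies in $F^nR$, giving $\Phi_n\in F^nR$. There is no real obstacle here; the work is the bookkeeping for the sign-cancellation step, which is simply Möbius inversion over the Boolean lattice and is what forces $V(A)\cup B=\{1,\dots,n\}$---the one condition that, combined with the trivial bound $|V(A)|\le 2|A|$, produces the filtration degree $n$.
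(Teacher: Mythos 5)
Your proof is correct, and it takes a genuinely different route from the paper's. The paper argues by induction on $n$: it splits the sum according to whether the index $n$ lies in the subset, applies the inductive hypothesis to the $(n-1)$-fold sum, and uses the elementary membership $a_{i,j}r_k - r_k \in F^2R$ to absorb the discrepancy, finally multiplying by $(r_n-1)$ to gain the last degree. You instead pass to the shifted generators $\tilde a_{i,j}=a_{i,j}-1$, $\tilde r_i = r_i-1$ and perform M\"obius inversion over the Boolean lattice of subsets of $\{1,\dots,n\}$; the alternating sum over supersets of $V(A)\cup B$ vanishes unless $V(A)\cup B=\{1,\dots,n\}$, and then the covering condition together with $|V(A)|\le 2|A|$ forces $2|A|+|B|\ge n$. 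Your computation is slightly stronger than the paper's: it produces a closed formula for the alternating sum as an explicit combination of generators of $F^nR$, and it makes transparent why $a$-factors carry weight $2$ and $r$-factors weight $1$ (each edge covers two indices, each vertex one). The paper's induction is shorter to write down but only certifies membership. One small point worth making explicit in either argument: the sum must include the empty subset $S=\emptyset$ (contributing $(-1)^n$), as is clear from the paper's application where the sum arises from $\sum_{\epsilon_i\in\{0,1\}}$; your Boolean-lattice indexing includes it automatically, and without it the cancellation $(1-1)^{n-|T|}$ would fail.
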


\begin{proof}
 We will prove the lemma by induction on $n$.
If $n=1$, the equation is trivial.
We have
\begin{align*}
&\sum_{1 \leq i_1 <i_2 < \cdots <i_k \leq n}
(-1)^{n-j}
(\prod_{1 \leq k <l \leq k} a_{i_k,i_l})
(\prod_{1 \leq m \leq k} r_{i_m}) \\
&=\sum_{1 \leq i_1 <i_2 < \cdots <i_k \leq n-1}
(-1)^{n-j}
(\prod_{1 \leq k <l \leq k} a_{i_k,i_l})
(\prod_{1 \leq m \leq k} a_{i_m,n} r_{i_m})r_{n} \\
&-\sum_{1 \leq i_1 <i_2 < \cdots <i_k \leq n-1}
(-1)^{n-j}
(\prod_{1 \leq k <l \leq k} a_{i_k,i_l})
(\prod_{1 \leq m \leq k} r_{i_m}) .
\end{align*}
By inductive assumption, we have
\begin{equation*}
\sum_{1 \leq i_1 <i_2 < \cdots <i_k \leq n-1}
(-1)^{n-j}
(\prod_{1 \leq k <l \leq k} a_{i_k,i_l})
(\prod_{1 \leq m \leq k} a_{i_m,n-1} r_{i_m})r_{n} 
\in F^{n-1} R.
\end{equation*}
Since $a_{i,j} r_k-r_k  \in F^2 R$,
we obtain
\begin{align*}
&\sum_{1 \leq i_1 <i_2 < \cdots <i_k \leq n-1}
(-1)^{n-j}
(\prod_{1 \leq k <l \leq k} a_{i_k,i_l})
(\prod_{1 \leq m \leq k} a_{i_m,n} r_{i_m})r_{n} \\
&-\sum_{1 \leq i_1 <i_2 < \cdots <i_k \leq n-1}
(-1)^{n-j}
(\prod_{1 \leq k <l \leq k} a_{i_k,i_l})
(\prod_{1 \leq m \leq k} r_{i_m})  \mod F^n R\\
&=\sum_{1 \leq i_1 <i_2 < \cdots <i_k \leq n-1}
(-1)^{n-j}
(\prod_{1 \leq k <l \leq k} a_{i_k,i_l})
(\prod_{1 \leq m \leq k}  r_{i_m})r_{n} \\
&-\sum_{1 \leq i_1 <i_2 < \cdots <i_k \leq n-1}
(-1)^{n-j}
(\prod_{1 \leq k <l \leq k} a_{i_k,i_l})
(\prod_{1 \leq m \leq k} r_{i_m})  \mod F^n R\\
&=\sum_{1 \leq i_1 <i_2 < \cdots <i_k \leq n-1}
(-1)^{n-j}
(\prod_{1 \leq k <l \leq k} a_{i_k,i_l})
(\prod_{1 \leq m \leq k} r_{i_m})(r_n-1) =0 \mod F^n R.
\end{align*}

This proves the lemma.

\end{proof}

\begin{proof}[Proof of Theorem \ref{thm_filtration_AS}]
It suffices to show
\begin{equation*}
\psi'_\mathcal{AS} (
\sum_{\epsilon_1, \cdots, \epsilon_N \in \shuugou{0,1}}
(-1)^{N-\sum_i \epsilon_i} {T}(\epsilon_1, \cdots, \epsilon_N))
\in F^N \skein{\Sigma}.
\end{equation*}
By definition, we have
\begin{align*}
&\psi'_\mathcal{AS} (
\sum_{\epsilon_1, \cdots, \epsilon_N \in \shuugou{0,1}}
(-1)^{N-\sum_i \epsilon_i} {T}(\epsilon_1, \cdots, \epsilon_N)) \\
&=
\sum_{1 \leq i_1 <i_2 < \cdots <i_k \leq N}
(-1)^{N-j}
(\prod_{1 \leq k <l \leq k} a_{i_k,i_l})
T(\prod_{1 \leq m \leq k} r_{i_m})
\end{align*}
where $a_{i_k,i_l} =(-A)^{(\mathrm{linking \ number \ of \ } T_{i_k} \mathrm{\  and \ } 
T_{i_l})}$ and $T (r_1^{\epsilon_1} \cdots r_N^{\epsilon_N})= (-A)^{\sum_j \epsilon_j w(T_j)}
T(\epsilon_1, \cdots , \epsilon_N)$.
Here, when $\epsilon_1,  \cdots, \epsilon_{l-1}, \epsilon_{l+1}, \cdots, 
\epsilon_{n}=0,\epsilon_l=1$,
we denote $T_l \defeq  T(\epsilon_1 , \cdots , \epsilon_N)$.
Using Lemma \ref{lemm_filtration_AS} corresponding
$T (r_1^{\epsilon_1} \cdots r_N^{\epsilon_N})$
to $r_1^{\epsilon_1} \cdots r_N^{\epsilon_N}$,
we obtain
\begin{align*}
\sum_{1 \leq i_1 <i_2 < \cdots <i_k \leq n}
(-1)^{n-j}
(\prod_{1 \leq k <l \leq k} a_{i_k,i_l})
T(\prod_{1 \leq m \leq k} r_{i_m})
\in F^N \skein{\Sigma}.
\end{align*}
This proves the theorem.
\end{proof}

Let $\tzeroskein{\Sigma}$ be the submodule
generated by links whose homology class is $0$
in $\Sigma$.
We remark that, for a Conway triple
$(L^+, L^-, L^0)$, if a homology class of one of 
$(L^+, L^-,L^-)$ is $0$ in $\Sigma$,
the homology classes of $(L^+, L^-,L^0)$
are $0$. 

\begin{prop}
Let $L=l_1 \sqcup l_2 \sqcup  \cdots \sqcup l_N$
 be an oriented framed link in $\Sigma \times I$ with
$\sharp \pi_0 (L) =N$.
The injective map 
$\iota : \Sigma \to \Sigma \times I \backslash L,
x \mapsto (x,0)$
induces $\iota_* :
H_1 (\Sigma, \Z ) \to H_1 (\Sigma \times I \backslash L,
\Z )$. Let $z_i$ be the homology class of 
the right handed meridian of $l_i$ 
and $y_i$ the homology class of 
the longitude of $l_i$ for $i=1, \cdots, N$.
We remark $H_1 (\Sigma \times I \backslash L, \Z)
=\iota_* (H_1 (\Sigma , \Z)) \oplus \Z z_1 \oplus \cdots  \oplus \Z z_n$.
When $\sum_i y_i =\iota_* (A_0) \oplus a^1 z_1 \oplus \cdots \oplus a^n z_n$,
we define $w' (L) \defeq \sum_{j}a^j$.
Then we have $w(L) =w'(L)$.
\end{prop}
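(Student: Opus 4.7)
The plan is to reduce both sides to the standard writhe decomposition for a framed diagram. First I would choose a diagram $D$ of $L$ on $\Sigma$ and write
\begin{equation*}
w(L) = \sum_{i} w(l_i) + 2 \sum_{i<j} \mathrm{lk}(l_i,l_j),
\end{equation*}
where $w(l_i)$ is the sum of signs of self-crossings of the component $l_i$ and $\mathrm{lk}(l_i,l_j)$ denotes the mutual linking number, defined as half the signed count of mixed crossings between $l_i$ and $l_j$. This is the standard splitting of the writhe into self and mutual contributions and needs no argument beyond the definition.

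Next, writing $[y_i] = \iota_*(A_i) + \sum_j a_i^j\, z_j$ for each longitude individually, one has $a^j = \sum_i a_i^j$. The key point is that the coefficient $a_i^j$ is obtained by geometric intersection with a small meridian disk $D_j$ of $l_j$, which is defined because $D_j$ is disjoint from every component of $L$ other than $l_j$. I would then compute $a_i^j$ by cases. For $i\neq j$, the longitude $y_i$ and the core $l_i$ cobound the framing annulus, and after a generic perturbation this annulus may be taken disjoint from $l_j$; hence $[y_i]=[l_i]$ in $H_1(\Sigma\times I\setminus l_j)$, so $a_i^j$ equals the algebraic intersection of $l_i$ with $D_j$, which is $\mathrm{lk}(l_i,l_j)$. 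For $i=j$, the blackboard longitude $y_j$ meets $D_j$ exactly $w(l_j)$ times with signs: this is a local calculation, since away from self-crossings of $l_j$ the pushoff is parallel and contributes nothing, while near a self-crossing of sign $\epsilon$ the blackboard pushoff acquires exactly one intersection of sign $\epsilon$ with a meridian disk of the other strand.

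Assembling these yields $a^j = w(l_j) + \sum_{i\neq j}\mathrm{lk}(l_i,l_j)$, and summing over $j$ gives
\begin{equation*}
w'(L) = \sum_j a^j = \sum_j w(l_j) + 2\sum_{i<j}\mathrm{lk}(l_i,l_j) = w(L),
\end{equation*}
which is the desired identity.

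The only real obstacle is the computation at $i=j$, namely that the blackboard longitude $y_j$ meets the meridian disk $D_j$ exactly $w(l_j)$ times with signs. I would carry this out by a local model at each self-crossing of $l_j$: after an isotopy putting the crossing in standard position, the blackboard pushoff of the over-strand crosses a meridian disk of the under-strand once, with sign matching the crossing sign; the two self-crossing contributions of the strand to itself pair up correctly. Once that local count is in hand, the rest of the argument is a bookkeeping exercise in $H_1(\Sigma\times I\setminus L)$.
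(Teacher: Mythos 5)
Your strategy---computing each meridian coefficient $a^j$ diagrammatically and summing---differs from the paper's proof, which instead verifies the identity on crossingless diagrams and tracks how $w'$ changes under the local moves needed to build an arbitrary diagram. Your architecture could be made to work, but the step on which everything rests is not valid as written. The coefficient of $z_j$ in a class of $H_1(\Sigma\times I\setminus L,\Z)$ is \emph{not} computed by algebraic intersection with a small meridian disk $D_j$: that disk is not a properly embedded dual surface in the complement (its boundary $z_j$ lies in the interior of $\Sigma\times I\setminus L$), so the signed intersection with $D_j$ is not invariant under homotopy of the curve in the complement --- the meridian $z_j$ itself can be pushed off $D_j$ entirely although its coefficient is $1$, and a blackboard pushoff $y_j$ meets a single small normal disk of $l_j$ either once or not at all depending on the radius, never $w(l_j)$ times. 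Since the components $l_j$ need not be null-homologous in $\Sigma\times I$, there is also no Seifert surface and no intrinsically defined pairwise linking number to fall back on.

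The values you then assign are wrong for $i\neq j$: on a surface, half the signed count of mixed crossings between $l_i$ and $l_j$ need not even be an integer (take two curves with algebraic intersection number $1$ in $\Sigma$, realized by a single crossing), whereas $a_i^j$ is an integer and is asymmetric in $i$ and $j$. The identity that actually has to be proved is that $a_i^j$ equals the signed number of crossings at which $y_i$ passes \emph{over} $l_j$; one way to get it is to observe that $[y_i]-\iota_*(p_*[y_i])$ (with $p:\Sigma\times I\to\Sigma$ the projection) is null-homologous in $\Sigma\times I$, hence a combination of meridians whose coefficients are honest linking numbers, and to bound the cycle $y_i-\iota(p(y_i))$ by the $2$-chain of vertical segments, whose intersections with $l_j$ are exactly the crossings where $y_i$ lies above $l_j$. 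With that corrected dictionary your count $a_j^j=w(l_j)$ is right, and the total $\sum_j a^j$ still equals $w(L)$ because $a_i^j+a_j^i$ recovers the full signed mixed-crossing count; but as written the intermediate identities are false and the mechanism you invoke to extract the coefficients does not exist.
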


\begin{proof}
When an oriented link $L$ has no crossing,
we have $w(L)=w'(L)=0$.
So it is enough to show the following.
\begin{itemize}
\item Let $d_1$ and $d_2$ be link diagrams in 
$\Sigma$ which are identical except for a disk.
When they differ as shown in Figure \ref{figure_proof_w}
in the disk, then we have $w'(T(d_1))=w' (T(d_2))$.
\item Let $d_1$ and $d_2$ be link diagrams in
$\Sigma$ which are identical except for a disk.
When they are a positive twist and a straight line 
in the disk, then we have $w'(T(d_1))=w'(T(d_2))+1$.
\end{itemize}
We leave the details to the reader.

\begin{figure}
\begin{picture}(100,50)
\put(0,0){\includegraphics[width=100pt]{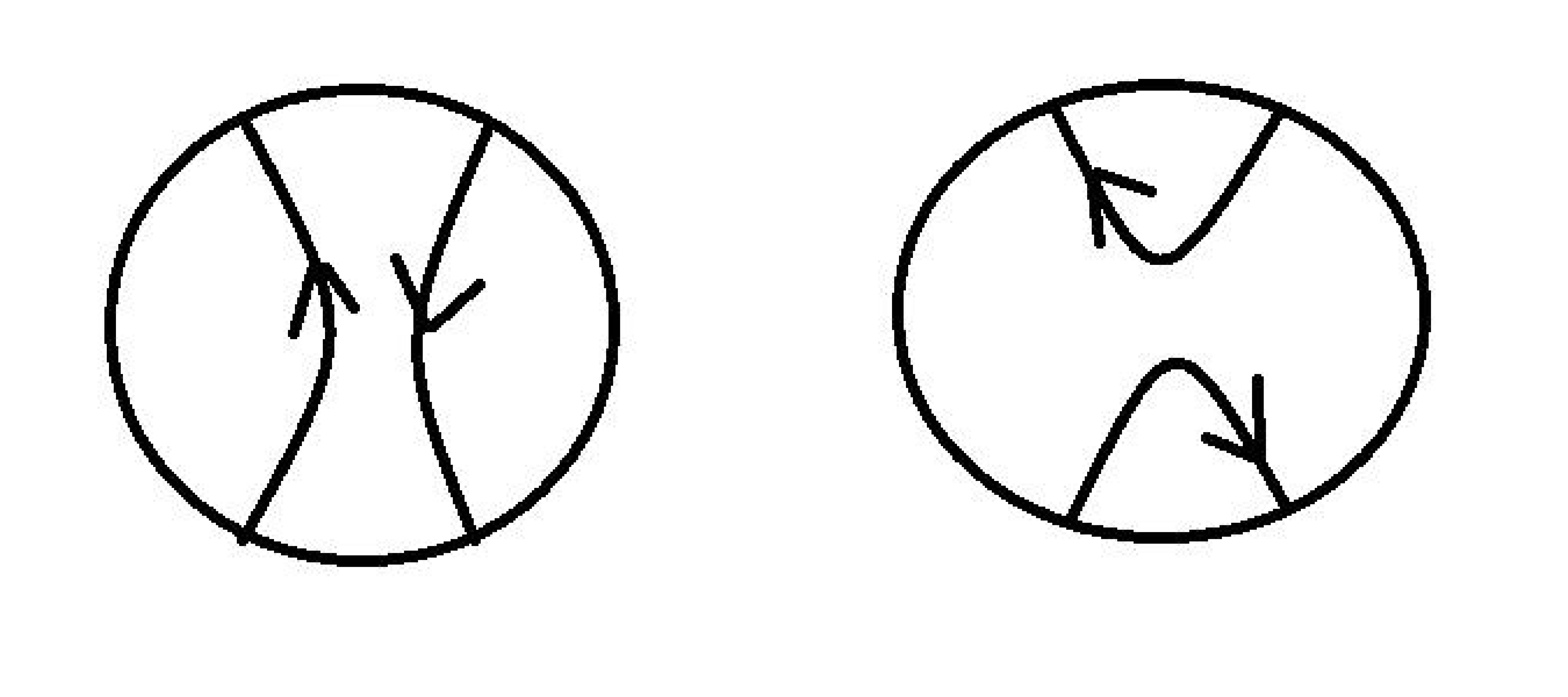}}
\end{picture}
\caption{$d_1,d_2$}
\label{figure_proof_w}
\end{figure}

\end{proof}

\begin{cor}
Let $e$ be an embedding from $\Sigma \times I \to \Sigma' \times I$.
Then, we have $w(L)=w(e(L))$ for any oriented framed link $L$
in $\Sigma \times I$
\end{cor}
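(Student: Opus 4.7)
The plan is to reduce the statement to the preceding proposition, which expresses the self-linking number $w(L)$ as the quantity $w'(L)$ defined intrinsically from the first homology of the link complement. Since $w'$ only sees the meridian coefficients of the longitudes after quotienting by the image of $H_1(\Sigma)$, it should transport cleanly along any embedding $e \colon \Sigma \times I \to \Sigma' \times I$.

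Concretely, I would first note that $e$ restricts to an embedding $\Sigma \times I \setminus L \hookrightarrow \Sigma' \times I \setminus e(L)$ and a corresponding embedding of surfaces $\Sigma \hookrightarrow \Sigma'$, giving a commutative square of inclusions. Passing to $H_1$, this yields
\begin{equation*}
e_* \colon H_1(\Sigma \times I \setminus L, \Z) \to H_1(\Sigma' \times I \setminus e(L), \Z)
\end{equation*}
which sends the right-handed meridian $z_i$ of the $i$-th component $l_i$ of $L$ to the right-handed meridian $z_i'$ of its image $e(l_i)$, and similarly sends the longitude $y_i$ to the longitude $y_i'$. Moreover $e_*$ carries $\iota_*(H_1(\Sigma,\Z))$ into $\iota'_*(H_1(\Sigma',\Z))$.

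Now if we write $\sum_i y_i = \iota_*(A_0) + \sum_j a^j z_j$ in $H_1(\Sigma \times I \setminus L, \Z)$, applying $e_*$ gives
\begin{equation*}
\sum_i y_i' \;=\; \iota'_*(e_*A_0) + \sum_j a^j z_j'
\end{equation*}
in $H_1(\Sigma' \times I \setminus e(L), \Z)$. Since the decomposition $H_1(\Sigma' \times I \setminus e(L), \Z) = \iota'_*(H_1(\Sigma',\Z)) \oplus \bigoplus_j \Z z_j'$ is a direct sum, the meridian coefficients of $\sum_i y_i'$ are uniquely determined, and they equal $a^j$. Hence $w'(e(L)) = \sum_j a^j = w'(L)$, and combining with the previous proposition gives $w(e(L)) = w'(e(L)) = w'(L) = w(L)$.

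The only subtle point is verifying that $e_*$ really does map the preferred meridians and longitudes of $L$ to those of $e(L)$; this is automatic because a meridian (resp.\ longitude) of a framed knot component is specified by its position in a tubular neighborhood, and $e$ being an embedding preserves tubular neighborhoods up to isotopy and preserves the framing by hypothesis. I do not expect any serious obstacle; the content of the corollary is precisely that the preceding intrinsic reformulation makes $w$ manifestly natural under embeddings.
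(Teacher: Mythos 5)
Your reduction to the homological quantity $w'$ is the natural route (the paper states this corollary without proof, as an immediate consequence of the preceding proposition), and your handling of meridians and framed longitudes is correct. The genuine gap is the unproved assertion ``$e_*$ carries $\iota_*(H_1(\Sigma,\Z))$ into $\iota'_*(H_1(\Sigma',\Z))$.'' The splitting $H_1(\Sigma\times I\setminus L,\Z)=\iota_*(H_1(\Sigma,\Z))\oplus\bigoplus_j\Z z_j$ is \emph{not} natural under embeddings of thickened $3$-manifolds: a loop lying in $\Sigma\times\shuugou{0}$ is sent by $e$ to a loop in $\Sigma'\times I\setminus e(L)$ whose class can acquire nonzero meridian components. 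Writing $e_*\iota_*(A_0)=\iota'_*(B_0)+\sum_j c^j z'_j$, your computation actually yields
\begin{equation*}
w'(e(L))=w'(L)+\sum_j c^j,
\end{equation*}
and the correction term need not vanish. For a concrete failure, take $\Sigma=S^1\times I$, let $L$ be the core circle with the blackboard framing, so $w(L)=0$, and let $e$ be the self-embedding of $(S^1\times I)\times I\cong S^1\times D^2$ given by the twist $(\theta,z)\mapsto(\theta,R_{2\pi\theta}(z))$ (or an embedding into $D^2\times I$ whose image is a knotted solid torus): then $e(L)$ is the core with framing shifted by $\pm 1$ (resp.\ a knot with nonzero framing), so $w(e(L))\neq 0$. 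Thus the step you skipped is precisely where the claim, read for arbitrary embeddings, breaks down; it is not a removable technicality.

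The argument is salvageable exactly in the situation the paper subsequently uses. If $\sum_i[l_i]=0$ in $H_1(\Sigma,\Z)$, then $A_0=0$, the problematic term disappears, and uniqueness of the meridian coefficients in the decomposition of $H_1(\Sigma'\times I\setminus e(L),\Z)$ gives $w'(e(L))=w'(L)$ by your own computation; this covers Corollary \ref{cor_embedding_AS}, which only asserts compatibility of $\psi'_{\mathcal{AS}}$ with $e_*$ on $\tzeroskein{\Sigma}$. Alternatively, if $e$ is isotopic to a level-preserving embedding $e_0\times\id_I$ induced by an inclusion of surfaces $e_0:\Sigma\to\Sigma'$, the conclusion is immediate from diagrams with no homology at all. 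One of these restrictions must be imposed; as written, your proof asserts without justification the one statement that is false in general.
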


\begin{cor}
\label{cor_embedding_AS}
Let $e$ be an embedding from $\Sigma \times I \to \Sigma' \times I$.
The embedding $e$ induces $e_*: \tskein{\Sigma} \to \tskein{\Sigma' }$,
$e_* :\tzeroskein{\Sigma} \to \tzeroskein{\Sigma' }$
and $e_*: \skein{\Sigma} \to \skein{\Sigma' }$.
Then we have $\psi'_\mathcal{AS} (e_* (x))=e_* (\psi'_\mathcal{AS} (x))$
for any $x \in \tzeroskein{\Sigma}$.

\end{cor}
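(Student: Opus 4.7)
The plan is to check the identity on the standard generators of $\tzeroskein{\Sigma}$, namely on classes $[L]$ represented by null-homologous oriented framed links $L \subset \Sigma \times I$, and then extend by $\Q[\rho][[h]]$-linearity (both $e_*$ and $\psi'_\mathcal{AS}$ are linear over the coefficient ring, on which each acts as the identity).

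Unwinding definitions,
\begin{equation*}
\psi'_\mathcal{AS}([L]) = (-A)^{w(L)}\,[\bar L], \qquad \psi'_\mathcal{AS}(e_*[L]) = \psi'_\mathcal{AS}([e \circ L]) = (-A)^{w(e(L))}\,[\overline{e(L)}].
\end{equation*}
Forgetting the orientation commutes with the embedding, so $e_*([\bar L]) = [\overline{e(L)}]$ in $\skein{\Sigma'}$; this is immediate, since $e$ operates on the underlying framed link independently of any choice of orientation. Given this, the whole identity collapses to the scalar equality $w(L) = w(e(L))$, and the claimed equality follows by pulling the scalar out of $e_*$.

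That last equality is precisely the corollary stated immediately before Corollary \ref{cor_embedding_AS}, which in turn follows from the preceding proposition expressing $w(L)$ as the intrinsic homological quantity $w'(L)$ computed from the class of the sum of longitudes in $H_1(\Sigma \times I \setminus L)$. Since $e$ restricts to a homeomorphism of $L$ onto $e(L)$ and sends a tubular neighbourhood of $L$ onto a tubular neighbourhood of $e(L)$, it carries longitudes and meridians of $L$ to those of $e(L)$, so $w'(L) = w'(e(L))$ tautologically, and combining with $w = w'$ yields $w(L) = w(e(L))$.

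The main potential obstacle would be if $\psi'_\mathcal{AS}$ were not well-defined as a $\Q$-linear map on the skein relations, but that is guaranteed by the well-definedness proposition. Beyond invoking this and the intrinsic-homology description of $w$, no further calculation is required; the restriction to $\tzeroskein{\Sigma}$ rather than all of $\tskein{\Sigma}$ is not used in the argument sketched and seems dictated by the downstream applications.
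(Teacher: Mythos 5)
Your reduction is the one the paper intends: the corollary is given no separate proof there, being treated as an immediate consequence of the preceding corollary $w(L)=w(e(L))$, exactly as you argue --- check the identity on link generators, note that forgetting the orientation commutes with $e$, and observe that everything collapses to the invariance of $w$ under the embedding. (Your parenthetical that $\psi'_{\mathcal{AS}}$ ``acts as the identity'' on the coefficient ring is not quite right --- it sends $h\mapsto -A^2+A^{-2}$ and $\exp(\rho h)\mapsto A^4$ --- but since $e_*$ fixes coefficients on both sides this does not damage the linearity step.)

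The genuine problem is your closing remark that the restriction to $\tzeroskein{\Sigma}$ ``is not used'', together with the claim that $w'(L)=w'(e(L))$ holds ``tautologically''. The quantity $w'(L)$ is the sum of the meridian coefficients of $\sum_i y_i$ in the decomposition $H_1(\Sigma\times I\setminus L)=\iota_*(H_1(\Sigma))\oplus\bigoplus_j\Z z_j$. The embedding does carry meridians to meridians and longitudes to longitudes, but it need not carry $\iota_*(H_1(\Sigma))$ into $\iota'_*(H_1(\Sigma'))$: writing $\sum_i y_i=\iota_*(A_0)+\sum_j a^j z_j$ with $A_0=\sum_i[l_i]\in H_1(\Sigma)$, the class $e_*\iota_*(A_0)$ can itself acquire meridian components in $H_1(\Sigma'\times I\setminus e(L))$, so in general $w'(e(L))$ equals $w'(L)$ plus the total meridian part of $e_*\iota_*(A_0)$. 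Concretely, take $\Sigma=\Sigma'=S^1\times I$, let $L$ be the core circle with blackboard framing (so $w(L)=0$), and let $e$ embed the solid torus $\Sigma\times I$ into a ball in $\Sigma'\times I$ with one full twist; then $e(L)$ is an unknot whose induced framing forces $w(e(L))=\pm1$. The correction term vanishes precisely when $A_0=\sum_i[l_i]=0$, i.e.\ when $L$ is null-homologous, so the hypothesis $x\in\tzeroskein{\Sigma}$ is exactly what validates your ``tautological'' step (and, incidentally, the paper's own preceding corollary $w(L)=w(e(L))$ is overstated in omitting this hypothesis, which may be what misled you). Your proof of the corollary as actually stated survives once this point is inserted, but as written the key step is justified by an argument that would equally prove a false general statement.
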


Since $w(L_1 L_2) =w(L_1)+w(L_2)$ for null homologous oriented framed links
$L_1$ and $L_2$, $\psi'_\mathcal{AS}:\tzeroskein{\Sigma} \to \skein{\Sigma}$
 is an algebra homomorphism.
We define $\psi_\mathcal{AS}  \defeq (A+\gyaku{A}) \psi'_\mathcal{AS}$. 

\begin{prop}
The $\Q$-algebra homomorphism $\psi_\mathcal{AS}$
is a Lie algebra homomorphism.
\end{prop}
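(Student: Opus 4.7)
The plan is to verify the identity $\psi_\mathcal{AS}([x,y]) = [\psi_\mathcal{AS}(x),\psi_\mathcal{AS}(y)]_\mathcal{S}$ by reducing to pairs $x = [T(d_1)]$, $y = [T(d_2)] \in \tzeroskein{\Sigma}$ whose underlying diagrams $d_1,d_2$ meet only in transverse double points. Such elements generate $\tzeroskein{\Sigma}$ as a $\Q[\rho][[h]]$-module, and both sides of the identity are $\Q$-bilinear in $x,y$, so this reduction suffices. On such a pair, the HOMFLY-PT Lie bracket is characterized by
\begin{equation*}
h\,\sigma(d_1,d_2) = [T(d_1)]\boxtimes[T(d_2)] - [T(d_2)]\boxtimes[T(d_1)].
\end{equation*}

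First I would apply $\psi'_\mathcal{AS}$ to this relation. Its restriction to $\tzeroskein{\Sigma}$ is a $\Q$-algebra homomorphism (established in the paragraph immediately preceding the statement), and $\psi'_\mathcal{AS}(h) = -A^2 + A^{-2}$. Hence the image in $\skein{\Sigma}$ is
\begin{equation*}
(-A^2+A^{-2})\,\psi'_\mathcal{AS}(\sigma(d_1,d_2)) = \psi'_\mathcal{AS}(d_1)\,\psi'_\mathcal{AS}(d_2) - \psi'_\mathcal{AS}(d_2)\,\psi'_\mathcal{AS}(d_1).
\end{equation*}
Using the factorization $-A^2+A^{-2} = -(A+A^{-1})(A-A^{-1})$ together with the definition $\psi_\mathcal{AS} = (A+A^{-1})\psi'_\mathcal{AS}$, this can be rewritten as
\begin{equation*}
-(A-A^{-1})\,\psi_\mathcal{AS}(\sigma(d_1,d_2)) = \psi'_\mathcal{AS}(d_1)\,\psi'_\mathcal{AS}(d_2) - \psi'_\mathcal{AS}(d_2)\,\psi'_\mathcal{AS}(d_1),
\end{equation*}
so that the factor $(A+A^{-1})$ built into $\psi_\mathcal{AS}$ cancels exactly one of the two linear factors of $\psi'_\mathcal{AS}(h)$, leaving only $A-A^{-1}$ on the left.

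Second, I would invoke the defining formula for the Lie bracket $[\cdot,\cdot]_\mathcal{S}$ on $\skein{\Sigma}$ from \cite{TsujiCSAI}: on a pair of transversely intersecting Kauffman diagrams the bracket is a signed sum over intersection points whose commutator-to-bracket conversion factor is $A - A^{-1}$ up to sign. Matching this with the identity above yields $\psi_\mathcal{AS}(\sigma(d_1,d_2)) = [\psi_\mathcal{AS}(d_1),\psi_\mathcal{AS}(d_2)]_\mathcal{S}$, and $\Q[\rho][[h]]$-bilinear extension gives the proposition. The main obstacle is the bookkeeping of the framing corrections $(-A)^{w(\cdot)}$ attached to each diagram under $\psi'_\mathcal{AS}$ and the matching of intersection-sign conventions between the HOMFLY-PT $\sigma$ and its Kauffman counterpart; however this is a purely local computation at each transverse crossing, structurally identical to the Conway-triple analysis already used to show that $\psi'_\mathcal{AS}$ respects multiplication, so no new Reidemeister-move verification is needed once the local match is done.
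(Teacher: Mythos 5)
Your plan is right in its first half and coincides there with the paper's proof: both arguments rest on the facts that $\psi'_\mathcal{AS}$ is multiplicative on $\tzeroskein{\Sigma}$, that $xy-yx=h[x,y]$ in $\tskein{\Sigma}$, and that $\psi'_\mathcal{AS}(h)=-A^2+A^{-2}=(-A+A^{-1})(A+A^{-1})$. Where you diverge is the last step. The paper never goes back to the diagrammatic, signed-sum-over-intersection-points description of either bracket: it uses that the Kauffman bracket is likewise characterized by its commutator, $uv-vu=(-A+A^{-1})[u,v]$ in $\skein{\Sigma}$, so applying $\psi'_\mathcal{AS}$ to $xy-yx-h[x,y]=0$ yields the desired identity up to an overall power of $A+A^{-1}$, which is then cancelled using the fact that $\skein{\Sigma}$ is a free $\Q[A^{\pm 1}]$-module. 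This makes your reduction to transversely intersecting diagrams, and more importantly your proposed crossing-by-crossing matching of the two brackets --- the ``main obstacle'' of framing corrections and sign conventions that you explicitly defer --- unnecessary: all of that local information is already packaged into the two commutator characterizations, and what you would end up re-proving locally is exactly the identity $(-A+A^{-1})[u,v]_\mathcal{S}=uv-vu$ from \cite{TsujiCSAI}. One concrete caution about your bookkeeping: after cancelling one factor of $A+A^{-1}$ you equate $-(A-A^{-1})\psi_\mathcal{AS}(\sigma(d_1,d_2))$ with the commutator of the $\psi'_\mathcal{AS}$-images, but the bracket you must identify is that of the $\psi_\mathcal{AS}$-images, and by bilinearity these two commutators differ by a further power of $(A+A^{-1})^{2}$; this is precisely the scalar the paper clears at the outset and removes at the end via torsion-freeness, and it is the one place your ``purely local computation'' could silently lose a factor.
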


\begin{proof}
It suffices to show $\psi_\mathcal{AS} (x) \psi_\mathcal{AS} (y)-
\psi_\mathcal{AS} (y) \psi_\mathcal{AS} (x) -
(-A+A^{-1}) \psi_\mathcal{AS} ([x,y])=0$ for 
$x,y \in \tzeroskein{\Sigma}$.
We have
\begin{align*}
&(A+A^{-1})^2 (\psi_\mathcal{AS} (x) \psi_\mathcal{AS} (y)-
\psi_\mathcal{AS} (y) \psi_\mathcal{AS} (x) -
(-A+A^{-1}) \psi_\mathcal{AS} ([x,y])) \\
&= (\psi'_\mathcal{AS} (x) \psi'_\mathcal{AS} (y)-
\psi'_\mathcal{AS} (y) \psi_\mathcal{AS} (x)) -
(-A^2+A^{-2}) \psi'_\mathcal{AS} ([x,y]) \\
&=\psi'_\mathcal{AS} (xy-yx -h [x,y])=0
\end{align*}
Since $\skein{\Sigma}$ is a $\Q [A^{\pm 1}]$-free module,
we obtain 
$\psi_\mathcal{AS} (x) \psi_\mathcal{AS} (y)-
\psi_\mathcal{AS} (y) \psi_\mathcal{AS} (x) -
(-A+A^{-1}) \psi_\mathcal{AS} ([x,y])=0$.
This proves the proposition.
\end{proof}

Let $\Sigma_{g,1}$ be a compact connected oriented surface of genus $g$
with nonempty connected boundary.
There exist embeddings
\begin{align*}
&\zeta_\mathrm{Gol} : \mathcal{I} (\Sigma_{g,1}) 
\to (F^3 \widehat{\Q \hat{\pi} (\Sigma_{g,1})}, \bch), \\
&\zeta_\mathcal{S} :\mathcal{I} (\Sigma_{g,1})
\to (F^3 \widehat{ \skein{\Sigma_{g,1}}}, \bch).
\end{align*}
We remark we introduce the embedding $\zeta_\mathcal{S}$ in 
\cite{TsujiTorelli}.
We define some filtrations $\filtn{\mathcal{I}_g(n)}$,
$\filtn{\mathcal{M}_g (n)}$, $\filtn{\mathcal{M}^\mathcal{S}_g (n)}$
 and $\filtn{\mathcal{M}^\mathcal{A}_g (n)}$
by $\mathcal{I}_g (0) \defeq \mathcal{I} (\Sigma_{g,1})$,
$\mathcal{I}_g (n)=[\mathcal{I}_g(n-1), \mathcal{I} (\Sigma_{g,1})]$,
$\mathcal{M}_g(n)=\zeta_\mathrm{Gol}^{-1} (F^{n+3} \widehat{\Q \hat{\pi} (\Sigma_{g,1})})$, $\mathcal{M}^\mathcal{S}_g (n)
=\zeta_\mathcal{S}^{-1} (F^{n+3} \widehat{\skein{\Sigma_{g,1}}}$
and  $\mathcal{M}^\mathcal{A}_g (n)
=\zeta_\mathcal{A}^{-1} (F^{n+3} \widehat{\skein{\Sigma_{g,1}}}$
where $\zeta_\mathcal{A} =\zeta$.
We remark that $\filtn{\mathcal{M}_g (n)}$ is the Johnson filtration.
By definition, we have $\mathcal{I}_g (n) \subset \mathcal{M}_g(n)$,
$\mathcal{I}_g(n) \subset \mathcal{M}^\mathcal{S}_g (n)$
and $\mathcal{I}_g(n) \subset \mathcal{M}^\mathcal{A}_g (n)$.

Let $\psi_{\mathcal{AG}}:\tskein{\Sigma} \to \widehat{\Q \hat{\pi}} (\Sigma)$
be $\Q$-module homomorphism defined by
\begin{itemize}
\item $\psi_\mathcal{AG} (h)=0$, $\psi_\mathcal{AG} (\rho)=0$,
\item $\psi_\mathcal{AG} (L)$ is the conjugacy class of $L$ when $\sharp \pi_0 (L)=1$,
\item $\psi_\mathcal{AG} (L)=0$ when $\sharp \pi_0 (L) \geq 2$.
\end{itemize}
Then the $\Q$-module homomorphism $\psi_{\mathcal{AG}}$
 is a Lie algebra homomorphism 
and $\psi_{\mathcal{AG}} (F^n \tskein{\Sigma}) \subset
F^n \Q \hat{\pi} (\Sigma)$.
Using the Lie algebra homomorphism $\psi_\mathcal{AG}$,
we obtain $\mathcal{M}^\mathcal{A}_g (n) \subset
\mathcal{M}_g(n)$ for any $n \in \Z_{\geq 0}$.

\begin{prop}
\label{prop_filtration_Torelli}
We have 
$\mathcal{M}_g (1) = \mathcal{M}^\mathcal{S}_g(1)= \mathcal{M}^\mathcal{A}_g(1)$.
\end{prop}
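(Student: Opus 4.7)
The plan is to establish the three-way equality by exhibiting two separate inclusions, using $\mathcal{M}_g(1) = \mathcal{M}^\mathcal{S}_g(1)$ (stated in the introduction and proved in \cite{TsujiTorelli}) to obtain the third. All the work thus goes into proving $\mathcal{M}^\mathcal{A}_g(1) = \mathcal{M}_g(1)$.

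For the inclusion $\mathcal{M}^\mathcal{A}_g(1) \subset \mathcal{M}_g(1)$, I would exploit the Lie algebra homomorphism $\psi_\mathcal{AG} \colon \widehat{\tskein{\Sigma}} \to \widehat{\Q\hat\pi(\Sigma)}$, which preserves filtrations. The key computation is that $\psi_\mathcal{AG}(\Lambda(c)) = L_{\mathrm{Gol}}(c)$ on the nose: since $\psi_\mathcal{AG}$ sends $h$, $\rho$, and any multi-component link to zero, only the $n=1$ summand in the defining series for $\Lambda(c)$ survives, and with $\gamma_{1,1} = \exp(\rho h)\gamma$ and $\lambda(X) = \frac{1}{2X}(\log X)^2$ one computes $\gamma_{1,1}\lambda(\gamma_{1,1}) = \frac{1}{2}(\rho h + \log\gamma)^2$; evaluating under $\psi_\mathcal{AG}$ yields $\frac{1}{2}\lvert(\log\gamma)^2\rvert = L_{\mathrm{Gol}}(c)$, with the prefactor $(h/2)^2/(\arcsinh(h/2))^2$ tending to $1$. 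Since both $\zeta_\mathcal{A}$ (Theorem \ref{thm_main_embedding}) and $\zeta_{\mathrm{Gol}}$ are group homomorphisms into the respective $\bch$-groups built from the corresponding generators, and $\psi_\mathcal{AG}$ commutes with $\bch$, we conclude $\psi_\mathcal{AG} \circ \zeta_\mathcal{A} = \zeta_{\mathrm{Gol}}$. Filtration preservation then gives $\zeta_\mathcal{A}(\xi) \in F^4 \Rightarrow \zeta_{\mathrm{Gol}}(\xi) \in F^4$, as required.

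For the reverse inclusion $\mathcal{M}_g(1) \subset \mathcal{M}^\mathcal{A}_g(1)$, I would invoke Johnson's theorem that for $g \geq 3$ the Johnson subgroup $\mathcal{M}_g(1)$ is generated by Dehn twists along separating simple closed curves; the cases $g \leq 2$ are trivial or immediate. For such a separating curve $c$, the corollary to Lemma \ref{lemm_bounding_pair} gives $\Lambda(c) \in F^4\widehat{\tskein{\Sigma}}$, so each separating twist lies in $\mathcal{M}^\mathcal{A}_g(1)$. By Proposition \ref{prop_filt_bracket} the bracket satisfies $[F^n,F^m]\subset F^{n+m-2}$, so iterated commutators of elements of $F^4$ remain in $F^4$; consequently $\bch$ of $F^4$ elements stays in $F^4$, and the entire subgroup generated by separating twists is carried into $F^4$ by $\zeta_\mathcal{A}$. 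This gives $\mathcal{M}_g(1) \subset \mathcal{M}^\mathcal{A}_g(1)$.

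The main obstacle is the appeal to Johnson's classical theorem identifying $\mathcal{M}_g(1)$ with the group generated by separating twists; an alternative, more self-contained route would be to compute the graded map $F^3/F^4\widehat{\tskein{\Sigma_{g,1}}} \to F^3/F^4\widehat{\Q\hat\pi(\Sigma_{g,1})}$ induced by $\psi_\mathcal{AG}$ directly and verify it restricts to an injection on the image of $\zeta_\mathcal{A}(\mathcal{I}(\Sigma_{g,1}))/F^4$, thereby equating the kernels of the two Johnson-type quotient maps $\mathcal{I}(\Sigma_{g,1}) \to F^3/F^4$ coming from $\zeta_\mathcal{A}$ and $\zeta_{\mathrm{Gol}}$; the difficulty is then a precise identification of the relevant graded piece, but this is routine once the degree-three monomials $\lvert(R_1-1)(R_2-1)(R_3-1)\rvert$ are traced through both filtrations.
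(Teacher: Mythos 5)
Your proposal is correct and follows essentially the same route as the paper: the equality $\mathcal{M}_g(1)=\mathcal{M}^\mathcal{S}_g(1)$ is quoted from \cite{TsujiTorelli}, the inclusion $\mathcal{M}^\mathcal{A}_g(n)\subset\mathcal{M}_g(n)$ is obtained from the filtration-preserving Lie algebra homomorphism $\psi_{\mathcal{AG}}$ (which the paper records just before the proposition), and the reverse inclusion $\mathcal{M}_g(1)\subset\mathcal{M}^\mathcal{A}_g(1)$ uses Johnson's theorem that the Johnson kernel is generated by separating twists together with $\Lambda(c)\in F^4\widehat{\tskein{\Sigma_{g,1}}}$ for null-homologous $c$. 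Your extra detail on $\psi_{\mathcal{AG}}\circ\zeta_{\mathcal{A}}=\zeta_{\mathrm{Gol}}$ only elaborates what the paper leaves implicit.
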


\begin{proof}
Theorem 3.15 \cite{TsujiTorelli} implies that
$\mathcal{M}_g (1) = \mathcal{M}^\mathcal{S}_g(1)$.
It suffices to show that
$\mathcal{M}_g (1) \subset \mathcal{M}^\mathcal{A}_g (1)$.
By \cite{JohnsonKernel},
$\mathcal{M}_g (1)$ is generated by
Dehn twists along null homologous simple closed curves.
Since $\Lambda (c) \in F^4 \widehat{\tskein{\Sigma_{g,1}}}$
for null homologous simple closed curve $c$,
 we obtain 
$\mathcal{M}_g (1) \subset \mathcal{M}^\mathcal{A}_g (1)$.
This proves  the proposition.
\end{proof}

Since $\mathcal{M}_g (1) = \mathcal{M}^\mathcal{S}_g(1)= \mathcal{M}^\mathcal{A}_g(1)$
is generated by Dehn twists  along null homologous 
simple closed curves, we obtain
$\zeta_\mathcal{A} (\mathcal{M}^\mathcal{A}_g (1)) \subset
\tzeroskein{\Sigma_{g,1}}$.
So $\psi_\mathcal{AS}$ induces the group homomorphism
$\psi_\mathcal{AS}: (\zeta_\mathcal{A} (\Sigma_{g,1})\cap F^4 \widehat{\tskein{\Sigma_{g,1}}}, \bch)
\to (\zeta_\mathcal{S} (\Sigma_{g,1}) \cap F^4 \widehat{\skein{\Sigma_{g,1}}}, \bch)$.
By Theorem \ref{thm_filtration_AS}, we have the following.

\begin{thm}
We have $\mathcal{I}_g (n) \subset \mathcal{M}_g^\mathcal{A} (n)
\subset \mathcal{M}_g^\mathcal{S} (n) \cap \mathcal{M}_g (n)$.
\end{thm}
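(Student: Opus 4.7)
The plan is to prove the two inclusions separately.

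For the first inclusion $\mathcal{I}_g(n) \subset \mathcal{M}_g^{\mathcal{A}}(n)$, I would argue by induction on $n$. The base cases $n=0$ and $n=1$ are handled by definition and by Proposition \ref{prop_filtration_Torelli} (since $\mathcal{I}(\Sigma_{g,1}) = \mathcal{I}_g(0) = \mathcal{M}_g^{\mathcal{A}}(0)$ and $\mathcal{I}_g(1) \subset \mathcal{M}_g(1) = \mathcal{M}_g^{\mathcal{A}}(1)$). For the inductive step, assume $\zeta_{\mathcal{A}}(\mathcal{I}_g(n-1)) \subset F^{n+2}\widehat{\tskein{\Sigma_{g,1}}}$. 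For $\xi \in \mathcal{I}_g(n-1)$ and $\eta \in \mathcal{I}(\Sigma_{g,1})$, the fact that $\zeta_{\mathcal{A}}$ is a group homomorphism into $(\widehat{\tskein{\Sigma_{g,1}}}, \bch)$ yields
\begin{equation*}
\zeta_{\mathcal{A}}([\xi,\eta]) = \bch(\zeta_{\mathcal{A}}(\xi), \zeta_{\mathcal{A}}(\eta), -\zeta_{\mathcal{A}}(\xi), -\zeta_{\mathcal{A}}(\eta)).
\end{equation*}
This BCH expression equals $[\zeta_{\mathcal{A}}(\xi), \zeta_{\mathcal{A}}(\eta)]$ modulo iterated brackets of length $\geq 3$ in $\zeta_{\mathcal{A}}(\xi) \in F^{n+2}$ and $\zeta_{\mathcal{A}}(\eta) \in F^{3}$. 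By Proposition \ref{prop_filt_bracket} the leading term lies in $F^{(n+2)+3-2} = F^{n+3}$, and each additional nested bracket raises the filtration (since bracketing elements of $F^m$, $F^k$ with $m,k \geq 3$ yields something in $F^{m+k-2}$, strictly deeper than both). Hence $\zeta_{\mathcal{A}}([\xi,\eta]) \in F^{n+3}$, which gives $[\xi,\eta] \in \mathcal{M}_g^{\mathcal{A}}(n)$.

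For the second inclusion $\mathcal{M}_g^{\mathcal{A}}(n) \subset \mathcal{M}_g^{\mathcal{S}}(n) \cap \mathcal{M}_g(n)$, the plan is to exploit the filtration-preserving Lie algebra homomorphisms $\psi_{\mathcal{AS}}$ and $\psi_{\mathcal{AG}}$ already constructed above. Theorem \ref{thm_filtration_AS} gives $\psi_{\mathcal{AS}}(F^n\tskein{\Sigma_{g,1}}) \subset F^n\skein{\Sigma_{g,1}}$, and the analogous statement for $\psi_{\mathcal{AG}}$ is noted after its definition. The crucial compatibility to establish is that these homomorphisms intertwine the three injective group maps, that is,
\begin{equation*}
\psi_{\mathcal{AS}} \circ \zeta_{\mathcal{A}} = \zeta_{\mathcal{S}}, \qquad \psi_{\mathcal{AG}} \circ \zeta_{\mathcal{A}} = \zeta_{\mathrm{Gol}},
\end{equation*}
at least on $\mathcal{I}(\Sigma_{g,1})$. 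Because both sides are group homomorphisms to $(\bullet, \bch)$ that agree on a generating set, and because $\mathcal{I}(\Sigma_{g,1})$ is generated (by Putman) by BP-maps $t_{c_1c_2}$, it suffices to verify $\psi_{\mathcal{AS}}(\Lambda(c_1) - \Lambda(c_2)) = L_{\mathcal{S}}(c_1) - L_{\mathcal{S}}(c_2)$ and the Goldman analogue. These identities follow by comparing the explicit series expressions for $\Lambda(c)$, $L_{\mathcal{S}}(c)$, and $L_{\mathrm{Gol}}(c)$ under the substitutions $h \mapsto -A^2 + A^{-2}$, $\exp(\rho h) \mapsto A^4$ on the one hand and $h, \rho \mapsto 0$ on the other, together with the Lie algebra property and the fact that $\zeta_{\mathcal{A}}(\mathcal{M}_g^{\mathcal{A}}(1)) \subset \tzeroskein{\Sigma_{g,1}}$ so that $\psi_{\mathcal{AS}}$ is defined on the relevant elements. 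Granting these intertwining identities, the second inclusion is immediate: if $\zeta_{\mathcal{A}}(\xi) \in F^{n+3}\widehat{\tskein{\Sigma_{g,1}}}$, then $\zeta_{\mathcal{S}}(\xi) = \psi_{\mathcal{AS}}(\zeta_{\mathcal{A}}(\xi)) \in F^{n+3}\widehat{\skein{\Sigma_{g,1}}}$ and similarly for $\zeta_{\mathrm{Gol}}$.

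The main obstacle I anticipate is the intertwining identity $\psi_{\mathcal{AS}} \circ \zeta_{\mathcal{A}} = \zeta_{\mathcal{S}}$. Matching the two distinct Dehn-twist formulas (the HOMFLY-PT-type formula built from $\lambda^{[n]}$ and the Kauffman-type formula built from $\arccosh$) requires a careful series identity under the specialization $h \mapsto -A^2+A^{-2}$, $\exp(\rho h) \mapsto A^4$, analogous to the verification carried out in the proof of Lemma \ref{lemm_annulus_Dehn}. Once this specialization identity is in hand for a single simple closed curve, the general bounding-pair case follows by linearity, and the filtration statement is then a direct consequence of Theorem \ref{thm_filtration_AS}.
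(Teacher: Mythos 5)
Your proposal follows essentially the same route as the paper: the first inclusion comes from the bracket-filtration property $[F^n,F^m]\subset F^{n+m-2}$ applied to the BCH expression of a commutator, and the second from the filtration-preserving Lie algebra homomorphisms $\psi_{\mathcal{AS}}$ and $\psi_{\mathcal{AG}}$ together with Theorem \ref{thm_filtration_AS}. The only difference is one of emphasis — you explicitly flag the intertwining identities $\psi_{\mathcal{AS}}\circ\zeta_{\mathcal{A}}=\zeta_{\mathcal{S}}$ and $\psi_{\mathcal{AG}}\circ\zeta_{\mathcal{A}}=\zeta_{\mathrm{Gol}}$ as the point requiring verification, which the paper treats as implicit in the statement that $\psi_{\mathcal{AS}}$ induces a group homomorphism between the images of $\zeta_{\mathcal{A}}$ and $\zeta_{\mathcal{S}}$.
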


\section{Integral homology 3-spheres}
We choose the Heegard spliting $S^3 =H_g^+ \cup_\varphi H_g^-$
of $S^3$
where $\varphi : \Sigma_g =\partial H_g^+ \to \partial H_g^-$
is a diffeomorphism. We fix an closed disk $D$ in the boundary 
$\partial H_g^+=\partial H_g^-$
and denote $\Sigma_{g,1} \defeq \partial H_g^+ \backslash \mathrm{inn} D$.
The embedding $\Sigma_{g,1} \hookrightarrow S^3$ 
determines two natural subgroups of 
the mapping class group of $
\mathcal{M}(\Sigma_{g,1})
$
namely
\begin{equation*}
\mathcal{M}(H_{g,1}^\epsilon)
\defeq \mathrm{Diff}^+ (H_{g}^\epsilon, D)/
\mathrm{Diff}_0(H_{g}^\epsilon, D).
\end{equation*}
for $\epsilon \in \shuugou{+,-}$.
We denote $M(\xi) \defeq H_g^+ \cup_{\varphi \circ \xi} H_g^-$
for any mapping class $\xi \in \mathcal{M} (\Sigma_{g,1})$.
We remark that
there is an injective stabilization map $\mathcal{M}(\Sigma_{g,1}) \hookrightarrow
\mathcal{M}(\Sigma_{g+1,1})$, which is
compatible with  the above two subgroups
$\mathcal{M}(H_{g,1}^+)$ and $\mathcal{M} (H_{g,1}^-)$.

\begin{fact}[For example, see \cite{Morita1989} \cite{Pitsch2008}\cite{Pitsch2009}]
There exist a bijective map
\begin{equation*}
\underrightarrow{\lim}_{g \rightarrow \infty} (\mathcal{I}(\Sigma_{g,1})/\sim )
\to \mathcal{H}(3),\xi \mapsto M(\xi)
\end{equation*}
where $\mathcal{H}(3)$ is the set of integral homology
$3$-spheres.
Here, for $\xi_1$ and $\xi_2 \in \mathcal{I}(\Sigma_{g,1})$,
$\xi_1 \sim \xi_2$ if and only if there exist
$\eta^+ \in \mathcal{M}(H_{g,1}^+)$ and $\eta^- \in \mathcal{M}(H_{g,1}^-)$
satisfying $\xi_1 =\eta^- \xi_2 \eta^+$.
\end{fact}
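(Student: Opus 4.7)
The plan is to treat this as a well-known structural result in Heegaard theory and break the bijectivity claim into three ingredients: (i) well-definedness of the map $\xi \mapsto M(\xi)$ on the quotient, (ii) surjectivity after taking the direct limit in $g$, and (iii) injectivity modulo the equivalence relation $\sim$.

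For well-definedness, if $\xi_1 = \eta^- \xi_2 \eta^+$ with $\eta^\pm \in \mathcal{M}(H_{g,1}^\epsilon)$, then I would extend $\eta^+$ to a diffeomorphism of $H_g^+$ and $\eta^-$ to a diffeomorphism of $H_g^-$, glue them along $\varphi$, and observe that the resulting diffeomorphism carries $H_g^+ \cup_{\varphi \circ \xi_2} H_g^-$ to $H_g^+ \cup_{\varphi \circ \xi_1} H_g^-$. Compatibility with the stabilization map $\mathcal{M}(\Sigma_{g,1}) \hookrightarrow \mathcal{M}(\Sigma_{g+1,1})$ follows because stabilization corresponds to a connected sum with the standard genus-one Heegaard splitting of $S^3$, which does not alter the underlying $3$-manifold.

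For surjectivity, I would invoke the theorem of Heegaard that every closed orientable $3$-manifold $M$ admits a Heegaard splitting $M = H_g^+ \cup_\psi H_g^-$ for some $g$ and some gluing diffeomorphism $\psi$. Choosing coordinates so that the standard splitting of $S^3$ appears as the identity slot, I can write $\psi = \varphi \circ \xi$ for a unique $\xi \in \mathcal{M}(\Sigma_{g,1})$. The point is then to show that when $M$ is an integral homology $3$-sphere one may, after sufficient stabilization, arrange $\xi \in \mathcal{I}(\Sigma_{g,1})$. This step is essentially Morita's observation: the condition $H_1(M,\mathbb{Z}) = 0$ forces $\xi_* \in \mathrm{Aut}(H_1(\Sigma_g,\mathbb{Z}))$ to lie in the subgroup generated by the actions of $\mathcal{M}(H_{g,1}^+)$ and $\mathcal{M}(H_{g,1}^-)$, and one then uses a result of Griffiths (or an elementary symplectic argument) that after one stabilization any such symplectic automorphism can be absorbed into the handlebody groups, reducing $\xi$ to a Torelli element.

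For injectivity, I would invoke the Reidemeister--Singer theorem: any two Heegaard splittings of a fixed closed orientable $3$-manifold become isotopic after finitely many stabilizations. Applied to $M(\xi_1) = M(\xi_2)$, this produces, after a common stabilization, a diffeomorphism of pairs that moves one splitting to the other; decomposing such a diffeomorphism into pieces supported in $H_g^+$ and $H_g^-$ yields $\eta^\pm \in \mathcal{M}(H_{g,1}^\pm)$ with $\xi_1 = \eta^- \xi_2 \eta^+$, which is exactly the relation $\sim$.

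The main obstacle I anticipate is the Torelli-reduction step inside surjectivity, that is, showing that the homology-sphere hypothesis truly forces $\xi$ into $\mathcal{I}(\Sigma_{g,1})$ after stabilization rather than merely into some larger ``symplectically trivial'' subgroup. This is where I would need to argue carefully with the Mayer--Vietoris presentation of $H_1(M(\xi),\mathbb{Z})$ in terms of the Lagrangian subspaces $\ker(H_1(\Sigma_g)\to H_1(H_g^\pm))$ and the induced map $\xi_*$, and then to appeal either to the cited work of Morita or Pitsch, or to a direct construction of canceling handlebody twists realized by Dehn twists along meridians--longitudes of a stabilizing handle; since the paper merely cites this as a \emph{fact}, I would not attempt to reprove it in full detail but instead quote \cite{Morita1989}, \cite{Pitsch2008}, \cite{Pitsch2009} for this delicate ingredient.
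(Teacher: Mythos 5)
The paper offers no proof of this statement at all: it is recorded as a \emph{Fact} and delegated entirely to \cite{Morita1989}, \cite{Pitsch2008}, \cite{Pitsch2009}. Your proposal goes further by sketching the standard argument behind those references, and the sketch is essentially correct: well-definedness by extending $\eta^\pm$ over the respective handlebodies and absorbing them into the gluing, surjectivity via the existence of Heegaard splittings plus Morita's reduction of the gluing map to a Torelli element, and injectivity via Reidemeister--Singer stabilization followed by decomposing the resulting diffeomorphism into pieces supported in $H_g^+$ and $H_g^-$. Since you, too, ultimately defer the delicate steps to the same three references, the two ``proofs'' agree in substance; what your version buys is a transparent record of where each hypothesis (homology-sphere condition, stabilization, the disk $D$) actually enters, which the paper leaves implicit.

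Two small imprecisions are worth fixing if you expand the sketch. First, in the surjectivity step the condition $H_1(M(\xi),\Z)=0$ does \emph{not} merely force $\xi_*$ into the subgroup generated by the images of $\mathcal{M}(H_{g,1}^+)$ and $\mathcal{M}(H_{g,1}^-)$ in $Sp(2g,\Z)$ (that subgroup is all of $Sp(2g,\Z)$); the correct statement, and the actual content of Morita's lemma, is that $\xi_*$ lies in the \emph{product set} $\mathcal{B}\cdot\mathcal{A}$ of the two images, which is a double-coset condition and not a subgroup condition. Second, in the injectivity step the diffeomorphism produced by Reidemeister--Singer must be normalized to fix the distinguished disk $D$ pointwise and to preserve the two sides of the splitting before its restrictions define elements of $\mathcal{M}(H_{g,1}^\pm)$ in the sense used here (rel $D$); this normalization, together with keeping the stabilized gluing maps inside the Torelli group, is exactly what \cite{Pitsch2008} and \cite{Pitsch2009} take care of, so the deferral is appropriate but should be pointed at that step explicitly.
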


Let $G 
$  be the subgroup 
of $\mathcal{M} (\Sigma_{g,1})$generated by
$
\shuugou{h_i|i \in \shuugou{1,2, \cdots, g}} \cup
\shuugou{s_{ij}|i \neq j}.
$
Here we denote by $h_i$ and $s_{ij}$
the half twist along $c_{h,i}$ as in Figure \ref{figure_handle_G_h_i}
and the element $t_{c_{i,j}}{t_{c_{a,i}}}^{-1}{t_{c_{b,j}}}^{-1}$
as in Figure \ref{figure_handle_G_S_j_i}
and Figure \ref{figure_handle_G_S_i_j}, respectively.

\begin{figure}
\begin{picture}(300,140)
\put(0,0){\includegraphics[width=300pt]{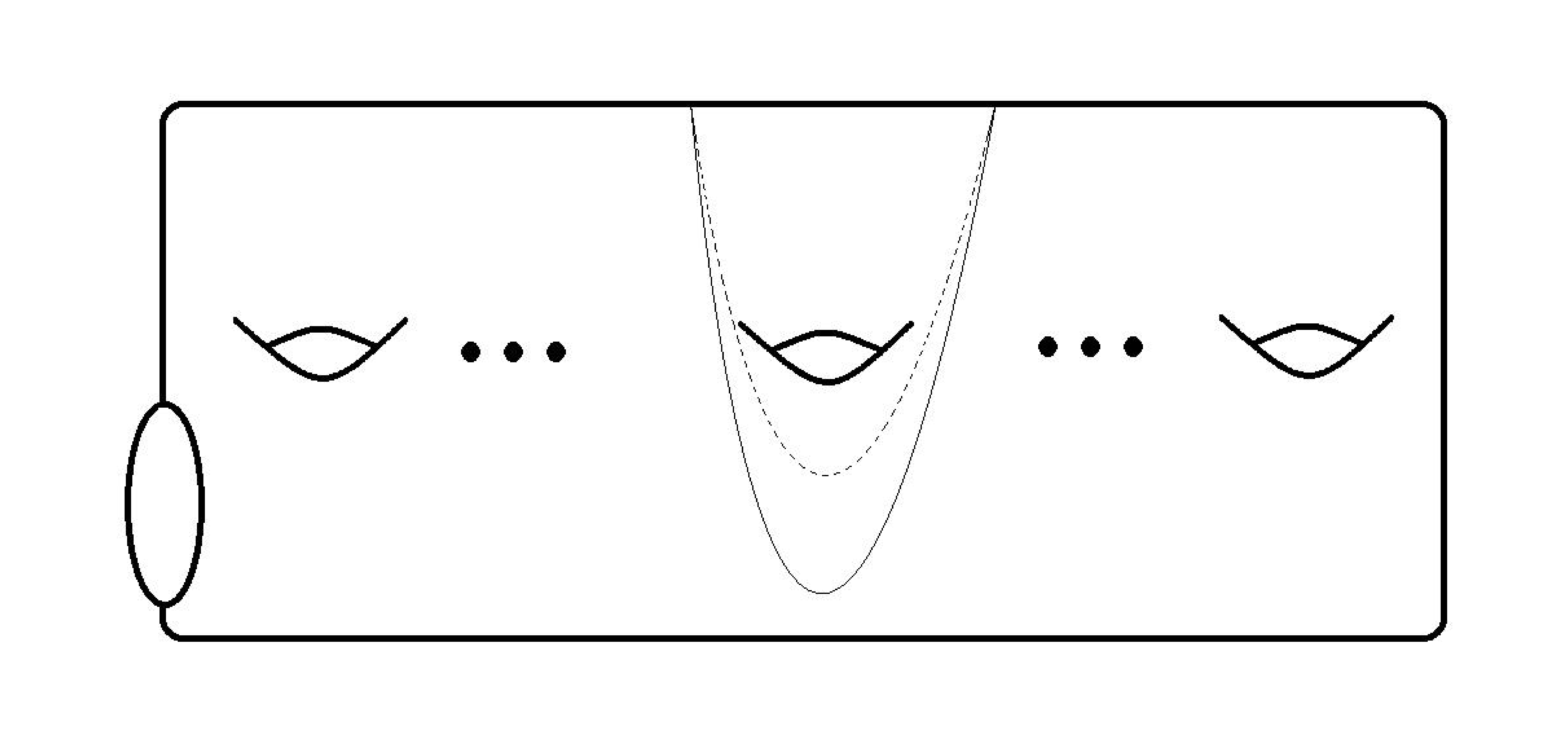}}
\put(37,90){$H_g^+$}
\put(57,90){$g$}
\put(250,90){$1$}
\put(160,90){$i$}
\put(180,63){$c_{h,i}$}
\put(14,90){$H_g^-$}
\end{picture}
\caption{$c_{h,i}$}
\label{figure_handle_G_h_i}
\end{figure}

\begin{figure}
\begin{picture}(300,140)
\put(0,0){\includegraphics[width=300pt]{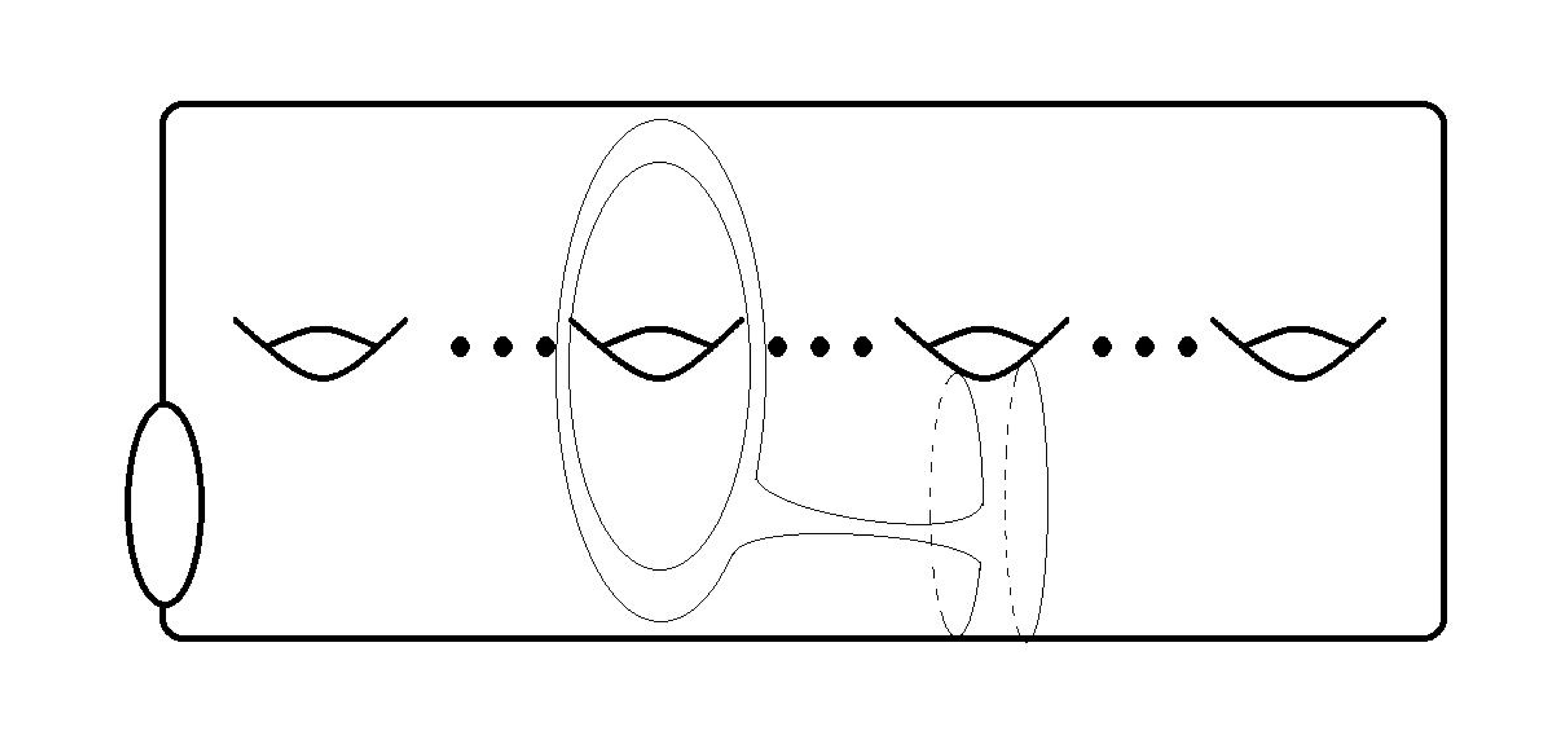}}
\put(37,90){$H_g^+$}
\put(57,90){$g$}
\put(250,90){$1$}
\put(204,43){$c_{a,i}$}
\put(160,27){$c_{i,j}$}
\put(120,45){$c_{b,j}$}
\put(187,90){$i$}
\put(125,90){$j$}
\put(14,90){$H_g^-$}
\end{picture}
\caption{$c_{a,i}$, $c_{b,j}$ and $c_{i,j}$ for $j>i$}
\label{figure_handle_G_S_j_i}
\end{figure}

\begin{figure}
\begin{picture}(300,140)
\put(0,0){\includegraphics[width=300pt]{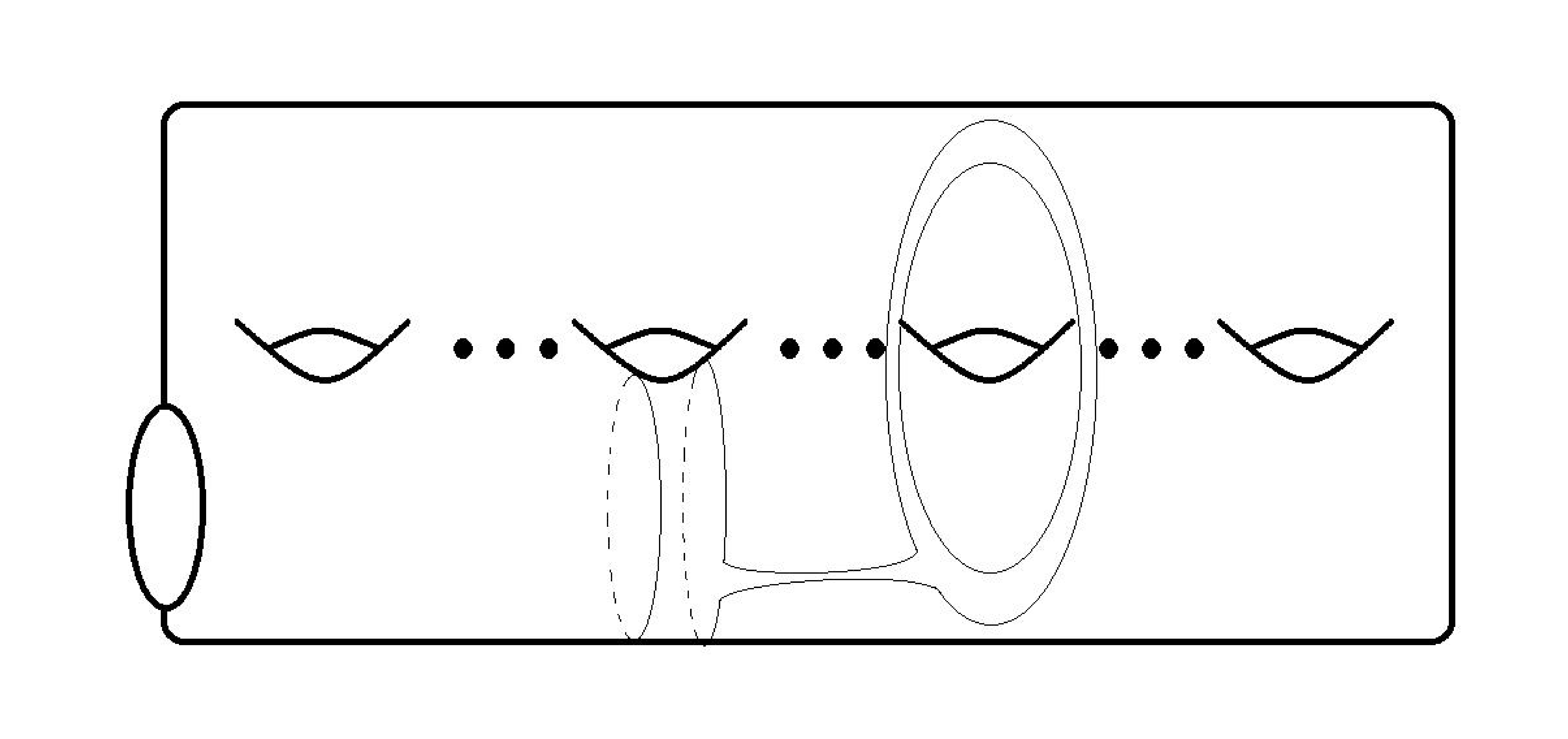}}
\put(37,90){$H_g^+$}
\put(57,90){$g$}
\put(250,90){$1$}
\put(213,63){$c_{i,j}$}
\put(180,53){$c_{b,j}$}
\put(100,43){$c_{a,i}$}
\put(187,90){$j$}
\put(125,90){$i$}
\put(14,90){$H_g^-$}
\end{picture}
\caption{$c_{a,i}$, $c_{b,j}$ and $c_{i,j}$ for $i>j$}
\label{figure_handle_G_S_i_j}
\end{figure}

\begin{lemm}[\cite{Tsujihom3} Lemma 2.5]
\label{lemm_torelli_check}
The equivalence relation $\sim$ in $\mathcal{I}(\Sigma_{g,1})$ is generated by
the relations
$\xi \sim \eta_{G} \xi {\eta_{G}}^{-1}$ for $\eta_G \in \shuugou{h_i, s_{i,j}}$,
$\xi \sim \xi \eta^+$ for $\eta^+ \in 
\shuugou{t_{\xi(c_a) \xi( c'_a)}|\xi \in \mathcal{M} (H_{g,1}^+)}
\cup \shuugou{t_{\xi(c_b) \xi(c'_b)}|\xi \in \mathcal{M} (H_{g,1}^+)}$
and 
$\xi \sim \eta^- \xi$ for $\eta^- \in 
\shuugou{t_{\xi(c_a) \xi( c'_a)}|\xi \in \mathcal{M} (H_{g,1}^-)}
\cup \shuugou{t_{\xi(c_b) \xi(c'_b)}|\xi \in \mathcal{M} (H_{g,1}^-)}$
\end{lemm}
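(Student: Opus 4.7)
The plan is to prove both directions of the generation claim. One direction is immediate: each listed relation is an instance of $\sim$, since $h_i, s_{i,j} \in \mathcal{M}(H_{g,1}^+) \cap \mathcal{M}(H_{g,1}^-)$ (the half-twists extend over both handlebodies, and each $s_{i,j} = t_{c_{i,j}} t_{c_{a,i}}^{-1} t_{c_{b,j}}^{-1}$ is built from Dehn twists along curves bounding disks inside the appropriate handlebodies). Hence the conjugation $\xi \sim \eta_G \xi \eta_G^{-1}$ uses $\eta^+ = \eta_G^{-1}$ and $\eta^- = \eta_G$, while the one-sided multiplications use $\eta^\mp = 1$.

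For the nontrivial converse, fix $\xi_1 = \eta^- \xi_2 \eta^+$ with $\xi_i \in \mathcal{I}(\Sigma_{g,1})$ and $\eta^\pm \in \mathcal{M}(H_{g,1}^\pm)$. The strategy proceeds in two stages. First, I would use that the image of $\mathcal{M}(H_{g,1}^+)$ in $\mathrm{Sp}(2g,\Z)$ is the stabilizer of the Lagrangian generated by the meridian classes $\{[c_{a,i}]\}$, and that $G = \langle h_i, s_{i,j} \rangle$ surjects onto this stabilizer: the $h_i$ realize handle permutations and sign changes, while the $s_{i,j}$ realize the elementary unipotent transvections preserving the Lagrangian. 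Thus I can factor $\eta^+ = \eta_G^+ \tau^+$ with $\eta_G^+ \in G$ and $\tau^+ \in \mathcal{M}(H_{g,1}^+) \cap \mathcal{I}(\Sigma_{g,1})$, and similarly $\eta^- = \tau^- \eta_G^-$. Applying conjugation relations by $\eta_G^\pm$ (which act simultaneously on both sides, crucially because $G \subseteq \mathcal{M}(H_{g,1}^+) \cap \mathcal{M}(H_{g,1}^-)$), the problem reduces to the case where both $\eta^\pm$ lie in the handlebody Torelli subgroups.

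In the second stage, I would show that the handlebody Torelli subgroup $\mathcal{M}(H_{g,1}^+) \cap \mathcal{I}(\Sigma_{g,1})$ is generated by bounding pair maps of the form $t_{\chi(c_a)} t_{\chi(c'_a)}^{-1}$ and $t_{\chi(c_b)} t_{\chi(c'_b)}^{-1}$ for $\chi \in \mathcal{M}(H_{g,1}^+)$, with the analogous statement on the minus side. Granting this, the right-multiplication relations in the lemma exhaust all right multiplications by elements of the handlebody Torelli group, and symmetrically for the left side, completing the reduction.

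The main obstacle is this second stage. Known descriptions of handlebody Torelli groups (building on Johnson, Morita, Hirose, and Pitsch) typically furnish a broader class of BP generators, namely bounding pairs co-bounding a subsurface sitting compressibly inside the handlebody. One must show that every such generator lies in the $\mathcal{M}(H_{g,1}^+)$-orbit of the two specified BP maps. A natural approach is to restrict Putman's infinite presentation of $\mathcal{I}(\Sigma_{g,1})$ (as used in Subsection \ref{subsection_well_defined_zeta}) to pairs extending over $H_g^+$, then apply a normal-form argument to the curves inside the handlebody, distinguishing the two cases according to whether the bounding pair is separated by the compressing disks as a ``meridional'' (orbit of $(c_a, c'_a)$) or ``longitudinal'' (orbit of $(c_b, c'_b)$) configuration. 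This is the technical core of the lemma.
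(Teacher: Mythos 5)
The paper itself does not prove this lemma: it is imported wholesale from \cite{Tsujihom3}, Lemma 2.5, so the only thing to compare your argument with is that external proof, which rests on Pitsch's analysis of when two Torelli elements give the same Heegaard-split $3$-manifold. Your first stage is the right skeleton and agrees with that approach, but one claim in it needs repair. The image of $\mathcal{M}(H_{g,1}^+)$ in $\mathrm{Sp}(2g,\Z)$ is the full stabilizer $\mathrm{GL}(g,\Z)\ltimes\mathrm{Sym}^2(\Z^g)$ of the meridian Lagrangian, and $G=\langle h_i,s_{i,j}\rangle$ does \emph{not} surject onto it: the $\mathrm{Sym}^2$ part is carried by meridian twists such as $t_{c_{a,i}}$, which do not lie in $G$ (one checks that $s_{i,j}$ acts on $H_1$ as a block-diagonal elementary matrix of $\mathrm{GL}(g,\Z)$). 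What is true, and what you actually need, is that $G$ surjects onto the intersection of the two Lagrangian stabilizers, which is $\mathrm{GL}(g,\Z)$, and that the hypothesis $\xi_1,\xi_2\in\mathcal{I}(\Sigma_{g,1})$ forces the $H_1$-actions of $\eta^-$ and $\eta^+$ to be mutually inverse, hence to lie in exactly this intersection; a single $\eta_G\in G$ with the same $H_1$-action as $\eta^-$ then gives $\xi_1=\eta_G\,(\eta_G^{-1}\eta^-)\,\xi_2\,(\eta^+\eta_G)\,\eta_G^{-1}$ with both middle factors in the handlebody Torelli groups $\mathcal{I}(\Sigma_{g,1})\cap\mathcal{M}(H_{g,1}^{\mp})$.

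The genuine gap is your second stage, which you name but do not carry out: the assertion that $\mathcal{I}(\Sigma_{g,1})\cap\mathcal{M}(H_{g,1}^+)$ is generated by the $\mathcal{M}(H_{g,1}^+)$-orbits of the two bounding-pair maps $t_{c_a}t_{c'_a}^{-1}$ and $t_{c_b}t_{c'_b}^{-1}$, and symmetrically for $H_{g,1}^-$. Since conjugation by $G\subset\mathcal{M}(H_{g,1}^+)$ preserves these orbits, the listed relations generate precisely right multiplication by the subgroup those orbits generate, so the lemma is \emph{equivalent} to this generation statement; it is not a finishing touch but the entire content being borrowed from the reference. It is also not routine: the generating sets for the handlebody Torelli group that one can extract from Pitsch's trivial-cocycle analysis include Dehn twists along separating meridians and bounding-pair maps whose curves merely cobound compressible subsurfaces in the handlebody, and each of these must still be exhibited inside the subgroup generated by the two specified orbits (for instance, a separating-meridian twist must be rewritten, via lantern-type relations carried out entirely inside $\mathcal{M}(H_{g,1}^+)$, as a product of such bounding pairs). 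Until that is done, the proposal establishes only the easy inclusion of the stated relations into $\sim$.
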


\section{An invariant for integral homology 3-spheres}
Let $e$ be an orientation preserving embedding $\Sigma_{g,1} \times [0,1]$ satisfying
\begin{align*}
&e_{|\Sigma_{g,1} \times \shuugou{\frac{1}{2}}}:\Sigma \times \shuugou{\frac{1}{2}}
\to \Sigma, (x,\frac{1}{2}) \mapsto x, \\
\end{align*}
We denote by $e_*$ the $\Q[\rho][[h]]$-module homomorphism
$\widehat{\tskein{\Sigma_{g,1}}} \to \Q[\rho][[h]]$ induced by $e$.

The aim of this section is to prove the following.

\begin{thm}
\label{thm_main}
The map $Z: \mathcal{I}(\Sigma_{g,1}) \to \Q[\rho][[h]]$
defined by 
\begin{equation*}
Z(\xi) \defeq \sum_{i=0}^\infty \frac{1}{(h)^i i!}e_*
((\zeta (\xi))^i)
\end{equation*}
induces
\begin{equation*}
z:\mathcal{H} (3) \to \Q[\rho][[h]], M(\xi) \to Z(\xi).
\end{equation*}

\end{thm}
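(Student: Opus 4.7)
The plan is to show that $Z$ is constant on the equivalence classes of $\sim$ from Lemma \ref{lemm_torelli_check} and is stable under the stabilization $\mathcal{M}(\Sigma_{g,1}) \hookrightarrow \mathcal{M}(\Sigma_{g+1,1})$, so that it descends to a well-defined map $z : \mathcal{H}(3) \to \mathbb{Q}[\rho][[h]]$.

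First I would verify convergence of the defining series. By Theorem \ref{thm_main_embedding}, $\zeta(\xi) \in F^3 \widehat{\tskein{\Sigma_{g,1}}}$, and Proposition \ref{prop_filt_product} gives $(\zeta(\xi))^i \in F^{3i}\widehat{\tskein{\Sigma_{g,1}}}$. Since $e$ factors through an embedding into a closed $3$-ball in $S^3$, combining Theorem \ref{thm_filt_underlying3mfd} with Proposition \ref{prop_filt_disk} yields $e_*(F^n \widehat{\tskein{\Sigma_{g,1}}}) \subset h^{\lfloor (n+1)/2 \rfloor}\mathbb{Q}[\rho][[h]]$. Hence the $i$-th term of $Z(\xi)$ lies in $h^{\lfloor (3i+1)/2 \rfloor - i}\mathbb{Q}[\rho][[h]]$, whose $h$-exponent tends to $+\infty$.

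Next I would verify invariance under conjugation $\xi \sim \eta_G \xi \eta_G^{-1}$ with $\eta_G \in \{h_i, s_{i,j}\}$. Each such $\eta_G$ extends to an orientation-preserving self-diffeomorphism $\tilde\eta_G$ of $S^3$ preserving the Heegaard surface, so $\tilde\eta_G \circ e$ and $e \circ (\eta_G \times \id_I)$ agree up to isotopy of embeddings into $S^3$; the HOMFLY-PT polynomial of links in $S^3$ is invariant under the ambient diffeomorphism $\tilde\eta_G$, giving $e_* \circ (\eta_G)_* = e_*$. Combined with the naturality $\zeta(\eta_G \xi \eta_G^{-1}) = (\eta_G)_*(\zeta(\xi))$ (which follows from $(\eta_G)_*(\Lambda(c)) = \Lambda(\eta_G(c))$ for any simple closed curve $c$), this gives $Z(\eta_G \xi \eta_G^{-1}) = Z(\xi)$.

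The hard step is invariance under $\xi \sim \xi \eta^+$ and $\xi \sim \eta^- \xi$, where $\eta^\pm = t_{c_1 c_2}$ is a bounding-pair Dehn twist with both $c_1, c_2$ bounding disks in the corresponding handlebody. By Corollary \ref{cor_main_embedding}, $\zeta(\eta^\pm) = \Lambda(c_1) - \Lambda(c_2)$, and $e_*(\zeta(\eta^\pm)) = 0$ since $e(c_1)$ and $e(c_2)$ are isotopic unknots in $S^3$. The core lemma to prove is the strengthening
\begin{equation*}
\sum_{i=0}^\infty \frac{1}{h^i i!} e_*((\bch(\zeta(\xi), \zeta(\eta^+)))^i) = \sum_{i=0}^\infty \frac{1}{h^i i!} e_*((\zeta(\xi))^i),
\end{equation*}
which requires showing that every term in the BCH expansion $\bch(\zeta(\xi), \zeta(\eta^+)) = \zeta(\xi) + \zeta(\eta^+) + \tfrac{1}{2}[\zeta(\xi), \zeta(\eta^+)] + \cdots$ containing at least one factor of $\zeta(\eta^+)$ is annihilated by $e_*$ after cancellation against the factors $h^{-i}$, using the identity $[x,y] = h\sigma(x)(y)$ in the HOMFLY-PT skein algebra to keep track of the $h$-degree of each nested bracket. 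Stability under stabilization is then a consequence: the two new standard meridian--longitude curves bound disks in the new handlebodies, so their contributions to $Z$ vanish by the same mechanism. The main obstacle is precisely the displayed identity above; it requires a careful algebraic analysis of how $e_*$ interacts simultaneously with the associative product and with the skein Lie bracket in $\widehat{\tskein{\Sigma_{g,1}}}$, and is where most of the technical work will lie.
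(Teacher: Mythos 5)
Your skeleton is the same as the paper's: convergence of the series via Proposition \ref{prop_filt_disk}, reduction to the generating relations of $\sim$ from Lemma \ref{lemm_torelli_check}, and then invariance under conjugation by $h_i,s_{ij}$ and under one-sided multiplication by the handlebody twists $\eta^\pm$. (Your ambient-isotopy treatment of $s_{ij}$ is a legitimate variant; the paper instead writes $s_{ij}=\exp(\sigma(\Lambda(c_{i,j})-\Lambda(c_{a,i})-\Lambda(c_{b,j})))$ and invokes Lemma \ref{lemm_proof_key}(2) together with Proposition \ref{prop_ideal}.) The genuine gap is in your plan for the step you correctly single out as hard. You propose to expand $\bch(\zeta(\xi),\zeta(\eta^+))$ and show that each term containing a factor $\zeta(\eta^+)$ is annihilated by $e_*$. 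This fails term by term: $e_*$ is only a $\Q[\rho][[h]]$-module map, not an algebra map (two curves on the Heegaard surface pushed to different levels are generally linked in $S^3$, so stacked products do not split), and $\ker\iota^+$ is only a one-sided ideal. Hence in a commutator such as $[\zeta(\xi),\zeta(\eta^+)]=h^{-1}\bigl(\zeta(\xi)\zeta(\eta^+)-\zeta(\eta^+)\zeta(\xi)\bigr)$ only one of the two monomials is controlled by the ideal property, and nothing forces $e_*$ to kill the term. The required vanishing only appears after resummation, not within individual BCH terms, and the $h$-degree bookkeeping you allude to can at best give congruences modulo powers of $h$, not the exact identity the theorem needs.

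The mechanism the paper actually uses is Proposition \ref{prop_bch_Z}: because the skein Lie bracket is $h^{-1}$ times the associative commutator, one has $\exp(\bch(x,y)/h)=\exp(x/h)\exp(y/h)$ exactly in the completed associative algebra, so that
\begin{equation*}
Z(\xi\eta^+)=\sum_{i,j\geq 0}\frac{1}{h^{i+j}\,i!\,j!}\;e_*\bigl((\zeta(\xi))^i(\zeta(\eta^+))^j\bigr).
\end{equation*}
In this regrouped series every monomial with $j\geq 1$ carries all its $\zeta(\eta^+)$ factors together on one side, and such a monomial is killed by $e_*$ because $\zeta(\eta^+)\in\ker\iota^+$ (Lemma \ref{lemm_proof_key}(1): the bounding pair becomes isotopic \emph{inside the handlebody} $H_g^+$, which is strictly stronger than your observation that the two curves are isotopic unknots in $S^3$ — the latter only handles the $i=0$, $j=1$ term), $\ker\iota^{\pm}$ absorbs multiplication on the appropriate side, and $e_*(\ker\iota^{\pm})=\shuugou{0}$ (Proposition \ref{prop_ideal}). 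You would need to supply this regrouping identity and the handlebody-level kernel statement to close the argument; as written, your core displayed identity is true but your proposed route to it would not go through.
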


By Proposition \ref{prop_filt_disk}, the map
$Z: \mathcal{I}(\Sigma_{g,1}) \to \Q[\rho][[h]]$ is well-defined.

For $\epsilon \in \shuugou{+,-}$,
let $\tskein{H_g^\epsilon}$ be the quotient of
$\Q [\rho][[h]] \mathcal{T} (H_g^\epsilon)$
modulo the skein relation, the trivial knot relation and
the framing relation ,
where $\mathcal{T}(H_g^\epsilon)$
is the set of oriented framed links
in $H_g^\epsilon$. We can consider  its completion
$
\widehat{\tskein{H_g^\epsilon}}$.
We denote the embedding $\iota'^+ :
\Sigma_{g,1}  \times [0,\frac{1}{2}] \to{H_g^+}$
and the embedding $\iota'^- :
\Sigma_{g,1}  \times [\frac{1}{2},1] \to {H_g^-}$.
The embeddings
\begin{align*}
&\iota^+:\Sigma_{g,1} \times I  \to H_g^+, (x,t) \mapsto \iota'^+(x,\frac{t}{2}), \\
&\iota^-:\Sigma_{g,1} \times I  \to H_g^-, (x,t) \mapsto \iota'^+(x,\frac{t+1}{2}) \\
\end{align*}
induces
\begin{align*}
&\iota^+:\widehat{\tskein{\Sigma_{g,1}}} \to \widehat{\tskein{H_g^+}}, \ \
\iota^-:\widehat{\tskein{\Sigma_{g,1}}} \to \widehat{\tskein{H_g^-}}. \\
\end{align*}

By definition, we have the followings.

\begin{prop}
\label{prop_ideal}
\begin{enumerate}
\item The kernel of $\iota^+$ is a right ideal
of $\widehat{\tskein{\Sigma_{g,1}}}$.
\item The kernel of $\iota^-$ is a left ideal
of $\widehat{\tskein{\Sigma_{g,1}}}$.
\item We have $e_* (\ker \iota^\epsilon) =\shuugou{0}$
for $\epsilon \in \shuugou{+,-}$.
\end{enumerate}
\end{prop}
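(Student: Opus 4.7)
The plan is to handle (1) and (2) by introducing a collar-stacking product on each of $\widehat{\tskein{H_g^\pm}}$ under which $\iota^+$ becomes a $\Q[\rho][[h]]$-algebra map and $\iota^-$ an anti-algebra map, and to handle (3) by factoring $e_*$ through $\iota^+_*$ (and, symmetrically, through $\iota^-_*$) via an ambient isotopy inside the respective handlebody.

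For (1), I endow $\widehat{\tskein{H_g^+}}$ with the $\Q[\rho][[h]]$-algebra product that sends $(L_1,L_2)$ to the disjoint union obtained by isotoping $L_1$ into $\iota'^+(\Sigma_{g,1}\times[1/4,1/2])$ and $L_2$ into $\iota'^+(\Sigma_{g,1}\times[0,1/4])$; well-definedness, associativity, and continuity for the filtration follow from the standard isotopy argument. Comparing the formula $\iota^+(x,t)=\iota'^+(x,t/2)$ with the definition of $\boxtimes$, one checks that $\iota^+(a\boxtimes b)$ is represented by the same link in $H_g^+$ as $\iota^+(a)\cdot \iota^+(b)$, so $\iota^+$ is a $\Q[\rho][[h]]$-algebra homomorphism and $\ker\iota^+$ is a two-sided ideal, in particular a right ideal.

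For (2), the same construction on $\widehat{\tskein{H_g^-}}$ produces an analogous product, but the formula $\iota^-(x,t)=\iota'^-(x,(t+1)/2)$ places the top factor of $a\boxtimes b$ deep inside $H_g^-$ and the bottom factor nearer the boundary---the opposite of the $+$ case. One therefore obtains $\iota^-(a\boxtimes b)=\iota^-(b)\cdot_-\iota^-(a)$, so $\iota^-$ is an anti-algebra homomorphism, $\ker\iota^-$ is again a two-sided ideal in $\widehat{\tskein{\Sigma_{g,1}}}$, and in particular a left ideal.

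For (3), I will show $e_*=j^+\circ\iota^+_*$, where $j^+:\widehat{\tskein{H_g^+}}\to\tskein{S^3}\cong\Q[\rho][[h]]$ is induced by the inclusion $H_g^+\hookrightarrow S^3$. Any link $L\subset\Sigma_{g,1}\times I$ is isotopic inside $\Sigma_{g,1}\times I$ to a link $L'$ contained in the half of $\Sigma_{g,1}\times I$ which $e$ carries into $H_g^+$, so $[L]=[L']$ in $\widehat{\tskein{\Sigma_{g,1}}}$. Restricted to this half, the two embeddings into $H_g^+$ given by $e$ and by $\iota^+$ are ambient isotopic in $H_g^+$, since both are orientation-preserving embeddings of a product collar into $\iota'^+(\Sigma_{g,1}\times[0,1/2])$ that traverse it in the same direction relative to $\partial H_g^+$. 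Hence $e(L')$ and $\iota^+(L')$ represent the same class in $\widehat{\tskein{H_g^+}}$, so $e_*([L])=j^+(\iota^+_*([L]))$ and $e_*(\ker\iota^+)=0$; the symmetric argument using the other half and $\iota^-$ gives $e_*(\ker\iota^-)=0$. The step I expect to be most delicate is this isotopy comparison in (3): one must verify carefully---using orientation-preservation and the compatibility of the $t$-direction with the inward direction of the handlebody---that the restrictions of $e$ and $\iota^+$ are genuinely isotopic as embeddings into $H_g^+$, and not merely have images in the same collar.
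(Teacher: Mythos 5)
Your part (3) is essentially right and coincides with what the paper treats as immediate from the definitions: since $\iota^{+}$ (resp.\ $\iota^{-}$) is exactly $e$ restricted to $\Sigma_{g,1}\times[0,\frac12]$ (resp.\ $\Sigma_{g,1}\times[\frac12,1]$) precomposed with the reparametrization $t\mapsto t/2$ (resp.\ $t\mapsto\frac{t+1}{2}$), and this reparametrization is isotopic to $\id_{\Sigma_{g,1}\times I}$ through embeddings, one gets $e_{*}=j^{\pm}\circ\iota^{\pm}_{*}$ at once, with no need for the link-by-link isotopy comparison you flag as delicate.

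Parts (1) and (2), however, rest on a construction that does not exist, and they prove too much. The collar-stacking product you propose on $\widehat{\tskein{H_g^{\pm}}}$ is not well-defined: forming the class of a link in $\widehat{\tskein{H_g^{+}}}$ forgets precisely the information needed to position it in the collar, so the class of the union depends on the chosen isotopies. Concretely, for $g=1$ let $m$ be a meridian of the solid torus $H_1^{+}$ sitting in $\Sigma_{1,1}\times I$, framed by its bounding disk, and $u$ a trivial framed circle; then $b\defeq [m]-[u]\in\ker\iota^{+}$, but for $a$ the core-parallel curve one computes $\iota^{+}(b\,a)=[\,\mathrm{meridian}\cup\mathrm{core}\,]-[\,\mathrm{unknot}\sqcup\mathrm{core}\,]=\pm h\cdot(\mathrm{a\ nonzero\ core\text{-}class\ element})\neq 0$ in $\widehat{\tskein{H_1^{+}}}$, which is Turaev's free algebra for the annulus. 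So $\ker\iota^{+}$ is \emph{not} a two-sided ideal; your conclusion, which would make both kernels two-sided and render the right/left distinction in the proposition vacuous, is false rather than merely unproven, and in particular $\iota^{+}$ cannot be an algebra homomorphism for any product on $\widehat{\tskein{H_g^{+}}}$. The correct mechanism is one-sided and uses only a module-type operation. In $ab=a\boxtimes b$ the map $\iota^{+}$ places $a$ in the outer collar $\iota'^{+}(\Sigma_{g,1}\times[\frac14,\frac12])$ and $b$ in $K\defeq H_g^{+}\setminus\iota'^{+}(\Sigma_{g,1}\times(\frac14,\frac12])\cong H_g^{+}$; for a fixed tangle representing $a$ in the collar, union with it descends to a well-defined $\Q[\rho][[h]]$-linear map $\widehat{\tskein{K}}\to\widehat{\tskein{H_g^{+}}}$, because all skein, framing and trivial-knot relations needed for $b$ take place inside $K$, away from $a$, and the inclusion $K\hookrightarrow H_g^{+}$ carries the class of $b$ to $\iota^{+}(b)$. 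Hence $b\in\ker\iota^{+}$ implies $ab\in\ker\iota^{+}$: absorption holds only on the side of the factor lying \emph{deep} in the handlebody. By your own (correct) geometric observation, for $\iota^{-}$ it is the left factor that lies deep, giving $a\in\ker\iota^{-}\Rightarrow ab\in\ker\iota^{-}$. This asymmetry is exactly the content of (1) and (2) in the form the paper later uses: $\zeta(\eta^{+})$ enters the products on the right and $\zeta(\eta^{-})$ on the left in the proofs that $Z(\xi\eta^{+})=Z(\xi)$ and $Z(\eta^{-}\xi)=Z(\xi)$.
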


\begin{prop}
\label{prop_bch_Z}
We have $Z(\xi_1 \xi_2) =\sum_{i,j \geq 0}\frac{1}{h^{i+j} i! j!}e_* ((\zeta(\xi_1))^i
(\zeta(\xi_2))^j)$.
\end{prop}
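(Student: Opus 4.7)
The plan is to combine the group-homomorphism property of $\zeta$ with a formal Baker-Campbell-Hausdorff identity, and then apply $e_*$ term-by-term. Set $X \defeq \zeta(\xi_1)$ and $Y \defeq \zeta(\xi_2)$, both of which lie in $F^3 \widehat{\tskein{\Sigma_{g,1}}}$. By Theorem \ref{thm_main_embedding} the map $\zeta$ is a group homomorphism $(\mathcal{I}(\Sigma_{g,1}), \cdot) \to (I \tskein{\Sigma_{g,1}}, \bch)$, so $\zeta(\xi_1 \xi_2) = \bch(X, Y)$. The proposition is therefore equivalent to the identity
\begin{equation*}
\sum_{n \geq 0} \frac{\bch(X,Y)^n}{h^n n!} = \sum_{i,j \geq 0} \frac{X^i Y^j}{h^{i+j} i! j!},
\end{equation*}
to be read in a completion large enough that both sides make sense, and then pushed forward to $\Q[\rho][[h]]$ by $e_*$.

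The key observation is that the Lie bracket on $\widehat{\tskein{\Sigma_{g,1}}}$ differs from the associative commutator by a factor of $h$: the skein relation yields $ab - ba = h[a,b]$. Substituting this into every Lie bracket appearing in the paper's $\bch$ and comparing with the usual BCH series written in terms of the associative commutator (call it $\bch_{\mathrm{cl}}$) gives the formal identity
\begin{equation*}
\bch(X, Y)/h \;=\; \bch_{\mathrm{cl}}(X/h,\; Y/h).
\end{equation*}
Applying the classical Baker-Campbell-Hausdorff identity $\exp(\bch_{\mathrm{cl}}(A,B)) = \exp(A)\exp(B)$ to $A = X/h$, $B = Y/h$ in the free associative algebra on $X, Y, h^{\pm 1}$ then yields the displayed identity of the previous paragraph as a formal power-series statement.

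To bring this back down to $\widehat{\tskein{\Sigma_{g,1}}}$ and then into $\Q[\rho][[h]]$, I would use the filtration bounds: by Proposition \ref{prop_filt_bracket} and Proposition \ref{prop_filt_product}, $\bch(X,Y)^n \in F^{3n}\widehat{\tskein{\Sigma_{g,1}}}$, and $X^i Y^j \in F^{3(i+j)}\widehat{\tskein{\Sigma_{g,1}}}$. Since $e_*$ factors through an embedding into $\widehat{\tskein{B^3}} \simeq \Q[\rho][[h]]$, Proposition \ref{prop_filt_disk} combined with Theorem \ref{thm_filt_underlying3mfd} shows that $e_*$ sends $F^N$ into $h^{\lfloor (N+1)/2 \rfloor}\Q[\rho][[h]]$. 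In particular $e_*(X^i Y^j)/h^{i+j}$ and $e_*(\bch(X,Y)^n)/h^n$ lie in $\Q[\rho][[h]]$, and only finitely many terms contribute modulo any fixed $h^N$. The identity can therefore be checked modulo $F^N$ for each $N$ by unwinding the BCH expansions and reassembling them as an $\exp(X/h)\exp(Y/h)$ product, this time inside the genuine ring $\Q[\rho][[h]]$.

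The main obstacle is precisely the non-invertibility of $h$ in $\widehat{\tskein{\Sigma_{g,1}}}$: the key identity $\bch(X,Y)/h = \bch_{\mathrm{cl}}(X/h,Y/h)$ lives a priori only in the localization $\widehat{\tskein{\Sigma_{g,1}}}[h^{-1}]$, or as a formal identity of symbols. The filtration estimate of Proposition \ref{prop_filt_disk} supplies exactly the powers of $h$ needed to absorb the $h^{-(i+j)}$ factors once $e_*$ is applied, and the careful bookkeeping of these $h$-valuations filtration-by-filtration is the step where the argument must be executed rigorously; everything else reduces to the two algebraic inputs already recorded, namely the classical BCH identity and Theorem \ref{thm_main_embedding}.
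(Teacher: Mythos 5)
Your proposal is correct and is essentially the argument the paper intends: the paper states Proposition \ref{prop_bch_Z} without any written proof, and the route you take --- $\zeta(\xi_1\xi_2)=\bch(\zeta(\xi_1),\zeta(\xi_2))$, the rescaling identity $\bch(X,Y)/h=\bch_{\mathrm{cl}}(X/h,Y/h)$ coming from $ab-ba=h[a,b]$, and the filtration estimates of Propositions \ref{prop_filt_product} and \ref{prop_filt_disk} to make sense of the division by powers of $h$ after applying $e_*$ --- is exactly the mechanism the paper sets up in Subsection \ref{subsection_bch} and invokes implicitly. The bookkeeping step you defer does close with the bounds you already cite: working modulo $h^K$ in $\Q[\rho][[h]]$, only terms with $n,\,i+j\leq 2K$ survive, and the truncated identity can be verified in $\widehat{\tskein{\Sigma_{g,1}}}/F^N$ after clearing denominators by a fixed power of $h$, so nothing further is needed.
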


\begin{lemm}
\label{lemm_proof_key}
\begin{enumerate}
\item For $\epsilon \in \shuugou{+,-}$, 
we have 
\begin{align*}
\iota^\epsilon (\xi(\Lambda(c_a)-\Lambda(c'_a))=0,
\iota^\epsilon (\xi(\Lambda(c_b)-\Lambda(c'_b))=0
\end{align*}
for $\xi \in \mathcal{M}(H_{g,1}^\epsilon)$.
\item For $\epsilon \in \shuugou{+,-}$, 
we have 
\begin{align*}
\iota^\epsilon (\Lambda(c_{i,j})-\Lambda(c_{a,i})-\Lambda(c_{b,j}))=0
\end{align*}for $i \neq j$.
\end{enumerate}

\end{lemm}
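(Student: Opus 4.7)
The plan is to reduce both parts to properties of how the curves in question sit in the handlebody $H_g^{\epsilon}$. For part (1), I first note that any $\xi\in\mathcal{M}(H_{g,1}^{\epsilon})$ extends to a self-diffeomorphism of $H_g^{\epsilon}$ fixing the disk $D$, so the induced automorphism $\xi_*$ of $\widehat{\tskein{H_g^{\epsilon}}}$ satisfies $\iota^{\epsilon}\circ\xi=\xi_*\circ\iota^{\epsilon}$ on $\widehat{\tskein{\Sigma_{g,1}}}$. Hence it suffices to show that $\iota^{\epsilon}(\Lambda(c_a)-\Lambda(c'_a))$ and $\iota^{\epsilon}(\Lambda(c_b)-\Lambda(c'_b))$ both vanish. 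The curves $c_a,c'_a$ (and likewise $c_b,c'_b$) in Figure \ref{figure_CL} differ by $c_v$; because $\Sigma_{1,2}$ is embedded in $\Sigma_{g,1}$ so that the bounding-pair Dehn twists along $(c_a,c'_a)$ and $(c_b,c'_b)$ extend to $H_g^{\epsilon}$ (as in the hypotheses of Lemma \ref{lemm_torelli_check}), the curve $c_v$ bounds a properly embedded disk in $H_g^{\epsilon}$. Using a collar of this disk, $c_a$ push-isotopes through $H_g^{\epsilon}$ to $c'_a$ in a surface-framing-preserving way, so the two become identical framed knots in $H_g^{\epsilon}$ and $\iota^{\epsilon}(\Lambda(c_a))=\iota^{\epsilon}(\Lambda(c'_a))$.

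For part (2), I treat $\epsilon=+$; the case $\epsilon=-$ is obtained by swapping the roles of $c_{a,i}$ and $c_{b,j}$. In $H_g^+$, the curve $c_{a,i}$ bounds a meridian disk of the $i$-th handle. Factoring a tubular neighborhood of this disk as an embedding $D^2\times I\hookrightarrow H_g^+$, the element $\iota^{+}(\Lambda(c_{a,i}))$ arises from the corresponding element in $\tskein{D^2}\simeq\Q[\rho][[h]]$ and hence vanishes by Corollary \ref{cor_def_Lambda_trivial}. Using the same disk to perform a band surgery, $c_{i,j}$ becomes isotopic to $c_{b,j}$ inside $H_g^+$ (they differ by a band sum with the disk-bounding $c_{a,i}$), so by the push-isotopy argument of part (1) we also have $\iota^{+}(\Lambda(c_{i,j}))=\iota^{+}(\Lambda(c_{b,j}))$. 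Adding the two identities yields the required vanishing.

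The main technical obstacle is the step "$\iota^{\epsilon}(\Lambda(\cdot))$ depends only on the framed isotopy class of the curve in $H_g^{\epsilon}$". The definition of $\Lambda(c)$ uses a path $\gamma$ on $\Sigma$, so a priori $\iota^{\epsilon}(\Lambda(c))$ could depend on how $c$ sits on $\partial H_g^{\epsilon}$ rather than just on $c$ as a framed knot in $H_g^{\epsilon}$. To settle this I would realize each of the required handlebody isotopies by a sequence of embeddings $\Sigma_{g,1}\hookrightarrow H_g^{\epsilon}$ obtained by composing the standard inclusion with ambient self-diffeomorphisms of $H_g^{\epsilon}$; by the functoriality of the skein module under embeddings and the invariance statement already packaged in Lemma \ref{lemm_relation_key}(1) together with Theorem \ref{thm_filt_underlying3mfd}, each stage preserves $\iota^{\epsilon}(\Lambda(\cdot))$. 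An alternative, more hands-on approach would be to proceed inductively on filtration depth, using the explicit leading-term formula $\Lambda(c)=\frac{1}{2}|\psi((\gamma-1)^2)|\bmod F^3\widehat{\tskein{\Sigma}}$ to verify the invariance modulo $F^n$ and then bootstrapping.
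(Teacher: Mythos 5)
The paper states this lemma without any proof, so there is no written argument to measure yours against; judged on its own, your proposal is the natural argument and is essentially correct. The reduction in part (1) to the single identity $\iota^\epsilon(\Lambda(c_a))=\iota^\epsilon(\Lambda(c'_a))$ via the extension of $\xi$ over the handlebody is right, and the engine of both parts --- that $\iota^\epsilon(\Lambda(c))$ depends only on the framed isotopy class of $c$ in $H_g^\epsilon$, that it vanishes when $c$ bounds a disk there (by factoring through $\tskein{D}$ and invoking Corollary \ref{cor_def_Lambda_trivial}), and that two curves cobounding a properly embedded annulus inducing the surface framings give equal values --- is exactly what is needed.

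Two soft spots. First, your justification of the framed-isotopy invariance is more roundabout than necessary and cites the wrong tools: Lemma \ref{lemm_relation_key}(1) concerns elements annihilated by $\sigma$, and Theorem \ref{thm_filt_underlying3mfd} only guarantees that the induced maps on completions are defined. The clean argument is already built into Section 5: $\Lambda(c)$ is by construction the push-forward of a fixed element of $\widehat{\tskein{S^1 \times I}}$ under the annulus embedding $\iota : S^1\times I \to \Sigma$ with $\iota(c_l)=c$, so $\iota^\epsilon(\Lambda(c))$ is the image of that universal element under the composite solid-torus embedding into $H_g^\epsilon$, and isotopic embeddings induce the same map on skein modules; your alternative ``bootstrap on filtration depth'' is unnecessary and would be harder to close. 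Second, the concrete geometric assertions --- that $c_v$ bounds a disk in $H_g^\epsilon$ so that $c_a$ and $c'_a$ cobound a properly embedded annulus there, and that $c_{a,i}$ bounds a meridian disk in $H_g^+$ while $c_{b,j}$ plays that role in $H_g^-$ --- are read off from the figures and from \cite{Tsujihom3} rather than established, and they are the actual content of the lemma; note in particular that part (1) quantifies over $\epsilon \in \shuugou{+,-}$, so you need the annulus between $c_a$ and $c'_a$ in \emph{both} handlebodies. These facts are what make $\shuugou{t_{\xi(c_a)\xi(c'_a)}}$ and $s_{ij}$ the correct stabilizer generators, so they do hold for the intended curves, but a complete write-up should pin them down explicitly rather than infer them.
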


By Lemma \ref{lemm_torelli_check},
in order to prove Theorem \ref{thm_main},
it suffices to check the following lemmas.

\begin{lemm}
For any $i$, we have $e_* \circ h_i =e_*$
\end{lemm}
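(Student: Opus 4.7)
The plan is to exhibit the half twist $h_i$ as the restriction of a globally defined orientation-preserving self-diffeomorphism of $S^3$, and then exploit the triviality of the orientation-preserving mapping class group of $S^3$.

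First, I would make precise the following geometric picture. The curve $c_{h,i}$ lies in the Heegaard surface and, as drawn in Figure \ref{figure_handle_G_h_i}, encircles the $i$-th handle pair of $H_g^+ \cup_\varphi H_g^-$. The half twist $h_i\in\mathcal{M}(\Sigma_{g,1})$ admits extensions $\tilde h_i^{+}\colon H_g^{+}\to H_g^{+}$ and $\tilde h_i^{-}\colon H_g^{-}\to H_g^{-}$ obtained by rotating the $i$-th handle of each handlebody by angle $\pi$. These fit together along the Heegaard surface to give an orientation-preserving diffeomorphism $\tilde h_i\colon S^3\to S^3$ (essentially the restriction of a rotation of $S^3$ about the axis through the $i$-th handle pair). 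By choosing the extensions $\tilde h_i^{\pm}$ to be stationary outside a neighborhood of the $i$-th handle and compatible with the product structure in a collar of the Heegaard surface, I can arrange that
$$
\tilde h_i\circ e \;=\; e\circ (h_i\times \mathrm{id}_I)\colon \Sigma_{g,1}\times I \to S^3.
$$

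Second, I would translate this into skein theory. For any tangle $T\in\mathcal{T}(\Sigma_{g,1})$, the equality above gives an equality of oriented framed links
$$
e\bigl((h_i\times\mathrm{id}_I)(T)\bigr) \;=\; \tilde h_i\bigl(e(T)\bigr)\subset S^3 .
$$
Hence in $\widehat{\tskein{S^3}}\cong \Q[\rho][[h]]$ (via Proposition \ref{prop_tskein_disk} applied to a ball in $S^3$ containing the link),
$$
e_{*}\bigl(h_i\cdot [T]\bigr) \;=\; {\tilde h_i}_{*}\bigl(e_{*}([T])\bigr).
$$

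Third, I would invoke the classical fact (Cerf--Hatcher) that every orientation-preserving diffeomorphism of $S^3$ is isotopic to the identity. Consequently $\tilde h_i$ acts as the identity on the set of isotopy classes of oriented framed links in $S^3$, and therefore trivially on $\widehat{\tskein{S^3}}$. Thus ${\tilde h_i}_{*}(e_{*}([T]))=e_{*}([T])$, which gives $e_{*}\circ h_i=e_{*}$ on the dense subset of $\widehat{\tskein{\Sigma_{g,1}}}$ spanned by tangle classes, and then on the whole completion by $\Q[\rho][[h]]$-linearity and continuity (both $e_{*}$ and $h_i$ respect the filtration by Theorem \ref{thm_filt_underlying3mfd}).

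The main obstacle is step one: one has to be careful that the handlebody extensions $\tilde h_i^{\pm}$ glue to a global diffeomorphism of $S^3$ and that this global diffeomorphism can be arranged to preserve the collar $e(\Sigma_{g,1}\times I)$ and act on it as $h_i\times\mathrm{id}_I$. Once the collar compatibility is in hand, everything else is formal. An alternative route that avoids appealing to Cerf--Hatcher would be to exhibit $\tilde h_i$ explicitly as a rotation of $S^3$, which is visibly isotopic to the identity through the family of partial rotations.
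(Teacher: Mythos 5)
Your proposal is correct and is essentially the paper's own argument: the paper's proof consists of the single assertion that the embeddings $e\circ h_i$ and $e$ are isotopic in $S^3$, which is exactly the isotopy you construct by extending the half twist over the handle to an ambient rotation of $S^3$. Your additional detail (the collar compatibility and the triviality of orientation-preserving diffeomorphisms of $S^3$ up to isotopy) just makes explicit why that asserted isotopy exists.
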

\begin{proof}
The embeddings $e \circ h_i $ and $e$ are
isotopic in $S^3$.
This proves the lemma.
\end{proof}

\begin{lemm}
For any $i \neq j$, we have $e_* \circ s_{ij} =e_*$.
\end{lemm}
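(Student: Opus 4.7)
The plan is to exploit the commutativity of the target ring $\Q[\rho][[h]]$ together with the defining identity for $\sigma$.

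First, I would establish the key algebraic fact: for any $a, b \in \widehat{\tskein{\Sigma_{g,1}}}$, one has $e_\ast(\sigma(a)(b)) = 0$. Indeed, the defining relation $h \sigma(a, b) = a \boxtimes b - b \boxtimes a = ab - ba$ holds in $\widehat{\tskein{\Sigma_{g,1}}}$, so applying the $\Q[\rho][[h]]$-algebra homomorphism $e_\ast$ yields
\begin{equation*}
h \cdot e_\ast(\sigma(a,b)) = e_\ast(a) e_\ast(b) - e_\ast(b) e_\ast(a) = 0,
\end{equation*}
because $\Q[\rho][[h]]$ is commutative. Since $\Q[\rho][[h]]$ is $h$-torsion-free, this forces $e_\ast(\sigma(a,b)) = 0$. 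Iterating (the result $\sigma(a)^{n-1}(b) \in \widehat{\tskein{\Sigma_{g,1}}}$ may be fed back into the argument in the second slot), one obtains $e_\ast(\sigma(a)^n(b)) = 0$ for every $n \geq 1$, and hence $e_\ast(\exp(\sigma(a))(b)) = e_\ast(b)$ for arbitrary $a, b \in \widehat{\tskein{\Sigma_{g,1}}}$.

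Second, I would feed this into Theorem \ref{thm_main_Dehn}. That theorem presents each Dehn twist action on $\widehat{\tskein{\Sigma_{g,1}}}$ as $t_c(\cdot) = \exp(\sigma(\Lambda(c)))(\cdot)$. Writing $s_{ij} = t_{c_{i,j}} \circ t_{c_{a,i}}^{-1} \circ t_{c_{b,j}}^{-1}$ and applying the previous paragraph to each of the three factors in turn (with $a$ taken to be $\Lambda(c_{i,j})$, $-\Lambda(c_{a,i})$, and $-\Lambda(c_{b,j})$ respectively), I obtain
\begin{equation*}
e_\ast\bigl(s_{ij}(x)\bigr) = e_\ast\bigl(t_{c_{i,j}} \circ t_{c_{a,i}}^{-1} \circ t_{c_{b,j}}^{-1}(x)\bigr) = e_\ast(x)
\end{equation*}
for every $x \in \widehat{\tskein{\Sigma_{g,1}}}$, which is exactly the lemma.

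There is essentially no hard step here: the argument is uniform and makes no use of the specific geometry of the curves $c_{i,j}, c_{a,i}, c_{b,j}$ or of an isotopy in $S^3$ (unlike the preceding lemma for $h_i$, which was established by an ambient isotopy argument that need not be available for Torelli elements such as $s_{ij}$). The only things used are the commutativity of $\Q[\rho][[h]]$, the $h$-torsion-freeness of that ring, and Theorem \ref{thm_main_Dehn}.
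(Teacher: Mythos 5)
Your argument collapses at the first step: $e_*$ is \emph{not} a $\Q[\rho][[h]]$-algebra homomorphism, so the claimed vanishing $e_*(\sigma(a)(b))=0$ for arbitrary $a,b$ is false. The identity $h\,\sigma(a,b)=ab-ba$ does hold in $\widehat{\tskein{\Sigma_{g,1}}}$, but applying $e_*$ only gives $h\,e_*(\sigma(a,b))=e_*(ab)-e_*(ba)$, and here $e_*(ab)\neq e_*(a)e_*(b)$: the product $ab$ stacks $a$ above $b$ inside the collar $e(\Sigma_{g,1}\times I)\subset S^3$, which is in general a non-split link, whereas multiplication in $\tskein{D}\simeq\Q[\rho][[h]]$ corresponds to a split union. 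Concretely, take $a,b$ dual non-separating curves in a genus-one piece of $\Sigma_{g,1}$ meeting in one point; then $\sigma(a,b)=\pm[c']$ where $c'$ is the oriented smoothing, a curve in the class $[a]+[b]$ that is an unknot under the standard embedding, so $e_*(ab)-e_*(ba)=\pm h\cdot\frac{2\sinh(\rho h)}{h}=\pm 2\sinh(\rho h)\neq 0$. A global sanity check points the same way: if your key fact held, it would give $e_*\circ t_c=e_*$ for \emph{every} simple closed curve $c$, i.e.\ the skein class in $S^3$ of a link would be unchanged by an arbitrary Dehn twist along the Heegaard surface; this is false (such twists are exactly how one changes link types and produces all $3$-manifolds), and it would also make Proposition \ref{prop_ideal} pointless and the invariant of Theorem \ref{thm_main} essentially vacuous.

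The geometric input you explicitly discard is the whole content of the paper's proof. It writes $s_{ij}(x)=\exp(\sigma(y))(x)$ with $y=\Lambda(c_{i,j})-\Lambda(c_{a,i})-\Lambda(c_{b,j})$ and invokes Lemma \ref{lemm_proof_key}(2), which asserts that this \emph{particular} combination lies in $\ker\iota^+\cap\ker\iota^-$, i.e.\ dies in both handlebodies $H_g^+$ and $H_g^-$ (this is where the specific configuration of $c_{i,j}$, $c_{a,i}$, $c_{b,j}$ enters). Proposition \ref{prop_ideal} then says $\ker\iota^+$ is a right ideal, $\ker\iota^-$ is a left ideal, and $e_*$ annihilates both; hence $e_*(yx)=e_*(xy)=0$, so $e_*(\sigma(y)(x))=\frac{1}{h}\left(e_*(yx)-e_*(xy)\right)=0$, and iterating gives $e_*(\exp(\sigma(y))(x))=e_*(x)$. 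So the vanishing of $e_*\circ\sigma(y)$ is a special feature of elements $y$ that are trivial on both sides of the Heegaard splitting, not a formal consequence of the commutativity and $h$-torsion-freeness of the target ring.
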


\begin{proof}
We fix an element $x$ of $\widehat{\tskein{\Sigma_{g,1}}}$.
We have $s_{ij} (x) =\exp (\sigma(\Lambda(c_{i,j})-\Lambda(c_{a,i})-\Lambda(c_{b,j})))(x)$.
Using Lemma \ref{lemm_proof_key}(2) and Proposition
\ref{prop_ideal} (1)(2)(3),
we have $e_*(\exp (\sigma(\Lambda(c_{i,j})-\Lambda(c_{a,i})
-\Lambda(c_{b,j})))(x))=e_*(x)$.
This  proves the lemma.
\end{proof}

\begin{lemm}
\begin{enumerate}
\item We have $Z(\xi \eta^+) =Z(\xi)$ for any 
$\xi \in \mathcal{I}(\Sigma_{g,1})$ and 
any $\eta^+ \in \shuugou{t_{\xi(c_a) \xi( c'_a)}|\xi \in \mathcal{M} (H_{g,1}^+)}
\cup \shuugou{t_{\xi(c_b) \xi(c'_b)}|\xi \in \mathcal{M} (H_{g,1}^+)}$.
\item We have $Z(\eta^- \xi ) =Z(\xi)$ for any 
$\xi \in \mathcal{I}(\Sigma_{g,1})$ and 
any $\eta^- \in \shuugou{t_{\xi(c_a) \xi( c'_a)}|\xi \in \mathcal{M} (H_{g,1}^-)}
\cup \shuugou{t_{\xi(c_b) \xi(c'_b)}|\xi \in \mathcal{M} (H_{g,1}^-)}$.
\end{enumerate}
\end{lemm}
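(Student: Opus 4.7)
The plan is to expand $Z(\xi\eta^+)$ via Proposition~\ref{prop_bch_Z} as a double series and to show that only the summands with no contribution from $\eta^+$ survive under $e_*$. Explicitly,
\begin{equation*}
Z(\xi\eta^+) \;=\; \sum_{i,j\geq 0}\frac{1}{h^{i+j}\, i!\, j!}\, e_*\bigl((\zeta(\xi))^{i}(\zeta(\eta^+))^{j}\bigr),
\end{equation*}
and the subsum over $j=0$ is exactly $Z(\xi)$; hence it suffices to verify that every term with $j\geq 1$ is annihilated by $e_*$.

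For the vanishing I would proceed as follows. By Corollary~\ref{cor_main_embedding}, $\zeta(\eta^+)=\Lambda(\chi(c_a))-\Lambda(\chi(c'_a))$ (or the analogous expression involving $c_b,c'_b$) for the $\chi\in\mathcal{M}(H_{g,1}^+)$ that defines $\eta^+$. Using the diffeomorphism-naturality $\Lambda(\chi(c))=\chi(\Lambda(c))$, this equals $\chi(\Lambda(c_a)-\Lambda(c'_a))$, which lies in $\ker\iota^+$ by Lemma~\ref{lemm_proof_key}(1). Proposition~\ref{prop_ideal}(1) then guarantees that $(\zeta(\xi))^{i}(\zeta(\eta^+))^{j}\in\ker\iota^+$ for every $i\geq 0$ and every $j\geq 1$, and Proposition~\ref{prop_ideal}(3) concludes the argument: $e_*$ vanishes on $\ker\iota^+$.

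The proof of (2) is entirely symmetric. Taking $\xi_1=\eta^-$ and $\xi_2=\xi$ in Proposition~\ref{prop_bch_Z} gives
\begin{equation*}
Z(\eta^-\xi) \;=\; \sum_{i,j\geq 0}\frac{1}{h^{i+j}\, i!\, j!}\, e_*\bigl((\zeta(\eta^-))^{i}(\zeta(\xi))^{j}\bigr);
\end{equation*}
the $i=0$ summand equals $Z(\xi)$, and for $i\geq 1$ the same circle of ideas (Lemma~\ref{lemm_proof_key}(1), naturality of $\Lambda$, and Proposition~\ref{prop_ideal}(2)(3)) annihilates the remaining summands.

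No serious difficulty is expected: all of the structural work has been performed in Lemma~\ref{lemm_proof_key} and Proposition~\ref{prop_ideal}, and the present lemma merely assembles them. The only point that deserves a line of justification is the naturality $\Lambda(\chi(c))=\chi(\Lambda(c))$, which follows at once from the explicit construction of $\Lambda(c)$ from $c$ together with an auxiliary simple path, since a diffeomorphism transports the two pieces of data simultaneously.
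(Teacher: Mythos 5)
Your proof is correct and follows essentially the same route as the paper's: expand $Z(\xi\eta^+)$ via Proposition \ref{prop_bch_Z} and kill every summand with $j\geq 1$ using Lemma \ref{lemm_proof_key} together with Proposition \ref{prop_ideal}(1)(3) (and the symmetric argument with (2)(3) for the second statement). The only difference is that you make explicit the identification $\zeta(\eta^+)=\chi(\Lambda(c_a)-\Lambda(c'_a))$ via Corollary \ref{cor_main_embedding} and the naturality of $\Lambda$, which the paper leaves implicit.
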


\begin{proof}
We prove only (1) (because the proof of (2) is almost the same.)
By Proposition \ref{prop_bch_Z}, we have 
$Z(\xi \eta^+) =\sum_{i,j \geq 0}\frac{1}{h^{i+j} i! j!}e_* ((\zeta(\xi))^i
(\zeta(\eta^+))^j)$.
Using Lemma \ref{lemm_proof_key} and Proposition \ref{prop_ideal} (1)(3),
we obtain
\begin{equation*}
Z(\xi \eta^+) =\sum_{i,j \geq 0}\frac{1}{h^{i+j} i! j!}e_* ((\zeta(\xi))^i
(\zeta(\eta^+))^j)=\sum_{i \geq 0}\frac{1}{h^{i} i!}e_* ((\zeta(\xi))^i)=Z(\xi).
\end{equation*}
This proves the lemma.
\end{proof}

This invariant satisfies the following.

\begin{prop}
For $\xi_1 \in \zeta^{-1} (F^{n_1+2}\widehat{\tskein{\Sigma_{g,1}}}), 
\cdots, \xi_k \in \zeta^{-1}(F^{n_k+2} \widehat{\tskein{\Sigma_{g,1}}})$,
we have
\begin{equation*}
\sum_{\epsilon_i \in \shuugou{1,0}}(-1)^{\sum \epsilon_i}
z (M(\xi_1^{\epsilon_1} \cdots \xi_k^{\epsilon_k}))
\in h^{\gauss{n_1 + \cdots +n_k}}\Q [\rho][[h]].
\end{equation*}

\end{prop}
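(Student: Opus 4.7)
The plan is to convert the alternating sum into a completely factored product and then estimate each factor with the filtration on the disk. The first step is to exploit the fact that $\zeta$ is a homomorphism to the BCH group $(\widehat{\tskein{\Sigma_{g,1}}},\bch)$ whereas $e_{*}$ is an algebra homomorphism with commutative target $\Q[\rho][[h]]$ and hence annihilates every Lie bracket. Since $\bch(X_{1},\dots,X_{k})-(X_{1}+\cdots+X_{k})$ lies in the closed span of iterated brackets, every term of $\bch(X_{1},\dots,X_{k})^{m}$ other than $(X_{1}+\cdots+X_{k})^{m}$ contains a commutator factor and so is killed by $e_{*}$. Combined with an iterated form of Proposition~\ref{prop_bch_Z}, this gives
\begin{equation*}
Z(\xi_{1}\cdots\xi_{k})=\exp\!\left(\frac{1}{h}\sum_{l=1}^{k}e_{*}(\zeta(\xi_{l}))\right).
\end{equation*}

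Second, since $\zeta(\mathrm{id})=0$ and exponentials in the commutative ring $\Q[\rho][[h]]$ factorize, the signed sum collapses to a single product:
\begin{equation*}
\sum_{\epsilon\in\shuugou{0,1}^{k}}(-1)^{\sum_{l}\epsilon_{l}}\,Z(\xi_{1}^{\epsilon_{1}}\cdots\xi_{k}^{\epsilon_{k}})=\prod_{l=1}^{k}\Bigl(1-\exp\bigl(e_{*}(\zeta(\xi_{l}))/h\bigr)\Bigr).
\end{equation*}

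Third, I would estimate each factor. The map $e_{*}$ is induced by an embedding $\Sigma_{g,1}\times I\hookrightarrow D^{2}\times I$, so the argument of Theorem~\ref{thm_filt_underlying3mfd} (using the $F^{\star n}$ description, which is manifestly transported by embeddings) shows $e_{*}(F^{n}\widehat{\tskein{\Sigma_{g,1}}})\subset F^{n}\tskein{D}$. Proposition~\ref{prop_filt_disk} identifies the latter with $h^{\gauss{(n+1)/2}}\Q[\rho][[h]]$; applied with $n=n_{l}+2$ and then divided by $h$, each factor $1-\exp(e_{*}(\zeta(\xi_{l}))/h)$ has $h$-order at least $\gauss{(n_{l}+1)/2}$.

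The main obstacle is sharpening the resulting bound $h^{\sum_{l}\gauss{(n_{l}+1)/2}}$ to the claimed $h^{\gauss{n_{1}+\cdots+n_{k}}}$. The two agree in the basic Torelli regime $n_{l}=1$, which already implies the headline consequence from the introduction: $z(M)\bmod h^{n+1}$ is a finite-type invariant of order $n$ (take $k=n+1$ and all $n_{l}=1$). For $n_{l}\ge 2$, however, the generic disk bound of Proposition~\ref{prop_filt_disk} is not enough, and one must use extra structure of the image of $\zeta$: elements of $\zeta^{-1}(F^{n_{l}+2})$ are iterated BCH words in the generators $\Lambda(c_{1})-\Lambda(c_{2})$, $C(c_{1},c_{2})$, $\Lambda(c)$, and each nested commutator should contribute an additional factor of $h$ after $e_{*}$ beyond what the filtration index of the disk sees. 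Quantifying this commutator-by-commutator improvement uniformly is the delicate point I expect to require the most care.
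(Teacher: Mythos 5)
The decisive gap is in your first step: you assert that $e_*$ is an algebra homomorphism with commutative target and therefore annihilates every Lie bracket, so that $e_*(\zeta(\xi_1\cdots\xi_k)^m)$ collapses to $\bigl(\sum_l e_*(\zeta(\xi_l))\bigr)^m$. This is false. The embedding $\Sigma_{g,1}\times I\to S^3$ inducing $e_*$ cannot be of the form $\iota\times\id_I$ (a genus $g>0$ surface does not embed in a disk), so the two stackings $xy$ and $yx$ of a pair of skein classes are carried to genuinely non-isotopic links in $S^3$; hence $e_*(xy)\neq e_*(yx)$ and $e_*([x,y])=\frac{1}{h}e_*(xy-yx)$ need not vanish. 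Had your identity $Z(\xi_1\cdots\xi_k)=\exp\bigl(\frac{1}{h}\sum_l e_*(\zeta(\xi_l))\bigr)$ been true, $z$ would be a group homomorphism from $\mathcal{I}(\Sigma_{g,1})$ to the multiplicative group of $\Q[\rho][[h]]$, hence would factor through the abelianization of the Torelli group and carry essentially no information; the entire apparatus of Proposition \ref{prop_ideal} and Lemma \ref{lemm_proof_key} would also be superfluous. Your factorization of the signed sum as $\prod_l\bigl(1-\exp(e_*(\zeta(\xi_l))/h)\bigr)$ inside $\Q[\rho][[h]]$ therefore fails, and with it the third step.

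The correct manipulation keeps the product inside the noncommutative algebra and applies $e_*$ exactly once, at the end. Because the Lie bracket satisfies $h[x,y]=xy-yx$, one has $\exp(\bch(x,y)/h)=\exp(x/h)\exp(y/h)$ in $\widehat{\tskein{\Sigma_{g,1}}}$, so Proposition \ref{prop_bch_Z} yields $Z(\xi_1^{\epsilon_1}\cdots\xi_k^{\epsilon_k})=e_*\bigl(\exp(\zeta(\xi_1)/h)^{\epsilon_1}\cdots\exp(\zeta(\xi_k)/h)^{\epsilon_k}\bigr)$, and the alternating sum telescopes to $e_*\bigl((1-\exp(\zeta(\xi_1)/h))\cdots(1-\exp(\zeta(\xi_k)/h))\bigr)$, an ordered product of skein elements. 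The $h$-adic estimate is then made on this single element, using Proposition \ref{prop_filt_product} to bound the total filtration degree of each monomial $\zeta(\xi_1)^{i_1}\cdots\zeta(\xi_k)^{i_k}$ and Proposition \ref{prop_filt_disk} after applying $e_*$, rather than factor by factor in the commutative ring. Your remark that $e_*$ preserves the filtration (via the $F^{\star n}$ description) is correct and is the one ingredient of your third step that survives; but the route to the product formula must go through the skein algebra, not through $\Q[\rho][[h]]$.
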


\begin{proof}
We have
\begin{align*}
&\sum_{\epsilon_i \in \shuugou{1,0}}(-1)^{\sum \epsilon_i}
z (M(\xi_1^{\epsilon_1} \cdots \xi_k^{\epsilon_k})) \\
&=e_*((1-\exp (\frac{\zeta(\xi_1)}{h}))\cdots(1-\exp (\frac{\zeta(\xi_k)}{h})).
\end{align*}
By Proposition \ref{prop_filt_disk}, we have
\begin{equation*}
\sum_{\epsilon_i \in \shuugou{1,0}}(-1)^{\sum \epsilon_i}
z (M(\xi_1^{\epsilon_1} \cdots \xi_k^{\epsilon_k}))
\in h^{\gauss{n_1 + \cdots +n_k}}\Q [\rho][[h]].
\end{equation*}
This proves the proposition.

\end{proof}

\begin{cor}
\label{cor_finite_type}
The invariant $z(M) \in \Q[\rho][[h]]/(h^n)$
is a finite type invariant for $M \in \mathcal{H} (3)$
order $n$.
\end{cor}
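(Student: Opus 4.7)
The plan is to deduce the corollary from the immediately preceding proposition by matching Ohtsuki's combinatorial definition of finite type invariants with the $\zeta$-filtration on $\mathcal{I}(\Sigma_{g,1})$. Recall that an invariant $v:\mathcal{H}(3)\to R$ is of finite type of order $n$ (in Ohtsuki's sense) if $\sum_{L'\subset L}(-1)^{|L'|}v(S^3_{L'})=0$ for every $\pm 1$-framed algebraically split link $L\subset S^3$ with $n+1$ components; for $v=z$ taking values in $\Q[\rho][[h]]/(h^n)$, this reduces to showing that the same alternating sum lies in $h^n\Q[\rho][[h]]$.

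First I would carry out the geometric reduction. After possibly stabilizing the Heegaard splitting $S^3=H_g^+\cup_\varphi H_g^-$, every such link $L$ can be isotoped to lie on $\Sigma_{g,1}$ as $n+1$ pairwise disjoint null-homologous (hence separating) simple closed curves $c_1,\dots,c_{n+1}$ whose surface framings coincide with the given $\pm1$-framings; this is a standard refinement of the Lickorish-Wallace realization within the Torelli group, in the spirit of \cite{Morita1989} and \cite{Pitsch2008}. Setting $\xi_i:=t_{c_i}^{\epsilon_i}$ with $\epsilon_i\in\{\pm1\}$ the framing of $c_i$, the disjointness of the $c_i$ makes the $\xi_i$ commute pairwise, and for every sublink $L'$ one obtains $S^3_{L'}=M\bigl(\prod_{i\in I(L')}\xi_i\bigr)$. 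Consequently the Ohtsuki alternating sum matches exactly the sum appearing in the preceding proposition.

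Next I would plug in the filtration bound. Since each $c_i$ is null-homologous, the corollary following Lemma \ref{lemm_bounding_pair} gives $\Lambda(c_i)\in F^4\widehat{\tskein{\Sigma_{g,1}}}$, so $\zeta(\xi_i)=\epsilon_i\Lambda(c_i)\in F^4=F^{2+2}$ and thus $\xi_i\in\zeta^{-1}(F^{n_i+2}\widehat{\tskein{\Sigma_{g,1}}})$ with $n_i=2$. Applying the preceding proposition with $k=n+1$ and every $n_i=2$ produces
\begin{equation*}
\sum_{L'\subset L}(-1)^{|L'|}z(S^3_{L'}) \;=\; \sum_{\epsilon\in\{0,1\}^{n+1}}(-1)^{|\epsilon|}z\bigl(M(\xi_1^{\epsilon_1}\cdots\xi_{n+1}^{\epsilon_{n+1}})\bigr)\in h^{n+1}\Q[\rho][[h]]\subset h^n\Q[\rho][[h]],
\end{equation*}
which vanishes modulo $h^n$, as desired.

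The main obstacle is the geometric step at the beginning: one must verify that, after stabilizing, every algebraically split $\pm 1$-framed link admits a realization by pairwise disjoint null-homologous simple closed curves on $\Sigma_{g,1}$ with matching surface framings, so that the resulting mapping classes are commuting Dehn twists in the Torelli group to which the preceding proposition directly applies. This is essentially standard but requires disjointness, null-homology in $\Sigma_{g,1}$ (not merely in $S^3$), and framing-compatibility to all be arranged simultaneously; once this is in place, the corollary is pure filtration bookkeeping supplied by the preceding proposition.
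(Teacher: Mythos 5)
Your reduction to the preceding proposition and the filtration bookkeeping (each separating twist has $\zeta(t_{c}^{\pm 1})=\pm\Lambda(c)\in F^4=F^{2+2}$, so $n_i=2$, giving divisibility by $h^{n+1}$ for $k=n+1$ factors) is exactly the intended use of that proposition, and the paper itself offers no further argument for the corollary. The problem is the geometric step you flag as ``essentially standard'': it is false as stated. If $c_1,\dots,c_{n+1}$ are pairwise disjoint separating simple closed curves on a Heegaard surface $\Sigma\subset S^3$, then each $c_i$ bounds a subsurface of $\Sigma$, and an easy induction shows these subsurfaces can be chosen pairwise disjoint; hence the $c_i$ form a \emph{boundary link} in $S^3$. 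But an algebraically split link need not be a boundary link --- the Borromean rings are algebraically split ($\mathrm{lk}=0$ pairwise) yet have nonvanishing triple Milnor invariant, so they admit no system of disjoint Seifert surfaces and therefore cannot be isotoped onto any Heegaard surface as disjoint separating curves. Since $\pm1$-surgery on the Borromean rings is precisely the kind of $3$-component algebraically split surgery that generates $\mathcal{F}_3/\mathcal{F}_4$ (detected by the Casson invariant), your argument as written only establishes the vanishing condition for \emph{boundary} links, not for all algebraically split links, and so does not prove finite-typeness in Ohtsuki's sense.

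To close the gap you need one of two things. Either read ``finite type of order $n$'' in the Torelli-surgery sense (alternating sums over products $\xi_1^{\epsilon_1}\cdots\xi_{n+1}^{\epsilon_{n+1}}$ of separating twists or, more generally, of elements with $\zeta(\xi_i)\in F^{4}$, with no disjointness or commutativity required) --- then the corollary is immediate from the proposition exactly as you compute, since the proposition never assumes the $\xi_i$ commute. Or, if Ohtsuki's algebraically-split definition is intended, you must invoke the comparison of filtrations (Garoufalidis--Levine/Habiro: the algebraically split filtration and the Torelli filtration are commensurate, with a degree ratio of $3{:}1$) to convert vanishing on Torelli alternating sums into vanishing on $\mathcal{F}^{as}_{n+1}$; this is a substantive external input, not an isotopy argument, and it changes the numerology of which power of $h$ one must quotient by.
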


In \cite{Tsujihom3}, 
we define $z_\mathcal{S} :\mathcal{H} (3) \to \Q [[A+1]]$ by
\begin{equation*}
M (\xi) \mapsto  \sum_{i=0}^\infty \frac{1}{i!(-A+A^{-1})^i}e_* ( (\zeta (\xi))^i).
\end{equation*}
By Corollary \ref{cor_embedding_AS} and Proposition \ref{prop_filtration_Torelli},
we have the following.

\begin{thm}
We have $(z_\mathcal{A} (M))_{| \exp (\rho h)=A^4. h=-A^2+A^{-2}}=z_\mathcal{S} (M)$
for any integral homology $3$-sphere $M$,
where $z_\mathcal{A} =z$.
\end{thm}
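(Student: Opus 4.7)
The plan is to reduce the identity to a term-by-term comparison in the defining series of $z_\mathcal{A}$ and $z_\mathcal{S}$, using $\psi'_\mathcal{AS}$ as the bridge between the HOMFLY-PT and Kauffman bracket skein algebras. Fix $\xi\in\mathcal{I}(\Sigma_{g,1})$ with $M=M(\xi)$, so that
\[
z_\mathcal{A}(M)=\sum_{i\ge 0}\frac{e_*(\zeta_\mathcal{A}(\xi)^i)}{h^i\,i!}\in\Q[\rho][[h]],\qquad z_\mathcal{S}(M)=\sum_{i\ge 0}\frac{e_*(\zeta_\mathcal{S}(\xi)^i)}{(-A+A^{-1})^i\,i!}.
\]

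The first step is to show that at the level of $\tskein{D^3}\cong\Q[\rho][[h]]$, the specialization $\exp(\rho h)\mapsto A^4,\ h\mapsto-A^2+A^{-2}$ coincides with the map induced by $\psi'_\mathcal{AS}$. Using $\mathcal{N}([L])=\exp(w(L)\rho h)\homfly(L)(\exp(\rho h),h)$ from Proposition \ref{prop_tskein_disk}, a direct calculation shows that under the specialization $\mathcal{N}([L])$ becomes $(-A)^{w(L)}\bar L$, which is exactly $\psi'_\mathcal{AS}([L])$ in $\skein{D^3}$; as a sanity check, the trivial knot reduces to $-(A^2+A^{-2})$ on both sides. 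Combined with Corollary \ref{cor_embedding_AS}, which asserts $\psi'_\mathcal{AS}\circ e_*=e_*\circ\psi'_\mathcal{AS}$ on $\tzeroskein{\Sigma_{g,1}}$, this yields $\bigl(e_*(x)\bigr)|_{\mathrm{sp}}=e_*(\psi'_\mathcal{AS}(x))$ for every $x\in\tzeroskein{\Sigma_{g,1}}$.

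The second step uses that $\zeta_\mathcal{A}(\xi)\in\tzeroskein{\Sigma_{g,1}}$, which follows from Proposition \ref{prop_filtration_Torelli} together with the discussion preceding Theorem \ref{thm_filtration_AS}, and that $\psi'_\mathcal{AS}$ is multiplicative on $\tzeroskein{\Sigma_{g,1}}$, so $\psi'_\mathcal{AS}((\zeta_\mathcal{A}(\xi))^i)=(\psi'_\mathcal{AS}(\zeta_\mathcal{A}(\xi)))^i$. The crux is then the identification of $\psi_\mathcal{AS}(\zeta_\mathcal{A}(\xi))$ with $\zeta_\mathcal{S}(\xi)$ (equivalently $\psi'_\mathcal{AS}(\zeta_\mathcal{A}(\xi))=\zeta_\mathcal{S}(\xi)/(A+A^{-1})$): because $\psi_\mathcal{AS}=(A+A^{-1})\psi'_\mathcal{AS}$ is a Lie algebra homomorphism, it induces a group morphism for the BCH law, so $\psi_\mathcal{AS}\circ\zeta_\mathcal{A}\colon\mathcal{I}(\Sigma_{g,1})\to F^4\widehat{\skein{\Sigma_{g,1}}}$ is a group homomorphism, and agreement with $\zeta_\mathcal{S}$ on the Putman bounding-pair generators $t_{c_1c_2}$ suffices. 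This last check reduces to explicitly comparing $\psi_\mathcal{AS}(\Lambda(c_1)-\Lambda(c_2))$ with $L_\mathcal{S}(c_1)-L_\mathcal{S}(c_2)$ using the specialization identity $\arcsinh(h/2)\mapsto -2\log A$, which converts the HOMFLY-PT prefactor $((h/2)/\arcsinh(h/2))^2$ to its Kauffman counterpart $(-A+A^{-1})/(4\log(-A))$.

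Assembling these ingredients and using the factorization $-A^2+A^{-2}=(-A+A^{-1})(A+A^{-1})$ to absorb the $(A+A^{-1})^i$ factors produced in the identification, each term of $(z_\mathcal{A}(M))|_{\mathrm{sp}}$ matches the corresponding term of $z_\mathcal{S}(M)$, giving the claimed equality. The principal obstacle is the generator-level identification of $\psi_\mathcal{AS}(\Lambda(c))$ with $L_\mathcal{S}(c)$ under specialization: this is a careful computation with the two defining power series in Subsection 6.1, requiring matching of scalar prefactors and a Taylor-type rearrangement of the iterated skein-relation sums. Once this step is established, everything else is formal chasing through the Lie algebra homomorphism property of $\psi_\mathcal{AS}$, the embedding-compatibility Corollary \ref{cor_embedding_AS}, and the filtration identity of Proposition \ref{prop_filtration_Torelli}.
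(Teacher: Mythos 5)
Your skeleton is the one the paper intends (the paper's own ``proof'' is a single sentence citing Corollary \ref{cor_embedding_AS} and Proposition \ref{prop_filtration_Torelli}), but as written it has two genuine gaps. First, the entire content of the theorem is the generator-level identity relating $\psi_\mathcal{AS}\circ\zeta_\mathcal{A}$ to $\zeta_\mathcal{S}$, and this is precisely the step you defer as ``a careful computation with the two defining power series''; nothing in the paper supplies it either, so your argument does not close. Worse, your normalization makes the final bookkeeping fail: since $h\mapsto(-A+A^{-1})(A+A^{-1})$, matching $\sum_i e_*(\zeta_\mathcal{A}(\xi)^i)/(h^i\,i!)$ against $\sum_i e_*(\zeta_\mathcal{S}(\xi)^i)/((-A+A^{-1})^i\,i!)$ forces $\psi'_\mathcal{AS}(\zeta_\mathcal{A}(\xi))=(A+A^{-1})\,\zeta_\mathcal{S}(\xi)$, whereas your parenthetical asserts $\psi'_\mathcal{AS}(\zeta_\mathcal{A}(\xi))=\zeta_\mathcal{S}(\xi)/(A+A^{-1})$; with that scalar each term comes out off by $(A+A^{-1})^{2i}$ and nothing is ``absorbed.'' (Part of the blame lies with the paper: for the displayed computation in the proof that $\psi_\mathcal{AS}$ is a Lie algebra homomorphism to be consistent, one needs $\psi_\mathcal{AS}=\frac{1}{A+A^{-1}}\psi'_\mathcal{AS}$ rather than $(A+A^{-1})\psi'_\mathcal{AS}$. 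You must pin this scalar down before the term-by-term comparison means anything.)

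Second, your appeal to $\tzeroskein{\Sigma_{g,1}}$ is unjustified for a general $\xi\in\mathcal{I}(\Sigma_{g,1})$. Both the multiplicativity of $\psi'_\mathcal{AS}$ and Corollary \ref{cor_embedding_AS} are only available on $\tzeroskein{\Sigma_{g,1}}$, but for a bounding pair $\shuugou{c_1,c_2}$ with $[c_1]=[c_2]\neq 0$ the element $\Lambda(c_1)-\Lambda(c_2)$ is a combination of links of the form $\zettaiti{\psi(\gamma^{j_1})}\cdots\zettaiti{\psi(\gamma^{j_k})}$ whose homology classes are the nonzero multiples $m[c_1]$; it does \emph{not} lie in $\tzeroskein{\Sigma_{g,1}}$. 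The discussion after Proposition \ref{prop_filtration_Torelli} only gives $\zeta_\mathcal{A}(\xi)\in\tzeroskein{\Sigma_{g,1}}$ for $\xi$ in the Johnson kernel $\mathcal{M}_g(1)$, where $\zeta_\mathcal{A}(\xi)$ is built from separating curves. To repair this you must either restrict to Heegaard gluings by Johnson-kernel elements and invoke the fact (Morita, Pitsch) that every integral homology $3$-sphere arises this way --- which is exactly why the paper cites Proposition \ref{prop_filtration_Torelli} --- or extend the embedding-compatibility of $\psi'_\mathcal{AS}$ beyond the null-homologous part. As it stands, your step two is not valid on all of $\mathcal{I}(\Sigma_{g,1})$, and the reduction to Putman's bounding-pair generators in your step three sits in exactly the regime where the needed properties of $\psi'_\mathcal{AS}$ have not been established.
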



\end{document}